\newtheorem{theorem}{Theorem}[section]
\newtheorem{definition}[theorem]{Definition}
\newtheorem{prop}[theorem]{Proposition}
\newtheorem{corollary}[theorem]{Corollary}
\newtheorem{lemma}[theorem]{Lemma}
\newtheorem{ex}[theorem]{Example}
\newtheorem{remark}[theorem]{Remark}
\newcounter{hypoconbisfl}
\newcounter{saveconbisfl}
\newcommand\debutL{\begin{list} {\textbf{L\arabic{hypoconbisfl}}}{\usecounter{hypoconbisfl}}\setcounter{hypoconbisfl}{\value{saveconbisfl}}}
	\newcommand\finL{\end{list}\setcounter{saveconbisfl}{\value{hypoconbisfl}}}
\newcounter{hypoconbisf}
\newcounter{saveconbisf}
\newcommand\debutRU{\begin{list} {\textbf{RU\arabic{hypoconbisf}}}{\usecounter{hypoconbisf}}\setcounter{hypoconbisf}{\value{saveconbisf}}}
	\newcommand\finRU{\end{list}\setcounter{saveconbisf}{\value{hypoconbisf}}}
\newcounter{hypoconbisfm}
\newcounter{saveconbisfm}
\newcommand\debutI{\begin{list} {\textbf{I\arabic{hypoconbisfm}}}{\usecounter{hypoconbisfm}}\setcounter{hypoconbisfm}{\value{saveconbisfm}}}
	\newcommand\finI{\end{list}\setcounter{saveconbisfm}{\value{hypoconbisfm}}}
\newcounter{hypoconbisx}
\newcounter{saveconbisx}
\newcommand\debutTX{\begin{list}imsart.cls {\textbf{TX\arabic{hypoconbisx}}}{\usecounter{hypoconbisx}}\setcounter{hypoconbisx}{\value{saveconbisx}}}
	\newcommand\finTX{\end{list}\setcounter{saveconbisx}{\value{hypoconbisx}}}
\newcommand{\chapabstract}[1]{
	\begin{quote}
		\singlespacing\small
		\rule{14cm}{1pt}
		#1
		\vskip-4mm
		\rule{14cm}{1pt}
\end{quote}}
\newcommand{\cM}{\mathcal{M}}
\newcommand{\D}{\mathcal{D}}
\newcommand{\R}{{\mathbb R}}
\newcommand{\Z}{{\mathbb Z}}
\newcommand{\N}{{\mathbb N}}
\newcommand{\E}{{\mathbb E}}
\newcommand{\Var}{\mathrm{Var}}
\newcommand{\Cov}{\mathrm{Cov}}
\renewcommand{\P}{{\mathbb P}}
\renewcommand{\L}{{\mathbb L}}
\newcommand{\PRp}{{\mathfrak{P}(\R)}}
\newcommand{\PLRp}[1]{{\mathfrak{P}}_{#1}(\R)}
\newcommand{\pol}{\textup{pol}}
\newcommand{\Cpol}{{\mathcal C}^\infty_{\pol}(\R_+)}
\newcommand{\CpolK}[1]{{\mathcal C}^{#1}_{\pol}(\R_+)}
\newcommand{\CpolKL}[2]{{\mathcal C}^{#1,#2}_{\pol}(\R_+)}
\newcommand{\CP}[1]{{\mathcal C}^{#1}_{\pol}(\R\times \R_+)}
\newcommand{\CPL}[2]{{\mathcal C}^{#1,#2}_{\pol}(\R\times \R_+)}
\newcommand{\cPh}{\hat{\mathcal P}}
\newcommand{\USC}{\textsc{USC}}
\newcommand{\LSC}{\textsc{LSC}}
\newcommand{\bmcP}{\overline{\mathcal{P}}}
\def\bbar{\overline}
\def\brho{\bbar \rho}
\def\hP{\hat P}
\def\hx{{\hat x}}
\def\hf{{\hat f}}
\def\hX{\hat X}
\def\hY{\hat Y}
\def\bH{\overline H}
\def\tK{\tilde K}
\def\tX{\tilde X}
\def\tY{\tilde Y}
\def\tf{{\tilde f}}
\def\hu{\hat u}
\def\hetanu{{\hat \eta}_\nu}
\def\sjzL{\sum_{j= 0}^L}
\def\i{\mathrm{I}}
\def\mcA{{\mathcal A}}
\def\mcC{{\mathcal C}}
\def\mcD{{\mathcal D}}
\def\mcE{{\mathcal E}}
\def\cL{\mathcal L}
\def\mscrF{{\mathscr F}}
\def\mcK{{\mathcal K}}
\def\mcL{{\mathcal L}}
\def\mcF{{\mathcal F}}
\def\mcM{{\mathcal M}}
\def\mcN{{\mathcal N}}
\def\mcO{{\mathcal O}}
\def\mcP{{\mathcal P}}
\def\mcR{{\mathcal R}}
\def\mcS{{\mathcal S}}
\def\mcX{{\mathcal X}}
\def\mcY{{\mathcal Y}}
\def\bmcO{\bar{{\mathcal O}}}
\def\bfy{{\mathbf y}}
\def\tbfY{\tilde{\mathbf Y}}
\def\bDf{\overline{D_f}}
\def\bfy{{\mathbf y}}
\def\bff{{\mathbf f}}
\def\a{\alpha}
\def\b{\beta}
\def\g{\gamma}
\def\<{\langle}
\def\>{\rangle}
\def \pol{{\mathbf{pol}}}
\newcommand\ind[1]{\mathds{1}_{#1}}
\newcommand{\Dx}{{\Delta x}}
\newcommand{\PhDx}{\Pi^h_\Dx}
\date{}
\begin{document}

\frontmatter 



\title{Approximation and regularity results for the Heston model and
related processes}

\doctoralschool{MSTIC and Mathematics Tor Vergata}

\fieldthesis{Subject: Mathematics}

\laboratories{Thesis prepared at CERMICS,  and at the Mathematics department \\ of Università degli Studi di Roma Tor Vergata} 

\date{22/01/2025}

\author{Edoardo LOMBARDO}




\juryone{Cristina, CAROLI COSTANTINI}{Reviewer}{Professor}{Università degli Studi G. d'Annunzio Chieti--Pescara}
\jurytwo{Noufel, FRIKHA}{Reviewer}{Professor}{Université Paris 1 Panthéon Sorbonne}
\jurythree{Paolo, PIGATO}{Examiner}{Associate Professor}{Università degli studi di Roma Tor Vergata}
\juryfour{Michele, SALVI}{Examiner}{Associate Professor}{Università degli studi di Roma Tor Vergata}
\juryfive{Aurélien, ALFONSI}{Advisor}{Professor}{École Nationale des Ponts et Chaussées}
\jurysix{Lucia, CARAMELLINO}{Advisor}{Professor}{Università degli studi di Roma Tor Vergata}


\makecoverenpc

\clearpage
\section*{Summary}
This thesis investigates approximations and regularity results pertaining to Heston's stochastic volatility model. It comprises three papers organized across chapters 2, 3, and 4.

The first one tackles the challenge of developing high-order weak approximations for the Cox-Ingersoll-Ross (CIR) process, which is crucial in financial modelling. The standard Euler-Maruyama scheme fails due to the square root in the diffusion term, potentially leading to negative values. Additionally, theoretical frameworks that produce high-order approximations, like the Multistep Richardson Romberg approach (Pagès 2007), do not directly apply due to the CIR process's specific structure.
This work employs the random grid technique by Alfonsi and Bally (2021), which leverages an elementary scheme on random time grids to boost convergence order. We use Alfonsi's (2010) second-order CIR scheme as the elementary building block.
Rigorous proofs establish that weak approximations of any order can be achieved for smooth test functions with polynomial growth derivatives, given the condition $\sigma^2\le4a$ that is less restrictive than the well-known ``Feller Condition'' ($\sigma^2\le2a$). Numerical experiments validate these findings, showcasing convergence for both CIR and Heston models, with significant computational time improvements compared to lower-order schemes.
The limitation lies in the theoretical results being proven only under the less restrictive condition cited above. Numerical tests hint at effectiveness beyond this, but rigorous proof remains an open question.

The second work serves as a continuation of our first project. In this iteration, we apply our technique, which is based on random grids, to the log-Heston process. This process represents the logarithm of an asset price within the Heston model and the associated volatility process. The log transformation helps ensure that the moments of both variables remain bounded, thereby simplifying the mathematical analysis.
We start by proposing two second order schemes, built using splitting techniques. The first one uses an exact simulation for the volatility process; the other one uses the Ninomiya-Victoir splitting scheme for the log-Heston process (this one is valid only under the above-cited condition on parameters $\sigma^2\le 4a$). Rigorous proofs demonstrate convergence to any desired order.
Numerical experiments validate the theoretical results, showcasing the effectiveness of the high-order schemes for pricing European and arithmetic Asian options. The impact of different coupling choices on estimator variance is also investigated. Additionally, promising results are presented for the multifactor/rough Heston model, suggesting the potential of the random grid technique in this extended context.

The last work delves into the partial differential equation (PDE) associated with the log-Heston model, exploring classical and viscosity solutions.
Key contributions include extending classical solution results by incorporating linear and source terms in the PDE. In this work, we also prove the existence and uniqueness of viscosity solutions without relying on Feller's condition, a common assumption in the literature. Uniqueness is established even for initial data with specific discontinuities, which is relevant for financial applications like digital option pricing.
Furthermore, the chapter demonstrates the convergence of a hybrid numerical scheme (finite differences/tree scheme) to approximate the viscosity solution under relaxed regularity assumptions (continuity) on the initial data.
These results offer a more comprehensive understanding of the log-Heston PDE, particularly in scenarios where Feller's condition doesn't hold or the initial data is discontinuous. In the end, we prove a convergence result for a hybrid scheme provided that the initial data is just continuous.

In Appendix B, we collect other results for the CIR process that did not find space in these three articles: new regularity results,  ``High order approximation in high volatility regime'', a new proof of the CIR moment formula and ``polynomial schemes''.

\begin{acknowledgements}
My acknowledgements will be multilingual and tailored to each recipient.

I begin by thanking those who provided the tangible resources that enabled this thesis. I greatly thank the MIUR Excellence Department Project Math@TOV, which partially funded me and enabled me to enrol in Tor Vergata’s doctoral school in mathematics. Similarly, I extend my gratitude to the École des Ponts et Chaussées for funding my doctorate throughout my time at Champs-Sur-Marne.

I would like to express my gratitude to all the professors who have guided me throughout my academic journey.
J'adresse mes plus sincères remerciements à mon directeur de thèse, Aurélien Alfonsi, dont les conseils et le soutien ont été inestimables tout au long de mon doctorat. Son expertise a été déterminante pour l'élaboration de mes recherches et de cette thèse. J'espère avoir été un étudiant à la hauteur et avoir hérité d'une part de son esprit et de sa capacité de déduction. Esprimo la mia gratitudine alla mia direttrice di tesi, Lucia Caramellino, per avermi trasmesso la passione per la probabilità sin dalla laurea triennale e per il significativo supporto offerto durante tutto il mio percorso, dagli anni precedenti al dottorato fino ad oggi. Desidero ringraziare, inoltre, il Professor Alessio Porretta per il tempo dedicatomi, che ha contribuito significativamente all'avanzamento e all'affinamento delle mie ricerche sulle equazioni alle derivate parziali paraboliche. 
Finally, I wish to extend my thanks to Professor Noufel Frikha and Professor Cristina Caroli Costantini for accepting the responsibility of reviewing my thesis and for their valuable time spent proofreading the manuscript.

I now turn to expressing my gratitude to my colleagues and friends. 
Un grand merci à mes amis du troisième étage du CERMICS : Solal, Laurent, Eloïse, Alfred, Mathias, Simon, Alberic, Jean, Gaspard, Renato, Shiva, Régis et Epiphane, pour toutes les discussions, les bons moments passés ensemble et pour m'avoir accepté malgré que je sois ``un du deuxième''... Je plaisante, le deuxième étage règne ! J'adresse mes sincères remerciements à mes amis du deuxième étage, Dylan, Michel, Kacem, Hervé, Faten, Léo, Cyrille, Thomas, Sébastien, Louis, Vitor, Zoé et Hélène, pour la qualité de notre cohabitation et l'ambiance de travail que nous avons su créer. Leur compagnie a contribué à rendre plus légères les journées les plus chargées.
Un grand merci à mes amis de la ``Team Basic-Fit'' : Mohamad, Leo, Carlos et Clément, et surtout à Guillaume pour m'avoir aidé à rester en forme, me défouler pendant cette période intense et de ne pas oublier de manger mon poulet.
J'adresse un remerciement à mon ami Étienne pour son aide précieuse face à tous les problèmes imaginables concernant Linux et/ou la programmation. Un merci à mon ami Fabian qui a quitté le métavers pour apprendre aux machines à être intelligentes ! Ringrazio Luca per la sua enorme ospitalità e per aver condiviso tanti pasti al nostro amato Asian77.
Un ringraziamento speciale alla mia amica Roberta, una presenza costante e fidata, capace di offrire non solo supporto amicale, ma anche preziose consulenze psicologiche e matematiche, soprattutto nei momenti di maggiore difficoltà con i numeri.
Voglio ringraziare quel ``callone'' del mio amico Emanuele per avermi mostrato come affrontare le difficoltà con il sorriso sulle labbra e per le tante risate che mi ha fatto fare. Je suis reconnaissant envers Coco de m'avoir donné un avant-goût du chinois, de m'avoir offert de délicieux desserts chinois et pour sa précieuse amitié. Un grand remerciement à Anton mon acolyte de F1. J'espère qu'un jour on pourra de nouveau fêter un titre, que ce soit Ferrari ou Alpine.
Un ringraziamento va a Giacomo, con il quale ho condiviso l'ufficio. La sua compagnia e le nostre chiacchierate sono state di grande supporto durante il periodo a Roma.
Un ringraziamento affettuoso a Elia ``il Vecio'' per essere stato un fantastico compagno di passeggiate e un'illustre firma nei nostri fanta-articoli. Le nostre chiacchierate sono state una boccata d'aria fresca durante le giornate di lavoro più dure.
Rivolgo un doveroso ringraziamento ai miei nuovi colleghi Daniele e Katia, che con la loro saggezza mi hanno offerto un valido contributo nel periodo conclusivo di redazione della tesi.
I sincerely hope I have included everyone. If, however, anyone has been inadvertently omitted, I offer my sincerest apologies.

Vorrei ringraziare adesso le persone che più mi sono vicine.
Un ringraziamento speciale al mio ``Bro'' Ferdinando, che vive al di là della Manica. La distanza non intacca la nostra amicizia: è sempre presente e mi ricorda il vero significato di questa parola.
A Patrizio va un ringraziamento stratosferico, per tutti i suoi tentativi (falliti) di sabotaggio della mia esistenza fin dall'infanzia! Un grazie immenso anche a Lorenzo S., che per fortuna è lì a tenirci a bada. Siete due veri compagni di avventure.
A tutti gli altri ``Vendicatori'' grazie di esserci stati sempre nella vostra maniera.
Un grazie enorme al mio gigantesco amico Giacomino (``ino'' solo nel soprannome), che, pur essendo la mia ``custodia'', non manca mai di lusingare il mio ego, affermando che io sia ``grosso''.
Ringrazio il mio caro amico Andrea che con le giuste parole ha saputo ridarmi luce nel momento più buio della mia vita.

Con amore infinito, ringrazio Camilla, l'Amore della mia vita. Da quando le nostre vite si sono intrecciate, il tuo sostegno è stato una costante meravigliosa, una forza che mi sosterrà per sempre. Grazie di riempire il mio cuore di felicità e di rendere ogni giorno trascorso insieme un regalo inestimabile.

Esprimo la mia più sincera e profonda riconoscenza, accompagnata dal mio più sincero affetto,  a coloro che mi hanno offerto un sostegno concreto e determinante durante l'intero percorso che mi ha condotto al conseguimento del titolo di dottore: i miei genitori. A mia madre, che con un amore incondizionato si dedica quotidianamente alla felicità dei suoi figli, va il mio affetto più sincero e la mia profonda ammirazione. A mio padre, che ha dedicato la sua vita al bene della nostra famiglia, un pensiero colmo di nostalgia e amore: la sua presenza fisica ci manca profondamente, ma il suo spirito vive nei nostri cuori e ci guida. Spero un giorno di poter essere un padre degno del suo esempio.

Ringrazio di cuore tutta il resto della mia famiglia: mio fratello, mia sorella, Riccardo, i miei nipoti, i miei nonni e i miei zii, per essermi stati sempre vicini, ognuno a modo suo.

In the end a thought to all those who, in one way or another, have crossed my life and made me the person I am today.

\end{acknowledgements}


\tableofcontents 











\dedicatory{Al mio amato Papà} 


\mainmatter 

\pagestyle{thesis} 



\chapter{Introduction} 
\newcounter{my_steps}[subsubsection]

\label{Introduction} 


\def\a{\alpha}
\def\b{\beta}
\def\g{\gamma}
\def\D{{\mathbb D}}
\makeatletter
\newcommand{\bigcomp}{%
  \DOTSB
  \mathop{\vphantom{\sum}\mathpalette\bigcomp@\relax}%
  \slimits@
}
\newcommand{\bigcomp@}[2]{%
  \begingroup\m@th
  \sbox\z@{$#1\sum$}%
  \setlength{\unitlength}{0.9\dimexpr\ht\z@+\dp\z@}%
  \vcenter{\hbox{%
    \begin{picture}(1,1)
    \bigcomp@linethickness{#1}
    \put(0.5,0.5){\circle{1}}
    \end{picture}%
  }}%
  \endgroup
}
\newcommand{\bigcomp@linethickness}[1]{%
  \linethickness{%
      \ifx#1\displaystyle 2\fontdimen8\textfont\else
      \ifx#1\textstyle 1.65\fontdimen8\textfont\else
      \ifx#1\scriptstyle 1.65\fontdimen8\scriptfont\else
      1.65\fontdimen8\scriptscriptfont\fi\fi\fi 3
  }%
}
\makeatother


Stochastic processes, such as solutions of stochastic differential equations (SDEs), play a crucial role in modelling various random phenomena in fields such as finance, physics, biology, and engineering.  
Most of the time, exact simulation schemes for stochastic processes are often unachievable or have high computational costs, necessitating the use of approximation methods, possibly fast. We are interested in weak approximations of SDEs. Unlike strong approximation, which focuses on approximating sample paths closely, weak approximation aims to approximate the distribution of the process at specific time points. Here, we present a basic review of the literature on weak approximation of SDEs and acceleration techniques to boost the order of the approximation, given one.

\section{Weak approximation of solutions of SDEs}
We begin by providing a rigorous definition of Stochastic Differential Equation. Let $T>0$ and $d,d_W\in\N^*$. A Stochastic Differential Equation (hereafter SDE) is an equation of the form:
\begin{equation}\label{sde_autonomous_intro_gen}
dX^x_t = b(X^x_t) \, dt + \sigma(X^x_t) \, dW_t,  \quad X^x_0=x\in\R^d,
\end{equation}
where:
\begin{itemize}
    \item $(X^x_t, t\in[0,T])$ is the stochastic process in $\R^d$ that we want to simulate,
    \item $b:\R^d\rightarrow\R^d$ is the drift coefficient, which represents the deterministic ``trend'' and is a function of the current state $X^x_t$,
    \item $\sigma:\R^d\rightarrow\mcM(d,d_W,\R)$, where $\mcM(d,d_W,\R)$ are the real matrix with $d$ lines and $d_W$ columns, is the diffusion coefficient which represents the intensity of the random fluctuations and is also a function of the current state $X^x_t$,
    \item $(W_t,t\ge0)$ is a $d_W$-dimensional standard Wiener process  (or Brownian motion), representing the source of randomness.
\end{itemize}
We say that \eqref{sde_autonomous_intro_gen} has strong solutions if for every filtered probability space $(\Omega,\mscrF, (\mscrF_t)_t,\P)$ and Wiener process $(W_t)_t$ over it, there exists a process $(X^x_t)_t$ that verifies with probability 1 the following equality
\begin{equation}\label{sol_intro_gen}
    X^x_t = x + \int_0^t b(X^x_s) \, ds + \int_0^t \sigma(X^x_s) \, dW_s,
\end{equation}
where the second integral is an Itô integral.

The global existence and uniqueness of strong solutions $ (X^x_t)_t $ to the SDE \eqref{sde_autonomous_intro_gen} are guaranteed under certain conditions, often referred to as the locally Lipschitz and linear growth conditions:
\begin{itemize}
    \item For every compact set $K\subset \R^d$ exists a constant $ C_K>0$ such that for all $x,y\in K$,
    $$
    |b(x)-b(y)| + |\sigma(x)-\sigma(y)| \leq C_K |x-y|.
    $$
    \item There exists a constant $C>0$ such that for all $ x\in\R^d$,
    $$
    |b(x)| + |\sigma(x)| \leq C (1 + |x|).
    $$
\end{itemize}
We are interested in giving a notion of weak convergence for solutions of SDEs. If we consider a standard setting, given an $\R^d$-valued random variable $Z$ defined on a probability space $(\Omega,\mscrF,\P)$ and a sequence $(Z_n)_{n\in\N}$ defined over spaces $(\Omega_n,\mscrF_n,\P_n)$ we say that $Z_n$ converges weakly to $Z$ if
\begin{equation}\label{weak_conv_rv}
    \lim_{n\rightarrow\infty}  \E^n[f(Z_n)]\rightarrow\E[f(Z)], \text{ for all } f\in\mcC_b(\R^d),
\end{equation}
where $\E^n$ and $\E$ denote the expected values under the probabilities $\P^n$ and $\P$, and $\mcC_b(\R^d)$ is the space of real, continuous and bounded functions over $\R^d$.
So, fixed $x\in\R^d$ one could be interested to approximate the solution $X^x_T$ of \eqref{sde_autonomous_intro_gen} in the sense of \eqref{weak_conv_rv}, by constructing a sequence of random variables $(\hX^{n,x}_T)_{n\in\N}$ over spaces $(\Omega_n,\mscrF_n,\P_n)$. In the meantime, one could be interested in using a different vector space of  ``test'' functions $\mcF$ (to be specified) for which it is possible to give a rate of convergence or to use a more general definition that involves semigroups. Given $\mcF$ and the linear semigroup operator $P$ defined over it by $P_T f= \E[f(X^\cdot_T)]$, we say that a sequence of linear operators $\hP^n$ defined over $\mcF$ is an approximation of $P_T$ if
\begin{equation}\label{weak_conv_semigroup_point}
    \lim_{n\rightarrow\infty} |\hP^n f(x) - P_T f(x)|=0, \text{ for every }f\in\mcF \text{ and } x\in\R^d,
\end{equation}
or given a norm $\|\cdot\|$ over $\mcF$, if
\begin{equation}\label{weak_conv_semigroup_norm}
    \lim_{n\rightarrow\infty} \|\hP^n f - P_T f\|=0, \text{ for every }f\in\mcF.
\end{equation}
Weak approximation methods typically involve discretizing the continuous-time stochastic process into a sequence of random variables that are easier to handle computationally. The goal is to construct an approximation whose distribution closely matches that of the original process. Common techniques include Euler-Maruyama, Milstein, and higher-order schemes, which vary in complexity and accuracy. We now describe how to construct some approximations discretizing the SDE.


\subsection{Weak approximations schemes}\label{weak_approx_subsection_intro}
The most simple way of obtaining weak approximations is via weak approximation schemes.
The general idea of an approximation scheme is to create good approximations in law of $X^x_t$ for small $t$ such that composing these approximation schemes, the final law obtained is not so distant from the target one $X^x_T$. The most famous and used approximation scheme is the Euler-Maruyama scheme.

\subsubsection{The Euler-Maruyama scheme}
Let $T>0$, $n\in\N^*=\N\setminus\{0\}$ and consider the uniform grid $\Pi^n=\{t^n_k=kT/n\mid k=0,\ldots,n\}$ of step $T/n$.
The idea, like for the Euler scheme in the deterministic framework (ODEs), is to freeze the solution of the SDE between the regularly spaced discretization instants $t^n_k$. The discrete-time Euler-Maruyama scheme $\hX^{n,x}$ starting from $x$ is defined by 
\begin{equation}\label{Euler_scheme}
    \hX^{n,x}_0 = x, \qquad \hX^{n,x}_{t^n_{k+1}} = \hX^{n,x}_{t^n_k}+\frac{T}{n}b(\hX^{n,x}_{t^n_k})+\sigma(\hX^{n,x}_{t^n_k})\sqrt{\frac{T}{n}}(W_{t^n_{k+1}}-W_{t^n_{k}}), \quad k=0,\ldots,n-1. 
\end{equation}

In \cite{TT}, the following rate of convergence Theorem has been proved.
\begin{theorem}
    Let $b,\sigma$ be four times continuously differentiable on $\R^d$ with bounded partial derivatives. Assume $f:\R^d\rightarrow \R$ is four times differentiable such that $f$ and its derivatives have polynomial growth. Then there exists $C>0$ such that for every $x\in\R^d$, and $n\in\N^*$ large enough
    \begin{equation*}
        \E[f(\hX^{n,x}_T)]-\E[f(X^x_T)] \le \frac{C}{n}.
    \end{equation*}
\end{theorem}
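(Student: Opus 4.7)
The strategy is the classical Talay--Tubaro argument based on the backward Kolmogorov equation and a telescoping decomposition of the weak error. Let $\mathcal{L}$ denote the infinitesimal generator associated with \eqref{sde_autonomous_intro_gen}, namely $\mathcal{L}\varphi(x) = b(x)\cdot \nabla \varphi(x) + \tfrac{1}{2}\mathrm{Tr}(\sigma\sigma^\top(x) \nabla^2 \varphi(x))$, and define
\begin{equation*}
u(t,x) := \E[f(X^x_{T-t})], \qquad (t,x)\in [0,T]\times\R^d.
\end{equation*}
Under the four-times differentiability and boundedness of the derivatives of $b,\sigma$, together with the polynomial growth of $f$ and its derivatives, one can show by differentiating the flow $x\mapsto X^x_t$ inside the expectation that $u \in \mathcal{C}^{1,4}([0,T]\times\R^d)$, with $u$ and its space derivatives of polynomial growth in $x$ uniformly in $t$, and that $u$ solves the backward Kolmogorov PDE
\begin{equation*}
\partial_t u(t,x) + \mathcal{L}u(t,x) = 0, \qquad u(T,x)=f(x).
\end{equation*}

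The plan is then to decompose the weak error as a telescoping sum along the grid $\Pi^n$. Writing $h=T/n$ and using $u(T,\hat X^{n,x}_T) = f(\hat X^{n,x}_T)$ together with $u(0,x) = \E[f(X^x_T)]$, we obtain
\begin{equation*}
\E[f(\hat X^{n,x}_T)] - \E[f(X^x_T)] = \sum_{k=0}^{n-1}\E\!\left[u(t^n_{k+1},\hat X^{n,x}_{t^n_{k+1}}) - u(t^n_k,\hat X^{n,x}_{t^n_k})\right].
\end{equation*}
For each term, I would apply Taylor's formula in $t$ and $x$ around $(t^n_k,\hat X^{n,x}_{t^n_k})$ up to order four in space and two in time, using the explicit one-step increment of the scheme in \eqref{Euler_scheme}. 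Taking conditional expectations with respect to $\mathscr{F}_{t^n_k}$, odd moments of the Gaussian increment vanish, so the leading terms reconstruct exactly $h\bigl(\partial_t u + \mathcal{L} u\bigr)(t^n_k,\hat X^{n,x}_{t^n_k}) = 0$ by the Kolmogorov PDE. The remaining contribution in each step is therefore of order $h^2$, with a prefactor given by a polynomial combination of higher derivatives of $u$ and polynomial functions of $b,\sigma$ evaluated at $\hat X^{n,x}_{t^n_k}$.

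To conclude, I would use the standard moment bound $\sup_{n,k}\E[|\hat X^{n,x}_{t^n_k}|^p] \le C_p(1+|x|^p)$ for any $p\ge 1$ (which follows from the linear growth of $b,\sigma$ implied by the bounded-derivatives hypothesis, via a discrete Gronwall argument), combined with the polynomial growth of the relevant derivatives of $u$, to majorize each local error by $C h^2 (1+|x|^q)$ for some integer $q$. Summing $n$ such terms yields the announced $C/n$ bound.

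The main obstacle in this program is the regularity and polynomial growth estimates on $u$ and its derivatives. Establishing $u\in\mathcal{C}^{1,4}$ with polynomial-growth derivatives requires a careful analysis of the tangent flow $\partial_x X^x_t$ and its higher-order variations, which satisfy linear SDEs with coefficients built from derivatives of $b,\sigma$; controlling their $L^p$ moments uniformly in $t\in[0,T]$ relies precisely on the boundedness of these derivatives. Once this regularity is granted, the telescoping/Taylor step is technical but routine.
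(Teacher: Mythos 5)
Your proposal is the classical Talay--Tubaro argument, which is precisely the proof the paper attributes this theorem to via the citation to~\cite{TT}; the paper itself does not reproduce it, so there is no in-text proof to compare against. The telescoping decomposition you write, $\E[f(\hX^{n,x}_T)]-\E[f(X^x_T)]=\sum_{k=0}^{n-1}\E\bigl[u(t^n_{k+1},\hX^{n,x}_{t^n_{k+1}})-u(t^n_k,\hX^{n,x}_{t^n_k})\bigr]$, is exactly the pointwise form of the semigroup identity $\hat P_h^{[n]}-P_T=\sum_{i=0}^{n-1}\hat P_h^{[n-(i+1)]}(\hat P_h-P_h)P_{ih}$ that the paper rederives in~\eqref{eq_TT} when setting up the random-grid framework, so you have faithfully reconstructed the intended argument.
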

What is shown in the previous Theorem guarantees under regularity of $b,\sigma$ and $f$ that the approximation $\E[f(\hX^{n,x}_T)]$ is a weak approximation of order one. 

\subsubsection{Weak error analysis and splitting operator techniques}
The Euler scheme, presented above, represents the simplest and most famous weak approximation scheme, and has order one. In practice, weak approximations converging with an order greater than one can be useful. One interesting way to get higher-order approximations is to build schemes with a higher order of convergence.
Alfonsi in \cite{AA_book} introduced a general framework to get, rigorously, approximation schemes of order 2. We consider the autonomous case \eqref{sde_autonomous_intro_gen} in which the solution $(X^x_t)_{t\in[0,T]}$ of the SDE is confined in a subset $\D\subset\R^d$, i.e.
\begin{equation}\label{domain_of_SDE_gen}
    \P(X^x_t\in\D,\forall t\in [0,T])=1.
\end{equation}
Given a multi index $\alpha=(\alpha_1,\ldots,\alpha_d)$ and $\partial_\alpha = \partial^{\alpha_1}_1\cdots \partial^{\alpha_d}_d$ the differential operator that differentiates $\alpha_i$ times in the $i$-th coordinates, we define the functional space 
$$
    \mcC^{\infty}_{\pol}(\D)=\{ f\in \mcC^\infty(\D,\R), \forall \alpha\in\N^d, \exists C_\alpha>0,e_\alpha\in\N^*, \forall x\in\D, \\
    |\partial_\alpha f(x)|\le C_\alpha (1+|x|^{e_\alpha}) \}
 $$
where $|\cdot|$ is the standard Euclidean norm. This is the space of smooth functions whose derivatives have a polynomial growth.
\begin{definition}\label{def_good_sequence}
    Let $f\in\mcC^\infty_\pol(\D)$. We say that $(C_\alpha,e_\alpha)_{\alpha\in\N^d}$ is a good sequence for $f$ if for any $\alpha\in\N^d$ and $x\in\D$ one has $|\partial_\alpha f(x)|\le C_\alpha(1+|x|^{e_\alpha})$. 
\end{definition}

Alfonsi \cite{AA_book} makes further assumption over the coefficient of the SDE \eqref{sde_autonomous_intro_gen}:  $b:\D\rightarrow\R^d$ and $\sigma:\D\rightarrow\mcM(d,d_W,\R)$ are such that
\begin{equation}\label{cpol_coeff_hp_intro}
    \forall 1\le i\le d, 1\le j\le d_W,\qquad x\in\D\mapsto b_i(x),\,x\in\D\mapsto(\sigma(x)\sigma^\top(x) )_{i,j}\in\mcC^\infty_\pol(\D),
\end{equation}
for instance, this assumption is fulfilled in the case of affine diffusion.
The infinitesimal generator $\mcL$ associated to the SDE is given by 
\begin{equation}\label{gen_inf_intro}
    f\in\mcC^2(\D), \quad \mcL f(x) = \sum_{i=1}^d b_i(x) \partial_i f(x) + \frac 12 \sum_{i,j=1}^d (\sigma(x)\sigma^\top(x) )_{i,j} \partial_i\partial_j f(x).
\end{equation}

\begin{definition}\label{def_resquired_assumptions_L}
    We say that $\mcL$, defined \eqref{gen_inf_intro}, satisfies the required assumptions over $\D$ if its SDE have coefficients $b$ and $\sigma$ that satisfies \eqref{cpol_coeff_hp_intro}, sub-linearity and has strong solutions that satisfy \eqref{domain_of_SDE_gen}.
\end{definition}

To study the weak error, we need to focus on the asymptotic behaviour of 
$$
\E[f(\hX^x_t)]-\E[f(X^x_t)] \qquad\text{for}\quad t\rightarrow0^+, \,f\in\mcC^\infty_\pol(\D).
$$
\begin{definition}\label{def_remainder_and_order_scheme}
    A function $\mcC^\infty_\pol(\D)\times(0,\infty)\times\D\ni (f,t,x)\mapsto Rf(t,x)\in\R$ is called a remainder of order $\nu\in\N$ if for any function $f\in\mcC^\infty_\pol(\D)$ with a good sequence $(C_\alpha,e_\alpha)_{\alpha\in\N^d}$, there exist $C,E,\eta>0$ depending only on the good sequence such that
    $$
    \forall t\in(0,\eta), \forall x\in\D, |Rf(t,x)|\le Ct^\nu(1+|x|^E).
    $$
    We say that $\hX^x_t$ is a potential weak $\nu$-th-order scheme for the operator $\mcL$ if $(f,t,x)\mapsto \E[f(\hX^x_t)]-\E[f(X^x_t)] $ is a remainder of order $\nu+1$. 
\end{definition}
It is important to remark that every exact simulation scheme is a potential weak $\nu$-th-order scheme for all $\nu\in\N$.
It is relatively straightforward to show the following result using Itô's formula.
\begin{prop}\label{diffusion_expansion_wea_Abook}
    Let $f\in\mcC^\infty_\pol(\D)$ and $\mcL$ that satisfies the required assumptions. Then for all $\nu\in\N$ and $t\ge0$
    \begin{equation}
        \E[f(X^x_t)] = \sum_{l=0}^\nu \frac{t^l}{l!} \mcL^lf(x)+\int_0^t\frac{(t-s)^\nu}{\nu!}\E[\mcL^{\nu+1}f(X^x_s)]ds
    \end{equation}
    and $(f,t,x)\mapsto \int_0^t\frac{(t-s)^\nu}{\nu!}\E[L^{\nu+1}f(X^x_s)]ds$ is a remainder of order $\nu+1$. 
\end{prop}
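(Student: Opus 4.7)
The plan is to apply Itô's formula iteratively to $f(X^x_s)$, then use Fubini to rearrange the resulting iterated integrals into a Taylor-like expansion with integral remainder, proceeding by induction on $\nu$.

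For the base case, Itô's formula gives
\begin{equation*}
f(X^x_t) = f(x) + \int_0^t \mcL f(X^x_s)\,ds + \int_0^t \nabla f(X^x_s)^\top \sigma(X^x_s)\,dW_s.
\end{equation*}
The stochastic integral is a true martingale: under the sub-linearity of $\sigma$, standard Gronwall estimates yield $\E[\sup_{s\in[0,T]}|X^x_s|^p]\le K_p(1+|x|^p)$ for every $p\ge 1$, and combined with the polynomial growth of $\nabla f$ this implies $\E\!\int_0^t |\nabla f(X^x_s)|^2 |\sigma(X^x_s)|^2 ds<\infty$. Taking expectations therefore yields
\begin{equation*}
\E[f(X^x_t)] = f(x) + \int_0^t \E[\mcL f(X^x_s)]\,ds.
\end{equation*}

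For the induction step I would assume the formula holds at rank $\nu-1$ and apply the base-case identity to the function $\mcL^\nu f$. A key point here is that $\mcL$ preserves $\mcC^\infty_\pol(\D)$: since the coefficients $b_i$ and $(\sigma\sigma^\top)_{ij}$ are in $\mcC^\infty_\pol(\D)$ by \eqref{cpol_coeff_hp_intro}, and products and sums of polynomially bounded smooth functions are polynomially bounded, one can explicitly construct, by bookkeeping on multi-indices, a good sequence $(C^{(\nu)}_\alpha,e^{(\nu)}_\alpha)$ for $\mcL^\nu f$ depending only on $\nu$, $d$, the good sequence of $f$, and a good sequence of the coefficients of $\mcL$. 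Substituting $\E[\mcL^\nu f(X^x_s)]=\mcL^\nu f(x)+\int_0^s \E[\mcL^{\nu+1}f(X^x_u)]du$ into the rank-$(\nu-1)$ remainder, the deterministic part contributes $\tfrac{t^\nu}{\nu!}\mcL^\nu f(x)$ (by computing $\int_0^t \frac{(t-s)^{\nu-1}}{(\nu-1)!}ds$), while a Fubini swap yields
\begin{equation*}
\int_0^t\!\!\int_0^s \frac{(t-s)^{\nu-1}}{(\nu-1)!}\E[\mcL^{\nu+1}f(X^x_u)]\,du\,ds
=\int_0^t \frac{(t-u)^\nu}{\nu!}\E[\mcL^{\nu+1}f(X^x_u)]\,du,
\end{equation*}
which closes the induction.

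It remains to check that the map $(f,t,x)\mapsto \int_0^t \frac{(t-s)^\nu}{\nu!}\E[\mcL^{\nu+1}f(X^x_s)]\,ds$ is a remainder of order $\nu+1$. Using the good sequence for $\mcL^{\nu+1}f$, there exist $C_{\nu+1},E_{\nu+1}$ such that $|\mcL^{\nu+1}f(y)|\le C_{\nu+1}(1+|y|^{E_{\nu+1}})$ on $\D$, and combined with the polynomial moment bound $\E[|X^x_s|^{E_{\nu+1}}]\le K(1+|x|^{E_{\nu+1}})$ for $s\in[0,T]$ one obtains
\begin{equation*}
\left|\int_0^t \frac{(t-s)^\nu}{\nu!}\E[\mcL^{\nu+1}f(X^x_s)]\,ds\right|\le C\, t^{\nu+1}(1+|x|^E),
\end{equation*}
which matches Definition \ref{def_remainder_and_order_scheme}.

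The main obstacle is not the induction itself, which is an essentially algebraic manipulation, but the technical bookkeeping needed to show that $\mcL^k f\in \mcC^\infty_\pol(\D)$ with a good sequence depending only on the data, so that the constants $C,E$ in the remainder estimate are uniform in $t\in(0,\eta)$ and in $x\in\D$. The polynomial moment bound on $(X^x_s)_{s\in[0,T]}$ together with the justification that the Itô martingale is a true martingale (rather than a mere local martingale) are the other points that require care but are standard consequences of the sub-linearity assumption built into Definition \ref{def_resquired_assumptions_L}.
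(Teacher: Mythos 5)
Your proposal is correct and follows essentially the same route the paper intends: the paper states (and uses in the proof of Proposition~\ref{LCIR_Expansion}) that the expansion follows by iterating Itô's formula, using the polynomial moment bounds that come from the sub-linear growth of the coefficients, and a Fubini/change-of-variable step to collapse the iterated time integrals into the $(t-s)^\nu/\nu!$ kernel. Your care about (i) the stochastic integral being a true martingale via the moment bound and (ii) $\mcL$ preserving $\mcC^\infty_\pol(\D)$ with a good sequence depending only on the data is exactly the bookkeeping that makes the constants in the remainder estimate depend only on the good sequence of $f$, as Definition~\ref{def_remainder_and_order_scheme} requires.
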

This last result implies one key equivalence that will be crucial in the weak error analysis in Chapter \ref{Chapter_CIR} and \ref{Chapter_Heston}.  
\begin{remark}
     $\hX^x_t$ is a potential weak $\nu$-th order scheme for $\mcL$ if, and only if 
    $$
    (f,t,x)\mapsto\E[f(\hX^x_t)]-\sum_{l=0}^\nu \frac{t^l}{l!}\mcL^l f(x) \text{ is a remainder of order } \nu+1.
    $$
\end{remark}

Alfonsi \cite{AA_book} proved the following result that is of key importance to build high order weak schemes.

\begin{prop}\label{scheme_composition_intro}
    Let $\mcL_1$ and $\mcL_2$ be two operators that satisfy the required assumptions on $\D$, and $\hX^1$ and $\hX^2$ be respectively potential weak second order discretization schemes on $\D$ for $\mcL_1$ and $\mcL_2$. Then, for $\lambda_1, \lambda_2>0$ and $f\in \mcC^\infty_\pol(\D)$ one has
    \begin{equation}\label{composition_expansion}
        \E\left[f(\hX^{2,\hX^{1,x}_{\lambda_1t}}_{\lambda_2t})\right] =\sum_{l_1+l_2\le 2} \frac{\lambda_1^{l_1} \lambda_2^{l_2}}{l_1!l_2!}t^{l_1+l_2}\mcL_1^{l_1}\mcL_2^{l_2} f(x) +Rf(t,x)
    \end{equation}
    where $Rf(t,x)$ is a remainder of order 3.
\end{prop}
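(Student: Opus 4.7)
The plan is to iterate the characterization of a potential weak second-order scheme given in the remark following Proposition~\ref{diffusion_expansion_wea_Abook}. Conditioning on $\hX^{1,x}_{\lambda_1 t}$ (with $\hX^1$ and $\hX^2$ built from independent sources of randomness, as is standard for composition of schemes), I would first write
\[
\E\!\left[f\!\left(\hX^{2,\hX^{1,x}_{\lambda_1 t}}_{\lambda_2 t}\right)\right] = \E\!\left[g_t\!\left(\hX^{1,x}_{\lambda_1 t}\right)\right], \qquad g_t(y):=\E\!\left[f\!\left(\hX^{2,y}_{\lambda_2 t}\right)\right].
\]
Since $f\in\mcC^\infty_{\pol}(\D)$ and the required assumptions together with \eqref{cpol_coeff_hp_intro} force $\mcL_2$ to preserve $\mcC^\infty_{\pol}(\D)$ (by the chain and product rules, since polynomial-growth derivatives are stable under multiplication by $b$ and $\sigma\sigma^\top$), each iterate $\mcL_2^{l_2}f$ lies in $\mcC^\infty_{\pol}(\D)$. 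Applying the characterization of a potential weak second-order scheme to $\hX^2$ yields
\[
g_t(y) = \sum_{l_2=0}^{2}\frac{(\lambda_2 t)^{l_2}}{l_2!}\mcL_2^{l_2}f(y) + R_2 f(\lambda_2 t, y),
\]
where $R_2 f$ is a remainder of order $3$.

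Next, I would apply the same characterization to $\hX^1$ for each of the three functions $\mcL_2^{l_2}f$, $l_2\in\{0,1,2\}$, obtaining
\[
\E\!\left[\mcL_2^{l_2}f\!\left(\hX^{1,x}_{\lambda_1 t}\right)\right] = \sum_{l_1=0}^{2}\frac{(\lambda_1 t)^{l_1}}{l_1!}\mcL_1^{l_1}\mcL_2^{l_2}f(x) + R_{1,l_2}(\lambda_1 t, x),
\]
with remainders $R_{1,l_2}$ of order $3$. Combining these identities produces a double sum over $(l_1,l_2)\in\{0,1,2\}^2$. The terms with $l_1+l_2\le 2$ are precisely the main sum of \eqref{composition_expansion}; the remaining indices with $l_1+l_2\in\{3,4\}$ contribute factors $t^{l_1+l_2}\ge t^3$ multiplied by $\mcL_1^{l_1}\mcL_2^{l_2}f(x)$, which has polynomial growth in $x$ by the same stability argument, so these extra terms are themselves remainders of order $3$.

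It remains to absorb the two leftover error pieces, namely $\sum_{l_2=0}^{2}\frac{(\lambda_2 t)^{l_2}}{l_2!}R_{1,l_2}(\lambda_1 t, x)$ and $\E\!\left[R_2 f(\lambda_2 t, \hX^{1,x}_{\lambda_1 t})\right]$, into a single remainder of order $3$. The first piece is immediate once one tracks a good sequence for each $\mcL_2^{l_2}f$ in terms of the one for $f$. The main obstacle is the second piece: by Definition~\ref{def_remainder_and_order_scheme} one only controls $|R_2 f(\lambda_2 t, y)|\le C t^3(1+|y|^E)$ for small $t$, so I need a polynomial-in-$x$ moment estimate of the form $\E[|\hX^{1,x}_{\lambda_1 t}|^E]\le C'(1+|x|^{E'})$ uniform for small $t$. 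I would secure it by applying the potential weak second-order property of $\hX^1$ to a smooth $\mcC^\infty_{\pol}(\D)$ majorant of $y\mapsto 1+|y|^E$ and combining with Proposition~\ref{diffusion_expansion_wea_Abook} applied to $X^x$, whose polynomial moments are controlled by the linear-growth hypothesis on $b,\sigma$. Assembling the three pieces yields \eqref{composition_expansion} with $Rf(t,x)$ a remainder of order $3$.
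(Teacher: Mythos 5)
Your proposal is correct and is essentially the same argument as the one used in this thesis (and in Alfonsi's book); the paper states Proposition~\ref{scheme_composition_intro} by reference, but the proof it does give of the closely analogous Lemma~\ref{lem_compo} in Chapter~\ref{Chapter_Heston} follows exactly your strategy: expand the inner operator to second order, apply the outer operator to each term of that expansion, expand the outer operator again, and absorb the cross terms $t^{l_1+l_2}$ with $l_1+l_2\ge 3$ and the two residual error pieces into a single remainder of order~3.

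The only place where your argument is routed a bit differently than the paper's is the control of the outer operator acting on the inner remainder, i.e.\ $\E\big[R_2 f(\lambda_2 t,\hX^{1,x}_{\lambda_1 t})\big]$. In Lemma~\ref{lem_compo} this is dispatched by a norm-stability hypothesis on the approximating operators ($\|\hat P^i_t g\|_{k,L}\le C\|g\|_{k,L}$, which is the $\bar q=0$ case of~\eqref{assump_Phat}); you instead bootstrap the needed polynomial moment bound for $\hX^{1,x}_{\lambda_1 t}$ out of the potential second-order property itself, testing it against a smooth polynomial-growth majorant of $y\mapsto 1+|y|^E$. That device works: by the Remark after Proposition~\ref{diffusion_expansion_wea_Abook}, $\E[\phi(\hX^{1,x}_t)]=\sum_{l\le 2}\frac{t^l}{l!}\mcL_1^l\phi(x)+R\phi(t,x)$ with $R\phi$ a remainder of order~3, and both $\mcL_1^l\phi$ and the remainder have polynomial growth, so the moment estimate follows directly (you don't even strictly need the detour through the diffusion $X^{x}$ moments, though that route is also fine). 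The two treatments are morally equivalent—your bootstrap is what the paper's norm framework packages more abstractly—and both deliver constants $(C,E,\eta)$ depending only on a good sequence for $f$, as Definition~\ref{def_remainder_and_order_scheme} requires. So I see no gap.
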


We define the ordered composition of $k+1$ functions $\{f_0,f_1\ldots,f_k\}$ as follows
$$\bigcomp_{i=0}^{k}f_i=f_k\circ f_{k-1}\circ\cdots\circ f_0.$$

It is possible to prove the following result using the expansion \eqref{composition_expansion}.
\begin{corollary}
    Suppose the same hypotheses as in Proposition \ref{scheme_composition_intro} and 
    let $B$ be an independent Bernoulli random variable of parameter 1/2. Then, the two following schemes are potential second-order schemes for $\mcL_1+\mcL_2$
    $$
    \hX^{B,x}_t=B \hX^{2,\hX^{1,x}_{t}}_{t} + (1-B) \hX^{1,\hX^{2,x}_{t}}_{t}, \qquad \hX^{x}_t= \hX^{1,\hX^{2,\hX^{1,x}_{t/2}}_{t}}_{t/2}.
    $$
    More generally, let $k\in\N^*$, $\{\mcL_0,\mcL_1,\ldots,\mcL_k\}$ be operators that satisfy the required assumptions on $\D$, and $\{\hX^0,\hX^1,\ldots,\hX^k\}$ be respectively potential weak second order discretization schemes on $\D$ for them.
    Then
    \begin{align}
        \hX^{B,x}_t &=B \left(\bigcomp_{i=k}^{0} \hX^{i}_{t}(\cdot)\right) (x) +(1-B)\left(\bigcomp_{i=0}^{k}\hX^{i}_{t}(\cdot) \right) (x), \label{rand_leapfro_gen_scheme}\\
        \hX^{x}_t&=\left(\bigcomp_{i=k}^{1}\hX^{i}_{t/2}(\cdot)\right)\circ  \hX^{0}_{t}(\cdot) \circ \left(\bigcomp_{i=1}^{k}\hX^{i}_{t/2}(\cdot)\right) (x), \label{strang_splitting_gen_scheme}
    \end{align}
    are second order schemes for the operator $\mcL= \sum_{i=0}^k \mcL_i$. We call the first one the randomized leapfrog splitting scheme and the second one the Strang splitting scheme.
\end{corollary}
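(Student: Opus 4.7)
The plan is to exploit the equivalence stated in the Remark preceding Proposition \ref{scheme_composition_intro}: a scheme $\hat Y_t$ is a potential weak second-order scheme for $\mcL$ if and only if $(f,t,x)\mapsto \E[f(\hat Y_t)]-\sum_{l=0}^2 \tfrac{t^l}{l!}\mcL^l f(x)$ is a remainder of order $3$. Thus it suffices to Taylor-expand $\E[f(\hX^{B,x}_t)]$ and $\E[f(\hX^x_t)]$ up to order $t^2$ and verify that the polynomial parts coincide with $f+t\mcL f+\tfrac{t^2}{2}\mcL^2 f$ for $\mcL=\sum_{i=0}^k\mcL_i$, the residual being a remainder of order $3$.

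First, I would extend Proposition \ref{scheme_composition_intro} by induction on the number of schemes to obtain the multi-composition expansion
\[
\E\bigl[f\bigl(\hX^m_{\lambda_m t}\circ\cdots\circ\hX^0_{\lambda_0 t}(x)\bigr)\bigr]=\sum_{|\vec l|\le 2}\frac{t^{|\vec l|}\,\prod_{j=0}^m\lambda_j^{l_j}}{\prod_{j=0}^m l_j!}\,\mcL_0^{l_0}\mcL_1^{l_1}\cdots\mcL_m^{l_m}f(x)+Rf(t,x),
\]
valid for any potential weak second-order schemes $\hX^0,\ldots,\hX^m$ and positive weights $\lambda_0,\ldots,\lambda_m$, with the convention that $\mcL_m^{l_m}$ (the generator of the last-applied scheme) is innermost on $f$ and $R$ is a remainder of order $3$. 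The inductive step splits the composition as $\hX^m$ applied to the sub-composition of $\hX^0,\ldots,\hX^{m-1}$, expands the $\hX^m$ expectation, and then applies the induction hypothesis to the outer sub-composition, using that remainders propagate through smooth applications of the schemes.

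For the randomized leapfrog, apply the above with all $\lambda_j=1$ to both orderings. Each ordering yields the same constant term $f$, the same first-order term $t\mcL f$ and the same diagonal second-order term $\tfrac{t^2}{2}\sum_{i=0}^k\mcL_i^2 f$. The off-diagonal second-order pieces differ: the forward composition produces $t^2\sum_{i<j}\mcL_i\mcL_j f$ while the reverse produces $t^2\sum_{i<j}\mcL_j\mcL_i f$. Averaging via the Bernoulli $B$ symmetrizes these into $\tfrac{t^2}{2}\sum_{i<j}(\mcL_i\mcL_j+\mcL_j\mcL_i)f$, so that the full polynomial part becomes $f+t\mcL f+\tfrac{t^2}{2}\bigl(\sum_i\mcL_i\bigr)^2 f=f+t\mcL f+\tfrac{t^2}{2}\mcL^2 f$; the averaged remainders remain a remainder of order $3$, and the Remark closes the case. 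For the Strang splitting, apply the multi-composition expansion to the $2k+1$-fold ordered list $(\hX^1,\ldots,\hX^k,\hX^0,\hX^k,\ldots,\hX^1)$ with $\lambda=1/2$ everywhere except at the central slot where $\lambda=1$. The first-order part is immediate: $\mcL_0$ contributes $t$, and each $\mcL_i$ with $i\ge 1$ occupies the two symmetric positions $i$ and $2k+2-i$ with weight $t/2$ each, summing to $t\mcL f$. For the second-order part I would compute the coefficient of each product $\mcL_p\mcL_q f$ by enumerating all admissible position pairs that realize the left-to-right order $p,q$; a short case analysis (both indices non-central and equal, both non-central and distinct, one central, both central) shows that the weighted contributions $(1/2)^2/2$, $(1/2)^2$, $(1)(1/2)$ and $(1)^2/2$ combine to give exactly $1/2$ in every case, so that the second-order polynomial part equals $\tfrac{t^2}{2}\sum_{p,q}\mcL_p\mcL_q f=\tfrac{t^2}{2}\mcL^2 f$.

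The main obstacle is the combinatorial bookkeeping for the Strang scheme: because each non-central generator appears at the two positions symmetric about the centre, one must carefully enumerate all ordered pairs of positions realizing $\mcL_p\mcL_q$ and check that their weighted count always yields $1/2$, so that the second-order polynomial part collapses to $\tfrac{t^2}{2}\mcL^2 f$. Once this identity is established, both conclusions follow at once from Proposition \ref{scheme_composition_intro}, its multi-composition extension, and the Remark.
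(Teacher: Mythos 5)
Your proposal is correct and follows essentially the route the paper indicates: extend the two-scheme expansion of Proposition~\ref{scheme_composition_intro} to an arbitrary ordered composition by induction (tracking that moments stay polynomially bounded so remainders compose), then compare the polynomial part with $f+t\mcL f+\tfrac{t^2}{2}\mcL^2 f$ for $\mcL=\sum_i\mcL_i$ and invoke the Remark following Proposition~\ref{diffusion_expansion_wea_Abook}. The combinatorial verification for the Strang scheme is the only delicate point, and your analysis is sound: for each ordered pair $(\mcL_p,\mcL_q)$ the weighted positions always sum to $\tfrac12$, whether via $2\cdot\tfrac18+\tfrac14$ (same non-central index), $\tfrac14+\tfrac14$ (distinct non-central), $\tfrac12$ (one central), or $\tfrac12$ (both central). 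A minor presentational note: the list ``$(1/2)^2/2$, $(1/2)^2$, $(1)(1/2)$, $(1)^2/2$'' reads as the per-position-pair weights rather than the per-case totals, so the phrase ``combine to give exactly $1/2$ in every case'' is doing some implicit work; spelling out which pairs arise in each case (as you do implicitly) would make the write-up airtight. Otherwise this matches the proof the paper gestures at via \cite{AA_book}.
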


\subsubsection{The Ninomiya-Victoir scheme}
The theoretical tools introduced above allow us to demonstrate the convergence (and rate of speed) of the scheme proposed by Ninomiya and Victoir in \cite{NV}. The strength of this scheme is that it reduces the problem to the numerical approximation of Ordinary Differential Equations (ODEs). We consider an operator $\mcL$ that satisfies the required assumptions on $\D$, so it is defined by the formula \eqref{gen_inf_intro} for smooth coefficients $b$ and $\sigma\sigma^\top$. We define the following operators 
\begin{align*}
    V_0f(x)&=\sum_{i=1}^d b_i(x)\partial_i f(x) - \frac{1}{2}\sum_{k=1}^{d_W} \sum_{i,j=1}^d \partial_j \sigma_{i,k}(x)\sigma_{j,k}(x)\partial_i f(x)\\
    V_kf(x) &=\sum_{i=1}^d \sigma_{i,k}(x)\partial_i f(x), \quad\text{for}\quad k=1,\ldots,d_W.
\end{align*}
Then, one has the following identity 
$$
\mcL = \sum_{k=0}^{d_W}\mcL_k,
$$
where 
\begin{align*}
    &\mcL_k=\frac{1}{2} V^2_k= \sum_{i,j=1}^{d}\sigma_{j,k}(x)(\partial_j \sigma_{i,k}(x)\partial_i f(x)+\sigma_{i,k}(x)\partial_j\partial_if(x)) \quad\text{ for }k=1,\ldots,d_W, \\
    &\mcL_0=V_0,
\end{align*}
are well-defined and satisfy the required assumptions on $\D$. For all $k\in\{0,\ldots,d_W\}$, we call $v_k$ the vector field that verifies
$$
    V_k f(x) = v_k(x).\nabla f(x).
$$
We then consider the following ODEs:

\begin{align*}
    \partial_t X_0(t,x) &= v_0(X_0(t,x)), \quad t\ge 0,\, \qquad X_0(0,x)=x\in\D, \\
    \partial_t X_k(t,x) &= v_k(X_k(t,x)), \quad t\in \R, \qquad X_k(0,x)=x\in\D.
\end{align*}
One has the following result.
\begin{theorem}
    Let $\hX^k_t(x)=X_k(\sqrt{t}N,x)$, $N\sim \mcN(0,1)$ for all $k\in\{1,\ldots,d_W\}$ and $\hX^0_t(x)=X_0(t,x)$. 
    Under the above framework,  $\hX^{B,x}_t$ in \eqref{rand_leapfro_gen_scheme} and $\hX^{x}_t$  in \eqref{strang_splitting_gen_scheme} are potential second-order scheme for the operator $\mcL= \sum_{k=0}^{d_W}\mcL_k$.
\end{theorem}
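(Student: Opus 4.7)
The strategy is to reduce the claim to the composition corollary following Proposition \ref{scheme_composition_intro}: once I show that each building block $\hX^k_t$ is a potential weak second-order scheme for $\mcL_k$, the corollary applied to $\{\mcL_0, \mcL_1, \ldots, \mcL_{d_W}\}$ immediately yields that both $\hX^{B,x}_t$ in \eqref{rand_leapfro_gen_scheme} and $\hX^x_t$ in \eqref{strang_splitting_gen_scheme} are potential second-order schemes for $\mcL=\sum_{k=0}^{d_W}\mcL_k$. Hypotheses on the $\mcL_k$'s are already stated to hold in the excerpt, so the main task is to verify the second-order property of each elementary scheme, using the equivalent characterization stated in the Remark: check that $(f,t,x)\mapsto \E[f(\hX^k_t(x))]-\sum_{l=0}^{2}\frac{t^l}{l!}\mcL_k^l f(x)$ is a remainder of order 3.

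For $k=0$, $\hX^0_t(x)=X_0(t,x)$ is the exact flow of the ODE $\dot{y}=v_0(y)$. Iterating the chain rule, for $f\in\mcC^\infty_\pol(\D)$ the map $t\mapsto f(X_0(t,x))$ is smooth and its $\ell$-th derivative at $t$ equals $V_0^\ell f(X_0(t,x))=\mcL_0^\ell f(X_0(t,x))$. A Taylor expansion with integral remainder gives
\begin{equation*}
f(X_0(t,x))=\sum_{l=0}^{2}\frac{t^l}{l!}\mcL_0^l f(x)+\int_0^t \frac{(t-s)^2}{2}\mcL_0^3 f(X_0(s,x))\,\rmd s,
\end{equation*}
and since $v_0\in\mcC^\infty_\pol(\D)$ the flow $X_0(\cdot,x)$ has polynomial growth in $x$ uniformly on compact time intervals, so $\mcL_0^3 f\in\mcC^\infty_\pol(\D)$ composed with this flow is bounded by $C(1+|x|^E)$ for a $C,E$ depending only on a good sequence of $f$, proving the order-3 remainder property.

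For $k\in\{1,\ldots,d_W\}$, set $\varphi(s):=f(X_k(s,x))$ which is smooth in $s\in\R$ with $\varphi^{(\ell)}(s)=V_k^\ell f(X_k(s,x))$. Taylor expansion at $s=0$ to order 5, followed by the substitution $s=\sqrt{t}N$ and taking expectation, gives
\begin{equation*}
\E[f(\hX^k_t(x))]=\sum_{\ell=0}^{5}\frac{t^{\ell/2}\E[N^\ell]}{\ell!}V_k^\ell f(x)+\E\!\left[\int_0^{\sqrt{t}N}\frac{(\sqrt{t}N-s)^5}{5!}V_k^6 f(X_k(s,x))\,\rmd s\right].
\end{equation*}
Odd Gaussian moments vanish, $\E[N^2]=1$ and $\E[N^4]=3$, so the polynomial part equals $f(x)+\frac{t}{2}V_k^2 f(x)+\frac{t^2}{8}V_k^4 f(x)$; but $\mcL_k=\tfrac12 V_k^2$ gives exactly $\sum_{l=0}^{2}\frac{t^l}{l!}\mcL_k^l f(x)=f(x)+\frac{t}{2}V_k^2 f(x)+\frac{t^2}{8}V_k^4 f(x)$. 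Hence the difference is the explicit integral remainder above, whose absolute value is bounded by $\frac{t^3}{6!}\E[|N|^6]\sup_{|s|\le \sqrt{t}|N|}|V_k^6 f(X_k(s,x))|$; here is the main technical step, which I expect to be the principal obstacle: one has to show that after integrating in $N$ this bound is of the form $Ct^3(1+|x|^E)$, uniformly for $t\in(0,\eta)$. Using that $\sigma_{\cdot,k}\in\mcC^\infty_\pol$ implies $v_k\in\mcC^\infty_\pol$, Gronwall-type estimates on the ODE give $|X_k(s,x)|\le e^{C|s|}(1+|x|)$; combined with $V_k^6 f\in\mcC^\infty_\pol(\D)$ and the exponential moments of $N$, this yields the required polynomial bound.

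Once each $\hX^k_t$ is confirmed to be a potential weak second-order scheme for $\mcL_k$, the corollary of Proposition \ref{scheme_composition_intro} (applied inductively on $k$) concludes the proof for both the randomized leapfrog and the Strang splittings. The only subtle point to double-check is that the sub-linearity/confinement properties needed to apply the composition result are preserved by all of the $\hX^k_t$: for $k\ge 1$ this follows because $X_k(\cdot,x)$ is the flow of a vector field tangent to $\D$ (as guaranteed by the fact that $\mcL_k$ satisfies the required assumptions on $\D$), and for $k=0$ this is the same argument applied to $v_0$.
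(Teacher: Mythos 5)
Your proposal is correct and follows the intended route behind the theorem: verify that each $\hX^k_t$ is a potential weak second-order scheme for $\mcL_k$ and then invoke the composition corollary after Proposition \ref{scheme_composition_intro}. The paper itself does not write out a proof (it states the result as background, citing Ninomiya–Victoir and Alfonsi's book), so there is nothing in the text to compare against line by line, but your argument is the standard one and is sound.

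There is a shortcut worth noting: the Taylor verification of the building blocks is more than needed, because each $\hX^k_t$ is in fact an \emph{exact} simulation scheme for the SDE generated by $\mcL_k$, and by the remark following Proposition \ref{diffusion_expansion_wea_Abook} an exact scheme is automatically a potential scheme of any order. For $k=0$, $\hX^0_t(x)=X_0(t,x)$ is the exact flow of $dZ=v_0(Z)\,dt$, whose generator is $V_0=\mcL_0$. For $k\ge 1$, Itô's formula applied to $f(X_k(W_t,x))$ gives $df(X_k(W_t,x))=V_kf(X_k(W_t,x))\,dW_t+\tfrac12V_k^2f(X_k(W_t,x))\,dt$, so $X_k(W_\cdot,x)$ is the exact diffusion for $\mcL_k=\tfrac12V_k^2$, and since $W_t\overset{d}{=}\sqrt{t}N$ the random variable $\hX^k_t(x)=X_k(\sqrt{t}N,x)$ has exactly the law of the true solution at time $t$. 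This observation replaces the fifth-order Taylor expansion and the Gronwall/Gaussian-moment estimate entirely. Finally, a small slip in your write-up: the quantity $\sup_{|s|\le\sqrt{t}|N|}|V_k^6f(X_k(s,x))|$ depends on $N$, so the displayed bound $\frac{t^3}{6!}\,\E[|N|^6]\sup_{|s|\le\sqrt{t}|N|}(\cdots)$ should read $\frac{t^3}{6!}\,\E\big[|N|^6\sup_{|s|\le\sqrt{t}|N|}(\cdots)\big]$; the subsequent estimate $|X_k(s,x)|\le e^{C|s|}(1+|x|)$ together with finiteness of $\E[|N|^6e^{c|N|}]$ for all $c$ then carries through unchanged.
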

Here, we give a remarkable example of the CIR model, for which the standard Euler scheme is not defined. 
\begin{ex}
    Consider the process
    \begin{equation}
        Y^y_t = (a-bY^y_t) dt + \sigma \sqrt{Y^y_t}dW_t, \quad Y^y_0=y\ge0,
    \end{equation}
    whose infinitesimal generator is given by 
    $$
        \mcL = \frac{1}{2}\sigma^2y\partial_y^2 + (a-by)\partial_y,\quad f\in\mcC^2,y\ge0.
    $$
    Following the Ninomiya-Victoir splitting, one has $\mcL=V_0+\frac{1}{2}V_1^2$ with
    \begin{equation}
        V_0f(y)=\Big(a-\frac{\sigma^2}{2}-by\Big)f'(y) \text{ and } V_1f(y) = \sigma\sqrt{y}f'(y),
    \end{equation}
    so, the two following ODEs have to be solved
    \begin{align*}
        \partial_t Y_0(t,y)&= (a-bY_0(t,y)), \hspace{20pt}t\ge 0, \hspace{1pt}\quad Y_0(0,y)=y\ge0,\\
        \partial_t\tilde{Y}_1(t,y) &= \sigma\sqrt{\tilde{Y}_1(t,y)},  \hspace{34pt}t\in \R, \quad Y_k(0,y)=y\ge0.
    \end{align*}
    The solutions are 
    \begin{align*}
        Y_0(t,y) &= e^{-bt}y +\psi_b(t)(a-\sigma^2/4), \quad \psi_b(t) = \frac{1-e^{-bt}}{b},  \\
        \tilde{Y}_1(t,y) &=((\sqrt{y}+t\sigma/2)_+)^2,
    \end{align*}
    but one can prove that substituting $\tilde{Y}_1$ with 
    \begin{equation}\label{Y1_intro}
        Y_1(t,y)=(\sqrt{y}+t\sigma/2)^2,
    \end{equation}
     gives the same development of the functional  $f\mapsto\E[f(\tilde{Y}_1(\sqrt{t}N,\cdot))]$ in terms of the operator $\frac{1}{2}V_1^2$.
    So one get using formulas \eqref{rand_leapfro_gen_scheme} and \eqref{strang_splitting_gen_scheme}
    \begin{align}
        \hY_t^{B,y} =& \,B\bigg(e^{-bt}\Big(\sqrt{y}+\frac{\sigma\sqrt{t}}{2}N_1\Big)^2+\psi_b(t)\Big(a-\frac{\sigma^2}{4}\Big)\bigg) \nonumber \\
        &+(1-B)\bigg(\sqrt{e^{-bt}y +\psi_b(t)(a-\frac{\sigma^2}{4})}+\frac{\sigma\sqrt{t}}{2}N_2\bigg)^2,  \label{rand_leapfro_gen_scheme_CIR} \\
        \hY_t^{y} =&\,e^{-bt/2}\bigg(\sqrt{e^{-bt/2}y +\psi_b(t/2)\Big(a-\frac{\sigma^2}{4}\Big)}+\frac{\sigma\sqrt{t}}{2}N\bigg)^2+\psi_b(t/2)\Big(a-\frac{\sigma^2}{4}\Big), \label{strang_splitting_scheme_CIR}
    \end{align}
    where we have exchanged the role of $Y_0$ and $Y_1$ in the Strang splitting to reduce the number of standard Gaussian random variables from 2 to 1. We remark that these schemes are defined only if $\sigma^2\le 4a$. 
\end{ex}

\subsection{Boosting techniques}
In the previous subsection, we saw how it is possible to construct weak approximations using schemes. Here, we show how it is possible to construct higher-order approximations using multiple times the same schemes calculated on different grids, either deterministic or random. We describe two similar but different approaches: Richardson-Romberg extrapolation and random grids techniques.

\subsubsection{Richardson-Romberg extrapolation}
Richardson-Romberg extrapolation, originally developed to improve the accuracy of numerical integration, can be adapted to enhance the convergence of approximation schemes. 
The application to weak order schemes has been introduced in the seminal paper \cite{TT}.
The technique consists of mixing schemes that evolve on different time-step grids. Let $T>0$, $\alpha,n\in\N^*$, and $\hX^{n,x}$ be a weak $\alpha$-th order scheme that runs on the grid $\Pi^n=\{t^n_k=kT/n\mid k=0,\ldots,n\}$ of step $T/n$. This approach relies on the existence of a development for the error $\E[f(\hX^{n,x}_T)]-\E[f(X^x_T)]$. If one can prove the following development
\begin{equation}\label{develop_order_1_romb}
    \E[f(\hX^{n,x}_T)]-\E[f(X^x_T)] = \frac{c_1}{n^\alpha} + O\left( \frac{1}{n^{\alpha+1}} \right)
\end{equation}
then there exist weights $w_1(\alpha)=1 -2^\alpha/(2^\alpha-1)$ and $w_2(\alpha)= 2^\alpha/(2^\alpha-1)$, $i\in{1,2}$ and one has
$$
    \E[w_1(\alpha) f(\hX^{n,x}_T) +w_2(\alpha) f(\hX^{2n,x}_T)] - \E[f(X^x_T)] = O\left( \frac{1}{n^{\alpha+1}} \right),
$$
so $\E[w_1(\alpha) f(\hX^{n,x}_T) +w_2(\alpha) f(\hX^{2n,x}_T)]$ is a weak approximation of order at least $\alpha+1$.
Under regularity of the drift and diffusion coefficients of the SDE, Talay and Tubaro proved in \cite{TT} the following expansion results for the Euler scheme \eqref{Euler_scheme}.
\begin{theorem}\label{euler_expansion}
    Let $b,\sigma\in\mcC^\infty_b(\R^d)$ and $f\in\mcC^\infty_\pol(\R^d)$. Let $n\in\N^*$ and $\hX^{n,x}$ be the Euler-Maruyama scheme starting from $x\in\R^d$, then for every integer greater than $\nu\in\N^*$ one has the following development
    \begin{equation}\label{develop_order_m_romb}
        E[f(\hX^{n,x}_T)]-\E[f(X^x_T)] = \sum_{i=1}^{\nu} \frac{c_i}{n^i} + O\left( \frac{1}{n^{\nu+1}} \right).
    \end{equation}
\end{theorem}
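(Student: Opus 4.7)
The plan is the classical Feynman--Kac / backward Kolmogorov approach. Introduce
\begin{equation*}
u(t,x):=\E[f(X^x_{T-t})],\qquad (t,x)\in[0,T]\times\R^d.
\end{equation*}
Under the hypotheses ($b,\sigma\in\mcC^\infty_b(\R^d)$, $f\in\mcC^\infty_\pol(\R^d)$), standard results on differentiation of the stochastic flow $x\mapsto X^x_t$ yield that $u\in\mcC^\infty([0,T]\times\R^d)$, that it solves the backward PDE $\partial_t u+\mcL u=0$ with $u(T,\cdot)=f$, and that every space-time derivative of $u$ has polynomial growth in $x$, uniformly in $t\in[0,T]$. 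Establishing this regularity, together with uniform-in-$n$ polynomial moment bounds for the Euler iterates $(\hX^{n,x}_{t^n_k})_{0\le k\le n}$, is the central technical input.

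Since $u(T,\cdot)=f$ and $u(0,x)=\E[f(X^x_T)]$, I would write the weak error as a telescoping sum over the grid $\Pi^n$:
\begin{equation*}
\E[f(\hX^{n,x}_T)]-\E[f(X^x_T)]=\sum_{k=0}^{n-1}\E\bigl[u(t^n_{k+1},\hX^{n,x}_{t^n_{k+1}})-u(t^n_k,\hX^{n,x}_{t^n_k})\bigr].
\end{equation*}
On $[t^n_k,t^n_{k+1}]$ the scheme \eqref{Euler_scheme} is a Brownian motion with frozen coefficients $b(\hX^{n,x}_{t^n_k})$, $\sigma(\hX^{n,x}_{t^n_k})$, whose generator I denote $L(y)g(x):=\sum_{i}b_i(y)\partial_i g(x)+\frac12\sum_{i,j}(\sigma\sigma^\top(y))_{i,j}\partial_i\partial_j g(x)$. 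Itô's formula applied to $u(s,\hX^{n,x}_s)$ on the sub-interval, combined with the PDE $\partial_t u+\mcL u=0$, gives after taking expectations
\begin{equation*}
\E\bigl[u(t^n_{k+1},\hX^{n,x}_{t^n_{k+1}})-u(t^n_k,\hX^{n,x}_{t^n_k})\bigr]=\E\int_{t^n_k}^{t^n_{k+1}}\bigl[L(\hX^{n,x}_{t^n_k})-\mcL\bigr]u(s,\hX^{n,x}_s)\,ds.
\end{equation*}
The crucial observation is that $L(x)=\mcL$ at $x=y$, so the integrand vanishes at order $0$ in the increment $\hX^{n,x}_s-\hX^{n,x}_{t^n_k}$.

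Next I would perform an iterated Taylor expansion in the two small parameters $s-t^n_k\le T/n$ and $\hX^{n,x}_s-\hX^{n,x}_{t^n_k}$. Using the explicit Gaussian law of the Euler increment, the conditional moments $\E[(\hX^{n,x}_s-\hX^{n,x}_{t^n_k})^\alpha\mid\mscrF_{t^n_k}]$ are computable as exact polynomials in $s-t^n_k$, $b(\hX^{n,x}_{t^n_k})$, $\sigma(\hX^{n,x}_{t^n_k})$. Pushing the expansion to order $\nu+1$ in $s-t^n_k$ (with all $b,\sigma,u$-derivatives of polynomial growth) produces, for each $k$,
\begin{equation*}
\E\bigl[u(t^n_{k+1},\hX^{n,x}_{t^n_{k+1}})-u(t^n_k,\hX^{n,x}_{t^n_k})\bigr]=\sum_{i=2}^{\nu+1}\Bigl(\tfrac{T}{n}\Bigr)^{i}\E\bigl[\Psi_i(t^n_k,\hX^{n,x}_{t^n_k})\bigr]+r_k^{(n)},
\end{equation*}
where the functions $\Psi_i$ are explicit polynomial combinations of derivatives of $b,\sigma,u$, and the remainder satisfies $|r_k^{(n)}|\le C n^{-(\nu+2)}$ uniformly in $k$ by the polynomial-growth estimates and the moment bounds on $\hX^{n,x}$.

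Finally, summing over $k$ turns each leading term into a Riemann sum $\tfrac{T}{n}\sum_{k=0}^{n-1}\E[\Psi_i(t^n_k,\hX^{n,x}_{t^n_k})]$ of size $O(1)$, contributing $O(n^{-(i-1)})$ to the global error. To extract closed constants $c_i$, one recognises each Riemann sum as itself a weak approximation, at step $T/n$, of the smooth deterministic integral $\int_0^T\E[\Psi_i(t,X^x_t)]\,dt$, and applies the expansion inductively (with $\Psi_i$ playing the role of $f$) to replace the Riemann sum by its limit plus corrections of smaller order. Gathering terms of equal power in $1/n$ yields the announced development with constants $c_i$ depending only on $T,b,\sigma,f$. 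The main obstacle, in my view, is not any single step but the regularity/growth bookkeeping across the iteration: controlling polynomial-growth constants of $\Psi_i$ and all their derivatives, while keeping uniform-in-$n$ polynomial moments of the Euler scheme under precise control, is what turns this conceptually transparent plan into a genuinely technical proof.
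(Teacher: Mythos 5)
The paper does not prove this theorem; it is recalled in the introductory chapter with a citation to Talay and Tubaro \cite{TT}, and no proof is given there. Your proposal follows precisely the Talay--Tubaro argument (backward Kolmogorov function $u(t,x)=\E[f(X^x_{T-t})]$, telescoping of the weak error over the time grid, It\^o's formula with the frozen-coefficient generator on each step, Taylor expansion of the one-step error using the Gaussian conditional law of the Euler increment, then iteration to convert the resulting Riemann sums into integrals plus higher-order corrections), so it coincides with the cited source in both strategy and substance.
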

Under the hypotheses of the Theorem \ref{euler_expansion}, it is also possible to build weak approximations of order $\nu$ for every $\nu\in\N^*$ by systematically combining approximations computed with different step sizes of the Euler Scheme. In \cite{Pages}, Pagès has proven that if an expansion such \eqref{develop_order_m_romb} exists, then
\begin{equation}\label{multistep_RR_estimator}
    \E\left[ \sum_{i=1}^{\nu} \frac{(-1)^{\nu-i}i^\nu}{i!(\nu-i)!} f(\hat{X}^{in,x}_{T}) \right] = \E[f(X^x_{T})] + \frac{(-1)^{\nu}c_\nu}{\nu!}  n^{-\nu}+ O\left(\frac{1}{n^{\nu+1}}\right).
\end{equation}
\eqref{multistep_RR_estimator} proves that the linear operator $f\mapsto\E\left[ \sum_{i=1}^{\nu} \frac{(-1)^{\nu-i}i^\nu}{i!(\nu-i)!} f(\hat{X}^{in,\cdot}_{T}) \right]$ is a weak approximation of order $\nu$ of $P_T=\E[f(X^\cdot_{T})]$.  
Furthermore, Pagès has shown that if the Brownian increments for the Euler schemes are consistent, i.e. they are constructed using the same trajectories of the Wiener process, then one has 
\begin{equation*}
    \lim_{n\rightarrow\infty}\Var\left( \sum_{i=1}^{\nu} \frac{(-1)^{\nu-i}i^\nu}{i!(\nu-i)!} f(\hat{X}^{in,x}_{T}) \right) =\Var(f(X^x_T)).
\end{equation*}

\subsubsection{Random grids techniques}
Recently, Alfonsi and Bally \cite{AB} introduced a new technique to approximate general semigroups of linear operators $(P_t, t\ge 0)$ that works under a large framework. This technique permits building high order approximations by an intricate combination of elementary schemes running on random grids. In general, they consider a vector space $\mcF$ with a semigroup of linear operators $(P_t, t\ge 0)$ $P_t:\mcF\rightarrow\mcF$, and they equip the space with a family of seminorms $(\|\cdot\|_k)_{k\in\N}$ such that $\|f\|_k\le\|f\|_{k+1}$, for all $f\in \mcF$. Fixed a time horizon $T>0$, for all $n\in\N^*$ and $l\in\N$ they fix the time steps $h_l=\frac{T}{n^l}$. They consider a family of linear operators $Q_l:\mcF\rightarrow\mcF$, $l\in\N$, and denote, for $j\in\N^*$ $Q_l^{[j]}=Q_l^{[j-1]}Q_l$ as the operator obtained by composition ($Q^{[0]}= Id$). They suppose two conditions are met
\begin{equation}\tag{$\bbar{H_1}$}\label{H1_bar_intro}
    \begin{array}{c}\text{there exists } \alpha>0 \text{ and } \beta\in\N \text{ such that for any }  l,k\in\N, \text{ there exists } C>0,\text{ such that }\\ \|(P_{h_l}-Q_l)f\|_k \leq C\|f\|_{k+\beta} h_l^{1+\alpha} \text{ for all } f\in \mcF,
    \end{array}
\end{equation}

\begin{equation}\tag{$\bbar{H_2}$}\label{H2_bar_intro}
    \begin{array}{c}
        \text{for all } l,k\in \N, \text{ there exists } C>0 \text{ such that } \\ \max_{0\leq j\leq n^l}\|Q^{[j]}_l f\|_k + \sup_{t\leq T}\|P_t f\|_k\leq C\|f\|_k  \text{ for all } f\in \mcF.
      \end{array}
\end{equation}
The first quantifies how $Q_l$ approximates $P_{h_l}$, while the second one is a uniform bound with respect to all the seminorms.
Alfonsi and Bally show how one can construct, by mixing the operators $Q_l$, a linear operator $\cPh^{\nu,n}$ for which there exists $C>0$ and $k\in\N$ such that
\begin{equation}\label{rate_conv_nu_rg_intro}
    \|P_Tf-\cPh^{\nu,n}f\|_0\le C \|f\|_k n^{-\nu\alpha} \text{ for all } f\in\mcF.
\end{equation}
The general construction of $\cPh^{\nu,n}$  is described by trees \Cite[Section 3]{AB} and depends only on the value of the constant $\alpha$ and the desired boost $\nu$. Being quite complex and intricate, we do not reproduce here the construction in all its generality, but we describe only the procedure for $\nu=1,2$.
In the case $\nu=1$, one basically uses the same proof of Talay and Tubaro introduced in \cite{TT} for the weak error of the Euler scheme. Using the semigroup property, one has
\begin{equation}\label{dev_1_intro}
    P_Tf-Q_1^{[n]}f= P_{n h_1}f-Q_1^{[n]}f = \sum_{k=0}^{n-1}P_{(n-(k+1))h_1}(P_{h_1}-Q_1)Q_1^{[k]}f,
\end{equation}
and using twice \eqref{H2_bar_intro} and once \eqref{H1_bar_intro}
\begin{align*}
    \|P_Tf-Q^{[n]}_1f\|_0 & \le\sum_{k=0} ^{n-1} C \|[P_{h_1}-Q_1]Q_1^{[k]}f\|_0 \le \sum_{k=0} ^{n-1} C \|Q_1^{[k]}f\|_{\beta}h_1^{1+\alpha}  \\
                        & \le  C \|f\|_{\beta} n(T/n)^{1+\alpha}= C \|f\|_{\beta} n^{-\alpha},
\end{align*}
that proves $\cPh^{1,n} = Q_1^{[n]}$ is an approximation of order $\alpha$ of $P_T$.
To get the boost of order $\nu=2$, one use the same expansion used on $P_T=P_{nh_1}$ to $P_{(n-(k+1))h_1}$. One gets $P_{(n-(k+1))h_1}-Q_1^{[n-(k+1)]}=\sum_{k'=0}^{n-(k+2)}P_{(n-(k+k'+2))h_1}[P_{h_1}-Q_1]Q_1^{[k']}$ and then expand in~\eqref{dev_1_intro}:
\begin{align}
  P_Tf-Q^{[n]}_1f            & =\sum_{k=0}^{n-1}Q_1^{[n-(k+1)]}[P_{h_1}-Q_1]Q_1^{[k]}f + R(n)f, \label{dev_2_intro}                          \\
  \text{ with } R(n) & =\sum_{k=0}^{n-1}\sum_{k'=0}^{n-(k+2)}P_{(n-(k+k'+2))h_1}[P_{h_1}-Q_1] Q_1^{[k']}[P_{h_1}-Q_1] Q_1^{[k]} \nonumber.
\end{align}
This is not enough to produce our approximations because, in the extra terms on the right-hand side of \eqref{dev_2_intro}, there is $P_{h_1}$. This is solved by using again \eqref{dev_1_intro}, but this time over $P_{nh_2}-Q_2^{[n]}$ using the smaller time step $h_2$. One has 
\begin{align}
    P_Tf-Q^{[n]}_1f            & =\sum_{k=0}^{n-1}Q_1^{[n-(k+1)]}[Q_2^{[n]}-Q_1]Q_1^{[k]}f + \tilde{R}(n)f, \label{dev_2_mod_intro}                          \\
    \text{ with } \tilde{R}(n) & =\sum_{k=0}^{n-1}\sum_{j=0}^{n-1}Q_1^{[n-(k+1)]}P_{(n-(j+1))h_2}[P_{h_2}-Q_2] Q_2^{[j]} Q_1^{[k]} \nonumber\\
    +& \sum_{k=0}^{n-1}\sum_{k'=0}^{n-(k+2)}P_{(n-(k+k'+2))h_1}[P_{h_1}-Q_1] Q_1^{[k']}[P_{h_1}-Q_1] Q_1^{[k]} \nonumber.
\end{align}
As already done for $\nu=1$, using \eqref{H2_bar_intro} and \eqref{H1_bar_intro} over the terms in $\tilde{R}(n)$, one can prove $Q_1^{[n]} + \sum_{k=0}^{n-1}Q_1^{[n-(k+1)]}[Q_2^{[n]}-Q_1]Q_1^{[k]}$ to be a $2\alpha$ approximation of $P_T$. Unfortunately, simulating all the terms in \eqref{dev_2_mod_intro} would require a computational time in $O(n^2)$. Thus, the method would not be more efficient than using directly $Q_2^{[n^2]}$. To address this issue, Alfonsi and Bally introduce random grids and use a random variable $\kappa$ that is uniformly distributed on $\{0,\ldots,n-1\}$. One has
\begin{equation}\label{def_boost2_intro}
    \cPh^{2,n} = Q_1^{[n]} + n\E[Q_1^{[n-(\kappa+1)]}[Q_2^{[n]}-Q_1]Q_1^{[\kappa]}].
\end{equation}
In \cite{AB}, Alfonsi and Bally prove that the assumptions \eqref{H1_bar_intro} and \eqref{H2_bar_intro} are valid for several types of semigroups and approximation schemes that act over $C_b(\R^d)$, being equipped with the family of norms 
$$\|f\|_k =\sum_{0\le|\alpha|\le k}\sup_{x\in\R^d}|\partial^\alpha f(x)|.$$ 
Under regularity assumptions on the drift and diffusion coefficients, the following result was obtained for SDEs and the Euler Scheme.
\begin{prop}
    Consider $(X^x_t, t\ge0)$ the solution of \eqref{sde_autonomous_intro_gen} where $b,\sigma_j\in\mcC^\infty_b(\R^d)$ and the Euler Scheme \eqref{Euler_scheme}. Define $P_Tf(x)=\E[f(X^x_T)]$ and $Q_l=\E[f(X^{n^l,x}_T)]$. Then one has 
    \begin{equation}\tag{$H^E_1$}\label{H1_bar_euler}
        \begin{array}{c}\text{for any }  l,k\in\N, \text{ there exists } C>0,\text{ such that }\\ \|(P_{h_l}-Q_l)f\|_k \leq C\|f\|_{k+4} h_l^{2} \text{ for all } f\in C^\infty_b(\R^d),
        \end{array}
    \end{equation}
    
    \begin{equation}\tag{$H^E_2$}\label{H2_bar_euler}
        \begin{array}{c}
            \text{for all } l,k\in \N, \text{ there exists } C>0 \text{ such that } \\ \max_{0\leq j\leq n^l}\|Q^{[j]}_l f\|_k + \sup_{t\leq T}\|P_t f\|_k\leq C\|f\|_k  \text{ for all } f\in C^\infty_b(\R^d).
          \end{array}
    \end{equation}
\end{prop}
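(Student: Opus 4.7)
I would treat the two conditions separately, handling the discrete uniform estimate last since it is the most delicate.

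\textbf{Short-time bound \eqref{H1_bar_euler}.} Interpreting $Q_l f(x) = \E[f(\hX^x_{h_l})]$ as one Euler step of size $h_l$ starting at $x$, with $\hX^x_{h_l} = x + h_l\, b(x) + \sigma(x)(W_{h_l}-W_0)$, the plan is to follow the classical Talay--Tubaro expansion. Applying Itô's formula twice on $[0,h_l]$ both to $f(X^x_t)$ and to $f(\hX^x_t)$ — the latter being driven by the frozen-coefficient generator $\tilde\mcL_x g(y) = b(x)\cdot\nabla g(y) + \tfrac12\mathrm{tr}(\sigma(x)\sigma(x)^\top D^2 g(y))$ — and using that $\tilde\mcL_x f(x) = \mcL f(x)$, the $O(h_l)$ terms cancel and one is left with
$$
(P_{h_l}-Q_l)f(x) = \int_0^{h_l}\!\!\int_0^s \E\bigl[\mcL^2 f(X^x_u) - \tilde\mcL_x^2 f(\hX^x_u)\bigr]\,du\,ds.
$$
Since $\mcL^2$ and $\tilde\mcL_x^2$ are fourth-order differential operators with coefficients built from $b,\sigma$ and their first two derivatives (all uniformly bounded under the $\mcC^\infty_b$ assumption), a sup-norm bound immediately gives $|(P_{h_l}-Q_l)f(x)|\le C\|f\|_4 h_l^2$. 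To lift this to $\|\cdot\|_k$ for $k\ge 1$, I would differentiate the identity $|\alpha|$-times for $|\alpha|\le k$, applying the Faà di Bruno formula together with uniform moment bounds on the tangent processes $\partial^\gamma_x X^x_u$ and $\partial^\gamma_x \hX^x_u$; the loss of exactly four derivatives on $f$ stems from the two operators $\mcL^2$ and $\tilde\mcL_x^2$ and yields the factor $\|f\|_{k+4}$.

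\textbf{Uniform bound \eqref{H2_bar_euler}.} For the continuous semigroup I would write $\partial^\alpha_x P_t f(x) = \E[\partial^\alpha_x f(X^x_t)]$ and expand via Faà di Bruno as a sum of terms $\E[\partial^\beta f(X^x_t)\prod_i \partial^{\gamma_i}_x X^x_t]$ with $|\beta|\le|\alpha|$. Under $b,\sigma\in\mcC^\infty_b$, each tangent process $\partial^\gamma_x X^x_t$ solves a linear SDE with bounded coefficients and therefore has moments of all orders uniformly in $(t,x)\in[0,T]\times\R^d$; pulling $\|\partial^\beta f\|_\infty\le\|f\|_k$ outside the expectation yields $\sup_{t\le T}\|P_t f\|_k\le C\|f\|_k$. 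For the discrete semigroup I would prove $\|Q_l^{[j]}f\|_k\le C\|f\|_k$ uniformly in $0\le j\le n^l$ by induction on $j$. The one-step Jacobian $\partial_x \hX^x_{h_l} = I + h_l\nabla b(x) + \nabla\sigma(x)\cdot W_{h_l}$ equals the identity plus an $O(\sqrt{h_l})$ random perturbation with bounded moments, while $\partial^\gamma_x \hX^x_{h_l}$ for $|\gamma|\ge 2$ is of order $O(h_l) + O(\sqrt{h_l})\cdot W_{h_l}$. Applying Faà di Bruno to $(Q_l^{[j+1]}f)(x) = \E[(Q_l^{[j]}f)(\hX^x_{h_l})]$ and taking expectations of the Brownian contributions, one would derive a one-step estimate of the form $\|Q_l^{[j+1]}f\|_k \le (1+Ch_l)\|Q_l^{[j]}f\|_k$, so that iteration gives $\|Q_l^{[j]}f\|_k\le e^{CT}\|f\|_k$ for $j\le n^l$ and all $l\in\N$.

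\textbf{Main obstacle.} The delicate point is obtaining the one-step telescoping bound with a $1+O(h_l)$ factor, rather than a naive multiplicative constant that would compound exponentially in $j$. This forces one to exploit that $\E[\partial_x \hX^x_{h_l}] = I + O(h_l)$ and that the stochastic parts of the higher derivatives $\partial^\gamma_x \hX^x_{h_l}$ either have mean zero or contribute only $O(h_l)$ after taking expectation against $(Q_l^{[j]}f)$ and its derivatives. Carefully tracking these cancellations through the Faà di Bruno combinatorics — so that the Brownian noise in the Jacobian does not accumulate — constitutes the main technical work of the proof; once this is in place, a Gronwall-type argument closes the estimate, and constants are uniform in $l$ because only the $\mcC^k_b$-norms of $b$ and $\sigma$ enter.
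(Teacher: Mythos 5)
The paper does not actually prove this Proposition; it simply cites Alfonsi and Bally \cite{AB}, so there is no in-paper argument to compare against. Judging your proposal on its own merits: the treatment of \eqref{H1_bar_euler} via the Talay--Tubaro double It\^o expansion, using $\tilde\mcL_x f(x)=\mcL f(x)$ to cancel the $O(h_l)$ terms and then differentiating $k$ times via Fa\`a di Bruno with uniform moment bounds on tangent processes, is sound and is essentially the standard route; the loss of exactly four derivatives is correctly traced to $\mcL^2$ and $\tilde\mcL_x^2$. Likewise the continuous-semigroup half of \eqref{H2_bar_euler} is fine.

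The discrete half of \eqref{H2_bar_euler}, however, has a genuine gap, and you have in fact put your finger on exactly the place where your own argument fails. The one-step estimate you want, $\|Q_l g\|_k\le(1+Ch_l)\|g\|_k$, does \emph{not} hold. The obstruction is the coupling between $\partial^\beta g(\hX^x_{h_l})$ and the $\nabla\sigma\,W_{h_l}$ parts of the Jacobian: the dangerous contribution is $\E[\partial^\alpha g(\hX^x_{h_l})\,W_{h_l}]$, and the only way to upgrade the naive $O(\sqrt{h_l})\|g\|_k$ bound to $O(h_l)$ is a Gaussian integration by parts, which yields
$$
\E\big[\partial^\alpha g(\hX^x_{h_l})\,W_{h_l}\big]=h_l\,\E\big[\nabla\partial^\alpha g(\hX^x_{h_l})\,\sigma(x)\big],
$$
i.e.\ it costs one derivative. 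So the honest one-step estimate is $\|Q_l g\|_k\le(1+Ch_l)\|g\|_k+Ch_l\|g\|_{k+1}$, and the induction on $j$ does not close with a fixed $k$: after iteration you are left with a $\|f\|_{k+1}$ term, which itself requires $\|f\|_{k+2}$, and so on. The mean-zero cancellation you invoke, $\E[W_{h_l}]=0$, is of no help here precisely because $\partial^\alpha(Q_l^{[j]}f)(\hX^x_{h_l})$ is correlated with $W_{h_l}$, so the cross term does not vanish. The correct route is to bypass the one-step norm iteration entirely: differentiate the full $j$-step chain to get
$$
\partial^\alpha_x Q_l^{[j]}f(x)=\E\Big[\sum_{\text{F.d.B.}}\partial^\beta f(\hX^x_{jh_l})\prod_i\partial^{\gamma_i}_x\hX^x_{jh_l}\Big],
$$
bound $\|\partial^\beta f\|_\infty$ by $\|f\|_k$, and control $\E\big[\prod_i|\partial^{\gamma_i}_x\hX^x_{jh_l}|\big]$ via moment bounds on the discrete tangent processes that are uniform in $j\le n^l$ and in $n$. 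Those moment bounds \emph{are} obtained by a one-step Gronwall argument of the type you describe, and there the $\E[\Delta W_j]=0$ cancellation genuinely applies, because the quantities $\E[\|J_j\|^{2p}\mid\mathcal{F}_j]$ are polynomial in $\Delta W_j$, so the linear term drops and one gets $\E[\|J_{j+1}\|^{2p}\mid\mathcal F_j]\le(1+Ch_l)\|J_j\|^{2p}$ exactly. The cancellation lives at the level of the tangent-flow moments, not at the level of the $C^k_b$ norm of the iterated semigroup.
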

Under the same hypotheses, Alfonsi and Bally also proved that the estimator linked with $\cPh^{\nu,n}$ has a finite variance.

\section{Thesis contribution}
This section summarizes the main results obtained during the thesis.
These results are divided into three chapters related to three articles with Heston's model as a common link. This model describes the evolution of an asset and its volatility  Heston process that are the solutions respectively of $Y_t$ in \eqref{CIR_intro} and the couple $(S_t, Y_t)$ in \eqref{stock_heston_intro}-\eqref{CIR_intro}
\begin{align}
    dS^{s,y} _t &= rS^{s,y} _t dt +S^{s,y} _t\sqrt{Y^y_t} (\rho dW_t + \sqrt{1-\rho^2}dB_t), \quad S_0^{s,y} =s>0, \label{stock_heston_intro} \\
    dY^y_t &= (a-bY^y_t) dt + \sigma \sqrt{Y^y_t} dW_t, \quad Y^y_0 = y\ge 0. \label{CIR_intro}
\end{align}
In the first work (Chapter \ref{Chapter_CIR}), we deal with the construction of high order approximations of the volatility process \eqref{CIR_intro}. In the second one (Chapter \ref{Chapter_Heston}), we extend the results obtained to the couple $(S, Y)$. Instead, in the last work (Chapter \ref{Chapter_PDE}),
we study the PDE that describes the price of a European-type derivative under this model. Other minor results are listed in Appendix \ref{other_results_CIR}.
\subsection{Resume of Chapter \ref{Chapter_CIR}}
The goal is to build high order approximations of the CIR (Cox-Ingersoll-Ross) process \eqref{CIR_intro} and to prove rigorously a rate of convergence result. The theory developed for the Euler scheme in \cite{TT} and \cite{BT} and used in \cite{Pages} to create the multistep Richardson-Romberg approach, and the one developed in \cite{AB} do not cover this model. In fact, even if the drift coefficient is smooth (but not bounded), the diffusion coefficient is not even locally Lipschitz. Furthermore, because of this square root diffusion term, the standard Euler scheme is not well-defined: the increments are Gaussian distributed, so the positivity of the scheme is not achieved. 
In \cite{AA_MCOM}, Alfonsi proposed a second order scheme for the CIR that uses the Ninomiya-Victoir scheme coupled with a moment matching auxiliary scheme in a boundary of 0. Unfortunately, the techniques we develop cannot produce a proof for a general scheme that works even when $\sigma^2>4a$. Roughly speaking, this is because the analysis of the remainder in \eqref{def_boost2_intro} requires to be more elaborate: we do not only need to control its norm as in Subsection \ref{weak_approx_subsection_intro}, but we also have to handle its space regularity. Nevertheless, the following rate convergence Theorem has been proved.
\begin{theorem}\label{CIR_main_result_intro}
    Let $\hat{Y}^y_t$ be the scheme defined by~\eqref{strang_splitting_scheme_CIR} for $\sigma^2\le 4a$ and $Q_lf(y)=\E[f(\hat{Y}^y_{h_l})]$, for $l\ge 1$. 
    Then, for all $f\in \CpolK{18}$, we have $\cPh^{2,n}f(y)-P_Tf(y)=O(1/n^4)$ as $n\to \infty$.\\
    Besides, for  $f\in \Cpol$, we have $\cPh^{\nu,n}f(y)-P_Tf(y)=O(1/n^{2\nu})$.
\end{theorem}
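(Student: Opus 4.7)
The strategy is to cast the theorem as an instance of the Alfonsi--Bally random grid framework of \eqref{rate_conv_nu_rg_intro} with elementary scheme order $\alpha=2$, so that the $\nu$-th boost automatically delivers the rate $n^{-2\nu}$. The immediate obstruction is that the abstract framework is set in $C_b^\infty(\R^d)$, whereas for the CIR the drift $a-by$ and the diffusion $\sigma\sqrt y$ are unbounded and the latter is not Lipschitz at $0$. The plan is therefore to re-develop $(\bbar{H_1})$ and $(\bbar{H_2})$ on the space $\Cpol$ of smooth functions with polynomially-growing derivatives, equipped with a countable family of polynomially-weighted seminorms of the form $\|f\|_{k,p}=\sup_{y\ge 0}\sum_{j\le k}|\partial^j f(y)|/(1+y^p)$, and then run the Alfonsi--Bally construction verbatim.

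The local order bound $(\bbar{H_1})$ is the easy input. Since $\hat Y^y_t$ in \eqref{strang_splitting_scheme_CIR} is obtained by Strang splitting of the two potential second order building blocks coming from $V_0$ and $\frac12 V_1^2$, the Corollary after Proposition \ref{scheme_composition_intro} identifies it as a potential weak second-order scheme for the CIR generator $\mcL$. Combining this with Proposition \ref{diffusion_expansion_wea_Abook} yields, for $f\in\Cpol$ with good sequence $(C_\alpha,e_\alpha)$, an $h_l^3$ remainder expressible through $\mcL^{j}f$, $j\le 3$. Since $\mcL$ is a second-order differential operator with affine coefficients, this loses at most six derivatives and shifts the polynomial weight by a bounded amount, giving $(\bbar{H_1})$ with $\alpha=2$ and $\beta\le 6$.

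The uniform bound $(\bbar{H_2})$ is the main technical step. It requires (i) moment estimates $\E[(Y^y_t)^p]\le C_p(1+y^p)$ uniformly on $[0,T]$ for a wide range of $p\in\R$, which rely on the boundary behaviour at $0$ and are the reason the condition $\sigma^2\le 4a$ enters (it is precisely what makes the Strang scheme \eqref{strang_splitting_scheme_CIR} non-negative, and what gives enough negative moments to propagate derivative estimates); (ii) analogous moment bounds for the iterates $\hat Y^{y,[j]}_{h_l}$, uniform in $j\le n^l$ and in $l$; and (iii) regularity estimates of the form $|\partial^k_y P_t f(y)|, |\partial^k_y Q_l^{[j]}f(y)|\le C\|f\|_{k,p'}(1+y^{p})$ obtained by differentiating the flow and iterating a Gronwall argument with polynomial weights.

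The genuine difficulty, flagged in the remark preceding the theorem, is that the abstract remainder analysis for $\nu\ge 2$ in Alfonsi--Bally is performed in $C^\infty_b$-norm, whereas the iterated operator $Q_1^{[n-(\kappa+1)]}[Q_2^{[n]}-Q_1]Q_1^{[\kappa]}$ appearing in \eqref{def_boost2_intro} must here be controlled pointwise in a seminorm $\|\cdot\|_{0,p}$, so that polynomial weights and derivative orders must be tracked through every composition without destroying the $n^{-4}$ order. Once this propagation is under control, the abstract boosting delivers $\|P_Tf-\cPh^{\nu,n}f\|_{0,p}\le C\|f\|_{k_\nu,p_\nu}n^{-2\nu}$; tallying the cumulative derivative loss through the two boosts for $\nu=2$ gives the explicit requirement $f\in\CpolK{18}$ in the first assertion, while $f\in\Cpol$ trivially accommodates any $\nu$ and yields the general $O(1/n^{2\nu})$ rate.
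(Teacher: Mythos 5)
Your high-level plan (instantiate the Alfonsi--Bally machinery with $\alpha=2$ over a polynomially-weighted family of seminorms and verify $(\bbar{H_1})$, $(\bbar{H_2})$) matches the paper, and the arithmetic eventually lands on $\CpolK{18}$ for $\nu=2$. But several load-bearing steps of your sketch are wrong or missing, and the item you flag as the ``genuine difficulty'' is not where the real obstruction lies.

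First, the role of $\sigma^2\le 4a$. You claim it ``gives enough negative moments to propagate derivative estimates''. It does not enter that way at all: it is exactly the condition making the argument under the square root in the Strang scheme \eqref{strang_splitting_scheme_CIR}, namely $e^{-bt/2}y+\psi_b(t/2)(a-\sigma^2/4)$, nonnegative, so that the scheme is well-defined and positivity-preserving. No negative moments of $Y$ are used anywhere in the paper's argument.

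Second, the $(\bbar{H_1})$ accounting. You assert $\psi_Q(k)=k+\beta$ with $\beta\le 6$. In fact $\psi_Q$ is not an additive shift: the paper proves $\|(P_{h_l}-Q_l)f\|_{m,L+3}\le C h_l^3\|f\|_{2(m+3),L}$, i.e. the derivative index is \emph{doubled} (plus $6$). Two applications give $0\mapsto 6\mapsto 18$, which is where $\CpolK{18}$ comes from; your additive $\beta\le 6$ would give $12$, contradicting the $18$ you quote.

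Third, and most seriously, $(\bbar{H_2})$. You propose to differentiate the flows of $Y^y$ and $\hat Y^y$ and run a polynomially-weighted Gronwall argument. This cannot work as stated: differentiating $\hat Y^y_t$ in $y$ produces factors of $y^{-1/2}$ from the square root, which blow up at $y=0$, and the same problem appears for $\partial_y Y^y_t$ for the CIR SDE itself. The paper's mechanism is entirely different and is the actual core of the proof. For the CIR semigroup the paper uses the explicit noncentral-chi-squared density $p(t,y,z)$, subtracts the Taylor polynomial of $f$ at $0$, and integrates by parts in $z$ to produce the formula $\partial_y^j\E[\hat f_m(Y^y_t)]=\sum_i \frac{e^{-d_ty/2}(d_ty/2)^i}{i!}I_{i+j}(\hat f_m^{(j)},c_t)e^{-bjt}$, which is bounded without taking $y$-derivatives of the flow. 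For the scheme the paper proves (Lemma~\ref{regular_density_intro}) an integration-by-parts-in-the-noise identity that rewrites $\partial^m_y\E[f(Y_1(\sqrt t Z,y))]$ as an integral of $f^{(m)}$ against a kernel $\eta^*_m(z)=(-1)^{m-1}\sum_j c_{j,m}z^j\eta^{(j)}(z)$, and the sharp $(1+Ct)$ constant, essential for the $n^l$-fold composition, is obtained \emph{only when $\eta^*_m\ge 0$ for all $m$}. This positivity forces $Z$ to be Gaussian (the paper proves a genuine characterization theorem to this effect). You have not identified this constraint on the noise, and without it $(\bbar{H_2})$ for the iterates $Q_l^{[j]}$ has no proof. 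Finally, your closing paragraph locates the difficulty in ``tracking weights through the composition in the AB expansion''; that is essentially bookkeeping. The hard part is the regularity of $Q_l^{[j]}f$ near $y=0$, which your Gronwall proposal does not address.
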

Before giving details on how this Theorem has been proved (all details can be found in Chapter \ref{Chapter_CIR}), we mention that we later proved the same result under a slightly less demanding hypothesis on the regularity of $f$.  The result (Theorem \ref{thm_main_CIR_mod}) is in Appendix \ref{improvement_CIR_result}.

Theorem \ref{CIR_main_result_intro} is just an application of the random grids technique, so the key point is to prove a version of \eqref{H1_bar_intro} and \eqref{H2_bar_intro}.
In this work, we made the pedagogical choice of proving the main assumptions for the simpler space of polynomial functions. After that, we passed to the strictly larger class $$\CpolKL{k}{L}= \left\{f:\R_+ \to \R \text{ of class } \mathcal{C}^m \ :  \ \max_{j\in\{0,\ldots,m\}} \sup_{x\geq 0}\frac{|f^{(j)}(y)|}{1+y^L}<\infty \right\},$$
which we endowed with the following family of norms: for all $m\le k$ and $L'\ge L$
\begin{equation}\label{mLnorm_intro}
  \|f\|_{m,L'} = \max_{j\in\{0,\ldots,m\}} \sup_{x\geq 0} \frac{|f^{(j)}(y)|}{1+y^{L'}}.
\end{equation}
Across the proofs of \eqref{H1_bar_intro} and \eqref{H2_bar_intro} we consider an extension of the scheme in \eqref{strang_splitting_scheme_CIR}, in which the standard Gaussian random variable $N$ is replaced by a random variable $Z$ that satisfies the following criteria:
\begin{itemize}
    \item $Z$ is symmetric,
    \item $Z$ have all finite moments,
    \item $\E[Z^k]=\E[N^k]$ for $j\in\{2,4\}$.
\end{itemize}
We refer to these criteria as Assumption $(\mathcal{H}_Y)$, and the scheme is 
\begin{equation}\label{strang_splitting_scheme_CIR_Z}
    \hY_t^{y}=\varphi(t,y,\sqrt{t}Z) = e^{-bt/2}\bigg(\sqrt{e^{-bt/2}y +\psi_b(t/2)\Big(a-\frac{\sigma^2}{4}\Big)}+\frac{\sigma\sqrt{t}}{2}Z\bigg)^2+\psi_b(t/2)\Big(a-\frac{\sigma^2}{4}\Big).
\end{equation}
In the following, we point out when and why we must assume $Z=N$, $N\sim\mcN(0,1)$ to have the proof-machinery works.

\subsubsection{On the adapted version of \eqref{H2_bar_intro}}
In Section 4 of Chapter \ref{Chapter_CIR}, we prove this version of the assumption \eqref{H2_bar_intro}.
\begin{prop}\label{prop_H2_NV_intro}
    Let $\sigma^2\le 4a$ and $\hY^y_t=\varphi(y,t,\sqrt{t}N)$ be the scheme~\eqref{strang_splitting_scheme_CIR} with $N\sim \mathcal{N}(0,1)$. Let $T>0$ and  $m,L\in\N$ such that $L\ge m$. We define for $n\ge 1$ and $l\in \N$, $Q_lf(x)=\E[f(\hY^y_{h_l})]$with $h_l=\frac{T}{n^l}$. Then, there exists a constant $C\in \R_+^*$ such that for any $f \in \CpolKL{m}{L}$, $l\in \N$ and $t \in [0,T]$, 
    \begin{equation}\label{H2_NV_intro}   \left\|\E[f(Y^{\cdot}_t)]\right\|_{m,L} + \max_{0\le k\le n^l} \left\|Q_l^{[k]}f \right\|_{m,L} \le C \|f\|_{m,L}.
    \end{equation}
\end{prop}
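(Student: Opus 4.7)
The estimate splits naturally into a bound for the true semigroup contribution $\|\E[f(Y^{\cdot}_t)]\|_{m,L}$ and an iterated bound for $\|Q_l^{[k]} f\|_{m,L}$. The first is essentially classical: polynomial moments $\E[(Y^y_t)^p]\le C_p(T)(1+y^p)$ follow from Itô's formula applied to $y \mapsto y^p$ and the affine structure of the generator, and $\mcC^m$-dependence of the flow on the initial condition gives the norm bound upon differentiation under the expectation. For the iterated scheme bound, my plan is to establish the one-step estimate $\|Q_l f\|_{m,L}\le (1+Ch_l)\|f\|_{m,L}$; the iterated bound over $k\le n^l$ steps then follows from $(1+Ch_l)^k\le e^{Ckh_l}\le e^{CT}$.

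For the one-step estimate, the case $m=0$ follows from an explicit computation of $\E[(\hY^y_t)^L]$ using the closed form
$$\hY^y_t=e^{-bt/2}\bigl(\sqrt{A(y)}+BN\bigr)^2+\psi_b(t/2)(a-\sigma^2/4),\quad A(y)=e^{-bt/2}y+\psi_b(t/2)(a-\sigma^2/4),\ B=\tfrac{\sigma\sqrt{t}}{2}.$$
For $m\ge 1$ I differentiate under the expectation. Already the first derivative $\partial_y \hY^y_t = e^{-bt}(1+BN/\sqrt{A(y)})$ exhibits a singularity in the critical regime $\sigma^2=4a$ (where $A(0)=0$). The key device to tame it is Gaussian integration by parts, which turns $\E[N\,g(\hY^y_t)]$ into $2B e^{-bt/2}\E[(\sqrt{A(y)}+BN)\,g'(\hY^y_t)]$: the $\sqrt{A(y)}$ factor cancels one power of $1/\sqrt{A(y)}$ and produces a ``clean'' term plus a residual of the same structure, but with one more derivative on $f$ and an extra factor $B^2=O(t)$. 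Iterating this procedure gives an expansion
$$\partial_y^j Q_l f(y)=\sum_{i} c_{j,i}(t)\,\E\bigl[f^{(i)}(\hY^y_t)\,R_{j,i}(y,t,N)\bigr]+\text{remainder},$$
with $c_{j,i}$ bounded on $[0,T]$ and $R_{j,i}$ polynomials in $\sqrt{A(y)}$, $B$, $N$ whose moments are $O(1+y^L)$.

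The subtle point will be that the Stein iteration cannot be pushed beyond derivative order $m$ on $f$, so a singular remainder involving $\sqrt{A(y)}^{-1}\E[N f^{(m)}(\hY^y_t)]$ survives in the critical case $\sigma^2=4a$. The saving observation is that at $y=0$ one has $\hY^0_t=e^{-bt/2}B^2N^2+\psi_b(t/2)(a-\sigma^2/4)$, an even function of $N$, so any expectation of the form $\E[N\cdot(\text{even in }N)]$ vanishes; the problematic term is thus of the form $\sqrt{A(y)}^{-1}g(y)$ with $g(0)=0$. To close the argument I would show that $g$ vanishes at the correct order in $\sqrt{y}$ by exploiting that $y$ enters $\hY^y_t$ only through $\sqrt{A(y)}$, which reduces the issue to a Taylor expansion in the variable $\sqrt{A(y)}$ itself. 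It is also at this step that the Gaussian hypothesis $Z=N$ is essential: the identity $\E[Zh(Z)]=\E[h'(Z)]$ is specific to the standard normal, and a generic symmetric matched-moment $Z$ would not yield the required cancellations, which is precisely why the proposition is stated with $N\sim\mcN(0,1)$ rather than under the weaker $(\mathcal{H}_Y)$ hypothesis.
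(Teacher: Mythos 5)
There is a genuine gap on both halves of the argument, and the paper's mechanisms are different from yours in each case.

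For the semigroup term $\|\E[f(Y^\cdot_t)]\|_{m,L}$, your claim that ``$\mcC^m$-dependence of the flow on the initial condition gives the norm bound upon differentiation under the expectation'' does not go through. The CIR diffusion coefficient $\sigma\sqrt{y}$ is only $1/2$-Hölder, not Lipschitz, so the flow $y\mapsto Y^y_t$ is not differentiable at the boundary; and since the hypothesis $\sigma^2\le 4a$ still allows $\sigma^2>2a$ (Feller fails), the process reaches $y=0$ with positive probability, so the boundary is not negligible. The paper circumvents this entirely by never differentiating the flow: it subtracts the Taylor polynomial $T_m(f)$ of $f$ at $0$, uses the explicit noncentral chi-squared transition density, and performs integration by parts in the forward variable $z$ to obtain the identity $\partial_y^j \E[\hat f_m(Y^y_t)] = \sum_i \frac{e^{-d_t y/2}(d_t y/2)^i}{i!}\,I_{i+j}(\hat f_m^{(j)},c_t)\,e^{-bjt}$; the polynomial part $T_m(f)$ is then handled separately from the moment formula. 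You need a replacement for this argument.

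For the scheme term, your use of Stein's identity $\E[Ng(N)]=\E[g'(N)]$ is a genuinely different route from the paper's, and the structural idea (trade one factor $1/\sqrt{A}$ for a clean $(\sqrt A + BN)$ factor at the cost of one derivative on $f$ and an extra $B^2=O(t)$) is correct. But two things are not closed. First, each Stein integration by parts leaves a residual still carrying a factor $B/\sqrt A$, and you acknowledge this surviving singular term at derivative order $m$; your proposed fix (parity at $y=0$ followed by a Taylor expansion in $\sqrt{A(y)}$) is circular as written, since establishing the required modulus of continuity in $\sqrt y$ is precisely what you're trying to prove. The paper avoids the residual altogether: writing the sub-scheme functional as $\frac12\E[\psi^+_f]$ by symmetry, it exploits the exact identity $\partial_y\psi^+_f=(\sigma\sqrt t\sqrt y)^{-1}\partial_z\psi^-_f$, integrates by parts against the Lebesgue density $\eta$, and the built-in factor of $\sqrt y$ in $\psi^-_f$ cancels the singularity exactly. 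Iterating gives the representation of $\partial^m_y$ in terms of $f^{(m)}$ alone, with kernel $(u-u^2)^{m-1}\eta^*_m(z)$, with no leftover singular piece. Second, the sharp constant $(1+Ct)$ in the paper comes from the nonnegativity of $\eta^*_m$ (Lemma~\ref{gaussian_eta_positivity}) applied to two explicit monomial test functions, not from Stein's identity being Gaussian-specific; indeed the paper proves the estimate for any symmetric density with $\eta^*_m\ge 0$ (Proposition~\ref{prop_H2_sch_intro}) and then shows the standard normal is essentially the unique matched-moment distribution with that property. So your explanation of why $Z=N$ is necessary is not the one the paper uses, though the conclusion is the same.

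What you get right and share with the paper: the split of \eqref{H2_NV_intro} into a semigroup bound and an iterated one-step scheme bound; the target one-step estimate $\|Q_l f\|_{m,L}\le(1+Ch_l)\|f\|_{m,L}$ from which the uniform $k\le n^l$ bound follows by $(1+Ch_l)^{n^l}\le e^{CT}$; and the qualitative observation that the singularity at $y=0$ for $\sigma^2=4a$ is the crux. The precise cancellation device and the source of Gaussianity, however, differ, and the two holes above need to be filled before the proof stands.
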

We split the proof into two parts. We start by the upper bound for the CIR semigroup.
\begin{prop}\label{deriv_CIR_functionals_intro}
    Let $f\in\CpolKL{m}{L}$, $L\ge m$, $T>0$ and $t\in (0,T]$. Let  $Y^y$ be the CIR process starting from $y\ge 0$. Then, $\E[f(Y^{\cdot}_t)]\in\CpolKL{m}{L}$ and  we have the following estimate for some constant $C_\text{cir}(m,L,T)\in \R_+$:
    \begin{equation}\label{H0_estimate_CIR_intro}
      \left\|\E[f(Y^{\cdot}_t)]\right\|_{m,L} \le C_\text{cir}(m,L,T)\|f\|_{m,L}.
    \end{equation}
\end{prop}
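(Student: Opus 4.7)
My plan is to derive an explicit commutation formula relating the CIR semigroup and differentiation in the initial condition, and then apply standard polynomial moment bounds on CIR-type processes.

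\emph{Step 1 (commutator identity and Feynman--Kac).} Starting from the CIR generator $\mcL = \frac{\sigma^2 y}{2}\partial_y^2 + (a-by)\partial_y$, a direct computation gives $[\partial_y, \mcL]f = \partial_y(\mcL f) - \mcL(\partial_y f) = \frac{\sigma^2}{2}f'' - b f'$, so if $u(t,y)=\E[f(Y^y_t)]$ solves $\partial_t u = \mcL u$, then $v := \partial_y u$ formally satisfies
$$\partial_t v = \mcL_1 v - bv, \qquad v|_{t=0}=f',$$
where $\mcL_1 = \frac{\sigma^2 y}{2}\partial_y^2 + (a+\sigma^2/2 - by)\partial_y$ is the generator of a CIR process $Y^{(1),y}$ with drift coefficient $a$ shifted to $a+\sigma^2/2$. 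By the Feynman--Kac representation I then deduce $\partial_y \E[f(Y^y_t)] = e^{-bt}\,\E[f'(Y^{(1),y}_t)]$. Iterating $j$ times, shifting the drift by $\sigma^2/2$ at each step, yields
$$\partial_y^j \E[f(Y^y_t)] = e^{-jbt}\,\E[f^{(j)}(Y^{(j),y}_t)], \qquad j \in \{0,\ldots,m\},$$
where $Y^{(j),y}$ is a CIR process with drift $a+j\sigma^2/2 - by$ and diffusion $\sigma\sqrt{\cdot}$, started at $y$.

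\emph{Step 2 (moment bounds and assembly).} A standard application of It\^o's formula to $y\mapsto y^p$ and Gronwall's lemma gives $\E[(Y^{(j),y}_t)^p] \le C_{j,p}(T)(1+y^p)$ uniformly for $t\in[0,T]$. Combined with $|f^{(j)}(z)|\le \|f\|_{j,L}(1+z^L)$ and $L\ge m \ge j$, this yields
$$\bigl|\partial_y^j\E[f(Y^y_t)]\bigr| \le e^{-jbt}\|f\|_{j,L}\,\E\bigl[1+(Y^{(j),y}_t)^L\bigr] \le C_{\text{cir}}(m,L,T)\,\|f\|_{m,L}(1+y^L),$$
and taking the supremum over $y\ge 0$ and the maximum over $j\le m$ produces~\eqref{H0_estimate_CIR_intro}.

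\emph{Main obstacle.} The delicate point is rigorously justifying the commutator/PDE manipulation for test functions $f\in \CpolKL{m}{L}$, which need not be compactly supported or bounded. I would first prove the identity for $f\in\mathcal{C}^\infty_c((0,\infty))$, where classical Cauchy problem and tangent-flow arguments apply without friction, and then extend to all of $\CpolKL{m}{L}$ by a density/approximation argument, passing to the limit under the expectation using the uniform polynomial moment bounds from Step 2. Near the boundary $y=0$ the diffusion degenerates, but since the identity involves only CIR semigroups (no explicit tangent flow with $1/\sqrt{Y}$ singularities), no Feller-type condition on $(\sigma^2,a)$ is required.
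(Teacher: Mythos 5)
Your approach is correct but genuinely different from the one in the paper. The paper first peels off the $m$-th order Taylor polynomial of $f$ at $0$ (so the remainder $\hat f_m$ has vanishing derivatives at the origin), expresses $\E[\hat f_m(Y^y_t)]$ through the explicit noncentral chi-squared density series $p(t,y,z)=\sum_{i\ge 0}\frac{e^{-d_ty/2}(d_ty/2)^i}{i!}\cdot\frac{c_t/2}{\Gamma(i+v)}(c_tz/2)^{i-1+v}e^{-c_tz/2}$, differentiates the series term by term, and then integrates by parts in $z$ — legitimate precisely because $\hat f_m^{(j)}(0)=0$ — to obtain the closed formula $\partial_y^j\E[\hat f_m(Y^y_t)]=\sum_{i\ge 0}\frac{e^{-d_ty/2}(d_ty/2)^i}{i!}\,I_{i+j}(\hat f_m^{(j)},c_t)\,e^{-kjt}$, after which the bound follows from gamma raw moments. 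The Taylor part is handled separately by an explicit polynomial computation (this is also where the hypothesis $L\ge m$ is used). What you do instead is the one-dimensional analogue of the Briani–Caramellino–Terenzi stochastic representation used later in the thesis for log-Heston (cf. Proposition~\ref{prop-rep-logHeston-estim} and Lemma~\ref{lemma-reg}): the commutator $[\partial_y,\mcL]=\tfrac{\sigma^2}{2}\partial_y^2-b\partial_y$ shifts the drift $a\mapsto a+\sigma^2/2$, giving $\partial_y^j\E[f(Y^y_t)]=e^{-jbt}\E[f^{(j)}(Y^{(j),y}_t)]$, and the polynomial moment bound on the shifted CIR finishes. Your route is cleaner and reuses machinery from the rest of the thesis; the paper's density route is more elementary and self-contained, and it avoids the regularity issue you correctly flag: the Feynman–Kac identity for $v=\partial_y u$ with polynomially growing $f$ on the degenerate half-line is exactly the nontrivial content of \cite{BCT}, so if you want to close that gap without invoking BCT you would have to reproduce a comparable argument, whereas the density/integration-by-parts proof sidesteps it entirely by trading the tangent-flow question for termwise manipulation of a convergent series. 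Both approaches yield the sharp statement with the same constant structure, so this is a legitimate alternative proof.
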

The proof we gave in Chapter \ref{Chapter_CIR} relies heavily on the knowledge of the explicit form of the density of the CIR distribution.
To fully prove Proposition \ref{prop_H2_NV_intro}, we demonstrated the equivalent upper bound for the scheme.
\begin{prop}\label{prop_H2_sch_intro}
    Let $T>0, \sigma^2\le 4a$, $m,M \in \N$,  $Z$ be a symmetric random variable with density $\eta\in \mathcal{C}^M(\R)$ such that for all $i\in\{0,\ldots,M\}$, $|\eta^{(i)}(y)|=o(|z|^{-(2L+i)})$ for $|z|\rightarrow \infty$,  and $\eta^*_m\ge 0$ for all $1 \le m\le M$ (see Lemma~\ref{regular_density_intro} below for the definition of $\eta^*_m$).
    Let $Q_lf(x)=\E[f(\hat{Y}^y_{h_l})]$ with $\hat{Y}^y_t=\varphi(t,y,\sqrt{t}Z)$, $n\ge 1$, $l\in \N$ and $h_l=T/n^l$. Then, for any $L \in \N$, there exists $C\in \R_+$ such that: 
    $$  \max_{0\le j \le n^l}\|Q_l^{[j]}f\|_{m,L} \le C \|f\|_{m,L}, \ f\in \CpolKL{m}{L}, l \in \N.$$
  \end{prop}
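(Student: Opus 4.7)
The plan is to prove a one-step estimate of the form $\|Q_l f\|_{m,L} \le (1 + C h_l)\|f\|_{m,L}$ with a constant $C = C(T,m,L)$ that does not depend on $l$, and then to iterate via the semigroup property $Q_l^{[j]} = Q_l \circ Q_l^{[j-1]}$. Induction then gives $\|Q_l^{[j]} f\|_{m,L} \le (1+Ch_l)^j\|f\|_{m,L} \le e^{Cjh_l}\|f\|_{m,L} \le e^{CT}\|f\|_{m,L}$, since $jh_l \le T$. This mirrors the structure used for the exact CIR semigroup in Proposition~\ref{deriv_CIR_functionals_intro}, with the scheme taking the role of the true semigroup.

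The case $m=0$ is a direct moment bound. Writing $\hat{Y}^y_t = e^{-bt}y + r(t,y,Z)$ with $r(t,y,Z) = (1+e^{-bt/2})\psi_b(t/2)(a-\sigma^2/4) + e^{-bt/2}\sigma\sqrt{tA(t,y)}Z + e^{-bt/2}\sigma^2 t Z^2/4$, the expression is a polynomial of degree 2 in $Z$, so $(\hat{Y}^y_t)^L$ expands via the multinomial formula into a sum whose expectation uses only the (finite) moments of $Z$, with odd moments killed by symmetry. Isolating the leading contribution $(e^{-bt}y)^L \le y^L$ and bounding every remaining term by $Ct(1+y^L)$ gives $\E[1+(\hat{Y}^y_t)^L] \le (1+Ct)(1+y^L)$, and hence $|Q_l f(y)| \le \|f\|_{0,L}\,\E[1+(\hat{Y}^y_t)^L] \le (1+Ct)\|f\|_{0,L}(1+y^L)$.

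For $m \ge 1$, the crucial structural fact is that although $\varphi(t,y,\sqrt{t}z)$ depends on $y$ through $\sqrt{A(t,y)}$, the symmetry of $Z$ forces $Q_l f(y)$ to be a smooth function of $A(t,y)$ itself, which is affine in $y$. Indeed, with the change of variable $v = \sqrt{A(t,y)} + \sigma\sqrt{t}z/2$,
\[
Q_l f(y) = \int f(e^{-bt/2}v^2 + c_1)\,K(v, A(t,y))\,dv, \qquad K(v,A) = \tfrac{1}{2}[\tilde\eta(v-\sqrt{A}) + \tilde\eta(v+\sqrt{A})],
\]
where $c_1 = \psi_b(t/2)(a-\sigma^2/4)$ and $\tilde\eta$ is the density of $\sigma\sqrt{t}Z/2$. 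The kernel $K$ is even in $\sqrt{A}$, hence genuinely smooth in $A$ with formal expansion $K(v,A) = \sum_{k\ge 0}\frac{A^k}{(2k)!}\tilde\eta^{(2k)}(v)$. Writing $Q_l f(y) = H(A(t,y))$, one gets $\partial_y^i Q_l f(y) = e^{-ibt/2}H^{(i)}(A(t,y))$. The derivatives $H^{(i)}$ are bounded polynomially in $A$ by using the identity $\partial_A \tilde\eta(v\pm\sqrt{A}) = \pm\frac{1}{2\sqrt{A}}\partial_v\tilde\eta(v\pm\sqrt{A})$ to transfer each $\partial_A$ onto a $v$-derivative, then integrating by parts onto $g(v) = f(e^{-bt/2}v^2+c_1)$. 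The decay hypothesis $|\eta^{(i)}(z)| = o(|z|^{-(2L+i)})$ ensures integrability, while the positivity condition $\eta^*_m \ge 0$ from Lemma~\ref{regular_density_intro} provides the sign control required to bound the Malliavin-type weights, producing $|H^{(i)}(A)| \le C\|f\|_{i,L}(1+A^L)$ and hence $|\partial_y^i Q_l f(y)| \le (1+Ct)\|f\|_{m,L}(1+y^L)$.

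The main obstacle is making the one-step estimate additive (i.e.\ of the form $1+Ch_l$) rather than merely multiplicative: a bound $\|Q_l f\|_{m,L}\le C\|f\|_{m,L}$ with $C>1$ would blow up under iteration as $C^{n^l}$. The $1+Ch_l$ factor must be extracted from the fact that $\hat{Y}^y_0 = y$, i.e.\ that $Q_l$ coincides with the identity at $t=0$; quantitatively this comes from writing $Q_l f(y) - f(y) = \E[f(\hat Y^y_t)-f(y)]$ and Taylor-expanding using $\hat Y^y_t - y = O(t) + O(\sqrt{t A(t,y)})Z$, which is exactly what the analysis of $H^{(i)}$ above packages. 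A secondary subtlety is the degenerate regime $\sigma^2 = 4a$ where $c_1 = 0$ and $A(t,0) = 0$: the function $H(A)$ still extends smoothly to $A=0$, but verifying that its derivatives remain uniformly bounded down to the boundary requires explicit use of the symmetry of $Z$ (killing odd-order Taylor coefficients at $\sqrt{A}=0$) and the positivity $\eta^*_m \ge 0$, which together absorb the apparent $A^{-k/2}$ singularities that arise in a naive Faà di Bruno expansion of $\partial_y^k Q_l f = \partial_y^k H(A(t,y))$.
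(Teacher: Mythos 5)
Your high-level structure is right (one-step estimate, then iterate), and the observation that $Q_l f(y)$ factors through $A(t,y) = e^{-bt/2}y + \psi_b(t/2)(a-\sigma^2/4)$, so that $Q_l f(y) = H(A(t,y))$ and $\partial_y^i Q_l f = e^{-ibt/2}H^{(i)}(A(t,y))$, is correct — it is the same as the paper's decomposition $\hat Y^y_t = X_0(t/2, X_1(\sqrt{t}Z, X_0(t/2,y)))$, just packaged differently. But the central technical step is only asserted, not proved, and the asserted mechanism is wrong. "Transferring each $\partial_A$ onto a $v$-derivative via $\partial_A\tilde\eta(v\pm\sqrt A) = \pm\frac{1}{2\sqrt A}\partial_v\tilde\eta(v\pm\sqrt A)$, then integrating by parts onto $g$" does not cancel the $A^{-i/2}$ singularity: after one integration by parts you have $\frac{1}{2\sqrt A}\int g'(v)\,[\tilde\eta(v-\sqrt A) - \tilde\eta(v+\sqrt A)]\,dv$, and the factor $A^{-1/2}$ is still there. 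The cancellation is a real theorem, and it requires the extra ingredient the paper uses in Lemma~\ref{regular_density}: a mean-value/finite-difference representation $\tilde\eta(v-\sqrt A)-\tilde\eta(v+\sqrt A) = -2\sqrt A\int_0^1 \tilde\eta'(v+(2u-1)\sqrt A)\,du$ that introduces the auxiliary parameter $u$ and produces exactly the $\sqrt A$ needed. Iterating this correctly (alternating $y$- and $u$-integrations by parts) to all orders $m$ yields the singularity-free formula $\partial^m_x\E[f(X_1(\sqrt{t}Z,x))] = \int\int (u-u^2)^{m-1}f^{(m)}(w(u,x,z))\,\eta_m^*(z)\,du\,dz$, which is the hard content of the proposition. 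Your proposal jumps past this.

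You also misattribute the role of the hypothesis $\eta^*_m\ge 0$: it is not what "absorbs the $A^{-k/2}$ singularities." The singularities are removed by the integration-by-parts structure and symmetry alone (which is why the representation formula holds for any symmetric density with enough decay). The positivity $\eta^*_m\ge 0$ is used afterward, to pass the absolute value inside the integral against a nonnegative weight and then evaluate the resulting integrals exactly for monomial test functions; this is what produces the sharp one-step constant of the form $1+Ct$ rather than a generic constant $C>1$ (which, as you correctly note, would blow up under iteration as $C^{n^l}$). Without the explicit representation and the positive weight, your claimed bound $|H^{(i)}(A)|\le C\|f\|_{i,L}(1+A^L)$ with a constant that lets $(1+Ch_l)^{n^l}$ stay bounded is not established.
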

  The key tool to prove Proposition \ref{prop_H2_sch_intro} is the following regularity result.
\begin{lemma}\label{regular_density_intro}
    Let $M,L\in \N$. Let $Y_1$ be defined in \eqref{Y1_intro} and $Z$ be a symmetric random variable with density $\eta\in \mathcal{C}^M(\R)$ such that for all $i\in\{0,\ldots,M\}$, $|\eta^{(i)}(z)|=o(|z|^{-(2L+i)})$ for $|z|\rightarrow \infty$. Then, for all function $f\in\CpolKL{M}{L}$, $m\in\{1, \ldots,  M\}$ and $t\in [0,T]$ one has the following representation
    \begin{equation}\label{repres_X1_functional_intro}
      \partial^m_y\E[f(Y_1(\sqrt{t}Z,y))] = \int_{-\infty}^\infty \int_0^1 (u-u^2)^{m-1} f^{(m)}(w(u,y,z)) \eta^*_m(z) dudz
    \end{equation}
    where  $w(u,y,z)=y+(2u-1)\sigma\sqrt{t}z\sqrt{y}+ \sigma^2tz^2/4$, $\eta^*_m(z)=(-1)^{m-1}  \left(\sum_{j=1}^m c_{j,m}  z^j\eta^{(j)}(z)\right)$, and the coefficients $c_{j,m}$  are defined by induction, starting from $c_{1,1}=-1$, through the following formula
    \begin{equation*}
      c_{j,m}   = \bigg(\frac{2j}{m-1}-4\bigg)  c_{j,m-1}\mathds{1}_{j<m} + \frac{2}{m-1}  c_{j-1,m-1}\mathds{1}_{j>1},     \  j\in\{1, \ldots, m\}, \, m\in\{2, \ldots,M\}.  \\
    \end{equation*}
   In particular, $c_{m,m}=-\frac{2^{m-1}}{(m-1)!}<0$. Furthermore, if the density $\eta$ is such that  $\eta^*_{m}(z)\geq0$ for all $z\in\R$, and all $ m\in\{1, \ldots,  M\}$, then there exists $C\in \R_+$ such that
    \begin{equation}\label{estimate_X1_functional_intro}
          \|\E[f(Y_1(\sqrt{t}Z,\cdot))]\|_{m,L} \leq (1+Ct) \|f\|_{m,L},\ t\in [0,T].
    \end{equation}
\end{lemma}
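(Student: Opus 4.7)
The plan is to prove the representation \eqref{repres_X1_functional_intro} by induction on $m$, and then derive \eqref{estimate_X1_functional_intro} directly from it using the positivity of $\eta^*_m$.

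For the base case $m=1$, I exploit the key identity $\partial_y Y_1(\sqrt{t}z,y)=\tfrac{1}{\sigma\sqrt{t}\sqrt{y}}\partial_z Y_1(\sqrt{t}z,y)$, which is immediate from $Y_1(\sqrt{t}z,y)=(\sqrt{y}+\sigma\sqrt{t}z/2)^2$. This converts $\partial_y$ into $\partial_z$, so integration by parts in $z$ (the boundary terms vanish by the decay of $\eta$) gives $-\tfrac{1}{\sigma\sqrt{t}\sqrt{y}}\int f(g(y,z))\eta'(z)\,dz$, where $g(y,z):=Y_1(\sqrt{t}z,y)$. I then cancel the $1/\sqrt{y}$ singularity by combining the odd parity of $\eta'$ (since $\eta$ is even) with the FTC identity $f(g(y,z))-f(g(y,-z))=2\sigma\sqrt{t}z\sqrt{y}\int_0^1 f'(w(u,y,z))\,du$, itself a direct consequence of the linear interpolation $w(u,y,z)=(1-u)g(y,-z)+ug(y,z)$. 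This yields $\eta^*_1(z)=-z\eta'(z)$ as claimed.

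For the induction step, I first observe that the given recurrence for $c_{j,m}$ is equivalent to the compact relation $\eta^*_{m+1}(z)=4\eta^*_m(z)-\tfrac{2}{m}z\partial_z\eta^*_m(z)$. Starting from the claimed $\Phi_{m+1}(y)$, I substitute this relation and integrate by parts in $z$ on the $z\partial_z\eta^*_m$ term. Setting $K(z):=\int_0^1(u-u^2)^m f^{(m+1)}(w)\,du$, the derivative $K'(z)$ brings $f^{(m+2)}$ into play via $\partial_z w=\sigma\sqrt{t}((2u-1)\sqrt{y}+\sigma\sqrt{t}z/2)$; to reduce $f^{(m+2)}$ back to $f^{(m+1)}$, I integrate by parts in $u$ using $\partial_u[(2u-1)(u-u^2)^m]=(4m+2)(u-u^2)^m-m(u-u^2)^{m-1}$, which stems from $(2u-1)^2=1-4(u-u^2)$. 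The crucial algebraic cancellation $(4+\tfrac{2}{m})-\tfrac{2(2m+1)}{m}=0$ ensures that what remains is exactly the two pieces $T_1+T_2$ produced by differentiating $\Phi_m$ via $\partial_y w=1+(2u-1)\sigma\sqrt{t}z/(2\sqrt{y})$, closing the induction. This step is the main obstacle: maintaining the order of the $f$-derivative through the IBP manipulations in both $u$ and $z$ and verifying the precise coefficient cancellation requires careful bookkeeping.

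For the estimate, I use the convex combination $w=(1-u)(\sqrt{y}-\sigma\sqrt{t}z/2)^2+u(\sqrt{y}+\sigma\sqrt{t}z/2)^2\le(\sqrt{y}+\sigma\sqrt{t}|z|/2)^{2}$ to bound $w^L\le(\sqrt{y}+\sigma\sqrt{t}|z|/2)^{2L}$. With $|f^{(m)}(w)|\le\|f\|_{m,L}(1+w^L)$ inside the integral and $\eta^*_m\ge 0$ by hypothesis, I expand $w^L$ via the binomial theorem and integrate against $\eta^*_m(z)\,dz$. The evenness of $\eta^*_m$ (which follows from the evenness of $\eta$ and the parity of the $c_{j,m}z^j\eta^{(j)}$ terms) kills every odd power of $z$. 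The $z$-constant contribution gives $y^L\cdot\int_0^1(u-u^2)^{m-1}du\cdot\int\eta^*_m(z)\,dz=y^L$, the product equalling $1$ by consistency at $t=0$ (since $G(y)=f(y)$ there, whence $\partial^m_y G|_{t=0}=f^{(m)}(y)$); every other even monomial carries a positive power of $t$ and a power of $y$ at most $y^L$, so is dominated by $Ct(1+y^L)$ uniformly on $[0,T]$. This produces the claimed bound $(1+Ct)\|f\|_{m,L}$.
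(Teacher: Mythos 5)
Your proof of the representation \eqref{repres_X1_functional_intro} is correct and runs in the reverse direction to the paper's: you start from the claimed $\Phi_{m+1}$, substitute the compact recursion $\eta^*_{m+1}=4\eta^*_m-\tfrac{2}{m}\,z\,\partial_z\eta^*_m$ (which indeed encodes the $c_{j,m}$ recurrence), and show it equals $\partial_y\Phi_m$ by integration by parts in $z$ and then in $u$; the paper instead differentiates $\Phi_{m-1}$ directly and manipulates forward to reveal $\Phi_m$. Both establish the same identity $\Phi_m=\partial_y\Phi_{m-1}$, and your packaging of the recursion as an ODE-like relation on $\eta^*_m$ is a clean reorganisation. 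The key pieces --- the identity $\partial_u\bigl[(2u-1)(u-u^2)^m\bigr]=(4m+2)(u-u^2)^m-m(u-u^2)^{m-1}$ obtained from $(2u-1)^2=1-4(u-u^2)$, the factorisation $f^{(m+2)}(w)=\frac{1}{2\sigma\sqrt{t}z\sqrt{y}}\,\partial_u f^{(m+1)}(w)$, and the cancellation $(4+\tfrac{2}{m})-\tfrac{2(2m+1)}{m}=0$ --- are exactly the algebra the paper needs as well. The base case $m=1$ is also sound: you pass $\partial_y\to\partial_z$, integrate by parts, and use the FTC identity on $w=(1-u)g(y,-z)+ug(y,z)$ together with the oddness of $\eta'$ to cancel the $1/\sqrt{y}$; this is equivalent to the paper's symmetrisation $\psi^\pm_f$. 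Your argument that $A:=\int_0^1(u-u^2)^{m-1}\,du\int\eta^*_m\,dz=1$ by evaluating the representation at $t=0$ is correct as well.

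The estimate, however, contains a genuine gap. Your first move --- bounding $w\le(\sqrt{y}+\sigma\sqrt{t}|z|/2)^2$ and then $w^L\le(\sqrt{y}+\sigma\sqrt{t}|z|/2)^{2L}$ --- replaces $w$ by an expression in $|z|$, and every monomial $|z|^k$ is an \emph{even} function of $z$, so the evenness of $\eta^*_m$ kills nothing; you cannot then claim that "every odd power of $z$" is removed. In particular the $k=1$ term $\binom{2L}{1}y^{L-1/2}\frac{\sigma\sqrt{t}}{2}|z|$ survives after integration against $\eta^*_m$ and contributes $O(\sqrt{t})$, so this route only gives $(1+C\sqrt{t})\|f\|_{m,L}$, not $(1+Ct)\|f\|_{m,L}$. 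The weakening is not harmless: the very point of the sharp constant, as stressed right after the lemma, is that $(1+Ct)^{n^l}\le e^{CT}$ is bounded uniformly in $l$, whereas $(1+C\sqrt{t})^{n^l}=(1+C\sqrt{T}n^{-l/2})^{n^l}\to\infty$, which would destroy the whole $(\bar{H}_2)$ argument. The fix is to drop the preliminary bound and integrate $w^L$ itself against $\eta^*_m$: expand the trinomial $w=y+(2u-1)\sigma\sqrt{t}z\sqrt{y}+\sigma^2tz^2/4$ to power $L$, observe that a multinomial term with an odd power $b$ of the middle factor carries $z^{b+2c}$ with $b$ odd, and therefore vanishes against the even $\eta^*_m$; for even $b$ the factor $(2u-1)^b$ is nonnegative, the $t$-power $t^{b/2+c}$ is an integer, and the only $t$-free contribution is $y^L$. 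This recovers $(1+Ct)$. (The paper achieves the same effect by feeding $f(y)=\frac{L!}{(L+m)!}\,y^{L+m}$ back into the representation and expanding $Y_1(\sqrt{t}Z,\cdot)^{L+m}$, using symmetry to drop the odd moments of $Z$.)
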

We stress here two things that are crucial in~\eqref{estimate_X1_functional_intro}: the same norm is used on both sides and the sharp time dependence of the multiplicative constant $(1+Ct)$. These properties are used in the proof of Proposition~\ref{prop_H2_sch_intro} to get~\eqref{H2_bar_intro}. 
We remark that the hypotheses are quite restrictive and do not allow using discrete random variables to continue our weak error analysis. Instead, we need a random variable $Z$ with a density that is regular enough and that satisfies the differential inequalities $\eta^*_m(z)\ge0$ up for all $M\in\N^*$, where $M$ depends on the order of the boost one wants to achieve with the random grids techniques.
In Chapter \ref{Chapter_CIR}, we show that $N\sim\mcN(0,1)$ satisfies all the required density hypotheses for all $M\in\N$ and, furthermore, that is the unique law that does that fixed the second and fourth moments. We proved the following characterization of the law $\mcN(0,1)$.
\begin{theorem}
    Let $Z$ be a symmetric random variable with a $\mcC^\infty$ probability density function $\eta$ such that $\E[Z^2]=1$, $\E[Z^4]=3$ and $\eta^*_m\ge 0$ for all $m\ge 1$. Then, $Z\sim\mcN(0,1)$.
\end{theorem}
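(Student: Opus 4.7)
The plan is to use the evenness of $\eta$ to reduce the problem to a one-variable complete monotonicity statement, then apply Bernstein's theorem to write $Z$ as a Gaussian scale mixture, and finally use the moment conditions $\E[Z^2]=1$, $\E[Z^4]=3$ to pin down the mixing law.

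Introduce $\tilde\eta(u)=\eta(\sqrt u)$ for $u\ge 0$, which is $\mathcal{C}^\infty$ on $(0,\infty)$ because $\eta$ is smooth and even. Under $u=z^2$ the Euler operator $D:=z\partial_z$ satisfies $D=2U$ with $U:=u\partial_u$, and the standard identity $U(U-1)\cdots(U-m+1)f(u)=u^m f^{(m)}(u)$ gives
\[
\prod_{k=0}^{m-1}(D-2k)\,\eta(z)\;=\;2^m\,u^m\,\tilde\eta^{(m)}(u).
\]
The first step is the operator identity
\[
\eta^*_m(z)\;=\;\frac{(-1)^m\,2^{m-1}}{(m-1)!}\,\prod_{k=0}^{m-1}(D-2k)\,\eta(z),
\]
which I would prove by induction on $m$: starting from $\eta^*_1=-z\eta'=-D\eta$, the recursion defining $c_{j,m}$ is exactly equivalent to $\eta^*_m=-\frac{2}{m-1}(D-2(m-1))\,\eta^*_{m-1}$ for $m\ge 2$. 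Combining the two displays yields
\[
\eta^*_m(z)\;=\;\frac{(-1)^m\,2^{2m-1}}{(m-1)!}\,u^m\,\tilde\eta^{(m)}(u),\qquad u=z^2.
\]

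Consequently, the assumption $\eta^*_m\ge 0$ for every $m\ge 1$ is equivalent to $(-1)^m\tilde\eta^{(m)}(u)\ge 0$ on $(0,\infty)$ for every $m\ge 1$, which together with $\tilde\eta\ge 0$ says that $\tilde\eta$ is completely monotonic on $(0,\infty)$. Bernstein's theorem then provides a positive Borel measure $\mu$ on $[0,\infty)$ with $\tilde\eta(u)=\int_0^\infty e^{-us}\,d\mu(s)$. Because $\eta^*_1\ge 0$ makes $\eta$ unimodal with its maximum at $0$ and $\int\eta=1$, one has $\eta(z)\to 0$ as $|z|\to\infty$, so $\tilde\eta(u)\to 0$ and dominated convergence forces $\mu(\{0\})=0$. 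Undoing the substitution, $\eta(z)=\int_0^\infty e^{-sz^2}d\mu(s)$; setting $d\nu(s)=\sqrt{\pi/s}\,d\mu(s)$ makes $\nu$ a probability measure on $(0,\infty)$ and exhibits $Z$ as a Gaussian scale mixture: conditionally on $S\sim\nu$, $Z\sim\mathcal N(0,1/(2S))$.

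To conclude, the moment identities give $\E[Z^2]=\E[1/(2S)]=1$ and $\E[Z^4]=\E[3/(4S^2)]=3$, hence $\E[1/S]=2$ and $\E[1/S^2]=4$. Cauchy--Schwarz reads $(\E[1/S])^2\le\E[1/S^2]$, which is an equality here; therefore $1/S$ is $\nu$-a.s.\ constant equal to $2$, so $S=1/2$ a.s., and $Z\sim\mathcal N(0,1)$. The main obstacle is the closed-form identity for $\eta^*_m$: once the operator identity $\eta^*_m=\frac{(-1)^m 2^{m-1}}{(m-1)!}\prod_{k=0}^{m-1}(D-2k)\eta$ is verified by induction against the recursion on $c_{j,m}$, the rest is Bernstein's theorem plus the equality case of Cauchy--Schwarz.
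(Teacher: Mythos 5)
Your proof is correct and follows essentially the same route as the paper: reduce the hypothesis $\eta^*_m\ge 0$ to complete monotonicity of $u\mapsto\eta(\sqrt u)$, invoke Bernstein's theorem to exhibit $\eta$ as a Gaussian scale mixture, normalize the mixing measure using $\int\eta=1$, and conclude from the equality case of Cauchy--Schwarz applied to $\E[1/(2S)]=\E[1/(2S)^2]=1$. The only stylistic difference is that you derive the key identity $\eta^*_m(\sqrt u)=\tfrac{(-1)^m 2^{2m-1}}{(m-1)!}\,u^m\,\partial_u^m[\eta(\sqrt u)]$ through the Euler operator $D=z\partial_z$ and the compact operator recursion $\eta^*_m=-\tfrac{2}{m-1}(D-2(m-1))\eta^*_{m-1}$, whereas the paper runs the same induction directly on the successive derivatives of $x\mapsto\eta(\sqrt x)$.
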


\subsubsection{On the adapted version of \eqref{H1_bar_intro}}
In Section 4 of Chapter \ref{Chapter_CIR}, we prove this version of the assumption \eqref{H1_bar_intro}.
\begin{prop}
    Let $Z$ that satisfies $(\mathcal{H}_Y)$ and $\hY^y_t$ be the scheme \eqref{strang_splitting_scheme_CIR_Z}. Let $m,L\in\N$ such that $L+3\le m$ and $f\in\CpolKL{2(m+3)}{L}$. Then, there exists a constant $C>0$ such that for $t\in[0,T]$,
    $$ \|\E[f(\hat{Y}^\cdot_t)]- \E[f({Y}^\cdot_t)]\| _{m,L+3}\le C t^3 \|f\|_{2(m+3),L}.
    $$
\end{prop}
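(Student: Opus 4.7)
The plan is to insert the common second-order Taylor expansion $T_{2}f(t,y):=\sum_{l=0}^{2}\frac{t^{l}}{l!}\mcL^{l}f(y)$ and split the weak error as
\begin{equation*}
  \E[f(\hY^{y}_{t})]-\E[f(Y^{y}_{t})] \;=\; \underbrace{\bigl(\E[f(\hY^{y}_{t})]-T_{2}f(t,y)\bigr)}_{=:R_{\mathrm{sch}}(t,y)} \;-\; \underbrace{\bigl(\E[f(Y^{y}_{t})]-T_{2}f(t,y)\bigr)}_{=:R_{\mathrm{cir}}(t,y)}.
\end{equation*}
The Ninomiya--Victoir construction shows that $\hY$ is a potential weak second-order scheme for $\mcL$, so $R_{\mathrm{sch}}$ and $R_{\mathrm{cir}}$ are both remainders of order $3$ in the sense of Definition~\ref{def_remainder_and_order_scheme}; the job is to upgrade these pointwise bounds into the seminorm estimate $\|\cdot\|_{m,L+3}\le Ct^{3}\|f\|_{2(m+3),L}$ by treating each remainder separately.

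\textbf{The CIR term.} Proposition~\ref{diffusion_expansion_wea_Abook} with $\nu=2$ gives the explicit representation
\begin{equation*}
  R_{\mathrm{cir}}(t,y) = \int_{0}^{t} \frac{(t-s)^{2}}{2}\,\E[\mcL^{3}f(Y^{y}_{s})]\,ds.
\end{equation*}
Because $\mcL$ has polynomial coefficients of degree at most $1$, a direct Leibniz computation yields $\|\mcL^{3}f\|_{m,L+3}\le C\|f\|_{m+6,L}\le C\|f\|_{2(m+3),L}$. Differentiating $m$ times in $y$ under the integral sign and applying Proposition~\ref{deriv_CIR_functionals_intro} to $\mcL^{3}f$ then gives $\|R_{\mathrm{cir}}(t,\cdot)\|_{m,L+3}\le Ct^{3}\|f\|_{2(m+3),L}$.

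\textbf{The scheme term.} For $R_{\mathrm{sch}}$ the plan is to exploit the explicit quadratic structure $\hY^{y}_{t}=\varphi(t,y,\sqrt{t}Z)=\mu(t,y)+\beta(t,y)Z+\gamma(t)Z^{2}$, where $\mu,\beta,\gamma$ are smooth in $(t,y)$ under $\sigma^{2}\le 4a$ and $\beta$ is proportional to $\sqrt{A(y)}$ with $A(y)=e^{-bt/2}y+\psi_{b}(t/2)(a-\sigma^{2}/4)$. Taylor-expand $f$ around $y$ to order $2(m+3)$:
\begin{equation*}
  f(\hY^{y}_{t}) = \sum_{k=0}^{2(m+3)-1}\frac{f^{(k)}(y)}{k!}(\hY^{y}_{t}-y)^{k} + \mcR_{f}(\hY^{y}_{t},y),
\end{equation*}
with integral remainder $\mcR_{f}$ involving $f^{(2(m+3))}$. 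Take expectation: by the symmetry of $Z$ and the matching of its second and fourth moments with those of $\mcN(0,1)$ under $(\mathcal H_{Y})$, together with the potential second-order scheme property, the first three powers of $t$ in the resulting polynomial sum reproduce $T_{2}f(t,y)$ exactly, while the remaining contributions package into a sum $t^{3}\sum_{k}f^{(k)}(y)\,p_{k}(t,y)$ with smooth coefficients $p_{k}$ of controlled polynomial growth. Bounding $\E[\mcR_{f}(\hY^{y}_{t},y)]$ via H\"older's inequality, using that $\hY^{y}_{t}-y$ is quadratic in $Z$ and that $Z$ has all finite moments, yields a contribution of order $t^{m+3}$, strictly stronger than needed.

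\textbf{Main obstacle.} The delicate step is controlling $\partial_{y}^{m}$ of the polynomial part. Each $\partial_{y}$ either adds one order to a derivative of $f$ through the chain rule $\partial_{y}f^{(k)}(y+u(\hY^{y}_{t}-y))=f^{(k+1)}(\cdot)(1+u(\partial_{y}\hY^{y}_{t}-1))$, or it hits one of $\mu,\beta,\gamma$; acting on $\beta\propto\sqrt{tA(y)}$ introduces an inverse singularity $1/\sqrt{A(y)}$ that blows up at the critical boundary $\sigma^{2}=4a$, $y=0$. These singularities must be paired with the matching factors $\sqrt{A(y)}$ present in the $\beta$-terms of $\varphi$, so that after using the moments of $Z$ they recombine into non-negative integer powers of $A(y)$ with controlled polynomial growth. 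This book-keeping is what forces expanding $f$ to order $2(m+3)$ rather than the bare $m+6$: each $y$-derivative effectively consumes two derivatives of $f$, one through the direct chain rule and one to absorb the induced $1/\sqrt{A(y)}$ singularity via a further local Taylor expansion. Once this cancellation is carried out, every remaining term is bounded in $\|\cdot\|_{m,L+3}$ by $Ct^{3}\|f\|_{2(m+3),L}$, and combining with the CIR bound finishes the proof.
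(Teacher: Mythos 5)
Your high-level decomposition — inserting $T_{2}f(t,y)=\sum_{l\le 2}\frac{t^{l}}{l!}\mcL^{l}f(y)$ and splitting the weak error into $R_{\mathrm{cir}}$ and $R_{\mathrm{sch}}$ — is exactly the paper's strategy, and your treatment of $R_{\mathrm{cir}}$ via the iterated It\^o expansion and Proposition~\ref{deriv_CIR_functionals_intro} matches Proposition~\ref{LCIR_Expansion_intro} with $\nu=2$. That half of the proof is fine.

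The scheme side has a genuine gap. You propose a single global Taylor expansion of $f$ around $y$ to order $2(m+3)$, followed by taking expectation and then $\partial_{y}^{m}$. Two problems arise. First, for the polynomial part you assert that the $t^{0},t^{1},t^{2}$ coefficients ``reproduce $T_{2}f$ exactly''; this is not something that can be cited from the scheme being potentially second-order (that gives only a pointwise $O(t^{3})$ bound on values, not a structural identity on all coefficients after $\partial_{y}^{m}$), and no computation verifying it is offered. Second, and more seriously, the Taylor remainder $\mcR_{f}(\hY^{y}_{t},y)=\int_{0}^{1}\frac{(1-u)^{2(m+3)-1}}{(2(m+3)-1)!}(\hY^{y}_{t}-y)^{2(m+3)}f^{(2(m+3))}(y+u(\hY^{y}_{t}-y))\,du$ needs to be estimated in $\|\cdot\|_{m,L+3}$, i.e.\ differentiated $m$ more times in $y$. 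Each such derivative either lands on the argument of $f^{(2(m+3))}$ (requiring up to $2(m+3)+m$ derivatives of $f$, exceeding the hypothesis $f\in\CpolKL{2(m+3)}{L}$) or lands on $(\hY^{y}_{t}-y)^{2(m+3)}$, producing a $1/\sqrt{A(y)}$ factor. Because $Z$ appears inside the argument of $f^{(2(m+3))}$ in a \emph{non-symmetric} way, the odd-moment cancellation you invoke does not obviously kill these singularities in the remainder term: they cancel cleanly only for a symmetric pair $g(x+\beta\sqrt{x}+\gamma)+g(x-\beta\sqrt{x}+\gamma)$, and $\mcR_{f}$ is not of that form.

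What the paper does instead is never Taylor-expand $f$ globally. It exploits the composition structure $\hY^{y}_{t}=X_{0}(t/2,X_{1}(\sqrt{t}Z,X_{0}(t/2,y)))$ and expands the three semigroup factors separately (Lemma~\ref{lem_expan_V}), so the remainder of the middle $X_{1}$-expansion stays in the \emph{symmetric} form $\tfrac12\E[Z^{2\nu+2}\int_{0}^{1}\cdots(V_{1}^{2\nu+2}f(X_{1}(u\sqrt{t}Z,\cdot))+V_{1}^{2\nu+2}f(X_{1}(-u\sqrt{t}Z,\cdot)))du]$. The singularity bookkeeping is then carried by Lemma~\ref{regular_rep} (and Corollary~\ref{cor_psign}), an exact identity obtained by integrating by parts in $u$: it shows $\psi_{g}^{(n)}(x)=\psi_{g^{(n)}}(x)+\sum_{j=1}^{n}\binom{n}{j}\beta^{2j}\int_{0}^{1}g^{(n+j)}(\cdots)\frac{(u-u^{2})^{j-1}}{(j-1)!}du$, so $n$ derivatives of the symmetric pair cost exactly $2n$ derivatives of $g$, with no surviving singular factors. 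This is the precise mechanism behind your heuristic ``each $y$-derivative consumes two derivatives of $f$,'' and it is the step your proposal leaves open. Without either reproducing this integration-by-parts identity or finding another way to keep the remainder of your Taylor expansion in $Z$-symmetric form, the estimate $\|\bar{R}f(t,\cdot)\|_{m,L+3}\le Ct^{3}\|f\|_{2(m+3),L}$ does not follow.
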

To prove this result, we compare $\E[f(\hat{Y}^\cdot_t)]$ and $\E[f(Y^\cdot_t)]$ with the expansion of order two $f+t\mcL f+\frac{t^2}{2}\mcL f$, as in the weak error analysis explained in \cite{AA_book} and resumed here.
We start by proving our framework's equivalent of Proposition \ref{diffusion_expansion_wea_Abook}. We need something more than show $(f,t,y)\mapsto\E[f({Y}^y_t)] - f(y)+t\mcL f(y)+\frac{t^2}{2}\mcL f(y)$ is a remainder of order three as in Definition \ref{def_remainder_and_order_scheme}, we need to prove also a norm estimate as specified in the next result, proved in Chapter \ref{Chapter_CIR}.
\begin{prop}\label{LCIR_Expansion_intro}
    Let $m,\nu,L\in \N$ such that $L+\nu+1\ge m$, $T>0$ and  $f\in\CpolKL{m+2(\nu+1)}{L}$. Let $Y^y$ be the CIR process and $\mcL$ its infinitesimal generator. Then, for  $t\in [0,T]$,  we have
    \begin{equation*}
      \E[f(Y^y_t)] =\sum_{i=0}^\nu \frac{t^i}{i!}\mcL^if(y) + t^{\nu+1}\int_0^1 \frac{(1-s)^\nu}{\nu!}\E[\mcL^{\nu+1}f(Y^y_{ts})]ds
    \end{equation*}
    where the function $y\mapsto \int_0^1 \frac{(1-s)^\nu}{\nu!}\E[\mcL^{\nu+1}f(Y^y_s)]ds$ belongs to $\CpolKL{m}{L}$, and we have the following estimate for all $t\in [0,T]$,
    \begin{equation}\label{rem_CIR_estimate_intro}
      \left\|\int_0^1 \frac{(1-s)^\nu}{\nu!}\E[\mcL^{\nu+1}f(Y^{\cdot}_{ts})]ds\right\|_{m,L+\nu+1} \le C\|f\|_{m+2(\nu+1),L},
    \end{equation}
    for some constant $C\in \R_+$ depending on $(a,b,\sigma,\nu,m,L,T)$.
\end{prop}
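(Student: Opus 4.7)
My strategy is the classical iterated Itô expansion combined with careful norm tracking through the generator $\mcL = (a-by)\partial_y + \tfrac{\sigma^2}{2} y \partial_y^2$. The key structural fact I would exploit is that $\mcL$ trades two derivatives for one extra power of $y$: Leibniz's rule yields
\begin{equation*}
(\mcL g)^{(j)} = (a-by)\, g^{(j+1)} - jb\, g^{(j)} + \tfrac{\sigma^2}{2}\, y\, g^{(j+2)} + \tfrac{\sigma^2 j}{2}\, g^{(j+1)},
\end{equation*}
and bounding each term by a multiple of $(1+y^{L'+1})\|g\|_{k+2,L'}$ immediately gives $\|\mcL g\|_{k, L'+1} \le C(a,b,\sigma,k)\, \|g\|_{k+2, L'}$ for every $g \in \CpolKL{k+2}{L'}$. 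Iterating this estimate $\nu+1$ times starting from $f \in \CpolKL{m+2(\nu+1)}{L}$ then gives $\mcL^{\nu+1} f \in \CpolKL{m}{L+\nu+1}$ with a matching norm bound $\|\mcL^{\nu+1}f\|_{m,L+\nu+1} \le C'\, \|f\|_{m+2(\nu+1),L}$.

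Next I would derive the expansion itself. Applying Itô's formula to $g(Y^y_t)$ for a sufficiently regular $g$ with polynomial growth gives $g(Y^y_t) = g(y) + \int_0^t \mcL g(Y^y_s)\,ds + \int_0^t \sigma\sqrt{Y^y_s}\, g'(Y^y_s)\, dW_s$; standard uniform polynomial moment bounds on the CIR over $[0,T]$ make the stochastic integral a true martingale, so that $\E[g(Y^y_t)] = g(y) + \int_0^t \E[\mcL g(Y^y_s)]\,ds$. Iterating this identity $\nu+1$ times with $g = \mcL^i f$ for $i=0,\dots,\nu$ and collapsing the resulting simplex integral via Fubini produces
\begin{equation*}
\E[f(Y^y_t)] = \sum_{i=0}^{\nu} \frac{t^i}{i!}\, \mcL^i f(y) + \int_0^t \frac{(t-s)^\nu}{\nu!}\, \E[\mcL^{\nu+1} f(Y^y_s)]\,ds,
\end{equation*}
and the change of variable $s = tu$ factors out $t^{\nu+1}$ as required.

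For the norm estimate, I would apply Proposition~\ref{deriv_CIR_functionals_intro} to $g = \mcL^{\nu+1} f$, which by the first step lies in $\CpolKL{m}{L+\nu+1}$; the hypothesis $L+\nu+1 \ge m$ is precisely the condition required by that proposition. This yields $\|\E[\mcL^{\nu+1} f(Y^{\cdot}_{ts})]\|_{m, L+\nu+1} \le C_\text{cir}\, \|\mcL^{\nu+1} f\|_{m, L+\nu+1}$ for every $s \in [0,1]$ and $t \in [0,T]$. Since this bound is uniform in $s$, dominated convergence legitimizes differentiating under the integral sign; combining it with the first step and using $\int_0^1 \frac{(1-s)^\nu}{\nu!}\,ds = \frac{1}{(\nu+1)!}$ delivers the claimed bound~\eqref{rem_CIR_estimate_intro} with a constant depending only on $(a,b,\sigma,\nu,m,L,T)$. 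The main obstacle is the index bookkeeping: each application of $\mcL$ has to be matched with a loss of two smoothness orders and a gain of one polynomial-growth exponent, and the chain must terminate exactly at the pair $(m,\, L+\nu+1)$ so that Proposition~\ref{deriv_CIR_functionals_intro} is applicable --- the constraint $L+\nu+1 \ge m$ has been tailored precisely for this. The probabilistic ingredients (uniform polynomial moments of the CIR and the vanishing of the martingale term in expectation) are standard once these indices are properly aligned.
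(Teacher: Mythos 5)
Your proposal is correct and follows essentially the same route as the paper's own proof: iterated Itô's formula with the CIR moment bounds and a change of variable gives the expansion, the Leibniz computation you write out is exactly the norm inequality $\|\mcL g\|_{k,L'+1}\le C\|g\|_{k+2,L'}$ that the paper obtains in Lemma~\ref{lem_estimnorm}(6), iterating it $\nu+1$ times gives $\|\mcL^{\nu+1}f\|_{m,L+\nu+1}\le C'\|f\|_{m+2(\nu+1),L}$, and the remainder bound then follows from Proposition~\ref{deriv_CIR_functionals_intro} together with the triangle inequality under the integral, using $L+\nu+1\ge m$ precisely as you note. The only cosmetic difference is that you re-derive the generator norm estimate directly rather than citing the lemma.
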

The norm estimate, given \eqref{H0_estimate_CIR_intro}, is just a trivial application of Itô's formula and the boundedness of the moments of $Y^y$.

As one can imagine, the trickiest part is to give the norm estimate for $\E[f(\hat{Y}^\cdot_t)] - f +t \cL f+\frac{t^2}{2} \cL^2f$. We proved the following result.
\begin{prop}\label{prop_H1sch_intro}
Let $Z$ that satisfies $(\mathcal{H}_Y)$, $\sigma^2\le 4a$ and $\hat{Y}^y_t$ be the scheme~\eqref{strang_splitting_scheme_CIR_Z}.  Let $m\in \N,L \in \N^*$ and $f \in \CpolKL{2(m+3)}{L}$. Then, we have for $t\in[0,T]$,
$$ \E[f(\hat{Y}^y_t)]=f(y) +t \cL f(y)+\frac{t^2}{2} \cL^2f(y) +\bar{R}f(t,y),$$
with $\|\bar{R}f(t,\cdot)\|_{m,L+3}\le C t^3 \|f\|_{2(m+3),L}$.
\end{prop}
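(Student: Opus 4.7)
The plan is to Taylor-expand $\E[f(\hat Y^y_t)]$ in powers of $t$ using the explicit polynomial structure of the scheme in $Z$, and to identify the first three coefficients with $f$, $\cL f$, and $\tfrac{1}{2}\cL^2 f$. Set $A(t,y):=e^{-bt/2}y+\psi_b(t/2)(a-\sigma^2/4)$, which is nonnegative since $\sigma^2\le 4a$, and $c(t):=\psi_b(t/2)(a-\sigma^2/4)$, so that expanding the square in \eqref{strang_splitting_scheme_CIR_Z} gives
\[
\hat Y^y_t = \tilde y(t,y) + e^{-bt/2}\Big(\sigma\sqrt{t A(t,y)}\,Z + \tfrac{\sigma^2 t}{4}Z^2\Big), \qquad \tilde y(t,y):=e^{-bt/2}A(t,y)+c(t).
\]
The stochastic increment $\xi_t(Z):=\hat Y^y_t-\tilde y(t,y)$ is polynomial of degree two in $Z$, with coefficients smooth in $(t,y)$ apart from the $\sqrt{A(t,y)}$ factor attached to the linear-in-$Z$ term.

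First I would Taylor-expand $f$ around the deterministic centre $\tilde y(t,y)$ (affine in $y$) to order $N=6$ with integral remainder, then take expectation. By symmetry of $Z$ all odd-$Z$ contributions vanish, and these are precisely the places where $\sqrt{A(t,y)}$ enters. Hence every coefficient of the finite-sum part is a polynomial in $y$ times $f^{(k)}(\tilde y(t,y))$, depending on $Z$ only through the even moments $\E[Z^{2j}]$ for $j\le N/2$. Using only $\E[Z^2]=1$ and $\E[Z^4]=3$ from $(\mathcal{H}_Y)$, a direct algebraic computation---equivalent to the second-order Strang-splitting identity $T^0_{t/2}\circ T^1_t\circ T^0_{t/2}=\mathrm{Id}+t\cL+\tfrac{t^2}{2}\cL^2+O(t^3)$ for $\cL=V_0+\tfrac{1}{2}V_1^2$, with $T^0_s f(y)=f(Y_0(s,y))$ and $T^1_s f(y)=\E[f(Y_1(\sqrt s Z,y))]$---shows that the coefficients at $t^0$, $t^1$, $t^2$ collapse respectively to $f(y)$, $\cL f(y)$, and $\tfrac{1}{2}\cL^2 f(y)$. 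The higher-order $t^k$ contributions ($k\ge 3$) in the finite-sum part are polynomial-in-$y$ coefficients times $f^{(j)}(\tilde y)$ for $j\le 2(N-1)$, and since $\partial_y\tilde y=e^{-bt}$ is bounded, they satisfy the bound $\|\cdot\|_{m,L+3}\le C t^3\|f\|_{2(m+3),L}$ after the $m$-fold differentiation.

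The delicate point is controlling the $m$-th $y$-derivative of the Taylor integral remainder $\E[R_N]$, which involves $f^{(N)}$ evaluated at $\tilde y+s\xi_t(Z)$. A crude pointwise bound gives $|\E[R_N]|\le C t^{N/2}\|f\|_{N,L}(1+y^{L+N/2})$, which at $N=6$ is $O(t^3)$ in $\|\cdot\|_{0,L+3}$; the obstacle is that naive differentiation of $\xi_t(Z)$ in $y$ produces a $1/\sqrt{A(t,y)}$ factor, singular at $y=0$ in the boundary case $\sigma^2=4a$. This singularity is spurious, since $y\mapsto \E[f(\hat Y^y_t)]$ is in fact smooth on $[0,\infty)$. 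I would resolve it via the representation
\[
\E[f(\hat Y^y_t)] = H\bigl(\sqrt{A(t,y)}\bigr), \quad H(x) := \E\!\left[f\Bigl(e^{-bt/2}(x+\tfrac{\sigma\sqrt t}{2}Z)^2+c(t)\Bigr)\right],
\]
noting that the symmetry of $Z$ yields $H(-x)=H(x)$; consequently $H(x)=\tilde H(x^2)$ for a smooth $\tilde H$, and $\E[f(\hat Y^y_t)]=\tilde H(A(t,y))$. Since $A$ is affine in $y$, bounding the $m$-th $y$-derivative reduces to bounding $\tilde H^{(m)}$ on $[0,\infty)$, obtained by the same Taylor-plus-expectation procedure applied directly to $\tilde H$: all odd $Z$-powers cancel, leaving polynomial-in-$A$ coefficients times polynomially bounded derivatives of $f$. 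Applied to $R_N$, this produces the estimate $\|\bar R f(t,\cdot)\|_{m,L+3}\le C t^3\|f\|_{2(m+3),L}$.

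Combining the finite-sum expansion with the remainder bound yields the claim $\E[f(\hat Y^y_t)] = f(y)+t\cL f(y)+\tfrac{t^2}{2}\cL^2 f(y)+\bar R f(t,y)$ with the announced norm estimate. The hardest step is the cancellation of the $\sqrt{A}$-singularities in the $y$-derivatives of the Taylor remainder, resolved via the evenness of $H$ in $x$; this evenness is the scheme-level translation of the fact that the symmetric random variable $Z$ cannot distinguish the sign of $\sqrt{A}$, and it is exactly the mechanism which, at the algebraic level, makes the $\E[Z^{2j}]$-moment matching suffice.
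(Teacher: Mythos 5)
Your decomposition and the key observation --- that the symmetry of $Z$ renders the $\sqrt{\cdot}$-induced singularities spurious --- are exactly those of the paper's proof of Proposition~\ref{prop_H1sch}. The paper executes this by writing $\hat Y^y_t = Y_0(t/2, Y_1(\sqrt t\,Z, Y_0(t/2, y)))$ and applying the one-step expansions of Lemma~\ref{lem_expan_V} successively to each factor, producing three remainders whose norms are controlled via Lemma~\ref{X0_inequalities} for the ODE flow and via Corollary~\ref{cor_psign} for the stochastic step (where the factor-two derivative loss originates). You instead unroll the composition into a single sixth-order Taylor expansion around the affine centre $\tilde y(t,y)$ and package the symmetry as the evenness of $H(x)=\E[f(e^{-bt/2}(x+\tfrac{\sigma\sqrt t}{2}Z)^2+c(t))]$, so that $\E[f(\hat Y^y_t)]=\tilde H(A(t,y))$ for a smooth $\tilde H$. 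That is a clean way of seeing why the $1/\sqrt{A}$ factors must cancel.

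The gap is precisely at the step you yourself flag as hardest. Knowing qualitatively that $\tilde H$ is smooth does not give the quantitative bound $|\tilde H^{(m)}(z)|\le C\,\|f\|_{2(m+3),L}\,(1+z^{L'})$ with $C$ uniform in $t\in[0,T]$, which is what the estimate on $\bar R f$ requires. To extract such a bound one must produce a representation of $\tilde H^{(m)}$ in which the would-be singular chain-rule factors $1/\sqrt{A}$ have been explicitly absorbed --- and this is exactly the content of the paper's Lemma~\ref{regular_rep}: identity \eqref{formula_psign} writes the $n$-th derivative of the symmetrized function $\psi_g$ (the precise analogue of your $\tilde H$) as a manifestly nonsingular integral of $g^{(n+j)}$ against the kernel $(u-u^2)^{j-1}$, from which Corollary~\ref{cor_psign} then delivers the polynomial-growth estimate with the factor-two derivative loss that produces the $2(m+3)$ in the statement. ``Applying the same Taylor-plus-expectation procedure directly to $\tilde H$'' is circular, because $\tilde H$ is only defined implicitly through $H$ and its argument $x^2$; its derivatives cannot be controlled without first proving something equivalent to that integral representation. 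Your structural reformulation is sound and is, in disguise, the same observation the paper uses, but closing the norm estimate would still require Lemma~\ref{regular_rep} or an equivalent explicit identity.
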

The result is proved by composition of linear operators $P^0_t:f\mapsto f(Y_0(t,\cdot))$ and $P^1_t:f\mapsto \E[f(Y_1(\sqrt{t}Z,\cdot))]$ using the expansions proved in  Lemma~\ref{lem_expan_V} in Chapter \ref{Chapter_CIR}.
The first thing that jumps out is that to bound from above the $(M, L+3)$-norm of the remainder $\bar{R}f(t,\cdot)$ requires $2m+6$ derivatives, which is more than the $m+6$ derivatives needed for the CIR semigroup in Proposition~\ref{LCIR_Expansion_intro}, when $\nu=2$. The reason lies in the choice of the random variable $Z$. In Proposition~\ref{prop_H1sch_intro}, we consider variables $Z$ that only satisfies assumption $(\mathcal{H}_Y)$, so even discrete random variable (e.g. $Z$ such that $\P(Z=-\sqrt{3})=\frac{1}{6}=P(Z=\sqrt{3})$, and $P(Z=0)=\frac{2}{3}$) can be considered. So, to prove that $\bar{R}f(t,\cdot)$ is regular enough, we cannot use regularization techniques that use the existence of a regular density for the law of $Z$. In Chapter \ref{Chapter_CIR}, then, we prove the results Lemma~\ref{regular_rep} and Corollary~\ref{cor_psign} that exploit only the symmetry of $Z$ to prove the regularity of the remainder. We want to remark that in a later stage, we proved a regularity result, Lemma~\ref{regular_density_mod} that is an extension of Lemma~\ref{regular_density}, that permits using $Z=N$ or (fixed an order of desired boost $\nu\in\N^*$) possibly more general absolutely continuous random variables, to reduce the regularity demanded by $f$ (Proposition~\ref{prop_H1sch_regular_density}).

\subsubsection{Numerical experiments}
In Section 5 of Chapter \ref{Chapter_CIR}, we propose several numerical tests to validate our theoretical results and to push a little further the analysis ($\sigma^2>4a$) from an empirical point of view. First, we explain how to implement the approximations $\cPh^{2,n}$ and $\cPh^{3,n}$. We verify in the CIR model using the Ninomiya Victoir scheme (so $\sigma\le4a$) for $f$ smooth that one gets approximations of order 4 and 6 as expected from the theory. When $\sigma^2>4a$, we run similar tests for the Heston model with the second order scheme $(\exp(X^{NV,x,y},\hY^y_t))$ proposed \cite{AA_MCOM}
\begin{equation}\label{B_NV_heston_scheme}
    \hX^{NV,x,y}_t = x +(r-\frac{\rho}{\sigma}a)t +\frac{\rho}{\sigma}(\hY^y_t-y) +(\frac{\rho}{\sigma}b-\frac{1}{2})\frac{y+\hY^y_t}{2}t +\sqrt{y + B (1-\rho^2)(\hY^{,y}_t-y) t}N, 
\end{equation}
where $B\sim\mathcal{B}(1/2)$ is Bernoulli random variable independent of $N\sim\mcN(0,1)$. We price put options and get results similar to those obtained with the CIR. We ran simulations even in the case $\sigma^2>4a$ using for the CIR the general scheme proposed by Alfonsi in \cite{AA_MCOM}. Empirically, we observe that this kind of scheme that uses an auxiliary scheme in a neighbourhood of 0 is not well suited to be coupled with the random grids: the variance explodes in $n$. Finally, we propose a second order scheme $(\exp(X^{Ex,x,y}_t,Y^y_t))$
\begin{equation}\label{B_Ex_heston_scheme}
    \hX^{Ex,x,y}_t = x +(r-\frac{\rho}{\sigma}a)t +\frac{\rho}{\sigma}(Y^y_t-y) +(\frac{\rho}{\sigma}b-\frac{1}{2})\frac{y+Y^y_t}{2}t +\sqrt{y + B (1-\rho^2)(Y^{,y}_t-y) t}N,
\end{equation}
in which we simulate exactly the volatility $Y^y$. We show for this new scheme that the boost of order 2 works well and that the variance of the correction does not explode.

\subsection{Resume of Chapter \ref{Chapter_Heston}}
The material in Chapter \ref{Chapter_Heston} is an extension and refinement of what we proved in Chapter \ref{Chapter_CIR}. This time we build high order weak approximations for the logHeston process $(X^{x,y}_t, Y^y_t)$, solution of 
\begin{align}
    dX^{x,y} _t &= (r-\frac{1}{2}Y^y_t) dt +\sqrt{Y^y_t} (\rho dW_t + \sqrt{1-\rho^2}dB_t), \quad X_0^{x,y} =x\in\R, \label{log_stock_heston_intro} \\
    dY^y_t &= (a-bY^y_t) dt + \sigma \sqrt{Y^y_t} dW_t, \quad Y^y_0 = y\ge 0. \nonumber
\end{align}
for which we rigorously prove a rate of convergence result. We passed from equation \eqref{stock_heston_intro} to \eqref{log_stock_heston_intro}, applying the transformation $X^{x,y}_t=\log(S^{s,y_t})$, to have a couple of SDEs with bounded moments, since, now, their coefficients have at most linear growth.
As already done in the previous work, we want to approximate the semigroup $P_T:f(x,y)\mapsto\E[f(X^{x,y} _t, Y^y_t)]$. Once again, our primary tools are splitting techniques and the random grids approach.

\subsubsection{The second order schemes and the main Theorem}
In Section 5 of Chapter \ref{Chapter_CIR}, we did several numerical tests over the Heston process $(\exp(X^{x,y}_t), Y^y_t)$ using schemes \eqref{B_NV_heston_scheme} and \eqref{B_Ex_heston_scheme} in which we used the respectively Ninomiya-Victoir scheme and exact simulation for the CIR process. 
Here, two different schemes that do not use the Bernoulli random variables have been proposed. We split the infinitesimal generator 
$$
\mcL = \frac{y}{2}(\partial^2_x + 2\rho\sigma\partial_x\partial_y+\sigma^2\partial_y)+ (r-\frac{y}{2})\partial_x + (a-by)\partial_y,
$$
of the log-Heston SDE as $\mcL = \mcL_B+\mcL_W$, where $\brho=\sqrt{1-\rho^2}$ and
\begin{align*}
    \mcL_B &= \big((r-\frac{\rho a}{\sigma})+(\frac{\rho b}{\sigma}-\frac 12)y\big) \partial_x +\frac{y}{2}\brho^2\partial_x^2, \\
    \mcL_W &=\frac{y}{2}(\rho^2\partial^2_x + 2\rho\sigma\partial_x\partial_y+\sigma^2\partial^2_y) + (a-by)(\frac{\rho}{\sigma}\partial_x+\partial_y),
\end{align*}
that are infinitesimal generators of
\begin{equation*}
    \begin{cases}
      dX_t &= \big((r-\frac{\rho a}{\sigma})+(\frac{\rho b}{\sigma}-\frac 12)Y_t\big) dt+\brho\sqrt{Y_t} dB_t,\\
      dY_t &= 0,
    \end{cases}\quad\text{ and }\quad
    \begin{cases}
        dX_t &= (\frac{\rho a}{\sigma}-\frac{\rho b}{\sigma}Y_t) dt +\rho \sqrt{Y_t} dW_t,\\
      dY_t &= (a-bY_t) dt +\sigma \sqrt{Y_t} dW_t,
    \end{cases}
\end{equation*}
respectively. We emphasize that we have made (and will make again later) an abuse of notation by using the variables $(X, Y)$ in both systems; in fact, our goal here is only to associate the infinitesimal generators with the respective SDEs (not the solutions).
One should remark that the splitting is chosen to have in the second system $dX_t = \frac{\rho}{\sigma}dY_t$. So if one has an exact scheme for the CIR $Y^y_t$, has also an exact scheme for $\mcL_W$ given by
$$\varphi_W(t,x,y,Y^y_t)=(x+\frac{\rho}{\sigma}(Y^y_t-y),Y^y_t).$$
Instead, an exact scheme for $\mcL_B$ is given by 
$$
\varphi_B(t,x,y,N)=(x+(r-\rho a/\sigma)t -(1/2-\rho b/\sigma)yt+\brho\sqrt{ty}N,~y), \ \text{ with } N\sim\mcN(0,1),
$$
so, for all $f\in\CP{0}$, the semigroups associated to the two systems have the representation
$$
P^B_tf(x,y)=\E[f( \varphi_B(t, x, y, N ))], \qquad P^W_tf(x,y)=\E[f( \varphi_W(t,x,y,Y^y_t))].
$$
Composing schemes $\varphi_W$ and $\varphi_B$ as in \eqref{strang_splitting_gen_scheme} gives us the potential weak second order scheme $(\hX^{x,y}_t,Y^y_t)$, where the first component is given by
\begin{equation}\label{Xscheme_Ex_2N}
\hX^{x,y}_t = x +(r-\frac{\rho}{\sigma}a)t +\frac{\rho}{\sigma}(Y^y_t-y)  +(\frac{\rho}{\sigma}b-\frac{1}{2})\frac{y+Y^y_t}{2}t +\sqrt{(1-\rho^2)\frac{t}{2}}\bigg(\sqrt{y}N_1+\sqrt{Y^y_t}N_2\bigg).
\end{equation}
The linear operator $\hP^{Ex}_t$ associated to this scheme is, for all $\CP{0}$
\begin{equation}\label{def_PEx_intro}
\hP^{Ex}_t:f(x,y)\mapsto \E[f(\hX^{x,y}_t,Y^y_t)].
\end{equation}
This scheme is defined for all $\sigma>0$, but in practice simulating exactly the CIR process takes time, so we propose a second order scheme for the semigroup $P_W$ using the Ninomiya-Victoir splitting. One has $\mcL_W = \mcL_0+\mcL_1$ where
\begin{equation*}
  \mcL_0 =(a-\frac{\sigma^2}{4}-by)(\frac{\rho}{\sigma}\partial_x + \partial_y ),\qquad \mcL_1 =\frac{y}{2}(\rho^2\partial^2_x + 2\rho\sigma\partial_x\partial_y+\sigma^2\partial^2_y) + \frac{\rho \sigma}{4}\partial_x+ \frac{\sigma^2}4 \partial_y,
\end{equation*}
are the infinitesimal generators respectively associated to
$$\begin{cases} dX_t &= (\frac{\rho }{\sigma}(a-\sigma^2/4)-\frac{\rho b}{\sigma}Y_t) dt\\
dY_t &= (a-\sigma^2/4-bY_t) dt  \end{cases} \text{ and }
\begin{cases} dX_t &= \frac{\rho \sigma }{4} dt +\rho \sqrt{Y_t} dW_t\\
  dY_t &= \frac{\sigma^2}4 dt +\sigma \sqrt{Y_t} dW_t. \end{cases}$$
Let $\psi_b(t)=\frac{1-e^{-bt}}b$ (convention $\psi_b(t)=t$ for $b=0$) and define
\begin{align*}
  \varphi_0(t,x,y)&=\Big(x-\frac{\rho b}{\sigma}\psi_b(t)y +\frac{\rho}{\sigma}\psi_b(t)(a-\frac{\sigma^2}{4}),~e^{-bt}y+\psi_b(t)(a-\frac{\sigma^2}{4})\Big), \\
  \varphi_1(t,x,y)&=\Big(x+\frac{\rho }{\sigma}\big((\sqrt{y}+\frac{\sigma t}{2})^2-y\big),~(\sqrt{y}+\frac{\sigma t}{2})^2\Big). 
\end{align*}
We have for $f \in \CP{0}$,
\begin{equation*}
    P^0_t f(x,y)= f(\varphi_0(t,x,y)) \text{ and } P^1_t f(x,y)= \E[f(\varphi_1(\sqrt{t}G,x,y))], \text{ with } G\sim \mathcal{N}(0,1).  
\end{equation*}
The Ninomiya-Victoir scheme for $\mathcal{L}_W$ is then $P^0_{t/2}P^1_tP^0_{t/2}$ and is well-defined only for $\sigma^2\le 4a$. We define the linear operator
\begin{equation}\label{def_PNV_intro}
    \hat{P}_t^{NV}=P^B_{t/2}P^0_{t/2}P^1_tP^0_{t/2} P^B_{t/2},
\end{equation}
that is associated to the scheme $(\hX^{x,y}_t,\hY^y_t)$ where the first component is
\begin{equation}\label{Xscheme_NV_2N}
    \hX^{x,y}_t= x +(r-\frac{\rho}{\sigma}a)t +\frac{\rho}{\sigma}(\hY^y_t-y)  +(\frac{\rho}{\sigma}b-\frac{1}{2})\frac{y+\hY^y_t}{2}t +\sqrt{(1-\rho^2)\frac{t}{2}}\bigg(\sqrt{y}N_1+\sqrt{\hY^y_t}N_2\bigg),
\end{equation}
and the second one is the Ninomiya-Victoir scheme for the CIR \eqref{strang_splitting_scheme_CIR}.
We call $\mcC^{k}(\R\times\R_+)$ the space of continuous functions $f:\R \times \R_+ \to \R$ such that the partial derivatives $\partial^\a_x \partial^\b_y  f(x,y)$ exist and are continuous with respect to $(x,y)$ for all $(\a,\b)\in\N^2$ such that $\a+2\b\le k$. We define for every $L\in\N$
\begin{multline}\label{def_CpolkL_intro}
    \CPL{k}{L}=\{ f\in \mcC^{k}(\R\times\R_+) \mid \exists C>0 \text{ such that } \forall(\a,\b)\in\N^2, \a+2\b\le k,\\
    |\partial^\a_x \partial^\b_y  f(x,y)| \le C {\bf f}_L(x,y)   \},
\end{multline}
where ${\bf f}_L(x,y)=  (1+x^{2L}+y^{2L})$, for all $\ x\in \R$, $y\in \R_+$.
Furthermore, we set  $$\CP{k}= \cup_{L\in \N} \CPL{k}{L}.$$
The main result we proved is the following
\begin{theorem}\label{main_theorem_logHeston_intro}
    Let $\hat{P}_t$ be either $\hat{P}_t^{Ex}$ defined by~\eqref{def_PEx_intro} or $\hat{P}_t^{NV}$ by~\eqref{def_PNV_intro}. Let $T>0$, $n\in \N^*$ and $h_l=T/n^l$. Let $\cPh^{1,n}=\hat{P}_{h_1}^{[n]}$, $\cPh^{2,n}$ be defined by~\eqref{def_boost2_intro} and  $\cPh^{\nu,n}$ the further approximations developed in~\cite{AB}. Let $\nu \ge 1$.  For any $f\in \CP{12\nu}$ $x\in \R$ and $y\ge 0$, we have
     $$\cPh^{\nu,n}f(x,y)-P_T f(x,y) = O(1/n^{2\nu}).$$ 
\end{theorem}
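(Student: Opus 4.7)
The plan is to derive this theorem as an application of the random grids framework of Alfonsi and Bally, exactly as Theorem \ref{CIR_main_result_intro} was derived for the CIR case. The operator $\cPh^{\nu,n}$ is built from the elementary scheme $\hat{P}_{h_l}$ (either $\hat{P}^{Ex}_{h_l}$ or $\hat{P}^{NV}_{h_l}$), and the construction of \cite{AB} guarantees a convergence rate of order $n^{-\nu\alpha}$ once the two axioms \eqref{H1_bar_intro} and \eqref{H2_bar_intro} are established with $\alpha=2$ on a suitable scale of seminorms. The natural choice is to equip $\bigcup_{k,L}\CPL{k}{L}$ with the family $\|f\|_{k,L}=\sup_{(x,y)\in\R\times\R_+,\,\alpha+2\beta\le k}|\partial_x^\alpha \partial_y^\beta f(x,y)|/{\bf f}_L(x,y)$, so that the statement $f\in\CP{12\nu}$ in the theorem reflects the number of derivatives that the boosting procedure of order $\nu$ consumes (roughly $6$ derivatives per boost level, as in Chapter \ref{Chapter_CIR}).

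The first step is the analog of Proposition \ref{prop_H2_NV_intro}: prove that both the true semigroup $P_t$ and the iterates of $\hat{P}_t^{Ex}$, $\hat{P}_t^{NV}$ are stable with respect to $\|\cdot\|_{k,L}$ uniformly in $t\in[0,T]$ and in the number of iterates up to $T/t$. For the true semigroup, I would differentiate $\E[f(X^{x,y}_t,Y^y_t)]$ with respect to $x$ and $y$ by combining the tangent flow for $X^{x,y}$ (which is explicit since $X^{x,y}$ depends on $y$ only through the CIR component) with the CIR derivative estimates of Proposition \ref{deriv_CIR_functionals_intro}. For the scheme, the $x$-variable is polynomial-Gaussian and the stability in $x$ is immediate from the moment bounds of the Gaussian; the stability in $y$ reduces either to exact CIR estimates (case $\hat{P}^{Ex}$) or, in the $\hat{P}^{NV}$ case, to Proposition \ref{prop_H2_sch_intro} applied coordinate by coordinate, using the fact that $\hat{Y}^y_t$ in \eqref{strang_splitting_scheme_CIR} enters $\hX^{x,y}_t$ only through the terms $\sqrt{\hY^y_t}N_2$ and $(\rho/\sigma)(\hY^y_t-y)$.

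The second step is the analog of Proposition \ref{prop_H1sch_intro}: show that the one-step weak error $\hat{P}_t f - P_t f$ is a remainder of order $t^{3}$ in the $(m,L+\text{something})$ seminorm, when $f\in\CPL{2(m+3)}{L}$ or similar. I would expand $P_t f$ up to order two using an Itô/Taylor expansion analogous to Proposition \ref{LCIR_Expansion_intro} but for the log-Heston generator $\mcL=\mcL_B+\mcL_W$, and expand $\hat{P}_t f$ in the same way using the splitting structure $P^B_{t/2}P^W_tP^B_{t/2}$ (or its $\hP^{NV}$ substitute). Both expansions must match up to order $t^2$: this is exactly what Proposition \ref{scheme_composition_intro} guarantees, since $P^B_t$ is an exact scheme (hence a potential weak scheme of arbitrary order) for $\mcL_B$, and $P^W_t$ (resp.\ $P^B_{t/2}P^0_{t/2}P^1_tP^0_{t/2}P^B_{t/2}$ in the NV case) is a potential weak second-order scheme for $\mcL_W$ by the Strang-splitting identity \eqref{strang_splitting_gen_scheme}. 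What remains is to control the remainder in the graded norm rather than pointwise; this is done by bounding $\mcL^3 f$ and its iterated mixed derivatives, which accounts for the loss of $2(\nu+1)=6$ derivatives in the level-by-level boosting.

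The main obstacle, as in Chapter \ref{Chapter_CIR}, is the stability estimate in the $y$ variable for the scheme $\hat{P}^{NV}$: the map $y\mapsto (\sqrt{y}+\sigma\sqrt{t}N/2)^2$ has a square-root singularity that cannot be handled by a naive chain rule. This is precisely why Lemma \ref{regular_density_intro} is required, and it forces $Z=N\sim\mathcal{N}(0,1)$ so that the densities $\eta_m^*$ remain nonnegative. The second subtle point is to keep a sharp $(1+Ct)$ (rather than $(1+C\sqrt{t})$) dependence on $t$ in the single-step estimate so that the product over $n^l$ time steps remains uniformly bounded; this is inherited from \eqref{estimate_X1_functional_intro}. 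Once \eqref{H1_bar_intro} and \eqref{H2_bar_intro} are established with $\alpha=2$ and some $\beta$ related to the number $12$ appearing in the theorem's hypothesis, invoking the general result of \cite{AB} recalled in \eqref{rate_conv_nu_rg_intro} yields $\|P_T f-\cPh^{\nu,n}f\|_{0,L}=O(n^{-2\nu})\|f\|_{k,L}$ and in particular pointwise convergence at rate $n^{-2\nu}$ at any fixed $(x,y)$, which is the claim.
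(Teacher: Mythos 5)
Your overall plan---establish the analogs of~\eqref{H1_bar_intro} and~\eqref{H2_bar_intro} on the scale of norms $\|\cdot\|_{k,L}$ and then invoke the Alfonsi--Bally boosting machinery---is the paper's. But the methods you propose for the stability estimate~\eqref{H2_bar_intro} would not go through. You suggest differentiating the true semigroup $P_tf(x,y)=\E[f(X^{x,y}_t,Y^y_t)]$ in $y$ via the tangent flow combined with the CIR density estimates of Proposition~\ref{deriv_CIR_functionals_intro}. This fails: the first variation $\partial_y Y^y_t$ solves a linear SDE with diffusion coefficient $\tfrac{\sigma}{2\sqrt{Y^y_t}}$, which is not integrable near the boundary without the Feller condition, and the same singularity enters $\partial_y X^{x,y}_t$ through $\int_0^t \tfrac{\partial_y Y^y_s}{2\sqrt{Y^y_s}}\,dW_s$, so the naive pathwise approach produces uncontrollable terms. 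Likewise ``Proposition~\ref{prop_H2_sch_intro} applied coordinate by coordinate'' does not decouple: $\hat{X}^{x,y}_t$ depends on $\hat{Y}^y_t$ through $\sqrt{\hat{Y}^y_t}\,N_2$ and $(\rho/\sigma)(\hat{Y}^y_t-y)$, so the mixed derivatives $\partial_x^\alpha\partial_y^\beta\E[f(\hat{X}^{x,y}_t,\hat{Y}^y_t)]$ are not reached by the scalar CIR lemma. Finally, Lemma~\ref{regular_density_intro} and the $\eta^*_m\ge 0$ condition from Chapter~\ref{Chapter_CIR} are not invoked at all in Chapter~\ref{Chapter_Heston}; the paper explicitly states that it does not repeat that analysis, precisely because the Heston schemes are built by composing \emph{exact} factors, which removes the need for the density machinery.

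The idea you are missing is that each factor $P^0$, $P^1$, $P^B$, $P^W$ appearing in~\eqref{def_PEx_intro} and~\eqref{def_PNV_intro} is itself the semigroup of a log-Heston SDE of the form~\eqref{log-Heston-ext_SDEs_intro} for a suitable set of parameters (the four parameter lists below Proposition~\ref{prop-rep-logHeston-estim_intro}). Hence the Briani--Caramellino--Terenzi stochastic representation~\eqref{stoc_repr-new_intro} applies simultaneously to every factor and to $P_t$ itself: it trades each $\partial_y$ for a change of parameters $a\mapsto a+\beta\sigma^2/2$ that moves the CIR component away from the boundary, and that is what yields $\|P^I_t f\|_{k,L}\le e^{Ct}\|f\|_{k,L}$ for $I\in\{0,1,B,W\}$ and, by composition, the full assumption~\eqref{H2_bar_intro}. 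Your second step is closer to the mark, but Proposition~\ref{scheme_composition_intro} only gives pointwise remainder bounds; the paper actually proves a graded-norm variant (the scheme-composition lemma stated after Proposition~\ref{prop-rep-logHeston-estim_intro}) to obtain~\eqref{H1_bar_intro} with the precise loss of $12$ derivatives per level (not~$6$, and note $2(\nu+1)\ne 6$ in general). Also a small slip: $P^B_{t/2}P^0_{t/2}P^1_tP^0_{t/2}P^B_{t/2}$ is a second-order scheme for the full generator $\mcL=\mcL_B+\mcL_W$, not for $\mcL_W$; the Ninomiya--Victoir part for $\mcL_W$ is only $P^0_{t/2}P^1_tP^0_{t/2}$.
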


\subsubsection{On the adapted version of (\ref{H1_bar_intro}) and (\ref{H2_bar_intro})}
As in Chapter \ref{Chapter_CIR}, to prove the required assumptions \eqref{H1_bar_intro} and \eqref{H2_bar_intro}, we need to fix a family of norms. We endow $\CPL{k}{L}$ with the following norm:
\begin{equation}\label{def_NormCpolKL_intro}
    \|f\|_{k,L}=\sum_{\alpha+2\beta \le k} \sup_{(x,y) \in \R \times \R_+} \frac{|\partial^\alpha_x\partial^\beta_y f(x,y)|}{\bff_L(x,y)}.
\end{equation}
In Chapter \ref{Chapter_Heston}, we do not repeat the analysis that allows the use of discrete random variables to get assumption \eqref{H1_bar_intro}; our second order schemes are obtained by composing exact schemes. This simplifies the analysis to get the regularity of our approximations and permits us to prove it all by studying the Cauchy problem of a slightly general log Heston SDE. The next proposition has been proved by Briani et al. in \cite{BCT}, the only additional result is the norm estimate \eqref{norm_estim_logHeston_semigroup}.
\begin{prop}\label{prop-rep-logHeston-estim_intro}
    Let $k,L \in \mathbb{N}$ and suppose that $f \in \CPL{k}{L}$. Let $\lambda \ge 0$, $c,d \in \R$. Let $(X^{t, x, y}, Y^{t, y})$ be the solution to the SDE, for $s\ge t$, 
    \begin{equation}\label{log-Heston-ext_SDEs_intro}
      \begin{cases}
        X^{t,x,y}_s &= x +\int_t^s (c+dY^y_r) dr + \int_t^s  \lambda \sqrt{Y^y_r} (\rho dW_r + \sqrt{1-\rho^2} dB_r)\\
        Y^{t,y}_s &= y+ \int_t^s (a-bY^y_r) dr +\sigma \int_t^s  \sqrt{Y^y_r} dW_r,
      \end{cases}
    \end{equation}
   and set
    $$ u(t, x, y)=\E[f(X_{T}^{t, x, y}, Y_{T}^{t, y})]=P_{T-t}f(x,y).$$
    Then, $u(t,\cdot,\cdot)\in \CPL{k}{L}$ and the following stochastic representation holds for $\a+2\beta \le k$,
    \small
    \begin{multline}\label{stoc_repr-new_intro}
    \partial_{x}^{\a} \partial_{y}^{\b} u(t, x, y)=\E\bigg[e^{-\b b(T-t)} \partial_{x}^{\a} \partial_{y}^{\b} f\big(X_{T}^{\b, t, x, y}, Y_{T}^{\b,t, y}\big) \\
     \quad+\b \int_{t}^{T}e^{-\b b(s-t)}\Big(\frac{\lambda^2}{2} \partial_{x}^{\a+2} \partial_{y}^{\b-1} u + d \partial_{x}^{\a+1} \partial_{y}^{\b-1} u \Big)\big(s, X_{s}^{\b, t, x, y}, Y_{s}^{\b,t, y}\big) d s\bigg],
    \end{multline}
    where $\partial_{x}^{\a} \partial_{y}^{\b-1}  u:=0$ when $\beta=0$ and $(X^{\b, t, x, y}, Y^{\b, t, y}), \beta \geq 0$, denotes the solution starting from $(x, y)$ at time $t$ to the SDE \eqref{log-Heston-ext_SDEs_intro}  with parameters
    \begin{equation*}
      \rho_{\b}=\rho,  \quad a_{\b}=a+\b \frac{\sigma^{2}}{2}, \quad b_{\beta}= b, \quad c_{\b}=r+\b \rho \sigma \lambda, \quad d_{\b}=d, \quad \sigma_{\b}=\sigma.
    \end{equation*}
    Moreover, one has the following norm estimation for the semigroup   
    \begin{equation}\label{norm_estim_logHeston_semigroup}
     \forall k, L \in \N, T>0, \  \exists C, \forall f \in  \CPL{k}{L}, t\in[0,T], \   \|P_tf\|_{k,L}\le  \|f\|_{k,L} e^{Ct}.
    \end{equation}
\end{prop}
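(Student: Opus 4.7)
The representation formula \eqref{stoc_repr-new_intro} is already established in \cite{BCT}, so the only genuinely new ingredient is the norm estimate \eqref{norm_estim_logHeston_semigroup}. The plan is to exploit the recursive structure of \eqref{stoc_repr-new_intro}: each mixed derivative $\partial_x^\a \partial_y^\b u$ is written as an expectation of the corresponding derivative of $f$ evaluated at a shifted affine diffusion $(X^{\b}, Y^{\b})$, plus a time integral involving mixed derivatives of $u$ of the same total order $\a+2\b$ but with strictly smaller $y$-order $\b-1$. This suggests an induction on $\b$, at each level closed by a Gronwall-type argument.

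First, I would establish the preliminary moment bound: for every $L\in\N$ and every $\b \ge 0$, there exists $C=C(T,L,\b,a,b,\sigma,\rho,\lambda,c,d)$ such that
\begin{equation*}
   \E\bigl[\bff_L(X_s^{\b,t,x,y},Y_s^{\b,t,y})\bigr] \le C\, \bff_L(x,y), \qquad t\le s\le T.
\end{equation*}
This follows from the affine structure of \eqref{log-Heston-ext_SDEs_intro}: the CIR component $Y^{\b}$ has polynomial moments controlled by its initial value, and $X^{\b}$ is the sum of $x$, a drift linear in $Y^{\b}$ and a stochastic integral with square-integrable integrand $\lambda\sqrt{Y^{\b}}$; standard Burkholder--Davis--Gundy plus Grönwall yield the claim.

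Next, I would induct on $\b$ for fixed total order $k=\a+2\b$. The base case $\b=0$ is immediate from the formula $\partial_x^\a u(t,x,y)=\E[\partial_x^\a f(X_T^{0,t,x,y},Y_T^{0,t,y})]$, the assumption $|\partial_x^\a f|\le \|f\|_{k,L}\bff_L$ and the moment bound. For $\b \ge 1$, the first term on the right-hand side of \eqref{stoc_repr-new_intro} is controlled in exactly the same manner, while in the second term the integrand involves $\partial_x^{\a+2}\partial_y^{\b-1} u$ and $\partial_x^{\a+1}\partial_y^{\b-1} u$; these have the same total order $k$ but $y$-order $\b-1$, so the induction hypothesis applies and furnishes a bound $C(T-s)\|f\|_{k,L}\bff_L(\cdot,\cdot)$ at time $s$. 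Plugging this into the time integral, combined once more with the moment bound applied to $\bff_L(X_s^{\b},Y_s^{\b})$, produces an inequality of the form
\begin{equation*}
   \|\partial_x^\a \partial_y^\b u(t,\cdot,\cdot)\|_\infty^{(L)} \le A\|f\|_{k,L} + B\int_t^T \|u(s,\cdot,\cdot)\|_{k,L}\,ds,
\end{equation*}
where the left-hand norm means the supremum of the ratio to $\bff_L$. Summing over the finitely many multi-indices with $\a+2\b\le k$ and invoking Grönwall's lemma on $t\mapsto \|u(T-t,\cdot,\cdot)\|_{k,L}$ closes the estimate and yields $\|P_tf\|_{k,L}\le e^{Ct}\|f\|_{k,L}$.

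The main technical obstacle is purely bookkeeping: keeping track of how the constants depend on $(k,L,\b)$ so that the inductive step truly produces a usable Grönwall inequality rather than an unbounded recursion, and handling the sign of $b$ (so that $e^{-\b b(s-t)}$ is absorbed into the generic constant). No new probabilistic idea is needed beyond moment estimates for the affine system \eqref{log-Heston-ext_SDEs_intro} and the representation already proved in \cite{BCT}.
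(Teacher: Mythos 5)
The core of your argument — prove a moment bound $\E[\bff_L(X_s^{\b},Y_s^{\b})]\le C\bff_L(x,y)$ and then induct on $\b$ using the representation \eqref{stoc_repr-new_intro} — is exactly the paper's strategy, and your sketch is sound. There is, however, a small but telling inconsistency in the way you close the argument: you first say the induction hypothesis on $\b-1$ furnishes a direct, $s$-uniform bound $\tilde C\|f\|_{k,L}\bff_L(\cdot,\cdot)$ on the integrand, and then you nevertheless set up a Gr\"onwall inequality in $\int_t^T\|u(s,\cdot,\cdot)\|_{k,L}\,ds$. These are two different (and individually valid) routes, but invoking both is redundant: once the induction on $\b$ gives you the uniform bound on $\partial_x^{\a+2}\partial_y^{\b-1}u$ and $\partial_x^{\a+1}\partial_y^{\b-1}u$, the time integral is simply $\le C\|f\|_{k,L}(T-t)\bff_L(x,y)$ after the moment bound, and you are done without Gr\"onwall. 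This is what the paper does — it proves, by induction on $\b$, the pointwise inequality
\begin{equation*}
 \sup_{x,y}\frac{|\partial_x^\a\partial_y^\b u(t,x,y)|}{\bff_L(x,y)} \le \sup_{x,y}\frac{|\partial_x^\a\partial_y^\b f(x,y)|}{\bff_L(x,y)}\,e^{C(T-t)} + C\|f\|_{k,L}(T-t),
\end{equation*}
which immediately gives the uniform bound needed at the next level of the induction, and then sums over $\a+2\b\le k$ at the very end. The Gr\"onwall route can also be made to work, but then you should drop the induction and be careful that the a priori finiteness/integrability of $t\mapsto\|u(t,\cdot,\cdot)\|_{k,L}$ (which the paper's induction supplies for free) is justified from \cite{BCT}. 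Also note the spurious factor $C(T-s)$ you attribute to the induction hypothesis: the correct bound is $s$-independent, namely $\tilde C\|f\|_{k,L}$; at $s=T$ these derivatives equal the corresponding derivatives of $f$, which do not vanish. Finally, a minor stylistic difference: you prove the moment bound via BDG plus Gr\"onwall, whereas the paper deduces it directly from the polynomial/affine property of the extended log-Heston process (cited from the literature), exploiting that the semigroup acts as a matrix exponential on polynomials of degree $\le 2L$. Both yield the same estimate $\E[\bff_L(X_s,Y_s)]\le e^{Cs}\bff_L(x,y)$.
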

The \eqref{H2_bar_intro} assumption can be obtained through the norm estimate \eqref{norm_estim_logHeston_semigroup}, just considering these sets of parameters
\begin{itemize}
    \item $\tilde{a}=a-\frac{\sigma^2}4$, $\tilde{b}
  =b$, $\tilde{c}=\frac{\rho}{\sigma}\left(a-\frac{\sigma^2}4\right) $, $\tilde{d}=-b\frac{\rho}{\sigma}$, $\tilde{\lambda}=0$, $\tilde{\sigma}=0$ for $P^0$,
  \item  $\tilde{a}=\frac{\sigma^2}4$, $\tilde{b}
  =0$, $\tilde{c}=\frac{\rho \sigma}4 $, $\tilde{d}=0$, $\tilde{\lambda}=\rho$, $\tilde{\sigma}=\sigma$, $\tilde{\rho}=1$ for $P^1$,
  \item $\tilde{a}=0$, $\tilde{b}
  =0$, $\tilde{c}=r-\frac{\rho a}{\sigma} $, $\tilde{d}=\frac{\rho b}{\sigma} -\frac 12$, $\tilde{\lambda}=\bar{\rho}$,  $\tilde{\sigma}=0$, $\tilde{\rho}=0$ for $P^B$,
  \item $\tilde{a}=a$, $\tilde{b}
  =b$, $\tilde{c}=\frac{\rho a}\sigma $, $\tilde{d}=-\frac{\rho b}\sigma$, $\tilde{\lambda}=\rho$, $\tilde{\sigma}=\sigma$, $\tilde{\rho}=1$ for $P^W$.
\end{itemize}
To get the \eqref{H1_bar_intro}, given the regularity results just shown, we prove a variant of Proposition \ref{scheme_composition_intro} that roughly tells the composition of schemes works as a composition of operators.
\begin{lemma}(Scheme composition) Let $\nu \in \N$ and $T>0$. Let $V_i$, $i\in \{1,\dots,I\}$,  be infinitesimal generators such that  there exists $k_i,L_i \in \N$ such that
    \begin{equation*}
      \forall k \in \N, \exists C\in \R_+,  \forall f \in \CPL{k+k_i}{L}, V_i f \in  \CPL{k}{L+L_i} \text{ and } \|V_if\|_{k,L+L_i}\le C \|f\|_{k+k_i,L}.
    \end{equation*}
    Let $k^\star=\max_{1\le i\le I} k_i$ and $L^\star=\max_{1\le i\le I} L_i$.
    We assume that for any $i$, $\hat{P}^i_t:\CPL{0}{L} \to \CPL{0}{L} $ is such that
    \begin{align*} 
        \forall k, L \in \N, 0\le  \bar{q}\le \nu+1, &\ \exists C,\ \forall f \in \CPL{k+\bar{q}k_i}{L}, \forall t\in[0,T], \notag \\& \|\hat{P}^i_t f -\sum_{q=0}^{\bar{q}-1}\frac{t^q}{q!} V_i^qf\|_{k, L+ \bar{q} L_i} \le C t^{\bar{q}} \|f\|_{k+\bar{q} k_i, L}. 
    \end{align*}
    Then, we have for $\lambda_1,\dots,\lambda_I\in [0,1]$, 
    \begin{align*}
     & \forall k, L \in \N, 0\le  \bar{q}\le \nu+1, \ \exists C, \forall f \in \CPL{k+\bar{q}k^\star}{L}, \forall t \in [0,T]  \notag \\
      & \left\|\hat{P}^I_{\lambda_I t} \dots \hat{P}^1_{\lambda_1 t} f -\sum_{q_1+\dots+q_I\le \bar{q}-1}\frac{\lambda_1^{q_1}\dots \lambda_I^{q_I} t^{q_1+\dots+q_I} }{q_1!\dots q_I!} V_I^{q_I}\dots V_1^{q_1}f\right\|_{k, L+ \bar{q} L^\star} \le C t^{\bar{q}} \|f\|_{k+\bar{q} k^\star, L}. 
    \end{align*}
\end{lemma}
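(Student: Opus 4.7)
I would prove this scheme-composition lemma by induction on the number of operators $I$, with the truncation order $\bar{q}$ and the base norm indices $(k,L)$ carried as free parameters throughout.

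The base case $I=1$ is nothing but the hypothesis on $\hat{P}^1$ applied with time $\lambda_1 t \in [0,T]$ and the observation that $k_1 \le k^\star$, $L_1 \le L^\star$. For the inductive step, write
\[
\hat{P}^I_{\lambda_I t}\cdots \hat{P}^1_{\lambda_1 t} f \;=\; \hat{P}^I_{\lambda_I t}\bigl[G(t,f)\bigr], \qquad G(t,f):= \hat{P}^{I-1}_{\lambda_{I-1} t}\cdots \hat{P}^1_{\lambda_1 t} f,
\]
and apply the inductive hypothesis at order $\bar{q}$ to decompose $G(t,f) = T_{\bar{q}-1}(t,f) + R_1(t,f)$, where
\[
T_{\bar{q}-1}(t,f) = \sum_{\substack{(q_1,\dots,q_{I-1})\\ s := q_1+\dots+q_{I-1}\le \bar{q}-1}} \frac{\lambda_1^{q_1}\cdots \lambda_{I-1}^{q_{I-1}} t^s}{q_1!\cdots q_{I-1}!}\,V_{I-1}^{q_{I-1}}\cdots V_1^{q_1} f
\]
and $\|R_1\|_{k, L+\bar{q}L^\star} \le C t^{\bar{q}} \|f\|_{k+\bar{q}k^\star, L}$.

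For the remainder part, I would use the $\bar{q}=0$ case of the hypothesis on $\hat{P}^I$, which simply asserts that $\hat{P}^I_t$ is bounded as an operator on $\CPL{k}{L+\bar{q}L^\star}$ uniformly in $t\in[0,T]$, to get $\|\hat{P}^I_{\lambda_I t} R_1\|_{k,L+\bar{q}L^\star}\le Ct^{\bar{q}}\|f\|_{k+\bar{q}k^\star,L}$. For each fixed multi-index $(q_1,\dots,q_{I-1})$ with $s\le \bar{q}-1$, set $g := V_{I-1}^{q_{I-1}}\cdots V_1^{q_1} f$ and apply the hypothesis on $\hat{P}^I$ at order $\bar{q}-s$ to get
\[
\hat{P}^I_{\lambda_I t} g \;=\; \sum_{q_I=0}^{\bar{q}-s-1}\frac{(\lambda_I t)^{q_I}}{q_I!}V_I^{q_I} g + R_{2,\mathbf{q}}(t,f),
\]
with $\|R_{2,\mathbf{q}}\|_{k, L+\sum_{i<I} q_i L_i+(\bar{q}-s)L_I}\le C t^{\bar{q}-s} \|g\|_{k+(\bar{q}-s)k_I, L+\sum_{i<I} q_i L_i}$. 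Iterating the norm bound on the $V_i$ gives $\|g\|_{k+(\bar{q}-s)k_I, L+\sum_{i<I} q_i L_i}\le C\|f\|_{k+(\bar{q}-s)k_I+\sum_{i<I} q_i k_i, L}$. Since $k_i,k_I\le k^\star$ and $(\bar{q}-s)+\sum_{i<I} q_i=\bar{q}$, the derivative index is bounded by $k+\bar{q}k^\star$ and the growth index by $L+\bar{q}L^\star$, so multiplying by the coefficient $t^s$ yields a bound $\le Ct^{\bar{q}}\|f\|_{k+\bar{q}k^\star,L}$ in $\|\cdot\|_{k,L+\bar{q}L^\star}$.

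The final step is just a combinatorial re-indexing: the double indexing $\{(q_1,\dots,q_{I-1}):\sum_{i<I}q_i\le\bar{q}-1\}\times\{q_I:q_I\le \bar{q}-s-1\}$ is exactly $\{(q_1,\dots,q_I):\sum_{i=1}^I q_i\le\bar{q}-1\}$, so the main terms recombine into the desired Taylor expansion of the full composition, while the remainders sum (finitely many) to something bounded by $Ct^{\bar{q}}\|f\|_{k+\bar{q}k^\star,L}$ in $\|\cdot\|_{k,L+\bar{q}L^\star}$. The main obstacle is the bookkeeping in this step: one must verify that, uniformly over all the multi-indices appearing in the partial expansions, the derivative loss $(\bar{q}-s)k_I+\sum_{i<I}q_ik_i$ and the growth increase $(\bar{q}-s)L_I+\sum_{i<I}q_iL_i$ are each controlled by the global budgets $\bar{q}k^\star$ and $\bar{q}L^\star$, which is what makes the worst-case constants $k^\star$ and $L^\star$ in the statement the right quantities.
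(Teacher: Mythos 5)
Your proof is correct and follows essentially the same approach as the paper: expand the inner composition, apply the outermost operator's Taylor hypothesis to each term of the partial expansion, and control the leftover remainder by the $\bar{q}=0$ (boundedness) case; the paper writes out only the case $I=2$ "for readability" and the passage from $I-1$ to $I$ is exactly the induction you have made explicit.
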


\subsubsection{Numerical experiments}
In Section 3 of Chapter \ref{Chapter_Heston}, we remark the first component of second order schemes \eqref{Xscheme_Ex_2N} and \eqref{Xscheme_NV_2N} are normally distributed given respectively $Y^y_t$ and $\hY^y_t$, so we can save one Gaussian random variable and simulate instead
\begin{align}
    \hX^{EX,x,y}_t &= x +(r-\frac{\rho}{\sigma}a)t +\frac{\rho}{\sigma}(Y^y_t-y)  +(\frac{\rho}{\sigma}b-\frac{1}{2})\frac{y+Y^y_t}{2}t +\sqrt{(1-\rho^2)\frac{y+Y^y_t}{2}t}N,\label{Xscheme_Ex_1N}\\
    \hX^{NV,x,y}_t &= x +(r-\frac{\rho}{\sigma}a)t +\frac{\rho}{\sigma}(\hY^y_t-y)  +(\frac{\rho}{\sigma}b-\frac{1}{2})\frac{y+\hY^y_t}{2}t +\sqrt{(1-\rho^2)\frac{y+\hY^y_t}{2}t}N,\label{Xscheme_NV_1N}
\end{align}
that produce respectively the same laws of \eqref{Xscheme_Ex_2N} and \eqref{Xscheme_NV_2N}.
We run several tests (Put and Asian options), both in low volatility regime ($\sigma^2\le4a$) and in high volatility regime ($\sigma^2>4a$), that prove the effectiveness of the standard second order scheme $\cPh^{1,n}$ and of the boosted approximation $\cPh^{2,n}$: the empirical evidence confirm what proved in the theory giving approximation of order 2 and 4 respectively.
We run numerical experiments to study the variance of the estimators depending on $n$, testing two different couplings. Besides the choice of coupling, we also consider the schemes for the first component studied in Chapter \ref{Chapter_CIR} \eqref{B_NV_heston_scheme} and \eqref{B_Ex_heston_scheme} that use the Bernoulli random variable. We show the new schemes  \eqref{Xscheme_Ex_2N} and \eqref{Xscheme_NV_2N} are better suited to being used with random grids (besides, they require simulating less random variables) independently to the coupling chosen. Furthermore, the coupling studied in Cheng \cite{ZhengC} produce less variance.

In the end, we apply the random grids techniques to a modification of second order weak scheme proposed by Alfonsi \cite{AA_NPK} for the multifactor Heston model (known to be an excellent proxy of rough Heston Model see, for example, \cite{AEE19, AK24, BB23}). Unlike the Heston case, the approximation $\cPh^{2,n}$ is only defined when the parameters belong to a low volatility regime set. Nevertheless, we achieved good results showing the boost of random grids works.

\subsection{Resume of Chapter \ref{Chapter_PDE}}
Examining the PDE linked to the infinitesimal generator of an SDE offers a valuable method for demonstrating the regularity (smoothness) of the corresponding semigroup $P$, and Proposition \ref{prop-rep-logHeston-estim_intro} serves as a notable illustration of this principle.

Briani et al. \cite{BCT} obtained the formula \eqref{stoc_repr-new_intro} for the derivatives of the semigroup associated with the logHeston process studying this slightly general PDE
\begin{equation}\label{reference_PDE_intro}
    \begin{cases}
      \partial_tu(t,x,y) +\mcL u(t,x,y) +\varrho u(t,x,y) = h(t,x,y),\quad   t\in[0,T), &x\in\R, y\in\R_+, \\
      u(T,x,y) = f(x,y), \quad  &x\in\R, y\in \R_+,
    \end{cases}
\end{equation}
where $\mcL$ is the following differential operator 
\begin{equation}\label{reference_infinit_gen_intro}
\mcL = \frac{y}{2}(\lambda^2\partial^2_{x} + 2 \rho\lambda\sigma \partial_{x}\partial_{y} +  \sigma^2 \partial^2_{y}) +(c+ d y) \partial_x + (a-by) \partial_y,
\end{equation}
and  $b,c,d\in\R$, $a,\lambda,\sigma>0$ $\rho\in(-1,1)$, $\varrho\in\R$.
In this chapter, we want to delve deeper into the analysis of this PDE. In particular, we want to find minimal regularity hypotheses under which the function (produced via Feynman-Kac)
\begin{equation}\label{u_gen_sol_intro}
    u(t,x,y)=\E\bigg[e^{\varrho (T-t)}f(X^{t,x,y}_T,Y^{t,y}_T)-\int_t^Te^{\varrho (s-t)} h(s,X^{t,x,y}_s,Y^{t,y}_s)ds\bigg],
\end{equation}
is the unique classical or viscosity solution of \eqref{reference_PDE_intro}.
\subsubsection{Classical solutions results}
The first result concerns the resolution of \eqref{reference_PDE_intro}.
In \cite[Proposition 5.3]{BCT} Briani, Caramellino and Terenzi proved that if $h=0$, $\varrho=0$ and, for all $m+2n\le4 $, $f$ has partial derivatives  $\partial^m_x\partial^n_y f\in\mcC(\R\times\R_+)$  that have polynomial growth, then, for all $m+2n+4l\le4 $, the function $u$ in \eqref{u_gen_sol_intro} has partial derivatives $\partial^l_t\partial^m_x\partial^n_y u\in\mcC([0,T]\times\R\times\R_+)$ that have polynomial growth in $x$ and $y$ uniformly in $t$. In particular $u\in \mcC^{1,2}([0,T]\times(\R\times\R_+))$ and solves the reference PDE \eqref{reference_PDE_intro}.
In Section \ref{classical_section}, we give two refinements regarding classical solutions.

The first result concerns a verification result in which we prove that $u$ (as in \eqref{u_gen_sol_intro}) is a solution of the reference PDE \eqref{reference_PDE_intro} for more general $h$ and less regular $f$.  We prove, in fact, the following result. 
\begin{prop}\label{hp_minimal_u_sol_intro}
    Let $u$ be defined as in \eqref{u_gen_sol_intro}. Let $f$ and $h$ be such that, for all $m+2n\le2 $,  $f$ has partial derivatives $\partial^m_x\partial^n_y f\in\mcC(\R\times\R_+)$ with polynomial growth and $h$ has partial derivatives  $\partial^m_x\partial^n_y h\in\mcC([0,T)\times\R\times\R_+)$. Furthermore, suppose $h$ and $\partial_y h$ be such that $|h|^K_{\alpha,2}$, $|\partial_y h|^K_{\alpha,2}<\infty$ for all $K$ compact set contained in $[0,T)\times\R\times\R_+$.
    Then $u$ has partial derivatives $\partial_t u,\partial^m_x\partial^n_y u\in\mcC([0,T)\times\R\times\R_+)$ with $m+2n\le2 $ with polynomial growth in $x$ and $y$ uniformly in time, and solves
    \begin{equation}
      \begin{cases}
      \partial_t u(t,x,y)+ \mcL u(t,x,y) +\varrho u(t,x,y)= h(t,x,y),\qquad& t\in [0,T), \,(x,y)\in \R\times\R_+,\\
      u(T,x,y)= f(x,y),\qquad& (x,y)\in \R\times\R_+.
      \end{cases}	
    \end{equation}
\end{prop}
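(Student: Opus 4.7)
The plan is to decompose $u = u_1 + u_2$ with
$$u_1(t,x,y) = e^{\varrho(T-t)}\E[f(X^{t,x,y}_T, Y^{t,y}_T)], \quad u_2(t,x,y) = -\int_t^T e^{\varrho(s-t)} \E[h(s, X^{t,x,y}_s, Y^{t,y}_s)]\, ds,$$
and to treat each piece by regularizing the terminal/source data and propagating the regularity through the stochastic representation \eqref{stoc_repr-new_intro}. The key observation is that for $\alpha+2\beta\le 2$, the right-hand side of \eqref{stoc_repr-new_intro} only involves derivatives of the terminal data of order at most $(\alpha,\beta)$, plus strictly lower-order derivatives of the semigroup that can be controlled inductively on $\beta\in\{0,1\}$.

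For $u_1$, I would construct a sequence $f_n\to f$ of smooth functions with polynomial growth (by first $\mathcal{C}^2$-extending $f$ from $\R\times\R_+$ to $\R^2$ and then convolving with a mollifier) such that $\partial^\alpha_x \partial^\beta_y f_n \to \partial^\alpha_x \partial^\beta_y f$ locally uniformly for $\alpha+2\beta\le 2$. The BCT result applies to each $f_n$ and gives $u_1^n \in \mathcal{C}^{1,2}$ satisfying $\partial_t u_1^n + \mathcal{L} u_1^n + \varrho u_1^n = 0$. Using \eqref{stoc_repr-new_intro} inductively on $\beta$, dominated convergence transfers the convergence of $\partial^\alpha_x \partial^\beta_y f_n$ to the derivatives of $u_1^n$ of order $\alpha+2\beta\le 2$, yielding continuity, the polynomial bounds uniformly in $t\in[0,T]$, and, via $\partial_t u_1^n = -\mathcal{L} u_1^n - \varrho u_1^n$, the continuity of $\partial_t u_1$ together with the homogeneous PDE for $u_1$.

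For $u_2$, I introduce the slice functions $v_s(t,x,y) := \E[h(s, X^{t,x,y}_s, Y^{t,y}_s)]$ for $0\le t<s\le T$. Applying the previous step to each $v_s$ (with $h(s,\cdot,\cdot)$ as terminal data at time $s$) yields $v_s \in \mathcal{C}^{1,2}$ on $[0,s)\times\R\times\R_+$, $\partial_t v_s + \mathcal{L} v_s = 0$, and $v_s(t,x,y)\to h(s,x,y)$ as $t\uparrow s$. The estimates of Proposition~\ref{prop-rep-logHeston-estim_intro} applied slice-by-slice, combined with the local $|\cdot|^K_{\alpha,2}$ bounds on $h$, provide a domination of $\partial^\alpha_x \partial^\beta_y v_s(t,x,y)$ in $s$ on each compact of $[0,T)\times\R\times\R_+$, allowing differentiation under the integral sign:
$$\partial^\alpha_x \partial^\beta_y u_2(t,x,y) = -\int_t^T e^{\varrho(s-t)} \partial^\alpha_x \partial^\beta_y v_s(t,x,y)\, ds, \qquad \alpha+2\beta\le 2.$$
Leibniz's formula in $t$, using $v_t(t,x,y) = h(t,x,y)$ and $\partial_t v_s = -\mathcal{L} v_s$, then gives $\partial_t u_2 = h - \varrho u_2 - \mathcal{L} u_2$, so that $u_2$ solves the inhomogeneous PDE with zero terminal condition. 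Summing the two PDEs and using $u(T,\cdot,\cdot) = f$ completes the verification.

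The main obstacle is the joint continuity of $(t,s)\mapsto v_s(t,x,y)$ and of its spatial derivatives up to the diagonal $s=t$, which is exactly what is needed to guarantee continuity of $\partial_t u_2$ on $[0,T)\times\R\times\R_+$. This is precisely the role of the hypothesis that $h$ and $\partial_y h$ have locally finite parabolic Hölder semi-norms $|\cdot|^K_{\alpha,2}$: it plays the classical Schauder role making the Duhamel-type integral a genuine $\mathcal{C}^{1,2}$ function even for the $y$-degenerate generator $\mathcal{L}$, while the auxiliary control on $\partial_y h$ compensates for the loss of one $y$-derivative induced by the term $\partial^{\alpha+1}_x \partial^{\beta-1}_y u$ appearing in \eqref{stoc_repr-new_intro} when $\beta=1$.
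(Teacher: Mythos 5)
Your decomposition $u=u_1+u_2$ and the Duhamel/slicing treatment of $u_2$ are reasonable, but the proof has a genuine gap at the single place where the paper has to work hardest. The statement does not claim $u\in\mathcal C^{1,2}$ up to $\{y=0\}$: the only derivatives asserted to be continuous on $[0,T)\times\R\times\R_+$ are $\partial_t u$ and $\partial_x^m\partial_y^n u$ with $m+2n\le 2$, i.e.\ $u,\partial_x u,\partial_x^2 u,\partial_y u$. The mixed derivative $\partial_x\partial_y u$ and the pure $\partial_y^2 u$ are \emph{not} in this list, and under the stated hypotheses on $f$ they genuinely need not extend continuously to the boundary. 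Yet both appear in $\mathcal L u$. Your passage to the limit via ``$\partial_t u_1^n=-\mathcal L u_1^n-\varrho u_1^n$'' therefore does not go through as written: the families $\partial_y^2 u_1^n$, $\partial_x\partial_y u_1^n$ are not controlled by the derivatives $\partial_x^\alpha\partial_y^\beta f_n$ with $\alpha+2\beta\le 2$ whose convergence you arrange, so you cannot take the limit in the term $\mathcal L u_1^n$ near $\{y=0\}$. The closing paragraph of your proposal acknowledges that something Schauder-type is needed, but it asserts the conclusion (a ``genuine $\mathcal C^{1,2}$ function'') rather than proving it, and it claims more than is actually true.

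What the paper actually establishes is weaker and more precise: under these hypotheses, the \emph{weighted} second-order terms $y\,\partial_y^2 u$ and $y\,\partial_x\partial_y u$ tend to $0$ as $(t,x,y)\to(t_0,x_0,0)$ (Proposition~\ref{min_hp_pde_existence_sol}), so that $\mathcal L u$ extends continuously to $\{y=0\}$ with the value $(c\partial_x+a\partial_y)u$, and then the PDE at the boundary follows by a separate mean value argument in $t$ (Remark~\ref{remark_PDE_on_boundary}). The proof of the boundary-vanishing fact is a blow-up argument: one writes the PDE solved by $v=\partial_y u$, rescales around $(t_n,x_n,y_n)\to(t_0,x_0,0)$ by the factor $m_n\sim 1/y_n$, applies interior Schauder estimates with the weighted H\"older norms $|\cdot|^{K}_{\alpha,2}$ (this is exactly where the hypotheses $|h|^K_{\alpha,2},|\partial_y h|^K_{\alpha,2}<\infty$ are used), obtains uniform $\mathcal C^{2+\alpha}$ bounds for the rescaled functions, and concludes by Ascoli--Arzel\`a that the rescaled gradient vanishes in the limit. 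Nothing in your mollification/slicing scheme replaces this step. If you want to pursue your route, you would still need to prove this weighted boundary-vanishing estimate (or a uniform Schauder-type bound on $y\partial_y^2 u_1^n$, $y\partial_x\partial_y u_1^n$ near $y=0$ independent of $n$), which is precisely the content of the paper's blow-up lemma.
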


$|h|^K_{\alpha,2}$, $|\partial_y h|^K_{\alpha,2}$ in Proposition \eqref{hp_minimal_u_sol_intro} denote weighted Hölder seminorms that roughly measure the hölderianity of $h$ and $\partial_y h$ with a precise power of the weight. For a precise definition, we refer to \eqref{weighted_alfa_holder_norm_1} and \eqref{weighted_alfa_holder_norm_2}. Furthermore, we observe that Proposition \ref{hp_minimal_u_sol_intro} eases the requirements on the function $f$, demanding the condition $m+2n\le 2$ be satisfied rather than the stricter condition $m+2n\le4$.

The second contribution states sufficient conditions to ensure the uniqueness of classic solutions.
\begin{prop}
    There is at most one classical solution $u\in \mcC^{1,2}([0,T)\times(\R\times\R^*_+)) \cap \mcC^{1,1,1}([0,T)\times\R\times\R_+)\cap\mcC([0,T]\times\R\times\R_+)$ to PDE \eqref{reference_PDE_intro} such that the solution has polynomial growth in $(x,y)$ uniformly in $t$. So, in particular, under the hypothesis of Proposition \ref{hp_minimal_u_sol_intro}, $u$ defined as in \eqref{u_gen_sol_intro} is the unique solution.
\end{prop}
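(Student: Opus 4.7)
The plan is a Feynman--Kac uniqueness argument via It\^o's formula. Given two classical solutions $u_1,u_2$ of~\eqref{reference_PDE_intro} with the stated regularity and polynomial growth, set $v:=u_1-u_2$; then $v$ enjoys the same regularity, solves the homogeneous PDE $\partial_t v+\mcL v+\varrho v=0$ on $[0,T)\times\R\times\R^*_+$, satisfies $v(T,\cdot,\cdot)\equiv 0$ and has polynomial growth in $(x,y)$ uniformly in $t$. I would apply It\^o's formula to $e^{\varrho(s-t)}v(s,X_s^{t,x,y},Y_s^{t,y})$ between $s=t$ and $s=T$, use the PDE to cancel the drift, and conclude $v(t,x,y)=\E[e^{\varrho(T-t)}v(T,X_T,Y_T)]=0$.

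First, I would fix $(t,x,y)\in[0,T)\times\R\times\R_+$ with $y>0$ and localize via the stopping times $\tau_R=\inf\{s\ge t:|X_s^{t,x,y}|+Y_s^{t,y}\ge R\}$ and $\tau_\epsilon=\inf\{s\ge t:Y_s^{t,y}\le\epsilon\}$, for $R>0$ and $0<\epsilon<y$. On $[t,\tau_R\wedge\tau_\epsilon\wedge T]$ the CIR component $Y$ stays bounded below by $\epsilon$, so $v\in\mcC^{1,2}$ along the path; classical It\^o applies, the drift vanishes by the PDE, the bounded local-martingale part has zero expectation, and
\begin{equation*}
v(t,x,y)=\E\bigl[e^{\varrho(\tau_R\wedge\tau_\epsilon\wedge T-t)}\,v\bigl(\tau_R\wedge\tau_\epsilon\wedge T,X_{\tau_R\wedge\tau_\epsilon\wedge T},Y_{\tau_R\wedge\tau_\epsilon\wedge T}\bigr)\bigr].
\end{equation*}
The limit $R\to\infty$ follows from the polynomial growth of $v$ combined with the standard finiteness of polynomial moments of $(X,Y)$ on $[0,T]$ (the coefficients of~\eqref{log-Heston-ext_SDEs_intro} have at most linear growth), giving uniform integrability and dominated convergence. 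Next, since $Y$ has continuous paths, $\tau_\epsilon\uparrow\tau_0:=\inf\{s\ge t:Y_s=0\}$ as $\epsilon\downarrow 0$; on $\{\tau_0>T\}$ one has $\tau_\epsilon\wedge T=T$ eventually and $v(T,\cdot,\cdot)\equiv 0$; on $\{\tau_0\le T\}$, continuity of $v$ up to $y=0$ yields $v(\tau_\epsilon,X_{\tau_\epsilon},\epsilon)\to v(\tau_0,X_{\tau_0},0)$. We therefore obtain
\begin{equation*}
v(t,x,y)=\E\bigl[\mathds{1}_{\{\tau_0\le T\}}\,e^{\varrho(\tau_0-t)}\,v(\tau_0,X_{\tau_0},0)\bigr].
\end{equation*}

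The hard part is showing this residual term vanishes, that is, propagating the martingale identity through the random times at which $Y$ reaches $\{0\}$. The key observations are that, because every second-order coefficient of $\mcL$ carries a factor $y$, the PDE extends by $\mcC^{1,1,1}$ continuity at $y=0$ to the first-order boundary relation $\partial_t v+c\,\partial_x v+a\,\partial_y v+\varrho v=0$, and that under $a>0$ the CIR process leaves $0$ instantaneously (its drift at $0$ is strictly positive) while $\{s\in[t,T]:Y_s^{t,y}=0\}$ has zero Lebesgue measure almost surely. Either by invoking the strong Markov property at the successive endpoints of the excursions of $Y$ from $\{0\}$, or equivalently by a generalised It\^o formula for $\mcC^{1,1,1}$ functions whose second-order $y$-derivatives are controlled by the PDE (justifiable via a mollification of $v$ in the $y$-variable), the identity iterates and forces $v(\tau_0,X_{\tau_0},0)=0$ almost surely on $\{\tau_0\le T\}$; hence $v\equiv 0$. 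The second assertion of the proposition is then immediate: under the hypotheses of Proposition~\ref{hp_minimal_u_sol_intro} the Feynman--Kac representation~\eqref{u_gen_sol_intro} is one such classical solution, so it must be the unique one.
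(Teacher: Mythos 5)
Your Feynman--Kac route is a genuinely different strategy from the paper's, which uses an analytic maximum-principle argument: the paper perturbs the difference $w=u_1-u_2$ by the strict supersolution $\varepsilon e^{Mt}(1+x^{L+2}+y^{L+2})$ and derives a contradiction at the first time the set $\{w^\varepsilon<0\}$ is touched, handling the boundary $y=0$ by observing that there the PDE degenerates to a first-order transport relation and that the sign conditions on $\partial_t w^\varepsilon$, $\partial_x w^\varepsilon$, $\partial_y w^\varepsilon$ at the minimum clash with $a>0$. That argument only ever needs first derivatives at $y=0$, which is exactly what the $\mcC^{1,1,1}$ hypothesis supplies---and this is precisely where your proof has a real gap.

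Your reduction to the residual term on $\{\tau_0\le T\}$ is fine, but neither of your proposed ways to kill it works. When the Feller condition fails ($\sigma^2>2a$, $a>0$), the origin is a regular reflecting boundary and the zero set $\{s:Y^{t,y}_s=0\}$ is almost surely a perfect set of zero Lebesgue measure: excursions away from $0$ accumulate at $\tau_0$ (and at every zero), so there is no first excursion after $\tau_0$ and no chronological iteration of the strong Markov property at ``successive endpoints.'' The mollification alternative is also stuck: for an arbitrary $v$ in the stated uniqueness class nothing controls $y\,\partial_y^2 v$, $y\,\partial_x\partial_y v$ or $y\,\partial_x^2 v$ individually near $y=0$; the PDE only pins down the single combination $y\bigl(\lambda^2\partial_x^2+2\rho\lambda\sigma\partial_x\partial_y+\sigma^2\partial_y^2\bigr)v$, so the error term in a mollified It\^o formula cannot be shown to vanish. (The estimate $y\,\partial_y^2 u\to 0$ proved in the chapter holds only for the specific Feynman--Kac function $u$ under extra regularity assumptions on $f$ and $h$, not for a generic member of the class in which uniqueness is asserted.) To push the probabilistic argument through you would need an a priori second-order boundary estimate that the hypotheses do not provide; the paper's barrier argument is designed precisely so that $\partial_y^2$ is never evaluated at $y=0$.
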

The importance of this proposition is particularly relevant when the Feller condition is not satisfied (cf. page \pageref{Page:Feller_condition_uniqueness}). Requiring $u$ to belong to $\mcC([0,T]\times\R\times\R_+)$ means giving conditions on the whole boundary, and for uniformly elliptic operators, it is quite natural. Here, the degeneracy and the fact that the associated diffusion can reach the boundary (where the log-Heston operator $\mcL$ is degenerate) impose an additional condition on the first-order derivatives to obtain uniqueness.
\subsubsection{Viscosity solutions results}
To get the regularity of $u$ necessary to satisfy in a classical sense the PDE \eqref{reference_PDE_intro}, we asked $f$ to be smooth enough.
In order to drop this regularity hypothesis, in Section \ref{viscosity_section} of Chapter \ref{Chapter_PDE}, we study the problem from the more general point of view of viscosity solutions. Being \eqref{reference_PDE_intro} a linear PDE, we used a standard smoothing argument (truncation and mollification) to prove the following viscosity verification theorem in which we consider the final data to be just continuous.
\begin{prop}\label{existence_viscsol_fcont_intro}
    Let $f\in \mcC(\R\times \R_+)$ and $h\in \mcC((0,T]\times\R\times\R_+)$ be such that for all compact set $\mcK_T\subset[0,T]\times\R\times\R_+$ there exists $p>1$ such that
    \begin{equation}\label{integrability_condition_intro}
      \sup_{(t,x,y)\in\mcK_T}\big\|f(X_T^{t,x,y},Y^{t,y}_T)\big\|_{L^p(\Omega)}, \sup_{(t,x,y)\in\mcK_T}\int_t^T \|h(s,X^{t,x,y}_s,Y^{t,y}_s)\|_{L^p(\Omega)}ds<\infty .
    \end{equation}
    Then, $$u(t,x,y)=\E\left[e^{\varrho (T-t)}f(X_T^{t,x,y},Y^{t,y}_T)-\int_t^T e^{\varrho (s-t)} h(s,X^{t,x,y}_s,Y^{t,y}_s)ds\right]$$ belongs to $\mcC([0,T]\times\R\times\R_+)$ and is a viscosity solution to the PDE \eqref{reference_PDE_intro}.
\end{prop}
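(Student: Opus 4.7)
The plan is to construct $u$ as a locally uniform limit of classical solutions obtained from smoothed terminal/source data, and then invoke the stability of viscosity solutions under locally uniform convergence.

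\textbf{Step 1: Continuity of $u$.} First I would establish that $u \in \mcC([0,T]\times\R\times\R_+)$. The Feynman--Kac integrand $(X_T^{t,x,y}, Y_T^{t,y})$ depends continuously on $(t,x,y)$ in $L^q(\Omega)$ for every $q\ge 1$ (via standard SDE stability estimates for the Heston/CIR pair), so $f(X_T^{t,x,y},Y_T^{t,y}) \to f(X_T^{t_0,x_0,y_0},Y_T^{t_0,y_0})$ in probability along any convergent sequence $(t,x,y)\to(t_0,x_0,y_0)$. The $L^p$ bound in~\eqref{integrability_condition_intro} on a neighborhood of $(t_0,x_0,y_0)$ gives uniform integrability, hence convergence of expectations; the same argument applies pointwise in $s$ to the integrand $h(s,X_s^{t,x,y},Y_s^{t,y})$ and dominated convergence handles the $ds$-integral.

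\textbf{Step 2: Regularization of $(f,h)$.} I would approximate $f$ and $h$ by functions $(f_n,h_n)$ that satisfy the hypotheses of Proposition~\ref{hp_minimal_u_sol_intro} and converge to $(f,h)$ locally uniformly. Concretely, extend $f$ continuously to $\R^2$, multiply by a smooth compactly supported cutoff $\chi_n$ that equals $1$ on the ball of radius $n$, then mollify at scale $1/n$; do the same for $h$ while ensuring the regularity in $(t,x,y)$ needed for $|h_n|^K_{\alpha,2}$ and $|\partial_y h_n|^K_{\alpha,2}$ to be finite on compacts of $[0,T)\times\R\times\R_+$. After this truncation--mollification the new data have compact support in $(x,y)$, hence the polynomial growth and $\mcC^{1,2}$-type hypotheses of Proposition~\ref{hp_minimal_u_sol_intro} are trivially met.

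\textbf{Step 3: Classical solutions $u_n$ and passage to the limit.} By Proposition~\ref{hp_minimal_u_sol_intro} the corresponding
$$u_n(t,x,y)=\E\bigg[e^{\varrho(T-t)}f_n(X_T^{t,x,y},Y_T^{t,y})-\int_t^T e^{\varrho(s-t)}h_n(s,X_s^{t,x,y},Y_s^{t,y})\,ds\bigg]$$
is a classical solution of the PDE with data $(f_n,h_n)$ on $[0,T)\times\R\times\R_+$, continuous up to $t=T$. Classical solutions are viscosity solutions. It remains to show $u_n\to u$ locally uniformly on $[0,T]\times\R\times\R_+$. Fix a compact $\mcK_T$ and let $p>1$ be as in~\eqref{integrability_condition_intro}; since $\{f_n\}$ converges to $f$ locally uniformly, for $(t,x,y)\in\mcK_T$ one splits
$$|u_n(t,x,y)-u(t,x,y)|\le e^{|\varrho|T}\E\big[|f_n-f|(X_T^{t,x,y},Y_T^{t,y})\big]+\text{(source term)},$$
and decomposes each expectation over the event $\{(X_T^{t,x,y},Y_T^{t,y})\in B_R\}$ and its complement. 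On $B_R$ one uses local uniform convergence of $f_n\to f$; on the complement, Markov's inequality combined with the uniform $L^p$ bound in~\eqref{integrability_condition_intro} and the growth of $f_n,f$ (controlled once $n$ is large) makes the tail arbitrarily small uniformly in $\mcK_T$. The source term is handled analogously, integrating in $s$.

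\textbf{Step 4: Stability.} The stability theorem for viscosity solutions on the domain $[0,T]\times\R\times\R_+$ (using the standard notion for degenerate parabolic problems on domains with boundary) yields that the locally uniform limit $u$ is a viscosity sub- and supersolution of $\partial_t u+\mcL u+\varrho u=h$, and the uniform convergence up to $t=T$ gives $u(T,\cdot,\cdot)=f$. The main delicate point will be Step~3, namely controlling the tail contribution to $u_n-u$ uniformly on compacts: this is where the strengthened integrability hypothesis~\eqref{integrability_condition_intro} (with exponent $p>1$ strictly greater than $1$) is essential, since it is what upgrades pointwise domination to uniform integrability along the approximating sequence. Boundary behaviour at $y=0$ requires no extra care since the Feynman--Kac formula is valid for $y=0$ (the CIR process starts at $0$ and instantly leaves or not, with well-defined law), and classical solutions are viscosity solutions up to the boundary in the degenerate sense used here.
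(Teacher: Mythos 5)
Your overall strategy — truncate, mollify, obtain classical solutions, and pass to the limit using viscosity stability — is the same as the paper's, and Steps~2 and~4 match the paper's argument in spirit. The paper also writes $u=v-w$ and invokes a stability lemma (Lemma~\ref{sequence_lemma_general}) that lets the operators $F_n$ vary, exactly because the $h$-term changes along the approximation; you implicitly rely on the same fact, so that part is fine.

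There is, however, a genuine gap in Step~3 as you have set it up. You tie the truncation radius and the mollification scale to a single index $n$ (cutoff $\chi_n$ equal to $1$ on $B_n$, mollification at scale $1/n$) and then split $\E\big[|f_n-f|(X_T^{t,x,y},Y_T^{t,y})\big]$ over $B_R$ and $B_R^c$. The inside part is fine. The tail part is not: you need to control $\E\big[|f_n|\mathds{1}_{B_R^c}\big]$ uniformly in $n$, but $\sup |f_n| \le \sup_{B_{n+2}}|f|$ can diverge as $n\to\infty$, and the hypothesis gives a uniform $L^p$-bound on $f(X_T,Y_T)$, not on $f_n(X_T,Y_T)$ (mollification smears the argument, so $\E\big[|f_n(X_T,Y_T)|^p\big]$ is \emph{not} automatically bounded by the $L^p$ norm of $f(X_T,Y_T)$). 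Hence ``the growth of $f_n,f$ controlled once $n$ is large'' does not, by itself, make the tail small uniformly over $\mcK_T$. The paper resolves this precisely by decoupling the two scales: first truncate to $f^R$ (which is bounded and compactly supported), then mollify to $f^R_n$ for \emph{fixed} $R$. One then estimates $|u-u^R|$ by the $L^p$/Markov tail argument applied to $f$ (not to $f_n$), and $|u^R-u^R_n|$ directly by $\|f^R-f^R_n\|_\infty$, which suffers no tail issue because both are bounded; a diagonal extraction $\hat u_n=u^{R_n}_{k(n)}$ then produces the locally uniform convergence. Your sketch needs the same two-parameter structure and diagonal argument to close. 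A second, smaller point: your Step~1 (direct continuity of $u$) is redundant once locally uniform convergence of continuous $u_n$ is established, and as stated it leans on $L^q$-continuity of $(t,x,y)\mapsto(X_T^{t,x,y},Y_T^{t,y})$, which for the CIR component (square-root diffusion) is true but not as ``standard'' as for Lipschitz SDEs; the paper deliberately avoids this. Finally, you should make the time truncation for $h$ explicit — the paper multiplies by a time cutoff $\xi_R$ precisely because $h$ need not extend continuously to the closed time interval — whereas you only gesture at it.
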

Another key tool is a comparison principle (stated in Proposition \ref{comparison_principle}). Roughly speaking, it says that a sub-solution $w$ and super-solution $v$ that starts ordered ($w\le v$) stay ordered for any time. This is a key tool to prove the uniqueness of the viscosity solution. We call $\bDf$ the closure of the set of discontinuities of a function $f$ and state the main result.
\begin{theorem}\label{verification_theorem_intro}
    Let $f:\R\times\R_+\rightarrow\R$ be a function with polynomial growth, such that $\bDf$ has zero Lebesgue measure. Let $h\in \mcC([0,T)\times\R\times\R_+)$ be with polynomial growth in $(x,y)$ uniformly in $t$.
    Then $u$ in \eqref{u_gen_sol_intro} belongs to $\mcC\big(([0,T]\times\R\times\R_+)\setminus (\{T\}\times \bDf) \big)$, has polynomial growth in $(x,y)$ uniformly in $t$ and, in this class of functions, is the unique viscosity solution to the problem \eqref{reference_PDE_intro}. 
\end{theorem}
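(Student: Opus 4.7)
The plan is to approximate $f$ by continuous functions $(f_n)$ of uniform polynomial growth with $f_n\to f$ pointwise off $\bDf$, apply Proposition~\ref{existence_viscsol_fcont_intro} to obtain continuous viscosity solutions $u_n$ of~\eqref{reference_PDE_intro} with terminal data $f_n$, pass to the limit to get the viscosity-solution property of $u$ via the standard stability theorem, and close the uniqueness argument with the comparison principle of Proposition~\ref{comparison_principle}.

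Since $\bDf$ is closed with zero $2$-dimensional Lebesgue measure, I would take a cut-off $\varphi_n\in\mcC(\R\times\R_+,[0,1])$ vanishing on $\bDf$ and equal to $1$ outside the $1/n$-neighborhood of $\bDf$, then regularize $\varphi_n f$ at a scale smaller than the distance to $\bDf$. This gives $f_n\in\mcC(\R\times\R_+)$ with $|f_n(x,y)|\le C(1+|x|^p+y^p)$ uniformly in $n$ (with $p$ controlled by the growth of $f$) and $f_n(x,y)\to f(x,y)$ for every $(x,y)\notin\bDf$. Standard polynomial moment estimates on the log-Heston SDE~\eqref{log-Heston-ext_SDEs_intro} yield the integrability condition~\eqref{integrability_condition_intro} uniformly in $n$ for both $f_n$ and $h$, so Proposition~\ref{existence_viscsol_fcont_intro} provides continuous viscosity solutions $u_n\in\mcC([0,T]\times\R\times\R_+)$ with polynomial bounds uniform in $n$; the same bounds yield polynomial growth for $u$ itself.

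Fix now $(t_0,x_0,y_0)\in([0,T]\times\R\times\R_+)\setminus(\{T\}\times\bDf)$. If $t_0<T$, the law of $(X_T^{t_0,x_0,y_0},Y_T^{t_0,y_0})$ is absolutely continuous with respect to the $2$-dimensional Lebesgue measure on $\R\times\R_+$: the CIR marginal has no atom at $0$ since $a>0$ and $T-t_0>0$, and conditionally on the path of $Y$ the $X_T$-component is Gaussian with strictly positive variance $\int_{t_0}^T Y_s\,ds$. In particular this law does not charge $\bDf$, so $f_n(X_T^{t_0,x_0,y_0},Y_T^{t_0,y_0})\to f(X_T^{t_0,x_0,y_0},Y_T^{t_0,y_0})$ almost surely. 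If $t_0=T$ and $(x_0,y_0)\notin\bDf$, continuity of $f$ at $(x_0,y_0)$ together with $L^p$-continuity of the stochastic flow $(t,x,y)\mapsto(X_T^{t,x,y},Y_T^{t,y})$ deliver the same convergence. In both cases, dominated convergence upgrades this into locally uniform convergence $u_n\to u$ near $(t_0,x_0,y_0)$ and gives continuity of $u$ at this point, establishing $u\in\mcC(([0,T]\times\R\times\R_+)\setminus(\{T\}\times\bDf))$ with polynomial growth.

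Finally, the standard stability theorem for viscosity solutions of linear parabolic PDEs, applied to the locally uniform limit $u=\lim u_n$ on $[0,T)\times\R\times\R_+$, transfers the viscosity property to $u$, with terminal trace $u(T,x,y)=f(x,y)$ wherever $f$ is continuous. For uniqueness in the stated class, let $v$ be a second viscosity solution with polynomial growth, continuous off $\{T\}\times\bDf$, and sharing the terminal trace with $u$ off $\bDf$; applying Proposition~\ref{comparison_principle} twice, with $u$ as sub-solution and $v$ as super-solution and then with the roles reversed, yields $u=v$. The main obstacle is the degenerate boundary $\{y=0\}$ together with the possibility that $\bDf$ intersects $\{y=0\}$: $\mcL$ loses its ellipticity in $y$ there, so both the stability argument and the comparison principle must be conducted up to this boundary without any ellipticity, which is precisely why the tailored formulation of Proposition~\ref{comparison_principle} is invoked, together with the fact that $a>0$ forces $Y^{t,0}$ to leave $\{0\}$ instantly with probability one.
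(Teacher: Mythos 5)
The existence and continuity part of your argument is fine and essentially parallels the paper's Proposition~\ref{existence_viscsol_fcont_intro} together with the continuity result built on the absolute continuity of $(X_T^{t,x,y},Y_T^{t,y})$; the construction of $f_n$ by a cut-off near $\bDf$ and mollification, the uniform polynomial bounds, and stability under locally uniform limits are all reasonable.

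The genuine gap is the uniqueness argument. You propose to apply Proposition~\ref{comparison_principle} directly to $u$ and a second solution $v$, first with $u$ as a sub-solution and $v$ as a super-solution and then with the roles reversed. This does not close. The comparison principle compares an $\USC$ sub-solution with an $\LSC$ super-solution, so for two solutions that are continuous only off $\{T\}\times\bDf$ one must replace $u$ by its upper envelope $u^*$ and $v$ by its lower envelope $v_*$, and the hypothesis required is $u^*(T,\cdot,\cdot)\le v_*(T,\cdot,\cdot)$. But since both trace $f$ at $T$, at a point $(x_0,y_0)\in\bDf$ one only has $u^*(T,x_0,y_0)\ge f(x_0,y_0)\ge v_*(T,x_0,y_0)$ --- the \emph{opposite} of the ordering you need --- and nothing in your hypotheses pins these envelopes down further on $\bDf$. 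Reversing roles runs into the identical obstruction. So the direct comparison simply cannot be invoked across a discontinuity set at $t=T$.

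The paper circumvents exactly this with a sandwiching argument that you have not reproduced: it modifies the terminal data to $f^\pm = \pm\chi$ on $\bDf$ and $f$ elsewhere (with $\chi$ a polynomial dominating $|u|$ and $|v|$), notes that $f^-$ is lower and $f^+$ upper semicontinuous, and invokes a Baire-class-1 monotone approximation by \emph{continuous} functions $f^-_n\nearrow f^-$, $f^+_n\searrow f^+$. The associated Feynman--Kac solutions $u^\pm_n$ are continuous, so the comparison principle applies cleanly between $u^-_n$ (continuous sub-solution) and $v_*$ (LSC super-solution), and between $v^*$ (USC sub-solution) and $u^+_n$ (continuous super-solution); the inequalities $f^-_n\le v_*(T,\cdot,\cdot)$ and $v^*(T,\cdot,\cdot)\le f^+_n$ hold by construction of $f^\pm$. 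This squeezes $u^-_n\le v\le u^+_n$, and Dini's theorem gives $u^\pm_n\to u$ locally uniformly on $[0,T)\times\R\times\R_+$, whence $v=u$ for $t<T$. That monotone continuous envelope trick --- comparing through the sandwich $u^\pm_n$ rather than comparing $u$ and $v$ directly --- is the key missing idea; without it your comparison step is not valid.
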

Let us sketch the main ideas of the proof.  Taken a general solution $v$ of the PDE \eqref{reference_PDE_intro} (with $f$ and $h$ as in the hypotheses), we sandwiched it between two sequences, $(u^+_n)_{n\in\N}$ of continuous super-solutions and $(u^-_n)_{n\in\N}$ of continuous super-solutions that satisfy two hypotheses:
\begin{align}
    &u^-_n(T,\cdot,\cdot)\le v_*(T,\cdot,\cdot)\le v^*(T,\cdot,\cdot)\le u^+_n(T,\cdot,\cdot), \label{squeeze_v_intro} \\
    \text{for any compact } &\text{set } \mcK_T\subset[0,T)\times\R\times\R_+, \text{ one has }\lim_{n\rightarrow\infty} \|u^\pm_n-u\|_{\infty,\mcK_T}  \label{local_uniform_conv_VT_intro}.
\end{align}
The first property guarantees $u^-_n\le v_*\le v^*\le u^+_n$ thanks to the comparison principle (Proposition \ref{comparison_principle}), then the second one guarantees $v_*=u=v^*$ in $[0,T)\times\R\times\R_+$.  

Prior research has established existence and uniqueness results for the Heston PDE and even more general jump-diffusion processes (as demonstrated in Costantini et al. \cite{CPD12}). However, the assumptions used in these studies, when applied to the Heston model, necessitate the adoption of the Feller condition.
The significance of this last result lies in its validity even when the Feller condition $\sigma^2\le2a$ is not satisfied. As far as we know, this is an original contribution to the existing literature on the Heston model.

The techniques we developed to deal with viscosity solution for the logHeston PDE \eqref{reference_PDE_intro} had been fruitfully used to study the convergence of numerical schemes.
In the closure of Chapter \ref{Chapter_PDE}, we apply this approach to prove the convergence of the hybrid scheme from \cite{BCT}. In \cite{BCT}, a rate of function has been proved under strong regularity assumptions on the test functions. 
In Section \ref{approximation_section}, we relax this request, and we prove (see Theorem \ref{convergence_theorem}) the convergence for functions that have just suitable continuity properties.
This result is confirmed empirically by the numerical experiment carried out in \cite{BCZ}, which computes the price of a European put option in the Heston model. Other numerical experiments that use 
the hybrid algorithm for the Bates and for the Heston-Hull-White models have been carried out in \cite{BCTZ} and \cite{BCZ2} respectively.



\chapter{High order approximations for the CIR process using random grids} \label{Chapter_CIR} 
The material for this chapter has been released in \cite{AL}.
\chapabstract{\\We present new high order approximations schemes for the Cox-Ingersoll-Ross (CIR) process that are obtained by using a recent technique developed by Alfonsi and Bally (2021) for the approximation of semigroups. The idea consists in using a suitable combination of discretization schemes calculated on different random grids to increase the order of convergence. This technique coupled with the second order scheme proposed by Alfonsi (2010) for the CIR leads to weak approximations of order $2k$, for all $k\in\N^*$. Despite the singularity of the square-root volatility coefficient, we show rigorously this order of convergence under some restrictions on the volatility parameters. We illustrate numerically the convergence of these approximations for the CIR process and for the Heston stochastic volatility model and show the computational time gain they give. }
\section*{Introduction}

The present paper develops approximations, of any order, of the semigroup $P_t f(x):=\E[f(X^x_t)]$ associated to the following Stochastic Differential Equation (SDE) known as the Cox-Ingersoll-Ross (CIR) process
\begin{equation}\label{CIR_SDE}
  X^x_t =x+ \int_0^t (a-kX^x_s)ds+ \int_0^t \sigma\sqrt{X^x_s}dW_s, \quad t\ge 0,
\end{equation}
where $W$ is a Brownian motion, $x,a\ge 0$, $k\in \R$ and $\sigma>0$. Let us recall that the process~\eqref{CIR_SDE} is   nonnegative and  the semigroup $(P_t)_{t\ge 0}$ is well-defined on the space of functions $f:\R \to \R$ with polynomial growth. The diffusion~\eqref{CIR_SDE} is widely used in financial mathematics, in particular because of its simple parametrization and the affine property that enables to use numerical methods based on Fourier techniques. We mention here the Cox-Ingersoll-Ross model~\cite{CIR} for the short interest rate and the Heston stochastic volatility model~\cite{Heston}, that have been followed by many other ones.   Developing efficient numerical methods for the process~\eqref{CIR_SDE} is thus of practical importance. 

To deal with the approximation of SDE's semigroups, a common approach is to consider stochastic approximations and the most standard one is the Euler-Maruyama scheme. The error between the approximated semigroup and the exact one is called the weak error, as opposed to the strong error that quantifies the error "omega by omega" on the probability space. 
The seminal work of Talay and Tubaro~\cite{TT} shows, under regularity assumptions on the SDE coefficients, that the weak error given by the Euler-Maruyama scheme is of order one, i.e. is proportional to the time step. They also obtain an error expansion that enables to use Richardson-Romberg extrapolations as developed by Pagès~\cite{Pages}. Higher order schemes for SDEs and related extrapolations have been proposed by Kusuoka~\cite{Kusuoka}, Ninomiya and Victoir~\cite{NV}, Ninomiya and Ninomiya~\cite{NiNi} and Oshima et al.~\cite{OTV} to mention a few. Recently, Alfonsi and Bally~\cite{AB} have given a method to construct weak approximation of general semigroups of any order by using random time grids. 

These general results on weak approximation of SDEs do not apply to the CIR~\eqref{CIR_SDE} process. This is due to the diffusion coefficient, namely the singularity of the square-root at the origin. Besides this, classical schemes such as the Euler-Maruyama scheme are not well-defined for~\eqref{CIR_SDE}, and one has to work with dedicated schemes.   Under some restrictions on the parameters, the weak convergence of order one for some discretization schemes of the CIR process has been obtained by Alfonsi~\cite{AA_MCMA}, Bossy and Diop~\cite{BoDi}, and more recently by Briani et al.~\cite{BCT} who also study the weak convergence of a semigroup approximation for the Heston model. We also mention the earlier work by Altmayer and Neuenkirch~\cite{AlNe} that precisely studies the weak error for the Heston model.
 Adapting ideas from Ninomiya and Victoir~\cite{NV} who developed a second order scheme for general SDEs, Alfonsi~\cite{AA_MCOM} has introduced second order and third order schemes for the CIR and proved their weak order of convergence, without any restriction on the parameters.

 The goal of the present paper is to boost the second order scheme developed in~\cite{AA_MCOM} and get approximations of any order. To do so, we rely on the method developed recently by Alfonsi and Bally~\cite{AB} to construct approximation of semigroups of any order. Roughly speaking, this method allows to get, from an elementary weak approximation scheme of order~$\alpha>0$, approximation schemes of any order by computing the elementary scheme on appropriate random grids. The method is illustrated in~\cite{AB} on the case of the Euler-Maruyama scheme for SDEs, under regularity assumptions on the coefficients that do not hold for the CIR process~\eqref{CIR_SDE}. This method is presented briefly in Section~\ref{Sec_nutshell}. It relies on an appropriate choice of a function space endowed with a family of seminorms. Section~\ref{Sec_O2CIR} then presents the second order scheme that is used as an elementary scheme to get higher order approximation. It states in Theorem~\ref{thm_main} the main result of this paper: we prove, when $\sigma^2\le 4a$, that we get weak approximations of any orders for smooth test functions~$f$ with derivatives having at most a polynomial growth.  
 Section~\ref{Sec_pol_fct} illustrates the boosting method when considering the space of polynomials function with their usual norm. In this simple case, proofs are quite elementary so that the method can be followed easily. Section~\ref{Sec_main} is more involved: it  first defines the appropriate family of seminorms on the space of smooth functions with derivative of polynomial growth and then proves Theorem~\ref{thm_main}. Last, we illustrate in Section~\ref{Simulations} the convergence of the high order approximations for different parameter sets. It validates our theoretical results and shows important computational gains given by the new approximations. We also test the method on the Heston model and obtain similar convincing results.


\section{High order schemes with random grids: the method in a nutshell}\label{Sec_nutshell}

In this paragraph, we recall briefly the method developed by Alfonsi and Bally in~\cite{AB} to construct approximations of any order from a family of approximation schemes. We consider $F$ a vector space endowed with a family of seminorms $(\| \|_k)_{k\in\N}$ such that $\|f\|_k\leq \|f\|_{k+1}$. We consider a time horizon $T>0$ and set, for $n\in \N^*$ and $l\in \N$,
\begin{equation}\label{def_hl}
  h_l=\frac{T}{n^l}.
\end{equation}
To achieve this goal, we consider a family of linear operators $(Q_l)_{l \in \N}$ on $F$.  For $l\in \N$,  we note $Q^{[0]}_l=I$ the identity operator and, for $j\in \N^*$, $Q^{[j]}_l = Q^{[j-1]}_l Q_l$ the operator obtained by composition. We suppose that the two following conditions are satisfied. The first quantifies how $Q_l$ approximates~$P_{h_l}$:
\begin{equation}\tag{$\bbar{H_1}$}\label{H1_bar}
  \begin{array}{c}\text{there exists } \alpha>0 \text{ such that for any }  l,k\in\N, \text{ there exists } C>0,\text{ such that }\\ \|(P_{h_l}-Q_l)f\|_k \leq C\|f\|_{\psi_{Q}(k)} h_l^{1+\alpha} \text{ for all } f\in F,
  \end{array}
\end{equation}
where $\psi_Q: \N \to \N$ is a function\footnote{Note that in~\cite{AB}, it is taken $\psi_Q(k)=k+\beta$ for some $\beta \in \N$, but is can be easily generalized to any function~$\psi_Q$. In this paper, we will work with a doubly indexed norm and take $\psi_Q(m,L)=(2(m+3),L-1)$.}. The second one is a uniform bound with respect to all the seminorms:
\begin{equation}\tag{$\bbar{H_2}$}\label{H2_bar}
  \begin{array}{c}
    \text{for all } l,k\in \N, \text{ there exists } C>0 \text{ such that } \\ \max_{0\leq j\leq n^l}\|Q^{[j]}_l f\|_k + \sup_{t\leq T}\|P_t f\|_k\leq C\|f\|_k  \text{ for all } f\in F.
  \end{array}
\end{equation}
Then, for any $\nu \in \N^*$, Alfonsi and Bally~\cite{AB} show how one can construct, by mixing the operators $Q_l$,  a linear operator $\cPh^{\nu,n}_T$ for which there exists $C>0$ and $k\in \N$  such that
\begin{equation}\label{const_Pnu}
   \|P_Tf-\cPh^{\nu,n}f\|_0\leq C\|f\|_k n^{-\nu \alpha} \text{ for all } f\in F.
\end{equation}

Let us explain how it works for $\nu=1$ and $\nu=2$. For $\nu=1$, we mainly repeat the proof of Talay and Tubaro~\cite{TT} for the weak error of the Euler scheme. From the semigroup property, we have
\begin{equation}\label{dev_1}P_Tf-Q^{[n]}_1f=P_{nh_1}f-Q^{[n]}_1f=\sum_{k=0} ^{n-1}P_{(n-(k+1))h_1}[P_{h_1}-Q_1]Q_1^{[k]}f.
\end{equation}
We get by using~\eqref{H2_bar}, then~\eqref{H1_bar} and then again~\eqref{H2_bar}
\begin{align}
  \|P_Tf-Q^{[n]}_1f\|_0 & \le\sum_{k=0} ^{n-1} C \|[P_{h_1}-Q_1]Q_1^{[k]}f\|_0 \le \sum_{k=0} ^{n-1} C \|Q_1^{[k]}f\|_{\psi_Q(0)}h_1^{1+\alpha} \notag \\
                        & \le  C \|f\|_{\psi_Q(0)} n(T/n)^{1+\alpha}= C \|f\|_{\psi_Q(0)} T^{1+\alpha}n^{-\alpha}.\label{nu_egal_1}
\end{align}
Here, and through the paper, $C$ denotes a positive constant that may change from one line to another. So, $\cPh^{1,n}=Q^{[n]}_1$ satisfies~\eqref{const_Pnu}  with $\nu=1$, $k=\psi_Q(0)$. The approximation scheme simply consists in using $n$ times the scheme~$Q_1$, which can be seen as a scheme on the regular time grid with time step~$h_1$.

We now present the approximation scheme~\eqref{const_Pnu} for $\nu=2$. To do so, we use again~\eqref{dev_1} to get $P_{(n-(k+1))h_1}-Q_1^{[n-(k+1)]}=\sum_{k'=0}^{n-(k+2)}P_{(n-(k+k'+2))h_1}[P_{h_1}-Q_1]Q_1^{[k']}$ and then expand further~\eqref{dev_1}:
\begin{align}
  P_Tf-Q^{[n]}_1f            & =\sum_{k=0}^{n-1}Q_1^{[n-(k+1)]}[P_{h_1}-Q_1]Q_1^{[k]}f + R_2^{h_1}(n)f, \label{dev_2}                          \\
  \text{ with } R_2^{h_1}(n) & =\sum_{k=0}^{n-1}\sum_{k'=0}^{n-(k+2)}P_{(n-(k+k'+2))h_1}[P_{h_1}-Q_1] Q_1^{[k']}[P_{h_1}-Q_1] Q_1^{[k]} \nonumber
\end{align}
Using~\eqref{H1_bar} three times and~\eqref{H2_bar} twice, we obtain $$\| R_2^{h_1}(n)f\|_0 \le C \| f\|_{\psi_Q(\psi_Q(0))}\frac{n(n-1)}2 h_1^{2(1+\alpha)}\le C \| f\|_{\psi_Q(\psi_Q(0))}\frac{T^{2(1+\alpha)}}{2} n^{-2\alpha}.$$
Thus, $Q^{[n]}_1 +\sum_{k=0}^{n-1}Q_1^{[n-(k+1)]}[P_{h_1}-Q_1]Q_1^{[k]}f $ is an approximation of order $2\alpha$, but it still involves the semigroup through $P_{h_1}$. To get an approximation that is obtained only with the operators $Q_l$, we use again~\eqref{dev_1} with time step~$h_2$ and final time~$h_1=nh_2$:
$$P_{h_1}f-Q_2^{[n]}f=\sum_{k=0} ^{n-1}P_{(n-(k+1))h_2}[P_{h_2}-Q_2]Q_2^{[k]}f. $$
We have $\|P_{h_1}f-Q_2^{[n]}f\|_0\le C \|f\|_{\psi_Q(0)}n h_2^{1+\alpha}$ by using again \eqref{H1_bar} and~\eqref{H2_bar}. We get from~\eqref{dev_2}
\begin{equation}\label{devt_erreur}
  P_Tf-Q^{[n]}_1f=\sum_{k=0}^{n-1}Q_1^{[n-(k+1)]}[Q_2^{[n]}-Q_1]Q_1^{[k]}f+ \sum_{k=0}^{n-1}Q_1^{[n-(k+1)]}[P_{h_1}-Q_2^{[n]}]Q_1^{[k]}f+ R_2^{h_1}(n)f,
\end{equation}
with  $\|\sum_{k=0}^{n-1}Q_1^{[n-(k+1)]}[P_{h_1}-Q_2^{[n]}]Q_1^{[k]}f\|_0\le C  \|f\|_{\psi_Q(0)}n^2 h_2^{1+\alpha}=C  \|f\|_{\psi_Q(0)}T^{1+\alpha}n^{-2\alpha}$. Therefore, the approximation
\begin{equation}\label{def_P_2}
  \cPh^{2,n}f:=Q^{[n]}_1f+\sum_{k=0}^{n-1}Q_1^{[n-(k+1)]}[Q_2^{[n]}-Q_1]Q_1^{[k]}f
\end{equation}
satisfies~\eqref{const_Pnu} with $\nu=2$ and is obtained only with the approximating operators~$Q_l$.  The first term~$Q_1^{[n]}$ corresponds to apply the scheme~$Q_1$ on the regular time grid with time step~$h_1$, while each term  $Q_1^{[n-(k+1)]}[Q_2^{[n]}-Q_1]Q_1^{[k]}$ is the difference between this scheme and the one where $Q_2^{[n]}$ is used instead of $Q_1$ for the $(k+1)$-th time step. This amounts to refine this time step and split it into $n$ time steps of size~$h_2$, and to use the scheme $Q_2$ on this time grid.

In practice, it is inefficient to calculate one by one the terms in $\cPh^{2,n}f$. In fact, each term requires a number of calculations that is proportional to~$n$, and the overall computation cost would be of the same order as~$n^2$. Since  the convergence is in $O(n^{-2\alpha})$ it would not be better asymptotically than using $\hat{P}^{1,n^2}f$. To avoid this, we use randomization. We sample a uniform random variable~$\kappa$ on $\{0,\dots,n-1\}$ and calculate $n\E[Q_1^{[n-(\kappa+1)]}[Q_2^{[n]}-Q_1]Q_1^{[\kappa]}f]=\sum_{k=0}^{n-1}Q_1^{[n-(k+1)]}[Q_2^{[n]}-Q_1]Q_1^{[k]}f$. This amounts to consider the regular time grid with time step~$h_1$, to select randomly one time step and to refine it, and then to compute the difference between the approximations on the (random) refined time-grid and on the regular time-grid. To be more precise, let us consider the case of an approximation scheme defined by $\varphi(x,h,V)$ where $\varphi$ is a measurable function, $x$ is the starting point, $h$ the time step and $V$  a random variable. The associated operators are $Q_lf(x)=\E[f(\varphi(x,h_l,V))]$, $l\in \N$. For a time-grid $\Pi=\{0=t_0<\dots<t_n=T\}$, we define $X^\Pi_0(x)=x$ and $X^\Pi_{t_{i}}(x)=\varphi(X^\Pi_{t_{i-1}}(x),t_i-t_{i-1},V_i)$ for $1\le i\le n$, where $(V_i)_{i\ge 1}$ is an i.i.d. sequence. Thus, we get on the uniform time grid $\Pi^0=\{ kT/n, 0\le k\le n \}$ $\E[f(X^{\Pi^0}_{T}(x))]=Q_1^{[n]}f(x)$. By taking the random grid  $\Pi^1 = \Pi^0 \cup  \{ \kappa T/n + k'T/n^2 , 1 \leq k' \leq n-1 \}$, where $\kappa$ is an independent uniform random variable on $\{0,\ldots,n-1\}$, we also get $\E[f(X^{\Pi^1}_{T}(x))]=\E[Q_1^{[n-(\kappa+1)]}[Q_2^{[n]}-Q_1]Q_1^{[\kappa]}f(x)]$, and then $\E[n(f(X^{\Pi^1}_{T}(x))-f(X^{\Pi^0}_{T}(x)))]=\sum_{k=0}^{n-1}Q_1^{[n-(k+1)]}[Q_2^{[n]}-Q_1]Q_1^{[k]}f(x)$. When using a Monte-Carlo estimator of this identity, one has thus to draw as many $\kappa$'s as trajectories.

We have presented here how to construct~$\cPh^{\nu,n}$ for $\nu=1$ and $\nu=2$, and it is possible by repeating the same arguments to construct by induction approximations of any order. Unfortunately, the induction is quite involved. It is fully described in~\cite[Theorem 3.10]{AB}. We do not reproduce it in this paper because it would require much more notation, and we will mainly use the scheme~\eqref{def_P_2}. Here, we give in addition the explicit form of   $\cPh^{3,n}$, $n\ge 2$:
\begin{align}\label{def_P_3}
  \cPh^{3,n}f:= & \cPh^{2,n}+  \sum_{0\le k_1<k_2<n}^{n-1}Q_1^{[n-(k_2+1)]}[Q_2^{[n]}-Q_1]Q_1^{[k_2-k_1-1]}[Q_2^{[n]}-Q_1]Q_1^{[k_1]}f     \\
                   & +\sum_{k=0}^{n-1}Q_1^{[n-(k+1)]}\left[\sum_{k'=0}^{n-1}Q_2^{[n-(k'+1)]}[Q_3^{[n]}-Q_2]Q_2^{[k']} \right] Q_1^{[k]}f. \notag
\end{align}
By similar arguments, it satisfies~\eqref{const_Pnu} with $\nu=3$.


\section{Second order schemes for the CIR process and main result}\label{Sec_O2CIR}

In this section, we focus on the approximation of the semigroup of the CIR process $P_tf(x)=\E[f(X^x_t)]$, where
$$X_t^x=x + \int_0^t(a-kX^x_s)ds+ \sigma \int_0^t\sqrt{X^x_s}dW_s, \ t\ge 0.$$
Equation~\eqref{nu_egal_1} shows that, necessarily, approximating operators $Q_l$ that satisfy both~\eqref{H1_bar} and~\eqref{H2_bar} lead to a weak error of order~$\alpha$. Therefore, we are naturally interested in approximation schemes of the CIR for which we know the rate of convergence~$\alpha$ for the weak error. \cite[Proposition 4.2]{AA_MCMA} gives a rate $\alpha=1$ for a family of approximation schemes that are basically obtained as a correction of the Euler scheme. Ninomiya and Victoir~\cite{NV} have developed a generic method to construct second order schemes ($\alpha=2$) for Stochastic Differential Equations with smooth coefficients. Applied to the Cox-Ingersoll-Ross process, their method leads to the following approximation scheme
\begin{equation}\label{NV_scheme}
  \hat{X}^x_t=\varphi(x,t,\sqrt{t}N),
\end{equation}
where  $N\sim \mathcal{N}(0,1)$ and  $\varphi:\R_+\times\R_+\times\R\to \R_+$ is defined by
\begin{align}\label{def_varphi}
  \varphi(x,t,w) & = e^{-kt/2}\left(\sqrt{(a-\sigma^2/4)\psi_k(t/2)+e^{-kt/2}x}+\sigma w/2 \right)^2 +(a-\sigma^2/4)\psi_k(t/2) \\
                 & =X_0(t/2,X_1(w,X_0(t/2,x))), \text{ with} \notag
\end{align}
\begin{align}\label{def_X0_and_psi_k}
  X_0(t,x) & = e^{-kt}x+\psi_k(t)(a-\sigma^2/4), \quad \psi_k(t)=\frac{1-e^{-kt}}k, \\ X_1(t,x)&=(\sqrt{x}+ t\sigma/2)^2, \label{def_X1}
\end{align}
with the convention that $\psi_0(t)=t$. This scheme corresponds to approximate $P_tf(x)$ by $\hat{P}_tf(x)=\E[f( \hat{X}^x_t)]$ for $x,t\ge 0$, and then to set $Q_l=\hat{P}_{h_l}$. Its construction comes from the splitting of the infinitesimal generator of the CIR process
\begin{equation}\label{def_LCIR}
  \cL f(x)=(a-kx)f'(x)+\frac 12 \sigma^2 x f''(x),\ f \in \mathcal{C}^2, x\ge 0,
\end{equation}
as $\cL=V_0+\frac 12 V_1^2$ with
\begin{equation}\label{def_V0_V1}
  V_0f(x)=\left(a-\frac{\sigma^2}4 -kx\right)f'(x) \text{ and } V_1f(x)=\sigma \sqrt{x} f'(x).
\end{equation}
The function $t\mapsto X_0(t,x)$ is the solution of the ODE $X_0'(t,x)=a-\frac{\sigma^2}4 -kX_0(t,x)$ such that $X_0(0,x)=x$, while $X_1(W_t,x)$ solves the SDE associated to the infinitesimal generator $V_1^2/2$.

The scheme~\eqref{NV_scheme} is well-defined for $\sigma^2\le 4a$. Instead, for $\sigma^2> 4a$, it is not well-defined for any $x\ge 0$ since the argument in the square-root is negative when $x$ is close to zero. To correct this, Alfonsi~\cite{AA_MCOM} has proposed the following scheme
\begin{align}\label{Alfonsi_scheme}
  \hat{X}^x_t =  (\mathds{1}_{x\geq K^Y_2(t)}\varphi(x,t,\sqrt{t}Y) + \mathds{1}_{x< K^Y_2(t)} \hat{X}^{x,d}_t),
\end{align}
where $Y$ is a random variable with compact support on $[-A_Y,A_Y]$ for some $A_Y>0$ such that $\E[Y^k]=\E[N^k]$ for $k\le 5$, and  $\hat{X}^{x,d}_t$ is a nonnegative random variable such that $\E[(\hat{X}^{x,d}_t)^i]=\E[(X^{x}_t)^i]$ for $i\in \{1,2\}$ and $K^Y_2(t)$ is a nonnegative threshold defined by
\begin{equation}\label{threshold_gen}
  K^Y_2(t) =\mathds{1}_{\sigma^2>4a} \left[ e^{\frac{kt}{2}}\left( (\sigma^2/4-a)\psi_k(t/2)+ \bigg( \sqrt{e^{\frac{kt}2}(\sigma^2/4-a)\psi_k(t/2)} + \frac \sigma 2 A_Y\sqrt{t} \bigg)^2 \right)\right].
\end{equation}
Note that when $\sigma^2\le 4a$, we have $K^Y_2(t)=0$ and thus $\hat{X}^x_t = \varphi(x,t,\sqrt{t}Y)$.
In~\cite{AA_MCOM}, it is taken $Y$ such that $\P(Y=\sqrt{3})=\P(Y=-\sqrt{3})=1/6$ and $\P(Y=0)=2/3$, and a discrete  random variable 
$\hat{X}^{x,d}_t$ such that $\P(\hat{X}^{x,d}_t=\frac{1}{2 \pi(t,x)})=\pi(t,x)$, $\P(\hat{X}^{x,d}_t=\frac{1}{2 (1-\pi(t,x))})=1-\pi(t,x)$ where $\pi(t,x)=\frac{1-\sqrt{1-\E[(X^{x}_t)]^2/\E[(X^{x}_t)^2]}}{2} \in (0,1/2)$.

We now restate~\cite[Theorem 2.8]{AA_MCOM} that analyzes the weak error. We introduce $\CpolK{k}$, the set of $\mathcal{C}^k$ functions $f:\R \to \R_+$ such that all its derivatives have polynomial growth. More precisely, this means that for all $k'\in\{0,\dots,k\}$, there exists $C_{k'},E_{k'}\in \R_+$ such that
$$|f^{(k')}(x)|\le C_{k'}(1+x^{E_k'}), \ x\ge 0.$$
We also set $\Cpol=\cap_{k\in \N}\CpolK{k}$.

\begin{theorem}\label{thm_MCOM}
  Let $\hat{X}^x_t$ be the scheme defined by~\eqref{NV_scheme} for $\sigma^2\le 4a$ or by~\eqref{Alfonsi_scheme} for any $\sigma>0$. 
  Then, for all $f\in \Cpol$, we have $Q_1^{[n]}f(x)-P_Tf(x)=O(1/n^2)$ where $Q_1f(x)=\E[f( \hat{X}^x_{h_1})]$.
\end{theorem}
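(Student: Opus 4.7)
The plan is to run the classical Talay--Tubaro weak error analysis, adapted to handle the CIR singularity. Set $u(t,x) = P_t f(x) = \E[f(X^x_t)]$ and write the telescoping identity
\begin{equation*}
P_T f(x) - Q_1^{[n]} f(x) = \sum_{k=0}^{n-1} Q_1^{[k]} \bigl( P_{h_1} - Q_1 \bigr) u(T-(k+1)h_1,\cdot)(x).
\end{equation*}
If we can prove (i) a one-step estimate $\|(P_t - \hat{P}_t) g\|_{\mathrm{loc}} \le C t^3 (1+|\cdot|^E) \|g\|_\star$ for some polynomial weight depending on $g$, (ii) smoothness and polynomial-growth bounds on the derivatives of $u(s,\cdot)$ uniformly in $s\in[0,T]$, and (iii) uniform-in-$n$ polynomial moment bounds for the iterated scheme $Q_1^{[k]}$, then summing $n$ terms of order $h_1^3 = T^3/n^3$ gives $O(1/n^2)$, as required.

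First I would establish (ii), the regularity of $u$. Using the explicit non-central chi-squared transition density of the CIR process (or equivalently a stochastic flow/Malliavin argument), one shows that $u(t,\cdot)\in \Cpol$ for each $t\in [0,T]$ whenever $f\in \Cpol$, with polynomial-growth bounds on $\partial_x^j u(t,x)$ uniform on $[0,T]$. In particular $\cL^3 u(s,\cdot)$ has polynomial growth uniformly in $s\in[0,T]$, which is exactly what is needed later. This is the step I would expect to be the main technical obstacle, because the generator $\cL$ from~\eqref{def_LCIR} is degenerate at $x=0$ and its diffusion coefficient is only $1/2$-Hölder.

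Next I would prove (i). For $f\in\Cpol$ and $t>0$ small, Itô's formula applied twice gives the exact expansion
\begin{equation*}
\E[f(X^x_t)] = f(x) + t\,\cL f(x) + \tfrac{t^2}{2}\cL^2 f(x) + R_{\mathrm{CIR}}(t,x), \quad |R_{\mathrm{CIR}}(t,x)| \le C t^3 (1+x^E).
\end{equation*}
For the scheme~\eqref{NV_scheme} (case $\sigma^2\le 4a$), the Ninomiya--Victoir construction relies on the splitting $\cL = V_0 + \tfrac12 V_1^2$ from~\eqref{def_V0_V1}: a direct Taylor expansion of $\varphi(x,t,\sqrt{t}N)$ in $t$ together with the Gaussian moment identities $\E[N^{2j+1}]=0$, $\E[N^2]=1$, $\E[N^4]=3$ yields
\begin{equation*}
\E[f(\hat{X}^x_t)] = f(x) + t\,\cL f(x) + \tfrac{t^2}{2}\cL^2 f(x) + \hat{R}(t,x),
\end{equation*}
with the same order-$t^3$ remainder with polynomial growth. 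Subtracting gives (i). For the corrected scheme~\eqref{Alfonsi_scheme} (general $\sigma$), the same computation works on the set $\{x\ge K^Y_2(t)\}$ since $Y$ matches the moments of $N$ up to order $5$; on the complementary set $\{x<K^Y_2(t)\}$ one uses that $K^Y_2(t) = O(t)$, that the auxiliary variable $\hat{X}^{x,d}_t$ matches the first two moments of $X^x_t$, and a Taylor expansion of $f$ to show the contribution is still $O(t^3)$ in polynomial-growth sense.

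Finally (iii) is routine: from the form~\eqref{def_varphi} one gets $\E[(\hat{X}^x_{h})^p] \le (1+C h)(1+x^p)$ for any integer $p$ and $h\le T/n$ small, hence by iteration $\sup_n \max_{0\le k\le n} \E[(\hat{X}^{x,(k)}_{T/n})^p] \le C_p(1+x^p)$. Combining (i)--(iii), each summand $Q_1^{[k]}(P_{h_1}-Q_1)u(T-(k+1)h_1,\cdot)(x)$ is bounded by $C h_1^3 (1+x^E)$ uniformly in $k$, and the $n$ terms yield the announced $O(1/n^2)$ bound.
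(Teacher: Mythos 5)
The paper does not actually give a proof of Theorem~\ref{thm_MCOM}: it is restated verbatim from Alfonsi~\cite{AA_MCOM} (Theorem~2.8 there), and is used here only as the elementary second-order input for the random-grid boosting. Your sketch is nonetheless a correct outline of the Talay--Tubaro argument underlying that result, and it isolates precisely the three ingredients that the paper itself re-establishes in a norm framework for the boost: one-step consistency (Propositions~\ref{LCIR_Expansion} and~\ref{prop_H1sch}), backward regularity of $u(t,\cdot)=P_tf$ despite the degenerate generator (Proposition~\ref{deriv_CIR_functionals}), and moment stability of the iterated scheme (Lemma~\ref{estimates_pol}). One structural difference worth noting: your telescoping identity puts $Q_1^{[k]}$ on the outside and $P_{T-(k+1)h_1}f=u(T-(k+1)h_1,\cdot)$ on the inside, which is the classical ordering where the backward PDE solution is Taylor-expanded along the scheme; the paper's Equation~\eqref{dev_1} uses the mirror ordering $P_{(n-(k+1))h_1}[P_{h_1}-Q_1]Q_1^{[k]}$, with the exact semigroup applied last. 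Both are valid for this first-order telescoping (with the roles of semigroup and scheme estimates permuted), but the paper's ordering is the one that iterates cleanly into~\eqref{dev_2} and~\eqref{devt_erreur} and thus underpins the construction of $\cPh^{2,n}$; your ordering would not extend as directly. Finally, your handling of the corrected scheme's threshold region is the right idea (compare $\hat{X}^{x,d}_t$ to $X^x_t$ via the two matched moments, using $K^Y_2(t)=O(t)$), but at sketch level it leaves implicit the bound on the third central moment of $X^x_t$ when $x=O(t)$, which is exactly the delicate step in the cited proof and should not be waved through in a full write-up.
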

The goal of this paper is to extend this result and prove the estimates~\eqref{H1_bar} and~\eqref{H2_bar} for a suitable space of functions and a suitable family of seminorms. We are able to prove such results only in the case $\sigma^2\le 4a$: the indicator function in~\eqref{Alfonsi_scheme} creates a singularity that is difficult to handle in the analysis. In Section~\ref{Sec_pol_fct}, we first prove~\eqref{H1_bar} and~\eqref{H2_bar} for polynomial test functions. Then, we deal in Section~\ref{Sec_main} with the much technical case of smooth test functions with derivatives of polynomial growth. We state here our main result, the proof of which is given in Section~\ref{Sec_main}.
\begin{theorem}\label{thm_main}
  Let $\hat{X}^x_t$ be the scheme defined by~\eqref{NV_scheme} for $\sigma^2\le 4a$ and $Q_lf(x)=\E[f(\hat{X}^x_{h_l})]$, for $l\ge 1$. 
  Then, for all $f\in \CpolK{18}$, we have $\cPh^{2,n}f(x)-P_Tf(x)=O(1/n^4)$ as $n\to \infty$.\\
 Besides, for  $f\in \Cpol$, we have $\cPh^{\nu,n}f(x)-P_Tf(x)=O(1/n^{2\nu})$.
\end{theorem}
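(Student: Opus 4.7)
The plan is to apply the abstract Alfonsi--Bally machinery recalled in Section~\ref{Sec_nutshell} with the family $Q_l f(x)=\E[f(\hat X^x_{h_l})]$ built from the Ninomiya--Victoir scheme~\eqref{NV_scheme}. Since this elementary scheme has weak order two (Theorem~\ref{thm_MCOM}), once $(\bbar{H_1})$ and $(\bbar{H_2})$ are verified with $\alpha=2$, the general construction yields $\|P_Tf-\cPh^{\nu,n}f\|_0=O(n^{-2\nu})$. The proper choice of ambient space is a doubly-indexed scale: take $F=\Cpol$ and, for $L\ge m$, work with the seminorms
$$\|f\|_{m,L}=\max_{0\le j\le m}\sup_{x\ge 0}\frac{|f^{(j)}(x)|}{1+x^L},$$
i.e.\ consider the subspaces $\CpolKL{m}{L}$. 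The index transfer $\psi_Q$ needs to be read as a map on pairs $(m,L)$; the expected profile is $\psi_Q(m,L)=(2(m+3),L-1)$, increasing the derivative order and costing one unit of polynomial weight at each application.

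For $(\bbar{H_2})$, I would split the proof in two. First, for the exact semigroup, I write $P_tf(x)$ as an integral against the explicit non-central chi-squared density, differentiate under the integral, and use classical moment estimates for the CIR to obtain $\|P_tf\|_{m,L}\le C\|f\|_{m,L}$ uniformly on $[0,T]$. Second, and this is the heart of the matter, I need the same type of bound for the scheme: $\|Q_l^{[j]}f\|_{m,L}\le C\|f\|_{m,L}$ uniformly in $j\le n^l$. Using $\hat X^x_t=X_0(t/2,X_1(\sqrt{t}N,X_0(t/2,x)))$, the deterministic flow $X_0$ preserves the norms immediately, so it suffices to analyse the operator $f\mapsto\E[f(X_1(\sqrt{t}N,\cdot))]$. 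Here I differentiate $m$ times in $x$, rewrite the $y$-derivatives as $z$-derivatives of $f(X_1(\sqrt{t}z,y))$ via the chain rule $y+\sigma\sqrt{t}z\sqrt{y}+\sigma^2tz^2/4$, and integrate by parts against the Gaussian density $\eta$. This produces the integral representation of the type spelled out in Lemma~\ref{regular_density_intro}, and the crucial positivity property $\eta^*_m\ge 0$ (a structural feature of the standard Gaussian density, not shared by arbitrary symmetric laws) gives the sharp bound $\|\E[f(X_1(\sqrt{t}N,\cdot))]\|_{m,L}\le(1+Ct)\|f\|_{m,L}$. Composing with $X_0$ and iterating $n^l$ times, the factor $(1+Ch_l)^{n^l}\le e^{CT}$ stays controlled, which is what makes $(\bbar{H_2})$ hold.

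For $(\bbar{H_1})$, I compare $P_{h_l}f$ and $Q_lf$ to the common second-order Taylor polynomial $f+h_l\cL f+\tfrac{h_l^2}{2}\cL^2 f$, so that their difference has order $h_l^3$. On the semigroup side, Itô's formula gives a remainder of the form $\int_0^{h_l}\!\!\int\cdots\E[\cL^3 f(X^x_s)]ds$, which is estimated by the semigroup bound just proved plus moment controls. On the scheme side, I Taylor-expand $\varphi(x,h_l,\sqrt{h_l}N)$ jointly in $h_l$ and the Gaussian increment, take expectations using the matched moments of $N$ (through order four, via the Ninomiya--Victoir algebraic identity $\cL=V_0+\tfrac12 V_1^2$), and bound the remainder using symmetry of $N$ together with its Gaussian moment formulas. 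Each occurrence of $V_1^2$ consumes two derivatives, so the remainder involves up to $2(m+3)$ derivatives; tracking polynomial weights through the products $x^{j/2}$ generated by $V_1$ explains the shift from $L$ to $L+3$, yielding $\|(P_{h_l}-Q_l)f\|_{m,L+3}\le Ch_l^3\|f\|_{2(m+3),L}$.

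Once both bounds are established with $\alpha=2$, the results of Section~\ref{Sec_nutshell} apply verbatim. For $\nu=2$ one iterates $\psi_Q$ twice starting from $(0,0)$: $(0,0)\mapsto(6,-1)\mapsto(18,-2)$, which accounts for the regularity requirement $f\in\CpolK{18}$. For general $\nu$ the iteration remains inside $\Cpol$, giving the second statement of the theorem. The main obstacle, and the reason the hypothesis $\sigma^2\le 4a$ cannot be dropped with this technique, is the uniform norm estimate for the scheme in $(\bbar{H_2})$: obtaining the multiplicative constant of the sharp form $1+Ct$ with the \emph{same} seminorm on both sides is what permits composition $n^l$ times; this relies crucially on the smoothness and sign property of the Gaussian density via Lemma~\ref{regular_density_intro}, and breaks down for the corrected scheme~\eqref{Alfonsi_scheme} because the indicator $\mathds{1}_{x<K_2^Y(t)}$ destroys the regularity in $x$ that the integration-by-parts argument exploits.
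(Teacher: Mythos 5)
Your proposal follows the paper's route essentially step by step: the abstract Alfonsi--Bally machinery with a doubly-indexed norm scale, $(\bbar{H_2})$ split between the explicit non-central chi-squared density on the semigroup side and the Gaussian integration-by-parts representation with the positivity $\eta^*_m\ge 0$ on the scheme side, $(\bbar{H_1})$ by comparison of both $P_{h_l}$ and $Q_l$ with the common Taylor polynomial $f+h_l\cL f+\tfrac{h_l^2}{2}\cL^2 f$, then two iterations of $\psi_Q$ to land on $\CpolK{18}$. That is exactly the paper's architecture.

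One detail in your reasoning is slightly off, though the conclusion is right. You attribute the $2(m+3)$ derivative requirement in $(\bbar{H_1})$ to ``each occurrence of $V_1^2$ consumes two derivatives,'' but that accounting would only give $m+6$: the third-order remainder involves $V_1^6 f$ (a sixth-order operator), and one would expect to need $f\in\CpolKL{m+6}{L}$ to control its $(m,L)$ norm. The extra doubling of $m$ in $2(m+3)$ comes from the symmetrization argument (Lemma~\ref{regular_rep} and Corollary~\ref{cor_psign}): when $Y$ is only assumed symmetric rather than absolutely continuous with a regular density, one estimates $g((\sqrt{x}+\beta/2)^2)+g((\sqrt{x}-\beta/2)^2)$ and the resulting bound $\|\psi_g\|_{n,L}\le C\|g\|_{2n,L}$ costs a factor of two in the derivative order. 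Indeed, when the Gaussian density is used throughout (as in the improved version in Appendix~\ref{improvement_CIR_result}, Proposition~\ref{prop_H1sch_regular_density}), the requirement drops to $m+6$ derivatives, and the theorem holds already for $f\in\CpolK{12}$. So the factor of two is an artefact of the general-symmetric-$Y$ estimate, not of the $V_1^2$ bookkeeping, and the paper's choice to keep that generality in Proposition~\ref{prop_H1sch} (while requiring $Y=N$ only in $(\bbar{H_2})$) is what forces $18$ derivatives in the statement as given.
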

Let us stress here that Theorem~\ref{thm_main}  gives an asymptotic result as $n\to \infty$. It thus might happen that for small values of $n$,  $\cPh^{2,n}$ is less accurate than $\cPh^{1,n}=Q_1^{[n]}$ for some $f\in \Cpol$ and $x\ge 0$. In practice, we have always noticed in our numerical experiments that $\cPh^{2,n}$ is more accurate than $\cPh^{1,n}$. However, the estimated rates of convergence obtained from relatively small values of $n$ may be different from the theoretical asymptotic ones, see Figures~\ref{CIR_Plot1},\ref{CIR_Plot2} and~\ref{CIR_Plot3} where are given the estimated rates for $\cPh^{1,n}$, $\cPh^{2,n}$ and $\cPh^{3,n}$.

\section{The case of polynomial test functions}\label{Sec_pol_fct}

In this section, we want to illustrate the method and consider test functions that are  polynomial test functions. We define for $L\in \N$
$$\PLRp{L}=\{ f: \R_+\to \R,  f(x)=\sum_{j=0}^L a_j x^j \text{ for some } a_0,\dots,a_L \in \R \},$$
the vector space of polynomial functions over $\R_+$ with degree less or equal to~$L$. We also define  $\PRp= \cup_{L\in \N} \PLRp{L}$ the space of polynomial functions.
We endow $\PRp$ with the following norm:
\begin{equation}\label{norm_pol}
  \|f\| = \sjzL |a_j|, \text{ for } f(x)=\sjzL a_j x^j.
\end{equation}

We consider the case $\sigma^2\le 4a$ and consider the scheme~\eqref{Alfonsi_scheme} for the CIR process with a time step~$t>0$, $\hat{X}^x_t =  \varphi(x,t,\sqrt{t}Y)$.  The approximation scheme $Q_l$ is then defined by $Q_lf=\E[f(\hat{X}^x_{h_l})]$. The goal of this section is to prove~\eqref{H1_bar} and~\eqref{H2_bar} for the norm~\eqref{norm_pol}. We make the following assumption on~$Y$.

\noindent {\bf Assumption~$(\mathcal{H}_Y)$:} $Y:\Omega\to \R$ is a symmetric random variable such that $\E[|Y|^k]<\infty$ for all $k\in \N$, and $\E[Y^k]=\E[N^k]$ for $k\in \{2,4\}$ with $N\sim \mathcal{N}(0,1)$.

We now state two lemmas that will enable us to prove that~\eqref{H2_bar} is satisfied by the scheme~\eqref{Alfonsi_scheme}. Lemma~\ref{estimates_pol} shows that polynomials functions are preserved by the approximation scheme, and gives short time estimate for the polynomial norm. Lemma~\ref{Moments_Formula_CIR} gives similar results for the CIR diffusion. The proofs of these lemmas are quite elementary and are postponed to Appendix~\ref{App_proof_sec_pol}.

\begin{lemma}\label{estimates_pol}
  Let $T\ge 0$, $t\in[0,T]$, $f\in\PLRp{L}$ and assume~$(\mathcal{H}_Y)$ and $\sigma^2\le 4a$.  Then, we have $f(X_0(t,\cdot)),\E[f(X_1(\sqrt{t}Y, \cdot ))] \in \PLRp{L}$ where $X_0$ and $X_1$ are defined by~\eqref{def_X0_and_psi_k} and~\eqref{def_X1}, and
  \begin{enumerate}
    \item $\|f(X_0(t,\cdot))\| \leq (1\vee e^{-kLt})(1+C_{X_0}^Lt) \|f\|$,
    \item $\| \E[f(X_1(\sqrt{t}Y, \cdot ))] \| \leq (1+\E[Y^{2L}]C_{X_1}^Lt) \| f \|,$
  \end{enumerate}
  for some constants $C_{X_0},C_{X_1}$ depending only on~$(a,\sigma,T)$.
\end{lemma}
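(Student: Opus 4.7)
Both parts are established by explicit binomial expansion and elementary bookkeeping on the resulting coefficients, exploiting that both $X_0$ and $X_1$ preserve the polynomial structure. Throughout I write $f(x)=\sum_{j=0}^L a_j x^j$ so that $\|f\|=\sum_{j=0}^L|a_j|$.

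\textbf{Part (1).} The binomial theorem applied to each power $(e^{-kt}x+\psi_k(t)(a-\sigma^2/4))^j$ and regrouping by powers of $x$ immediately shows $f(X_0(t,\cdot))\in\PLRp{L}$. Bounding $e^{-ikt}\le 1\vee e^{-kLt}$ uniformly in $i\in\{0,\dots,L\}$ and exchanging the order of summation gives
$$
\|f(X_0(t,\cdot))\|\le (1\vee e^{-kLt})\sum_{j=0}^L|a_j|(1+\psi_k(t)|a-\sigma^2/4|)^j.
$$
Since $t\mapsto\psi_k(t)/t$ is continuous on $[0,T]$, one has $\psi_k(t)\le c\,t$ with $c$ depending only on $(k,T)$. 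Writing $u=\psi_k(t)|a-\sigma^2/4|\le cT|a-\sigma^2/4|$, I use $(1+u)^L-1=\sum_{m=1}^L\binom{L}{m}u^m\le (u/(cT|a-\sigma^2/4|))(1+cT|a-\sigma^2/4|)^L$, which produces a bound of the form $1+(t/T)(1+cT|a-\sigma^2/4|)^L$. Taking $C_{X_0}=(1+cT|a-\sigma^2/4|)\max(1,1/T)$ absorbs the factor $T^{-1/L}\le\max(1,1/T)$ uniformly in $L\ge 1$, which yields the stated inequality with a constant depending only on $(a,\sigma,k,T)$.

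\textbf{Part (2).} By linearity, it is enough to treat $f(x)=x^j$. Since $X_1(\sqrt{t}Y,x)=(\sqrt{x}+\sqrt{t}Y\sigma/2)^2$, the binomial theorem gives
$$
(\sqrt{x}+\sqrt{t}Y\sigma/2)^{2j}=\sum_{m=0}^{2j}\binom{2j}{m}x^{(2j-m)/2}(\sqrt{t}Y\sigma/2)^m.
$$
The symmetry of $Y$ in $(\mathcal{H}_Y)$ kills all odd-$m$ terms, so setting $m=2i$ yields
$$
\E[f(X_1(\sqrt{t}Y,x))]=\sum_{i=0}^j\binom{2j}{2i}(t\sigma^2/4)^i\E[Y^{2i}]\,x^{j-i}\in\PLRp{j}\subset\PLRp{L}.
$$
The $i=0$ term reproduces $x^j$, which supplies the leading $\|f\|$. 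For $i\ge 1$, Jensen's inequality combined with $\E[Y^2]=1$ in $(\mathcal{H}_Y)$ gives $\E[Y^{2L}]\ge 1$ and $\E[Y^{2i}]\le\E[Y^{2L}]$ for $i\le L$. The key observation is that $\phi(u):=\sum_{i=0}^j\binom{2j}{2i}u^{2i}=\tfrac12((1+u)^{2j}+(1-u)^{2j})$ contains only \emph{even} powers of $u$, so each term $u^{2i}$ with $i\ge 1$ is bounded by $u^2U^{2i-2}$ for $u\in[0,U]$, which yields $\phi(u)-1\le(u/U)^2(1+U)^{2j}$. Applying this with $u=\sigma\sqrt{t}/2$ and $U=\sigma\sqrt{T}/2$ supplies a full factor $t$ (and not merely $\sqrt{t}$):
$$
\sum_{i=1}^j\binom{2j}{2i}(t\sigma^2/4)^i\le \frac{t}{T}(1+\sigma\sqrt{T}/2)^{2j}.
$$
Setting $C_{X_1}=(1+\sigma\sqrt{T}/2)^2\max(1,1/T)$ again absorbs $T^{-1/L}$ uniformly in $L\ge 1$ and concludes the bound.

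\textbf{Main obstacle.} The only delicate point is the extraction of a linear-in-$t$ remainder in Part (2): a naive expansion in $\sqrt{t}$ would yield only $O(\sqrt{t})$, insufficient for the random-grid boosting framework. The symmetry of $Y$ collapses the expansion to even powers of $\sqrt{t}$, and the elementary observation that the even polynomial $\phi(u)-1$ starts at order $u^2$ converts this into the full factor $t$. A secondary bookkeeping issue is ensuring $C_{X_0},C_{X_1}$ do not depend on $L$; this is handled by replacing $T^{-1/L}$ by $\max(1,1/T)$ uniformly.
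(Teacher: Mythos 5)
Your proposal follows the same basic strategy as the paper's proof — explicit binomial expansion, use of the symmetry of $Y$ to kill odd moments, and elementary coefficient bookkeeping to extract a linear-in-$t$ remainder — and Part~(2) is correct as written. The even-polynomial observation $\phi(u)=\tfrac12\big((1+u)^{2j}+(1-u)^{2j}\big)$ is a tidy way to expose why a full factor $t$ (not merely $\sqrt{t}$) appears; the paper obtains the same conclusion by simply factoring $t$ out of the term $(\sigma^2 t/4)^{j-i}$ and bounding the leftover powers of $t$ by $(1\vee T)^{j-i}$. Both routes lead to a constant $C_{X_1}$ depending only on $(\sigma,T)$, as required. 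The $\max(1,1/T)$ device you use to absorb $T^{-1/L}$ is a slight detour compared with the paper's cleaner estimation, but it is correct.

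There is, however, one concrete gap in Part~(1). You first bound $e^{-ikt}\le 1\vee e^{-kLt}$ and pull it out, which leaves $(1+\psi_k(t)|a-\sigma^2/4|)^j$; you then invoke $\psi_k(t)\le c\,t$ with $c$ depending on $(k,T)$, so your final $C_{X_0}$ depends on $k$ as well — you say so yourself at the end of the paragraph. But the lemma asserts that $C_{X_0}$ depends only on $(a,\sigma,T)$, and the paper does achieve that. The key is to keep the two pieces of the $(j,i)$ coefficient together rather than splitting them. Writing $\tilde{X}_0(t)=e^{-kt}+|a-\sigma^2/4|\psi_k(t)$, one has $\|f(X_0(t,\cdot))\|\le\sum_{j=0}^L|a_j|\,\tilde{X}_0(t)^j$. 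For $k\ge 0$, $\psi_k(t)\le t$ and $\tilde{X}_0(t)\le 1+|a-\sigma^2/4|\,t$. For $k<0$, the identity $\psi_k(t)=e^{-kt}\psi_{-k}(t)$ with $\psi_{-k}(t)\le t$ (since $-k>0$) gives $\tilde{X}_0(t)=e^{-kt}\big(1+|a-\sigma^2/4|\,\psi_{-k}(t)\big)\le e^{-kt}\big(1+|a-\sigma^2/4|\,t\big)$, so the whole exponential factor bundles into $e^{-kjt}\le 1\vee e^{-kLt}$ and the remaining $(1+|a-\sigma^2/4|\,t)^j$ is genuinely $k$-free. Bounding $(1+|a-\sigma^2/4|\,t)^j-1\le t\,(1+|a-\sigma^2/4|(1\vee T))^L$ then yields $C_{X_0}=1+|a-\sigma^2/4|(1\vee T)$, depending only on $(a,\sigma,T)$. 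Your premature extraction of $e^{-ikt}$ lets the exponential hiding inside $\psi_k(t)$ escape into the constant. The repair is easy, but as written your argument establishes only a weaker version of the stated inequality.
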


\begin{lemma}\label{Moments_Formula_CIR}
  Let $(X^x_t,t\ge 0)$ be the CIR process starting from $x\in\R_+$. For $m\in \N$, we define $\tilde{u}_m(t,x):= \E[(X^x_t)^m]$. There exists $C^\infty$ functions $\tilde{u}_{j,m}:\R_+\to \R$ that depend on $(k,a,\sigma)$ such that:
  \begin{equation}
    \tilde{u}_m(t,x) = \sum_{j=0}^m \tilde{u}_{j,m}(t) x^j.
  \end{equation}
  If $f \in \PLRp{L}$, then we have $\E[f(X^\cdot_t)]\in \PLRp{L}$ and for $t\in [0,T]$,
  \begin{equation}\label{Pol_Estimate_CIR}
    \|\E[f(X^{\cdot}_t)]\| \leq C_{\text{cir}}(L,T) \|f\|,
  \end{equation}
  with $C_{\text{cir}}(L,T) = \max_{t\in[0,T],m\in \{0,\dots,L\} } \sum_{j=0}^m |\tilde{u}_{j,m}(t)|$.
\end{lemma}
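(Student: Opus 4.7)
\textbf{Proof plan for Lemma~\ref{Moments_Formula_CIR}.} The plan is to apply Itô's formula to the monomial $x\mapsto x^m$ along the CIR dynamics, which yields, after computing the generator,
$$\mcL(x^m) = -km\,x^m + \beta_m\, x^{m-1}, \qquad \beta_m := am + \tfrac{m(m-1)\sigma^2}{2}.$$
Taking expectations and using the fact that all moments of $X^x_t$ are finite for the CIR process (so the local martingale part is a true martingale), we obtain the recursive linear ODE
$$\partial_t \tilde u_m(t,x) = -km\,\tilde u_m(t,x) + \beta_m\, \tilde u_{m-1}(t,x), \qquad \tilde u_m(0,x)=x^m,$$
with $\tilde u_0(t,x)=1$. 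Solving by Duhamel gives
$$\tilde u_m(t,x) = e^{-kmt} x^m + \beta_m \int_0^t e^{-km(t-s)} \tilde u_{m-1}(s,x)\,ds.$$

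Next I would argue by induction on~$m$ that $\tilde u_m(t,x)$ is a polynomial of degree~$m$ in~$x$ with $C^\infty$ coefficients in~$t$. The base case $m=0$ is trivial. For the inductive step, the induction hypothesis gives $\tilde u_{m-1}(s,x) = \sum_{j=0}^{m-1} \tilde u_{j,m-1}(s)\,x^j$ with each $\tilde u_{j,m-1}\in C^\infty(\R_+)$. Plugging into the Duhamel formula and exchanging sum with integral produces
$$\tilde u_m(t,x) = e^{-kmt} x^m + \sum_{j=0}^{m-1} \Big(\beta_m \int_0^t e^{-km(t-s)} \tilde u_{j,m-1}(s)\,ds\Big) x^j,$$
so we can read off $\tilde u_{m,m}(t) = e^{-kmt}$ and, for $j<m$, $\tilde u_{j,m}(t) = \beta_m \int_0^t e^{-km(t-s)} \tilde u_{j,m-1}(s)\,ds$, both of which are $C^\infty$ in~$t$.

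Finally, for part~2, linearity of the expectation gives, for $f(x)=\sum_{m=0}^L a_m x^m$,
$$\E[f(X^x_t)] = \sum_{m=0}^L a_m \sum_{j=0}^m \tilde u_{j,m}(t)\,x^j = \sum_{j=0}^L \Bigl(\sum_{m=j}^L a_m \tilde u_{j,m}(t)\Bigr) x^j \in \PLRp{L}.$$
Applying the triangle inequality to the definition~\eqref{norm_pol} and rearranging the double sum,
$$\|\E[f(X^\cdot_t)]\| \le \sum_{j=0}^L \sum_{m=j}^L |a_m|\,|\tilde u_{j,m}(t)| = \sum_{m=0}^L |a_m| \sum_{j=0}^m |\tilde u_{j,m}(t)| \le \|f\| \max_{m\le L}\sum_{j=0}^m |\tilde u_{j,m}(t)|,$$
and taking the sup over $t\in[0,T]$ yields the stated constant $C_{\text{cir}}(L,T)$.

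The only mild obstacle is justifying the interchange of expectation and Itô integral (equivalently, verifying that the stochastic integral $\int_0^\cdot m\sigma (X^x_s)^{m-1/2}dW_s$ is a true martingale). This follows from the standard fact that all moments $\E[(X^x_s)^p]$ are locally bounded in~$s$ for the CIR process, which can be seen by a localization argument combined with the ODE derivation itself applied to a stopped process, or simply invoked as a classical result.
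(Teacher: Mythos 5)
Your proposal is correct and follows essentially the same route as the paper's own proof: apply Itô's formula to $x\mapsto x^m$ to obtain the recursive first-order linear ODE $\partial_t\tilde u_m = -km\,\tilde u_m + (am+\tfrac12\sigma^2 m(m-1))\tilde u_{m-1}$, solve by Duhamel to read off the recursion $\tilde u_{m,m}(t)=e^{-kmt}$ and $\tilde u_{j,m}(t)=\beta_m\int_0^t e^{-km(t-s)}\tilde u_{j,m-1}(s)\,ds$, then use linearity and the triangle inequality for the norm bound. The only cosmetic difference is that the paper treats $m=1$ as an explicit base case ($\tilde u_1(t,x)=xe^{-kt}+a\psi_k(t)$) before the general recursion, while you fold all $m\ge1$ into the same Duhamel formula.
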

We are now in position to prove the main result of this section, which is a weaker (but easier to prove) version of our main Theorem~\ref{thm_main}, since it only applies to polynomial test functions. Let us point however that it applies to a larger family of schemes, namely to the schemes $\varphi(x,t,\sqrt{t}Y)$ with $Y$ satisfying~$(\mathcal{H}_Y)$, while Theorem~\ref{thm_main} requires to take $Y\sim \mathcal{N}(0,1)$.

\begin{prop}
  Let $\sigma^2\le 4a$ and assume that $Y$ satisfies~$(\mathcal{H}_Y)$.  For any $L \in \N$, the properties~\eqref{H1_bar} and \eqref{H2_bar} are satisfied by the scheme~\eqref{Alfonsi_scheme} $\hat{X}^x_t=\varphi(x,t, \sqrt{t}Y)$ for $F=\PLRp{L}$ and the norm~\eqref{norm_pol}. Then, we have for any $f\in \PLRp{L}$,
  $$\|\E[f(X^x_T)]-\cPh^{\nu,n}f\| \le C_L \|f\| n^{-2\nu},$$
  for some constant $C_L$.
\end{prop}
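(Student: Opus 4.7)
The plan is to verify assumptions~\eqref{H1_bar} and~\eqref{H2_bar} for $F=\PLRp{L}$ equipped with the single norm~\eqref{norm_pol}, with rate $\alpha=2$, so that the construction recalled in Section~\ref{Sec_nutshell} immediately yields $\|P_Tf-\cPh^{\nu,n}f\|\le C_L\|f\|n^{-2\nu}$. Assumption~\eqref{H2_bar} is the easier half: the bound $\|P_tf\|\le C_{\mathrm{cir}}(L,T)\|f\|$ is exactly Lemma~\ref{Moments_Formula_CIR}, and for the scheme I would factor $Q_l=P^0_{h_l/2}P^1_{h_l}P^0_{h_l/2}$ with $P^0_tg(x)=g(X_0(t,x))$, $P^1_tg(x)=\E[g(X_1(\sqrt{t}Y,x))]$, and apply Lemma~\ref{estimates_pol} to each factor; each step contributes a multiplicative factor of the form $1+Ch_l$ (absorbing $(1\vee e^{-kLt})$ into an exponential on $[0,T]$), hence $\|Q_lf\|\le e^{Ch_l}\|f\|$ and iterating $j\le n^l$ times gives $\|Q_l^{[j]}f\|\le e^{CT}\|f\|$.

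For~\eqref{H1_bar} I would compare $P_{h_l}f$ and $Q_lf$ to the common Taylor polynomial $f+h_l\mcL f+\tfrac{h_l^2}{2}\mcL^2f$ and bound both remainders by $Ch_l^3\|f\|$ in the polynomial norm. On the CIR side, the Taylor-with-integral-remainder version of Ito's formula (applied twice) gives
\begin{equation*}
  P_{h_l}f = f+h_l\mcL f+\tfrac{h_l^2}{2}\mcL^2f+\int_0^{h_l}\tfrac{(h_l-s)^2}{2}\,\E\bigl[\mcL^3f(X^{\cdot}_s)\bigr]\,ds,
\end{equation*}
and since $\mcL x^m = -km\,x^m + m\bigl(a+(m-1)\sigma^2/2\bigr)x^{m-1}$ is a bounded endomorphism of $\PLRp{L}$ with $\|\mcL f\|\le C_\mcL\|f\|$, applying~\eqref{Pol_Estimate_CIR} of Lemma~\ref{Moments_Formula_CIR} to $\mcL^3 f$ bounds the integral remainder by $\tfrac{h_l^3}{6}C_{\mathrm{cir}}(L,T)C_\mcL^3\|f\|$.

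For the scheme side I would establish order-two Taylor expansions of each building block with $t^3$-norm-controlled remainders. For $P^0_tx^m = X_0(t,x)^m$, the three first $t$-derivatives at $0$ are computed directly from $\partial_tX_0(t,x)=a-\sigma^2/4-kX_0(t,x)$ and identify as $V_0$, $V_0^2$; the smoothness of $t\mapsto X_0(t,x)^m$ and the affinity of $X_0(t,\cdot)$ in $x$ make the Taylor integral remainder uniformly bounded in polynomial norm on $[0,T]$. For $P^1_t$ the key explicit computation is
\begin{equation*}
  P^1_tx^m = \sum_{j=0}^{m}\binom{2m}{2j}\,\frac{\sigma^{2j}\E[Y^{2j}]}{4^j}\,t^j\,x^{m-j},
\end{equation*}
obtained by expanding $(\sqrt{x}+\tfrac{\sigma}{2}\sqrt{t}Y)^{2m}$ and using the symmetry of $Y$ to kill odd powers; the moment-matching $\E[Y^2]=1$, $\E[Y^4]=3$ from $(\mathcal{H}_Y)$ forces the $j=0,1,2$ coefficients to coincide with those of $I+tB+\tfrac{t^2}{2}B^2$, where $B:=V_1^2/2$, while the finite tail $j\ge 3$ is controlled by $Ct^3\|f\|$. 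Composing the three expansions, all cross-products of remainders have norm-order $h_l^3$ and the deterministic parts reassemble via the Strang identity into $Q_lf = f+h_l(V_0+B)f+\tfrac{h_l^2}{2}(V_0+B)^2f+R_Q(h_l)f = f+h_l\mcL f+\tfrac{h_l^2}{2}\mcL^2f+R_Q(h_l)f$, with $\|R_Q(h_l)f\|\le Ch_l^3\|f\|$. Subtracting gives~\eqref{H1_bar} with $\alpha=2$, and the conclusion follows from Section~\ref{Sec_nutshell}.

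The main obstacle, and the reason $(\mathcal{H}_Y)$ enters in an essential way, is the order-two expansion of $P^1_t$: since $P^1_t$ is not the exact flow of any vector field, its agreement with $I+tB+\tfrac{t^2}{2}B^2$ up to $t^3$ in the polynomial norm is guaranteed only by matching the second and fourth moments of $Y$ with those of a standard Gaussian, which is exactly the content of $(\mathcal{H}_Y)$; matching fewer moments would destroy $\alpha=2$, and matching further moments would not improve it for this splitting.
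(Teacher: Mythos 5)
Your proof of~\eqref{H2_bar} is essentially the paper's: split $Q_l$ as $P^0_{h_l/2}P^1_{h_l}P^0_{h_l/2}$, apply Lemma~\ref{estimates_pol} to each factor, absorb into an exponential, and iterate; the CIR side is Lemma~\ref{Moments_Formula_CIR}. So no comment there.

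For~\eqref{H1_bar} you take a genuinely different route. The paper does \emph{not} expand anything in this polynomial setting. It observes that $v_m(t,x)=\E[(\hat{X}^x_t)^m]-\E[(X^x_t)^m]$ is, by the two lemmas just cited, a polynomial of degree $\le m$ in $x$, and it imports from the external reference~\cite[Proposition 2.4]{AA_MCOM} the pointwise weak-error bound $|v_m(t,x)|\le C_m't^3(1+|x|^{E_m'})$. Evaluating at $L+1$ fixed abscissas and inverting the Vandermonde matrix then converts the pointwise $t^3$-bound into a $t^3$-bound on each coefficient $v_{j,m}(t)$, hence on the polynomial norm. You instead rebuild the order-two weak-error property from scratch: Taylor-with-integral-remainder for the CIR semigroup, the explicit closed form $P^1_t x^m=\sum_j\binom{2m}{2j}\tfrac{\sigma^{2j}\E[Y^{2j}]}{4^j}t^j x^{m-j}$ for the noise step (with $\E[Y^2]=1,\ \E[Y^4]=3$ forcing agreement with $I+tB+\tfrac{t^2}{2}B^2$, $B=V_1^2/2$, up to $O(t^3)$), and a Strang-composition bookkeeping. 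Your computation is self-contained and exposes exactly where $(\mathcal{H}_Y)$ is used, which the paper's argument leaves buried inside the citation; the paper's argument is shorter and avoids redoing the expansion, at the cost of leaning on an external result and the slightly opaque Vandermonde step. Both are correct; your route also anticipates the structure of the much more technical Proposition~\ref{prop_H1sch} that the paper later proves in the $\CpolKL{m}{L}$ setting, so in a sense you have done a warm-up version of what the paper defers. Two small checks that you implicitly rely on and should make sure you believe: $\cL$ maps $\PLRp{L}$ to itself boundedly with $\|\cL f\|\le C_{\cL}\|f\|$ (true since $\cL x^m=-kmx^m+m(a+(m-1)\sigma^2/2)x^{m-1}$), and the composition $Q_l=P^0_{h_l/2}P^1_{h_l}P^0_{h_l/2}$ reads right-to-left in the operator convention, which matches $\hat{X}^x_t=X_0(t/2,X_1(\sqrt{t}Y,X_0(t/2,x)))$. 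Both hold.
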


\begin{proof}
  We first prove~\eqref{H2_bar}. The property $\sup_{t\in[0,T]}\|P_tf\|$ is given by Lemma~\ref{Moments_Formula_CIR}.
  Since $\hat{X}^x_t=X_0(t/2,X_1(\sqrt{t}Y,X_0(t/2,x)))$, we get by Lemma~\ref{estimates_pol}
  $$
    \|\E[f(\hat{X}^\cdot_t)]\|\le  [(1\vee e^{-kLt/2})(1+C_{X_0}^Lt/2)]^2(1+\E[Y^{2L}]C_{X_1}^Lt)  \|f\|.
  $$
  We now use that $1+x\le e^x$ to get
  \begin{equation}\label{maj_scheme}
    \| \E[f(\hat{X}^\cdot_t)] \| \leq e^{((-k)^+L+C_{X_0}^L+\E[Y^{2L}]C_{X_1}^L)t} \|f\|.
  \end{equation}
  Since $Q_lf(x)=\E[f(\hat{X}^x_{T/n^l})]$, this yields to $\max_{0\le j \le n^l}\|Q^{[j]}_lf\| \le  e^{((-k)^+L+C_{X_0}^L+\E[Y^{2L}]C_{X_1}^L)T} \|f\|$.

  We now prove~\eqref{H1_bar}. Let $m\in \{0,\dots,L\}$ and $0<x_0<\dots<x_L$ be fixed real numbers (one may take for example $x_\ell=\ell+1$).  Lemmas~\ref{estimates_pol} and~\ref{Moments_Formula_CIR} give that $v_m(t,x)=\E[(\hat{X}^x_t)^m]-\E[(X^x_t)^m]=\sum_{j=0}^m v_{j,m}(t)x^j $. By~\cite[Proposition 2.4]{AA_MCOM}, we know that there exists $C'_m,E'_m$ such that for all $t \in (0,1)$, $|v_m(t,x)|\le C'_m t^3(1+|x|^{E'_m})$. Therefore, there exists  $\tilde{C}_m\in \R_+$ such that for all $ \ell \in \{0,\dots,L\}$, $|v_m(t,x_\ell)|\le \tilde{C}_m t^3$. By using the invertibility of the Vandermonde matrix, we get the existence of $C_m \in \R_+$ such that
  $$ |v_{j,m}(t)|\le C_m t^3,  \ j\in \{0,\dots,m\}.$$
  Therefore, we get for $f \in \PLRp{L}$
  $$\|\E[f(\hat{X}^\cdot_t)]-\E[f(X^\cdot_t)]\| \le \sum_{m=0}^L|a_m|\sum_{j=0}^m C_m t^3\le L \max_{m\in \{0,\dots,L\}}C_m \|f\| t^3,$$
  that gives~\eqref{H1_bar}. We conclude by applying~\cite[Theorem 3.10]{AB}.
\end{proof}

\section{Proof of Theorem~\ref{thm_main}}\label{Sec_main}

In Section~\ref{Sec_pol_fct}, we have obtained the convergence for test functions that are polynomial functions. For these test functions, the choice of the norm is straightforward and the proofs are not very technical and quite easy. However, one would like to obtain the convergence result for a much larger class of test functions. This is the goal of this section.

We consider test functions that are smooth with polynomial growth, whose derivatives have a polynomial growth. Namely, we introduce for $m,L \in \N$,
\begin{equation}
  \CpolKL{m}{L}= \left\{f:\R_+ \to \R \text{ of class } \mathcal{C}^m \ :  \ \max_{j\in\{0,\ldots,m\}} \sup_{x\geq 0}\frac{|f^{(j)}(x)|}{1+x^L}<\infty \right\},
\end{equation}
which we endow with the norm
\begin{equation}\label{mLnorm}
  \|f\|_{m,L} = \max_{j\in\{0,\ldots,m\}} \sup_{x\geq 0} \frac{|f^{(j)}(x)|}{1+x^L}.
\end{equation}

To prove Theorem~\ref{thm_main}, we need to prove the estimates~\eqref{H1_bar} and~\eqref{H2_bar} for this family of norms. This is the goal of the two next subsections. More precisely, we will show respectively the estimates
  $$\|(P_{h_l}-Q_l)f\|_{m,L+3}\le C h_l^3  \|f\|_{2(m+3),L}, \  m\le L+3, f\in  \CpolKL{2(m+3)}{L} $$
in Proposition~\ref{H1_bar_mL}  and 
  $$\sup_{t\ge T} \|P_{t}f\|_{m,L}+ \max_{0\le j \le n^l} \|Q^{[j]}_l f\|_{m,L} \le \|f\|_{m,L} C h_l^3, \  m\le L, f\in  \CpolKL{m}{L} $$
in Proposition~\ref{prop_H2_NV} for $Q_l$ as in Theorem~\ref{thm_main}. Note that $L$ has to be large enough: this is not an issue for our purpose since $\CpolKL{m}{L}\subset \CpolKL{m}{L+1}$, and we can work with $L$ as large as needed. We refer to the proof of Theorem~\ref{thm_main} in Subsection~\ref{Subsec_thm_main} for further details.

Before, we summarize in the next lemma some properties of the norms defined in Equation~\eqref{mLnorm} that we will use later on. Its proof is postponed to Appendix~\ref{App_proof_sec_5}
\begin{lemma} \label{lem_estimnorm}
  Let $m,L\in\N$. We have the following basic properties:
  \begin{enumerate}
    \item $\|f\|_{m',L}= \max_{j\in\{0,\ldots, m'\}}\|f^{(j)}\|_{0,L}$ for $f\in\CpolKL{m}{L}$ and $m'\in\{0,\ldots,m\}$.
    \item $\CpolKL{m+1}{L}\subset \CpolKL{m}{L}$ and  $\|f\|_{m,L} \leq \|f\|_{m+1,L}$ for $f\in\CpolKL{m+1}{L}$.
    \item $\|f^{(i)}\|_{m,L} \leq \|f\|_{m+i,L}$ for  $i\in \N$ and $f\in \CpolKL{m+i}{L}$.
    \item $\CpolKL{m}{L}\subset \CpolKL{m}{L+1}$ and $\|f\|_{m,L+1} \leq 2\|f\|_{m,L}$ for $f\in\CpolKL{m}{L}$.
    \item Let $\mathcal{M}_1$ be the operator defined by $f\mapsto \mathcal{M}_1 f$, $\mathcal{M}_1 f(x)=xf(x)$. Then, $\mathcal{M}_1f\in  \CpolKL{m}{L+1}$ for $f\in  \CpolKL{m}{L}$ and $\|\mathcal{M}_1 f\|_{m,L+1} \leq (2m+3)\|f\|_{m,L}$.
    \item Let $\cL f (x)=(a-kx)f'(x)+\frac 12 \sigma^2 x f''(x)$ be the infinitesimal generator of the CIR process. Then, we have for $f \in \CpolKL{m+2}{L}$,
          $$\|\cL f\|_{m,L+1}\le  \left(2a  + (2m+3)(|k| +\sigma^2/2) \right) \|f\|_{m+2,L}.  $$
          We also have $\|(V_1^2/2) f\|_{m,L+1}\le  \sigma^2 (m+2) \|f\|_{m+2,L}$ and $\|V_0 f\|_{m,L+1}\le [2|a-\sigma^2/4|+(2m+3)|k|] \|f\|_{m+1,L}$, where $V_0$ and $V_1$ are defined by~\eqref{def_V0_V1}.
  \end{enumerate}
\end{lemma}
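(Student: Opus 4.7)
The six items are arranged by increasing technical content but share a common scheme: each bound reduces, via the definition of $\|\cdot\|_{m,L}$ in~\eqref{mLnorm} as a supremum of weighted derivative moduli, to an elementary inequality comparing the weights $1+x^L$ and $1+x^{L+1}$.

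Items 1--4 are direct unwrappings of the definitions. Item~1 rewrites $\|f\|_{m',L}$ as a maximum of $0$-norms of derivatives; item~2 follows from monotonicity of the maximum with respect to the index set $\{0,\ldots,m\}$; item~3 combines them via $(f^{(i)})^{(j)}=f^{(i+j)}$, which shifts the index set to $\{i,\ldots,i+m\}\subset\{0,\ldots,i+m\}$. For item~4, the plan rests on the single inequality $1+x^L \le 2(1+x^{L+1})$, verified by a case split at $x=1$; dividing $|f^{(j)}(x)| \le (1+x^L)\|f\|_{m,L}$ by $1+x^{L+1}$ produces the factor~$2$.

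For item~5, the plan is to apply the Leibniz rule to obtain $(\mathcal{M}_1 f)^{(j)}(x) = x f^{(j)}(x) + j f^{(j-1)}(x)$ for $j\ge 1$ (with the trivial identity $\mathcal{M}_1 f = x f$ for $j=0$). The first summand is controlled by the bound $\frac{x(1+x^L)}{1+x^{L+1}}\le 2$, again checked by splitting at $x=1$; the second summand is controlled by the item~4 inequality. Summing gives
$$\frac{|(\mathcal{M}_1 f)^{(j)}(x)|}{1+x^{L+1}} \le (2+2j)\,\|f\|_{m,L},$$
and taking the maximum over $j\in\{0,\ldots,m\}$ yields $\|\mathcal{M}_1 f\|_{m,L+1} \le (2m+2)\|f\|_{m,L} \le (2m+3)\|f\|_{m,L}$.

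Item~6 is the point of the lemma and falls out of item~5 together with items~3--4 once the three operators are written linearly in $\mathcal{M}_1$:
$$\cL f = a f' - k\,\mathcal{M}_1(f') + \tfrac{\sigma^2}{2}\mathcal{M}_1(f''),\quad \tfrac{V_1^2}{2} f = \tfrac{\sigma^2}{4} f' + \tfrac{\sigma^2}{2}\mathcal{M}_1(f''),\quad V_0 f = \bigl(a-\tfrac{\sigma^2}{4}\bigr) f' - k\,\mathcal{M}_1(f').$$
Applying the triangle inequality for $\|\cdot\|_{m,L+1}$, using item~5 on each $\mathcal{M}_1$ summand (with constant $2m+3$) and items~3--4 on the remaining summands (so that $\|f^{(i)}\|_{m,L+1}\le 2\|f\|_{m+i,L}$), the three stated constants $2a+(2m+3)(|k|+\sigma^2/2)$, $\tfrac{\sigma^2}{2}(1+2m+3)=\sigma^2(m+2)$, and $2|a-\sigma^2/4|+(2m+3)|k|$ each fall out by direct accounting. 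No step is a genuine obstacle; the only point requiring care is the weight comparison $\frac{x(1+x^L)}{1+x^{L+1}}\le 2$ that drives item~5, since a worse constant there would propagate to the generator estimates in item~6.
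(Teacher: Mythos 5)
Your proof is correct and follows essentially the same route as the paper's: both rest on the Leibniz rule for item~5 and on a linear decomposition in $\mathcal{M}_1$ together with items~3--5 for item~6. The only noteworthy variation is in item~5, where you use the sharper comparison $x(1+x^L)\le 2(1+x^{L+1})$ in place of the paper's $(1+x)(1+x^L)\le 3(1+x^{L+1})$ combined with $\sup_x \tfrac{x}{1+x}=1$, which gives you the tighter intermediate constant $2m+2$ (still, of course, bounded by the stated $2m+3$).
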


We also state the following elementary lemma that will be useful to prove both~\eqref{H1_bar} and~\eqref{H2_bar}.

\begin{lemma}\label{X0_inequalities}
  Let $T>0$, $\sigma^2\le 4a$ and $X_0$ be defined by~\eqref{def_X0_and_psi_k}. Then, there exists a constant $K\ge 0$ such that
  for any function $f\in\CpolKL{m}{L}$, we have
  $$ \|f(X_0(t,\cdot))\|_{m,L} \leq  e^{Kt} \|f\|_{m,L}, \ t \in [0,T].$$
\end{lemma}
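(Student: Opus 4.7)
The plan is to exploit the explicit affine form of $X_0(t,\cdot)$. Setting $g(x)=f(X_0(t,x))$ with $X_0(t,x)=e^{-kt}x+C(t)$ where $C(t)=\psi_k(t)(a-\sigma^2/4)$, the chain rule gives $g^{(j)}(x)=e^{-jkt}f^{(j)}(X_0(t,x))$ for every $j$. Hence for $j\in\{0,\dots,m\}$,
\begin{equation*}
\frac{|g^{(j)}(x)|}{1+x^L}\le e^{-jkt}\,\|f\|_{m,L}\cdot\frac{1+X_0(t,x)^L}{1+x^L}.
\end{equation*}
The key point is that under $\sigma^2\le 4a$ one has $a-\sigma^2/4\ge 0$, and $\psi_k(t)\ge 0$ for every $k\in\R$ and $t\ge 0$, so $C(t)\ge 0$ and $X_0(t,x)\ge 0$. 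This guarantees that $X_0(t,x)^L$ has a clean binomial expansion with nonnegative terms, which is precisely what makes the argument work.

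Expanding by the binomial theorem,
\begin{equation*}
1+X_0(t,x)^L = 1+e^{-Lkt}x^L + \sum_{i=0}^{L-1}\binom{L}{i}e^{-ikt}C(t)^{L-i}x^i.
\end{equation*}
Using the elementary bound $x^i\le 1+x^L$ valid for all $x\ge 0$ and $0\le i\le L$ (split cases $x\le 1$ and $x\ge 1$), together with $1+e^{-Lkt}x^L\le (1\vee e^{-Lkt})(1+x^L)$, one gets
\begin{equation*}
\frac{1+X_0(t,x)^L}{1+x^L}\le (1\vee e^{-Lkt}) + \sum_{i=0}^{L-1}\binom{L}{i}e^{-ikt}C(t)^{L-i}=:A(t).
\end{equation*}
Combining and taking the max over $j\in\{0,\dots,m\}$ yields $\|g\|_{m,L}\le (1\vee e^{-mkt})\,A(t)\,\|f\|_{m,L}$.

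It remains to show that $t\mapsto (1\vee e^{-mkt})A(t)$ is dominated by $e^{Kt}$ on $[0,T]$ for some $K$. At $t=0$ one has $A(0)=1$ (the binomial sum vanishes since $C(0)=0$) and the prefactor equals $1$, and both $A$ and the prefactor are continuous on $[0,T]$. Since $C(t)=O(t)$ as $t\to 0^+$, each summand in $A(t)-(1\vee e^{-Lkt})$ is $O(t)$, so $(1\vee e^{-mkt})A(t)\le 1+C' t$ on $[0,T]$ for some $C'$ depending on $(k,a,\sigma,L,m,T)$, and hence $\le e^{Kt}$ with $K=C'$. There is essentially no obstacle here: the only subtle point was ensuring $C(t)\ge 0$ so that the binomial expansion can be bounded termwise, which is exactly the role of the assumption $\sigma^2\le 4a$.
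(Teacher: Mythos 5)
Your proof is correct and follows essentially the same route as the paper's: both exploit the affine form $X_0(t,x)=e^{-kt}x+\psi_k(t)(a-\sigma^2/4)$ to pull out $e^{-jkt}$ via the chain rule, then expand $X_0(t,x)^L$ by the binomial theorem, bound $x^i\le 1+x^L$, and absorb the resulting $1+O(t)$ factor into $e^{Kt}$ on $[0,T]$. The only cosmetic difference is that the paper computes the constant $\tilde{C}_{X_0}$ explicitly (using $\psi_k(t)\le e^{(-k)^+T}t$), whereas you argue a bit more abstractly that $A(t)-1=O(t)$ uniformly on $[0,T]$; you also usefully highlight that $\sigma^2\le 4a$ is what makes the additive constant $\psi_k(t)(a-\sigma^2/4)$ nonnegative, hence $X_0(t,x)\ge 0$ and the termwise binomial bound valid.
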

\begin{proof}
  We first prove the following inequality
  $$1 + X_0(t,x)^L \leq  (1\vee e^{-Lkt})(1+\tilde{C}_{X_0}t)(1+x^L),$$
  for some constant $\tilde{C}_{X_0}$. To do so,  we develop the term $X_0(t,x)^L$ and get
  \begin{align*}
    1 + X_0(t,x)^L & = 1+\sum_{j=0}^L {L\choose j} e^{-(L-j)kt} x^{(L-j)}(\psi_k(t)(a-\sigma^2/4))^{j}                              \\
                   & = 1+e^{-Lkt}x^L +\psi_k(t)\sum_{j=1}^{L} {L\choose j} e^{-(L-j)kt} x^{(L-j)}\psi_k(t)^{j-1}(a-\sigma^2/4)^{j}.
  \end{align*}
  We remark that for $k\ge 0$, $0\le \psi_k(t)\leq t \leq 1\vee T$  for all $t\in[0,T]$. For $k<0$, we have $\psi_k(t)=e^{-kt}\psi_{-k}(t)$ and thus $\psi_k(t)\le e^{(-k)^+t}t$  for all $t\in[0,T]$ and $k\in \R$. Using $x^j\leq 1+x^L$ for all $j\in\{1,\ldots,L\}$, we can rewrite the previous identity as
  \begin{align*}
    1 + X_0(t,x)^L & \leq (1\vee e^{-Lkt})(1+x^L) \\
    & \quad + t  e^{(-k)^+t} (1\vee e^{-Lkt})(1+x^L)  \sum_{j=0}^L {L\choose j} ( e^{(-k)^+T}(1\vee T)(a-\sigma^2/4))^{j} \\
                   & \leq (1\vee e^{-Lkt})(1+\tilde{C}_{X_0}t)(1+x^L),
  \end{align*}
  where $\tilde{C}_{X_0}=e^{(-k)^+T}(1+e^{(-k)^+T}(1\vee T)(a-\sigma^2/4))^L$.

  We are now in position to prove the claim. For $i\le m$, we have:
  \begin{align*}
    |\partial_x^i f(X_0(t,x))| =|e^{-ikt}f^{(i)}(X_0(t,x))| & \leq e^{-ikt}\|f\|_{m,L}(1+X_0(t,x)^L)                                              \\
                                                            & \leq \|f\|_{m,L} (1 \vee e^{-mkt})  (1 \vee e^{- Lkt})  (1+\tilde{C}_{X_0}t)(1+x^L) \\
                                                            & \leq \|f\|_{m,L} e^{[\tilde{C}_{X_0}+(L+m)(-k)^+] t}(1+x^L).
  \end{align*}
  This gives $\|f(X_0(t,\cdot))\|_{m,L}\le \|f\|_{m,L} e^{[\tilde{C}_{X_0}+(L+m)(-k)^+] t}$.
\end{proof}

\subsection{Proof of~\eqref{H1_bar}} In this subsection, we prove the following result which is a direct consequence of Propositions~\ref{LCIR_Expansion} (with $\nu=2$) and~\ref{prop_H1sch} that are stated below.
\begin{prop}\label{H1_bar_mL}
  Let $Y$ satisfy~$(\mathcal{H}_Y)$, $\sigma^2\le 4a$ and $\hat{X}^x_t=\varphi(x,t,\sqrt{t}Y)$ be the scheme~\eqref{Alfonsi_scheme}. Let $m,L\in \N$ such that $L+3\ge m$ and $f \in \CpolKL{2(m+3)}{L}$. Then, there exists a constant $C\in \R_+^*$ such that for $t\in[0,T]$,
  $$ \|\E[f(\hat{X}^\cdot_t)]- \E[f({X}^\cdot_t)]\| _{m,L+3}\le C t^3 \|f\|_{2(m+3),L}.$$
\end{prop}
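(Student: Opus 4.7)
The plan is to combine Propositions~\ref{LCIR_Expansion} and~\ref{prop_H1sch} by the triangle inequality: both produce a second order Taylor-in-$t$ expansion of the corresponding semigroup around the same polynomial part $f + t\,\cL f + \tfrac{t^2}{2}\cL^2 f$, so subtracting cancels the shared terms and leaves only two remainders, each of size $O(t^3)$ in the $(m,L+3)$-norm.

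First I would apply Proposition~\ref{LCIR_Expansion} with $\nu=2$ to the CIR semigroup $P_t f(y)=\E[f(X^y_t)]$. The compatibility condition $L+\nu+1=L+3\ge m$ is exactly our standing hypothesis, while $f\in \CpolKL{2(m+3)}{L}\subset \CpolKL{m+6}{L}=\CpolKL{m+2(\nu+1)}{L}$ by item~2 of Lemma~\ref{lem_estimnorm}. This yields the identity
\begin{equation*}
  \E[f(X^\cdot_t)] = f + t\,\cL f + \tfrac{t^2}{2}\cL^2 f + t^3 \tilde R f(t,\cdot),
\end{equation*}
with $\tilde R f(t,y)=\int_0^1 \tfrac{(1-s)^2}{2}\E[\cL^3 f(X^y_{ts})]\,ds$ and the norm estimate
$\|\tilde R f(t,\cdot)\|_{m,L+3}\le C\|f\|_{m+6,L}\le C\|f\|_{2(m+3),L}$ for $t\in[0,T]$.

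Next I would invoke Proposition~\ref{prop_H1sch}, which provides the matching expansion for the scheme $\hat X$ under the same regularity class:
\begin{equation*}
  \E[f(\hat X^\cdot_t)] = f + t\,\cL f + \tfrac{t^2}{2}\cL^2 f + \bar R f(t,\cdot),
\end{equation*}
with $\|\bar R f(t,\cdot)\|_{m,L+3}\le C t^3 \|f\|_{2(m+3),L}$. Subtracting these two identities makes the common polynomial part disappear, and the triangle inequality gives
\begin{equation*}
  \|\E[f(\hat X^\cdot_t)] - \E[f(X^\cdot_t)]\|_{m,L+3} \le t^3\|\tilde R f(t,\cdot)\|_{m,L+3} + \|\bar R f(t,\cdot)\|_{m,L+3}\le C t^3 \|f\|_{2(m+3),L},
\end{equation*}
which is the desired bound.

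I expect no real difficulty in this assembly: the whole technical content sits in the two Propositions being invoked. The binding regularity requirement $f\in \CpolKL{2(m+3)}{L}$ comes from $\bar R$, not from $\tilde R$ (the CIR-side remainder only needs $m+6$ derivatives, while the scheme-side expansion demands $2(m+3)$ because of the composition of the two splitting operators $P^0_t$ and $P^1_t$), so the hypothesis of this proposition is exactly what is needed to feed both results simultaneously. The condition $L+3\ge m$ is transmitted verbatim from Proposition~\ref{LCIR_Expansion}, and the weight-growth $L+3$ on the left-hand side arises because each step raises the polynomial weight by one (the factor $1+y^{\nu+1}=1+y^3$ in the remainder estimate~\eqref{rem_CIR_estimate_intro}), consistently with the auxiliary weight increase in Proposition~\ref{prop_H1sch}.
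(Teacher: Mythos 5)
Your proof is correct and matches the paper's own argument exactly: the paper proves Proposition~\ref{H1_bar_mL} precisely by combining Proposition~\ref{LCIR_Expansion} with $\nu=2$ and Proposition~\ref{prop_H1sch} via the triangle inequality, cancelling the common second-order expansion $f + t\,\cL f + \tfrac{t^2}{2}\cL^2 f$. Your observations about which hypothesis is binding (the $2(m+3)$ derivatives needed for $\bar R$) and where the $L+3$ weight and the $L+3\ge m$ condition come from are also accurate.
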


To prove this result, we compare each term with the expansion $f(x)+t\cL f(x)+\frac{t^2}2\cL^2 f(x)$ of order two. The next proposition analyzes the difference between such expansion and the semigroup of the CIR process.
\begin{prop}\label{LCIR_Expansion}
  Let $m,\nu,L\in \N$ such that $L+\nu+1\ge m$, $T>0$ and  $f\in\CpolKL{m+2(\nu+1)}{L}$. Let $X^x$ be the CIR process and $\cL$ its infinitesimal generator. Then, for  $t\in [0,T]$,  we have
  \begin{equation}\label{dev_funct_CIR}
    \E[f(X^x_t)] =\sum_{i=0}^\nu \frac{t^i}{i!}\cL^if(x) + t^{\nu+1}\int_0^1 \frac{(1-s)^\nu}{\nu!}\E[\cL^{\nu+1}f(X^x_{ts})]ds
  \end{equation}
  where the function $x\mapsto \int_0^1 \frac{(1-s)^\nu}{\nu!}\E[\cL^{\nu+1}f(X^x_s)]ds$ belongs to $\CpolKL{m}{L}$ and we have the following estimate for all $t\in [0,T]$,
  \begin{equation}\label{rem_CIR_estimate}
    \left\|\int_0^1 \frac{(1-s)^\nu}{\nu!}\E[\cL^{\nu+1}f(X^{\cdot}_{ts})]ds\right\|_{m,L+\nu+1} \le C\|f\|_{m+2(\nu+1),L},
  \end{equation}
  for some constant $C\in \R_+$ depending on $(a,k,\sigma,\nu,m,L,T)$.
\end{prop}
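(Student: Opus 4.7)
The plan is to derive the expansion via iterated Itô's formula and then obtain the remainder estimate by combining two ingredients already available: the control of~$\cL^{\nu+1}f$ in the $\|\cdot\|_{m,L+\nu+1}$ norm (item~6 of Lemma~\ref{lem_estimnorm}) and the contraction-type bound for the CIR semigroup stated as Proposition~\ref{deriv_CIR_functionals_intro} in the introduction.

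First, I would establish the expansion~\eqref{dev_funct_CIR}. Since $f\in\CpolKL{m+2(\nu+1)}{L}$, iterating item~6 of Lemma~\ref{lem_estimnorm} a total of $\nu+1$ times yields
\[
 \cL^{i}f\in\CpolKL{m+2(\nu+1-i)}{L+i}\quad\text{for }i\in\{0,\dots,\nu+1\},
\]
with a norm bound $\|\cL^{i}f\|_{m+2(\nu+1-i),L+i}\le C_i\|f\|_{m+2(\nu+1),L}$. In particular each $\cL^{i}f$ has polynomial growth, so Itô's formula applied to $\cL^{i}f(X^{x}_{\cdot})$ produces no boundary terms and no issue with the square-root diffusion (the stochastic integrals are true martingales because moments of the CIR are polynomially bounded in~$x$). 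Applying Itô's formula and taking expectations gives $\partial_{t}\E[\cL^{i}f(X^{x}_{t})] = \E[\cL^{i+1}f(X^{x}_{t})]$, and a standard Taylor expansion with integral remainder then yields~\eqref{dev_funct_CIR}.

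Next, for the norm estimate~\eqref{rem_CIR_estimate}, I set $g:=\cL^{\nu+1}f$. By the above bound with $i=\nu+1$, $g\in\CpolKL{m}{L+\nu+1}$ and $\|g\|_{m,L+\nu+1}\le C\|f\|_{m+2(\nu+1),L}$. Here the assumption $L+\nu+1\ge m$ ensures we may legitimately apply Proposition~\ref{deriv_CIR_functionals_intro} at level $(m,L+\nu+1)$, obtaining
\[
 \|\E[g(X^{\cdot}_{ts})]\|_{m,L+\nu+1}\le C_{\text{cir}}(m,L+\nu+1,T)\,\|g\|_{m,L+\nu+1},\qquad ts\in[0,T].
\]
Since the seminorm~$\|\cdot\|_{m,L+\nu+1}$ is induced by a supremum over $(j,x)$ and commutes with integration over~$s\in[0,1]$ against the nonnegative weight $(1-s)^{\nu}/\nu!$, we conclude
\[
 \Big\|\int_{0}^{1}\tfrac{(1-s)^{\nu}}{\nu!}\E[\cL^{\nu+1}f(X^{\cdot}_{ts})]\,ds\Big\|_{m,L+\nu+1}\le\frac{C_{\text{cir}}\cdot C}{(\nu+1)!}\,\|f\|_{m+2(\nu+1),L},
\]
which is the desired~\eqref{rem_CIR_estimate}. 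Regularity of the integrand in~$x$ (needed to conclude that the remainder itself lies in $\CpolKL{m}{L+\nu+1}$) follows from Proposition~\ref{deriv_CIR_functionals_intro}, which gives not only the bound but the membership $\E[g(X^{\cdot}_{ts})]\in\CpolKL{m}{L+\nu+1}$ at each~$s$; dominated convergence transfers this to the integral.

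The main obstacle in this argument is not in the proof itself but sits entirely inside the semigroup estimate invoked in the last step, namely Proposition~\ref{deriv_CIR_functionals_intro}: establishing that $\|\E[f(X^{\cdot}_{t})]\|_{m,L}\le C\|f\|_{m,L}$ for the CIR process is nontrivial because the diffusion coefficient $\sigma\sqrt{x}$ is not Lipschitz, so the standard pathwise differentiation of $x\mapsto X^{x}_{t}$ breaks down and one instead relies on the explicit noncentral $\chi^{2}$ density of the CIR to differentiate under the expectation. Once that semigroup estimate is taken as a black box, the rest of the proof is a clean bookkeeping of how~$\cL$ shifts the two indices $(m,L)$ by $(-2,+1)$ each time it is applied.
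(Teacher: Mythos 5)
Your proposal is correct and follows the same route as the paper: derive~\eqref{dev_funct_CIR} by iterated Itô and Taylor with integral remainder (using the polynomially bounded CIR moments to justify the martingale property), bound $\|\cL^{\nu+1}f\|_{m,L+\nu+1}$ by iterating Lemma~\ref{lem_estimnorm}(6), apply Proposition~\ref{deriv_CIR_functionals} at level $(m,L+\nu+1)$ using $L+\nu+1\ge m$, and finish with the triangle inequality (Minkowski in~$s$), which produces the $1/(\nu+1)!$ factor. The only cosmetic remark is that the phrase ``the seminorm commutes with integration'' should really read ``the norm of the integral is bounded by the integral of the norm,'' which is exactly the triangle inequality the paper invokes.
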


\begin{proof}
  Let $f\in\CpolKL{m+2(\nu+1)}{L}$. Since the coefficients of the CIR SDE have sublinear growth, we have bounds on the moments of $X^x_s$: for any $q\in\N^*$, there exists $C_q>0$ such that $\E[|X^x_s|^q]\leq C_q(1+x^q)$ for $s\in[0,T]$. Using iterations of Itô’s formula and a change of variable (in time), we then easily get \eqref{dev_funct_CIR} for $t \in [0, T]$.  To get the estimate \eqref{rem_CIR_estimate}, we first use Lemma~\ref{lem_estimnorm} and obtain $$\| \cL^{\nu+1} f \|_{m,L+\nu+1}\le K_{ cir}(m,\nu)^{\nu+1} \|f\|_{m+2(\nu+1),L},$$ with $K_{\bf cir}(m,\nu)=2a+(2m+4\nu+3)(|k|+\sigma^2/2)$.   By the triangle inequality, we have
  $$\left\|\int_0^1 \frac{(1-s)^\nu}{\nu!}\E[\cL^{\nu+1}f(X^{\cdot}_{ts})]ds\right\|_{m,L+\nu+1}\le \int_0^1 \frac{(1-s)^\nu}{\nu!} \left\|\E[\cL^{\nu+1}f(X^{\cdot}_{ts})]\right\|_{m,L+\nu+1}ds. $$
  Since $t\le T$, we have $\left\|\E[\cL^{\nu+1}f(X^{\cdot}_{ts})]\right\|_{m,L+\nu+1}\le C_{cir}(m,L+\nu+1,T)\left\|\cL^{\nu+1}f\right\|_{m,L+\nu+1}$ by  Proposition~\ref{deriv_CIR_functionals} using that $L+\nu+1\ge m$. This gives by Lemma~\ref{lem_estimnorm}
  $$\left\|\int_0^1 \frac{(1-s)^\nu}{\nu!}\E[\cL^{\nu+1}f(X^{\cdot}_{ts})]ds\right\|_{m,L+\nu+1}\le \frac {C_{cir}(m,L+\nu+1,T)} {(\nu+1)!} K_{ cir}(m,\nu)^{\nu+1} \|f\|_{m+2(\nu+1),L}. $$
\end{proof}

We now focus on the approximation scheme. The main difficulty comes from the  differentiation of the square-root that may lead to derivatives that blow up at the origin. Here, we exploit the fact that $Y$ is a symmetric random variable to cancel these blowing terms. More precisely, we will then need to differentiate in $x$ the following quantity
$$g(X_1(s \sqrt{t},x))+g(X_1(-s\sqrt{t},x))=g(x+\sigma s \sqrt{t} \sqrt{x} + \frac{\sigma^2}4 ts^2)+g(x-\sigma s \sqrt{t} \sqrt{x} + \frac{\sigma^2}4 t s^2),$$
and the next lemma enables us to have a sharp estimate of the derivatives.

\begin{lemma}\label{regular_rep}
  Let $g:\R_+\to \R$ be a $\mathcal{C}^{2n}$ function, $\beta\in \R_+$ and $\gamma \ge \beta^2/4$. Then,
  the function  $\psi_g(x):=g(x+\beta \sqrt{x}+\gamma)+g(x-\beta \sqrt{x}+\gamma)$, $x\ge 0$   is $\mathcal{C}^n$ with derivatives
  \begin{equation}\label{formula_psign}
    \psi_g^{(n)}(x)=\psi_{g^{(n)}}(x)+\sum_{j=1}^n\binom{n}{j}\beta^{2j}\int_0^1g^{(n+j)}(x+\beta(2u-1)\sqrt{x}+\gamma) \frac{(u-u^2)^{j-1}}{(j-1)!}du
  \end{equation}
\end{lemma}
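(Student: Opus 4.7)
My plan is to prove the identity by induction on $n$ (with $g$ assumed $\mathcal{C}^{2n}$), working first for $x>0$ where $\psi_g$ is clearly smooth by composition, and then extending the resulting formula to $x=0$ by continuity. Note that the hypothesis $\gamma\ge \beta^2/4$ ensures that $x\pm\beta\sqrt{x}+\gamma = (\sqrt{x}\pm\beta/2)^2 + \gamma - \beta^2/4 \ge 0$, so all evaluations of $g$ stay in its domain. For the base case $n=1$, direct differentiation yields
\[
\psi_g'(x) = \psi_{g'}(x) + \frac{\beta}{2\sqrt{x}}\bigl[g'(x+\beta\sqrt{x}+\gamma)-g'(x-\beta\sqrt{x}+\gamma)\bigr],
\]
and Taylor's theorem with integral remainder writes the bracketed difference as $2\beta\sqrt{x}\int_0^1 g''(x+\beta(2u-1)\sqrt{x}+\gamma)\,du$. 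This cancels the $1/\sqrt{x}$ and produces $\beta^2\int_0^1 g''(\cdots)\,du$, matching \eqref{formula_psign} at $n=1$.

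The heart of the argument is the inductive step $n\to n+1$. Assuming the formula at step $n$, I would differentiate $\psi_g^{(n)}$ term by term: the derivative of $\psi_{g^{(n)}}$ is handled by the $n=1$ case. For each integral $F_j(x)=\int_0^1 g^{(n+j)}(\alpha(u,x))\frac{(u-u^2)^{j-1}}{(j-1)!}\,du$ with $\alpha(u,x)=x+\beta(2u-1)\sqrt{x}+\gamma$, I would split $\partial_x\alpha = 1+\frac{\beta(2u-1)}{2\sqrt{x}}$ into a regular part that immediately contributes $g^{(n+j+1)}(\alpha)(u-u^2)^{j-1}/(j-1)!$, and a singular part containing the factor $\beta(2u-1)/(2\sqrt{x})$. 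The key algebraic identity
\[
\frac{(2u-1)(u-u^2)^{j-1}}{(j-1)!} = -\frac{1}{j!}\frac{d}{du}\bigl[(u-u^2)^j\bigr]
\]
turns the singular part into $-\frac{\beta}{2\sqrt{x}\,j!}\int_0^1 \frac{d}{du}[(u-u^2)^j]\,g^{(n+j+1)}(\alpha)\,du$. A single integration by parts in $u$, using that $(u-u^2)^j$ vanishes at $u=0,1$ for $j\ge 1$, combined with $\partial_u[g^{(n+j+1)}(\alpha)] = 2\beta\sqrt{x}\,g^{(n+j+2)}(\alpha)$, cancels the $1/\sqrt{x}$ and produces exactly $\beta^2\int_0^1 g^{(n+j+2)}(\alpha)(u-u^2)^j/j!\,du$. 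Reindexing this contribution by $j\mapsto j+1$ and applying Pascal's identity $\binom{n}{j}+\binom{n}{j-1}=\binom{n+1}{j}$ to combine it with the regular-part contribution reassembles the sum in the form \eqref{formula_psign} at level $n+1$.

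The main obstacle is exactly what dictates the particular shape of \eqref{formula_psign}: every naive way of differentiating introduces $1/\sqrt{x}$ singularities, and only the interplay between the $(u-u^2)^{j-1}$ weight and the boundary vanishing of $(u-u^2)^j$ at $u=0,1$ allows one integration by parts to yield a singularity-free representation. A secondary but routine point is to justify the interchange of differentiation and integration (legitimate on $(0,\infty)$ by dominated convergence since $g^{(n+j+1)}$ is locally bounded there) and then extend the resulting formula to $x=0$: the right-hand side of \eqref{formula_psign} is manifestly continuous in $x\ge 0$, and since $\psi_g\in\mathcal{C}([0,\infty))$ with each $\psi_g^{(k)}$ admitting a continuous extension for $k\le n$, one concludes that $\psi_g\in\mathcal{C}^n([0,\infty))$ with the derivatives given by \eqref{formula_psign}.
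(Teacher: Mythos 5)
Your proposal is correct and follows essentially the same route as the paper: induction on $n$, the $n=1$ base case via the fundamental theorem of calculus, the inductive step splitting $\partial_x\alpha$ into regular and singular parts, the identity $(2u-1)(u-u^2)^{j-1}/(j-1)!=-\tfrac{1}{j!}\partial_u[(u-u^2)^j]$ enabling a single integration by parts whose boundary terms vanish, and finally Pascal's identity after reindexing. The only cosmetic differences are that you spell out the dominated-convergence justification and the continuity extension to $x=0$, which the paper leaves implicit beyond the $n=1$ remark.
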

The proof of this lemma and of the next corollary are postponed to Appendix~\ref{App_proof_sec_5}.

\begin{corollary}\label{cor_psign} 
	Let $m,L\in \N$, $\beta\ge 0$ and $g \in \CpolKL{2m}{L}$. Then, $\psi_g(x)=g((\sqrt{x}+\beta/2)^2)+ g((\sqrt{x}-\beta/2)^2)$ belongs to $\CpolKL{m}{L}$, and for all $n\in\{0,\ldots,m\}$ we have the following estimates
  \begin{equation}
    \|\psi_g\|_ {n,L} \leq C_{\beta,m,L} \|g\|_{2n,L},
  \end{equation}
  with $C_{\beta,m,L}= \big((1+\beta/2)^{2L}+(1-\beta/2)^{2L}+2(1+\beta^2/2)^{L}(1+\beta^2/2)^{m})$.
\end{corollary}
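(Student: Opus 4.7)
The corollary is a direct consequence of Lemma~\ref{regular_rep}. The plan is to first recast $\psi_g$ in the form required by that lemma, and then to obtain the norm estimate term by term.

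I would first observe that $\psi_g(x) = g(x+\beta\sqrt{x}+\beta^2/4) + g(x-\beta\sqrt{x}+\beta^2/4)$, so $\psi_g$ fits the hypotheses of Lemma~\ref{regular_rep} with $\gamma = \beta^2/4$, which trivially satisfies $\gamma \ge \beta^2/4$. Since $g \in \CpolKL{2m}{L} \subset \mathcal{C}^{2m}(\R_+)$, for any $n\in\{0,\ldots,m\}$ the lemma applies (it only uses at most $2n \le 2m$ derivatives of $g$) and yields
\begin{equation*}
  \psi_g^{(n)}(x)=\psi_{g^{(n)}}(x)+\sum_{j=1}^n\binom{n}{j}\beta^{2j}\int_0^1 g^{(n+j)}\bigl(x+\beta(2u-1)\sqrt{x}+\beta^2/4\bigr) \frac{(u-u^2)^{j-1}}{(j-1)!}\,du.
\end{equation*}
In particular, $\psi_g^{(n)}$ is continuous for each $n \le m$, so $\psi_g \in \CpolKL{m}{L}$ once the growth bounds below are in place.

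To quantify the norm, I would bound each factor using the polynomial growth of $g$: since $j+n \le 2n$ for $j \le n$, every occurrence of $g^{(n+j)}$ satisfies $|g^{(n+j)}(y)| \le \|g\|_{2n,L}(1+y^L)$, and it remains to dominate the weight $1+y^L$ at each argument $y$ by a multiple of $1+x^L$. For the boundary piece $\psi_{g^{(n)}}$, the arguments are $y_\pm = (\sqrt{x}\pm\beta/2)^2$; a simple case split on $x\le 1$ versus $x\ge 1$ gives $y_\pm^{L} \le (1\pm\beta/2)^{2L}(1\vee x^L)$, producing the first two contributions $(1+\beta/2)^{2L}+(1-\beta/2)^{2L}$ of $C_{\beta,m,L}$ times $\|g\|_{n,L}(1+x^L)$. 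For the integral terms, the argument $y(u,x)=x+\beta(2u-1)\sqrt{x}+\beta^2/4$ lies in the interval $[(\sqrt{x}-\beta/2)^2,(\sqrt{x}+\beta/2)^2]$ for $u\in[0,1]$, and an AM--GM estimate $\beta\sqrt{x}\le (\beta^2+x)/2$ yields $y(u,x)^L \le (1+\beta^2/2)^L(1+x^L)$ up to combinatorial factors. The coefficient $\sum_{j=1}^n \binom{n}{j}\beta^{2j}\int_0^1 (u-u^2)^{j-1}/(j-1)!\,du$, which uses the Beta integrals $\int_0^1 u^{j-1}(1-u)^{j-1}du = (j-1)!^2/(2j-1)!$, is bounded for $n\le m$ by $2(1+\beta^2/2)^m$, producing the remaining $2(1+\beta^2/2)^{L+m}$ contribution of $C_{\beta,m,L}$. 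Taking the maximum over $n \in \{0,\ldots,m\}$ then gives the stated norm bound.

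The main difficulty will be purely bookkeeping: the structural argument is transparent, but matching the stated constant $C_{\beta,m,L}$ exactly requires careful interplay between the case split on $x$, the AM--GM inequality applied to $\beta\sqrt{x}$, and the explicit evaluation of the Beta integrals, so it is easy to land on a slightly larger (but structurally identical) constant instead.
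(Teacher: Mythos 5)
Your proposed structure (invoke Lemma~\ref{regular_rep} with $\gamma=\beta^2/4$, then estimate each piece) is exactly the paper's, but there is a genuine error in the estimate of the boundary piece $\psi_{g^{(n)}}$. You claim that a case split on $x\le 1$ versus $x\ge 1$ gives $(\sqrt{x}-\beta/2)^{2L}\le(1-\beta/2)^{2L}(1\vee x^L)$, and this is false. Take $\beta=1$, $L=1$, $x=4$: the left side is $(2-\tfrac12)^2=\tfrac94$ while the right side is $(\tfrac12)^2\cdot 4=1$. More structurally, as $x\to\infty$ the left side behaves like $x^L$, while your proposed bound behaves like $(1-\beta/2)^{2L}x^L<x^L$ for $0<\beta<2$; so the inequality is asymptotically backwards for a whole range of $\beta$. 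The reason the constant $(1+\beta/2)^{2L}+(1-\beta/2)^{2L}$ appears cannot be obtained by bounding the two summands of $\psi_{g^{(n)}}$ separately.

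The paper's actual mechanism is algebraic cancellation: it keeps the two terms of $\psi_{g^{(n)}}$ together and expands the sum $(\sqrt{x}+\beta/2)^{2L}+(\sqrt{x}-\beta/2)^{2L}$, in which all odd powers of $\sqrt{x}$ cancel, leaving $2\sum_{i=0}^L\binom{2L}{2i}(\beta/2)^{2i}x^{L-i}$, a polynomial in $x$. Then $x^{L-i}\le 1+x^L$ gives the bound, and the resulting coefficient $2\sum_{i=0}^L\binom{2L}{2i}(\beta/2)^{2i}$ equals $(1+\beta/2)^{2L}+(1-\beta/2)^{2L}$. If you want to bound each term separately (as you attempted), the correct per-term bound is $(\sqrt{x}-\beta/2)^{2L}\le(\sqrt{x}+\beta/2)^{2L}\le(1+\beta/2)^{2L}(1+x^L)$, which is valid but produces $2(1+\beta/2)^{2L}$ instead of the stated sum; that still proves the corollary (with a worse constant) but does not recover $C_{\beta,m,L}$. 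Your treatment of the integral remainders — bounding $w(u,x,y)\le(\sqrt{x}+\beta/2)^2$, using AM--GM in place of the paper's binomial expansion, and evaluating the $u$-integrals via Beta functions rather than the crude $u-u^2\le\tfrac14$ — is a legitimate variation and only affects the constant, as you say; it is the boundary-piece inequality that needs the cancellation idea.
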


\begin{lemma}\label{lem_expan_V}
  Let $m,\nu,L\in \N, \ T>0,\ t \in [0,T]$ and $N\sim \mathcal{N}(0,1)$. Let $Y$ be a symmetric random variable such that $\E[Y^k]=\E[N^k]$ for $k\le 2\nu$ and $\E[Y^{2k}]<\infty$ for all $k\in \N$.  We have, for $f\in \CpolKL{m+\nu+1}{L}$,
  \begin{equation}
    f(X_0(t,x)) = \sum_{i=0}^\nu \frac{t^i}{i!} V^i_0f(x) +t^{\nu+1} \int_0^{1} \frac{(1-u)^{\nu}}{\nu!}V^{\nu+1}_0 f(X_0(u t,x))du,  \label{expan_X0}
  \end{equation}
  with $\|\int_0^{1} \frac{(1-u)^{\nu}}{\nu !}V^{\nu+1}_0 f(X_0(u t,\cdot))du\|_{m,L+\nu+1}\le C_0 \|f\|_{m+\nu+1,L}$; and for $f\in\CpolKL{2(m+\nu+1)}{L} $,
  \begin{align}\E[f(X_1(\sqrt{t}Y,x))] =& \sum_{i=0}^\nu \frac{t^i}{i!} \left(\frac{1}{2}V^2_1\right)^if(x) \label{expan_X1} \\&+ t^{\nu+1}\E\left[Y^{2\nu +2}\int_0^{1} \frac{(1-u)^{2\nu+1}}{(2\nu+1)!}V^{2\nu+2}_1 f(X_1(u \sqrt{t} Y,x))du \right],  \notag
  \end{align}
  with   $\left\|\E\left[Y^{2\nu +2}\int_0^{1} \frac{(1-u)^{2\nu+1}}{(2\nu+1)!}V^{2\nu+2}_1 f(X_1(u \sqrt{t} Y,\cdot))du \right]\right\|_{m,L+\nu+1}\le  C_1 \|f\|_{2(m+\nu+1),L}$, for some constants $C_0,C_1\in \R^+$ depending on $(a,k,\sigma)$, $T$, $m$, $M$ and $\nu$.
\end{lemma}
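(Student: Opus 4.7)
My plan is to treat the two expansions separately. For \eqref{expan_X0}, the key observation is that $t\mapsto X_0(t,x)$ solves the ODE $\partial_t X_0 = a-\sigma^2/4 - k X_0$, so by the chain rule $\partial_t f(X_0(t,x)) = V_0 f(X_0(t,x))$ and, by induction, $\partial_t^i f(X_0(t,x)) = V_0^i f(X_0(t,x))$. Taylor's formula with integral remainder applied in the variable $t$ then yields the expansion directly. For the norm estimate, iterating Lemma~\ref{lem_estimnorm}(6) gives $\|V_0^{\nu+1}f\|_{m,L+\nu+1}\le C\|f\|_{m+\nu+1,L}$, while Lemma~\ref{X0_inequalities} controls $\|V_0^{\nu+1}f(X_0(ut,\cdot))\|_{m,L+\nu+1}$ uniformly in $u\in[0,1]$ and $t\in[0,T]$; integrating over $u$ closes this part.

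For~\eqref{expan_X1}, although $X_1(w,x)=(\sqrt{x}+w\sigma/2)^2$ has a square-root dependence on $x$, the map $u(w):=f(X_1(w,x))$ (with $x$ fixed) is smooth in $w$ and a direct induction shows that $u^{(2k)}(w)=V_1^{2k}f(X_1(w,x))$ for every $k\ge 0$; the odd-order derivatives would involve the sign of $\sqrt{x}+w\sigma/2$, but they will be killed by the symmetry of $Y$. Applying Taylor's formula to $u$ at $0$ up to order $2\nu+1$ with integral remainder, then setting $w=\sqrt{t}Y$ and taking expectation, the odd moments of $Y$ vanish while, for $i\le \nu$, the even moments $\E[Y^{2i}]$ coincide with $(2i-1)!!=(2i)!/(2^i i!)$ by the moment-matching hypothesis. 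The main sum therefore collapses to $\sum_{i=0}^\nu (t^i/i!)(V_1^2/2)^i f(x)$, giving~\eqref{expan_X1}.

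The hard part is the norm estimate on the remainder, because a priori differentiating $V_1^{2\nu+2}f(X_1(w,x))$ in $x$ generates $1/\sqrt{x}$ contributions that blow up at the origin. My plan is to exploit the symmetry of $Y$ one more time: since $Y^{2\nu+2}$ is an even function of $Y$, the integrand inside the expectation can be replaced by its symmetrized counterpart
\begin{equation*}
\tfrac12\bigl(V_1^{2\nu+2}f(X_1(s\sqrt{t}Y,x))+V_1^{2\nu+2}f(X_1(-s\sqrt{t}Y,x))\bigr)=\tfrac12\,\psi_{V_1^{2\nu+2}f}(x),
\end{equation*}
where $\psi_g$ is the function of Corollary~\ref{cor_psign} with parameter $\beta=s\sqrt{t}\,|Y|\sigma$. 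Corollary~\ref{cor_psign} then yields $\|\psi_{V_1^{2\nu+2}f}\|_{m,L+\nu+1}\le C_{\beta,m,L+\nu+1}\|V_1^{2\nu+2}f\|_{2m,L+\nu+1}$, and iterating Lemma~\ref{lem_estimnorm}(6) produces $\|V_1^{2\nu+2}f\|_{2m,L+\nu+1}\le C\|f\|_{2(m+\nu+1),L}$. Since $C_{\beta,m,L+\nu+1}$ is polynomial in $\beta$ and $|Y|$ has moments of all orders, integrating in $s$ and taking $\E[|Y|^{2\nu+2}\,C_{\beta,m,L+\nu+1}]$ produces a finite multiplicative constant depending on $(a,k,\sigma,T,m,L,\nu)$ and on the moments of $Y$, as required.
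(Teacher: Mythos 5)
Your proposal is correct and follows essentially the same route as the paper: Taylor's formula in $t$ for both schemes, cancellation of odd moments by symmetry of $Y$, symmetrization of the remainder, and then Corollary~\ref{cor_psign} combined with Lemma~\ref{lem_estimnorm}(6) for the norm estimate. Your observation that $\partial_w^{2k}f(X_1(w,x))=V_1^{2k}f(X_1(w,x))$ while the odd derivatives carry a factor $\mathrm{sign}(\sqrt{x}+w\sigma/2)$ is in fact slightly more careful than the paper's blanket claim $\frac{d}{dt}f(X_1(t,x))=V_1f(X_1(t,x))$, but it explains precisely why the even-order remainder and the Taylor coefficients at $w=0$ come out exactly as stated.
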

\begin{proof}
  Equation~\eqref{expan_X0} holds by using Taylor formula since $\frac{d}{dt}f(X_0(t,x))=V_0f(X_0(t,x))$. We have by Property~(6) of Lemma~\ref{lem_estimnorm} $\|V_0f\|_{m,L+1}\le |a-\frac {\sigma^2} 4|\|f'\|_{m,L+1}+|k|(2m+3) \|f'\|_{m,L}\le (2|a-\frac {\sigma^2} 4|+|k|(2m+3))\|f\|_{m+1,L}$ and thus $\|V_0^{\nu+1}f\|_{m,L+\nu+1}\le C \|f\|_{m+\nu+1,L}$ for some constant $C$ depending on $a,\sigma,k,\nu,m$.  Using the triangular inequality and Lemma~\ref{X0_inequalities}, we get the result.

  We now prove the second part of the claim. We first show Equation~\eqref{expan_X1}. Since $\frac{d}{dt}f(X_1(t,x))=V_1f(X_1(t,x))$, we get by Taylor formula
    \begin{align*}
      f(X_1(t,x))&=\sum_{i=0}^{2\nu+1} \frac{t^i}{i!} V_1^if(x) +  \int_0^t \frac{(t-s)^{2\nu+1}}{(2\nu +1)!} V_1^{2\nu+2}f(X_1(s,x))du \\
      &=\sum_{i=0}^{2\nu+1} \frac{t^i}{i!} V_1^if(x) + t^{2\nu+2} \int_0^1 \frac{(1-u)^{2\nu+1}}{(2\nu +1)!} V_1^{2\nu+2}f(X_1(ut,x))du, \ t\in \R.
    \end{align*}
We apply this formula at $\sqrt{t}Y$ and take the expectation. Since $\E[Y^{2i+1}]=0$ by symmetry and $\E[Y^{2i}]=\E[N^{2i}]=\frac{(2i)!}{i!2^i}$ for $i\le \nu$, we get~\eqref{expan_X1}.  We now analyze the norm of the remainder. We have $\|\frac 12 V_1^2f\|_{m,L+1}\le \sigma^2 (m+2)\|f\|_{m+2,L}$ by using Lemma~\ref{lem_estimnorm}~(6). Then, we observe that by symmetry of~$Y$,
  \begin{align*}
     & \E\left[Y^{2\nu +2}\int_0^{1} \frac{(1-u)^{2\nu+1}}{(2\nu+1)!}V^{2\nu+2}_1 f(X_1(u \sqrt{t} Y,x))du \right]
    \\& = \frac 12 \E\left[Y^{2\nu +2}\int_0^{1} \frac{(1-u)^{2\nu+1}}{(2\nu+1)!}[V^{2\nu+2}_1 f(X_1(u \sqrt{t} Y,x)) + V^{2\nu+2}_1 f(X_1(-u \sqrt{t} Y,x))]du  \right]
  \end{align*}
  By Corollary~\ref{cor_psign}, we have
  \begin{align*}
    \|V^{2\nu+2}_1 f(X_1(u \sqrt{t} Y,\cdot))+V^{2\nu+2}_1 f(X_1(-u \sqrt{t} Y,\cdot))\|_{m,L+\nu+1} & \le C_{\sigma u \sqrt{t} Y,m,L} \|V^{2\nu+2}_1 f \|_{2m,L+\nu+1} \\&\le C' C_{\sigma u \sqrt{t} Y,m,L} \| f \|_{2(m+\nu+1),L},
  \end{align*}
  with $C'=(4\sigma^2 (m+\nu+1))^{\nu+1}$.
  The conclusion follows by using the triangle inequality, the polynomial growth of the constant $C_{\sigma u \sqrt{t} Y,m,L}$ given by~Corollary~\ref{cor_psign} and the finite moments $\E[Y^{2k}]$ for $k$ sufficiently large.
\end{proof}

We are now in position to prove the estimate for the approximation scheme~\eqref{Alfonsi_scheme}. Since this scheme is obtained as the composition of the schemes $X_0$ and $X_1$, the proof consists  is using iteratively the estimates of Lemma~\ref{lem_expan_V}.
\begin{prop}\label{prop_H1sch}
  Let $Y$ be a symmetric random variable such that $\E[Y^k]=\E[N^k]$ for $k\le 4$ and $\E[Y^{2k}]<\infty$ for all $k\in \N$. Let $\sigma^2\le 4a$ and $\hat{X}^x_t$ be the scheme~\eqref{Alfonsi_scheme}.  Let $m\in \N,L \in \N^*$ and $f \in \CpolKL{2(m+3)}{L}$. Then, we have for $t\in[0,T]$,
  $$ \E[f(\hat{X}^x_t)]=f(x) +t \cL f(x)+\frac{t^2}{2} \cL^2f(x) +\bar{R}f(t,x),$$
  with $\|\bar{R}f(t,\cdot)\|_{m,L+3}\le C t^3 \|f\|_{2(m+3),L}$.
\end{prop}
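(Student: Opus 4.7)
The scheme factors as $\hat{X}^x_t = X_0(t/2, X_1(\sqrt{t}Y, X_0(t/2, x)))$, so defining $P^0_s g(x) := g(X_0(s,x))$ and $P^1_s g(x) := \E[g(X_1(\sqrt{s}Y, x))]$, we have $\E[f(\hat{X}^x_t)] = P^0_{t/2}\, P^1_t\, P^0_{t/2} f(x)$. The plan is to expand each of the three factors via Lemma~\ref{lem_expan_V}, compose the expansions, and identify the main terms up to order $t^2$ with $f + t\cL f + (t^2/2)\cL^2 f$ using the splitting $\cL = V_0 + V_1^2/2$.

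For the innermost factor, apply Lemma~\ref{lem_expan_V} with $\nu = 2$ to write
$g_1 := P^0_{t/2} f = f + (t/2) V_0 f + (t^2/8) V_0^2 f + r_1,$
where $\|r_1\|_{m, L+3} \le C t^3 \|f\|_{m+3, L}$. Then apply $P^1_t$. The three main contributions $P^1_t f$, $(t/2) P^1_t V_0 f$, and $(t^2/8) P^1_t V_0^2 f$ are expanded via Lemma~\ref{lem_expan_V} with $\nu = 2, 1, 0$ respectively (the smaller $\nu$ for the higher power of $t$, so that the combined remainder is still $O(t^3)$ while requiring only $2(m+3)$ derivatives of $f$ in the worst case). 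The residual contribution $P^1_t r_1$ is controlled directly using the symmetry of $Y$ via Corollary~\ref{cor_psign}, which yields a uniform estimate of the form $\|P^1_t g\|_{m, L+3} \le K_1 \|g\|_{2m, L+3}$ on $[0,T]$; thus $\|P^1_t r_1\|_{m, L+3} \le C t^3 \|f\|_{2m+3, L}$. Finally, the outer $P^0_{t/2}$ is treated the same way: each main term of order $t^j$ is expanded with $\nu = 2-j$, and the collected remainder $r_2$ is transported through $P^0_{t/2}$ using the uniform estimate $\|P^0_s g\|_{m, L+3} \le e^{Ks} \|g\|_{m, L+3}$ from Lemma~\ref{X0_inequalities}.

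After composing the three expansions, the main terms gather as
$$\sum_{i_1 + j + i_2 \le 2} \frac{(t/2)^{i_1}}{i_1!} \frac{t^{j}}{j!} \frac{(t/2)^{i_2}}{i_2!} \, V_0^{i_2}(V_1^2/2)^{j} V_0^{i_1} f.$$
A direct algebraic expansion of $(1 + u + u^2/2)(1 + v + v^2/2)(1 + u + u^2/2)$ modulo total order three in $u, v$, with the substitution $u = (t/2) V_0$ and $v = (t/2) V_1^2$, gives $1 + t(V_0 + V_1^2/2) + (t^2/2)(V_0 + V_1^2/2)^2 + O(t^3)$, which matches $f + t\cL f + (t^2/2) \cL^2 f$ as required.

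The main obstacle is the precise bookkeeping of the derivative losses through the triple composition. The operator $P^1_t$ morally doubles the differentiation index in our norms (via Corollary~\ref{cor_psign}), whereas each application of $V_0$, resp.\ $V_1^2$, costs one, resp.\ two derivatives of $f$ (Lemma~\ref{lem_estimnorm}(6)). The worst case is the $\nu = 2$ remainder from $P^1_t f$, which costs exactly $2(m+3)$ derivatives of $f$; every other remainder contribution stays below this threshold (e.g.\ $2m+5$ for the $(t/2)P^1_t V_0 f$ branch and $2m+4$ for the $(t^2/8) P^1_t V_0^2 f$ branch). On the $L$-side, only the innermost Lemma~\ref{lem_expan_V} applications increase the weight index by $\nu+1 \le 3$, while the transport of the residuals through the remaining $P^0$, $P^1$ factors does not raise $L$, so the final remainder indeed sits in $\|\cdot\|_{m, L+3}$ with bound $C t^3 \|f\|_{2(m+3), L}$.
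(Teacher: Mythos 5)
Your proposal is correct and follows essentially the same route as the paper: factor $\E[f(\hat{X}^x_t)]=P^0_{t/2}P^1_tP^0_{t/2}f(x)$, expand each factor with Lemma~\ref{lem_expan_V} (using decreasing $\nu$ on the higher-order main terms to keep the derivative budget at $2(m+3)$), transport residuals through the remaining operators via Lemma~\ref{X0_inequalities} and Corollary~\ref{cor_psign}, and collect the order-$\le 2$ terms into $f + t\cL f + \tfrac{t^2}{2}\cL^2 f$. The only slight imprecision is that to justify $\|P^1_t r_1\|_{m,L+3}\le Ct^3\|f\|_{2m+3,L}$ you need the estimate $\|r_1\|_{2m,L+3}\le Ct^3\|f\|_{2m+3,L}$ (Lemma~\ref{lem_expan_V} applied at index $2m$), not merely the $\|r_1\|_{m,L+3}$ bound you stated; the paper handles this by directly estimating the composed remainder $R_I f$ via an iterated application of the transport lemmas, but the outcome is the same.
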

\begin{proof}
  We use $\hat{X}^x_t=X_0(t/2,X_1(\sqrt{t}Y,X_0(t/2,x)))$
  and apply first~\eqref{expan_X0}:
  \begin{align*}
     & \E[f(X_0(t/2,X_1(\sqrt{t}Y,X_0(t/2,x))))]=\E\left[(f+\frac{t}{2} V_0 f +\frac{t^2}{8} V_0^2 f )(X_1(\sqrt{t}Y,X_0(t/2,x)))\right] + R_If(t,x), \\
     & \text{ with } R_If(t,x) = \left(\frac t 2\right)^3\int_0^{1}\frac{(1-u)^2}{2} \E[V_0^3f(X_0(u t/2,X_1(\sqrt{t}Y,X_0(t/2,x))))]ds.
  \end{align*}
  We get by using Lemma~\ref{X0_inequalities}, Corollary~\ref{cor_psign} (using the symmetry and the finite moments of~$Y$), again Lemma~\ref{X0_inequalities} and then Lemma~\ref{lem_estimnorm}~(6):
  \begin{align*}&\|\E[V_0^3f(X_0(u t/2,X_1(\sqrt{t}Y,X_0(t/2,\cdot))))]\|_{m,L+3}\le C\|\E[V_0^3f(X_0(u t/2,X_1(\sqrt{t}Y,\cdot)))]\|_{m,L+3} \\&\le C\|V_0^3f(X_0(u t/2,\cdot))\|_{2m,L+3}\le   C\|V_0^3f\|_{2m,L+3} \le C  \|f\|_{2m+3,L}
  \end{align*}
This gives $\| R_If(t,x)\|_{m,L+3}\le  C t^3 \|f\|_{2m+3,L}$, for $t\in [0,T]$.

  We now expand again and get from~\eqref{expan_X1}
  \begin{align*} & \E\left[(f+\frac{t}{2} V_0 f +\frac{t^2}{8} V_0^2 f )(X_1(\sqrt{t}Y,X_0(t/2,x)))\right]                               \\
     & =f(X_0(t/2,x))+\frac t2 V_1^2 f(X_0(t/2,x))+ \frac {t^2}{2} (V_1^2/2)^2 f(X_0(t/2,x)) + \frac{t}{2} V_0 f(X_0(t/2,x)) \\
     & \phantom{=} + \frac{t^2}{2} (V_1^2/2)V_0 f(X_0(t/2,x)) + \frac{t^2}{8} V_0^2 f(X_0(t/2,x)) +R_{II}f(t,x),
  \end{align*}
  with
  \begin{align*}
    R_{II}f(t,x)=  t^3 \E\Bigg[&Y^6 \int_0^{u} \frac{(1-u)^{5}}{5!}V^{6}_1 f(X_1(u \sqrt{t} Y,X_0(T/2,x)))du \\
    &+  \frac {Y^4} 2 \int_0^{u} \frac{(1-u)^{3}}{ 3!}V^{4}_1V_0 f(X_1(u \sqrt{t} Y,X_0(T/2,x)))du \\
                  & + \frac{Y^2}{8} \int_0^{u}  (1-u)V^{2}_1V_0^2 f(X_1(u \sqrt{t} Y,X_0(T/2,x)))du \Bigg].
  \end{align*}
  We use Lemmas~\ref{lem_expan_V},~\ref{X0_inequalities} and~\ref{lem_estimnorm} to get, for $t\in [0,T]$,
  $$\| R_{II}f(t,\cdot)\|_{m,L+3}\le Ct^3(\|f\|_{2(m+3),L}+\|V_0f\|_{2(m+2),L+1}+\|V_0^2f\|_{2(m+1),L+2})\le Ct^3 \|f\|_{2(m+3),L}.$$
  Last, we use again~\eqref{expan_X0} to get 
  \begin{align*}
     & f(X_0(t/2,x))+\frac t2 [V_1^2 +V_0] f(X_0(t/2,x))+ \frac {t^2}{2} [(V_1^2/2)^2+(V_1^2/2)V_0+V_0^2/4] f(X_0(t/2,x))     \\
     & = f(x) +\frac t2 V_0f(x)+\frac{t^2}{8}V_0^2 f(x) + \frac t2 [V_1^2 +V_0] f(x) + \frac {t^2}{4} [V_0 V_1^2 +V_0^2] f(x) \\&\phantom{=}+ \frac {t^2}{2} [(V_1^2/2)^2+(V_1^2/2)V_0+V_0^2/4] f(x)+ R_{III}f(t,x),
  \end{align*}
  where again by Lemma~\ref{lem_expan_V}  and~\ref{lem_estimnorm}, we have
  \begin{align*}
    \|R_{III}f(t,\cdot)\|_{m,L+3} & \le Ct^3( \|f\|_{m+3,L}+\|[V_1^2 +V_0]f\|_{m+2,L+1}\\& \quad +\|[(V_1^2/2)^2+(V_1^2/2)V_0+V_0^2/4]f\|_{m+1,L+2}) \\&\le Ct^3 \|f\|_{m+5,L}.
  \end{align*}
  Finally, we get $ \|\bar{R}f(t,\cdot)\|_{m,L+3}\le Ct^3 \|f\|_{2(m+3),L}$ with $\bar{R}f:=R_{I}f+R_{II}f+R_{III}f$ and
  \begin{align*}
    \E[f(X_0(t/2,X_1(\sqrt{t}Y,X_0(t/2,x))))] & = f(x)+t[V_0+V_1^2/2]f(x)                                \\&+\frac{t^2}{2}[V_0^2+V_0V_1^2/2+(V_1^2/2)V_0+ (V_1^2/2)^2]f(x) +\bar{R}f(t,x)\\
                                              & =f(x)+t \cL f(x)+\frac{t^2}{2} \cL^2f(x) +\bar{R}f(t,x).\qedhere
  \end{align*}
\end{proof}

\subsection{Proof of~\eqref{H2_bar}}

In this section, we mainly prove the following result.
\begin{prop}\label{prop_H2_NV}
  Let $\sigma^2\le 4a$ and $\hat{X}^x_t=\varphi(x,t,\sqrt{t}N)$ be the scheme~\eqref{NV_scheme} with $N\sim \mathcal{N}(0,1)$. Let $T>0$ and  $m,L\in\N$ such that $L\ge m$. We define for $n\ge 1$ and $l\in \N$, $Q_lf(x)=\E[f(\hat{X}^x_{h_l})]$with $h_l=\frac{T}{n^l}$. Then, there exists a constant $C\in \R_+^*$ such that for any $f \in \CpolKL{m}{L}$, $l\in \N$ and $t \in [0,T]$, 
  \begin{equation}\label{H2_NV}   \left\|\E[f(X^{\cdot}_t)]\right\|_{m,L} + \max_{0\le k\le n^l} \left\|Q_l^{[k]}f \right\|_{m,L} \le C \|f\|_{m,L}.
  \end{equation}
\end{prop}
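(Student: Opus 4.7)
The two summands on the left-hand side of~\eqref{H2_NV} are controlled separately. The uniform bound on the exact CIR semigroup, $\|\E[f(X^\cdot_t)]\|_{m,L}\le C\|f\|_{m,L}$, will be obtained from the explicit form of the CIR transition density and its derivatives in the starting point (cf.\ Proposition~\ref{deriv_CIR_functionals}); it directly yields the first summand. The bulk of the work is the uniform-in-$l$ control of the iterated scheme operator $Q_l^{[k]}$, $0\le k\le n^l$.

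The plan is to decompose the one-step scheme through the splitting~\eqref{def_varphi}, $\hat X^x_t=X_0(t/2,X_1(\sqrt{t}N,X_0(t/2,x)))$, which gives $Q_l=P^0_{h_l/2}\,P^1_{h_l}\,P^0_{h_l/2}$ for $P^0_t f(x)=f(X_0(t,x))$ and $P^1_t f(x)=\E[f(X_1(\sqrt{t}N,x))]$. Lemma~\ref{X0_inequalities} immediately yields $\|P^0_t f\|_{m,L}\le e^{Kt}\|f\|_{m,L}$ for the deterministic factor. For the stochastic factor we need the sharp estimate $\|P^1_t f\|_{m,L}\le(1+Ct)\|f\|_{m,L}$; its proof will use the symmetrization identity of Lemma~\ref{regular_rep} to represent $\partial_x^m\E[f(X_1(\sqrt{t}N,x))]$ as an integral of $f^{(m)}$ against a kernel of the form $\eta^*_m$, where $\eta$ is the Gaussian density, and then invoke the nonnegativity $\eta^*_m\ge 0$ to reduce that integral to a clean $\|f\|_{m,L}$ estimate.

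Combining the two ingredients gives the one-step bound
\begin{equation*}
\|Q_l f\|_{m,L}\le e^{K h_l/2}(1+Ch_l)\,e^{K h_l/2}\|f\|_{m,L}\le e^{K'' h_l}\|f\|_{m,L}
\end{equation*}
with $K''=K+C$ independent of $l$, using $1+x\le e^x$. Iterating $k\le n^l$ times and using $kh_l\le T$ yields $\max_{0\le k\le n^l}\|Q_l^{[k]}f\|_{m,L}\le e^{K'' T}\|f\|_{m,L}$, which together with the CIR semigroup bound proves~\eqref{H2_NV}.

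The main obstacle is securing the linear-in-$t$ factor $1+Ct$ for $P^1_t$: a weaker bound of the form $1+C\sqrt{t}$ would compound to $(1+Ch_l^{1/2})^{n^l}$ and blow up as $l\to\infty$. Avoiding this forces one to exploit the symmetry of $N$ to kill the odd powers of $\sqrt{t}$ in the expansion of $f(X_1(\sqrt{t}N,x))$, which is precisely what Lemma~\ref{regular_rep} achieves; the nonnegativity of $\eta^*_m$ for the Gaussian density is what ultimately pins the scheme to $Z=N$ at this stage of the argument, as opposed to the more permissive assumption $(\mathcal{H}_Y)$ used in the~\eqref{H1_bar} half.
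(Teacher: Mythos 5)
Your decomposition $Q_l=P^0_{h_l/2}\,P^1_{h_l}\,P^0_{h_l/2}$, the use of Lemma~\ref{X0_inequalities} for the deterministic factor, the invocation of Proposition~\ref{deriv_CIR_functionals} for the CIR semigroup, and the remark that a $1+C\sqrt{t}$ bound for $P^1_t$ would be fatal all match the paper; that last observation is exactly the crux.

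There is, however, a genuine mis-step in the central citation: the representation you describe — $\partial_x^m\E[f(X_1(\sqrt{t}N,x))]$ as an integral of $f^{(m)}$ against the kernel $\eta^*_m$, producing the factor $1+Ct$ at the same norm level $\|\cdot\|_{m,L}$ — is the conclusion of Lemma~\ref{regular_density}, not of Lemma~\ref{regular_rep}, and the two are not interchangeable. Lemma~\ref{regular_rep} is a deterministic symmetrization identity for $\psi_g(x)=g(x+\beta\sqrt{x}+\gamma)+g(x-\beta\sqrt{x}+\gamma)$ with $\beta$ a fixed number; it makes no reference to the law of $Y$ and, via Corollary~\ref{cor_psign}, yields only
\begin{equation*}
\|\E[f(X_1(\sqrt{t}Y,\cdot))]\|_{m,L}\le \tfrac12\,\E\!\left[C_{\sigma\sqrt{t}Y,m,L}\right]\|f\|_{2m,L},
\end{equation*}
which is useless for~\eqref{H2_bar}: the constant is $\tfrac12 C_{0,m,L}=2$ at $t=0$ (so it compounds geometrically under iteration) and, worse, the derivative index doubles from $m$ to $2m$ on each step and therefore escapes the norm family after finitely many iterations. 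The decisive extra ingredient in Lemma~\ref{regular_density}, which has no analogue in Lemma~\ref{regular_rep}, is the integration by parts against the density $\eta$ of $Y$: after symmetrizing over $\pm\sqrt{t}Y$, the excess derivatives are transferred from $f$ to $\eta$, which is what produces $\eta^*_m$, keeps the order of differentiation at $m$, and yields the sharp coefficient $1+Ct$. This is also why the two halves of the argument impose different hypotheses on $Y$: $\overline{H_1}$ can use Lemma~\ref{regular_rep}/Corollary~\ref{cor_psign} and therefore tolerates any symmetric $Y$ satisfying $(\mathcal{H}_Y)$ (including discrete ones), whereas $\overline{H_2}$ requires a random variable with a smooth density satisfying $\eta^*_m\ge 0$ for all $m$, and Lemma~\ref{gaussian_eta_positivity} together with Theorem~\ref{thm_carac_gauss} pins this down to $N\sim\mathcal{N}(0,1)$. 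Replace the reference to Lemma~\ref{regular_rep} by Lemma~\ref{regular_density} and the proof you sketch is the paper's.
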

We split the proof in two parts. The first one deals with the semigroup of the CIR process, for which the assumption~$\sigma^2\le 4a$ is not needed. This is stated in Proposition~\ref{deriv_CIR_functionals}, whose proof exploits the particular form of the density of~$X^x_t$. The second part that deals with the approximation scheme is quite technical. We prove in fact in  Proposition~\ref{prop_H2_sch} a slightly more general result for the scheme  $\hat{X}^x_t=\varphi(x,t,\sqrt{t}Y)$, when $Y$ is a symmetric random variable with a smooth density. However, the conditions needed on the density are quite restrictive. These conditions are satisfied by the standard normal variable by Lemma~\ref{gaussian_eta_positivity}. If we want besides to have~\eqref{H2_NV} for any $m$ and in addition to match the moments $\E[Y^2]=\E[N^2]$ and $\E[Y^4]=\E[N^4]$ -- which is required to have a second-order scheme --, then Theorem~\ref{thm_carac_gauss} shows that we necessarily have $Y\sim \mathcal{N}(0,1)$. This is why we directly state here, for sake of simplicity, Proposition~\ref{prop_H2_NV} with $Y=N\sim \mathcal{N}(0,1)$.

\subsubsection{Upper bound for the semigroup}
We first prove the estimate~\eqref{H2_bar} for the semigroup of the CIR process. To do so, we take back the arguments of~\cite[Proposition 4.1]{AA_MCMA} that gives polynomial estimates for~$(t,x)\mapsto P_tf(x)$. First we remove the polynomial Taylor expansion of the function~$f$ at~$0$, which enables then to do an integration by parts and to get the remarkable formula in Eq.~\eqref{formula_derivative} below for the iterated derivatives of $P_tf$ that gives then the desired estimate. The polynomial part is analyzed separately in Lemma~\ref{Moments_Formula_CIR2} below with standard arguments.

\begin{prop}\label{deriv_CIR_functionals}
  Let $f\in\CpolKL{m}{L}$, $L\ge m$, $T>0$ and $t\in (0,T]$. Let  $X^x$ be the CIR process starting from $x\ge 0$. Then, $\E[f(X^{\cdot}_t)]\in\CpolKL{m}{L}$ and  we have the following estimate for some constant $C_\text{cir}(m,L,T)\in \R_+$:
  \begin{equation}\label{H0_estimate_CIR}
    \left\|\E[f(X^{\cdot}_t)]\right\|_{m,L} \le C_\text{cir}(m,L,T)\|f\|_{m,L}.
  \end{equation}
\end{prop}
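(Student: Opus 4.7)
My plan follows the blueprint suggested by the hint: decompose $f$ into a Taylor-polynomial part at the origin plus a smooth remainder that vanishes to order $m$ at $0$, handle each piece separately, and combine.

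\textbf{Taylor decomposition and polynomial part.} Write $f = P + g$ with
$$P(y) = \sum_{j=0}^{m-1} \frac{f^{(j)}(0)}{j!} y^j, \qquad g(y) = \int_0^y \frac{(y-s)^{m-1}}{(m-1)!} f^{(m)}(s)\, ds,$$
so that $g^{(i)}(0) = 0$ for $i = 0, \ldots, m-1$, $g^{(m)} = f^{(m)}$, and elementary estimates give $|f^{(j)}(0)| \le \|f\|_{m,L}$ together with $\|g\|_{m,L} \le C \|f\|_{m,L}$. Since $\deg P \le m-1 \le L$, the polynomial version of Lemma~\ref{Moments_Formula_CIR} (its $\|\cdot\|_{m,L}$-analogue, to be stated as Lemma~\ref{Moments_Formula_CIR2}) shows that $\E[P(X^\cdot_t)]$ is again a polynomial of degree $\le m-1$ whose coefficients are controlled uniformly on $[0,T]$ by those of $P$; this yields $\|\E[P(X^\cdot_t)]\|_{m,L} \le C \|f\|_{m,L}$ since a degree-$(m-1)$ polynomial has all its $\|\cdot\|_{m,L}$-norm coming from the bound on the function itself.

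\textbf{Smooth remainder via integration by parts.} Using the explicit non-central chi-squared density $p_t(x,y)$ of $X^x_t$ (expressible in terms of the modified Bessel function $I_q$ with $q = 2a/\sigma^2 - 1$), we write $\E[g(X^x_t)] = \int_0^\infty g(y)\, p_t(x,y)\, dy$. The Bessel identities $(z^\nu I_\nu)' = z^\nu I_{\nu-1}$ and $(z^{-\nu} I_\nu)' = z^{-\nu} I_{\nu+1}$ yield an identity of the form $\partial_x p_t(x,y) = -\partial_y \big[ q_t(x,y)\big]$, where $q_t(x,y)$ is (up to an explicit factor in $x$ and $y$) the density of a CIR process with shifted parameter $a \leadsto a + \sigma^2/2$. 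Iterating this identity $j$ times and integrating by parts in $y$ on each step produces the ``remarkable formula''
$$\partial_x^j \E[g(X^x_t)] = \int_0^\infty g^{(j)}(y)\, K_{j,t}(x,y)\, dy,$$
the boundary terms at $y=0$ vanishing thanks to the flatness of $g$ at the origin (since $j \le m$) and those at infinity thanks to the exponential decay of $q_t$ and its $y$-derivatives.

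\textbf{Estimate and conclusion.} The kernel $K_{j,t}(x,y)$ is expressible as a weight polynomial in $x$ and $y$ against a CIR-type density with a shifted dimension parameter. Combining this with the polynomial bound $|g^{(j)}(y)| \le \|g\|_{m,L}(1+y^L)$ and with the standard sublinear-growth moment estimates $\E[(\tilde X^x_t)^k] \le C(1+x^k)$ for the shifted CIR, we obtain $|\partial_x^j \E[g(X^\cdot_t)](x)| \le C \|f\|_{m,L}(1+x^L)$ for every $j \le m$ and $t \in (0,T]$, which combined with the polynomial estimate gives~\eqref{H0_estimate_CIR}. \textbf{Main obstacle.} The subtle point is establishing the integration-by-parts identity $\partial_x p_t = -\partial_y q_t$ with a kernel $q_t$ that remains integrable at the origin when the Feller condition fails ($\sigma^2 > 2a$, where $p_t(x,\cdot)$ blows up at $0$) and still admits enough decay at infinity to iterate $j$ times without spurious boundary terms; the assumption $L \ge m$ is precisely what ensures the growth bookkeeping closes after iterating the representation.
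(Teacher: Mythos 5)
Your proposal follows essentially the same route as the paper: split off the Taylor polynomial at the origin (the paper uses degree $m$, you use degree $m-1$, and both give enough vanishing derivatives at $0$ for the $m$ integrations by parts), and for the flat remainder use the explicit CIR transition density to turn $x$-derivatives into $y$-derivatives against a parameter-shifted density and integrate by parts. The paper carries this out via the Poisson--Gamma series form of the density and its difference operator $\Delta_t$ rather than the Bessel-function identities you invoke, but the two presentations encode the same integration-by-parts identity (shift $v\mapsto v+1$, i.e.\ $a\mapsto a+\sigma^2/2$), and the remaining estimates on the shifted-parameter moments are the same.
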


\begin{proof}
  Let $f\in\CpolKL{m}{L}$ and $T_m(f)(x)=\sum_{j=0}^m \frac{f^{(j)}(0)}{j!} x^j$ its Taylor polynomial expansion at~$0$ of order $m$. We define $\hat{f}_m = f-T_m(f)\in\CpolKL{m}{L}$, so we have $f=\hat{f}_m +T_m(f)$.
  By Lemma~\ref{Moments_Formula_CIR2} below, one gets $\|\E[T_m(f)(X^{\cdot}_t)]\|_{m,L} \leq C_{cir}(m,T) \|T_m(f)\|_{m,L}$ and then
  \begin{equation}\label{estim_taylor_trunc}\|\E[T_m(f)(X^{\cdot}_t)]\|_{m,L}\leq e C_{cir}(m,T) \|f\|_{m,L},
  \end{equation}
  since for all $i\in\{0,\ldots,m\}$ and $x\geq0$
  $$ \bigg|\frac{(T_m(f))^{(i)}(x)}{1+x^L}\bigg| = \bigg|\sum_{j=0}^{m-i} \frac{f^{(i+j)}(0)}{j!} \frac{x^{j}}{1+x^L}\bigg| \leq \sum_{j=0}^{m-i} \frac{1}{j!} \|f\|_{m,L} \leq  e \|f\|_{m,L}. $$

  We now focus on~$\E[\hat{f}_m(X^{\cdot}_t)]$. We recall the density of $X^x_t$~(see e.g. \cite[Proposition 1.2.11]{AA_book})\footnote{In the case $a=0$, $X^x_t$ is distributed according to the probability measure $e^{-d_t x/2}\delta_{0}(dx)+\sum_{i=1}^\infty \frac{e^{-d_t x/2}(d_t x/2)^i}{i!} \frac{c_t/2}{\Gamma(i)}\left(\frac{c_t z}{2}\right)^{i-1}e^{-c_t z/2}$. The proof works the same since $\hat{f}_m(0)=0$, so that $\E[\hat{f}_m(X^x_t)]$ only involves the absolutely continuous part of the distribution. }
  \begin{equation}\label{CIR_density}
    p(t,x,z)=\sum_{i=0}^\infty \frac{e^{-d_t x/2}(d_t x/2)^i}{i!} \frac{c_t/2}{\Gamma(i+v)}\left(\frac{c_t z}{2}\right)^{i-1+v}e^{-c_t z/2}
  \end{equation}
  where $c_t=\frac{4k}{\sigma^2(1-e^{-kt})}$, $v=2a/\sigma^2$ and $d_t=c_te^{-kt}$. Let us remark that
  \begin{equation*}
    c_t\geq c_\text{min}:=\begin{cases} \,\,\quad\frac{4k}{\sigma^2},      & k>0  \\
      \quad\frac{4}{\sigma^2T},          & k=0  \\
      \frac{4|k|}{\sigma^2(e^{|k|T}-1)}, & k<0.
    \end{cases}
  \end{equation*}
  We have
  \begin{equation*}
    \E[\hat{f}_m(X^x_t)] = \sum_{i=0}^\infty \frac{e^{-d_t x/2}(d_t x/2)^i}{i!} I_i(\hat{f}_m,c_t),\ t>0,
  \end{equation*}
  where
  \begin{equation*}
    I_i(\hat{f}_m,c_t)=\int_0^\infty \hat{f}_m(z) \frac{c_t/2}{\Gamma(i+v)}\left(\frac{c_t z}{2}\right)^{i-1+v}e^{-c_t z/2}dz.
  \end{equation*}
  Differentiating successively, we get that for $j\le m $, $t\in(0,T]$ and $x\in\R_+$
  \begin{equation*}
    \partial^x_j\E[\hat{f}(X^x_t)]= \sum_{i=0}^\infty \frac{e^{-d_t x/2}(d_t x/2)^i}{i!} \Delta^j_t(I_i(\hat{f}_m,c_t)),
  \end{equation*}
  where $\Delta_t$ : $\R^\N \rightarrow \R^\N$ is an operator defined on sequences $(I_i)_{i\geq0}\in\R^\N$ by $\Delta_t(I_i) = \frac{d_t}{2} (I_{i+1}-I_i)= \frac{e^{-kt}}{2}c_t(I_{i+1}-I_i)$. An integration by parts gives for $i\geq 1$
  \begin{align*}
    I_i(\hat{f}^{(j)}_m,c_t) & = \int_0^\infty \hat{f}^{(j-1)}_m(z) \frac{(c_t/2)^2}{\Gamma(i+v)}\left(\frac{c_t z}{2}\right)^{i-1+v}e^{-c_t z/2}dz             \\
                             & \quad- \int_0^\infty \hat{f}_m^{(j-1)}(z) \frac{(c_t/2)^2(i-1+v)}{\Gamma(i+v)}\left(\frac{c_t z}{2}\right)^{i-2+v}e^{-c_t z/2}dz \\
                             & =\frac{c_t}{2}(I_i(\hat{f}_m^{(j-1)},c_t)-I_{i-1}(\hat{f}_m^{(j-1)},c_t))=e^{kt}\Delta_t (I_{i-1}(\hat{f}_m^{(j-1)},c_t)),
  \end{align*}
  since $\hat{f}^{(j)}_m(0)=0$ for all $1\leq j\leq m$ and $\hat{f}^{(j)}_m$has a polynomial growth. By iterating, we get for all $t\in(0,T]$ and $x\in\R_+$,
  \begin{equation}\label{formula_derivative}
    \partial^x_j\E[\hat{f}_m(X^x_t)]= \sum_{i=0}^\infty \frac{e^{-d_t x/2}(d_t x/2)^i}{i!} I_{i+j}(\hat{f}_m^{(j)},c_t) e^{-kjt}.
  \end{equation}
  Note that, since for $j\leq m $, $ |\hat{f}_m^{(j)}(z)| \leq \|\hat{f}_m\|_{m,L}(1 + z^L)$ and using the well known formula for the $L$-th raw moment of gamma distribution we have for all $i\in\N$
  \begin{equation}\label{estimate_I_f_m}
    |I_i(\hat{f}_m^{(j)},c_t)| \leq \|\hat{f}_m\|_{m,L} \left(1+ \bigg(\frac{2}{c_t}\bigg)^L \frac{\Gamma(i+L+v)}{\Gamma(i+v)}\right).
  \end{equation}
  Thus, the derivation of the series~\eqref{formula_derivative} is valid, and we get that
  $$|\partial^x_j\E[\hat{f}_m(X^x_t)]|\leq \|\hat{f}_m\|_{m,L} e^{-k jt}\left(1+ \big(\frac{2}{c_t}\big)^L \sum_{i=0}^\infty \frac{e^{-d_t x/2}(d_t x/2)^i}{i!} \frac{\Gamma(i+j+L+v)}{\Gamma(i+j+v)}\right). $$
  The quotient $\frac{\Gamma(i+j+L+v)}{\Gamma(i+j+v)}$ is a polynomial function of degree $L$ with respect to $i$, and we denote $\beta^j_0,\ldots,\beta^j_{L}$ its coefficients in the basis $\{ 1, i, i(i-1),\ldots, i(i-1)\cdots(i-L+1)\}$. Thus, we get that
  \begin{align*}
    |\partial^x_j\E[\hat{f}_m(X^x_t)]| & \leq \|\hat{f}_m\|_{m,L} e^{(-k)^+jT}\left(1+ \bigg(\frac{2}{c_t}\bigg)^L \sum_{i=0}^L |\beta^j_i| \bigg(\frac{d_t}{2}\bigg)^i x^i \right) \\
                                       & \leq \|\hat{f}_m\|_{m,L} e^{(-k)^+ mT}\left(1+ \sum_{i=0}^L |\beta^j_i| \bigg(\frac{2}{c_t}\bigg)^{L-i} e^{-kit} (1+x^L) \right)           \\
                                       & \leq \|\hat{f}_m\|_{m,L} e^{(-k)^+(m+L)T}\left(1+ \sum_{i=0}^L |\beta^j_i| \bigg(\frac{2}{c_{\text{min}}}\bigg)^{L-i}   \right)(1+x^L).
  \end{align*}
  By the triangular inequality and~\eqref{estim_taylor_trunc}, we get $\|\hat{f}_m\|_{m,L}\leq(1+e)\|f\|_{m,L}$, so one has for all $t\in (0,T]$, $j\le m$
  \begin{equation}
    |\partial^x_j\E[\hat{f}_m(X^x_t)]|  \leq (1+e) e^{(-k)^+(m+L)T} \left(1+ \sum_{i=0}^L |\beta^j_i| \bigg(\frac{2}{c_{\text{min}}}\bigg)^{L-i}   \right)\|f\|_{m,L}(1+x^L),
  \end{equation}
  and thus for all $t\in(0,T]$:
  \begin{equation}
    \|\E[\hat{f}_m(X^{\cdot}_t)]\|_{m,L} \leq \hat{C}\|f\|_{m,L},
  \end{equation}
  where $\hat{C}:=  (1+e) e^{(-k)^+(m+L)T} \max_{0\le j\le m}\left(1+ \sum_{i=0}^L |\beta^j_i| \bigg(\frac{2}{c_{\text{min}}}\bigg)^{L-i}   \right)$.
  Finally, we get the desired estimate by the triangular inequality,~\eqref{estim_taylor_trunc} and Lemma~\ref{Moments_Formula_CIR2}:
  \begin{equation*}
    \left\|\E[f(X^{\cdot}_t)]\right\|_{m,L} \leq  \|\E[\hat{f}_m (X^{\cdot}_t)]\|_{m,L} + \|\E[T_m(f) (X^{\cdot}_t)]\|_{m,L} \leq (\hat{C}+C_{cir}(m,T)) \| f\|_{m,L}. \qedhere
  \end{equation*}
\end{proof}

\begin{lemma}\label{Moments_Formula_CIR2}
  Let $P\in \PLRp{m}$ be  a polynomial function of degree $m\in\N^*$ and $L\in\N^*$ such that $L\geq m$. Then, for $t\in [0,T]$ we have the following estimate
  \begin{equation}\label{Pol_Estimate_CIR_2}
    \|\E[P(X^{\cdot}_t)]\|_{m,L} \leq C_{\text{cir}}(m,T) \|P\|_{m,L},
  \end{equation}
  where $C_{\text{cir}}(m,T) = \max_{t\in[0,T]} \sum_{j=0}^m \sum_{i=0}^j |\tilde{u}_{i,j}(t)|$ with $\tilde{u}_{i,j}(t)$ defined as in Lemma~\ref{Moments_Formula_CIR} by $\E[(X^x_t)^j]=\sum_{i=0}^j\tilde{u}_{i,j}(t)x^i$.
\end{lemma}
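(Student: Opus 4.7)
The plan is to exploit the fact that $\E[P(X^x_t)]$ is itself a polynomial in $x$ whose coefficients are explicit through Lemma~\ref{Moments_Formula_CIR}, and then to read off all the required estimates directly from the monomial expansion. First I would write $P(x)=\sum_{j=0}^m a_j x^j$ and extract the coefficient bound $|a_j| = |P^{(j)}(0)|/j! \le \|P\|_{m,L}/j!$ by evaluating the $(m,L)$-norm at $x=0$, which is valid because $j\le m$ lies in the range of derivatives controlled by the norm.

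Next, by linearity of the expectation and Lemma~\ref{Moments_Formula_CIR}, I would set $u(t,x):=\E[P(X^{x}_t)]=\sum_{j=0}^m a_j\sum_{i=0}^j \tilde{u}_{i,j}(t)\,x^i$, which is a polynomial of degree at most $m$ in $x$. Differentiating $k$ times for $k\in\{0,\dots,m\}$ gives
\begin{equation*}
\partial_x^k u(t,x) = \sum_{j=0}^m a_j \sum_{i=k}^j \frac{i!}{(i-k)!}\,\tilde{u}_{i,j}(t)\, x^{i-k}.
\end{equation*}

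The rest of the argument rests on two elementary inequalities. First, since $0\le i-k\le m\le L$, one has $x^{i-k}\le 1+x^L$ for every $x\ge 0$, so $x^{i-k}/(1+x^L)\le 1$. Second, for $0\le k\le i\le j\le m$, the combinatorial factor obeys $\frac{i!}{(i-k)!\,j!}\le 1$, because $i(i-1)\cdots(i-k+1)\le j(j-1)\cdots(j-k+1)\le j!$. Combining these with the coefficient bound on $|a_j|$, I would conclude
\begin{equation*}
\frac{|\partial_x^k u(t,x)|}{1+x^L} \le \|P\|_{m,L}\sum_{j=0}^m \sum_{i=k}^j |\tilde{u}_{i,j}(t)| \le \|P\|_{m,L}\sum_{j=0}^m \sum_{i=0}^j |\tilde{u}_{i,j}(t)|,
\end{equation*}
and then take the supremum over $x\ge 0$, the maximum over $k\in\{0,\dots,m\}$, and finally the maximum over $t\in[0,T]$ to obtain the estimate with the constant $C_{\text{cir}}(m,T)$ as defined. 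There is no substantial obstacle: the entire argument is algebraic once one observes that under $L\ge m$ the $(m,L)$-norm of a polynomial of degree at most $m$ is trapped between constants times its coefficient sum, so that the bound of Lemma~\ref{Moments_Formula_CIR} transfers essentially verbatim to the new norm.
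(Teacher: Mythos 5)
Your proof is correct and follows essentially the same route as the paper's: write $\E[P(X^x_t)]$ as a polynomial in $x$ via Lemma~\ref{Moments_Formula_CIR}, differentiate term by term, bound $x^{i-k}/(1+x^L)\le 1$ (using $L\ge m$) and the falling factorial by $j!$, and close with $|a_j|\,j!=|P^{(j)}(0)|\le\|P\|_{m,L}$. The only cosmetic difference is that you extract the coefficient bound $|a_j|\le\|P\|_{m,L}/j!$ at the outset, whereas the paper carries $|b_j|\,j!$ through and bounds it at the end; the arithmetic is identical.
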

\begin{proof}
  We consider a polynomial function $P(y)=\sum_{i=0}^m b_i y^i$ of degree $m$ and $L\geq m$. For all $l\in\{0,\ldots,m\}$ one has from Lemma~\ref{Moments_Formula_CIR}
  \begin{align*}
    \frac{|\partial_x^l\E[P(X_t^x)]|}{1+x^L} & =\bigg| \sum_{j=0}^m b_j \frac{\partial_x^l\tilde{u}_j(t,x)}{1+x^L} \bigg|  \leq \bigg| \sum_{j=0}^m b_j \sum_{i=l}^j\tilde{u}_{i,j}(t)\frac{i!}{(i-l)!}\frac{x^{i-l}}{1+x^L} \bigg| \\
                                             & \leq  \sum_{j=0}^m |b_j| j! \sum_{i=l}^j |\tilde{u}_{i,j}(t)|
    \leq \max_{t\in[0,T]} \sum_{j=0}^m \sum_{i=l}^j |\tilde{u}_{i,j}(t)| \max_{j\in\{0,\ldots,L\}}|b_j| j!,
  \end{align*}
  passing to supremum over $x\geq 0$, $l\in\{0,\ldots,m\}$ we get \eqref{Pol_Estimate_CIR_2} observing that $|b_j| j!=|P^{(j)}(0)|\le \|P\|_{m,L}$.
\end{proof}

\subsubsection{Upper bound for the approximation scheme}
We now prove the estimate~\eqref{H2_bar} for the approximation of the CIR process. The main result of this paragraph is the following.
\begin{prop}\label{prop_H2_sch}
  Let $T>0, \sigma^2\le 4a$, $m,M \in \N$,  $Y$ be a symmetric random variable with density $\eta\in \mathcal{C}^M(\R)$ such that for all $i\in\{0,\ldots,M\}$, $|\eta^{(i)}(y)|=o(|y|^{-(2L+i)})$ for $|y|\rightarrow \infty$,  and $\eta^*_m\ge 0$ for all $1 \le m\le M$ (see Lemma~\ref{regular_density} below for the definition of $\eta^*_m$).
  Let $Q_lf(x)=\E[f(\hat{X}^x_{h_l})]$ with $\hat{X}^x_t=\varphi(t,x,\sqrt{t}Y)$, $n\ge 1$, $l\in \N$ and $h_l=T/n^l$. Then, for any $L \in \N$, there exists $C\in \R_+$ such that: 
  $$  \max_{0\le j \le n^l}\|Q_l^{[j]}f\|_{m,L} \le C \|f\|_{m,L}, \ f\in \CpolKL{m}{L}, l \in \N.$$
\end{prop}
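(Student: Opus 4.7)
The plan is to factor the approximation scheme $\hat{X}^x_t=\varphi(t,x,\sqrt{t}Y)=X_0(t/2, X_1(\sqrt{t}Y, X_0(t/2, x)))$ as a composition of three elementary operators on $\CpolKL{m}{L}$ and to control the $\|\cdot\|_{m,L}$-norm of each factor by a constant of the form $1+C t$ (or equivalently $e^{Ct}$), so that iterating $n^l$ times over a time step $h_l=T/n^l$ leaves a bound of order $e^{CT}$, uniform in $l$.

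First I would introduce the operators
\begin{equation*}
  P^0_t f = f\circ X_0(t,\cdot), \qquad P^1_t f = \E[f(X_1(\sqrt{t}Y,\cdot))],
\end{equation*}
and write $Q_l = P^0_{h_l/2}\,P^1_{h_l}\,P^0_{h_l/2}$. Lemma~\ref{X0_inequalities} (used with any $L$, which is permitted since no constraint on $L$ appears there) shows that $P^0_t$ maps $\CpolKL{m}{L}$ into itself with $\|P^0_t f\|_{m,L}\le e^{Kt}\|f\|_{m,L}$, where $K$ depends only on $(a,k,\sigma,m,L,T)$. The second operator $P^1_t$ is exactly the one analyzed in Lemma~\ref{regular_density} (the case $Y=Z$): the assumptions on $Y$ in the statement — the symmetry, the $\mathcal{C}^M$ regularity of $\eta$, the decay $|\eta^{(i)}(y)|=o(|y|^{-(2L+i)})$ and the nonnegativity $\eta^*_m\ge 0$ for $1\le m\le M$ — are precisely what is needed to apply the representation \eqref{repres_X1_functional_intro} and to derive the sharp estimate $\|P^1_t f\|_{m,L}\le (1+Ct)\|f\|_{m,L}$ for $t\in[0,T]$, with a constant $C$ depending on $(\sigma,T,m,L,\eta)$ but not on~$t$.

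Combining the three estimates gives
\begin{equation*}
  \|Q_l f\|_{m,L} \le e^{K h_l/2}\,(1+C h_l)\,e^{K h_l/2}\,\|f\|_{m,L} \le e^{(K+C) h_l}\,\|f\|_{m,L},
\end{equation*}
since $1+Ch_l\le e^{C h_l}$. As $Q_l$ maps $\CpolKL{m}{L}$ into itself, iterating this inequality $j$ times yields, for every $0\le j\le n^l$,
\begin{equation*}
  \|Q_l^{[j]} f\|_{m,L} \le e^{(K+C) j h_l}\|f\|_{m,L} \le e^{(K+C) T}\|f\|_{m,L},
\end{equation*}
because $j h_l\le n^l h_l=T$. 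Setting $C':=e^{(K+C)T}$ gives the claim, with a constant independent of $l\in\N$.

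The only delicate point is the sharpness of the factor $(1+Ct)$ (equivalently $e^{Ct}$) in the estimate for $P^1_t$. A looser control of the form $\|P^1_t f\|_{m,L}\le C_\eta\|f\|_{m,L}$ with a constant $C_\eta>1$ independent of $t$ would blow up after $n^l$ iterations, so the whole argument hinges on Lemma~\ref{regular_density}, whose proof uses the integration-by-parts representation \eqref{repres_X1_functional_intro} together with the sign condition $\eta^*_m\ge 0$ to gain one power of $t$ in the remainder. Everything else is a straightforward composition argument.
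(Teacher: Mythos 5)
Your proof is correct and follows essentially the same route as the paper: factoring $Q_l$ into $P^0_{h_l/2}\,P^1_{h_l}\,P^0_{h_l/2}$, applying Lemma~\ref{X0_inequalities} twice for the $X_0$ factors and the sharp $(1+Ct)$-bound from Lemma~\ref{regular_density} for the $X_1$ factor, then iterating over $n^l$ steps. Your closing remark about why the $(1+Ct)$ sharpness (rather than a $t$-independent constant $>1$) is essential is also exactly the point the paper emphasizes.
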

Note that by Lemma~\ref{gaussian_eta_positivity} below, the assumptions of Proposition~\ref{prop_H2_sch} are satisfied by~$Y\sim \mathcal{N}(0,1)$. Therefore,~\eqref{H2_bar} holds for  the scheme of Ninomiya and Victoir~\eqref{NV_scheme}.
\begin{proof}
  We have $\hat{X}^x_t=\varphi(t,x,\sqrt{t}Y)=X_0(t/2,X_1(\sqrt{t}Y,X_0(t/2,x)))$. Let $f \in \CpolKL{m}{L}$. We apply Lemma~\ref{X0_inequalities} and Lemma~\ref{regular_density} below to get:
  \begin{align*}  \|\E[f(X_0(t/2,X_1(\sqrt{t}Y,X_0(t/2,\cdot ))))]\|_{m,L} & \le e^{Kt/2} \|\E[f(X_1(\sqrt{t}Y,X_0(t/2,\cdot )))]\|_{m,L} \\&\le e^{Kt/2+Ct} \|f(X_0(t/2,\cdot ))\|_{m,L}\le e^{(C+K)t} \|f\|_{m,L}
  \end{align*}
  This gives $\max_{0\le j \le n^l}\|Q_l^{[j]}f\|_{m,L} \le e^{(C+K)T} \|f\|_{m,L}$.
\end{proof}

\begin{lemma}\label{regular_density}
  Let $M,L\in \N$. Let $Y$ be a symmetric random variable with density $\eta\in \mathcal{C}^M(\R)$ such that for all $i\in\{0,\ldots,M\}$, $|\eta^{(i)}(y)|=o(|y|^{-(2L+i)})$ for $|y|\rightarrow \infty$. Then, for all function $f\in\CpolKL{M}{L}$, $m\in\{1, \ldots,  M\}$ and $t\in [0,T]$ one has the following representation
  \begin{equation}\label{repres_X1_functional}
    \partial^m_x\E[f(X_1(\sqrt{t}Y,x))] = \int_{-\infty}^\infty \int_0^1 (u-u^2)^{m-1} f^{(m)}(w(u,x,y)) \eta^*_m(y) dudy
  \end{equation}
  where  $w(u,x,y)=x+(2u-1)\sigma\sqrt{t}y\sqrt{x}+ \sigma^2ty^2/4$, $\eta^*_m(y)=(-1)^{m-1}  \left(\sum_{j=1}^m c_{j,m}  y^j\eta^{(j)}(y)\right)$, and the coefficients $c_{j,m}$  are defined by induction, starting from $c_{1,1}=-1$, through the following formula
  \begin{equation}\label{recursive_coeff_formula}
    c_{j,m}   = \bigg(\frac{2j}{m-1}-4\bigg)  c_{j,m-1}\mathds{1}_{j<m} + \frac{2}{m-1}  c_{j-1,m-1}\mathds{1}_{j>1},     \  j\in\{1, \ldots, m\}, \, m\in\{2, \ldots,M\}.  \\
  \end{equation}
 In particular, $c_{m,m}=-\frac{2^{m-1}}{(m-1)!}<0$. Furthermore, if the density $\eta$ is such that  $\eta^*_{m}(y)\geq0$ for all $y\in\R$, and all $ m\in\{1, \ldots,  M\}$, then there exists $C\in \R_+$ such that
  \begin{equation}\label{estimate_X1_functional}
    	\|\E[f(X_1(\sqrt{t}Y,\cdot))]\|_{m,L} \leq (1+Ct) \|f\|_{m,L},\ t\in [0,T].
  \end{equation}
\end{lemma}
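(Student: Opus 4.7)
The plan is to establish the representation formula~\eqref{repres_X1_functional} by induction on $m \in \{1,\ldots,M\}$ and then deduce the estimate~\eqref{estimate_X1_functional} from the positivity of $\eta^*_m$. Since $Y$ is symmetric, I would first write $\E[f(X_1(\sqrt{t}Y,x))]=\tfrac{1}{2}\int_{\R}\psi_f(x;y)\eta(y)\,dy$ with $\psi_f(x;y)=f(X_1(\sqrt{t}y,x))+f(X_1(-\sqrt{t}y,x))$, so that Lemma~\ref{regular_rep} applied with $\beta=\sigma\sqrt{t}y$, $\gamma=\beta^2/4$ supplies explicit formulas for $\partial_x^m\psi_f$. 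Differentiation under the integral sign, as well as all the integrations by parts that follow, are justified throughout by the decay hypothesis $|\eta^{(i)}(y)|=o(|y|^{-(2L+i)})$ and the polynomial growth of $f$ and its derivatives.

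For the base case $m=1$, Lemma~\ref{regular_rep} gives
\[\partial_x\E[f(X_1(\sqrt{t}Y,x))]=\E[f'(X_1(\sqrt{t}Y,x))]+\tfrac{\sigma^2 t}{2}\int y^2\eta(y)\int_0^1 f''(w(u,x,y))\,du\,dy.\]
The key algebraic identity is $\tfrac{\sigma^2 ty}{2}f''(w)=\partial_y f'(w)-(2u-1)\sigma\sqrt{t}\sqrt{x}\,f''(w)$, coming from $\partial_y w=(2u-1)\sigma\sqrt{t}\sqrt{x}+\sigma^2 ty/2$. I would use this to replace $\tfrac{\sigma^2 t}{2}y^2 f''(w)$, integrate the $\partial_y f'(w)$ contribution by parts in $y$, and handle the residual $(2u-1)f''(w)$ term by integration by parts in $u$ via $(2u-1)=-\partial_u(u-u^2)$. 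After the boundary terms vanish by the decay hypothesis, the pieces combine into $\int_{\R}\int_0^1 f'(w)\cdot(-y\eta'(y))\,du\,dy$, confirming $c_{1,1}=-1$.

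For the inductive step, differentiating the representation at level $m$ and using $\partial_x w=1+(2u-1)\sigma\sqrt{t}y/(2\sqrt{x})$ splits $\partial_x^{m+1}\E[f(X_1(\sqrt{t}Y,x))]=A+B$, with $B$ carrying the singular factor $1/\sqrt{x}$. On $B$, I would integrate by parts in $u$ via $(u-u^2)^{m-1}(2u-1)=-\tfrac{1}{m}\frac{d}{du}(u-u^2)^m$ together with $\partial_u w=2\sigma\sqrt{t}y\sqrt{x}$ to obtain $B=\tfrac{\sigma^2 t}{m}\int y^2\eta^*_m(y)\int_0^1(u-u^2)^m f^{(m+2)}(w)\,du\,dy$, free of $\sqrt{x}$. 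Using the same identity to replace $f^{(m+2)}$, then integrating the $\partial_y f^{(m+1)}$ piece by parts in $y$ and the $(2u-1)f^{(m+2)}$ piece by parts in $u$ (which, via $(2u-1)^2=1-4(u-u^2)$, produces a linear combination of $(u-u^2)^{m-1}$ and $(u-u^2)^m$ weights), one obtains after cancellation of the $(u-u^2)^{m-1}$ contribution against $A$:
\[A+B=\int_{\R}\int_0^1(u-u^2)^m f^{(m+1)}(w)\Bigl[4\eta^*_m(y)-\tfrac{2}{m}y(\eta^*_m)'(y)\Bigr]\,du\,dy.\]
Setting $\eta^*_{m+1}(y)=4\eta^*_m(y)-\tfrac{2}{m}y(\eta^*_m)'(y)$ and expanding $\eta^*_m=(-1)^{m-1}\sum_{j=1}^m c_{j,m}y^j\eta^{(j)}(y)$ immediately yields the recursion~\eqref{recursive_coeff_formula}; iterating $c_{m,m}=\tfrac{2}{m-1}c_{m-1,m-1}$ from $c_{1,1}=-1$ gives $c_{m,m}=-\tfrac{2^{m-1}}{(m-1)!}$.

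For the estimate~\eqref{estimate_X1_functional} under $\eta^*_m\ge 0$, I would use the representation to bound
\[|\partial_x^n\E[f(X_1(\sqrt{t}Y,x))]|\le\|f\|_{n,L}\int_{\R}\int_0^1(u-u^2)^{n-1}(1+w^L)\eta^*_n(y)\,du\,dy,\quad 1\le n\le m,\]
then expand $w^L$ by the binomial formula. Since $\eta^*_n$ is even (as $\eta$ is) and $\int_0^1(2u-1)^l(u-u^2)^{n-1}du=0$ for odd $l$, only integer powers of $x$ survive; the $x^L$-piece has coefficient $B(n,n)\int\eta^*_n(y)\,dy=1$ (which can be checked from $\int y^j\eta^{(j)}(y)dy=(-1)^j j!$ together with the explicit $c_{j,n}$), and every remaining contribution carries a factor of $t$, bounded by $Ct(1+x^L)$ uniformly in $x\ge 0$. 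The case $n=0$ follows directly from expanding $\E[1+X_1^L]$. The main obstacle is the bookkeeping in the inductive step: the integrations by parts in $u$ and in $y$ have to simultaneously remove the $\sqrt{x}$ singularity and lower the derivative order of $f$ from $m+2$ back to $m+1$ in just the right way to reproduce the two-term recursion for $c_{j,m+1}$, and the decay hypothesis on $\eta^{(i)}$ is what controls how far the induction can be closed.
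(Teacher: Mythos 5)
Your proposal is correct and follows the same overall architecture as the paper's proof (symmetrize, induct on $m$, remove the $\sqrt{x}$ singularity by integrations by parts in $u$ and $y$ to build the kernel $\eta^*_m$, then exploit $\eta^*_m\ge0$ for the norm estimate). The route through the inductive step is, however, organized differently and is somewhat longer: you first integrate $B$ by parts in $u$ via $(u-u^2)^{m-1}(2u-1)=-\frac{1}{m}\partial_u(u-u^2)^m$, which kills the $1/\sqrt{x}$ but raises the derivative order to $f^{(m+2)}$; you then substitute the $\partial_y$-identity (which reintroduces a $\sqrt{x}$), integrate by parts once in $y$, and once more in $u$ (using $(2u-1)^2=1-4(u-u^2)$) to come back down to $f^{(m+1)}$. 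The paper avoids the temporary excursion to $f^{(m+2)}$ and the second $u$-IBP: it rewrites the singular factor directly as $\frac{(2u-1)\sigma\sqrt t y}{2\sqrt x}=\frac{(2u-1)}{\sigma\sqrt t\sqrt x}\partial_y w-(2u-1)^2$, so the $(2u-1)^2$ piece is absorbed immediately into the weight $(u-u^2)^{m-1}\mapsto(u-u^2)^m$ and only a single $y$-IBP followed by a single $u$-IBP remain. Both paths produce $\eta^*_{m+1}=4\eta^*_m-\frac{2}{m}y(\eta^*_m)'$ and hence the same recursion for $c_{j,m}$. For the estimate, you propose the binomial expansion of $w^L$ and verify the unit leading coefficient through $\int y^j\eta^{(j)}(y)\,dy=(-1)^jj!$; the paper instead recognizes the two relevant integrals as $\partial_x^n\E[X_1^n/n!]$ and $\partial_x^n\E[\frac{L!}{(L+n)!}X_1^{L+n}]$, which gives the constants without any combinatorial identity to check. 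Both arguments are valid; the paper's is a little shorter. Incidentally, your explicit remark that the case $n=0$ (needed for the norm $\|\cdot\|_{m,L}$) must also be treated by direct expansion of $\E[1+X_1^L]$ is more careful than the paper's exposition, which only writes the bound for $n\ge1$.
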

Let us stress here two things that are crucial in~\eqref{estimate_X1_functional}: the same norm is used in both sides, and the sharp time dependence of the multiplicative constant $(1+Ct)$. These properties are used in the proof of Proposition~\ref{prop_H2_sch} to get~\eqref{H2_bar}.

\begin{proof}
  We first consider $m=1$ and $f\in\CpolKL{M}{L}$. From the symmetry of $Y$, we have the equality $\E[f(X_1(\sqrt{t}Y,x))] = \E[f(X_1(\sqrt{t}Y,x))+ f(X_1(-\sqrt{t}Y,x))]/2$ and using the notation $\psi^{\pm}_f(x,y) = f(x+\sigma\sqrt{t}y\sqrt{x}+ \sigma^2ty^2/4) \pm f(x-\sigma\sqrt{t}y\sqrt{x}+ \sigma^2ty^2/4)$ we can write,
  \begin{equation*}
    \partial_x\E[f(X_1(\sqrt{t}Y,x))] = \frac{1}{2} \int_{-\infty}^\infty \partial_x\psi^+_f(x,y) \eta(y)dy.
  \end{equation*}
  One derivation and a little of algebra show that
  \begin{align*}
    \partial_x \psi^+_f(x,y) & = (1+ \frac{\sigma\sqrt{t}y}{2\sqrt{x}}) f'(x+\sigma\sqrt{t}y\sqrt{x}+ \sigma^2ty^2/4)+ (1- \frac{\sigma\sqrt{t}y}{2\sqrt{x}}) f'(x-\sigma\sqrt{t}y\sqrt{x}+ \sigma^2ty^2/4) \\
                             & = \frac{1}{\sigma\sqrt{t}\sqrt{x}}
    \begin{multlined}[t]
      \Big((\sigma^2 t y/2+ \sigma\sqrt{t}\sqrt{x}) f'(x+\sigma\sqrt{t}y\sqrt{x}+ \sigma^2ty^2/4) \\
      - (\sigma^2 t y/2- \sigma\sqrt{t}\sqrt{x}) f'(x-\sigma\sqrt{t}y\sqrt{x}+ \sigma^2ty^2/4)\Big)
    \end{multlined}                                                                                                                                                                              \\
                             & = \frac{1}{\sigma\sqrt{t}\sqrt{x}} \Big(\partial_y [f(x+\sigma\sqrt{t}y\sqrt{x}+ \sigma^2ty^2/4)] -  \partial_y [f(x-\sigma\sqrt{t}y\sqrt{x}+ \sigma^2ty^2/4)] \Big)         \\
                             & = \frac{\partial_y \psi^-_f(x,y)}{\sigma\sqrt{t}\sqrt{x}}.
  \end{align*}
  Integrating by parts in the variable $y$, observing that the boundary term vanishes since $|\eta(y)|=_{|y|\to \infty}o(|y|^{-2L})$ and $f(z)=_{z\to \infty}O(z^L)$, one has
  \begin{align*}
    \partial_x\E[f(X_1(\sqrt{t}Y,x))] & = - \frac{1}{2} \int_{-\infty}^\infty \frac{ \psi^-_f(x,y) \eta'(y)}{\sigma\sqrt{t}\sqrt{x}} dy            \\
                                      & = -  \int_{-\infty}^\infty \int_0^1 f'(x+(2u-1)\sigma\sqrt{t}y\sqrt{x}+ \sigma^2ty^2/4)  \eta'(y)y \,dudy  \\
                                      & =  \int_{-\infty}^\infty \int_0^1 f'(x+(2u-1)\sigma\sqrt{t}y\sqrt{x}+ \sigma^2ty^2/4)  (-\eta'(y)y) \,dudy
  \end{align*}
  since $\partial_u f(x+(2u-1)\sigma\sqrt{t}y\sqrt{x}+ \sigma^2ty^2/4) = 2\sigma\sqrt{t}y\sqrt{x} f'(x+(2u-1)\sigma\sqrt{t}y\sqrt{x}+ \sigma^2ty^2/4) $.
  In order to simplify the notation, we define $w(u,x,y) := x+(2u-1)\sigma\sqrt{t}y\sqrt{x}+ \sigma^2ty^2/4$, and we write explicitly the partial derivatives of $w$
  \begin{equation}\label{deriv_w}
    \begin{cases}
      \partial_u w(u,x,y) = 2\sigma\sqrt{t}y\sqrt{x},                  \\
      \partial_x w(u,x,y) = 1+\frac{(2u-1)\sigma\sqrt{t}y}{2\sqrt{x}}, \\
      \partial_y w(u,x,y) = (2u-1)\sigma\sqrt{t}\sqrt{x}+\frac{\sigma^2 t y}{2},
    \end{cases}
  \end{equation}
  and we define  for $s:[0,1]\times \R \rightarrow \R$
  \begin{equation}
    I^{(l)}_{m,n}(s)= \int_{-\infty}^\infty \int_0^1 s(u,y) (u^2-u)^{m-1}f^{(l)}(w(u,x,y))  \bigg(\sum_{j=1}^n c_{j,n}  y^j\eta^{(j)}(y)\bigg) \,dudy,
  \end{equation}
  so we can rewrite \eqref{repres_X1_functional} as $\partial^m_x\E[f(X_1(\sqrt{t}Y,x))] = I^{(m)}_{m,m}(1)$ where the 1 in the argument has to be intended as the constant map identically equal to 1. So far, we have shown that formula \eqref{repres_X1_functional} is true for $m=1$, we take now $m\geq2$, and we prove it by induction over $m$ assuming that the result holds for $m-1$. We differentiate Eq.~\eqref{repres_X1_functional}
  for $m-1$ and use the second equality of \eqref{deriv_w} to get
  \begin{equation}
    \partial^m_x\E[f(X_1(\sqrt{t}Y,x))]  =I^{(m)}_{m-1,m-1}(1) + I^{(m)}_{m-1,m-1}\left(\frac{(2u-1)\sigma\sqrt{t}y}{2\sqrt{x}}\right).
  \end{equation}
  Then, from the third equality of \eqref{deriv_w}, one has $\frac{(2u-1)\sigma\sqrt{t}y}{2\sqrt{x}}= \frac{(2u-1)}{\sigma\sqrt{t}\sqrt{x}}\partial_yw - (2u-1)^2$ and so
  \begin{align*}
    \partial^m_x\E[f(X_1(\sqrt{t}Y,x))] & =I^{(m)}_{m-1,m-1}(1- (2u-1)^2) + I^{(m)}_{m-1,m-1}\left(\frac{(2u-1)}{\sigma\sqrt{t}\sqrt{x}} \partial_y w(u,x,y)\right) \\
                                        & =-4 I^{(m)}_{m-1,m-1}(u^2-u) + I^{(m)}_{m-1,m-1}\left(\frac{(2u-1)}{\sigma\sqrt{t}\sqrt{x}} \partial_y w(u,x,y)\right)    \\
                                        & =-4 I^{(m)}_{m,m-1}(1) + I^{(m)}_{m-1,m-1}\left(\frac{(2u-1)}{\sigma\sqrt{t}\sqrt{x}} \partial_y w(u,x,y)\right).
  \end{align*}
  We work on the term $I^{(m)}_{m-1,m-1}(\frac{(2u-1)}{\sigma\sqrt{t}\sqrt{x}} \partial_y w(u,x,y))$. We use first an integration by parts in the variable $y$ and subsequently one in the variable $u$.  The boundary terms vanishes by using the hypothesis on $\eta$ since $|f^{(m)}(w(u,x,y))|\le \|f\|_{m,L}(1+w(u,x,y)^L)\underset{|y|\to \infty }=O(y^{2L})$ and to the fact that the function $u^2-u$ vanishes in $0$ and $1$. One gets
    {\small	\begin{multline}
        \int_0^1\int_{-\infty}^\infty   \frac{(2u-1)(u^2-u)^{m-2}}{\sigma\sqrt{t}\sqrt{x}}f^{(m)}(w(u,x,y))\partial_y w(u,x,y)\bigg(  \sum_{j=1}^{m-1} c_{j,m-1}  y^j\eta^{(j)}(y) \bigg)dydu \\
        \begin{aligned}
           & =-\int_{-\infty}^\infty \int_0^1  \frac{(2u-1)(u^2-u)^{m-2}}{\sigma\sqrt{t}\sqrt{x}}f^{(m-1)}(w(u,x,y))  \bigg(\sum_{j=1}^{m-1} c_{j,m-1}  (jy^{j-1}\eta^{(j)}(y)+y^{j}\eta^{(j+1)}(y)) \bigg)dudy  \\
           & =\int_{-\infty}^\infty \int_0^1 (u^2-u)^{m-1} f^{(m)}(w(u,x,y)) \bigg( \sum_{j=1}^{m-1} \frac{2}{m-1}c_{j,m-1}  (jy^{j}\eta^{(j)}(y)+y^{j+1}\eta^{(j+1)}(y)\bigg) \,dudy                            \\
           & =\int_{-\infty}^\infty \int_0^1 (u^2-u)^{m-1} f^{(m)}(w(u,x,y))  \frac{2}{m-1}\bigg(\sum_{j=1}^{m-1} jc_{j,m-1}  y^{j}\eta^{(j)}(y) +\sum_{j=2}^{m} c_{j-1,m-1}y^{j}\eta^{(j)}(y)\bigg) \bigg)dudy.
        \end{aligned}
      \end{multline}}
  Rewriting the last equality for $\partial^m_x\E[f(X_1(\sqrt{t}Y,x))]$, one has
    {\small \begin{multline}
        \partial^m_x\E[f(X_1(\sqrt{t}Y,x))] = \int_{-\infty}^\infty \int_0^1 (u^2-u)^{m-1} f^{(m)}(w(u,x,y))  \bigg(-4\sum_{j=1}^{m-1} c_{j,m-1}  y^{j}\eta^{(j)}(y)\bigg)\,dudy +\\
        + \int_{-\infty}^\infty \int_0^1 (u^2-u)^{m-1} f^{(m)}(w(u,x,y))  \frac{2}{m-1}\bigg(\sum_{j=1}^{m-1} jc_{j,m-1}  y^{j}\eta^{(j)}(y) +\sum_{j=2}^{m} c_{j-1,m-1}y^{j}\eta^{(j)}(y)\bigg) \,dudy\\
        = \begin{aligned}[t]
           & \int_{-\infty}^\infty \int_0^1 (u^2-u)^{m-1} f^{(m)}(w(u,x,y))\bigg( \big(\frac{2}{m-1} -4\big)c_{1,m-1} y\eta^{(1)}(y)+                                                 \\
           & \sum_{j=2}^{m-1}\Big(\big(\frac{2j}{m-1} -4\big)c_{j,m-1} +\frac{2}{m-1}c_{j-1,m-1}\Big) y^{j}\eta^{(j)}(y) + \frac{2}{m-1} c_{m-1,m-1} y^{m}\eta^{(m)}(y)\bigg) \,dudy,
        \end{aligned}
      \end{multline}}
  which proves the representation \eqref{repres_X1_functional}. Since  $c_{1,1}=-1$ and $c_{m,m}=-\frac{2}{m-1} c_{m-1,m-1}$ for $m\ge 2$, we get $c_{m,m}=-\frac{2^{m-1}}{(m-1)!}$ for $m\ge 1$.

  We are now able to prove the estimate using this representation. Defining $\eta^*_m(y) = (-1)^{m-1}  \\\sum_{j=0}^mc_{j,m}y^j\eta{(j)}(y)$, that is nonnegative for all $y$ by hypothesis, one has
  \begin{align*}
    |\partial^m_x\E[f(X_1(\sqrt{t}Y,x))]| & \leq \int_0^1 (u-u^2)^{m-1}  \int_{-\infty}^\infty   |f^{(m)}(w(u,x,y))| \eta^*_m(y)dydu                     \\
                                          & \leq \|f\|_{m,L}\int_0^1 (u-u^2)^{m-1}  \int_{-\infty}^\infty (1+w(u,x,y)^L) \eta^*_m(y)dydu                 \\
                                          & =  \|f\|_{m,L}\underbrace{\int_0^1 (u-u^2)^{m-1}  \int_{-\infty}^\infty  \eta^*_m(y)dydu}_{A}                \\
                                          & \quad+ \|f\|_{m,L}\underbrace{\int_0^1 (u-u^2)^{m-1}  \int_{-\infty}^\infty w(u,x,y)^L \eta^*_m(y)dydu}_{B}.
  \end{align*}
  The double integral $A$ can be seen by means of representation \eqref{repres_X1_functional} with $f(x)=\frac{x^m}{m!}$ ($f^{(m)}\equiv 1$) as
  \begin{align*}
    A = \partial^m_x\E\left[\frac{X_1(\sqrt{t}Y,x)^m}{m!}\right] & =\frac{1}{m!} \partial^m_x\sum_{j=0}^{m}{2m\choose 2j} x^{m-j}\left(\frac{\sigma\sqrt{t}}{2}\right)^j \E[Y^{2j}]=1,
  \end{align*}
  by using  the symmetry of the density $\eta$.	  In the same way, $B$ can be seen by means of the representation as
  \begin{align*}
    B & = \partial^m_x\E\left[\frac{L!}{(L+m)!}X_1(\sqrt{t}Y,x)^{L+m}\right]                                                               \\
      & = \partial^m_x\sum_{j=0}^{L+m}{2(L+m)\choose 2j} \frac{L!x^{L+m-j}}{(L+m)!}\left(\frac{\sigma\sqrt{t}}{2}\right)^{2j} \E[Y^{2j}]   \\
      & = \sum_{j=0}^{L}{2(L+m)\choose 2j} \frac{L!(L+m-j)!}{(L-j)!(L+m)!}x^{L-j}\left(\frac{\sigma\sqrt{t}}{2}\right)^{2j} \E[Y^{2j}]     \\
      & =x^L +t\sum_{j=1}^{L}{2(L+m)\choose 2j} \frac{L!(L+m-j)!}{(L-j)!(L+m)!}x^{L-j}\left(\frac{\sigma}{2}\right)^{2j}t^{j-1} \E[Y^{2j}] \\
      & \leq x^L +t(1+x^L)(1+\E[Y^{2L}])\sum_{j=1}^{L}{2(L+m)\choose 2j} \left(\frac{\sigma c_T}{2}\right)^{2j}                            \\
      & \leq x^L +Ct(1+x^L)
  \end{align*}
  where $c_T= \max(1,T)$ and $C = \frac{1}{2}\left((1+\frac{\sigma c_T}{2})^{2(L+k)}+(1-\frac{\sigma c_T}{2})^{2(L+k)}\right)(1+\E[Y^{2L}])$. Putting parts $A$ and $B$ back together one has
  \begin{equation}
    \partial^m_x\E[f(X_1(\sqrt{t}Y,x))] \leq \|f\|_{m,L} (1 +x^L + Ct(1+x^L)) = (1 +x^L)(1 +Ct)\|f\|_{m,L}  ,
  \end{equation}
  and this proves the desired norm inequality.
\end{proof}

\begin{lemma}\label{gaussian_eta_positivity}
  Let $\eta(y)=\frac 1 {\sqrt{2\pi}} e^{-y^2/2}$ be the density of a standard normal variable. Then, we have for $m\ge 1$:
  \begin{equation}\label{gauss_identity}
    \eta^*_m(y):= (-1)^{m-1}\sum_{j=1}^m  c_{j,m} y^j\eta^{(j)}(y) = -c_{m,m}y^{2m}\eta(y),
  \end{equation}
  so, in particular $\eta^*_m(y)\ge 0$ for all $y\in\R$.
\end{lemma}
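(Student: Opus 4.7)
The plan is to exploit the well-known fact that for the Gaussian density the successive derivatives factor through polynomials: writing $\eta^{(j)}(y)=P_j(y)\eta(y)$, one has $P_0=1$, $P_1(y)=-y$, and the two recursions $P_{j+1}(y)=P_j'(y)-yP_j(y)$ (from differentiating $P_j\eta$) together with $P_j'(y)=-jP_{j-1}(y)$ (from iterating $\eta'=-y\eta$). Combining them yields the three-term Hermite-type relation $P_{j+1}(y)=-yP_j(y)-jP_{j-1}(y)$. Set
$$T_m(y)=\sum_{j=1}^m c_{j,m} y^j P_j(y),$$
so that $\eta^*_m(y)=(-1)^{m-1}T_m(y)\eta(y)$. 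The target identity \eqref{gauss_identity} is then equivalent to the purely polynomial statement $T_m(y)=(-1)^m c_{m,m}y^{2m}$, which I will prove by induction on $m$. Note that once this is shown, the nonnegativity follows at once since $c_{m,m}=-2^{m-1}/(m-1)!<0$.

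The heart of the argument is to derive a first-order recursion for $T_m$. Substituting the defining recursion \eqref{recursive_coeff_formula} for $c_{j,m}$ gives
$$T_m=-4T_{m-1}+\frac{2}{m-1}\sum_{j=1}^{m-1} j c_{j,m-1} y^j P_j+\frac{2}{m-1}\sum_{k=1}^{m-1}c_{k,m-1}y^{k+1}P_{k+1}.$$
In the last sum I use $P_{k+1}=-yP_k-kP_{k-1}$ to replace $y^{k+1}P_{k+1}$ by $-y^{k+2}P_k-ky^{k+1}P_{k-1}$, producing a clean $-y^2T_{m-1}$ term and a residual $\sum_k kc_{k,m-1}y^{k+1}P_{k-1}$. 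The latter is then eliminated via the product identity
$$(y^{j+1}P_j)'=(j+1)y^j P_j-jy^{j+1}P_{j-1},$$
which follows directly from $P_j'=-jP_{j-1}$. After grouping all terms, the weighted sum $U_{m-1}=\sum_j jc_{j,m-1}y^jP_j$ cancels and one is left with
$$T_m(y)=-\left(4+\frac{2y^2}{m-1}\right)T_{m-1}(y)+\frac{2y}{m-1}T_{m-1}'(y), \qquad m\ge 2.$$

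The induction is now immediate. The base $m=1$ gives $T_1(y)=c_{1,1}yP_1(y)=y^2=(-1)^1 c_{1,1}y^2$. Assuming $T_{m-1}(y)=(-1)^{m-1}c_{m-1,m-1}y^{2(m-1)}$, a direct computation shows $\frac{2y}{m-1}T_{m-1}'(y)=4\,T_{m-1}(y)$ (the $2(m-1)$ from differentiating $y^{2(m-1)}$ cancels the $m-1$ in the denominator), so the $-4T_{m-1}$ and $+4T_{m-1}$ terms cancel and only $-\frac{2y^2}{m-1}T_{m-1}(y)=(-1)^m\,\frac{2c_{m-1,m-1}}{m-1}\,y^{2m}$ remains. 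The recursion \eqref{recursive_coeff_formula} at $j=m$ reads $c_{m,m}=\frac{2}{m-1}c_{m-1,m-1}$, giving $T_m(y)=(-1)^m c_{m,m}y^{2m}$, as required. Multiplying by $(-1)^{m-1}\eta(y)$ yields $\eta^*_m(y)=-c_{m,m}y^{2m}\eta(y)\ge 0$.

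The only non-routine step is the derivation of the first-order recursion for $T_m$, and the delicate point there is the handling of the shifted sum $\sum c_{k,m-1}y^{k+1}P_{k+1}$: without the product-rule identity above, the terms $y^{k+1}P_{k-1}$ would keep raising the degree in $y$ without closing. Using $P_j'=-jP_{j-1}$ lets one rewrite these terms via $(yT_{m-1})'$, which is precisely what makes the recursion collapse into a form that is linear in $T_{m-1}$ and $T_{m-1}'$, and compatible with the ansatz $T_{m-1}\propto y^{2(m-1)}$.
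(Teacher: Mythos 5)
Your proof is correct, and it takes a genuinely different route from the paper's. The paper establishes \eqref{gauss_identity} indirectly: it applies the differentiation step from the proof of Lemma~\ref{regular_density} to the (inductively vanishing) kernel $\eta^*_{m-1}+c_{m-1,m-1}y^{2m-2}\eta$, obtains an integral identity with the kernel $\eta^*_m+c_{m,m}y^{2m}\eta$, then specializes to $f_L(x)=\frac{L!}{(L+m)!}x^{L+m}$ and $x=0$ to deduce that all moments $\int y^{2L}\big(\eta^*_m(y)+c_{m,m}y^{2m}\eta(y)\big)dy$ vanish; since the kernel is $P_{2m}(y)\eta(y)$ with $P_{2m}$ an even polynomial of degree $2m$, the vanishing of all Gaussian moments forces $P_{2m}\equiv 0$. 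Your argument, by contrast, works entirely at the level of the polynomials $T_m(y)=\sum_{j=1}^m c_{j,m}y^jP_j(y)$ (so $\eta^*_m=(-1)^{m-1}T_m\eta$): using $P_{j+1}=-yP_j-jP_{j-1}$ and $P_j'=-jP_{j-1}$ together with the coefficient recursion \eqref{recursive_coeff_formula}, you derive the closed first-order relation $T_m=-\bigl(4+\tfrac{2y^2}{m-1}\bigr)T_{m-1}+\tfrac{2y}{m-1}T_{m-1}'$, for which the ansatz $T_{m-1}\propto y^{2(m-1)}$ propagates by a one-line check. I verified the cancellation of the auxiliary sum $U_{m-1}=\sum_j jc_{j,m-1}y^jP_j$, the base case, and the inductive step, and all steps are sound. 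What each buys: your proof is more elementary and self-contained, requiring nothing beyond the Hermite-type recursions and \eqref{recursive_coeff_formula}, and it gives a clean structural explanation (a first-order recursion for $T_m$) of why the answer is a monomial; the paper's proof reuses the already-built integral-representation machinery at the cost of a detour through the moment-uniqueness of the Gaussian. One small imprecision in your write-up: the residual you isolate when expanding $\sum_k c_{k,m-1}y^{k+1}P_{k+1}$ carries a minus sign, $-\sum_k k\,c_{k,m-1}y^{k+1}P_{k-1}$; and the derivation of $P_j'=-jP_{j-1}$ actually requires combining the Leibniz-rule three-term recurrence $P_j=-yP_{j-1}-(j-1)P_{j-2}$ with $P_j=P_{j-1}'-yP_{j-1}$ rather than following directly from iterating $\eta'=-y\eta$. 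Neither affects the validity of the argument.
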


\begin{proof}
  For $m=1$, \eqref{gauss_identity} is clearly true since $\eta'(y)=-y\eta(y)$.
  We now take $m\geq2$, $M\geq m$ and we suppose \eqref{gauss_identity} true for $m-1$:  for all $f\in \CpolKL{M}{L}$ and $x\in\R_+$, we have
  \begin{equation*}
    \int_{-\infty}^\infty \int_0^1 (u-u^2)^{m-2} f^{(m-1)}(w(u,x,y)) \big(\eta^*_{m-1}(y)+c_{m-1,m-1}y^{2m-2}\eta(y)\big)dudy=0.
  \end{equation*}
  Doing one differentiation step with respect to~$x$ like in the proof of Lemma~\ref{regular_density} and using that $\eta'(y)=-y\eta(y)$, we obtain
  \begin{equation*}
    \int_{-\infty}^\infty \int_0^1 (u-u^2)^{m-1} f^{(m)}(w(u,x,y)) \big(\eta^*_m(y)+c_{m,m}y^{2m}\eta(y)\big)dudy=0.
  \end{equation*}
  By choosing $f_L(x):= \frac{L!}{(L+m)!}x^{L+m}$ for $L\in \N$, we get for all $L\in \N$, $x \in \R_+$,
  \begin{equation*}
      \int_{-\infty}^\infty \int_0^1 (u-u^2)^{m-1} w(u,x,y)^L \big(\eta^*_m(y)+c_{m,m}y^{2m}\eta(y)\big)dudy=0.
  \end{equation*}
  We now take $x=0$ so that $w(u,0,y)=\frac{\sigma^2t}4 y^2$ and then
  $$ \int_{-\infty}^\infty y^{2L} \big(\eta^*_m(y)+c_{m,m}y^{2m}\eta(y)\big)dy=0, \ L\in \N.$$ We remark also that
  $\eta^*_m(y)=\big(\sum_{j=1}^m (-1)^{m+j-1}c_{j,m}y^jH_j(y)\big)\eta(y)$, where $H_j$ is the $j^{\text{th}}$ Hermite polynomial function (defined by $\eta^{(j)}(y)=(-1)^j H_j(y)\eta(y)$). Thus, $\eta^*_m(y)+c_{m,m}y^{2m}\eta(y)=P_{2m}(y)\eta(y)$ where $P_{2m}$ is  an even polynomial function of degree $2m$. We therefore obtain $\int_{-\infty}^\infty  y^{l} P_{2m}(y) \eta(y) dy=0$ for all $l\in \N$, which gives $P_{2m}=0$ and thus the claim.
\end{proof}

\begin{remark}
  Lemma~\ref{gaussian_eta_positivity} gives a remarkable formula of the monomial of order $2m$ $m\in\N^*$ in terms of the first $m$ Hermite polynomials multiplied respectively by the first $m$ monomials
  \begin{equation}\label{Hermite_inv_formula}
    y^{2m} = \sum_{j=1}^m (-1)^{m+j}\frac{c_{j,m}}{c_{m,m}}y^jH_j(y).
  \end{equation}
\end{remark}

The next result gives a kind of reciprocal result to Lemma~\ref{gaussian_eta_positivity}. It explains why we consider a normal random variable for~$Y$ in Theorem~\ref{thm_main}, since we use Proposition~\ref{prop_H2_sch} for any $M\in \N$.
\begin{theorem}\label{thm_carac_gauss}
  Let $Y$ be a symmetric random variable with a $\mathcal{C}^\infty$ probability density function~$\eta$ such that $\E[Y^2]=1$, $\E[Y^4]=3$ and $\eta_m^*\ge 0$ for all $m\ge 1$. Then, $Y\sim \mathcal{N}(0,1)$.
\end{theorem}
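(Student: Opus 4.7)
The plan is to extract, from the family of pointwise positivity conditions $\eta_m^* \ge 0$ for all $m \ge 1$, enough constraints on the moments of $Y$ to force them to coincide with the Gaussian moments $(2L-1)!!$, and then to conclude by moment-determinateness.

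First, I would combine the representation formula of Lemma~\ref{regular_density} (applied to $f(x)=x^{L+m}$ and evaluated at $x=0$) with the integration-by-parts identities used in its proof to obtain the explicit moment formula
$$\int_{-\infty}^\infty y^{2L}\,\eta_m^*(y)\,dy \;=\; K_{L,m}\,\E[Y^{2L}], \qquad K_{L,m}=\frac{(2L+2m)!\,L!}{2(m-1)!\,(2L)!\,(L+m)!},$$
valid for all $L,m\in\N$. Under the positivity hypothesis, $\eta_m^*/K_{0,m}\,dy$ is therefore a probability measure on $\R$ whose even moments are explicit rational multiples of the moments of $Y$. Summing this identity against $\alpha^L/L!$ yields the Laplace-type functional identity
$$B_m \int_{-\infty}^\infty e^{\alpha y^2}\,\eta_m^*(y)\,dy \;=\; F_m(\alpha) \;:=\; \sum_{k=0}^m \frac{m!\,(4\alpha)^k}{(m-k)!\,(2k)!}\,H^{(k)}(\alpha),$$
where $H(\alpha):=\E[e^{\alpha Y^2}]$ is the Laplace transform of $Y^2$ (well-defined on some $(-\infty,\alpha_0)$ from the decay of $\eta$) and $B_m = ((m-1)!)^2/(2m-1)!$. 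A direct check shows that for $Y\sim\mathcal{N}(0,1)$ one has $H(\alpha)=(1-2\alpha)^{-1/2}$ and $F_m(\alpha)=(1-2\alpha)^{-m-1/2}$.

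The central step is an induction on $k$ showing $H^{(k)}(0)=(2k-1)!!$ for all $k\in\N$. The base cases $k\le 2$ are given by the hypotheses $H(0)=1$, $H'(0)=\E[Y^2]=1$, $H''(0)=\E[Y^4]=3$. For the inductive step I would exploit the pointwise positivity of $\eta_m^*$ (strictly stronger than the integrated positivity $F_m(\alpha)\ge 0$ or the Hankel PSD conditions on $\mu_m$): writing $u(y):=-y\eta'(y)/\eta(y)=\eta_1^*(y)/\eta(y)\ge 0$, the hypotheses $\eta_m^*\ge 0$ translate, through the recursive structure of the coefficients $c_{j,m}$ used in Lemma~\ref{gaussian_eta_positivity}, into a hierarchy of polynomial differential inequalities in $u$ whose only common solution compatible with the prescribed moments and with $\eta$ being a symmetric probability density is $u(y)=y^2$. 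Integrating then gives $\eta(y)\propto e^{-y^2/2}$.

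The hard part will be this inductive step: the pointwise conditions $\eta_m^*(y)\ge 0$ encode much more than the positivity of $F_m$, yet one must convert them into the sharp moment recurrence $\E[Y^{2L+2}]=(2L+1)\,\E[Y^{2L}]$. Once this recurrence is established for all $L$, the moments of $Y$ coincide with those of $\mathcal{N}(0,1)$; Carleman's criterion (applicable since $(2L-1)!!$ grows no faster than $L!$) then guarantees moment-determinateness and yields $Y\sim\mathcal{N}(0,1)$.
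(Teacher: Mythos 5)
Your proposal diverges from the paper's proof at exactly the point you flag as ``the hard part,'' and that is where it fails: there is no argument given for why the pointwise inequalities $\eta^*_m\ge 0$ force the moment recurrence $\E[Y^{2L+2}]=(2L+1)\E[Y^{2L}]$. The phrase ``the hypotheses $\eta^*_m\ge 0$ translate, through the recursive structure of the coefficients $c_{j,m}$, into a hierarchy of polynomial differential inequalities in $u$ whose only common solution compatible with the prescribed moments\ldots is $u(y)=y^2$'' is a restatement of what must be proved, not a proof; in particular nothing rules out, at this level of generality, a non-Gaussian $u\ge 0$ satisfying the first two moment constraints and all of the $\eta^*_m\ge 0$ inequalities except through the very structure theorem you are not invoking. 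Your moment identity $\int y^{2L}\eta^*_m(y)\,dy = K_{L,m}\,\E[Y^{2L}]$ with $K_{L,m}=\frac{(2L+2m)!\,L!}{2(m-1)!(2L)!(L+m)!}$ is correct (one gets it from the representation of Lemma~\ref{regular_density} at $x=0$, where $w(u,0,y)=\sigma^2ty^2/4$ becomes $u$-independent and the Beta integral factors out), but it only encodes \emph{integrated} positivity of $\eta^*_m$ and cannot by itself see the pointwise condition, as you yourself observe.

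The paper closes exactly this gap by a different and decisive step: it shows (Appendix, via the explicit chain rule identity $\partial_x^m[\eta(\sqrt{x})]=(-1)^m\frac{(m-1)!}{2^{2m-1}}x^{-m}\eta^*_m(\sqrt{x})$) that $\eta^*_m\ge 0$ for all $m$ is \emph{equivalent} to complete monotonicity of $x\mapsto \eta(\sqrt{x})$ on $(0,\infty)$, and then applies Bernstein's theorem to write $\eta(y)=\int_0^\infty e^{-ty^2}\,\mu(dt)$ for a positive Borel measure $\mu$. After normalizing $\mu$ to a probability measure $\tilde\mu$, the two constraints $\E[Y^2]=\E[Y^4]/3=1$ become $\int (2t)^{-1}\tilde\mu(dt)=\int(2t)^{-2}\tilde\mu(dt)=1$, and equality in Cauchy--Schwarz forces $\tilde\mu=\delta_{1/2}$, hence $\eta$ is the standard Gaussian density. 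This identifies $\eta$ directly, so there is no need to compute all the moments of $Y$ nor to appeal to moment-determinateness or Carleman. If you want to repair your argument, the missing ingredient is precisely this Bernstein/complete-monotonicity characterization: it is the mechanism that converts the infinite family of pointwise positivity conditions into a structural (mixture-of-centered-Gaussians) representation on which a two-moment Cauchy--Schwarz rigidity argument can bite.
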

\begin{proof}
  By Corollary~\ref{cor_density_etam}, there exists a positive Borel measure~$\mu$ such that
  $\eta(x)=\int_0^\infty e^{-tx^2}\mu(dt)$. Since $\int_{\R}\eta=1$, we get $\int_0^\infty \sqrt{\pi/t}\mu(dt)=1$ and then   $\eta(x)=\int_0^\infty \frac{e^{-tx^2}}{\sqrt{\pi/t}} \tilde{\mu}(dt)$ with $\tilde{\mu}(dt)=\sqrt{\pi/t} \mu(dt)$ being a probability measure on~$\R_+$. We have $\E[Y^2]=\int_0^\infty \int_{\R} x^2\frac{e^{-tx^2}}{\sqrt{\pi/t}} dx \tilde{\mu}(dt)=\int_0^\infty \frac{1}{2t} \tilde{\mu}(dt)$ and $\E[Y^4]=\int_0^\infty 3\left(\frac{1}{2t}\right)^2 \tilde{\mu}(dt)$. Therefore, we have
  $$\int_0^\infty \frac{1}{2t} \tilde{\mu}(dt)=\int_0^\infty \left(\frac{1}{2t}\right)^2 \tilde{\mu}(dt) =1.$$
  The equality condition in the Cauchy-Schwarz inequality implies that $ \tilde{\mu}(dt) =\delta_{1/2}(dt)$, i.e. $Y$ is a standard normal variable.
\end{proof}

\subsection{Proof of Theorem~\ref{thm_main}}\label{Subsec_thm_main}
We prove the result for $\cPh^{2,n}$. By assumption, $f\in  \CpolKL{18}{L}$, for $L\ge 18$ sufficiently large. From~\eqref{devt_erreur}, we have
\begin{align*}
  P_Tf-\cPh^{2,n}f & =  \sum_{k=0}^{n-1}Q_1^{[n-(k+1)]}[P_{h_1}-Q_2^{[n]}]Q_1^{[k]}f \\ &\quad+\sum_{k=0}^{n-1}\sum_{k'=0}^{n-(k+2)}P_{(n-(k+k'+2))h_1}[P_{h_1}-Q_1] Q_1^{[k']}[P_{h_1}-Q_1] Q_1^{[k]},
\end{align*}
with $h_l=T/n^l$.
Using Proposition~\ref{prop_H2_NV} three times and Proposition~\ref{H1_bar_mL} twice, we get for $k\in \{0,\dots,n-1\},k' \in \{0,\dots,n-(k+2)\}$:
\begin{align*}
  \|P_{(n-(k+k'+2))h_1}[P_{h_1}-Q_1] Q_1^{[k']}[P_{h_1}-Q_1] Q_1^{[k]} f\|_{0,L+6} & \le C\|[P_{h_1}-Q_1] Q_1^{[k']}[P_{h_1}-Q_1]  Q_1^{[k]}f\|_{0,L+6} \\
                                                                                   & \le C h_1^3\| Q_1^{[k']}[P_{h_1}-Q_1] Q_1^{[k]}f\|_{6,L+3}         \\
                                                                                   & \le  C h_1^3\|[P_{h_1}-Q_1] Q_1^{[k]}f \|_{6,L+3}                  \\
                                                                                   & \le Ch_1^6\|Q_1^{[k]}f \|_{18,L} \le Ch_1^6\|f \|_{18,L}.
\end{align*}
For the other term, we write $P_{h_1}-Q_2^{[n]}=\sum_{k'=0}^{n-1}P_{(n-(k'+1))h_2}[P_{h_2}-Q_2]Q_2^{[k']}$ and get for $k,k'\in \{0,\dots,n-1\}$ by using  Proposition~\ref{prop_H2_NV}, Proposition~\ref{H1_bar_mL} and Lemma~\ref{lem_estimnorm}:
\begin{align*}
  \|Q_1^{[n-(k+1)]}P_{(n-(k'+1))h_2}[P_{h_2}-Q_2]Q_2^{[k']}Q_1^{[k]}f\|_{0,L+6} & \le C\|[P_{h_2}-Q_2]Q_2^{[k']}Q_1^{[k]}f\|_{0,L+6}   \\
                                                                                & \le C h_2^3 \|Q_2^{[k']}Q_1^{[k]}f\|_{6,L+3}         \\
                                                                                & \le  C h_2^3 \|f\|_{6,L+3}\le  C h_2^3 \|f\|_{18,L}.
\end{align*}
This gives
$$\|P_Tf-\cPh^{2,n}f\|_{0,L+6}\le C \|f\|_{18,L} n^2(h_1^6+h_2^3) \le C \|f\|_{18,L} n^{-4},$$
and in particular $P_Tf(x)-\cPh^{2,n}f(x)=O(n^{-4})$ for any $x\ge 0$.

We now consider $f\in \mathcal{C}^\infty$ with derivatives of polynomial growth. Therefore, for any $m\in \N$, it exists $L \ge m$ sufficiently large, such that $f\in  \CpolKL{m}{L}$. We can then apply \cite[Theorem~3.10]{AB} to get that for some functions ${\bf m},\ell:\N^*\to \N^*$, we have $\|P_Tf-\cPh^{\nu,n}f\|_{0,L+\ell(\nu)}\le C\|f\|_{{\bf m}(\nu),L}n^{-2\nu}$ for $L\ge {\bf m}(\nu)$, which gives the claim.


\section{Simulations results}\label{Simulations}
In order to present some numerical test, we  first explain how to implement the approximations $\cPh^{2,n}$ and $\cPh^{3,n}$ defined respectively by~\eqref{def_P_2} and~\eqref{def_P_3} (let us recall here that $\cPh^{1,n}$ is the approximation obtained on the regular time grid $\Pi^0 = \{kT/n, 0 \leq k \leq n\}$). We consider a general case of a scheme that can be written as a function of the starting point, the time step, the Brownian increment and an independent random variable, i.e.
$$Q_lf(x)=\E[\varphi(x,h_l,W_{h_l},V)].$$
The second order scheme for the CIR~\eqref{NV_scheme}  falls into this framework as well as the second order scheme for the Heston model~\eqref{H2S} that we introduce below. As illustrated in \cite{AB} the approximation~$\cPh^{2,n}$ is the simplest case for the implementation. It consists in the simulation of two starting schemes on the uniform time grid $\Pi^0$ and on the random grid : $\Pi^1 = \Pi^0 \cup  \{ \kappa T/n + k'T/n^2 , 1 \leq k' \leq n-1 \}$, where $\kappa$ is an independent uniform random variable on $\{0,\ldots,n-1\}$. We denote by $\hat{X}^{n,0}$ the scheme on $\Pi^0$
\begin{align}
  \hat{X}^{n,0}_0          & = x, \notag                                                                                \\
  \hat{X}^{n,0}_{(k+1)h_1} & = \varphi(\hat{X}^{n,0}_{kh_1}, h_1, W_{(k+1)h_1} - W_{kh_1}, V_k ),\quad 0\leq k\leq n-1, \label{Scheme0}
\end{align}
and by  $\hat{X}^{n,1}$ the scheme on $\Pi^1$:
\begin{align*}
  \hat{X}^{n,1}_{kh_1}                 & = \hat{X}^{n,0}_{kh_1},                                                                            & \quad   0\leq k \leq \kappa,   \\
  \hat{X}^{n,1}_{\kappa h_1+(k'+1)h_2} & = \varphi(\hat{X}^{n,1}_{\kappa h_1+k'h_2}, h_2, W_{\kappa h_1+(k'+1)h_2} - W_{\kappa h_1+k'h_2},V_{n+k'}), & \quad 0 \leq k' \leq n-1,      \\
  \hat{X}^{n,1}_{(k+1)h_1}             & = \varphi(\hat{X}^{n,1}_{kh_1}, h_1, W_{(k+1)h_1} - W_{kh_1},V_k),                                     & \quad \kappa+1 \leq k\leq n-1.
\end{align*}
Here, $(V_k)_{k\ge 0}$ is an i.i.d. sequence with the same law as $V$. Finally, we can give the following probabilistic representation
\begin{align}
  \cPh^{2,n}f & = Q^{[n]}_1f+n\E[Q_1^{[n-(\kappa+1)]}[Q_2^{[n]}-Q_1]Q_1^{[\kappa]}f] \nonumber            \\
           & =\E[f(\hat{X}^{n,0}_T)] + n\E[f(\hat{X}^{n,1}_T) - f(\hat{X}^{n,0}_T)]. \label{P2n_indep} 
\end{align}
Let us stress here that it is crucial for the Monte-Carlo method to use the same underlying Brownian motion for $\hat{X}^{n,0}$ and $\hat{X}^{n,1}$. Thus, the variance of $n\left(f(\hat{X}^{n,1}_T) - f(\hat{X}^{n,0}_T)\right)$ is quite moderate. It is shown in~\cite[Appendix A]{AB} that this variance is bounded when using the Euler scheme for an SDE with smooth coefficients. The theoretical analysis of the variance in our framework is beyond the scope of the paper. We only check numerically how it evolves with respect to~$n$ on our experiments, see Table~\ref{Table_VAR_CIR1} below.

The approximation $\cPh^{3,n}$ is more involved. Let $\kappa'$ be an independent uniform random variable on $\{0,\ldots,n-1\}$. We 
define the scheme  $\hat{X}^{n,2}$:
\begin{align*}
  \hat{X}^{n,2}_{kh_1}                 & = \hat{X}^{n,1}_{kh_1}, \quad \hat{X}^{n,2}_{\kappa h_1+k' h_2} = \hat{X}^{n,1}_{\kappa h_1+k'h_2},                                                                                0\leq k \leq \kappa,\ 0\leq k' \leq \kappa',   \\
  \hat{X}^{n,2}_{\kappa h_1+\kappa' h_2+(k''+1) h_3} & = \varphi(\hat{X}^{n,2}_{\kappa h_1+\kappa' h_2+k'' h_3}, h_3, W_{\kappa h_1+\kappa'h_2 +(k''+1)h_3} - W_{\kappa h_1+\kappa'h_2 +k''h_3} ,V_{2n+k''}),   \\
  &\phantom{= \varphi(\hat{X}^{n,2}_{\kappa h_1+\kappa' h_2+k'' h_3}, h_3, W_{\kappa h_1+\kappa'h_2 +(k''+1)h_3}) ********}0 \leq k'' \leq n-1,      \\
  \hat{X}^{n,2}_{\kappa h_1 +(k'+1)h_2}             & = \varphi(\hat{X}^{n,2}_{\kappa h_1 +k' h_2}, h_2, W_{\kappa h_1+(k'+1)h_2} - W_{\kappa h_1+k'h_2},V_{n+k'}),                                    \,  \kappa+1 \leq k'\leq n-1.\\
  \hat{X}^{n,2}_{(k+1)h_1}             & = \varphi(\hat{X}^{n,2}_{kh_1}, h_1, W_{(k+1)h_1} - W_{kh_1},V_k),                                       \kappa+1 \leq k\leq n-1.
\end{align*}
This is the scheme obtained on the time grid $\Pi^1 \cup  \{ \kappa T/n + \kappa' T/n^2 + k''T/n^3 , 1 \leq k'' \leq n-1 \}$.
We have $$\sum_{k=0}^{n-1}Q_1^{[n-(k+1)]}\left[\sum_{k'=0}^{n-1}Q_2^{[n-(k'+1)]}[Q_3^{[n]}-Q_2]Q_2^{[k']} \right] Q_1^{[k]}f =n^2\E[f(\hat{X}^{n,2}_T) - f(\hat{X}^{n,1}_T)].$$
We now explain how to calculate the second term in~\eqref{def_P_3}. Let $(\kappa_1,\kappa_2)$ be an independent random variable uniformly distributed on the set $\{(k_1,k_2):0\le k_1<k_2<n\}$. We define: 
\begin{align*}
  \hat{X}^{n,3}_{kh_1}                 & = \hat{X}^{n,0}_{kh_1},                                                                            & \quad   0\leq k \leq \kappa_1,   \\
  \hat{X}^{n,3}_{\kappa_1 h_1+(k'+1)h_2} & = \varphi(\hat{X}^{n,3}_{\kappa_1 h_1+k'h_2}, h_2, W_{\kappa h_1+(k'+1)h_2} - W_{\kappa h_1+k'h_2},V_{3n+k'}), & \quad 0 \leq k' \leq n-1,      \\
  \hat{X}^{n,3}_{(k+1)h_1}             & = \varphi(\hat{X}^{n,3}_{kh_1}, h_1, W_{(k+1)h_1} - W_{kh_1},V_k),                                     & \quad \kappa_1+1 \leq k\leq n-1,
\end{align*}
\begin{align*}
  \hat{X}^{n,4}_{kh_1}                 & = \hat{X}^{n,0}_{kh_1},                                                                            & \quad   0\leq k \leq \kappa_2,   \\
  \hat{X}^{n,4}_{\kappa_2 h_1+(k'+1)h_2} & = \varphi(\hat{X}^{n,4}_{\kappa_2 h_1+k'h_2}, h_2, W_{\kappa_2 h_1+(k'+1)h_2} - W_{\kappa_2 h_1+k'h_2},V_{4n+k'}), & \quad 0 \leq k' \leq n-1,      \\
  \hat{X}^{n,4}_{(k+1)h_1}             & = \varphi(\hat{X}^{n,4}_{kh_1}, h_1, W_{(k+1)h_1} - W_{kh_1},V_k),                                     & \quad \kappa_2+1 \leq k\leq n-1,
\end{align*}
and
\begin{align*}
  \hat{X}^{n,5}_{kh_1}                 & = \hat{X}^{n,3}_{kh_1},                                                                            & \quad   0\leq k \leq \kappa_2,   \\
  \hat{X}^{n,5}_{\kappa_2 h_1+(k'+1)h_2} & = \varphi(\hat{X}^{n,5}_{\kappa_2 h_1+k'h_2}, h_2, W_{\kappa_2 h_1+(k'+1)h_2} - W_{\kappa_2 h_1+k'h_2},V_{4n+k'}), & \quad 0 \leq k' \leq n-1,      \\
  \hat{X}^{n,5}_{(k+1)h_1}             & = \varphi(\hat{X}^{n,5}_{kh_1}, h_1, W_{(k+1)h_1} - W_{kh_1},V_k),                                     & \quad \kappa_2+1 \leq k\leq n-1,
\end{align*}
These schemes correspond respectively to the time grids $\Pi^0 \cup  \{ \kappa_1 T/n + k'T/n^2 , 1 \leq k' \leq n-1 \}$, $\Pi^0 \cup  \{ \kappa_2 T/n + k'T/n^2 , 1 \leq k' \leq n-1 \}$ and $\Pi^0 \cup  \{ \kappa_1 T/n + k'T/n^2 , 1 \leq k' \leq n-1 \}\cup  \{ \kappa_2 T/n + k'T/n^2 , 1 \leq k' \leq n-1 \}$. We then get \begin{align}
  \cPh^{3,n}f  =& \E[f(\hat{X}^{n,0}_T)] + n\E[f(\hat{X}^{n,1}_T) - f(\hat{X}^{n,0}_T)] +n^2 \E[f(\hat{X}^{n,2}_T) - f(\hat{X}^{n,1}_T)] \label{P3n} \\
  &+\frac{n(n-1)}{2}\E[f(\hat{X}^{n,5}_T) - f(\hat{X}^{n,4}_T)- f(\hat{X}^{n,3}_T)+ f(\hat{X}^{n,0}_T)]. \notag 
\end{align}

\subsection{Simulations result for the CIR process}\label{Sim_CIR}
In this subsection, we want to illustrate the convergence of the approximations $\cPh^{2,n}$ and $\cPh^{3,n}$, which together with the use of the second order scheme \eqref{NV_scheme} guarantee respectively approximations of order four and six by Theorem~\ref{thm_main}. In order to calculate these approximations, we use Monte-Carlo estimators of~\eqref{P2n_indep} and~\eqref{P3n}, using independent samples for each expectation. The number of samples (up to $10^{11}$)  is such that we can neglect the statistical error. In Figures \ref{CIR_Plot1}, \ref{CIR_Plot2} and \ref{CIR_Plot3} we plot the convergence in function of the time step for different parameters choices, taking advantage of the closed formula for the Laplace transform of the CIR process, see e.g.~\cite[Proposition 1.2.4]{AA_book}. The three numerical experiments test different levels of the ratio $\sigma^2 / 4a$ in decreasing order. We observe that the slopes estimated on the log-log plots are close to 2, 4 and 6 respectively, so that they are in accordance with Theorem~\ref{thm_main}. Note however that Theorem~\ref{thm_main} gives an asymptotic result for $n\to \infty$, while we are restricted here to rather small values of $n$ since we are using a large number of samples to kill the statistical error.  
In all the cases shown, the approximations of higher order outperform the one built with the simple second order scheme~\eqref{NV_scheme}. Talking about accuracies, the fourth order approximation for $n=3$ shows an absolute relative error of about $0.17\%$ in the tests in Figures \ref{CIR_Plot1}, and \ref{CIR_Plot2} and $0.02\%$ in the one in Figure \ref{CIR_Plot3}; the sixth order approximation  already for $n=3$ exhibits a relative error of $0.002\%$ in each case studied.

\begin{figure}[h]
  \centering
  \begin{subfigure}[h]{0.49\textwidth}
    \centering
    \includegraphics[width=\textwidth]{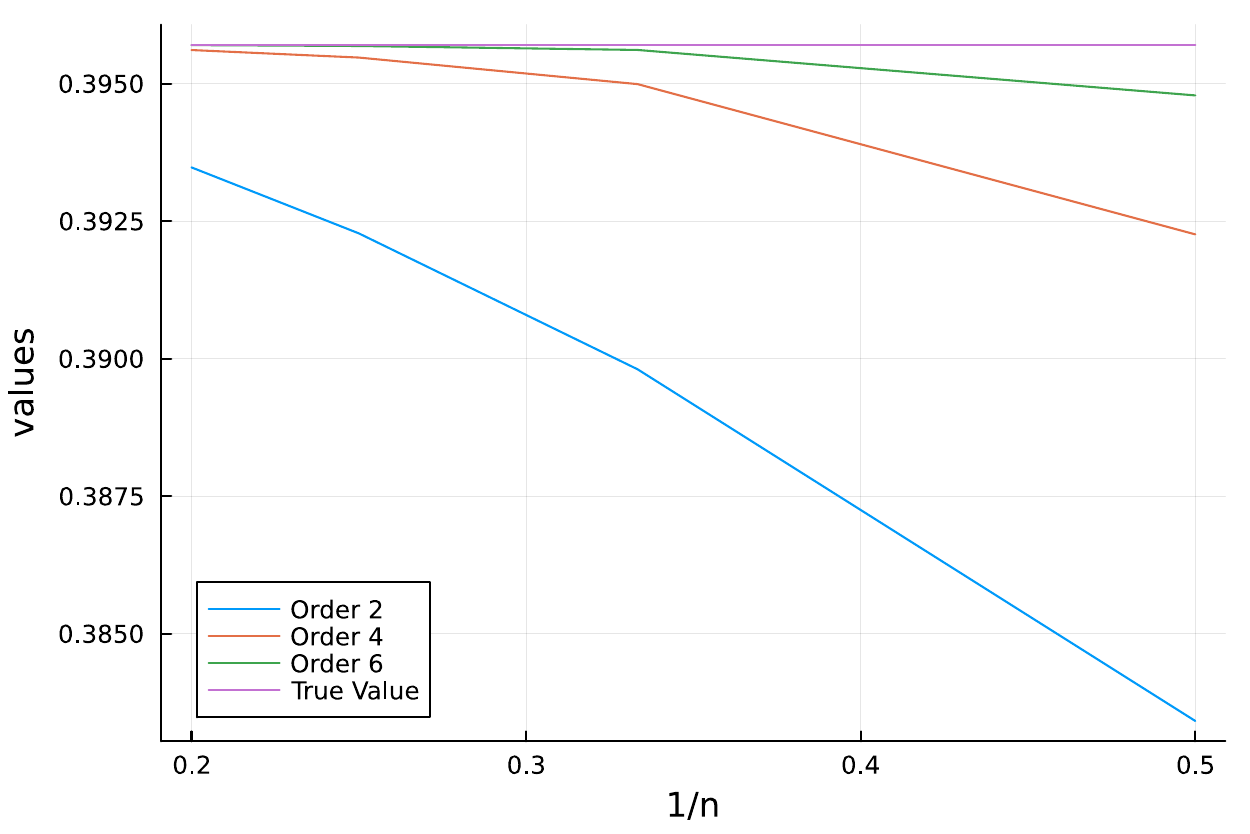}
    \caption{Values plot}
    \label{fig:values_plot13}
  \end{subfigure}
  \hfill
  \begin{subfigure}[h]{0.49\textwidth}
    \centering
    \includegraphics[width=\textwidth]{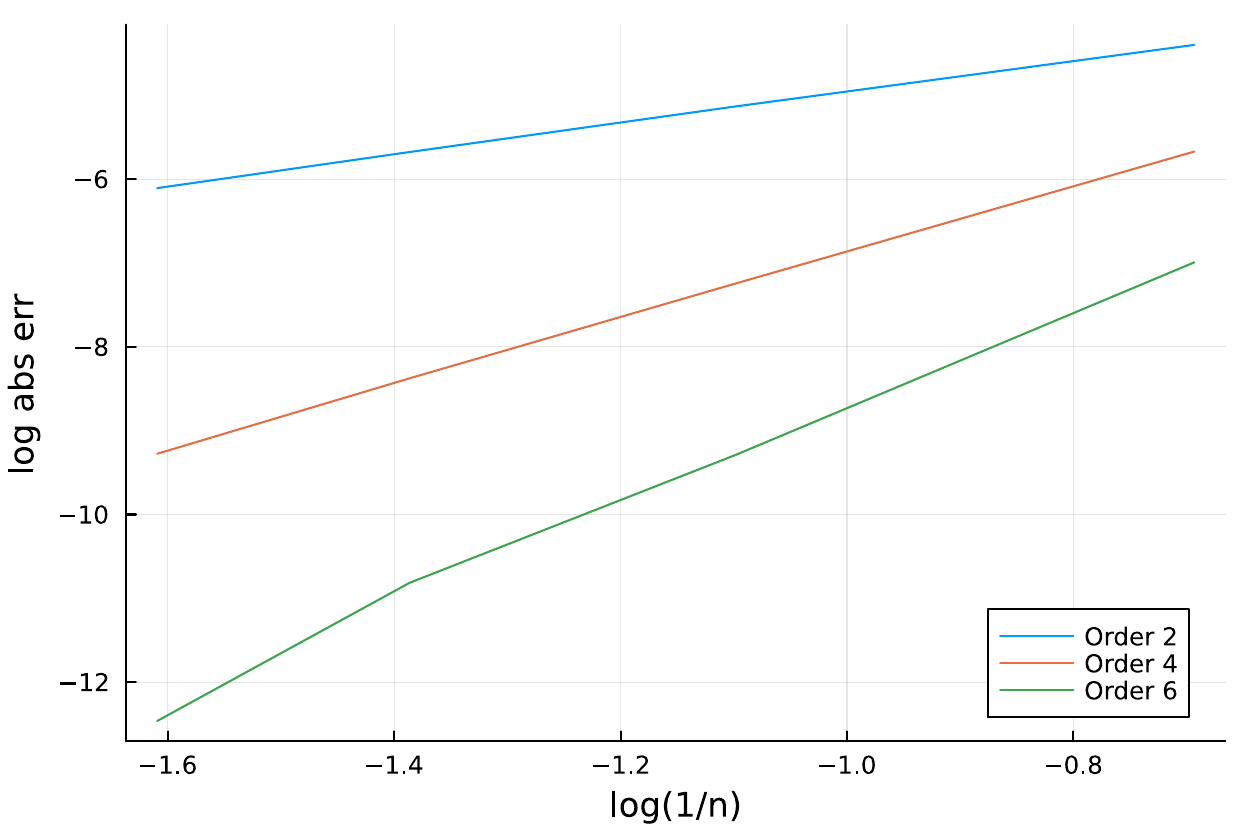}
    \caption{Log-log plot}
    \label{fig:log-log_plot13}
  \end{subfigure}
  \caption{Parameters: $x=0.0$, $a=0.2$, $k=0.5$, $\sigma=0.65$, $f(z)=\exp(-10 z)$ and $T=1$ ($\frac{\sigma^2}{2a}\approx 1.06$). Graphic~({\sc a}) shows the values of $\cPh^{1,n}f$, $\cPh^{2,n}f$, $\cPh^{3,n}f$ as a function of the time step $1/n$  and the exact value. Graphic~({\sc b}) draws $\log(|\hat{P}^{i,n}f-P_Tf|)$ in function of $\log(1/n)$: the regressed slopes are 1.86, 3.93 and 5.87 for the second, fourth and sixth order respectively.}\label{CIR_Plot1}
\end{figure}

\begin{figure}[h!]
  \centering
  \begin{subfigure}[h]{0.49\textwidth}
    \centering
    \includegraphics[width=\textwidth]{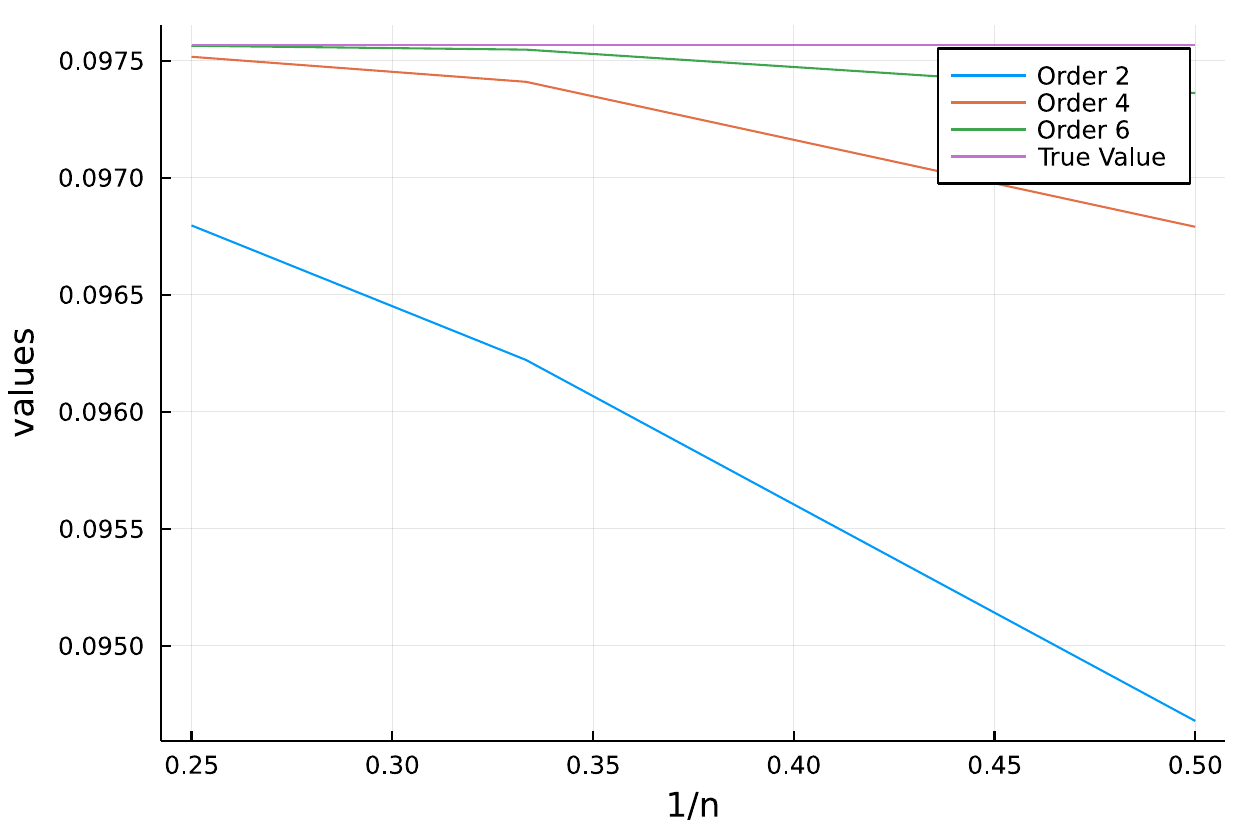}
    \caption{Values plot}
    \label{fig:values_plot9}
  \end{subfigure}
  \hfill
  \begin{subfigure}[h]{0.49\textwidth}
    \centering
    \includegraphics[width=\textwidth]{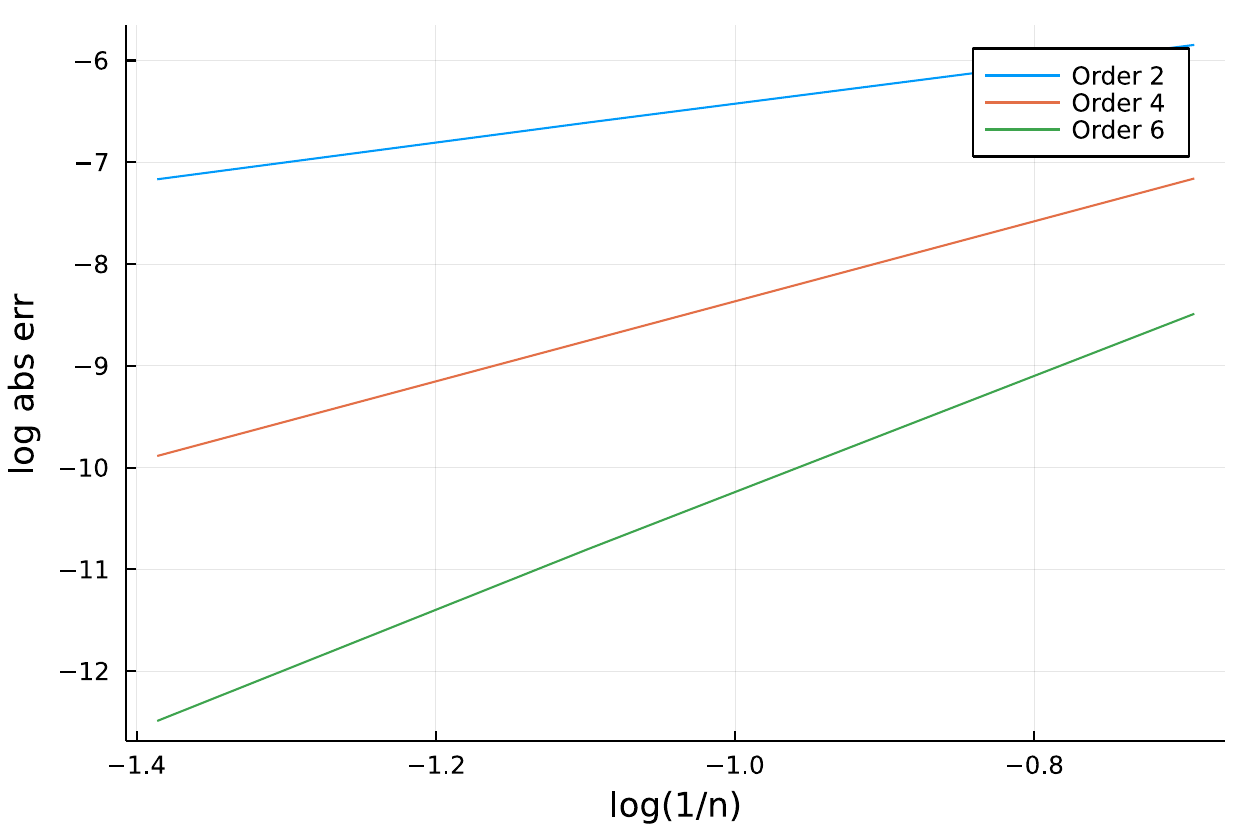}
    \caption{Log-log plot}
    \label{fig:log-log_plot9}
  \end{subfigure}
  \caption{Parameters: $x=0.3$, $a=0.4$, $k=1$, $\sigma=0.4$, $f(z)=\exp(-8 z)$ and $T=1$ ($\frac{\sigma^2}{2a}= 0.2$). Graphic~({\sc a}) shows the values of $\cPh^{1,n}f$, $\cPh^{2,n}f$, $\cPh^{3,n}f$ as a function of the time step $1/n$  and the exact value. Graphic~({\sc b}) draws $\log(|\hat{P}^{i,n}f-P_Tf|)$ in function of $\log(1/n)$:  the regressed slopes are 1.90, 3.93 and 5.77 for the second, fourth and sixth order respectively.}\label{CIR_Plot2}
\end{figure}

\begin{figure}[h]
  \centering
  \begin{subfigure}[h]{0.49\textwidth}
    \centering
    \includegraphics[width=\textwidth]{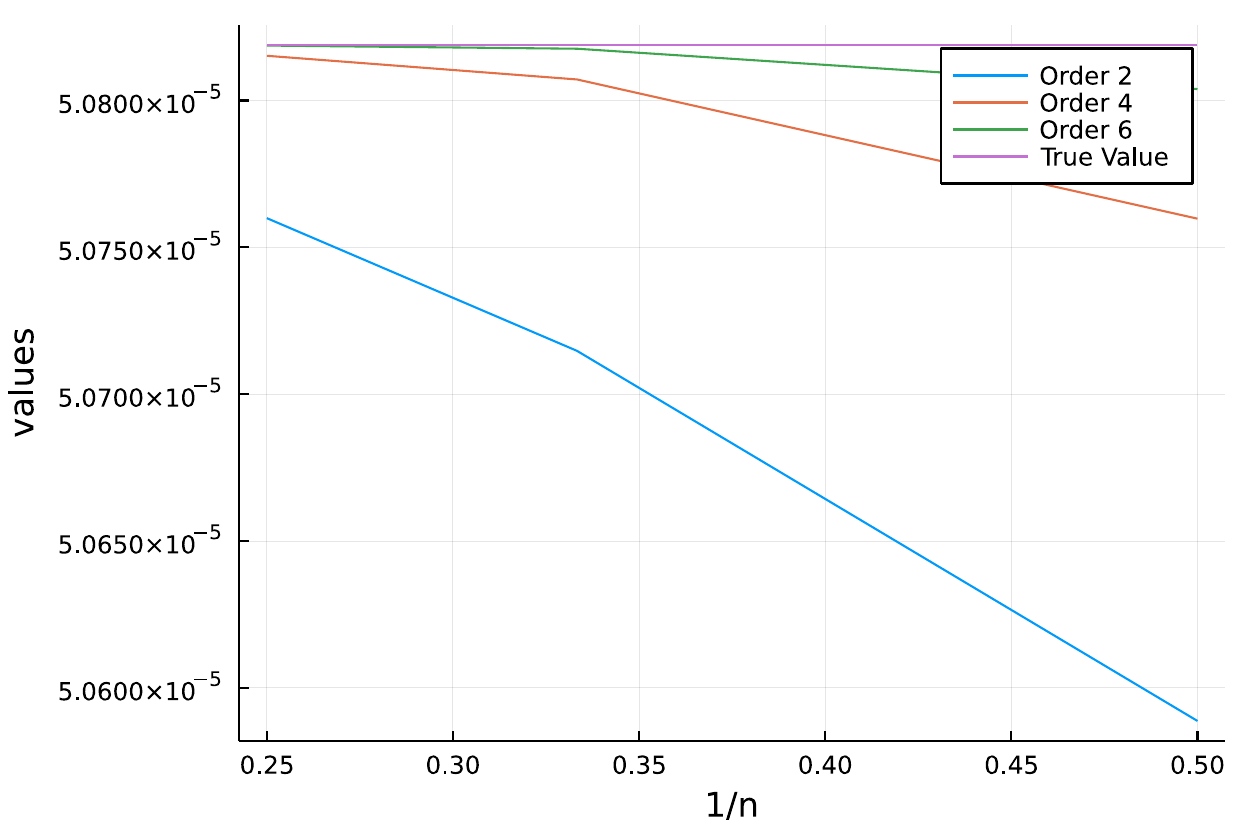}
    \caption{Values plot}
    \label{fig:values_plot15}
  \end{subfigure}
  \hfill
  \begin{subfigure}[h]{0.49\textwidth}
    \centering
    \includegraphics[width=\textwidth]{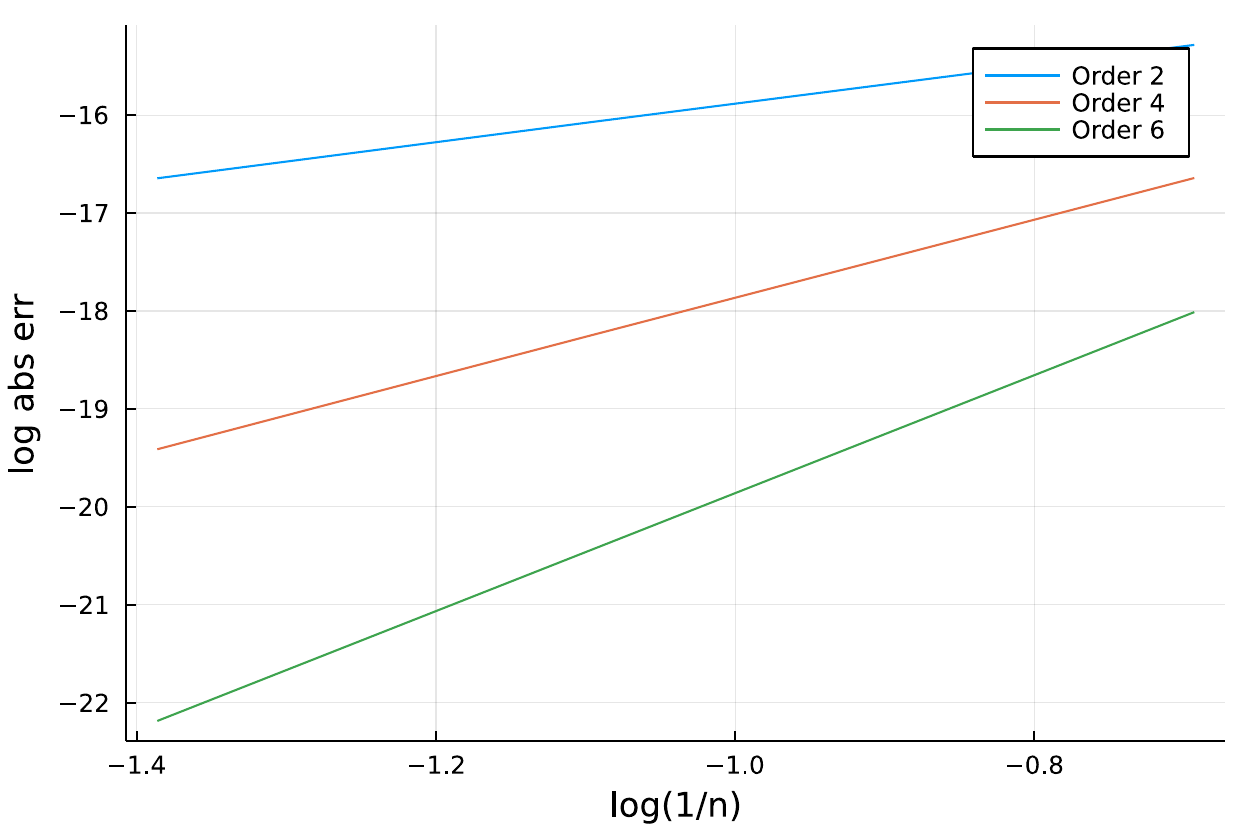}
    \caption{Log-log plot}
    \label{fig:log-log_plot15}
  \end{subfigure}
  \caption{Parameters: $x=10$, $a=10$, $k=1$, $\sigma=0.23$, $f(z)=\exp(- z)$ and $T=1$ ($\frac{\sigma^2}{2a}\approx 0.0026$).
  Graphic~({\sc a}) shows the values of $\cPh^{1,n}f$, $\cPh^{2,n}f$, $\cPh^{3,n}f$ as a function of the time step $1/n$  and the exact value. Graphic~({\sc b}) draws $\log(|\hat{P}^{i,n}f-P_Tf|)$ in function of $\log(1/n)$:  the regressed slopes are 1.96, 4.00 and 6.02 for the second, fourth and sixth order respectively.}\label{CIR_Plot3}
\end{figure}

\subsection{Simulations result for the Heston model}\label{Sim_Heston}

In this subsection, we want to test the second order scheme for the Heston model proposed by Alfonsi in~\cite{AA_MCOM} along with the approximations of order 4 and 6 obtained with combination of random grids. 
First, we recall the couple of stochastic differential equations describing this model
\begin{equation}\label{Heston SDEs}
  \begin{cases}
    dS^{(x,s)}_t = rS^{(x,s)}_t dt + \sqrt{X_t}S^{(x,s)}_t (\rho dW_t + \sqrt{1-\rho^2} dZ_t), \ S^{(x,s)}_0=s,\\
    dX^x_t = (a-kX^x_t) dt +\sigma \sqrt{X^x_t} dW_t, \ X^x_0=x,
  \end{cases}
\end{equation}
where $W$ and $Z$ are two independent Brownian motions. We define the two following random variables
\begin{align*}
  S_1\big((x,s),h,Z_h \big) & = \left(x, s\exp\Big(\sqrt{x(1-\rho^2)}Z_h\Big)                            \right)                                                                               \\
  S_2\big((x,s),h,W_h \big) & =\bigg( \varphi(x,h,W_h),\\& s\exp\left((r-\frac{\rho}{\sigma}a)h + (\frac{\rho}{\sigma}k-\frac{1}{2})\frac{x+\varphi(x,h,W_h)}{2}h + \frac{\rho}{\sigma}(\varphi(x,h,W_h)-x)\right)\bigg)
\end{align*}
where $\varphi$ is defined by~\eqref{def_varphi} anf corresponds to the second order scheme for the CIR process.
We define as in \cite{AA_MCOM} the second order scheme for~\eqref{Heston SDEs} as follows
\begin{equation}\label{H2S}
  \Phi\big((x,s),h,(W_h,Z_h),B\big) = \begin{cases} S_2\left(S_1\big((x,s),h,Z_h \big),h,W_h\right), \text{ if } B=1, \\
    S_1\left(S_2\big((x,s),h,W_h \big),h,Z_h\right), \text{ if } B=0,\end{cases} 
\end{equation}
where $B$ is an independent Bernoulli random variable of parameter 1/2.

To test the order of the approximations $\cPh^{2,n}$ and $\cPh^{3,n}$ boosting the second order scheme~\eqref{H2S}, we have calculated European put prices taking advantage of the existence of a semi closed formula for this option, see~\cite{Heston}. In Figure~\ref{Heston_orders} we draw the convergence in function of the time step. Again, we noticed that the slopes obtained on the log-log plot are in line with the expected order of convergence. 
More importantly, we see that the correction terms of the approximations $\cPh^{2,n}$ and $\cPh^{3,n}$ really improves the precision. They respectively give relative errors of a 0.035\% and 0.0023\%, already for $n=3$.

\begin{figure}[h!]
  \centering
  \begin{subfigure}[h]{0.49\textwidth}
    \centering
    \includegraphics[width=\textwidth]{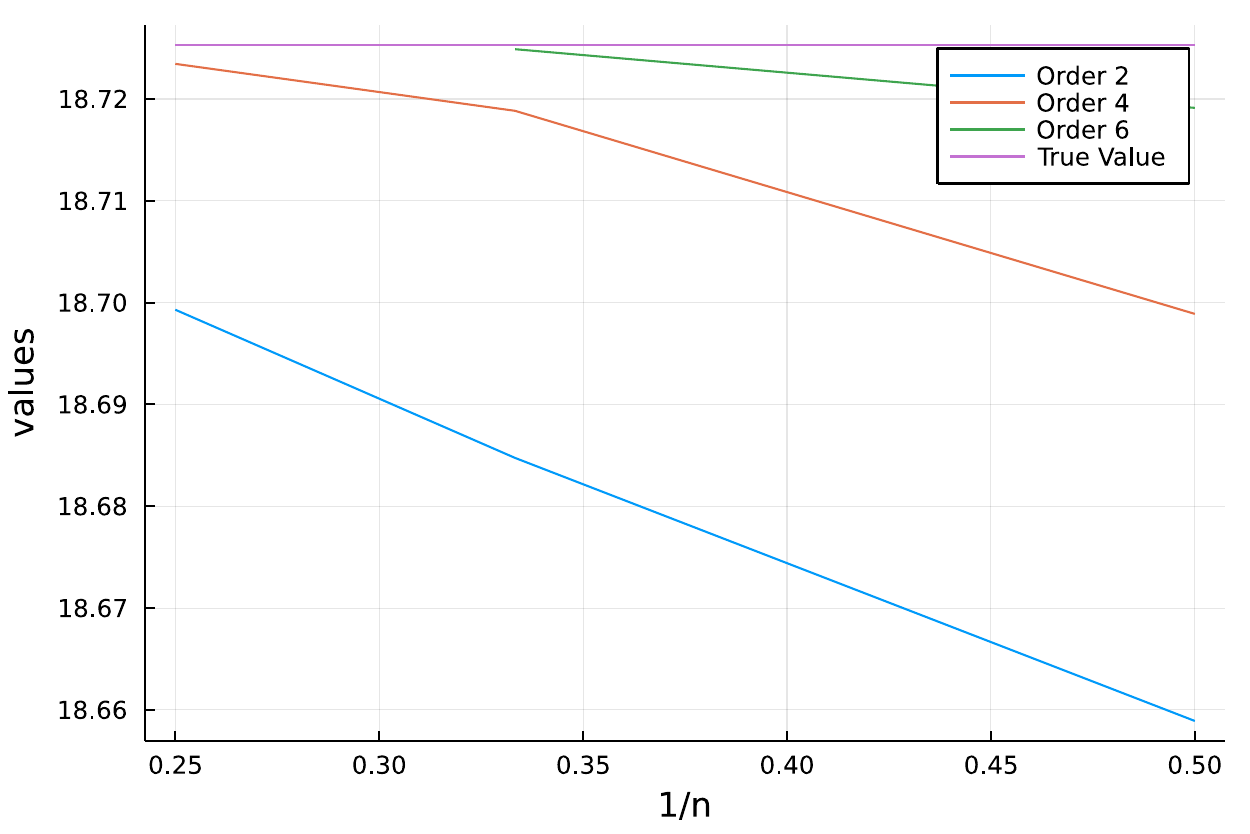}
    \caption{Values plot}
    \label{fig:values_plot_heston2}
  \end{subfigure}
  \hfill
  \begin{subfigure}[h]{0.49\textwidth}
    \centering
    \includegraphics[width=\textwidth]{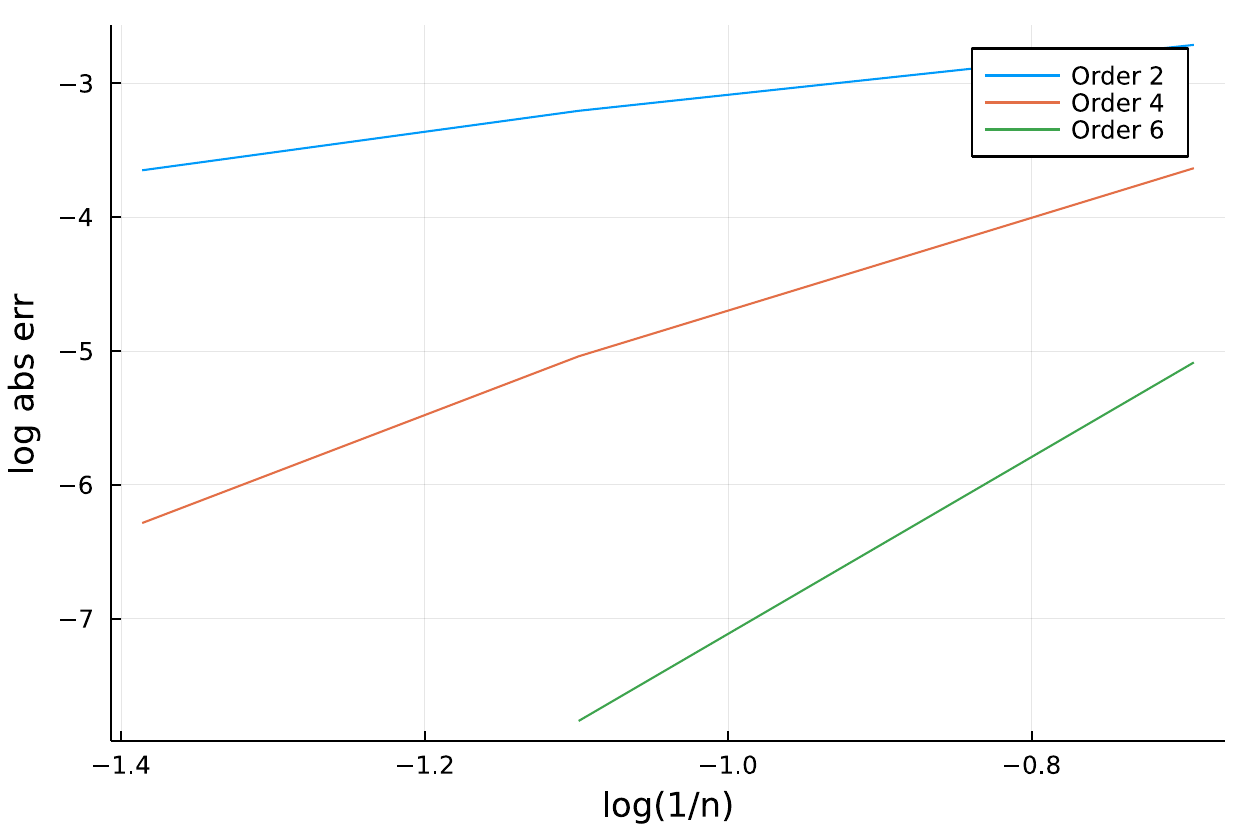}
    \caption{Log-log plot}
    \label{fig:log-log_plot_heston2}
  \end{subfigure}
  \caption{Test function: $f(x,s)=(K-s)^+$. Parameters: $S_0=100$, $r=0$, $x=0.25$, $a=0.25$, $k=1$, $\sigma=0.65$, $\rho=-0.3$, $T=1$, $K=100$ ($\frac{\sigma^2}{2a}= 0.845$).
  Graphic~({\sc a}) shows the values of $\cPh^{1,n}f$, $\cPh^{2,n}f$, $\cPh^{3,n}f$ as a function of the time step $1/n$  and the exact value. Graphic~({\sc b}) draws $\log(|\hat{P}^{i,n}f-P_Tf|)$ in function of $\log(1/n)$:  the regressed slopes are 1.34, 4.00 and 6.02 for the second, fourth and sixth order respectively.}\label{Heston_orders}
\end{figure}


\subsection{Optimized implementation of~$\cPh^{2,n}$}\label{Optim_time/var}

The approximations $\cPh^{2,n}$ and $\cPh^{3,n}$ defined respectively by~\eqref{P2n_indep} and~\eqref{P3n} involve respectively two and four expectations. The larger is $\nu$ the more  expectations are involved in $\cPh^{\nu,n}$. Thus,  for simplicity, independent samples were used  by Alfonsi and Bally~\cite{AB} to compute each term. However,  it may be interesting to reuse some samples in order to spare computation time. This is what we investigate in this subsection. 

Namely, Equation \eqref{P2n_indep} leads naturally to the  two following estimators  of $P^{2,n}f$:
\begin{align}
  \Theta_{\mathrm{I}}(M_1,M_2,n) &= \frac{1}{M_1} \sum_{j=1}^{M_1} f\big((\hat{X}^{n,0}_T)^{(j)} \big) +  \frac{1}{M_2} \sum_{i=M_1+1}^{M_1+M_2} n\left(f\big((\hat{X}^{n,1}_T)^{(i)}\big) -   f\big((\hat{X}^{n,0}_T)^{(i)}\big) \right), \label{Theta_i}\\
  \Theta_\mathrm{D}(M_1,M_2,n) &= \frac{1}{M_1} \sum_{j=1}^{M_1} f\big((\hat{X}^{n,0}_T)^{(j)} \big) +  \frac{1}{M_2} \sum_{i=1}^{M_2} n\left(f\big((\hat{X}^{n,1}_T)^{(i)}\big) -   f\big((\hat{X}^{n,0}_T)^{(i)}\big) \right).\label{Theta_d}
\end{align}
The first one takes  independent samples, and we call this estimator $\Theta_\mathrm{I}$. This approach is the one used in~\cite{AB}. In the second case, we reuse the first $M_1\wedge M_2$ simulations of $f\big((\hat{X}^{n,0}_T)^{(i)}\big)$ in both sums. We call this estimator $\Theta_\mathrm{D}$ to indicate the dependence between samples. In terms of variance, we have
\begin{align}
   & \Var\left(\Theta_\mathrm{I}(M_1,M_2,n)\right) = \frac{\Var\big(f(\hat{X}^{n,0}_T)\big)}{M_1} + \frac{\Var\big(n(f(\hat{X}^{n,1}_T) - f(\hat{X}^{n,0}_T))\big)}{M_2}, \\
   & \begin{multlined}
    \Var\left(\Theta_\mathrm{D}(M_1,M_2,n)\right) = \frac{\Var\big(f(\hat{X}^{n,0}_T)\big) }{M_1}+ 2 \frac{\Cov\big(f(\hat{X}^{n,0}_T),n(f(\hat{X}^{n,1}_T) - f(\hat{X}^{n,0}_T))\big) }{M_1\vee M_2} \\
    + \frac{\Var\big(n(f(\hat{X}^{n,1}_T) - f(\hat{X}^{n,0}_T))\big)}{M_2}.
  \end{multlined}\label{var_thetad}
\end{align}
Let us define $t_1$ as the time to generate one sample $f(\hat{X}^{n,0}_T)$ and $t_2$ as the one needed for one sample of the correction $n(f(\hat{X}^{n,1}_T) - f(\hat{X}^{n,0}_T))$. The computation time needed to compute $\Theta_\mathrm{I}$ is given by $g_i(M_1,M_2)=M_1t_1+M_2t_2$, while the one needed to compute $\Theta_\mathrm{D}$ is $g_d(M_1,M_2)=\mathbf{1}_{M_1\ge M_2} [(M_1-M_2)t_1+M_2t_2] +\mathbf{1}_{M_1< M_2}M_2t_2$. We note $\zeta=\frac{t_2}{t_1}$. From the definition of schemes $\hat{X}^{n,0}$ and $\hat{X}^{n,1}$ in~\eqref{Scheme0}, we observe that  $2\le \zeta \le 3 $ and that $\zeta \approx 2.5$ in average since these schemes are equal up to $\kappa h_1$.  
The advantage of $\Theta_d$ is not necessarily in reducing the variance, but in decreasing the number of simulations needed, making it more efficient from a computational time point of view.\\

We want to find the optimal numbers of simulations $M_1$ and $M_2$ for our estimators in order to minimize the execution time for a given variance $\varepsilon^2$. Let us define 
$\sigma^2_2(n) = \Var\big(f(\hat{X}^{n,0}_T)\big)$, $\sigma^2_{4}(n) = \Var\big(n(f(\hat{X}^{n,1}_T) - f(\hat{X}^{n,0}_T))\big)$, $\Gamma(n)=\Cov\big(f(\hat{X}^{n,0}_T), n(f(\hat{X}^{n,1}_T) - f(\hat{X}^{n,0}_T))\big)$. For~$\Theta_\mathrm{I}$, the minimization of $g_i$ given that $\sigma^2_2(n)/M_1+\sigma^4_2(n)/M_2=\varepsilon^2$ leads to $M_1=\sqrt{\zeta}\frac{\sigma_2(n)}{\sigma_4(n)}M_2$ and then to:
\begin{equation}\label{optim_i}
M_{1,\mathrm{I}}=\left\lceil \frac{1}{\varepsilon^2}\left( \sigma^2_2(n) +\sqrt{\zeta} \sigma_2(n)\sigma_4(n)   \right)  \right\rceil, \ M_{2,\mathrm{I}}= \left\lceil \frac{1}{\varepsilon^2}\left( \sigma^2_4(n) +\frac{\sigma_2(n)\sigma_4(n)}{\sqrt{\zeta}}   \right)  \right\rceil.
\end{equation}
To minimize the execution time $g_d$, one has first to decide whether we take $M_1\ge M_2$ or $M_1<M_2$. From~\eqref{var_thetad}, this amounts to compare $\frac{\sigma^2_2(n)+2\Gamma(n)}{m+\tilde{m}\zeta}$ with $\frac{\sigma^2_4(n)+2\Gamma(n)}{m+\tilde{m}}$ where $m=M_1\wedge M_2$ and $\tilde{m}\ge 0$ ($\tilde{m}$ simulations of the correction term takes the same time as $\zeta \tilde{m}$ simulations of~$f(\hat{X}^{n,0}_T)$). Taking the derivative at~$\tilde{m}=0$, we get that $M_1\ge M_2$ if $\zeta\frac{\sigma^2_2(n)+2\Gamma(n)}{\sigma^2_4(n)+2\Gamma(n)}\ge 1$, and $M_1<M_2$ otherwise. When $M_1\ge M_2$, the minimization of $g_d$ given $ \Var\left(\Theta_d(M_1,M_2,n)\right) =\varepsilon^2$ leads to
\begin{align}
 \begin{cases} 
	M_{1,\mathrm{D}}  &=\left\lceil \frac{1}{\varepsilon^2}\left( \sigma^2_2(n) + 2\Gamma(n) +\sqrt{\big(\sigma^2_2(n)+2\Gamma(n)\big)\sigma^2_4(n)(\zeta-1) }  \right)  \right\rceil, \\ 
	M_{2,\mathrm{D}} & =\left\lceil \frac{1}{\varepsilon^2}\left( \sigma^2_4(n) +\sqrt{\frac{\big(\sigma^2_2(n)+2\Gamma(n)\big)\sigma^2_4(n)}{\zeta-1} }  \right)  \right\rceil.\end{cases} \label{optim_d}
\end{align}
We have similar formulas when $M_1<M_2$. In all our numerical experiments below, we are in the case where  $\zeta\frac{\sigma^2_2(n)+2\Gamma(n)}{\sigma^2_4(n)+2\Gamma(n)}\ge 1$ and thus taking $M_1\ge M_2$ is optimal.

Now, we show the performance of the two estimators~\eqref{Theta_i} and \eqref{Theta_d}. To do this, we calculate the empirical variances $\sigma_2^2(n)$, $\sigma_4^2(n)$ and the  empirical covariance $\Gamma(n)$ on a small sampling, fix a desired precision $\varepsilon=1.96\sqrt{\Var(\Theta(M_1,M_2,n))}$ for both the estimators, so that all the terms have roughly the same statistical error with a 95\% confidence interval half-width equal to $\varepsilon$. We show two tables in which we set the precision $\varepsilon$ to $10^{-3}$. In Table~\ref{Table_T/V_I_VS_D_1}, we have   $\sigma_2^2(n) \gg \sigma_4^2(n) $, while in Table~\ref{Table_T/V_I_VS_D_2}, $\sigma_2^2(n)$ is still larger than $\sigma_4^2(n)$, but of the same order of magnitude.

\begin{table}[h!]
	\centering
  \begin{tabular}{ c|c|c|c|c|  }
    \cline{2-5}
        & $n=2$     & $n=3$     & $n=4$      & $n=5$      \\
    \hline
    \multicolumn{1}{|c|}{$\Theta_\mathrm{I}$}  & 63.04 & 96.15 & 131.84 & 165.80 \\
    \multicolumn{1}{|c|}{$\Theta_\mathrm{D}$}  & 51.61 & 87.24 & 122.76 & 152.32 \\
    \hline
  \end{tabular}
  \caption{ Computation time (in seconds) needed by the Estimators $\Theta_i$ and $\Theta_d$ for a precision $\varepsilon=10^{-3}$.
  Test function: $f(x,s)=(K-s)^+$. Parameters: $S_0=100$, $r=0$, $x=0.4$, $a=0.4$, $k=1$, $\sigma=0.2$, $\rho=-0.3$, $T=1$, $K=100$ ($\frac{\sigma^2}{2a}=0.05$)}.\label{Table_T/V_I_VS_D_1}
\end{table}

\begin{table}[h!]
	\centering
  \begin{tabular}{ c|c|c|c|c|  }
    \cline{2-5}
        & $n=2$     & $n=3$     & $n=4$      & $n=5$      \\
    \hline
    \multicolumn{1}{|c|}{$\Theta_\mathrm{I}$}  & 59.50 & 102.13 & 148.45 & 193.41 \\
    \multicolumn{1}{|c|}{$\Theta_\mathrm{D}$}  & 37.59 & 70.43  & 100.14 & 136.16 \\	
    \hline
  \end{tabular}
  \caption{Computation time (in seconds) needed by the Estimators $\Theta_i$ and $\Theta_d$ for a precision $\varepsilon=10^{-3}$.  Test function: $f(x,s)=(K-s)^+$. Parameters: $S_0=100$, $r=0$, $x=0.1$, $a=0.1$, $k=1$, $\sigma=0.63$, $\rho=-0.3$, $T=1$, $K=100$ ($\frac{\sigma^2}{2a}\approx 1.98$)}.\label{Table_T/V_I_VS_D_2}
\end{table}

We observe that we do not have a great gain in using $\Theta_i$ when $\sigma_2^2(n)   \gg \sigma_4^2(n) $ (Table \ref{Table_T/V_I_VS_D_1}), while we save up to $30\%$ of execution time, using $\Theta_d$ instead of $\Theta_i$, when $\sigma_2^2 (n) $ is of the same order of magnitude $\sigma_4^2 (n)$ (Table \ref{Table_T/V_I_VS_D_2}).  Heuristically, this can be understood as follows: when $\sigma^2_2(n)$ is of the same magnitude as~$\sigma^2_4(n)$, so are $M_{1,\i}$ and $M_{2,\i}$, which gives an important gain in reusing the simulation of the correction term.  
In any case, $\Theta_d$ turns out to be faster for each choice of parameters, and therefore we recommend it at the expense of $ \Theta_i$.


\subsection{Comparison between the second and the fourth order approximation}
Subsections~\ref{Sim_CIR} and~\ref{Sim_Heston} have confirmed  numerically the theoretical results obtained in this paper. However, they do not compare directly the computation time required by the different methods. We now present numerical tests that allow us to prove  the real advantage of using the fourth order approximation $\cPh^{2,n}$ instead of the simple second order scheme. Namely, we compare the squared $L^2$ distance of the estimator $\Theta_d$ from the true value with the same distance between the estimator of $\hat{P}^{1, n^2}$ with the true value. We plot these quantities in function of the computation time needed. Note that $\hat{P}^{2, n}$ and $\hat{P}^{1, n^2}$ converges at a  rate of $O(n^{-4})$ so that their bias have the same order of magnitude.

\begin{figure}[h!]
  \centering
  \begin{subfigure}[h]{0.49\textwidth}
    \centering
    \includegraphics[width=\textwidth]{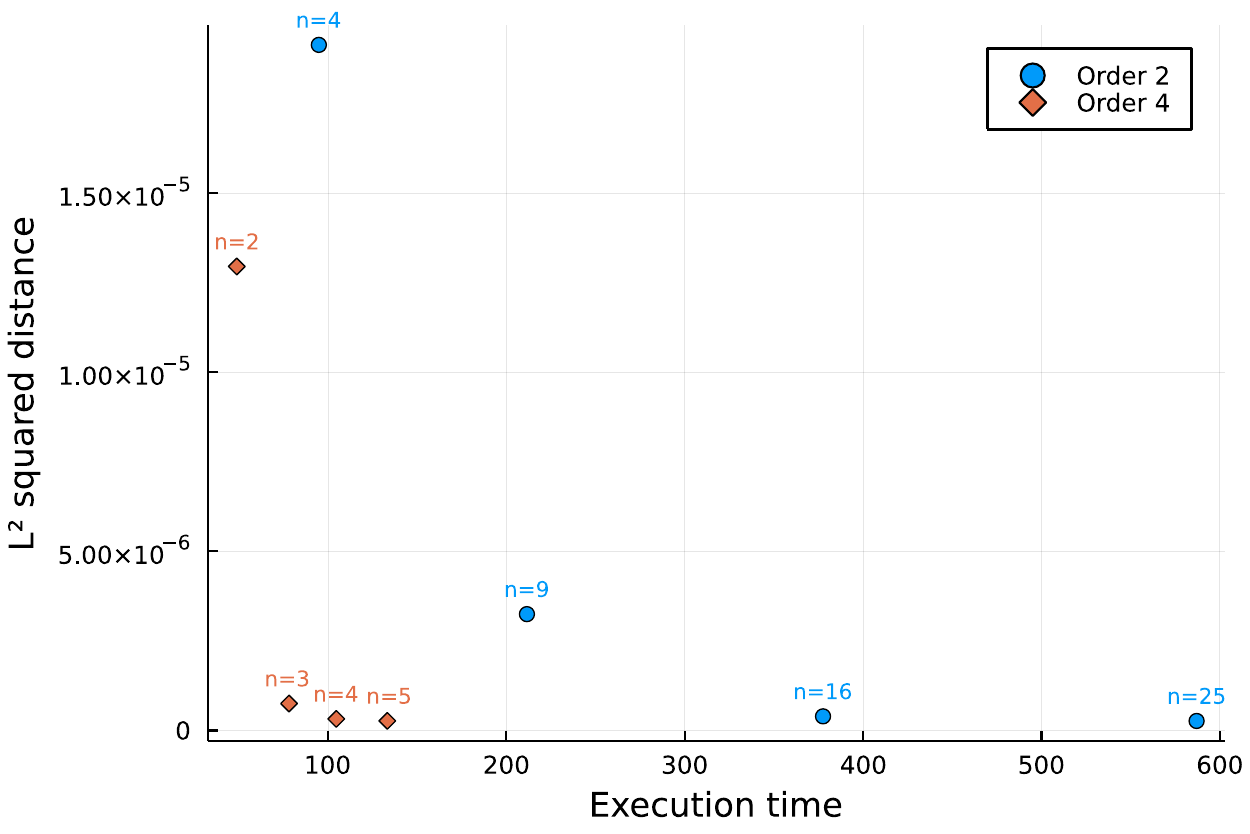}
    \caption{($\frac{\sigma^2}{2a}=0.0125$)} 
    \label{fig: L2distance_H4S_2M_Dep_VS_H2S6}
  \end{subfigure}
  \hfill
  \begin{subfigure}[h]{0.49\textwidth}
    \centering
    \includegraphics[width=\textwidth]{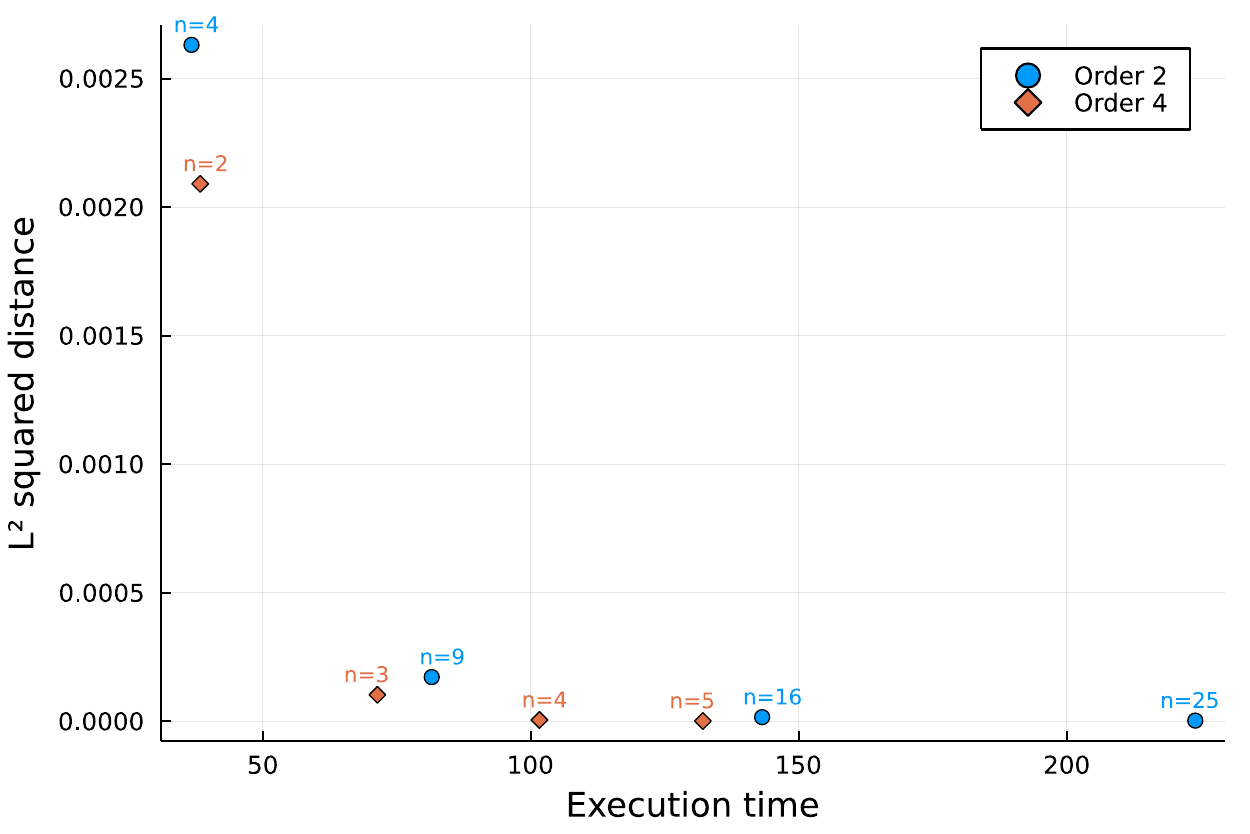}
    \caption{($\frac{\sigma^2}{2a}\approx 1.98$)} 
    \label{fig: L2distance_H4S_2M_Dep_VS_H2S13}
  \end{subfigure}
  \caption{
    $L^2$-square error in function of the execution time in seconds. Test function: $f(x,s)=(K-s)^+$. Parameters in graphic~({\sc a}) : $S_0=100$, $r=0$, $x=0.4$, $a=0.4$, $k=1$, $\sigma=0.1$, $\rho=-0.3$, $T=1$, $K=100$.\\Parameters in graphic~({\sc b}) : 
   $S_0=100$, $r=0$, $x=0.1$, $a=0.1$, $k=1$, $\sigma=0.63$, $\rho=-0.3$, $T=1$, $K=100$.}\label{H4S_dep_VS_H2S} 
\end{figure}

Figure~\ref{H4S_dep_VS_H2S} shows the results for the calculation of the price of a European put option in the Heston model with two different sets of parameters. In this numerical experience we set a precision $\varepsilon$ equal to $10^{-3}$. The empirical evidences show that the fourth order estimator $\Theta_d$ is the best choice, especially when the ratio $\frac{\sigma^2}{2a}\ll 1$ (Figure \ref{H4S_dep_VS_H2S} ({\sc a})) where the performance of the fourth order estimator is unparalleled. For example, $\hat{P}^{2, 3}$ is twice more accurate and more than twice faster than $\hat{P}^{1,9}$.
Even in Figure \ref{H4S_dep_VS_H2S} ({\sc b}), where  the ratio $\frac{\sigma^2}{2a}$ is larger and close to~2,  the fourth order estimator~$\Theta_d$ is more precise than the second order estimator and is faster from $n=3$ onward. These experiments illustrate the outperformance of the boosted estimator $\hat{P}^{2, n}$ with respect to $\hat{P}^{1, n}$.


\subsection{Numerical experiments for $\sigma^2>4a$}
In the previous subsections, we have presented analyzes to confirm numerically the theoretical rates of convergence of our approximations, and to assess their computational time. This is why we have only considered parameters such that $\sigma^2\le 4a$, since this condition is required in Theorem~\ref{thm_main}. However, it is possible to test numerically the relevance of the boosting technique using random grids when $\sigma^2> 4a$. This is the purpose of this subsection. We first present the different schemes and then analyze numerically the variance of the correcting term. Then, we present the numerical bias of the approximation $\hat{P}^{2, n}$ for the CIR and Heston models.

\subsubsection{The approximation schemes}
In order to perform the numerical tests for $\sigma^2>4a$, we consider two different second order schemes for the CIR process. The first one is the second order scheme~\eqref{Alfonsi_scheme} presented in \cite{AA_MCOM}. More precisely, we define
\begin{equation}\label{Alfonsi_scheme_detailed}
  \varphi_A(x,t,\sqrt{t}N) = \varphi_A^u(x,t,\sqrt{t}N)\mathds{1}_{x \ge K^A_2(t)}  + \varphi_A^d(x,t,\sqrt{t}N)\mathds{1}_{x<K^A_2(t)},
\end{equation}
with 
\begin{align*}
  \varphi_A^u(x,t,\sqrt{t}N) &=\varphi(x,t,-\sqrt{3t})\mathds{1}_{N<\mcN^{-1}(1/6)}  + \varphi(x,t,0)\mathds{1}_{\mcN^{-1}(1/6)\le N<\Phi_N^{-1}(5/6)} \nonumber \\
  &+\varphi(x,t,\sqrt{3t})\mathds{1}_{N\ge\mcN^{-1}(5/6)}, \\
  \varphi_A^d(x,t,\sqrt{t}N) &=\frac{\E[X^x_t]}{2(1-\pi(t,x))}\mathds{1}_{N<\mcN^{-1}(1-\pi(t,x))}  + \frac{\E[X^x_t]}{2\pi(t,x)}\mathds{1}_{ N\ge\mcN^{-1}(1-\pi(t,x))} ,
\end{align*}
where $\mcN$ is the  cumulative distribution function of the standard normal distribution, $\pi(t,x)=\frac{1-\sqrt{1-\frac{\E[X^x_t]^2}{\E[(X^x_t)^2]}}}{2}$ and $K^A_2(t)$ is the function given by~\eqref{threshold_gen} with $A_Y=\sqrt{3}$. Here, we have written the scheme $\varphi_A$ as a function of the starting point $x$, the time step $t$ and the Brownian increment $\sqrt{t} N$. When computing $n\E[\left(f(\hat{X}^{n,1}_T) - f(\hat{X}^{n,0}_T)\right)]$ by Monte-Carlo, we use the same Brownian path to sample $\hat{X}^{n,0}_T$ and $\hat{X}^{n,1}_T$, as explained at the beginning of Section~\ref{Simulations}. Thus, there is a strong dependence between these schemes.

We present also another scheme that corresponds to other choices of $Y$ and $\hat{X}^{x,d}$ in~\eqref{Alfonsi_scheme}. We use a distribution that is pretty similar to a Gaussian distribution over the threshold, and a scaled beta distribution below. Thus, we define
\begin{equation}\label{def_sch_B}
  \varphi_B(x,t,\sqrt{t}N) = \varphi_B^u(x,t,\sqrt{t}N)\mathds{1}_{x\ge K^B_2(t)}  + \varphi_B^d(x,t,\sqrt{t}N)\mathds{1}_{x<K^B_2(t)},
\end{equation}
with 
\begin{align*}
  \varphi_B^u(x,t,\sqrt{t}N) & =\varphi(x,t,-z_2)\mathds{1}_{N\le-c_2} + \varphi(x,t,-z_1)\mathds{1}_{-c_2<N\le-c_1}  + \varphi(x,t,N)\mathds{1}_{-c_1\le N<c_1}  \nonumber \\
  &+\varphi(x,t,z_1)\mathds{1}_{c_1<N\le c_2} + \varphi(x,t,z_2)\mathds{1}_{N>c_2}, \\
  \varphi_B^d(x,t,\sqrt{t}N) & = \frac{\E[X^x_t]}{2\pi(t,x)} (\mcN(N))^{\frac{1}{2\pi(t,x)}-1},
\end{align*}
where $z_1=2.7523451704710586$, $z_2 = 3.5$, $c_1=2.58$, $c_2= 3.106520327375868$, and  $K^B_2(t)$ is the function given by~\eqref{threshold_gen} with $A_Y=3.5$. Here, we have fixed the values of $c_1$ and $z_2$, and we have numerically calculated $c_2$ and $z_1$ to have $\E[Y^2]=\E[N^2]$ and $\E[Y^4]=\E[N^4]$ with 
$$Y= -z_2 \mathds{1}_{N\le-c_2} -z_1 \mathds{1}_{-c_2<N\le -c_1} +N \mathds{1}_{-c_1<N\le c_1}+z_1 \mathds{1}_{c_1<N\le c_2}+ z_2\mathds{1}_{c_2<N}. $$
The random variable $\varphi_B^d(x,t,\sqrt{t}N)$ has the same two first moments as $X^x_t$, and we can prove following the same arguments as~\cite[Theorem 2.8]{AA_MCOM} that $\varphi_B(x,t,\sqrt{t}N)$ is a second order scheme for the weak error.

\subsubsection{Numerical study of the variance of the correcting term $n\left(f(\hat{X}^{n,1}_T) - f(\hat{X}^{n,0}_T)\right)$}
We now analyze the variance of the corrections terms of the correcting term $n\left(f(\hat{X}^{n,1}_T) - f(\hat{X}^{n,0}_T)\right)$ in function of the number $n$ of discretization steps, when we use the different schemes~\eqref{Alfonsi_scheme_detailed} and~\eqref{def_sch_B}. We start with an example with $\sigma^2<4a$ for which $\varphi$ is still defined and $\varphi_A$ (resp. $\varphi_B$) does not use the auxiliary scheme $\varphi_A^d$ (resp. $\varphi_B^d$) since $K_2^{A}(t)=K_2^{B}(t)=0$ in this case. We observe in Table \ref{Table_VAR_CIR1} that the scheme~$\varphi_A$ leads to a value of $\Var(n(f(\hat{X}^{n,1}_T) - f(\hat{X}^{n,0}_T)))$ that is more than 20 times as large as that the one obtained using $\varphi$. Besides, the variance given by the scheme  $\varphi_A$ increases quite linearly with $n$, while the one obtained with $\varphi$ seems to be bounded and to decrease with $n$. One heuristic explanation for this is that  $\varphi_A$ is discrete scheme, which increases the strong error between the scheme on the fine grid~$\Pi^1$ and the scheme on the coarse grid~$\Pi^0$. Considering the scheme $\varphi_B$ that mixes Gaussian and discrete distributions leads to a much smaller variance that is rather close to the one of the scheme $\varphi$. However, as $n$ gets large, we see that the variance does not decrease in contrast to the scheme $\varphi$.
\begin{table}[h!]
	\centering
  \begin{tabular}{ c c|c|c|c|c|c|  }
    \cline{3-7}
        & & $n=2$     & $n=4$      & $n=8$       & $n=16$    & $n=32$    \\
    \hline
    \multicolumn{1}{|c|}{\multirow{2}{*}{$\varphi$} } & $\sigma^2_{4}(n)$    & 23.86e-4 & 17.43e-4 & 9.35e-4 & 4.85e-4  & 2.49e-4 \\
    \multicolumn{1}{|c|}{}& 95\% prec.& 3.2e-6 & 3.7e-6 & 2.8e-6 & 2.1e-6 & 1.5e-6 \\ 
    \hline
    \multicolumn{1}{|c|}{\multirow{2}{*}{$\varphi_A$}} & $\sigma^2_{4}(n)$     & 4.807e-2 & 10.870e-2 & 22.493e-2 & 45.437e-2 & 91.219e-2 \\ 
    \multicolumn{1}{|c|}{}& 95\% prec.     & 2.4-5 & 5.2e-5 & 11.1e-5 & 22.9e-5 & 46.3e-5 \\
    \hline
    \multicolumn{1}{|c|}{\multirow{2}{*}{$\varphi_B$} } & $\sigma^2_{4}(n)$    & 24.17e-4 & 18.37e-4 & 11.78e-4 & 10.27e-4  & 13.85e-4 \\
    \multicolumn{1}{|c|}{}& 95\% prec.& 3.2e-6 & 3.7e-6 & 2.9e-6 & 3.0e-6 & 4.5e-6 \\ 
    \hline
  \end{tabular}
  \caption{$\sigma^2_{4}(n) = \Var\big(n(f(\hat{X}^{n,1}_T) - f(\hat{X}^{n,0}_T))\big)$ for the different schemes, with $10^8$ samples and 95\% confidence interval precision.
  Test function: $f(x)=\exp(-10x)$. Parameters: $x=0.2$, $a=0.2$, $k=0.5$, $\sigma=0.5$, $T=1$ ($\frac{\sigma^2}{2a}=0.625$).}\label{Table_VAR_CIR1}
\end{table}

We now consider a case with $\sigma^2>4a$ so that the schemes $\varphi_A$ and $\varphi_B$ switch around their threshold. The scheme $\varphi$ is no longer defined. 
In Table~\ref{Table_VAR_CIR2}, we observe a huge increase of the variance in time steps with respect to Table~\ref{Table_VAR_CIR1}. We now observe that the variances grow almost linearly with respect to~$n$. Again, this can be explained heuristically by the switching that increases the strong error between the schemes on the fine grid~$\Pi^1$ and the coarse grid~$\Pi^0$. The rather high values of the variance obtained with the scheme $\varphi_A$ makes the boosting technique using random grids less interesting in practice from a computational point of view. In contrast, the scheme $\varphi_B$ produces much lower variances and the Monte-Carlo estimator of $\cPh^{2,n}f$ is more competitive. 
\begin{table}[h!]
	\centering
  \begin{tabular}{ c c|c|c|c|c|c|  }
    \cline{3-7}
      & & $n=2$     & $n=4$      & $n=8$       & $n=16$    & $n=32$    \\
      \hline
      \multicolumn{1}{|c|}{\multirow{2}{*}{$\varphi_A$} } 
      & $\sigma^2_{4}(n)$   & 0.0927 & 0.8742 & 2.7966 & 7.9095  & 21.6793 \\ 
      \multicolumn{1}{|c|}{}& 95\% prec.& 5.3e-5 & 3.3e-4 & 1.6e-3 & 6.1e-3 & 2.1e-2 \\ 
      \hline
      \multicolumn{1}{|c|}{\multirow{2}{*}{$\varphi_B$} } 
      & $\sigma^2_{4}(n)$   & 0.0757 & 0.2184 & 0.5145 & 1.1892  & 2.6600 \\ 
      \multicolumn{1}{|c|}{}& 95\% prec.& 6.4e-5 & 1.8e-4 & 5.5e-4 & 1.9e-3 & 6.2e-3 \\ 
      \hline
  \end{tabular}
  \caption{$\sigma^2_{4}(n) = \Var\big(n(f(\hat{X}^{n,1}_T) - f(\hat{X}^{n,0}_T))\big)$ with $10^8$ samples and 95\% confidence interval precision.
  Test function: $f(x)=\exp(-10x)$. Parameters: $x=0.2$, $a=0.2$, $k=0.5$, $\sigma=1.5$, $T=1$ ($\frac{\sigma^2}{2a}=5.625$).}\label{Table_VAR_CIR2}
\end{table}

\subsubsection{Numerical Convergence for the CIR}
\begin{figure}[h!]
  \centering
  \begin{subfigure}[h]{0.49\textwidth}
    \centering
    \includegraphics[width=\textwidth]{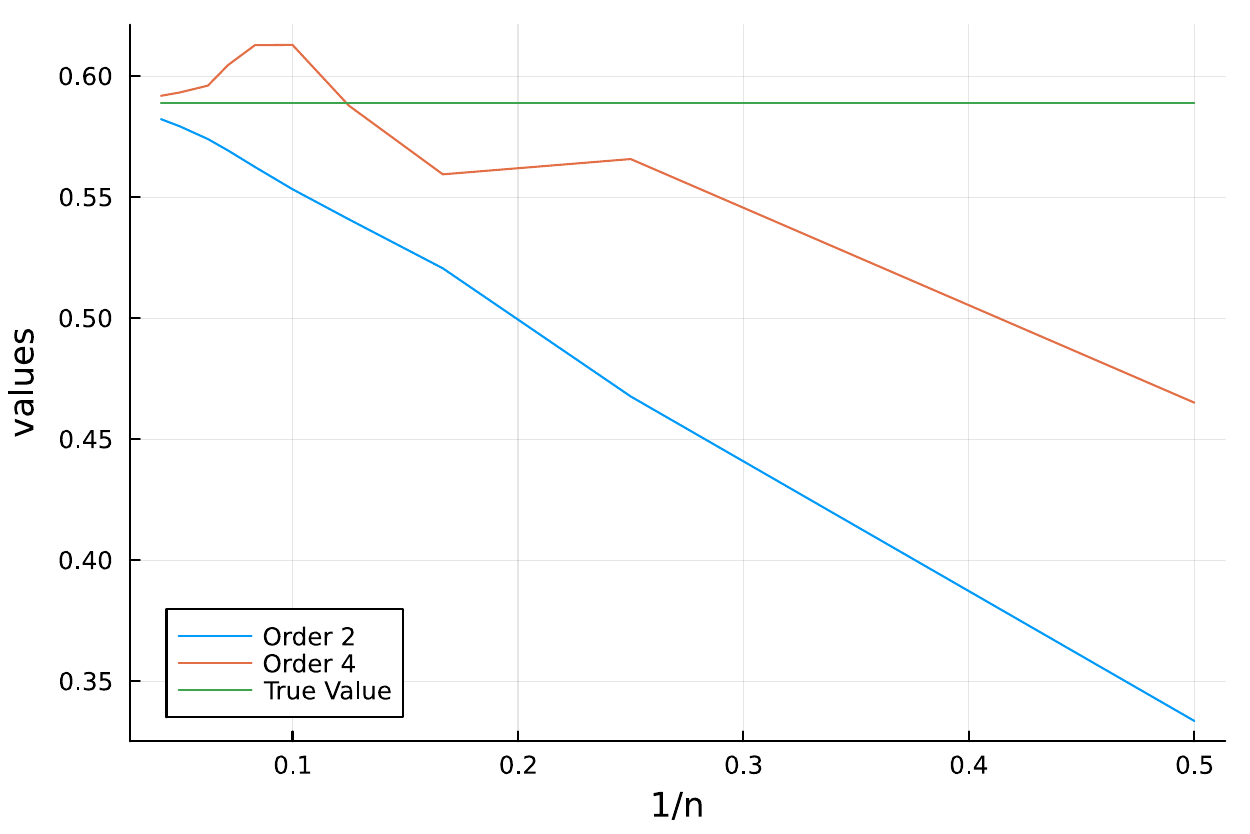}
    \caption{Values plot, scheme $\varphi_A$}
    \label{fig:values_plot_cirAA1}
  \end{subfigure}
  \hfill
  \begin{subfigure}[h]{0.49\textwidth}
    \centering
    \includegraphics[width=\textwidth]{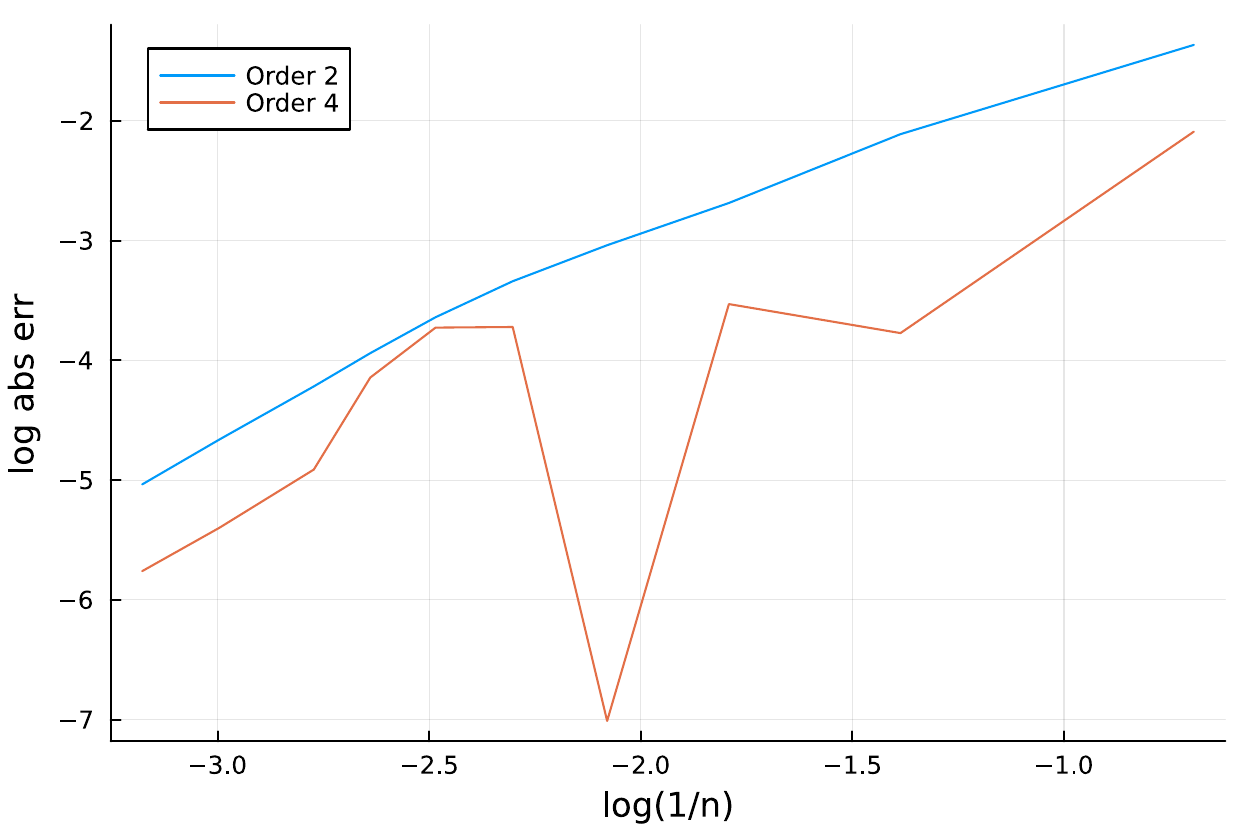}
    \caption{Log-log plot, scheme $\varphi_A$}
    \label{fig:log-log_plot_cirAA1}
  \end{subfigure}
  \begin{subfigure}[h]{0.49\textwidth}
    \centering
    \includegraphics[width=\textwidth]{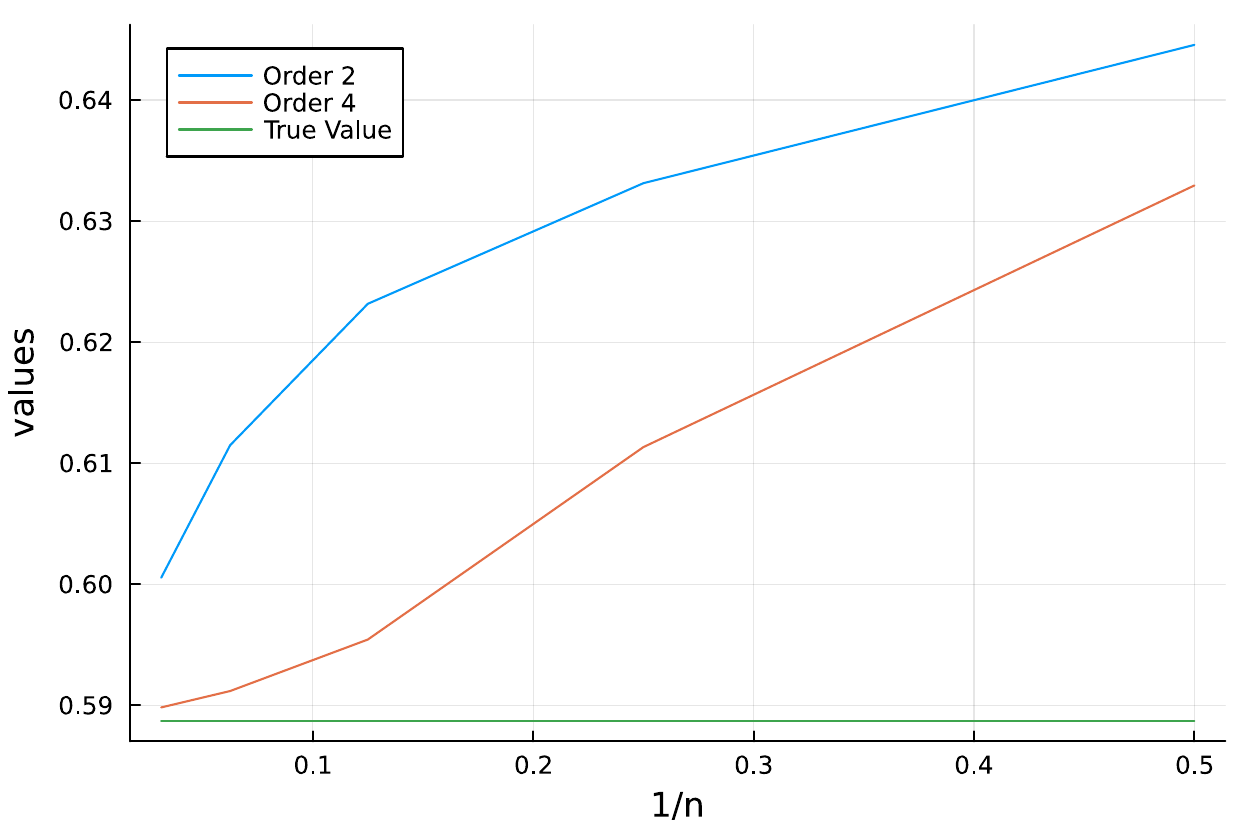}
    \caption{Values plot, scheme $\varphi_B$}
    \label{fig:values_plot_cirAE1}
  \end{subfigure}
  \hfill
  \begin{subfigure}[h]{0.49\textwidth}
    \centering
    \includegraphics[width=\textwidth]{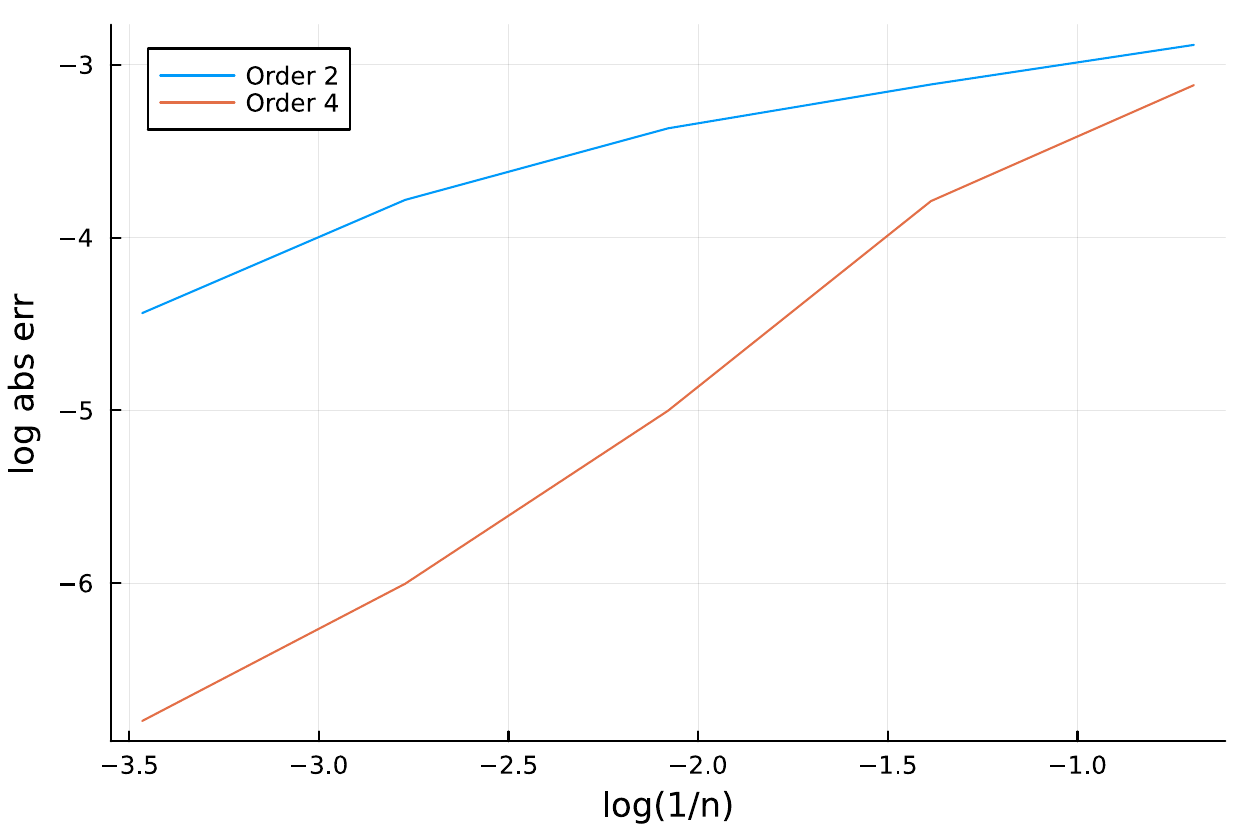}
    \caption{Log-log plot, scheme $\varphi_B$}
    \label{fig:log-log_plot_cirAE1}
  \end{subfigure}
  \caption{Test function: $f(x)=\exp(-10x)$. Parameters: $x=0.2$, $a=0.2$, $k=0.5$, $\sigma=1.5$, $T=1$ ($\frac{\sigma^2}{2a}=5.625$). Statistical precision $\varepsilon=5$e-5.
  Left graphics show the values of $\cPh^{1,n}f$, $\cPh^{2,n}f$ as a function of the time step $1/n$  and the exact value. Right graphics draw $\log(|\hat{P}^{i,n}f-P_Tf|)$ in function of $\log(1/n)$:  for the scheme $\varphi_A$ (resp. $\varphi_B$) the regressed slopes are 1.47 (resp. 0.54) and 1.14 (resp. 1.38) for the second and fourth order respectively.}\label{CIRAA_orders}
\end{figure}

We have plotted in Figure \ref{CIRAA_orders} the convergence of the estimators of the Monte-Carlo estimators $\cPh^{1,n}f$ and $\cPh^{2,n}f$ for the schemes $\varphi_A$ and $\varphi_B$.  We note that in all our experiments, $\cPh^{2,n}f$ gives a better approximation than $\cPh^{1,n}f$, though there is no theoretical guarantee of that. However, the improvement is not as good as for $\sigma^2\le 4a$. 
We know that $\cPh^{1,n}f$ leads to an asymptotic weak error of order~2: the estimated rate of convergence obtained by regression are below since we consider rather small values of~$n$ and are not in the asymptotic regime. We have instead no theoretical guarantee that $\cPh^{2,n}f$ gives an asymptotic weak error of order~4. The estimated rates are quite far from this value, indicating that a fourth order of convergence may not hold. To sum up,   even if $\cPh^{2,n}f$ is still more accurate than $\cPh^{1,n}f$ for $\sigma^2>4a$, it does not lead to obvious computational gains.

\subsubsection{Simulations in the Heston model}
We present now some numerical tests for Heston model and consider three different schemes that are well-defined for any $\sigma\ge 0$:
\begin{itemize}
\item $\Phi_A$ is the scheme~\eqref{H2S} where $\varphi_A(x,h,W_h)$ is used instead of 
$\varphi(x,h,W_h)$,
\item $\Phi_B$ is the scheme~\eqref{H2S} where $\varphi_B(x,h,W_h)$ is used instead of $\varphi(x,h,W_h)$,
\item $\Phi_E$ is the scheme~\eqref{H2S} where the exact scheme $X^x_h$ (see, e.g.~\cite[Proposition 3.1.1]{AA_book}) is used instead of $\varphi(x,h,W_h)$.
\end{itemize}
We start by comparing the variance of the correcting terms with the different schemes.
 In Table~\ref{Table_VAR_Heston1}, we consider a case with $\sigma^2<4a$ and also include the variance for the scheme~$\Phi$ given by~\eqref{H2S}. We remark that the variances of the correction term for the standard scheme $\Phi$ and for the scheme $\Phi_E$ appear to be bounded. In contrast, the variance for the schemes~$\Phi_A$ and $\Phi_B$ tends to increase with~$n$: the variance is very high for $\Phi_A$ while the one produced by $\Phi_B$ remains close to the one of $\Phi$ and $\Phi_E$.  Table~\ref{Table_VAR_Heston2} deals with a case with $\sigma^2>4a$ for which variances are much higher. We observe an approximately linear growth of the variance of the correction term for the schemes $\Phi_A$ and $\Phi_B$. The variance produced by the scheme $\Phi_E$ also increases, but in much moderate way.  
\begin{table}[h!]
	\centering
  \begin{tabular}{ c c|c|c|c|c|c|  }
    \cline{3-7}
        & & $n=2$     & $n=4$      & $n=8$       & $n=16$    & $n=32$    \\
        \hline
        \multicolumn{1}{|c|}{\multirow{2}{*}{$\Phi$} } & $\sigma^2_{4}(n)$   & 33.252 & 41.962 & 46.159 & 48.273  & 49.385 \\ 
        \multicolumn{1}{|c|}{}& 95\% prec.& 0.024 & 0.029 & 0.033 & 0.035 & 0.037 \\ 
    \hline
    \multicolumn{1}{|c|}{\multirow{2}{*}{$\Phi_A$}} & $\sigma^2_{4}(n)$     & 450.95 & 973.82 & 1976.53 & 3984.64 & 8014.19 \\ 
    \multicolumn{1}{|c|}{}& 95\% prec.     & 0.20 & 0.40 & 0.83 & 1.70 & 3.47 \\
    \hline
        \multicolumn{1}{|c|}{\multirow{2}{*}{$\Phi_B$} } & $\sigma^2_{4}(n)$   & 33.702 & 43.116 & 48.606 & 53.373  & 59.760 \\ 
        \multicolumn{1}{|c|}{}& 95\% prec.& 0.025 & 0.031 & 0.037 & 0.044 & 0.059 \\ \hline
        \multicolumn{1}{|c|}{\multirow{2}{*}{$\Phi_E$}} & $\sigma^2_{4}(n)$     & 51.99 & 53.93 & 52.46 & 51.47 &  50.99 \\     
        \multicolumn{1}{|c|}{}& 95\% prec.     & 0.032 & 0.034 & 0.036 & 0.037 & 0.037 \\
        \hline  
      \end{tabular}
  \caption{$\sigma^2_{4}(n) = \Var\big(n(f(\hat{X}^{n,1}_T,\hat{S}^{n,1}_T) - f(\hat{X}^{n,0}_T,\hat{S}^{n,0}_T))\big)$ with $10^8$ samples and 95\% confidence interval precision.
  Test function: $f(x,s)=(K-s)^+$. Parameters: $S_0=100$, $r=0$, $x=0.2$, $a=0.2$, $k=1.0$, $\sigma=0.5$, $\rho=-0.7$, $T=1$, $K=105$ ($\frac{\sigma^2}{2a}=0.625$).}\label{Table_VAR_Heston1}
\end{table}
\begin{table}[h!]
	\centering
  \begin{tabular}{ c c|c|c|c|c|c|  }
    \cline{3-7}
        & & $n=2$     & $n=4$      & $n=8$       & $n=16$    & $n=32$    \\
    \hline
    \multicolumn{1}{|c|}{\multirow{2}{*}{$\Phi_A$}} & $\sigma^2_{4}(n)$     & 799.93 & 2568.43 & 6384.48 & 14588.23 & 29798.4266 \\ 
    \multicolumn{1}{|c|}{}& 95\% prec.     & 0.58 & 1.93 & 5.88 & 16.63 & 42.38 \\
    \hline
    \multicolumn{1}{|c|}{\multirow{2}{*}{$\Phi_B$}} & $\sigma^2_{4}(n)$     & 306.87 & 581.70 & 958.06 & 1729.18 & 3185.83 \\ 
    \multicolumn{1}{|c|}{}& 95\% prec.     & 0.18 & 0.38 & 0.90 & 2.65 & 8.25 \\
    \hline
    \multicolumn{1}{|c|}{\multirow{2}{*}{$\Phi_{E}$} } & $\sigma^2_{4}(n)$    & 233.89 & 287.50 & 314.03 & 331.31 & 344.20 \\ 
    \multicolumn{1}{|c|}{}& 95\% prec.& 0.14 & 0.20 & 0.24 & 0.27 & 0.29 \\ 
\hline
  \end{tabular}
  \caption{$\sigma^2_{4}(n) = \Var\big(n(f(\hat{X}^{n,1}_T,\hat{S}^{n,1}_T) - f(\hat{X}^{n,0}_T,\hat{S}^{n,0}_T))\big)$ with $10^8$ samples and 95\% confidence interval precision.
  Test function: $f(x,s)=(K-s)^+$. Parameters: $S_0=100$, $r=0$, $x=0.2$, $a=0.2$, $k=1.0$, $\sigma=1.5$, $\rho=-0.7$, $T=1$, $K=105$ ($\frac{\sigma^2}{2a}=5.625$).}\label{Table_VAR_Heston2}
\end{table}

We now turn to the convergence of the Monte-Carlo estimators. We have plotted in Figure~\ref{HestonAA_orders2}, for the same set of parameters as in Table~\ref{Table_VAR_Heston2}, the behavior of $\cPh^{1,n}f$ and $\cPh^{2,n}f$ for the schemes $\Phi_B$ and $\Phi_E$. We have discarded the scheme $\Phi_A$ that produces a too large variance for the correcting term. As for the CIR diffusion, we  note that $\cPh^{2,n}f$ gives a better approximation than $\cPh^{1,n}f$, but the bias does not seem to be of order~$4$. For the scheme $\Phi_B$, the improvement is moderate, and do not really compensate the computational effort of calculating the correcting term. Instead, for the scheme $\Phi_E$, the improvement is rather significant, making the approximation $\cPh^{2,n}f$ interesting from a computational point of view with respect to~$\cPh^{1,n}f$. Also, the estimated rate of convergence is much higher and not so far from~$4$. A dedicated theoretical study of $\cPh^{2,n}f$ with the scheme $\Phi_E$ is left for further research. 
\begin{figure}[h!]
  \centering
  \begin{subfigure}[h]{0.49\textwidth}
    \centering
    \includegraphics[width=\textwidth]{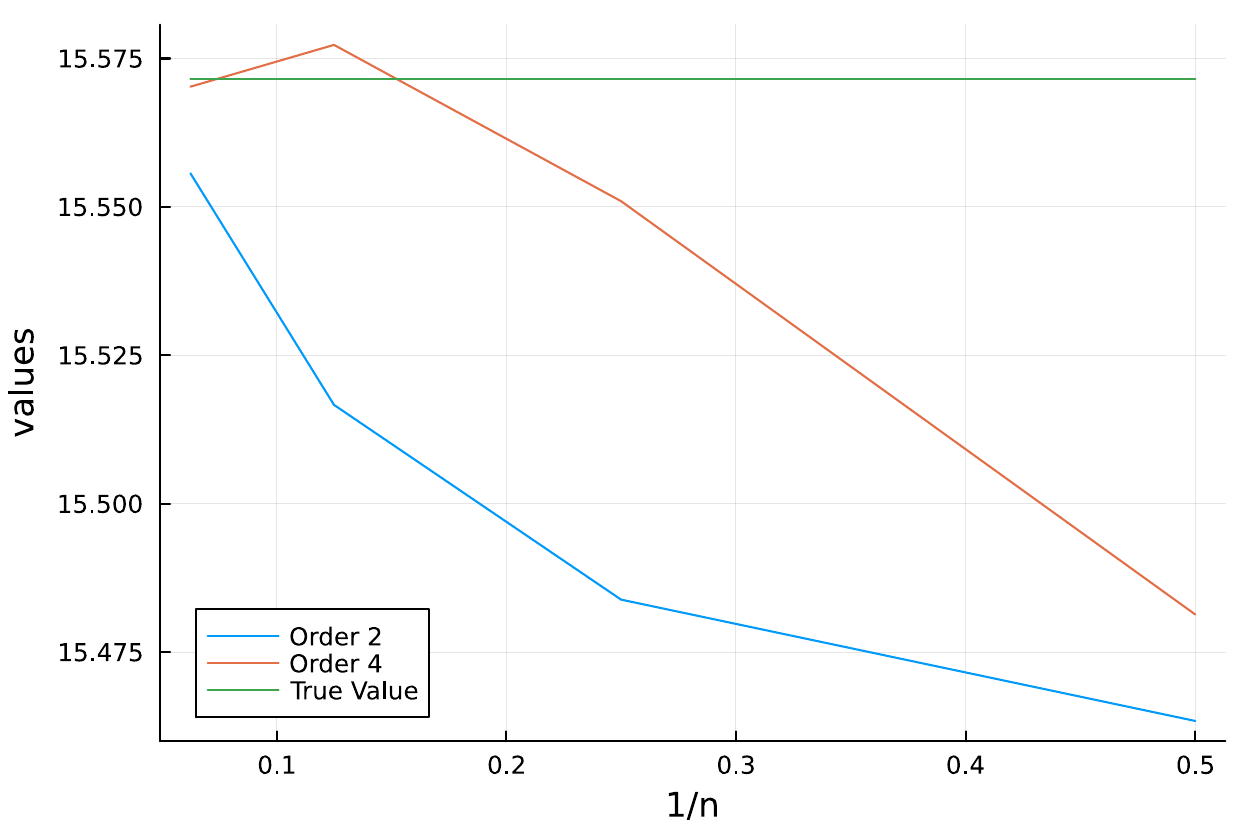}
    \caption{Values plot, scheme $\Phi_B$}
    \label{fig:values_plot_hestonAA2}
  \end{subfigure}
  \hfill
  \begin{subfigure}[h]{0.49\textwidth}
    \centering
    \includegraphics[width=\textwidth]{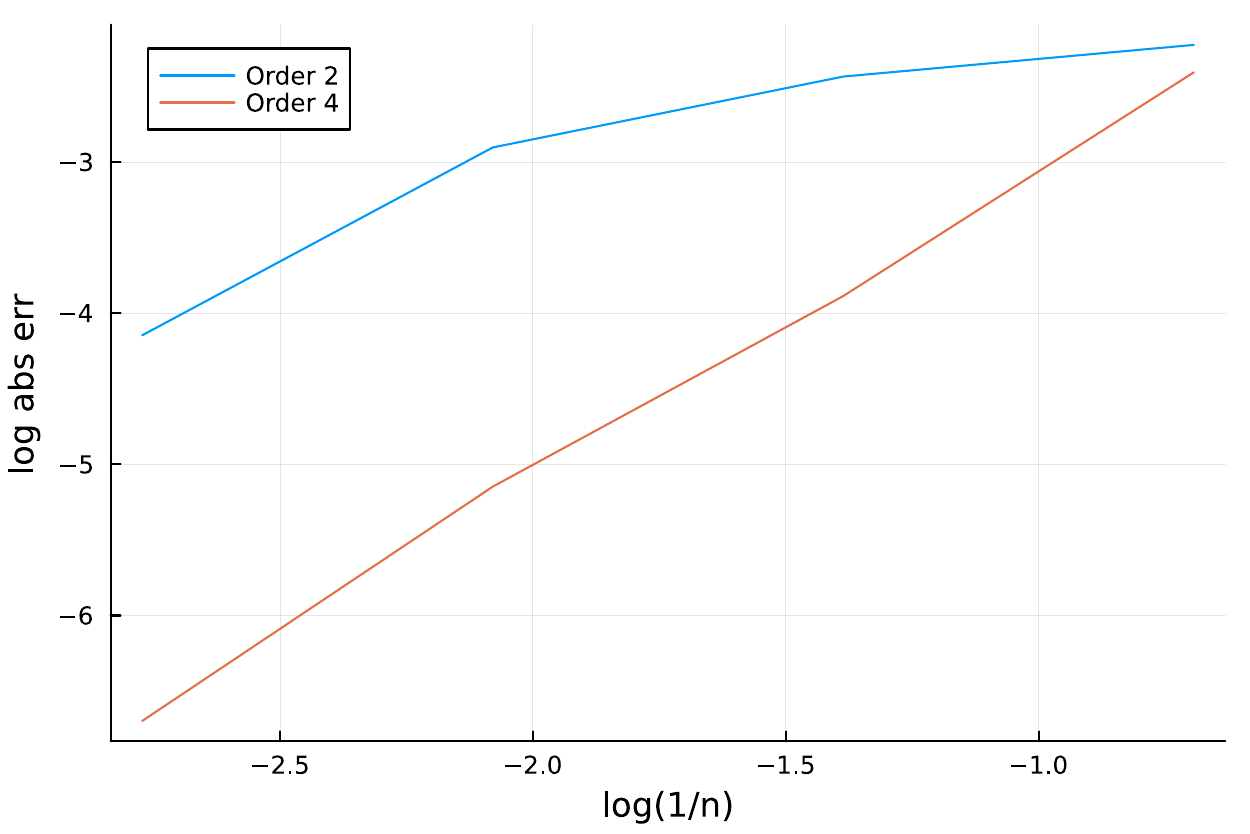}
    \caption{Log-log plot, scheme $\Phi_B$}
    \label{fig:log-log_plot_hestonAA2}
  \end{subfigure}
  \begin{subfigure}[h]{0.49\textwidth}
    \centering
    \includegraphics[width=\textwidth]{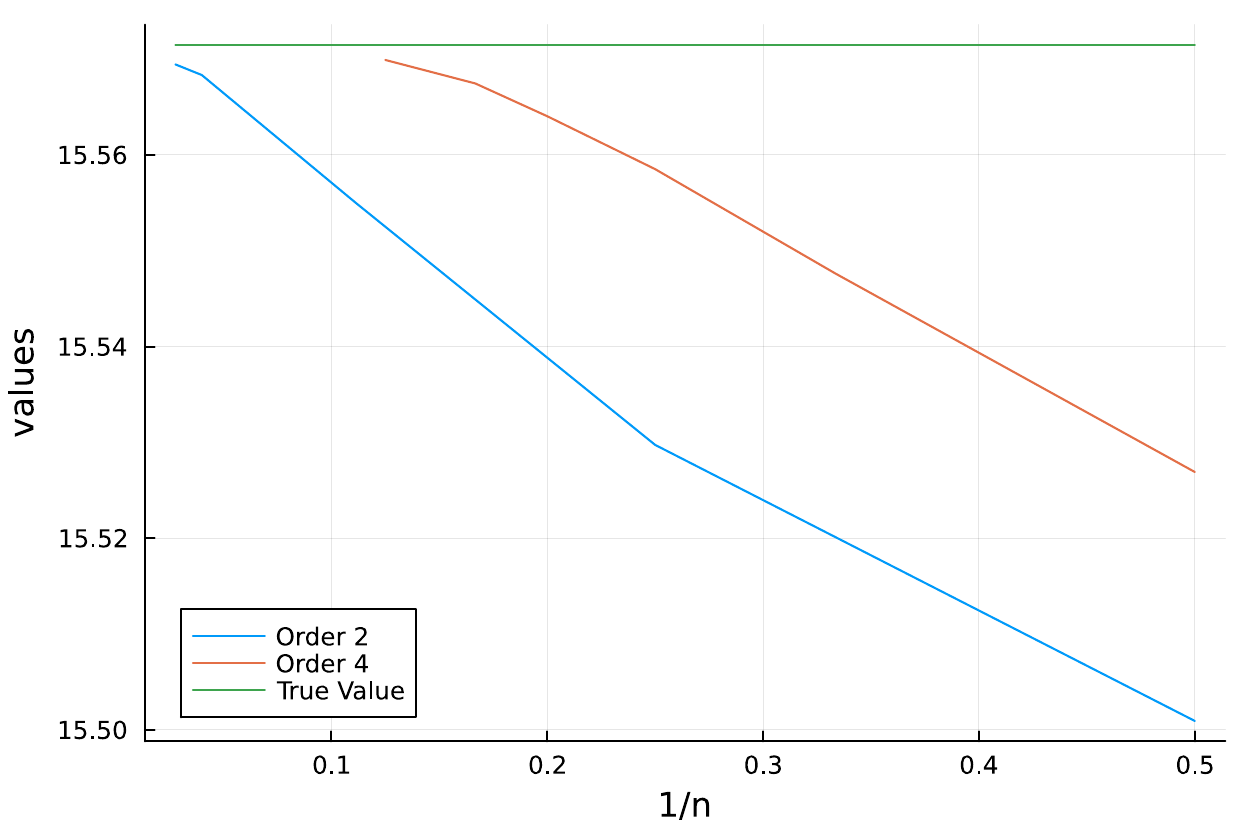}
    \caption{Values plot, scheme $\Phi_E$}
    \label{fig:values_plot_hestonCF1}
  \end{subfigure}
  \hfill
  \begin{subfigure}[h]{0.49\textwidth}
    \centering
    \includegraphics[width=\textwidth]{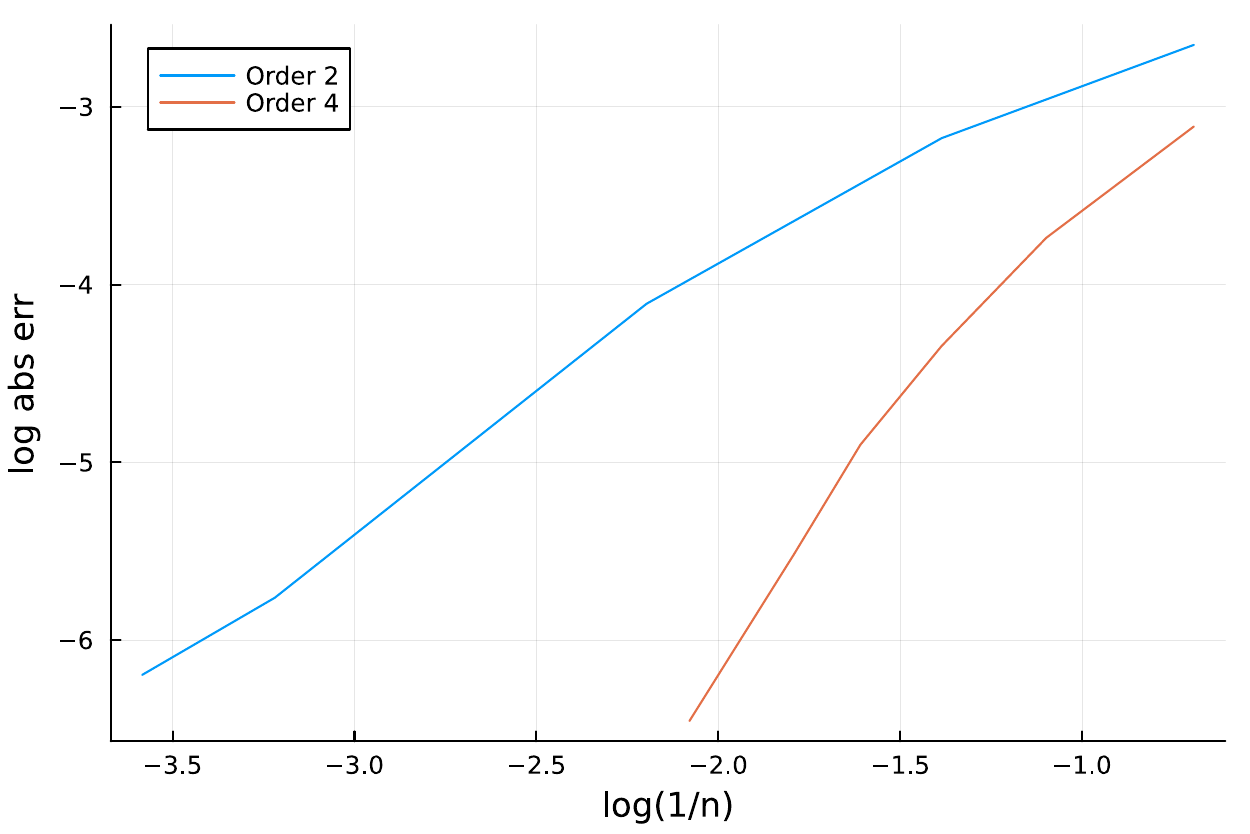}
    \caption{Log-log plot, scheme $\Phi_E$}
    \label{fig:log-log_plot_hestonCF1}
  \end{subfigure}
  \caption{Test function: $f(x,s)=(K-s)^+$. Parameters: $S_0=100$, $r=0$, $x=0.2$, $a=0.2$, $k=1$, $\sigma=1.5$, $\rho=-0.7$, $T=1$, $K=105$ ($\frac{\sigma^2}{2a}=5.625$). Statistical precision $\varepsilon=5$e-4.
  Left graphics show the values of $\cPh^{1,n}f$, $\cPh^{2,n}f$ as a function of the time step $1/n$  and the exact value. Right graphics draw $\log(|\hat{P}^{i,n}f-P_Tf|)$ in function of $\log(1/n)$: for the scheme $\Phi_B$ (resp. $\Phi_E$) the regressed slopes  are 0.90 (resp. 1.28) and 2.04 (resp. 2.40) for the second and fourth order respectively.}\label{HestonAA_orders2}
\end{figure}



\newcommand{\cP}{{\mathcal P}}
\newcommand{\cI}{{\mathcal I}}

\def\a{\alpha}
\def\b{\beta}


\chapter{High order approximations of the log-Heston process semigroup} 

\label{Chapter_Heston} 
This chapter is based on the paper \cite{AL24}, which has been accepted for publication on SIAM Journal on Financial Mathematics.
\chapabstract{\\We present weak approximations schemes of any order for the Heston model that are obtained by using the method developed by Alfonsi and Bally (2021). This method consists in combining approximation schemes calculated on different random grids to increase the order of convergence. We apply this method with either the Ninomiya-Victoir scheme (2008) or a second-order scheme that samples exactly the volatility component, and we show rigorously that we can achieve then any order of convergence. We give numerical illustrations on financial examples that validate the theoretical order of convergence, and present also promising numerical results for the multifactor/rough Heston model.}
\section*{Introduction}
The Heston model~\cite{Heston} is one of the most popular model in mathematical finance. It describes the dynamics of an asset and its instantaneous volatility by the following stochastic differential equations:
\begin{equation}\label{Heston_SDEs}
	\begin{cases}
	dS^{s,y}_t &= rS^{s,y}_t dt + \sqrt{Y^y_t}S^{s,y}_t (\rho dW_t + \sqrt{1-\rho^2} dB_t), \ S^{s,y}_0=s >0,\\
	dY^y_t &= (a-bY^y_t) dt +\sigma \sqrt{Y^y_t} dW_t, \ Y^y_0=y\ge 0,
	\end{cases}
\end{equation}
where $W$ and $B$ are two independent Brownian motions, $a\ge 0$, $b\in \R$, $\sigma>0$ and $\rho \in [-1,1]$. For the financial application, it is typically assumed in addition that $b>0$ so that the volatility mean reverts towards $a/b$, but this is not needed in the present paper. 

The goal of the paper is to propose high order weak approximations for this model and to prove their convergence. Let us recall first that exact simulation methods have been proposed for the Heston model by Broadie and Kaya~\cite{BrKa} and then by Glasserman and Kim~\cite{GlKi}. However, these methods usually require more computation time than simulation schemes. Besides, when considering variants or extensions of the Heston model, it is not clear how to simulate them exactly while approximation schemes can more simply be reused or adapted. There exists in the literature many approximation schemes of the Heston model, we mention here the works of  Andersen~\cite{Andersen}, Lord et al.~\cite{LKVD}, Ninomiya and Victoir~\cite{NV} and Alfonsi~\cite{AA_MCOM}. Few of them study from a theoretical point of view the weak convergence of these schemes. While~\cite{AA_MCOM} focuses on the volatility component, Altmayer and Neuenkirch~\cite{AlNe} proposes up to our knowledge the first analysis of the weak error for the whole Heston model. They essentially obtain for a given Euler/Milstein scheme a weak convergence rate of~$1$ under the restriction $\sigma^2<a$ on the parameter.

Like~\cite{AlNe}, we will work with the log-Heston model that solves the following SDE
\begin{equation}\label{log-Heston_SDEs}
	\begin{cases}
	dX^{x,y}_t &= (r-\frac{1}{2}Y^y_t) dt + \sqrt{Y^y_t} (\rho dW_t + \sqrt{1-\rho^2} dB_t), \ X^{x,y}_0=x=\log(s) \in \R,\\
	dY^y_t &= (a-bY^y_t) dt +\sigma \sqrt{Y^y_t} dW_t, \ Y^y_0=y.
	\end{cases}
\end{equation}
This log transformation of the asset price is standard to carry mathematical analyses: it allows to get an SDE with bounded moments since its coefficients have at most a linear growth. Our goal is to propose approximations of any order of the semigroup $P_Tf(x,y)=\E[f(X^{x,y}_T,Y^y_T)]$, where $f:\R \times \R_+ \to \R$ is a sufficiently smooth function such that $|f(x,y)|\le C(1+|x|^L+y^L)$ for some $L \in \N$. More precisely, we want to apply the recent method proposed by Alfonsi and Bally~\cite{AB} that allows to boost the convergence of an approximation scheme by using random time grids. We consider here either the Ninomiya-Victoir scheme for $\sigma^2\le 4a$ or a scheme that simulate exactly $Y$ for any $\sigma>0$.
In a previous work~\cite{AL}, we have applied the method of~\cite{AB} to the only Cox-Ingersoll-Ross component~$Y$ and we want to extend our result to the full log-Heston model. 
One difficulty with respect to the general framework developed in~\cite{AB} is to deal with the singularity of the diffusion coefficient. In particular, we need some analytical results on the Cauchy problem associated to the log-Heston model that have been obtained recently by Briani et al.~\cite{BCT}. Our main theoretical result (Theorem~\ref{main_theorem}) states that we obtain, for any $\nu\ge 1$, semigroup approximations of order $2\nu$ by using the mentioned scheme with the boosting method of~\cite{AB}.

The paper is structured as follows. Section~\ref{Sec_main_heston} presents the precise framework and in particular the functional spaces that we consider carrying our analysis. It introduces the approximation schemes and briefly presents the boosting method using random grids proposed in~\cite{AB}. The main result of the paper is then stated precisely.  Section~\ref{Sec_proof} is dedicated to the proof of the main theorem. Last, Section~\ref{Sec_num} explains how to implement our approximations and illustrates their convergence on numerical experiments motivated by the financial application. As an opening for future research, we show that our approximations can be used for the multifactor Heston model\footnote{We recall that the multifactor Heston model proposed by Abi Jaber and El Euch~\cite{AEE19} is an extension of the Heston model that is a good proxy of the rough Heston model introduced by El Euch and Rosenbaum~\cite{EER19}.} under some parameter restrictions and give very promising convergence results.

\section{Main results}\label{Sec_main_heston}

We start by introducing some functional spaces that are used through the paper.
For $k\in\N$, we denote by $\mcC^{k}(\R\times\R_+)$ the space of continuous functions $f:\R \times \R_+ \to \R$ such that the partial derivatives $\partial^\a_x \partial^\b_y  f(x,y)$ exist and are continuous with respect to $(x,y)$ for all $(\a,\b)\in\N^2$ such that $\a+2\b\le k$. We then define for $L\in \N$,
\begin{equation}\label{def_fL}
   {\bf f}_L(x,y)=  (1+x^{2L}+y^{2L}), \ x\in \R,y\in \R_+,
\end{equation}
and introduce 
\begin{multline}\label{def_CpolkL}
  \CPL{k}{L}=\{ f\in \mcC^{k}(\R\times\R_+) \mid \exists C>0 \text{ such that } \forall(\a,\b)\in\N^2, \a+2\b\le k,\\
  |\partial^\a_x \partial^\b_y  f(x,y)| \le C {\bf f}_L(x,y)   \},
\end{multline}
the space of continuously differentiable functions up to order~$k$ with derivatives with polynomial growth of order $2L$. Note that we assume twice less differentiability on the $y$ component.  
Furthermore, we set 
$$\CP{k}= \cup_{L\in \N} \CPL{k}{L} \text{ and } \CP{\infty}= \cap_{k\in \N} \CP{k}.$$
Last, we endow $\CPL{k}{L}$ with the following norm:
\begin{equation}\label{def_NormCpolKL}
  \|f\|_{k,L}=\sum_{\alpha+2\beta \le k} \sup_{(x,y) \in \R \times \R_+} \frac{|\partial^\alpha_x\partial^\beta_y f(x,y)|}{\bff_L(x,y)}.
\end{equation}

\subsection{Second order schemes for the log-Heston process}\label{subsec_2nd}

Having in mind~\cite[Theorem 2.3.8]{AA_book}, there are three properties to check to get a second-order scheme for the weak error:
\begin{enumerate}[(a)]
  \item polynomial estimates for the derivatives of the solution of the Cauchy problem,
  \item uniformly bounded moments of the approximation scheme, 
  \item a potential second order scheme, which roughly means that we have a family of random variables $(\hat{X}^{x,y}_t,\hat{Y}^y_t)_{t\ge 0}$ such that $|\E[f(\hat{X}^{x,y}_t,\hat{Y}^y_t)]-f(x,y)-t \mathcal{L} f(x,y)- \frac{t^2}2\mathcal{L}^2 f(x,y)|=_{t\to 0} O(t^3)$. 
\end{enumerate}

Let us precise this in our context. We consider a time horizon $T>0$ and a time step $h=T/n$, with $n\in \N^*$. We note $(\hat{X}^{x,y}_h,\hat{Y}^y_h)$ an approximation scheme for the SDE~\eqref{log-Heston_SDEs} starting from $(x,y)$ with time-step $h$, and $$\hat{P}_h f(x,y)=\E[f(\hat{X}^{x,y}_h,\hat{Y}^y_h)]$$ the associated semigroup approximation. The weak error analysis proposed by Talay and Tubaro~\cite{TT} consists in writing
\begin{equation}\label{eq_TT}
  \hat{P}^{[n]}_h-P_T=\hat{P}^{[n]}_h-P_h^{[n]}=\sum_{i=0}^{n-1}\hat{P}_h^{[n-(i+1)]}(\hat{P}_{h}-P_h)P_h^{[i]}=\sum_{i=0}^{n-1}\hat{P}_h^{[n-(i+1)]}(\hat{P}_{h}-P_h)P_{ih},
\end{equation}
where $\hat{P}_h^{[0]}=Id$ and $\hat{P}_h^{[i]}=\hat{P}_h^{[i-1]}\hat{P}_h$ for $i\ge 1$, and $P_h^{[i]}=P_{ih}$ by the semigroup property.   
Let us assume that the three properties hold
\begin{enumerate}[(a)]
\item $\forall k,L \in \N, \ \exists C \in \R_+, \ \forall i\in \{0,\dots,n\}, \  \|P_{ih} f\|_{k,L} \le C \|f\|_{k,L}$,
\item $\forall L \in \N, \exists C_L \in \R_+, \ \max_{0\le i\le n} \hat{P}_h^{[i]} \bff_L (x,y) \le C_L \bff_L(x,y)$,
\item $\|\hat{P}_h f-P_hf\|_{0,L+3}\le Ch^3 \|f\|_{12,L}.$
\end{enumerate}
Then, for $f \in \CPL{12}{L}$, we have for each $i \in \{0,\dots,N-1\},$
$$\|(\hat{P}_{h}-P_h)P_{ih}f \|_{0,L+3}\le Ch^3\|P_{ih}f \|_{12,L}\le C^2 h^3\|f \|_{12,L}, $$
by using the first and third properties. Then, we use that $$|(\hat{P}_{h}-P_h)P_{ih}f(x,y)|\le  C^2\|f \|_{12,L} h^3 \bff_{L+3}(x,y),$$
together with the second property to get that $|\hat{P}^{[n-(i+1)]}(\hat{P}_{h}-P_h)P_{ih}f(x,y)|\le C_LC^2 h^3\bff_L(x,y)$. This bound is uniform with respect to~$i$, and we get
\begin{equation}
  |\hat{P}^{[n]}_hf(x,y)-P_Tf(x,y)|\le C_LC^2 T \bff_{L+3}(x,y) \times \left(\frac{T}{n}\right)^2,
\end{equation}
since $h=T/n$.

Before concluding this paragraph, we comment briefly how to get the three properties (a--c). For the log-Heston SDE, the Cauchy problem has been studied by Briani et al.~\cite{BCT}, and their analysis allows us to obtain~(a). Their result is reported in Proposition~\ref{prop-rep-logHeston-estim}. Property (b) can generally be checked by simple but sometimes tedious calculation. Property (c) is the crucial one and can be obtained by using splitting technique, see~\cite[Paragraph 2.3.2]{AA_book}. We check this property in Corollary~\ref{cor_H1} for the schemes presented in this paper.

\subsection{From the second order scheme to higher orders by using random grids}

We continue the analysis and present, in our framework, the method developed by Alfonsi and Bally~\cite{AB} to get approximations of any orders by using random grids. For $l \in \N^*$, let us define the time step
$h_l=\frac{T}{n^l}$. We set $Q_l=\hat{P}_{h_l}$ the operator obtained by using the approximation scheme with the time step $h_l$. The principle is to iterate the identity~\eqref{eq_TT}. Namely, we get from~\eqref{eq_TT}
$$ \hat{P}_{h_1}^{[i]}-P_{ih_1}=\sum_{i_1=0}^{i-1} \hat{P}^{[i-(i_1+1)]}_{h_1}(\hat{P}_{h_1}-P_{h_1})P^{[i_1]}_{h_1} $$
and 
$$ \hat{P}_{h_2}^{[n]}-P_{h_1}=\hat{P}_{h_2}^{[n]}-P_{h_2}^{[n]}=\sum_{j=0}^{n-1}\hat{P}^{[n-(j+1)]}_{h_2}(\hat{P}_{h_2}-P_{h_2})P^{[j]}_{h_2}.   $$
Plugging these two identities successively in~\eqref{eq_TT}, we obtain
\begin{equation}\label{eq_boost2}\hat{P}^{[n]}_{h_1}-P_T=\sum_{i=0}^{n-1}\hat{P}_{h_1}^{[n-(i+1)]}(\hat{P}_{h_1}-\hat{P}_{h_2}^{[n]})\hat{P}^{[i]}_{h_1}+R,\end{equation}
with the remainder given by
\begin{align*}
  R=&\sum_{i=0}^{n-1}\hat{P}_{h_1}^{[n-(i+1)]} \left(\sum_{j=0}^{n-1}\hat{P}^{[n-(j+1)]}_{h_2}(\hat{P}_{h_2}-P_{h_2})P^{[j]}_{h_2} \right)\hat{P}^{[i]}_{h_1} \\&- \sum_{i=0}^{n-1}\hat{P}_{h_1}^{[n-(i+1)]} (\hat{P}_{h_1}-P_{h_1}) \sum_{i_1=0}^{i-1} \hat{P}^{[i-(i_1+1)]}_{h_1}(\hat{P}_{h_1}-P_{h_1})P^{[i_1]}_{h_1} .\end{align*}
Let us assume that we have the two following properties\footnote{We directly specify the method to our framework, and refer to~\cite{AB} or~\cite[Section 2]{AL} for a general presentation.}
\begin{align}\tag{$\bbar{H_1}$}\label{H1_bar_heston}
   &\forall l,k,L\in\N,\exists C>0,\forall f\in \CPL{k+12}{L},\, \|(P_{h_l}-Q_l)f\|_{k,L+3} \leq C\|f\|_{k+12,L} h_l^{3},\\
&\tag{$\bbar{H_2}$} \label{H2_bar_heston}
  \forall l,k,L\in \N,\exists C>0,\forall f\in \CPL{k}{L}, \, \max_{0\leq j\leq n^l}\|Q^{[j]}_l f\|_{k,L} + \sup_{t\leq T}\|P_t f\|_{k,L} \leq C\|f\|_{k,L}.
\end{align}
Then, we can upper bound the remainder for $f\in  \CPL{k+24}{L}$ by
$$ \|R f\|_{k,L+6}\le C^3n^2 \|f\|_{k+12,L+3} h_2^3+C^5 \frac{n(n-1)}2 \|f\|_{k+24,L} (h_1^3)^2 \le \tilde{C}\|f\|_{k+24,L} \left(\frac{T}{n}\right)^4,$$
where we have used twice~\eqref{H2_bar_heston} and once~\eqref{H1_bar_heston} for the first sum, and three times~\eqref{H2_bar_heston} and twice~\eqref{H1_bar_heston} for the second one. Therefore, we get from~\eqref{eq_boost2} that
\begin{equation}\label{def_boost2}
  \cPh^{2,n}:= \hat{P}^{[n]}_{h_1}+\sum_{i=0}^{n-1}\hat{P}_{h_1}^{[n-(i+1)]}(\hat{P}_{h_2}^{[n]}-\hat{P}_{h_1})\hat{P}^{[i]}_{h_1}
\end{equation}
is an approximation of order~$4$. Namely, we get
\begin{equation}\label{boost2_estimate} \forall f \in \CP{24}, \  \exists C>0,L\in \N, \   \|\cPh^{2,n} f -P_Tf\|_{0,L+6}\le C \|f\|_{24,L} \left(\frac{T}{n} \right)^4. \end{equation}
Let us note that $\hat{P}_{h_1}^{[n-(i+1)]}\hat{P}_{h_2}^{[n]}\hat{P}_{h_1}^{[i]}$ corresponds to the scheme on a time grid that is uniform, but uniformly refined on the $(i+1)$-th time step. This time grid has thus $2n$ time steps, and if one should calculate all the terms in the sum of~\eqref{def_boost2}, this would require a computational time in $O(n^2)$. Thus, the method would not be more efficient that using the second-order scheme with a time step $n^2$. This is why we use random grids and use a random variable $\kappa$ that is uniformly distributed on $\{0,\dots,n-1\}$. We have 
\begin{equation}\label{boost2_RG} \cPh^{2,n} = \hat{P}^{[n]}_{h_1}+n\E[\hat{P}_{h_1}^{[n-(\kappa+1)]}(\hat{P}_{h_2}^{[n]}-\hat{P}_{h_1})\hat{P}^{[\kappa]}_{h_1}].\end{equation}
Thus, for the correcting term, we consider a random time grid that is the obtained from the uniform time grid with time step $T/n$ by refining uniformly the $(\kappa+1)$-th time step with a time step $h_2=T/n^2$. 

We have presented here how $\cPh^{2,n}$ improves the convergence of $\cPh^{1,n}=\hat{P}^{[n]}_{h_1}$. Then, for $\nu \ge 2$, it is possible to define by induction approximations $\cPh^{\nu,n}$, such that 
\begin{equation}\label{def_Pnu}\forall f \in \CP{12 \nu}, \  \exists C>0,L\in \N, \   \|\cPh^{\nu,n} f -P_Tf\|_{0,L+3\nu}\le C \|f\|_{12\nu,L} \left(\frac{T}{n} \right)^{2 \nu}. \end{equation}
Unfortunately, the induction cannot be easily described and involves a tree structure. We refer to~\cite{AB} for the details and to~\cite[Eq. (2.8)]{AL} for an explicit expression of $\cPh^{3,n}$.

\subsection{A second-order scheme for the log-Heston model}

Before presenting the scheme, it is interesting to point similarities and  difference between the weak error analysis  of Subsection~\ref{subsec_2nd} and the present one to get higher order approximations. Property~\eqref{H1_bar_heston} is a generalization of~(c), while~\eqref{H2_bar_heston} is stronger than properties (a) and (b)\footnote{Note that $\bff_L\in \CPL{\infty}{L}$. We have, for $i \le n$, $\|P^{[i]}_{T/n} \bff_L\|_{0,L} =\|Q_1^{[i]}\bff_L\|_{0,L}\le C \|\bff_L\|_{0,L}$ by \eqref{H2_bar_heston}, which gives (b).}. We now point an important difference between the two error analysis. In Equation~\eqref{eq_TT}, the difference between the semigroup and its approximation appears only once and there is no need to have regularity properties for the function $(\hat{P}_{h}-P_h)P_{ih}f$: only a polynomial bound is needed. In contrast, for the approximation $\cPh^{2,n}$ we need some regularity to iterate and upper bound the remainder. This difference has an important incidence in the case of the log-Heston process. It is proposed in~\cite{AA_MCOM} a second-order scheme for the log-Heston process for any $\sigma \ge 0$. When $\sigma^2\ge 4a$, this scheme relies for the Cox-Ingersoll-Ross (CIR) part on bounded random variables that match the first moments of the standard normal distribution. Unfortunately, these moment-matching variables prevent us to get~\eqref{H2_bar_heston}: in a recent work on high order approximations for the CIR process, we have shown in~\cite[Theorem 5.16]{AL} that it was not possible to use these moment-matching variables together with our analysis in order to get~\eqref{H2_bar_heston}. We do not repeat here the analysis that would be quite similar for the log-Heston model, and consider either the Ninomiya-Victoir scheme for $\sigma^2\le 4a$ or the exact CIR simulation for any $\sigma>0$. We now present this in detail.

We present in this subsection the approximations scheme that we will study in this paper. It is constructed by using the splitting technique. Let  $\brho=\sqrt{1-\rho^2}$. Then, the infinitesimal generator associated to the log-Heston SDE~\eqref{log-Heston_SDEs} is given by 
\begin{equation}\label{log_Heston2_diff-op}
  \mcL = \frac{y}{2}(\partial^2_x + 2\rho\sigma\partial_x\partial_y+\sigma^2\partial_y)+ (r-\frac{y}{2})\partial_x + (a-by)\partial_y.
\end{equation}
We split this operator as the sum $\mcL = \mcL_B+\mcL_W$ where \begin{equation}\label{def_LB}\mcL_B = \big((r-\frac{\rho a}{\sigma})+(\frac{\rho b}{\sigma}-\frac 12)y\big) \partial_x +\frac{y}{2}\brho^2\partial_x^2\end{equation}  is the infinitesimal generator of the SDE
\begin{equation}\label{LB_SDEs}
  \begin{cases}
    dX_t &= \big((r-\frac{\rho a}{\sigma})+(\frac{\rho b}{\sigma}-\frac 12)Y_t\big) dt+\brho\sqrt{Y_t} dB_t,\\
    dY_t &= 0,
  \end{cases}
\end{equation}
and  \begin{equation}\label{def_LW}\mcL_W =\frac{y}{2}(\rho^2\partial^2_x + 2\rho\sigma\partial_x\partial_y+\sigma^2\partial^2_y) + (a-by)(\frac{\rho}{\sigma}\partial_x+\partial_y)\end{equation} is the infinitesimal generator of 
\begin{equation}\label{LW_SDEs}
  \begin{cases}
    dX_t &= (\frac{\rho a}{\sigma}-\frac{\rho b}{\sigma}Y_t) dt +\rho \sqrt{Y_t} dW_t,\\
    dY_t &= (a-bY_t) dt +\sigma \sqrt{Y_t} dW_t.
  \end{cases}
\end{equation}
This splitting is slightly different from the one considered in~\cite[Subsection 4.2]{AA_MCOM}: one should remark that it is chosen in order to have in~\eqref{LW_SDEs} $dX_t = \frac{\rho}{\sigma}dY_t$. This is not particularly useful to get a second order scheme. However, it avoids introducing a third coordinate corresponding to the integrated CIR process, which is more parsimonious for the mathematical analysis.

We now present two different second order schemes for the log-Heston process, for which we will be able to prove the effectiveness of the higher order approximations. The first one simply consists in sampling exactly each SDE and then using the scheme composition introduced by Strang, see e.g.~\cite[Corollary 2.3.14]{AA_book}. More precisely, let $P^B$ (respectively $P^W$) denote the semigroup associated to the SDE~\eqref{LB_SDEs} (resp.~\eqref{LW_SDEs}). We define
\begin{equation}\label{def_PEx}
  \hat{P}_t^{Ex}=P^B_{t/2}P^W_t P^B_{t/2}.
\end{equation}
Let us note that the exact scheme for~\eqref{LB_SDEs} is explicit and given by
\begin{equation}\label{varphiB}
  \varphi_B(t,x,y,N)=(x+(r-\rho a/\sigma)t -(1/2-\rho b/\sigma)yt+\brho\sqrt{ty}N,~y), \ \text{ with } N\sim\mcN(0,1).
\end{equation}
We  indeed have $P^B_tf(x,y)=\E[f(\varphi_B(t,x,y,N))]$ for all $f \in \CP{0}$.
For the SDE~\eqref{LW_SDEs}, we have $P^W_tf(x,y)=\E[f(x+\frac{\rho}{\sigma}(Y^y_t-y),Y^y_t)]$, where $Y^y$ is the solution of~\eqref{log-Heston_SDEs}. Thus, it amounts to simulate exactly the $Y^y_t$, and we refer to~\cite[Section 3.1]{AA_book} for a presentation of different exact simulation methods.

However, in practice, the exact simulation of the Cox-Ingersoll-Ross process is longer than a simple Gaussian random variable, and it can be interesting to use an approximation scheme. We use here the one introduced by Ninomiya and Victoir~\cite{NV}. Following Theorem 1.18 in~\cite{AA_MCOM}, we rewrite $\mcL_W=\mcL_0+\mcL_1$ where 
\begin{equation}\label{def_L0L1}
  \mcL_0 =(a-\frac{\sigma^2}{4}-by)(\frac{\rho}{\sigma}\partial_x + \partial_y ),\qquad \mcL_1 =\frac{y}{2}(\rho^2\partial^2_x + 2\rho\sigma\partial_x\partial_y+\sigma^2\partial^2_y) + \frac{\rho \sigma}{4}\partial_x+ \frac{\sigma^2}4 \partial_y,
\end{equation}
are the infinitesimal generator respectively associated to
$$\begin{cases} dX_t &= (\frac{\rho }{\sigma}(a-\sigma^2/4)-\frac{\rho b}{\sigma}Y_t) dt\\
dY_t &= (a-\sigma^2/4-bY_t) dt  \end{cases} \text{ and }
\begin{cases} dX_t &= \frac{\rho \sigma }{4} dt +\rho \sqrt{Y_t} dW_t\\
  dY_t &= \frac{\sigma^2}4 dt +\sigma \sqrt{Y_t} dW_t. \end{cases}$$
Let $\psi_b(t)=\frac{1-e^{-bt}}b$ (convention $\psi_b(t)=t$ for $b=0$) and define
\begin{align}
  \varphi_0(t,x,y)&=\Big(x-\frac{\rho b}{\sigma}\psi_b(t)y +\frac{\rho}{\sigma}\psi_b(t)(a-\frac{\sigma^2}{4}),~e^{-bt}y+\psi_b(t)(a-\frac{\sigma^2}{4})\Big), \label{varphi0} \\
  \varphi_1(t,x,y)&=\Big(x+\frac{\rho }{\sigma}\big((\sqrt{y}+\frac{\sigma t}{2})^2-y\big),~(\sqrt{y}+\frac{\sigma t}{2})^2\Big). \label{varphi1}
\end{align}
We have for $t \ge 0$ and $f \in \CP{0}$, 
\begin{equation}\label{def_P0P1}
  P^0_t f(x,y)= f(\varphi_0(t,x,y)) \text{ and } P^1_t f(x,y)= \E[f(\varphi_1(\sqrt{t}G,x,y))], \text{ with } G\sim \mathcal{N}(0,1).  
\end{equation}
Indeed, $\varphi_0$ is the exact solution of the ODE associated to~$\mathcal{L}_0$, starting from $(x,y)$ at time~$0$, and we can show by Itô calculus that $\varphi_1(W_t,x,y)$ is an exact solution of the SDE associated to~$\mathcal{L}_1$, starting from $(x,y)$ at time~$0$, and with the Brownian motion $d\tilde{W}_t=\text{sgn}\left(\sqrt{y}+\frac \sigma 2 W_t \right)dW_t$. The Ninomiya-Victoir scheme~\cite{NV} for $\mathcal{L}_W$ is then $P^0_{t/2}P^1_tP^0_{t/2}$, and we define \begin{equation}\label{def_PNV}
  \hat{P}_t^{NV}=P^B_{t/2}P^0_{t/2}P^1_tP^0_{t/2} P^B_{t/2}, \text{ when } \sigma^2\le 4a.
\end{equation}
This scheme is well-defined only for $\sigma^2\le 4a$, otherwise $\varphi_0$ may send the $y$ component to negative values, and the composition is not well-defined. This problem was pointed in~\cite{AA_MCOM} that introduces a second order scheme for any $\sigma\ge 0$. For this scheme,  the normal variable~$G$ in~\eqref{def_P0P1} is replaced by a bounded random variable that matches the five first moments of~$G$ and besides, an ad hoc discrete scheme is used in the neighborhood of~$0$. However, as indicated in the introduction of this subsection, this prevents us with our analysis to get~\eqref{H2_bar_heston} and thus approximations of higher order. This is why we only consider the Ninomiya-Victoir scheme here.

We now state  the main theorem of the paper.
\begin{theorem}\label{main_theorem}
 Let $\hat{P}_t$ be either $\hat{P}_t^{Ex}$ defined by~\eqref{def_PEx} or $\hat{P}_t^{NV}$ by~\eqref{def_PNV}. Let $T>0$, $n\in \N^*$ and $h_l=T/n^l$. Let $\cPh^{1,n}=\hat{P}_{h_1}^{[n]}$, $\cPh^{2,n}$ be defined by~\eqref{def_boost2} and  $\cPh^{\nu,n}$ the further approximations developed in~\cite{AB}. Let $\nu \ge 1$.  For any $f\in \CP{12\nu}$ $x\in \R$ and $y\ge 0$, we have
  $$\cPh^{\nu,n}f(x,y)-P_T f(x,y) = O(1/n^{2\nu}).$$ 
\end{theorem}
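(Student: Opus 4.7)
The plan is to invoke the general boosting result of Alfonsi--Bally (\cite[Theorem~3.10]{AB}) in the doubly-indexed seminorm setting where the function space is $\cup_L \CPL{k}{L}$ equipped with the norms~\eqref{def_NormCpolKL}, the base rate is $\alpha=2$ and the loss-of-regularity function is $\psi_Q(k,L)=(k+12,L-1)$. Once the two structural hypotheses~\eqref{H1_bar_heston} and~\eqref{H2_bar_heston} are verified for $Q_l=\hat{P}_{h_l}$ (with $\hat{P}_t$ being either $\hat{P}_t^{Ex}$ or $\hat{P}_t^{NV}$), the conclusion for $\nu=1,2$ follows from the computation sketched in Section~\ref{Sec_main_heston} and for general $\nu\ge 1$ it follows from the tree-based construction of~$\cPh^{\nu,n}$ and the induction of~\cite{AB}, giving some $\mathbf{m}(\nu),\ell(\nu)\in\N$ with $\|\cPh^{\nu,n}f-P_Tf\|_{0,L+\ell(\nu)}\le C\|f\|_{\mathbf{m}(\nu),L}n^{-2\nu}$ for $L$ large enough; since $f\in\CP{12\nu}\subset\CPL{\mathbf{m}(\nu)}{L}$ for $L$ sufficiently large, the pointwise statement $\cPh^{\nu,n}f(x,y)-P_Tf(x,y)=O(n^{-2\nu})$ follows.

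\textbf{Verifying~\eqref{H2_bar_heston}.} The point is that the full semigroup $P_t$ and each elementary building block $P^B_t$, $P^W_t$, $P^0_t$, $P^1_t$ of the schemes is the semigroup of an SDE of the form~\eqref{log-Heston-ext_SDEs_intro} with a specific choice of parameters (the four parameter sets are listed explicitly in the excerpt). Proposition~\ref{prop-rep-logHeston-estim} gives in each case the stability estimate $\|P^\bullet_t f\|_{k,L}\le e^{Ct}\|f\|_{k,L}$. The case of $P^0_t f=f\circ\varphi_0(t,\cdot)$ is the deterministic limit of the same proposition (with $\sigma=0$ and no Brownian noise in either coordinate) and is handled directly by differentiating the affine map $\varphi_0$ whose coefficients are bounded on $[0,T]$. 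Composing the bounds gives $\|\hat{P}_{h_l}f\|_{k,L}\le e^{Ch_l}\|f\|_{k,L}$ for both $\hat{P}^{Ex}$ and $\hat{P}^{NV}$, and iterating $n^l$ times yields $\max_{0\le j\le n^l}\|Q_l^{[j]}f\|_{k,L}+\sup_{t\le T}\|P_tf\|_{k,L}\le e^{CT}\|f\|_{k,L}$.

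\textbf{Verifying~\eqref{H1_bar_heston}.} For each building block $P^\bullet_t$ with infinitesimal generator $V_\bullet$, Proposition~\ref{prop-rep-logHeston-estim} together with the semigroup identity $\partial_tP^\bullet_tf=V_\bullet P^\bullet_tf$ and Taylor's formula in time gives the expansion
\begin{equation*}
\Big\|P^\bullet_tf-\sum_{q=0}^{\bar{q}-1}\tfrac{t^q}{q!}V_\bullet^qf\Big\|_{k,L+\bar{q}}\le C t^{\bar{q}}\|f\|_{k+4\bar{q},L},
\end{equation*}
since each application of $V_\bullet$ increases the differentiation order by at most~$4$ (because of the $\sigma^2\partial_y^2$ term, which counts as $4$ in our index convention $\alpha+2\beta\le k$) and the polynomial growth by~$1$. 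We then apply the scheme composition lemma stated in the excerpt with $\bar{q}=3$ to $\hat{P}^{Ex}_t=P^B_{t/2}P^W_tP^B_{t/2}$, respectively to $\hat{P}^{NV}_t=P^B_{t/2}P^0_{t/2}P^1_tP^0_{t/2}P^B_{t/2}$. Because $\mathcal{L}=\mathcal{L}_B+\mathcal{L}_W$ (resp.\ $\mathcal{L}_W=\mathcal{L}_0+\mathcal{L}_1$) and because the Strang splitting is symmetric with time steps $t/2,t,t/2$, the $t^0$, $t^1$ and $t^2$ terms of the multinomial expansion of the composition coincide with those of the Taylor expansion $\text{Id}+t\mathcal{L}+\tfrac{t^2}{2}\mathcal{L}^2$ of $P_t$. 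Subtracting this common part leaves only $O(t^3)$ remainders, and gathering the norm estimates produces
\begin{equation*}
\|(P_{h_l}-\hat{P}_{h_l})f\|_{k,L+3}\le C h_l^3\,\|f\|_{k+12,L},
\end{equation*}
which is~\eqref{H1_bar_heston}; the exponent $12$ is the worst case $4\bar{q}$ with $\bar{q}=3$.

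\textbf{Main obstacle.} The delicate point is that the standard derivation of~\eqref{H1_bar_heston} in the weak-error literature only needs \emph{pointwise} bounds on the one-step error, whereas here we must control a full $\CPL{k}{L+3}$ norm in order for the iterated boosting argument to close. This forces us to combine, at every intermediate step of the Strang composition, the Cauchy-problem regularity result of~\cite{BCT} (used in the strengthened form of Proposition~\ref{prop-rep-logHeston-estim}, including its norm estimate~\eqref{norm_estim_logHeston_semigroup}) with careful bookkeeping of derivative orders and polynomial growth. The scheme composition lemma stated in the excerpt is precisely designed to automate this bookkeeping, and once it is applied the conclusion follows from~\cite[Theorem~3.10]{AB}. \qed
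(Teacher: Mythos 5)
Your proposal matches the paper's proof in essence and sequence: \eqref{H2_bar_heston} is obtained by recognizing each building-block semigroup $P^B_t$, $P^W_t$, $P^0_t$, $P^1_t$ (and the full $P_t$) as the semigroup of the extended log-Heston SDE \eqref{log-Heston-ext_SDEs} with a suitable parameter choice and applying Proposition~\ref{prop-rep-logHeston-estim} (this is Lemma~\ref{lem_H2_estimate}); \eqref{H1_bar_heston} follows from the $\bar q=3$ Taylor expansion of each block (Lemma~\ref{lem_expan_CPL_scheme}) combined with the scheme-composition lemma (Lemma~\ref{lem_compo}), which is Corollary~\ref{cor_H1}; and the conclusion then flows from the Alfonsi--Bally boosting machinery, exactly as the paper cites \eqref{boost2_estimate} for $\nu=2$ and \eqref{def_Pnu} for $\nu\ge 3$ (the paper additionally spells out $\nu=1$ via the Talay--Tubaro telescoping~\eqref{eq_TT}, but that is stylistic). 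One small bookkeeping slip: with \eqref{H1_bar_heston} reading $\|(P_{h_l}-Q_l)f\|_{k,L+3}\le C\|f\|_{k+12,L}h_l^3$, the loss-of-regularity map in the doubly-indexed seminorm setting is $\psi_Q(k,L)=(k+12,L-3)$, not $(k+12,L-1)$; you appear to have carried over the $L-1$ from the CIR chapter's footnote, where it is also used imprecisely.
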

\begin{proof}
  Property~\eqref{H1_bar_heston} is proved in Corollary~\ref{cor_H1} and~\eqref{H2_bar_heston} in Lemma~\ref{lem_H2_estimate}. For $f\in \CP{12\nu}$, there exists $L$ such that $f\in \CPL{12\nu}{L}$. Let $\nu=1$. We get from~\eqref{eq_TT}, 
  $$\|\cPh^{1,n}f-P_Tf\|_{0,L}= \|\sum_{i=0}^{n-1}\hat{P}_{h_1}^{[n-(i+1)]}(\hat{P}_{h_1}-P_{h_1})P_{i{h_1}}f\|_{0,L}\le C^3T \|f\|_{12,L+3} \left(\frac{T}{n} \right)^{2} , $$
  since $\| \hat{P}_{h_1}^{[n-(i+1)]}(\hat{P}_{h_1}-P_{h_1})P_{i{h_1}}f \|_{0,L} \le C\|(\hat{P}_{h_1}-P_{h_1})P_{i{h_1}}f\|_{0,L} \le C^2\|P_{i{h_1}}f\|_{12,L+3} h_1^3 \le C^3\|f\|_{12,L+3} h_1^3$ by using~\eqref{H2_bar_heston}, then~\eqref{H1_bar_heston} and again~\eqref{H2_bar_heston}. This shows the claim for $\nu=1$. For $\nu=2$ (resp. $\nu \ge 3$), the claim is a consequence of~\eqref{boost2_estimate} (resp.~\eqref{def_Pnu}).
\end{proof}

\section{Proof of the main result}\label{Sec_proof}

\subsection{Preliminary results on the norm}

The next lemma gathers basic properties of the family of norms $\|\cdot\|_{k,L}$ defined in Equation~\eqref{def_NormCpolKL}.

\begin{lemma}\label{basic_lemma_CPL}
  Let $k,L\in\N$. We have the following basic properties:
  \begin{enumerate}
    \item $\CPL{k+1}{L}\subset \CPL{k}{L}$. For $f\in \CPL{k+1}{L}$, we have $\|f\|_{k,L}\le \|f\|_{k+1,L}$.
    \item Let $k,\a',\b'\in\N$. For $f\in\CPL{k+\a'+2\b'}{L}$ one has $\|\partial_x^{\a'} \partial_y^{\b'}  f\|_{k,L}\le \|f\|_{k+\alpha'+2\beta',L}$.
    \item $\CPL{k}{L}\subset \CPL{k}{L+1}$ and $\|f\|_{k,L+1}\le 3\|f\|_{k,L}$ for $f\in \CPL{k}{L}$.
    \item Let $\mcM_1$ be the operator defined by $\mcM_1 f(x,y)=yf(x,y)$. For $f\in \CPL{k}{L}$, we have  $\mcM_1f\in \CPL{k}{L+1}$  and $\| \mcM_1f\|_{k,L+1}\le \frac{3}{2}(k+1) \|f\|_{k,L}$.
    \item Let $\mcL$, $\mcL_B$, $\mcL_W$, $\mcL_0$ and $\mcL_1$ the infinitesimal generators defined in Equations~\eqref{log_Heston2_diff-op}, \eqref{def_LB}, \eqref{def_LW} and~\eqref{def_L0L1}. Then, for all $k\in \N$, there exists a constant $C(k)$ such that 
    \begin{align*} &\forall L\in \N, f\in\CPL{k+4}{L}, \ \|\mcL f\|_{k,L+1} + \|\mcL_W f\|_{k,L+1} + \|\mcL_1 f\|_{k,L+1}  \le C(k) \|f\|_{k+4 ,L}, \\
      &\forall L\in \N, f\in\CPL{k+2}{L}, \ \|\mcL_B f\|_{k,L+1}+\|\mcL_0 f\|_{k,L+1} \le C(k)  \|f\|_{k+2 ,L}.\end{align*}
  \end{enumerate}
\end{lemma}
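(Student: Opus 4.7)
Properties (1) and (2) are essentially tautological from the definition~\eqref{def_NormCpolKL}, and I would simply read them off. For (1), $\|f\|_{k+1,L}$ is a sum of nonnegative terms over the index set $\{(\a,\b):\a+2\b\le k+1\}$, which contains $\{(\a,\b):\a+2\b\le k\}$ used to define $\|f\|_{k,L}$. For (2), I note that $\partial_x^\a\partial_y^\b(\partial_x^{\a'}\partial_y^{\b'}f)=\partial_x^{\a+\a'}\partial_y^{\b+\b'}f$ with $(\a+\a')+2(\b+\b')\le k+\a'+2\b'$ whenever $\a+2\b\le k$, so every term of $\|\partial_x^{\a'}\partial_y^{\b'}f\|_{k,L}$ already appears in $\|f\|_{k+\a'+2\b',L}$.

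For (3), the key pointwise inequality is $\bff_L\le 3\,\bff_{L+1}$, which I would establish from the elementary bound $t^{2L}\le 1+t^{2L+2}$ for $t\ge 0$ (case split on $t\le 1$ versus $t>1$): applied with $t=|x|$ and $t=y$, this gives $1+x^{2L}+y^{2L}\le 3+x^{2L+2}+y^{2L+2}\le 3\,\bff_{L+1}(x,y)$. Dividing in the definition of $\|f\|_{k,L+1}$ yields the factor~$3$.

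Property (4) is the only one with a nontrivial ingredient. The Leibniz rule gives $\partial_x^\a\partial_y^\b(yf)=y\,\partial_x^\a\partial_y^\b f+\b\,\partial_x^\a\partial_y^{\b-1}f$ (the second summand absent when $\b=0$), and the plan is to use the pointwise estimate $y\,\bff_L(x,y)\le C\,\bff_{L+1}(x,y)$. The terms $y$ and $y\cdot y^{2L}=y^{2L+1}$ are immediately dominated by $1+y^{2L+2}$; the delicate cross-term $y\cdot x^{2L}$ will be handled by Young's inequality with conjugate exponents $p=2L+2$ and $q=(2L+2)/(2L+1)$, giving $yx^{2L}\le \tfrac{y^{2L+2}}{p}+\tfrac{x^{2Lq}}{q}$, where the exponent $2Lq=4L(L+1)/(2L+1)$ is at most $2L+2$, so $x^{2Lq}\le 1+x^{2L+2}$. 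Combining this with property~(3) on the lower-order Leibniz term and summing over the multi-index set will give the norm inequality; the explicit constant $\tfrac{3}{2}(k+1)$ will come from the combinatorics of counting pairs $(\a,\b)$ with $\a+2\b\le k$ and tracking the extra factor of $\b$ that Leibniz produces.

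For (5), I would write each of $\mcL$, $\mcL_W$ and $\mcL_1$ as a finite sum of terms of the form $c\,\partial_x^\a\partial_y^\b f$ or $c\,\mcM_1(\partial_x^\a\partial_y^\b f)$ with $\a+2\b\le 4$ and $c\in\R$, and similarly $\mcL_B$, $\mcL_0$ with $\a+2\b\le 2$. Each constant-coefficient term is then controlled in $\|\cdot\|_{k,L+1}$ via properties~(2) and~(3), while each $\mcM_1$-term uses properties~(2) and~(4). The main technical obstacle is the pointwise estimate $y\,\bff_L\le C\,\bff_{L+1}$ that underpins (4), and everything downstream is purely combinatorial bookkeeping whose constants depend only on $k$, which is enough for the statements claimed.
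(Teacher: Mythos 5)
Your plan follows the paper's proof almost step for step: properties (1)--(2) are read off the definition, (3) rests on $t^{2L}\le 1+t^{2L+2}$, (4) on Leibniz plus a pointwise bound $y\,\bff_L\le C\,\bff_{L+1}$, and (5) on decomposing the generators into constant-coefficient derivatives and $\mcM_1$-weighted derivatives and then invoking (2), (3), (4). The only place you deviate from the paper, and the only place worth scrutiny, is the Young's inequality you choose for the cross-term $y\,x^{2L}$ in (4). Applying Young directly to $y\cdot x^{2L}$ with exponents $\big(2L+2,\ \tfrac{2L+2}{2L+1}\big)$ gives
$y\,x^{2L}\le \tfrac{y^{2L+2}}{2L+2}+\tfrac{2L+1}{2L+2}\big(1+x^{2L+2}\big)$, and adding the $y+y^{2L+1}\le 1+y^{2L+2}$ piece produces $y\,\bff_L\le \tfrac{4L+3}{2L+2}\,\bff_{L+1}$, a constant that is uniformly below $2$ but strictly above $\tfrac{3}{2}$ once $L\ge 1$. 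The paper instead first uses the AM--GM bound $y\le\tfrac{1+y^2}{2}$ to replace $y\,x^{2L}$ by $\tfrac12 x^{2L}+\tfrac12 y^2x^{2L}$, and only then applies Young to $y^2x^{2L}$ with exponents $\big(L+1,\tfrac{L+1}{L}\big)$; working through the coefficients one finds $y\,\bff_L\le \tfrac{3}{2}\,\bff_{L+1}$ with the maximum attained on the constant term, uniformly in $L$. Your route therefore would not yield the exact constant $\tfrac{3}{2}(k+1)$ stated in (4), but something like $\tfrac{3k}{2}+2$ instead; since (5) and every downstream use only need a $k$-dependent constant, this is a cosmetic mismatch and not a gap in the argument.
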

\begin{proof}
  Property (1)-(2) are straightforward. We prove only (3), (4) and (5).

  \noindent(3) Let $a>0$ than $a^{2L}\le 1+a^{2L+2}$. So, we get  $\bff_L(x,y)=1+x^{2L}+y^{2L}\le 3(1+x^{2(L+1)}+y^{2(L+1)})=3\bff_{L+1}(x,y)$ and then $\frac{1}{\bff_{L+1}(x,y)}\le 3 \frac{1}{\bff_{L}(x,y)}$. This gives immediately the claim.

  \noindent(4) Let $f\in \CPL{k}{L}$.  Applying Leibniz rule, one obtains $\partial^\a_x\partial_y^\b \mcM_1f = \b\partial^\a_x\partial_y^{\b-1}f +\mcM_1\partial^\a_x\partial_y^\b f$ for $\a,\b \in \N$. Now, we write 
  \begin{align*}
    \frac{|\partial^\a_x\partial_y^\b[yf(x,y)]|}{\bff_{L+1}(x,y)}&\le \frac{\b|\partial^\a_x\partial_y^{\b-1} f(x,y)|}{\bff_{L+1}(x,y)} + \frac{y|\partial^\a_x\partial_y^\b f(x,y)|}{\bff_{L+1}(x,y)} \\
    &\le  \frac{3\b|\partial^\a_x\partial_y^{\b-1} f(x,y)|}{\bff_{L}(x,y)} +\frac{3|\partial^\a_x\partial_y^\b f(x,y)|}{2 \bff_{L}(x,y)},
  \end{align*}
  where we used the comparison above between $\bff_L$ and $\bff_{L+1}$  for the first term and, for the second term,  
  $y \bff_L(x,y) =y+y^{2L+1}+yx^{2L}+\le 1+y^{2L+2}+\frac{1+y^2}{2}x^{2L}\le \frac 32  \bff_{L+1}(x,y)$ by using $y+y^{2L+1}\le 1+y^{2L+2}$ and then Young's inequality. Then, we obtain 
  \begin{align*}
    \|\mcM_1 f\|_{k,L+1} &\le \sum_{\a+2\b\le k} \left(3\beta \sup_{(x,y)\in \R \times \R_+}\frac{3\b|\partial^\a_x\partial_y^{\b-1} f(x,y)|}{\bff_{L}(x,y)}  + \frac 3 2 \sup_{(x,y)\in \R \times \R_+}\frac{3\b|\partial^\a_x\partial_y^{\b} f(x,y)|}{\bff_{L}(x,y)} \right) \\
    &\le 3\left( \lfloor k/2 \rfloor +1/2 \right) \| f\|_{k,L}.
  \end{align*}
  \noindent(5) We prove only the estimate for $\mcL$, the others are obtained using the same arguments. We have $\|\mcL f\|_{k,L+1}\le \frac{1}{2}\|\mcM_1(\partial_x^2+2\rho\sigma\partial_x\partial_y+\sigma^2\partial_y^2-\partial_x-2b\partial_y) f\|_{k,L+1} + \|(r\partial_x+a\partial_y) f\|_{k,L+1}$, by using linearity of derivatives and the triangular inequality.  
  We conclude using property (4) and (2) for the first term,  (2) and (3) for the second and finally property number (1) to upper bound $\|\mcL f\|_{k,L+1}$ by $C(k)\|f\|_{k+4,L+1}$, where $C(k)$ is  a constant  depending on $k$ and on the parameters ($r,\rho,a,b$ and $\sigma$).
\end{proof}

\subsection{The Cauchy problem of the Log-Heston SDE}

In this subsection, we aim to prove the estimates on the derivatives of the Cauchy problem.  The representation formula presented below is a result of Briani, Caramellino and Terenzi~\cite{BCT}.

\begin{prop}\label{prop-rep-logHeston-estim}
  Let $k,L \in \mathbb{N}$ and suppose that $f \in \CPL{k}{L}$. Let $\lambda \ge 0$, $c,d \in \R$. Let $(X^{t, x, y}, Y^{t, y})$ be the solution to the SDE, for $s\ge t$, 
  \begin{equation}\label{log-Heston-ext_SDEs}
    \begin{cases}
      X^{t,x,y}_s &= x +\int_t^s (c+dY^y_r) dr + \int_t^s  \lambda \sqrt{Y^y_r} (\rho dW_r + \sqrt{1-\rho^2} dB_r)\\
      Y^{t,y}_s &= y+ \int_t^s (a-bY^y_r) dr +\sigma \int_t^s  \sqrt{Y^y_r} dW_r,
    \end{cases}
  \end{equation}
 and set
  $$ u(t, x, y)=\E[f(X_{T}^{t, x, y}, Y_{T}^{t, y})]=P_{T-t}f(x,y).$$
  Then, $u(t,\cdot,\cdot)\in \CPL{k}{L}$ and the following stochastic representation holds for $\a+2\beta \le k$,
  \small
  \begin{multline}\label{stoc_repr-new}
  \partial_{x}^{\a} \partial_{y}^{\b} u(t, x, y)=\E\bigg[e^{-\b b(T-t)} \partial_{x}^{\a} \partial_{y}^{\b} f\big(X_{T}^{\b, t, x, y}, Y_{T}^{\b,t, y}\big) \\
   \quad+\b \int_{t}^{T}e^{-\b b(s-t)}\Big(\frac{\lambda^2}{2} \partial_{x}^{\a+2} \partial_{y}^{\b-1} u + d \partial_{x}^{\a+1} \partial_{y}^{\b-1} u \Big)\big(s, X_{s}^{\b, t, x, y}, Y_{s}^{\b,t, y}\big) d s\bigg],
  \end{multline}
  where $\partial_{x}^{\a} \partial_{y}^{\b-1}  u:=0$ when $\beta=0$ and $(X^{\b, t, x, y}, Y^{\b, t, y}), \beta \geq 0$, denotes the solution starting from $(x, y)$ at time $t$ to the SDE \eqref{log-Heston-ext_SDEs}  with parameters
  \begin{equation}\label{parameters-new}
    \rho_{\b}=\rho,  \quad a_{\b}=a+\b \frac{\sigma^{2}}{2}, \quad b_{\beta}= b, \quad c_{\b}=r+\b \rho \sigma \lambda, \quad d_{\b}=d, \quad \sigma_{\b}=\sigma.
  \end{equation}
  Moreover, one has the following norm estimation for the semigroup   \begin{equation}
   \forall k, L \in \N, T>0, \  \exists C, \forall f \in  \CPL{k}{L}, t\in[0,T], \   \|P_tf\|_{k,L}\le  \|f\|_{k,L} e^{Ct}.
  \end{equation}
\end{prop}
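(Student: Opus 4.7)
The representation formula~\eqref{stoc_repr-new} is already established in Briani, Caramellino and Terenzi~\cite{BCT}, so the only new content is the norm estimate $\|P_t f\|_{k,L}\le \|f\|_{k,L} e^{Ct}$. The plan is to extract this estimate from~\eqref{stoc_repr-new} by a double induction: an outer induction on $\beta$ (the order of $y$-differentiation) coupled with a Gronwall argument, after having first established polynomial moment bounds for the auxiliary processes $(X^{\beta,t,x,y},Y^{\beta,t,y})$.

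\textbf{Step 1: moment bounds.} First I would show that for any $L\in\N$ there exists a constant $K=K(L,T,a,b,c,d,\lambda,\sigma,\rho)$ such that, for the process defined by~\eqref{log-Heston-ext_SDEs} with parameters $(\rho_\beta,a_\beta,b_\beta,c_\beta,d_\beta,\sigma_\beta)$ given by~\eqref{parameters-new} and $\beta\le k/2$,
\begin{equation*}
\E\bigl[\bff_L(X^{\beta,t,x,y}_s,Y^{\beta,t,y}_s)\bigr]\le K\, \bff_L(x,y)\, e^{K(s-t)},\qquad t\le s\le T.
\end{equation*}
This is standard: applying Itô's formula to $\bff_L(x,y)=1+x^{2L}+y^{2L}$ and using that the drift and diffusion of~\eqref{log-Heston-ext_SDEs} grow at most linearly in $Y$ (recalling $Y\ge 0$), one obtains a differential inequality of the form $\frac{d}{ds}\E[\bff_L]\le K\,\E[\bff_L]$ on which Gronwall's lemma applies. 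The constants can be taken uniform in $\beta\in\{0,\dots,\lfloor k/2\rfloor\}$ since the modified parameters~\eqref{parameters-new} are bounded by $\beta$.

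\textbf{Step 2: induction on $\beta$.} For $k,L$ fixed and $f\in\CPL{k}{L}$, define
\begin{equation*}
A_\beta(t)=\max\Bigl\{\sup_{(x,y)}\frac{|\partial_x^{\alpha}\partial_y^{\beta}u(t,x,y)|}{\bff_L(x,y)}\ :\ \alpha\in\N,\ \alpha+2\beta\le k\Bigr\}.
\end{equation*}
The base case $\beta=0$ is immediate: the representation~\eqref{stoc_repr-new} reduces to $\partial_x^{\alpha}u(t,x,y)=\E[\partial_x^{\alpha}f(X_T^{0,t,x,y},Y_T^{0,t,y})]$, and combining $|\partial_x^{\alpha}f|\le \|f\|_{k,L}\bff_L$ with Step~1 yields $A_0(t)\le K\|f\|_{k,L}e^{K(T-t)}$. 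For the inductive step $\beta\ge 1$, observe that the two derivatives appearing inside the integral in~\eqref{stoc_repr-new}, namely $\partial_x^{\alpha+2}\partial_y^{\beta-1}u$ and $\partial_x^{\alpha+1}\partial_y^{\beta-1}u$, both have their combined index $\alpha'+2\beta'$ bounded by $\alpha+2\beta\le k$, so each is controlled by $A_{\beta-1}(s)\bff_L$. Taking absolute values in~\eqref{stoc_repr-new} and using Step~1 to push the expectation through $\bff_L$, we obtain
\begin{equation*}
A_\beta(t)\le C_1\|f\|_{k,L}e^{C_1(T-t)}+ C_2\int_t^T A_{\beta-1}(s)e^{C_1(s-t)}ds,
\end{equation*}
for constants $C_1,C_2$ depending only on $k,L,T$ and the coefficients. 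Iterating gives $A_\beta(t)\le C_\beta \|f\|_{k,L}e^{C(T-t)}$ for every $\beta\le k/2$, and summing over $(\alpha,\beta)$ with $\alpha+2\beta\le k$ yields $\|u(t,\cdot,\cdot)\|_{k,L}=\|P_{T-t}f\|_{k,L}\le \|f\|_{k,L}e^{C(T-t)}$, as required.

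\textbf{Main obstacle.} The only delicate point is verifying that the moment-growth constant $K$ of Step~1 can be taken uniform in $\beta$: the $\beta$-dependent drift $c_\beta=r+\beta\rho\sigma\lambda$ and mean-reversion level $a_\beta=a+\beta\sigma^2/2$ inflate with $\beta$, but since we only need the bound for $\beta\le\lfloor k/2\rfloor$ this remains finite and deterministic. Once this is secured, the rest of the argument is a clean Gronwall induction driven by the $\beta$-lowering structure of~\eqref{stoc_repr-new}.
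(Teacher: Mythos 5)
Your approach is structurally the same as the paper's: take the representation~\eqref{stoc_repr-new} from Briani--Caramellino--Terenzi, and obtain the norm estimate by an induction on $\beta$ driven by the $\beta$-lowering structure of that formula, combined with a moment bound for $\bff_L$ under the auxiliary processes. The one genuine difference is how the moment bound is obtained: the paper invokes the affine/polynomial moment structure of the extended log-Heston process (the semigroup acts as a matrix exponential on polynomials of degree $\le 2L$, via Cuchiero et al.), whereas you propose It\^o plus Gronwall. Your route is more elementary but needs Young's inequality to absorb cross terms such as $X^{2L-1}Y$ or $X^{2L-2}Y$ into $\bff_L$, and note that Gronwall actually delivers $\E[\bff_L(X_s,Y_s)]\le \bff_L(x,y)\,e^{K(s-t)}$ with \emph{no} multiplicative constant in front, since $\E[\bff_L(X_t,Y_t)]=\bff_L(x,y)$; the extraneous $K$ you place out front is not only unnecessary, it is the source of the issue below.

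The multiplicative constant is where the gap lies. You state the iterated bound as $A_\beta(t)\le C_\beta\|f\|_{k,L}\,e^{C(T-t)}$ with $C_\beta$ out front, yet the next line asserts that summing over $\alpha+2\beta\le k$ yields $\|P_{T-t}f\|_{k,L}\le\|f\|_{k,L}\,e^{C(T-t)}$; summing your bounds in fact gives $\bigl(\sum_\beta C_\beta\bigr)\|f\|_{k,L}\,e^{C(T-t)}$, which is a weaker statement. The coefficient-one form is not cosmetic: it is exactly what makes the $n^l$-fold composition $\|Q_l^{[j]}f\|_{k,L}\le e^{3CjT/n^l}\le e^{3CT}$ uniform in $j\le n^l$ in Lemma~\ref{lem_H2_estimate}, i.e.~\eqref{H2_bar_heston}; a front constant $C'>1$ would propagate to $(C')^{n^l}$ under composition and destroy the uniform bound. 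The paper avoids this by proving, for each fixed $(\alpha,\beta)$, the sharper intermediate estimate
$$\sup_{x,y}\frac{|\partial_x^\alpha\partial_y^\beta u(t,x,y)|}{\bff_L(x,y)}\le \sup_{x,y}\frac{|\partial_x^\alpha\partial_y^\beta f(x,y)|}{\bff_L(x,y)}\,e^{C(T-t)}+C\|f\|_{k,L}(T-t),$$
so that the leading exponential carries coefficient one and the inductive contribution sits in an $O(T-t)$ correction; summing over $\alpha+2\beta\le k$ and using $1+x\le e^x$ then gives $\|P_{T-t}f\|_{k,L}\le\|f\|_{k,L}\,e^{C(1+(k+1)^2)(T-t)}$. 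Your Gronwall iteration can be rescued by carrying the $\beta$-dependence in the exponent rather than out front (absorbing the $O(T-t)$ correction via $1+x\le e^x$ at each step), but as written the final step does not follow from the stated iterated bound.
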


\begin{proof}
Proposition~\ref{prop-rep-logHeston-estim} basically restates~\cite[Proposition 5.3]{BCT} in our framework (note that our space $\CPL{k}{L}$ already includes twice more derivatives in~$x$ than in $y$ and that we have added the scaling factor $\lambda$ for convenience). The only additional result is the norm estimate, which we prove now. 

Let $f \in  \CPL{k}{L}$. We will prove that for all $(\a,\b)$ such that $\alpha+2 \b \le k$, there exists a constant $C\in \R_+$ such that 
\begin{equation}\label{inter_estim}
  \sup_{x\in \R y \in \R_+} \frac{|\partial_x^\alpha \partial_y^\beta u(t,x,y)|}{\bff_L(x,y)} \le \sup_{x\in \R y \in \R_+} \frac{|\partial_x^\alpha \partial_y^\beta f(x,y)|}{\bff_L(x,y)}e^{C(T-t)}  + C \|f\|_{k,L}(T-t).
\end{equation}
Let us note that this implies $\sup_{x\in \R y \in \R_+} \frac{|\partial_x^\alpha \partial_y^\beta u(t,x,y)|}{\bff_L(x,y)} \le \tilde{C} \|f\|_{k,L}$, with $\tilde{C}=e^{CT}+CT$.

For $\beta=0$, the estimate is straightforward : from~\eqref{stoc_repr-new} and $f\in\CPL{k}{L}$, one has
$$
|\partial_{x}^{\a} u(t,x,y)| \le  \left(\sup_{x'\in \R y' \in \R_+} \frac{|\partial_x^\alpha f(x',y')|}{\bff_L(x',y')}  \right)\E \big[\bff_L(X^{t,x,y}_T,Y_T^{t,y})\big],
$$
and we get easily~\eqref{inter_estim} by using the estimate on moments~\eqref{eq_moments} that we prove below. 

Suppose now that the estimate \eqref{inter_estim} is true up to $\beta-1$, and let us prove it for $\beta$.
Using \eqref{stoc_repr-new}  and the induction hypothesis for the integral term, we get
\begin{align*}
  |\partial_{x}^{\a} \partial_{y}^{\b} u(t, x, y)|\le & 
  e^{-\beta b (T-t)} \left(\sup_{x'\in \R y' \in \R_+} \frac{|\partial_x^\alpha \partial_y^\beta f(x',y')|}{\bff_L(x',y')} \right)
  \E\left[ \bff_L(X^{\beta,t,x,y}_T,Y^{\beta,t,y}_T)  \right]\\
  &+\beta\frac{\lambda^2+|d|}{2}e^{\beta |b| T}  \tilde{C} \|f\|_{k,L} \int_t^T \E[\bff_L(X^{\beta,t,x,y}_s,Y^{\beta,t,y}_s)] ds.
\end{align*}
This gives~\eqref{inter_estim} by using again the estimate on the moments~\eqref{eq_moments}. This shows~\eqref{inter_estim} by induction, and we finally sum~\eqref{inter_estim} over $\alpha+2\beta\le k$ to get
$$\|P_{T-t}f\|_{k,L}\le \|f\|_{k,L} e^{C(T-t)}+C(k+1)^2 \|f\|_{k,L} (T-t)\le \|f\|_{k,L} e^{C(1+(k+1)^2)(T-t)},$$
proving the claim.
\end{proof}

\begin{lemma}\label{lem_moments}
  Let $(X^{x,y},Y^y)$ be the solution of~\eqref{log-Heston-ext_SDEs} starting from $(x,y)$ at time~$0$. Let $T>0$. For any $L\in \N$, there is a constant $C\in \R_+$ depending on $T$, $L$ and the SDE parameters, such that 
\begin{equation}\label{eq_moments}\E[\bff_L(X^{x,y}_t,Y^y_t)]\le e^{Ct}\bff_L (x,y), \ t\in [0,T].
\end{equation}
\end{lemma}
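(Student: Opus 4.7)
The strategy is to apply Itô's formula to $\bff_L(X^{x,y}_t,Y^y_t)$ and close the resulting estimate with Grönwall's lemma. Denote by $\tilde{\mcL}$ the infinitesimal generator associated with the SDE~\eqref{log-Heston-ext_SDEs}. Because $\bff_L(x,y)=1+x^{2L}+y^{2L}$ has no mixed $(x,y)$-derivative, a direct computation gives
\begin{align*}
\tilde{\mcL}\bff_L(x,y) =\,& 2Lc\,x^{2L-1}+2Ld\,yx^{2L-1}+(2La+\sigma^2L(2L-1))\,y^{2L-1}\\
&-2Lb\,y^{2L}+\lambda^2L(2L-1)\,y\,x^{2L-2}.
\end{align*}
Each of these monomials can be controlled by $\bff_L$ via elementary Young-type inequalities: $|x|^{2L-1}\le 1+x^{2L}$, $|yx^{2L-1}|\le \frac{y^{2L}}{2L}+\frac{2L-1}{2L}x^{2L}$, $|y|^{2L-1}\le 1+y^{2L}$, and (for $L\ge 2$) $yx^{2L-2}\le \frac{y^L}{L}+\frac{L-1}{L}x^{2L}$ combined with $y^L\le 1+y^{2L}$; the cases $L=0,1$ are either trivial or reduce to terms already handled. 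Hence there exists a constant $K=K(L,a,b,c,d,\lambda,\sigma)$, depending only on $L$ and the SDE parameters, such that $\tilde{\mcL}\bff_L(x,y)\le K\,\bff_L(x,y)$ for all $(x,y)\in\R\times\R_+$.

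The main obstacle is that the Itô term containing the stochastic integrals is only a priori a local martingale, and $\bff_L$ is unbounded, so one cannot directly take expectations. The standard way around this is localization: introduce the stopping times $\tau_n:=\inf\{s\ge 0\,:\,|X^{x,y}_s|+Y^y_s\ge n\}$, which satisfy $\tau_n\uparrow\infty$ almost surely, since the CIR process $Y^y$ does not explode in finite time and, given a trajectory of $Y^y$, the equation for $X^{x,y}$ has coefficients with at most linear growth in $X^{x,y}$. Applying Itô's formula up to $t\wedge\tau_n$, the stochastic integrals become square-integrable martingales (their quadratic variations are bounded on that interval), so taking expectations kills the martingale part and, via Fubini and the pointwise bound above, yields
$$u_n(t):=\E[\bff_L(X^{x,y}_{t\wedge\tau_n},Y^y_{t\wedge\tau_n})]\le \bff_L(x,y)+K\int_0^t u_n(s)\,ds,$$
where we used $\mathbf{1}_{s<\tau_n}\bff_L(X^{x,y}_s,Y^y_s)\le \bff_L(X^{x,y}_{s\wedge\tau_n},Y^y_{s\wedge\tau_n})$ to express the right-hand side in terms of $u_n$.

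Grönwall's lemma then gives $u_n(t)\le \bff_L(x,y)\,e^{Kt}$ uniformly in $n$. Since $\bff_L\ge 0$, Fatou's lemma lets us pass $n\to\infty$ through the expectation, yielding $\E[\bff_L(X^{x,y}_t,Y^y_t)]\le \bff_L(x,y)\,e^{Kt}$, which is exactly the claimed estimate with $C=K$.
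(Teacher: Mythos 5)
Your proof is correct, but it takes a genuinely different route from the one in the paper. The paper invokes the affine/polynomial property of the extended log-Heston process (citing~\cite[Theorem 2.7]{CKRT}): the semigroup acts on polynomials of degree at most $2L$ as a matrix exponential, so one may write $\E[(X^{x,y}_t)^{2L}+(Y^y_t)^{2L}]=x^{2L}+y^{2L}+\sum_{i+j\le 2L}\varphi_{i,j}(t)\,x^i y^j$ with $\varphi_{i,j}\in\mcC^1([0,T])$ and $\varphi_{i,j}(0)=0$; the bound then follows from $|x|^i y^j\le 2\bff_L(x,y)$ for $i+j\le 2L$ and the boundedness of $\varphi'_{i,j}$ on $[0,T]$. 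You instead compute $\tilde{\mcL}\bff_L$ directly, dominate each monomial by $\bff_L$ with Young's inequality to obtain $\tilde{\mcL}\bff_L\le K\bff_L$, and close the argument via Itô, localization by $\tau_n$ (justified by nonexplosion), Grönwall and Fatou. Both arguments are valid: your route is more elementary and self-contained, requiring only standard stochastic calculus and making no appeal to the polynomial-process machinery; the paper's route is shorter once that machinery is available, since it directly produces the explicit polynomial structure of the moments (which the paper also reuses elsewhere) and avoids having to discuss integrability of the local-martingale part. One small remark on your write-up: the pointwise estimate you derive via Young's inequality is in fact $|\tilde{\mcL}\bff_L|\le K\bff_L$, which is stronger than the one-sided $\tilde{\mcL}\bff_L\le K\bff_L$ you need; stating it that way also makes the Fubini step before Grönwall immediate, since the integrand is then dominated by a bounded quantity on $\{s<\tau_n\}$.
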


\begin{proof}
  We use the affine (and thus polynomial) property of the extended log-Heston process~\eqref{log-Heston-ext_SDEs}, see~\cite[Example 3.1]{CKRT}. By~\cite[Theorem 2.7]{CKRT}, we get that the log-Heston semigroup acts as a matrix exponential on the polynomial functions of degree lower than $2L$. This gives $\E[(X^{x,y}_t)^{2L}+(Y^y_t)^{2L}]=x^{2L}+y^{2L}+\sum_{i+j\le 2L}\varphi_{i,j}(t) x^i y^j$, with $\varphi_{i,j}\in \mathcal{C}^1([0,T],\R)$ such that $\varphi_{i,j}(0)=0$. Using that $|x|^i y^j\le |x|^{i+j}+ y^{i+j}\le 2 \bff_L(x,y)$ for $i+j\le 2L$ and using that $\varphi'_{i,j}$ is bounded on $[0,T]$, we get 
  $$\E[\bff_L(X^{x,y}_t,Y^y_t)]\le \bff_L(x,y)+ Ct \bff_L(x,y)\le \bff_L(x,y)e^{Ct}, $$
  with $C=2 \sum_{i,j\le 2L } \max_{[0,T]}|\varphi'_{i,j}|$. 
\end{proof}

\subsection{Proof of~\eqref{H1_bar_heston} and ~\eqref{H2_bar_heston}}

We start by proving the property~\eqref{H2_bar_heston} in the next lemma.
\begin{lemma}\label{lem_H2_estimate}
  Let $t\in[0,T]$,  $k,L\in\N$ and $f\in\CPL{k}{L}$. Let $\varphi_0 $ be the function defined in Equation~\eqref{varphi0}. Then,  there exists $C$ such that, for $I\in \{0,1,B,W\}$, $\|P^{I}_tf\|_{k,L}\le e^{Ct}\|f\|_{k,L}$, for $t\in [0,T]$. The semigroup approximations $\hat{P}^{Ex}_t$ and $\hat{P}^{NV}_t$ enjoy the same property and satisfy~\eqref{H2_bar_heston}.
\end{lemma}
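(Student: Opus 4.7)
The plan is to first establish the estimate $\|P^I_t f\|_{k,L}\le e^{C_I t}\|f\|_{k,L}$ for each of the four elementary semigroups $I\in\{0,1,B,W\}$ by specializing Proposition~\ref{prop-rep-logHeston-estim} to appropriate parameter sets, then obtain the estimate for $\hat{P}_t^{Ex}$ and $\hat{P}_t^{NV}$ by composing these bounds, and finally derive \eqref{H2_bar_heston} by iterating.

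First I would observe that each of the four operators $P^B$, $P^W$, $P^0$, $P^1$ is the Feynman--Kac semigroup of a SDE of the form~\eqref{log-Heston-ext_SDEs} for a suitable choice of parameters $(\tilde a,\tilde b,\tilde c,\tilde d,\tilde \lambda,\tilde \sigma,\tilde \rho)$:
\begin{itemize}
\item For $P^B$ take $\tilde a=0$, $\tilde b=0$, $\tilde c=r-\rho a/\sigma$, $\tilde d=\rho b/\sigma-\tfrac12$, $\tilde\lambda=\brho$, $\tilde\sigma=0$, so \eqref{log-Heston-ext_SDEs} reduces exactly to~\eqref{LB_SDEs}.
\item For $P^W$ take $\tilde a=a$, $\tilde b=b$, $\tilde c=\rho a/\sigma$, $\tilde d=-\rho b/\sigma$, $\tilde\lambda=\rho$, $\tilde\sigma=\sigma$, $\tilde\rho=1$, which gives~\eqref{LW_SDEs}.
\item For $P^0$ take $\tilde a=a-\sigma^2/4$, $\tilde b=b$, $\tilde c=\frac\rho\sigma(a-\sigma^2/4)$, $\tilde d=-b\rho/\sigma$, $\tilde\lambda=0$, $\tilde\sigma=0$; the associated dynamics is deterministic and gives $\varphi_0$ in~\eqref{varphi0}.
\item For $P^1$ take $\tilde a=\sigma^2/4$, $\tilde b=0$, $\tilde c=\rho\sigma/4$, $\tilde d=0$, $\tilde\lambda=\rho$, $\tilde\sigma=\sigma$, $\tilde\rho=1$, which produces $\varphi_1$ in~\eqref{varphi1}.
\end{itemize}
In each case Proposition~\ref{prop-rep-logHeston-estim} applies directly (for $P^0$ the stochastic representation~\eqref{stoc_repr-new} degenerates to a plain chain rule but the proof of the norm estimate goes through unchanged since one only uses Lemma~\ref{lem_moments}, which holds trivially in the deterministic case). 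This yields a constant $C_I$ depending only on $k,L,T$ and the model parameters such that $\|P^I_t f\|_{k,L}\le e^{C_I t}\|f\|_{k,L}$ for $t\in[0,T]$ and $f\in\CPL{k}{L}$.

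Next, by composing these bounds with $C:=\max_I C_I$, I get, for any $t\in[0,T]$,
\begin{align*}
  \|\hat{P}^{Ex}_t f\|_{k,L} &= \|P^B_{t/2}P^W_t P^B_{t/2} f\|_{k,L} \le e^{Ct/2}\,e^{Ct}\,e^{Ct/2}\|f\|_{k,L} = e^{2Ct}\|f\|_{k,L}, \\
  \|\hat{P}^{NV}_t f\|_{k,L} &= \|P^B_{t/2}P^0_{t/2}P^1_t P^0_{t/2}P^B_{t/2} f\|_{k,L} \le e^{3Ct}\|f\|_{k,L},
\end{align*}
using at each composition step that the inner function still belongs to $\CPL{k}{L}$, which is exactly what the previous step guarantees.

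Finally, to deduce~\eqref{H2_bar_heston}, I iterate: since $h_l=T/n^l$ and $Q_l=\hat P_{h_l}$ (for either choice of scheme), for $0\le j\le n^l$,
\begin{equation*}
  \|Q_l^{[j]}f\|_{k,L}\le e^{3C j h_l}\|f\|_{k,L}\le e^{3CT}\|f\|_{k,L},
\end{equation*}
uniformly in $j$ and $l$. Combined with the bound $\sup_{t\le T}\|P_t f\|_{k,L}\le e^{C'T}\|f\|_{k,L}$ already given by~\eqref{norm_estim_logHeston_semigroup} applied to the full log-Heston SDE~\eqref{log-Heston_SDEs}, this proves~\eqref{H2_bar_heston}. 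There is no real obstacle here: the entire argument is a direct bookkeeping of the exponential bound of Proposition~\ref{prop-rep-logHeston-estim} across a bounded number of composition factors, so that the constant remains dimension-free in $l$ and the time step is absorbed by the terminal time~$T$.
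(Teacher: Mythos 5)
Your proof is correct and follows essentially the same route as the paper: the same four parameter specializations of Proposition~\ref{prop-rep-logHeston-estim} for $P^0,P^1,P^B,P^W$, then composition to bound $\hat P^{Ex}_t$ and $\hat P^{NV}_t$, and iteration to get~\eqref{H2_bar_heston}. The only (harmless) additions are your explicit remark that the $P^0$ case degenerates to a deterministic chain rule and your explicit invocation of~\eqref{norm_estim_logHeston_semigroup} for the $\sup_t\|P_tf\|_{k,L}$ part of~\eqref{H2_bar_heston}, which the paper leaves implicit.
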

\begin{proof}
We apply four times Proposition~\ref{prop-rep-logHeston-estim} with 
\begin{itemize}
  \item $\tilde{a}=a-\frac{\sigma^2}4$, $\tilde{b}
=b$, $\tilde{c}=\frac{\rho}{\sigma}\left(a-\frac{\sigma^2}4\right) $, $\tilde{d}=-b\frac{\rho}{\sigma}$, $\tilde{\lambda}=0$, $\tilde{\sigma}=0$ for $P^0$,
\item  $\tilde{a}=\frac{\sigma^2}4$, $\tilde{b}
=0$, $\tilde{c}=\frac{\rho \sigma}4 $, $\tilde{d}=0$, $\tilde{\lambda}=\rho$, $\tilde{\sigma}=\sigma$, $\tilde{\rho}=1$ for $P^1$,
\item $\tilde{a}=0$, $\tilde{b}
=0$, $\tilde{c}=r-\frac{\rho a}{\sigma} $, $\tilde{d}=\frac{\rho b}{\sigma} -\frac 12$, $\tilde{\lambda}=\bar{\rho}$,  $\tilde{\sigma}=0$, $\tilde{\rho}=0$ for $P^B$,
\item $\tilde{a}=a$, $\tilde{b}
=b$, $\tilde{c}=\frac{\rho a}\sigma $, $\tilde{d}=-\frac{\rho b}\sigma$, $\tilde{\lambda}=\rho$, $\tilde{\sigma}=\sigma$, $\tilde{\rho}=1$ for $P^W$,
\end{itemize}
where the tilde parameters are the ones used in Equation~\eqref{log-Heston-ext_SDEs}. This gives the first claim. Then, we deduce easily that $\|\hat{P}^{Ex}_tf\|_{k,L}\le e^{Ct/2} \|P^W_{t} P^B_{t/2}f\|_{k,L}\le e^{2Ct} \|f\|_{k,L}$ by using twice the estimate for $P^B$ and once for $P^W$. Similarly, we obtain $\|\hat{P}^{NV}_tf\|_{k,L}\le e^{3Ct} \|f\|_{k,L}$, by using the estimates for $P^B$, $P^0$ and $P^1$.

Now, the property~\eqref{H2_bar_heston} follows easily: consider $l\ge 1$ and $Q_l=\hat{P}^{NV}_{\frac{T}{n^l}}$, we have for $f \in\CPL{k}{L}$, $\|Q_l f\|_{k,L}\le e^{3C \frac{T}{n^l}}$ and thus for any $j\le n^l$, $\|Q^{[j]}_l f\|_{k,L}\le e^{3C \frac{jT}{n^l}}\le e^{3CT}$, which gives~\eqref{H2_bar_heston}. The same result holds for $\hat{P}^{Ex}$.  
\end{proof}

We now turn to the proof of the property~\eqref{H1_bar_heston}. We first state a general result on the composition of approximation schemes that fits our framework with the norm family $\|\cdot\|_{k,L}$.  It can be seen as a variant of~\cite[Proposition 2.3.12]{AA_book} and says, heuristically, that the composition of schemes works as a composition of operators. 

\begin{lemma}\label{lem_compo}(Scheme composition). Let $\nu \in \N$ and $T>0$. Let $V_i$, $i\in \{1,\dots,I\}$,  be infinitesimal generators such that  there exists $k_i,L_i \in \N$ such that
\begin{equation}\label{estim_V}
  \forall k \in \N, \exists C\in \R_+,  \forall f \in \CPL{k+k_i}{L}, V_i f \in  \CPL{k}{L+L_i} \text{ and } \|V_if\|_{k,L+L_i}\le C \|f\|_{k+k_i,L}.
\end{equation}
Let $k^\star=\max_{1\le i\le I} k_i$ and $L^\star=\max_{1\le i\le I} L_i$.
We assume that for any $i$, $\hat{P}^i_t:\CPL{0}{L} \to \CPL{0}{L} $ is such that
\begin{align} \forall k, L \in \N, 0\le  \bar{q}\le \nu+1, &\ \exists C,\ \forall f \in \CPL{k+\bar{q}k_i}{L}, \forall t\in[0,T], \notag \\& \|\hat{P}^i_t f -\sum_{q=0}^{\bar{q}-1}\frac{t^q}{q!} V_i^qf\|_{k, L+ \bar{q} L_i} \le C t^{\bar{q}} \|f\|_{k+\bar{q} k_i, L}. \label{assump_Phat}\end{align}
Then, we have for $\lambda_1,\dots,\lambda_I\in [0,1]$, 
\begin{align}
 & \forall k, L \in \N, 0\le  \bar{q}\le \nu+1, \ \exists C, \forall f \in \CPL{k+\bar{q}k^\star}{L}, \forall t \in [0,T]  \notag \\
  & \left\|\hat{P}^I_{\lambda_I t} \dots \hat{P}^1_{\lambda_1 t} f -\sum_{q_1+\dots+q_I\le \bar{q}-1}\frac{\lambda_1^{q_1}\dots \lambda_I^{q_I} t^{q_1+\dots+q_I} }{q_1!\dots q_I!} V_I^{q_I}\dots V_1^{q_1}f\right\|_{k, L+ \bar{q} L^\star} \le C t^{\bar{q}} \|f\|_{k+\bar{q} k^\star, L}. \label{compo_oper}
\end{align}
\end{lemma}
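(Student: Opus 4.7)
The plan is to induct on $I\ge 1$. The base case $I=1$ follows directly from hypothesis~\eqref{assump_Phat} applied at time $\lambda_1 t\in [0,T]$: I would absorb $\lambda_1^{\bar q}\le 1$ into the constant, use $k_1\le k^\star$ to weaken the right-hand seminorm, and invoke Lemma~\ref{basic_lemma_CPL}(3) to inflate the polynomial weight on the left from $L+\bar q L_1$ to $L+\bar q L^\star$.

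Assume~\eqref{compo_oper} holds for $I-1$ operators and every $\bar q\le \nu+1$. Setting $u_{I-1}:=\hat{P}^{I-1}_{\lambda_{I-1} t}\cdots \hat{P}^1_{\lambda_1 t} f$, the inductive hypothesis yields a splitting $u_{I-1}=S_{I-1}(t,f)+R_{I-1}(t,f)$ with
\begin{equation*}
S_{I-1}(t,f) = \sum_{\substack{\vec q\,\in\,\N^{I-1}\\ |\vec q|\le\bar q-1}}\frac{\lambda_1^{q_1}\cdots\lambda_{I-1}^{q_{I-1}}\,t^{|\vec q|}}{q_1!\cdots q_{I-1}!}\,V_{I-1}^{q_{I-1}}\cdots V_1^{q_1} f,
\end{equation*}
writing $|\vec q|=q_1+\cdots+q_{I-1}$, and $\|R_{I-1}(t,f)\|_{k,\,L+\bar q L^\star}\le C t^{\bar q}\|f\|_{k+\bar q k^\star,L}$. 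I would then apply $\hat P^I_{\lambda_I t}$ to each piece and re-expand.

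For the remainder piece, hypothesis~\eqref{assump_Phat} with $\bar q=0$ (an empty sum) asserts that $\hat P^I_{\lambda_I t}$ is bounded on each $\CPL{k}{L'}$ uniformly in $t\in[0,T]$, which delivers $\|\hat P^I_{\lambda_I t} R_{I-1}(t,f)\|_{k,L+\bar q L^\star}\le C' t^{\bar q}\|f\|_{k+\bar q k^\star, L}$. For the principal part, I would fix a multi-index $\vec q$ with $|\vec q|=s\le \bar q-1$, set $g_{\vec q}:=V_{I-1}^{q_{I-1}}\cdots V_1^{q_1} f$, and apply~\eqref{assump_Phat} to $\hat P^I$ at order $\bar q-s$ starting from weight $L_{\vec q}:=L+\sum_j q_j L_j$:
\begin{equation*}
\Big\|\hat P^I_{\lambda_I t} g_{\vec q}-\sum_{q_I=0}^{\bar q-s-1}\frac{(\lambda_I t)^{q_I}}{q_I!}V_I^{q_I} g_{\vec q}\Big\|_{k,\,L_{\vec q}+(\bar q-s)L_I}\le C t^{\bar q-s}\|g_{\vec q}\|_{k+(\bar q-s)k_I,\,L_{\vec q}}.
\end{equation*}
Iterating~\eqref{estim_V} along the composition $V_{I-1}^{q_{I-1}}\cdots V_1^{q_1}$ gives $\|g_{\vec q}\|_{k',L_{\vec q}}\le C''\|f\|_{k'+\sum_j q_j k_j,\,L}$ for any $k'\in\N$. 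Since $(\bar q-s)k_I+\sum_j q_j k_j\le \bar q k^\star$ and $L_{\vec q}+(\bar q-s)L_I\le L+\bar q L^\star$, the right-hand side is bounded by $C''' t^{\bar q-s}\|f\|_{k+\bar q k^\star,L}$, and Lemma~\ref{basic_lemma_CPL}(3) upgrades the left-hand weight up to $L+\bar q L^\star$. Multiplying by the prefactor of $g_{\vec q}$ in $S_{I-1}(t,f)$, summing over the finitely many $\vec q$, and observing the bijection between pairs $(\vec q,q_I)$ with $|\vec q|+q_I\le \bar q-1$ and multi-indices $(q_1,\dots,q_I)$ with $q_1+\cdots+q_I\le \bar q-1$ produces the advertised expansion plus a remainder of size $O(t^{\bar q})$ in the $\|\cdot\|_{k,L+\bar q L^\star}$ norm.

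The main obstacle is the bookkeeping of polynomial weights and seminorm orders: one must verify at every step that each weight increment caused by a $V_i$ or by the remainder of some $\hat P^i$ is absorbed into the corresponding multiple of $L^\star$, so the final weight never exceeds $L+\bar q L^\star$. The maximal choices $k^\star=\max_i k_i$ and $L^\star=\max_i L_i$ are designed precisely for this uniform handling, and the induction is otherwise a direct consequence of the multi-linearity of the Taylor-type expansions furnished by~\eqref{assump_Phat}.
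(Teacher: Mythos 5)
Your proof is correct and follows essentially the same strategy as the paper: split off the Taylor-type remainder of the leftmost (or rightmost) factor, propagate the principal part through the next operator via \eqref{assump_Phat} at the complementary order, handle the remainder via boundedness (the $\bar q=0$ case of \eqref{assump_Phat}), and track seminorm and weight indices using \eqref{estim_V} and Lemma~\ref{basic_lemma_CPL}. The paper presents this for $I=2$ "for readability" and leaves the general case implicit; your explicit induction on $I$ is exactly the intended extension, with correct bookkeeping of the $k^\star$, $L^\star$ budgets.
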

\begin{proof} For readability, we make the proof with $I=2$ operators. 
  Let $\bar{q}\le \nu+1$ and $f\in \CPL{k+\bar{q}k^\star}{L}$. We define $R^1f=\hat{P}^1_{\lambda_1 t} f -\sum_{q_1=0}^{\bar{q}-1}\frac{\lambda_1^{q_1} t^{q_1}}{q_1!} V_1^{q_1}f$. For $t\in [0,T]$, we have $\lambda_1 t \in [0,T]$  since $\lambda_1 \in [0,1]$ and by assumption~\eqref{assump_Phat}, we have $R^1 f \in \CPL{k+\bar{q} k^\star - \bar{q} k_1 }{L+\bar{q}L_1}$ and 
  $$\|R^1 f\|_{k+\bar{q}k^\star - \bar{q} k_1,L+\bar{q}L_1}\le C t^{\bar{q}} \|f\|_{k+\bar{q}k^\star,L}.$$  
  We now write 
  $$ \hat{P}^2_{\lambda_2 t} \hat{P}^1_{\lambda_1 t} f= \sum_{q_1=0}^{\bar{q}-1}\frac{\lambda_1^{q_1} t^{q_1}}{q_1!} \hat{P}^2_{\lambda_2 t} V_1^{q_1}f + \hat{P}^2_{\lambda_2 t} R^1 f.$$
 Since $V_1^{q_1}f \in \CPL{k+\bar{q}k^\star-q_1k_1}{L+q_1L_1}$, we  apply~\eqref{assump_Phat} to get $$\hat{P}^2_{\lambda_2 t} V_1^{q_1}f = \sum_{q_2=0}^{\bar{q}-q_1-1} \frac{\lambda_2^{q_2} t^{q_2}}{q_2!} V_2^{q_2}V_1^{q_1}f + R^2_{q_1}f,$$ with
 $\|R^2_{q_1} f \|_{k+\bar{q}k^\star-q_1k_1-(\bar{q}-q_1)k_2,L+q_1L_1+(\bar{q}-q_1)L_2}\le C t^{\bar{q}-q_1}\|f\|_{k+\bar{q}k^\star,L}$ by~\eqref{estim_V} and~\eqref{assump_Phat}. We also have $\|\hat{P}^2_{\lambda_2 t} R^1 f\|_{k+\bar{q}k^\star - \bar{q} k_1,L+\bar{q}L_1}\le C t^{\bar{q}} \|f\|_{k+\bar{q}k^\star,L}$ by~\eqref{assump_Phat}. Since  
 $$k+\bar{q}k^\star-q_1k_1-(\bar{q}-q_1)k_2\ge k, \ L+q_1L_1+(\bar{q}-q_1)L_2\le L+\bar{q} L^\star, $$
 for all $0\le q_1\le \bar{q}-1$, and using Lemma~\ref{basic_lemma_CPL} (1 and 3), we get
 $$ \left\|\hat{P}^2_{\lambda_2 t} \hat{P}^1_{\lambda_1 t} f -\sum_{q_1+q_2\le \bar{q}-1}\lambda_1^{q_1}\lambda_2^{q_2}\frac{t^{q_1}t^{q_2}}{q_1!q_2!} V_2^{q_2} V_1^{q_1}f \right\|_{k, L+ \bar{q} L^\star} \le C t^{\bar{q}} \|f\|_{k+\bar{q} k^\star, L}. \qedhere$$
\end{proof}

\begin{lemma}\label{lem_expan_CPL_scheme} 
   Let $L_0=L_1=L_B=L_W=L_H=1$, $k_0=k_B=2$, $k_1=k_W=k_H=4$. Let denote $\mcL_H=\mcL$ and $P^H_t=P_t$ the log-Heston semigroup.  Let $i\in \{0,1,B,W,H\}$. We have 
  $$ \forall k,L \in \N, \exists C\in \R_+, \forall f \in \CPL{k+k_i}{L}, \  \|\mathcal{L}_if\|_{k,L+L_i}\le C \|f\|_{k+k_i,L}. $$ 
  Besides, for any $\bar{q}\in \N$, we have 
  \begin{align*} \forall k, L \in \N,  \ \exists C,\ \forall f \in \CPL{k+\bar{q}k_i}{L}, \notag  \|P^i_t f -\sum_{q=0}^{\bar{q}-1}\frac{t^q}{q!} \mathcal{L}_i^qf\|_{k, L+ \bar{q} L_i} \le C t^{\bar{q}} \|f\|_{k+\bar{q} k_i, L}. \end{align*}
\end{lemma}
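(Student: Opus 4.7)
My plan is to handle the two claims essentially in sequence. For part (1), I would observe that each of the five operators $\mcL_0, \mcL_1, \mcL_B, \mcL_W, \mcL_H$ is a linear differential operator whose coefficients are polynomials of degree at most one in $y$ and constant in $x$, so the bound
$\|\mcL_i f\|_{k, L+L_i} \le C \|f\|_{k+k_i, L}$
is an immediate instance of Lemma \ref{basic_lemma_CPL}(5) (or a direct computation following the same lines). The values $(k_i, L_i)$ match exactly the maximum derivative order (counted as $\alpha+2\beta$ in our convention) and the maximum $y$-exponent of the coefficients: $k_0 = k_B = 2$, $k_1 = k_W = k_H = 4$, and $L_i = 1$ in all cases. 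Iterating $q$ times then yields $\mcL_i^q f \in \CPL{k}{L+qL_i}$ together with $\|\mcL_i^q f\|_{k, L+qL_i} \le C \|f\|_{k+qk_i, L}$ for every $q \in \N$.

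For part (2), my strategy is to prove by induction on $\bar q$ the Duhamel/Taylor expansion
\begin{equation*}
P^i_t f = \sum_{q=0}^{\bar q-1} \frac{t^q}{q!} \mcL_i^q f + \int_0^t \frac{(t-s)^{\bar q -1}}{(\bar q -1)!} P^i_s \mcL_i^{\bar q} f \, ds,
\end{equation*}
which reduces, via integration by parts, to the first-order identity $P^i_t f = f + \int_0^t P^i_s \mcL_i f \, ds$. For $i \in \{H, W, B\}$, this identity follows by applying It\^o's formula to $f(X^{t,x,y}_s, Y^{t,y}_s)$ along the extended log-Heston SDE of Proposition \ref{prop-rep-logHeston-estim}, the stochastic integral being a true martingale because $\nabla f$ has polynomial growth and the moments of $(X, Y)$ are controlled by \eqref{eq_moments}. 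For $i=0$, it is just the fundamental theorem of calculus applied to $t \mapsto f(\varphi_0(t,x,y))$, since $\varphi_0$ solves the ODE whose generator is $\mcL_0$. For $i=1$, I would set $M_t = \sqrt y + \sigma W_t/2$, so that $\varphi_1(W_t, x, y) = (x + (\rho/\sigma)(M_t^2 - y), M_t^2)$, and apply the one-dimensional It\^o formula to $g(M_t)$ with $g(m) := f(x + (\rho/\sigma)(m^2 - y), m^2)$; a direct computation shows $(\sigma^2/8)g''(m)|_{m=M_s} = \mcL_1 f(\varphi_1(W_s, x, y))$, and taking expectations yields the required identity.

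Once the expansion is established, the remainder estimate follows routinely: part (1) iterated $\bar q$ times gives $\|\mcL_i^{\bar q} f\|_{k, L+\bar q L_i} \le C\|f\|_{k+\bar q k_i, L}$, while Lemma \ref{lem_H2_estimate} yields the uniform bound $\|P^i_s g\|_{k, L+\bar q L_i} \le C' \|g\|_{k, L+\bar q L_i}$ for $s \in [0,T]$. Combining these with the triangular inequality inside the integral produces
\begin{equation*}
\Bigl\| \int_0^t \tfrac{(t-s)^{\bar q -1}}{(\bar q -1)!} P^i_s \mcL_i^{\bar q} f \, ds \Bigr\|_{k, L+\bar q L_i} \le \tfrac{t^{\bar q}}{\bar q !} C C' \|\mcL_i^{\bar q} f\|_{k, L+\bar q L_i} \le C'' t^{\bar q} \|f\|_{k + \bar q k_i, L},
\end{equation*}
which is exactly the claimed bound. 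A subtle consistency check is that each iteration step needs $\mcL_i^q f$ to live in a space to which $P^i_s$ can still be applied with the norm control above — this is precisely what part (1) combined with Lemma \ref{lem_H2_estimate} ensures, which is why the hypothesis $f \in \CPL{k+\bar q k_i}{L}$ is made.

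The main obstacle I anticipate is the It\^o step for $P^1$: the process $M_t$ can change sign so $|M_t| \neq M_t$, and one cannot naively write the diffusion coefficient as $\sqrt{Y_t}$ inside a stochastic integral; expressing everything through $M_t$ itself and then matching the It\^o correction with $\mcL_1 f$ by the short algebraic computation indicated above is the cleanest fix. All other ingredients (polynomial growth, moment bounds, chain rule for $P^0$, the standard SDE case for $P^H, P^W, P^B$) are routine.
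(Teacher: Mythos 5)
Your proof follows essentially the same approach as the paper: part~(1) comes from Lemma~\ref{basic_lemma_CPL}~(5), and part~(2) is the Taylor/Duhamel expansion~\eqref{expan_P_general}, obtained by iterating $P^i_t f = f + \int_0^t P^i_s\mcL_i f\,ds$ and bounding the remainder with part~(1) together with the $\bar q=0$ bound from Lemma~\ref{lem_H2_estimate}. The only place where you take a slightly different route is the first-order identity for $P^0$ and $P^1$: the paper treats all five cases uniformly by identifying each $\mcL_i$ as the generator of the extended log-Heston SDE~\eqref{log-Heston-ext_SDEs} with the parameter dictionary given in the proof of Lemma~\ref{lem_H2_estimate}, so that It\^o's formula applies verbatim, the $P^1$ case being reconciled with $\varphi_1(W_t,x,y)$ through the $\text{sgn}$-Brownian-motion remark below~\eqref{def_P0P1}; you instead verify the Kolmogorov identity for $P^1$ by a direct one-dimensional It\^o computation on $M_t=\sqrt{y}+\tfrac{\sigma}{2}W_t$, checking that $\tfrac{\sigma^2}{8}g''(M_s)=\mcL_1 f(\varphi_1(W_s,x,y))$. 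Both are correct; your version is a bit more self-contained and neatly sidesteps the square-root-of-the-square ambiguity you anticipated, while the paper's is more economical since it reuses the unified framework of Proposition~\ref{prop-rep-logHeston-estim}.
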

\begin{proof}
  The first part of the statement is proved in Lemma~\ref{basic_lemma_CPL}. For $\bar{q}=0$, the estimate is simply the one given by Lemma~\ref{lem_H2_estimate} (or Proposition~\ref{prop-rep-logHeston-estim} for $P^H_t$).

  We now consider $\bar{q}\ge 1$. As already pointed in the proof of Lemma~\ref{lem_H2_estimate}, each operator is the infinitesimal generator of~\eqref{log-Heston-ext_SDEs} with a suitable choice of parameter. Then, by applying Itô's formula and taking the expectation, we get $P^{i}_tf(x,y)=f(x,y)+\int_0^t P^i_s \mathcal{L}_if(x,y)ds$. By iterating,  we obtain for $f \in \CPL{k+\bar{q}k_i}{L}$,
  \begin{equation}\label{expan_P_general}
    P^{i}_tf(x,y) = \sum_{j=0}^{\bar{q}-1} \frac{t^j}{j!} \mcL_{i}^jf(x,y) + \int_0^{t} \frac{(t-s)^{\bar{q}-1}}{(\bar{q}-1)!} P^{i}_{s} \mcL^{\bar{q}}_{i}f(x,y)ds.  
  \end{equation}
We have $\|\mcL^{\bar{q}}_{i}f\|_{k, L+\bar{q} L_i}\le C^{\bar{q}} \|f\|_{k+\bar{q} k_i, L}$ by Lemma~\ref{basic_lemma_CPL}-(5) and thus $\| P^i_s \mcL^{\bar{q}}_{i}f \|_{k, L+\bar{q} L_i}\le C^{q+1} \| f \|_{k+\bar{q} k_i, L}$ for $s\in [0,T]$, by using the result for $\bar{q}=0$.  Therefore, we get by the triangle inequality
\begin{align*}
  \left\|P^i_tf-\sum_{j=0}^{\bar{q}-1} \frac{t^j}{j!} \mcL_{i}^jf\right\|_{k,L+\bar{q}L_i}\le  \int_0^t \frac{(t-s)^{\bar{q}-1}}{(\bar{q}-1)!}C^{\bar{q}+1} \| f \|_{k+\bar{q} k_i, L} ds= \frac{t^{\bar{q}}}{\bar{q}!}C^{\bar{q}+1} \| f \|_{k+\bar{q} k_i, L}.\quad \qedhere
\end{align*}
\end{proof}

\begin{corollary}\label{cor_H1}
  Let $T>0$. Let $\hat{P}_t$ denote either $\hat{P}_t^{Ex}$ or $\hat{P}_t^{NV}$. We have, for $\bar{q}\le 3$,  
  \begin{align*} \forall k, L \in \N,  \ \exists C,\ \forall f \in \CPL{k+4\bar{q}}{L}, \forall t \in [0,T],  \|\hat{P}_t f -\sum_{q=0}^{\bar{q}-1}\frac{t^q}{q!} \mathcal{L}^qf\|_{k, L+ \bar{q} } \le C t^{\bar{q}} \|f\|_{k+4\bar{q}, L}, \end{align*}
  and~\eqref{H1_bar_heston} holds.
\end{corollary}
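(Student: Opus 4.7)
The plan is to reduce the corollary to a direct application of the general composition Lemma~\ref{lem_compo}, combined with the elementary expansions of Lemma~\ref{lem_expan_CPL_scheme} for the building blocks $P^B_t, P^W_t, P^0_t, P^1_t$, and a short algebraic identity for the Strang splitting. I will first obtain the estimate for $\hat{P}_t = \hat{P}^{Ex}_t$ by applying Lemma~\ref{lem_compo} once with the $I=3$ operators $(\mcL_B, \mcL_W, \mcL_B)$, scalings $(\lambda_1,\lambda_2,\lambda_3) = (1/2, 1, 1/2)$, and parameters $k_i,L_i$ from Lemma~\ref{lem_expan_CPL_scheme}; here $k^\star = 4$ and $L^\star = 1$. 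This yields, for $\bar{q}\le 3$, $f\in\CPL{k+4\bar{q}}{L}$,
\begin{equation*}
\bigl\|\hat{P}^{Ex}_t f - \!\!\!\sum_{q_1+q_2+q_3\le \bar{q}-1} \!\!\frac{(1/2)^{q_1+q_3}}{q_1!\,q_2!\,q_3!}\, t^{q_1+q_2+q_3}\mcL_B^{q_3}\mcL_W^{q_2}\mcL_B^{q_1} f\bigr\|_{k,L+\bar{q}} \le C t^{\bar{q}}\|f\|_{k+4\bar{q},L}.
\end{equation*}

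The next step is the Strang identity, which I will compute term by term: the degree-zero contribution is $f$; summing the degree-one terms gives $t\bigl(\tfrac12 \mcL_B + \mcL_W + \tfrac12 \mcL_B\bigr)f = t\mcL f$; for the degree-two terms one enumerates the six multi-indices $(q_1,q_2,q_3)$ with $q_1+q_2+q_3=2$ and finds
$$\tfrac{t^2}{8}\mcL_B^2 + \tfrac{t^2}{8}\mcL_B^2 + \tfrac{t^2}{4}\mcL_B^2 + \tfrac{t^2}{2}\mcL_W^2 + \tfrac{t^2}{2}\mcL_B\mcL_W + \tfrac{t^2}{2}\mcL_W\mcL_B = \tfrac{t^2}{2}(\mcL_B+\mcL_W)^2 = \tfrac{t^2}{2}\mcL^2.$$
Thus the sum in the preceding display equals $\sum_{q=0}^{\bar q-1} \tfrac{t^q}{q!}\mcL^q f$ for $\bar q\le 3$, which proves the corollary's estimate for $\hat P^{Ex}$.

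For $\hat{P}^{NV}_t = P^B_{t/2} P^0_{t/2} P^1_t P^0_{t/2} P^B_{t/2}$ I will proceed in two stages to keep the combinatorics manageable. First I apply Lemma~\ref{lem_compo} to the inner triple $\hat P^W_t := P^0_{t/2} P^1_t P^0_{t/2}$ with operators $(\mcL_0,\mcL_1,\mcL_0)$ and scalings $(1/2,1,1/2)$; the exact same Strang identity (with $A=0$, $C=1$, and $\mcL_W=\mcL_0+\mcL_1$) shows that $\hat P^W_t$ satisfies assumption~\eqref{assump_Phat} of Lemma~\ref{lem_compo} with the triple $(k_W,L_W,\bar q\le 3)=(4,1,\bar q)$. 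Then I apply Lemma~\ref{lem_compo} a second time to $\hat P^{NV}_t = P^B_{t/2}\hat P^W_t P^B_{t/2}$ with the outer operators $(\mcL_B,\mcL_W,\mcL_B)$ and scalings $(1/2,1,1/2)$, using exactly the same Strang computation as for $\hat P^{Ex}_t$, to conclude the desired estimate.

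Finally, to deduce~\eqref{H1_bar_heston}, I apply the corollary with $\bar q=3$ and subtract the analogous expansion for $P^H_t$ given by Lemma~\ref{lem_expan_CPL_scheme}: for $h_l=T/n^l \in [0,T]$ and $f\in\CPL{k+12}{L}$,
\begin{equation*}
\|(P_{h_l}-\hat P_{h_l})f\|_{k,L+3} \le \|P_{h_l}f-{\textstyle\sum_{q=0}^{2}}\tfrac{h_l^q}{q!}\mcL^q f\|_{k,L+3} + \|\hat P_{h_l}f-{\textstyle\sum_{q=0}^{2}}\tfrac{h_l^q}{q!}\mcL^q f\|_{k,L+3},
\end{equation*}
and both right-hand terms are bounded by $Ch_l^3\|f\|_{k+12,L}$. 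No single step is the real obstacle: the only non-bookkeeping ingredient is the Strang identity, which is the short algebraic computation above, and everything else is a mechanical invocation of the composition lemma together with the building-block expansions already proved.
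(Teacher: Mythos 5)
Your argument is correct and follows the same route as the paper: both invoke Lemma~\ref{lem_expan_CPL_scheme} for the building blocks, apply the composition Lemma~\ref{lem_compo} to the Strang product, verify the algebraic identity $\sum_{q_1+q_2+q_3\le 2}\tfrac{(1/2)^{q_1+q_3}t^{q_1+q_2+q_3}}{q_1!q_2!q_3!}\mcL_B^{q_3}\mcL_W^{q_2}\mcL_B^{q_1}=\sum_{q=0}^{2}\tfrac{t^q}{q!}\mcL^q$, and then deduce \eqref{H1_bar_heston} by the triangle inequality against the semigroup expansion. The only difference is that the paper dismisses $\hat{P}^{NV}_t$ with ``the argument is similar,'' whereas you make that case explicit via a clean two-stage nested application of Lemma~\ref{lem_compo} (first showing the inner triple $P^0_{t/2}P^1_tP^0_{t/2}$ itself satisfies \eqref{assump_Phat} with $(k_W,L_W)=(4,1)$, then composing with $P^B_{t/2}$ on each side); this is a legitimate and arguably tidier way to organize what the paper leaves implicit.
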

\begin{proof}
We prove the result for $\hat{P}_t^{Ex}$, the argument is similar for $\hat{P}^{NV}_t$.   
We use Lemma~\ref{lem_expan_CPL_scheme} for $P^W_t$ and $P^B_t$ and apply then Lemma~\ref{lem_compo}. For $\bar{q}=0,1,2$, we get easily the claim. For $\bar{q}=3$, we get since $\hat{P}^{Ex}_t=P^B_{t/2}P^W_{t/2}P^B_{t/2}$, 
$$\left\|\hat{P}^{Ex}_t f -\sum_{q_1+q_2+q_3\le 2} \frac{(1/2)^{q_1+q_3} t^{q_1+q_2+q_3} }{q_1!q_2 q_3!} \mathcal{L}_B^{q_3}\mathcal{L}_W^{q_2}\mathcal{L}_B^{q_1}f\right\|_{k, L+ 3} \le C t^{3} \|f\|_{k+12, L}.$$
The term of order two is 
$$ \frac{1}8 \mathcal{L}_B^{2}f+ \frac{1}4 \mathcal{L}_B^{2}f+\frac{1}8 \mathcal{L}_B^{2}f+  \frac 12 \mathcal{L}_B\mathcal{L}_W+ \frac 12 \mathcal{L}_W\mathcal{L}_B +\frac{1}2 \mathcal{L}_W^{2}f=\frac 12 (\mathcal{L}_B+\mathcal{L}_W)^2f, $$
and thus $\sum_{q_1+q_2+q_3\le 2} \frac{(1/2)^{q_1+q_3} t^{q_1+q_2+q_3} }{q_1!q_2 q_3!} \mathcal{L}_B^{q_3}\mathcal{L}_W^{q_2}\mathcal{L}_B^{q_1}f=\sum_{q=0}^2 \frac {t^q}{q!}(\mathcal{L}_B+\mathcal{L}_W)^qf=\sum_{q=0}^2 \frac {t^q}{q!}\mathcal{L}^qf$.

Now, we use Lemma~\ref{lem_expan_CPL_scheme} to get $\left\|P_t f - \sum_{q=0}^2 \frac {t^q}{q!}\mathcal{L}^qf \right\|_{k, L+ 3} \le C t^{3} \|f\|_{k+12, L}$. The triangular inequality then gives 
$$ \left\|P_t f - \hat{P}^{Ex}_t \right\|_{k, L+ 3} \le C t^{3} \|f\|_{k+12, L},$$
which is precisely~\eqref{H1_bar_heston}.
\end{proof}

\section{Numerical results}\label{Sec_num}

\subsection{Implementation}\label{Subsec_implementation}
We explain in this subsection the implementation of the schemes associated to~$\hat{P}_t^{Ex}$ and $\hat{P}_t^{NV}$, and then of the Monte-Carlo estimator of $\cPh^{\nu,n}$, $\nu \in \{1,2\}$. We will note either $\cPh^{Ex,\nu,n}$ or $\cPh^{NV,\nu,n}$ to emphasize what semigroup approximation is used.

On a single time step, the scheme associated to $\hat{P}_t^{NV}$ is given by
\begin{align*} 
  \hX^{x,y}_t &= x +(r-\frac{\rho}{\sigma}a)t +\frac{\rho}{\sigma}(\hY^y_t-y)  +(\frac{\rho}{\sigma}b-\frac{1}{2})\frac{y+\hY^y_t}{2}t +\sqrt{(1-\rho^2)\frac{t}{2}}\bigg(\sqrt{y}N_1+\sqrt{\hY^y_t}N_2\bigg),\\
  \hY^y_t &= (a-\frac{\sigma^2}{4})\psi_b(\frac{t}{2})+e^{-b\frac{t}{2}}\bigg( \sqrt{(a-\frac{\sigma^2}{4})\psi_b(\frac{t}{2})+e^{-b\frac{t}{2}}y} +\frac{\sigma\sqrt{t}}{2}G \bigg)^2,
\end{align*}
where $N_1,N_2,G$ are three independent random variables with the standard normal distribution. It is obtained from the composition~\eqref{def_PNV} and by using accordingly the maps $\varphi_0$, $\varphi_1$ and $\varphi_B$ that represent the semigroups, see Equations~\eqref{varphiB} and~\eqref{def_P0P1}.

One should remark however that the conditional law of $\hX^{(x,y)}_t$ given  $\hY^y_t$ is normal with mean $x+\frac{\rho}{\sigma}(\hY^y_t-y)+(r-\frac{\rho}{\sigma}b)t +(\frac{\rho}{\sigma}b-\frac{1}{2})\frac{y+\hY^y_t}{2}t$ and variance $t(1-\rho^2)(y+\hY^y_t)/2 $. Therefore, we rather consider the following probabilistic representation, that has the same law and requires to simulate one standard Gaussian random variable $N$ instead of the couple $(N_1,N_2)$ for the first component:
\begin{align*}
  \hX^{x,y}_t &= x +(r-\frac{\rho}{\sigma}a)t +\frac{\rho}{\sigma}(\hY^y_t-y) +(\frac{\rho}{\sigma}b-\frac{1}{2})\frac{y+\hY^y_t}{2}t +\sqrt{(1-\rho^2)\frac{y+\hY^y_t}{2}t}N, \\
  \hY^y_t &= (a-\frac{\sigma^2}{4})\psi_b(\frac{t}{2})+e^{-b\frac{t}{2}}\bigg( \sqrt{(a-\frac{\sigma^2}{4})\psi_b(\frac{t}{2})+e^{-b\frac{t}{2}}y} +\frac{\sigma\sqrt{t}}{2}G \bigg)^2.
\end{align*}
We note $\varphi^{NV}(t,x,y,N,G):=(\hX^{x,y}_t,\hY^{y}_t)$ this map. 
The same trick can be used for~$\hat{P}^{Ex}_t$ when the exact simulation is used for the CIR component, and we define 
$$\varphi_X^{Ex}(t,x,y,N,Y^y_t)=x +(r-\frac{\rho}{\sigma}a)t +\frac{\rho}{\sigma}(Y^y_t-y) +(\frac{\rho}{\sigma}b-\frac{1}{2})\frac{y+Y^y_t}{2}t +\sqrt{(1-\rho^2)\frac{y+Y^y_t}{2}t}N,$$
the map that gives the log-stock component. 

We now explain how to get the Monte-Carlo estimator for $\cPh^{1,n}$ and then $\cPh^{2,n}$. We start with the simulation scheme for $\hat{P}^{Ex}_t$. Let us consider $T>0$, $h_1=T/n$ and the regular time grid $\Pi^0=\{kh_1, \ 0\le k\le n\}$. We simulate exactly $Y_{kh_1}$, $1\le k\le n$, the CIR component starting from $Y_0=y$, and we set
$$\hat{X}^{Ex,0}_{k h_1}=\varphi_X^{Ex,0}(h_1,\hat{X}^{Ex,0}_{(k-1) h_1},Y_{(k-1)h_1},N_k,Y_{kh_1}), \ 1\le k\le n,$$
where $(N_k)_{1\le k\le N}$ are standard normal random variable such that $N_k$ is independent of $(N_{k'})_{k'<k}$ and the process $Y$. 
The Monte-Carlo estimator of $\cPh^{Ex,1,n}$ is then 
$$\frac{1}{M_1}\sum_{m=1}^{M_1} f(\hat{X}^{Ex,0,(m)}_{T},Y^{(m)}_T),$$
where $M_1$ is the number of independent samples. We now present how to calculate the correcting term in $\cPh^{2,n}$. To do so, we draw an independent random variable~$\kappa$ that is uniformly distributed on $\{0,\dots,n-1\}$ and selects the time-step to refine. We note $\Pi^1=\Pi^0 \cup  \{ \kappa h_1 + k' h_2 , 1 \leq k' \leq n-1 \}$ the refined (random) grid, where $h_2=T/n^2$. We simulate exactly~$Y$ on this time grid and define the scheme $\hat{X}^{Ex,1}$ as follows:
\begin{align*}
  &\hat{X}^{Ex,1}_{k h_1}=\hat{X}^{Ex,0}_{k h_1} \text{ for } k\le \kappa,\\
  &\hat{X}^{Ex,1}_{\kappa h_1 +k'h_2}=\varphi_X^{Ex}(h_2,\hat{X}^{Ex,1}_{\kappa h_1 +(k'-1)h_2},Y_{\kappa h_1 +(k'-1)h_2},\tilde{N}_{k'},Y_{\kappa h_1 +k'h_2}), \ 1\le k'\le n, \\
  &\hat{X}^{Ex,1}_{k h_1}=\varphi_X^{Ex,1}(h_1,\hat{X}^{Ex,1}_{(k-1) h_1},Y_{(k-1)h_1},N_k,Y_{kh_1}), \ \kappa+2< k\le n,
\end{align*}
where $(\tilde{N}_{k'})_{1\le k'\le N}$ are i.i.d. random normal variable, independent of $\kappa$ and $(N_k,Y_{kh_1})_{k\le \kappa}$. We then define the Monte-Carlo estimator of $\cPh^{Ex,2,n}$ (see Eq.~\eqref{boost2_RG}) by
$$\frac{1}{M_1}\sum_{m=1}^{M_1} f(\hat{X}^{Ex,0,(m)}_{T},Y^{(m)}_T)+\frac{1}{M_2}\sum_{m=1}^{M_2} n\left(f(\hat{X}^{Ex,1,(m)}_{T},Y^{(m)}_T)-f(\hat{X}^{Ex,0,(m)}_{T},Y^{(m)}_T)\right).$$
Note that we reuse the same Monte-Carlo samples in the two sums as it has been observed in~\cite[Subsection 6.3]{AL} that it is more efficient. The tuning of the parameters $M_1$ and $M_2$ is made to minimize the computational cost to achieve a given precision, see~\cite[Eq. (6.11)]{AL} for the details. Let us stress here that it is important for the variance of the estimator to use the same noise for the simulations of $\hat{X}^{Ex,1}$ and $\hat{X}^{Ex,0}$. In particular, the normal random variable $N_{\kappa+1}$ should depend on $(\tilde{N}_k)_{1\le k\le N}$. A natural choice is to take
$$N_{\kappa+1}=N^{\textup{st}}_{\kappa+1} \text{ where }N^{\textup{st}}=\frac 1{\sqrt{n}}\sum_{k=1}^n\tilde{N}_k,$$
if we think of Brownian increments. We will discuss this choice later on in Subsection~\ref{Subsec_coupling}.

Let us now present the scheme for~$\hat{P}^{NV}_t$, that is well-defined for $\sigma^2\le 4a$. The scheme on the coarse grid~$\Pi^0$ is defined by
$$(\hat{X}^{NV,0}_{kh_1},\hat{Y}^{NV,0}_{kh_1})=\varphi^{NV}(h_1,\hat{X}^{NV,0}_{(k-1)h_1},\hat{Y}^{NV,0}_{(k-1)h_1},N_k,G_k), \ 1\le k\le n,$$
where $N_k,G_k$, $1\le k\le n$, are two independent standard normal variables independent of $(N_{k'},G_{k'})_{k'<k}$. The Monte-Carlo estimator of~$\cPh^{NV,1,n}$ is then $\frac{1}{M_1}\sum_{m=1}^{M_1} f(\hat{X}^{NV,0,(m)}_{T},\hat{Y}^{NV,0,(m)}_T)$. The scheme on the refined random grid~$\Pi^1$ is defined by 
\begin{align*}
  &(\hat{X}^{NV,1}_{k h_1},\hat{Y}^{NV,1}_{k h_1})=(\hat{X}^{NV,0}_{k h_1},\hat{Y}^{NV,0}_{k h_1}) \text{ for } k\le \kappa,\\
  &(\hat{X}^{NV,1}_{\kappa h_1 +k'h_2},\hat{Y}^{NV,1}_{\kappa h_1 +k'h_2})=  \varphi^{NV}(h_2,\hat{X}^{NV,1}_{\kappa h_1 +(k'-1)h_2},\hat{Y}^{NV,1}_{\kappa h_1 +(k'-1)h_2},\tilde{N}_{k'},\tilde{G}_{k'}), \ 1\le k'\le n, \\
  &(\hat{X}^{NV,1}_{k h_1},\hat{Y}^{NV,1}_{k h_1})=\varphi^{NV}(h_1,\hat{X}^{NV,1}_{(k-1)h_1},\hat{Y}^{NV,1}_{(k-1)h_1},N_k,G_k), \ \kappa+2< k\le n,
\end{align*}
where $(\tilde{N}_{k'},\tilde{G}_{k'})_{1\le k'\le N}$, are independent standard normal variables that are also independent of $\kappa$ and $(N_k,G_{k})_{k\le \kappa}$. The Monte-Carlo estimator of $\cPh^{NV,2,n}$  is then defined by
$$\frac{1}{M_1}\sum_{m=1}^{M_1} f(\hat{X}^{NV,0,(m)}_{T},Y^{NV,0,(m)}_T)+\frac{1}{M_2}\sum_{m=1}^{M_2} n\left(f(\hat{X}^{NV,1,(m)}_{T},\hat{Y}^{NV,1,(m)}_{T})-f(\hat{X}^{NV,0,(m)}_{T},Y^{NV,0,(m)}_T)\right).$$
Again, to reduce the variance of the estimator, it is important to use the same noise for the coarse and the refined grids. In particular, we take for the scheme $(\hat{X}^{NV,0}_{kh_1},\hat{Y}^{NV,0}_{kh_1})$ on the coarse grid 
$$ N_{\kappa+1}= N^{\textup{st}} \text{ and }G_{\kappa+1}= G^{\textup{st}}:= \frac{1}{\sqrt{n}} \sum_{k=1}^n \tilde{G}_k.$$
Another choice will be considered for $N_{\kappa+1}$ in Subsection~\ref{Subsec_coupling}, but we will always use $G_{\kappa+1}= G^{\textup{st}}_{\kappa+1}$ in our experiments.

\begin{figure}[h!]
  \centering
  \begin{subfigure}[h]{0.49\textwidth}
    \centering
    \includegraphics[width=\textwidth]{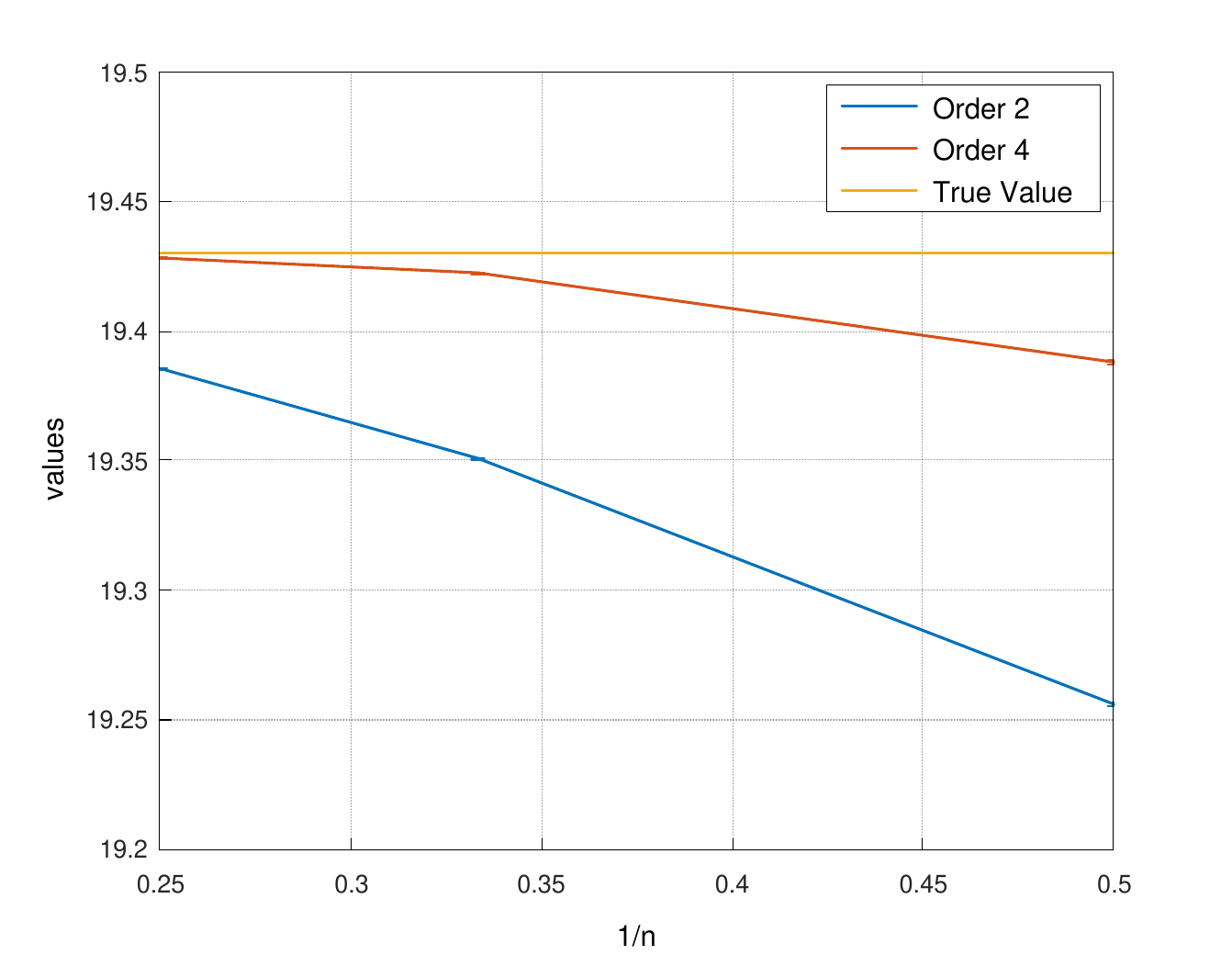}
    \caption{Values plot}
    \label{fig:values_plot_heston1}
  \end{subfigure}
  \hfill
  \begin{subfigure}[h]{0.49\textwidth}
    \centering
    \includegraphics[width=\textwidth]{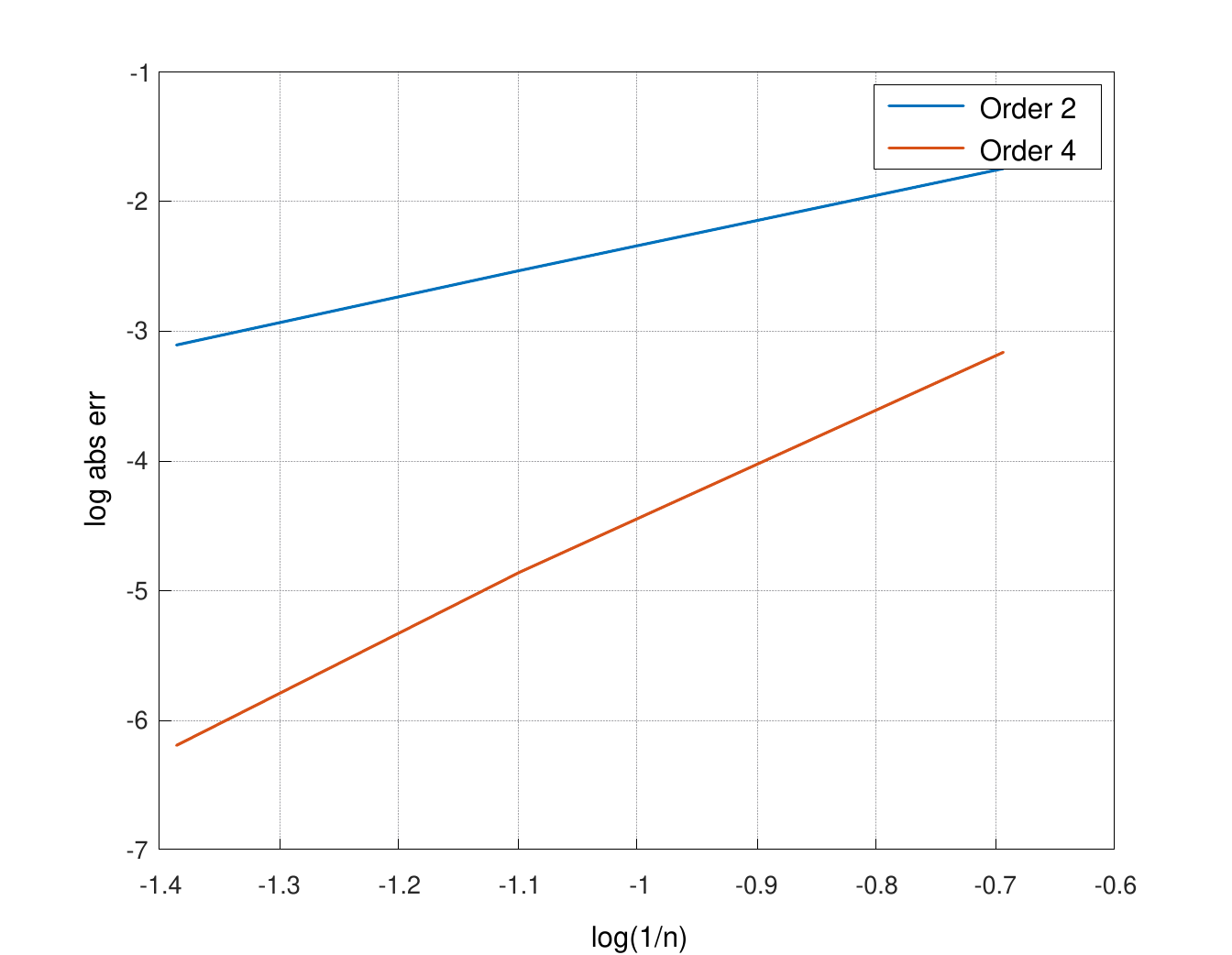}
    \caption{Log-log plot}
    \label{fig:log-log_plot_heston1}
  \end{subfigure}
  \caption{Test function: $f(x,y)=(K-e^x)^+$. Parameters: $S_0=e^{x}=100$, $r=0$, $y=0.2$, $a=0.2$, $b=1$, $\sigma=0.5$, $\rho=-0.7$, $T=1$, $K=105$. Statistical precision $\varepsilon=5$e-4.
  Graphic~({\sc a}) shows the Monte Carlo estimated values of $\cPh^{NV,1,n}f$, $\cPh^{NV,2,n}f$ as a function of the time step $1/n$  and the exact value. Graphic~({\sc b}) draws $\log(|\cPh^{NV,\nu,n}f-P_Tf|)$ in function of $\log(1/n)$: the regressed slopes are 1.89 and 4.27 for the second and fourth order respectively.}\label{Heston_orders}
\end{figure}

\begin{figure}[h!]
  \centering
  \begin{subfigure}[h]{0.49\textwidth}
    \centering
    \includegraphics[width=\textwidth]{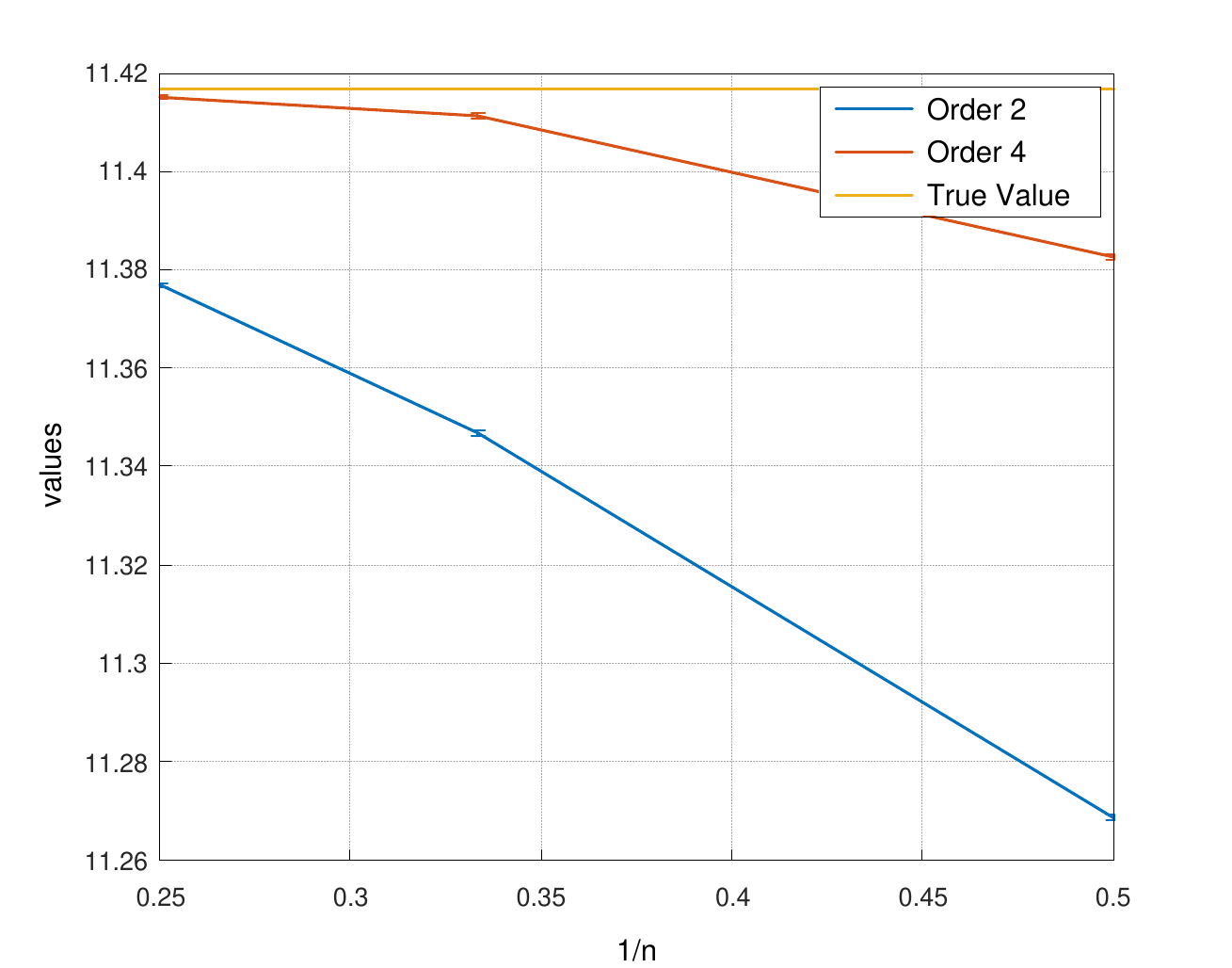}
    \caption{Values plot}
    \label{fig:values_plot_hestonCF1}
  \end{subfigure}
  \hfill
  \begin{subfigure}[h]{0.49\textwidth}
    \centering
    \includegraphics[width=\textwidth]{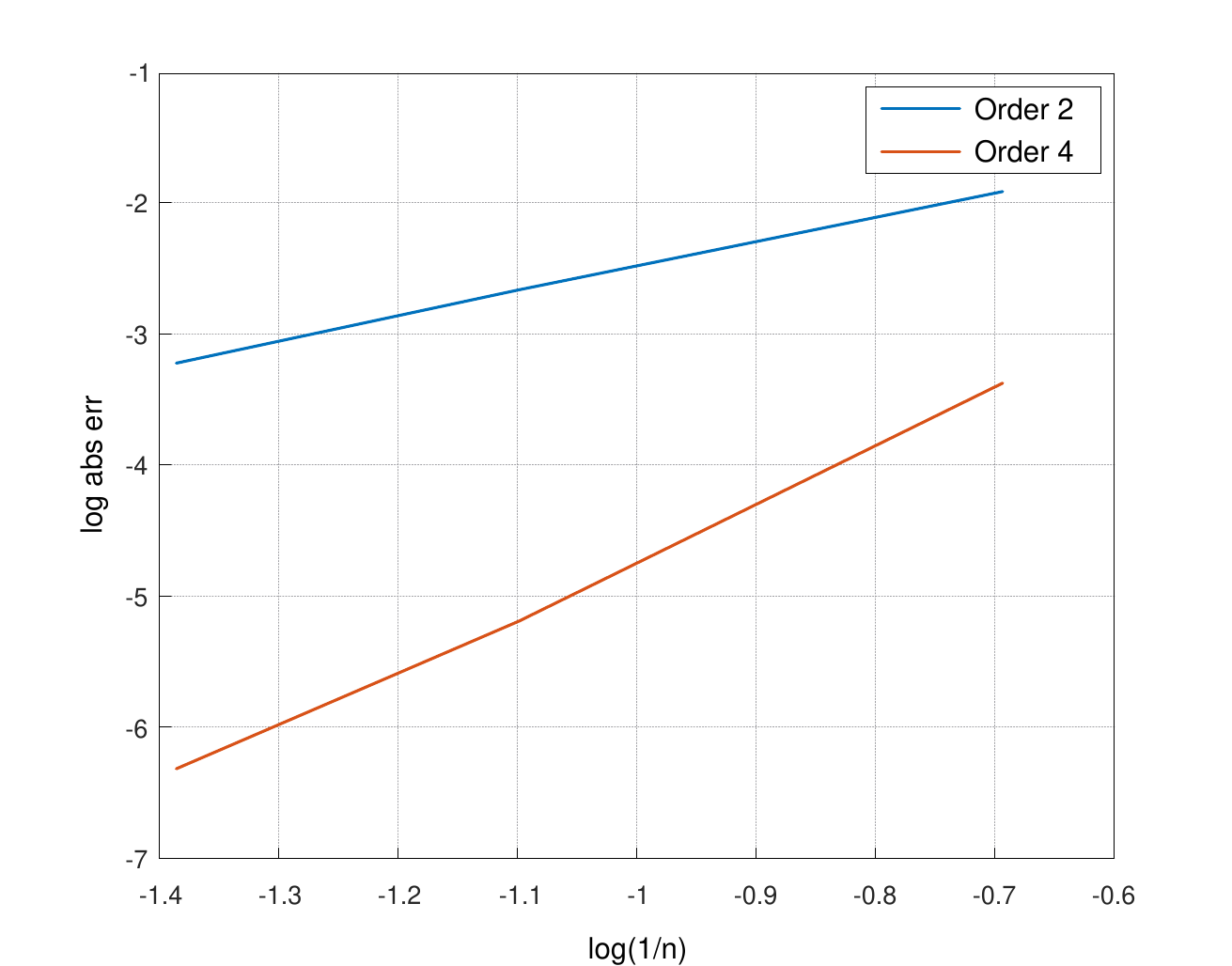}
    \caption{Log-log plot}
    \label{fig:log-log_plot_hestonCF1}
  \end{subfigure}
  \caption{Test function: $f(x,y)=(K-e^x)^+$. Parameters: $S_0=e^x=100$, $r=0$, $y=0.1$, $a=0.1$, $b=1$, $\sigma=1.0$, $\rho=-0.9$, $T=1$, $K=105$. Statistical precision $\varepsilon=5$e-4.
  Graphic~({\sc a}) shows the Monte Carlo estimated values of $\cPh^{Ex,1,n}f$, $\cPh^{Ex,2,n}f$ as a function of the time step $1/n$  and the exact value. Graphic~({\sc b}) draws $\log(|\hat{P}^{Ex,\nu,n}f-P_Tf|)$ in function of $\log(1/n)$: the regressed slopes are 1.89 and 4.26 for the second and fourth order respectively.}\label{HestonCF_orders}
\end{figure}

\subsection{Pricing of European and Asian options}\label{Subsec_Pricing}
We present in Figure~\ref{Heston_orders} the convergence of the approximations $\cPh^{NV,1,n}$ and $\cPh^{NV,2,n}$ for the price of a European option in a case where $\sigma^2\le 4a$. On the left graphic, we draw the values in function of the time step and the exact value of the option price $P_Tf$, that can be calculated with Fourier transform techniques. On the right graphic is plotted the log error in function of the log time step: the estimated slopes are in line with the theoretical order of convergence (2 and 4), even though the test function $f(x)=(K-e^x)_+$ is not as regular as required by Theorem~\ref{main_theorem}.  In Figure~\ref{HestonCF_orders}, we illustrate similarly the convergence of the approximations $\cPh^{Ex,1,n}$ and $\cPh^{Ex,2,n}$ for the price of a European option in a case where $\sigma^2\gg4a$. Again, we observe the theoretical rates of convergence given by Theorem~\ref{main_theorem}.

We now consider the case of Asian options, for which we need to simulate a third coordinate: $\mathcal{I}_t=\int_0^t S^{s,y}_u du=\int_0^t e^{X^{x,y}_u} du$. We explain how to simulate this coordinate for $\hat{P}^{Ex}$, and we do exactly the same then for $\hat{P}^{NV}$. We approximate the integral $\mathcal{I}_t$ by the trapezoidal rule. This gives 
\begin{align*}
  &\hat{\cI}^{Ex,0}_{k h_1} =  \hat{\cI}^{Ex,0}_{(k-1) h_1} + \frac{e^{\hat{X}^{Ex,0}_{(k-1) h_1}}+e^{\hat{X}^{Ex,0}_{k h_1}}}2 h_1 , \ 1\le k \le n, \\
  &\hat{\cI}^{Ex,1}_{k h_1}=\hat{\cI}^{Ex,0}_{k h_1}, \ 0\le k \le \kappa, \\
  &\hat{\cI}^{Ex,1}_{\kappa h_1 +k'h_2}= \hat{\cI}^{Ex,1}_{\kappa h_1 +(k'-1) h_2} +\frac{e^{\hat{X}^{Ex,1}_{\kappa h_1+(k'-1)h_2}}+e^{\hat{X}^{Ex,1}_{\kappa h_1 +k'h_2}} }2 h_2 , \ 1\le k'\le n, \\
  &\hat{\cI}^{Ex,1}_{k h_1}= \hat{\cI}^{Ex,1}_{(k-1) h_1} +\frac{e^{\hat{X}^{Ex,1}_{(k-1) h_1 }}+e^{\hat{X}^{Ex,1}_{kh_1}} }2 h_1, \ \kappa+2< k\le n,
\end{align*}
with $\hat{\cI}^{Ex,0}_{0}=\hat{\cI}^{Ex,1}_{0}=0$.
Let us mention here that the trapezoidal rule corresponds to the Strang splitting for the generator $\mathcal{L}+e^x\partial_{\mathcal{I}}$. Our formalism would allow to analyse the convergence rate for the Strang splitting for $\mathcal{L}+h(x)\partial_{\mathcal{I}}$, when $h$ is smooth with derivatives of polynomial growth. The exponential function does not fit this condition, and we analyse here the convergence on numerical experiments. 

\begin{figure}[h!]
  \centering
  \begin{subfigure}[h]{0.49\textwidth}
    \centering
    \includegraphics[width=\textwidth]{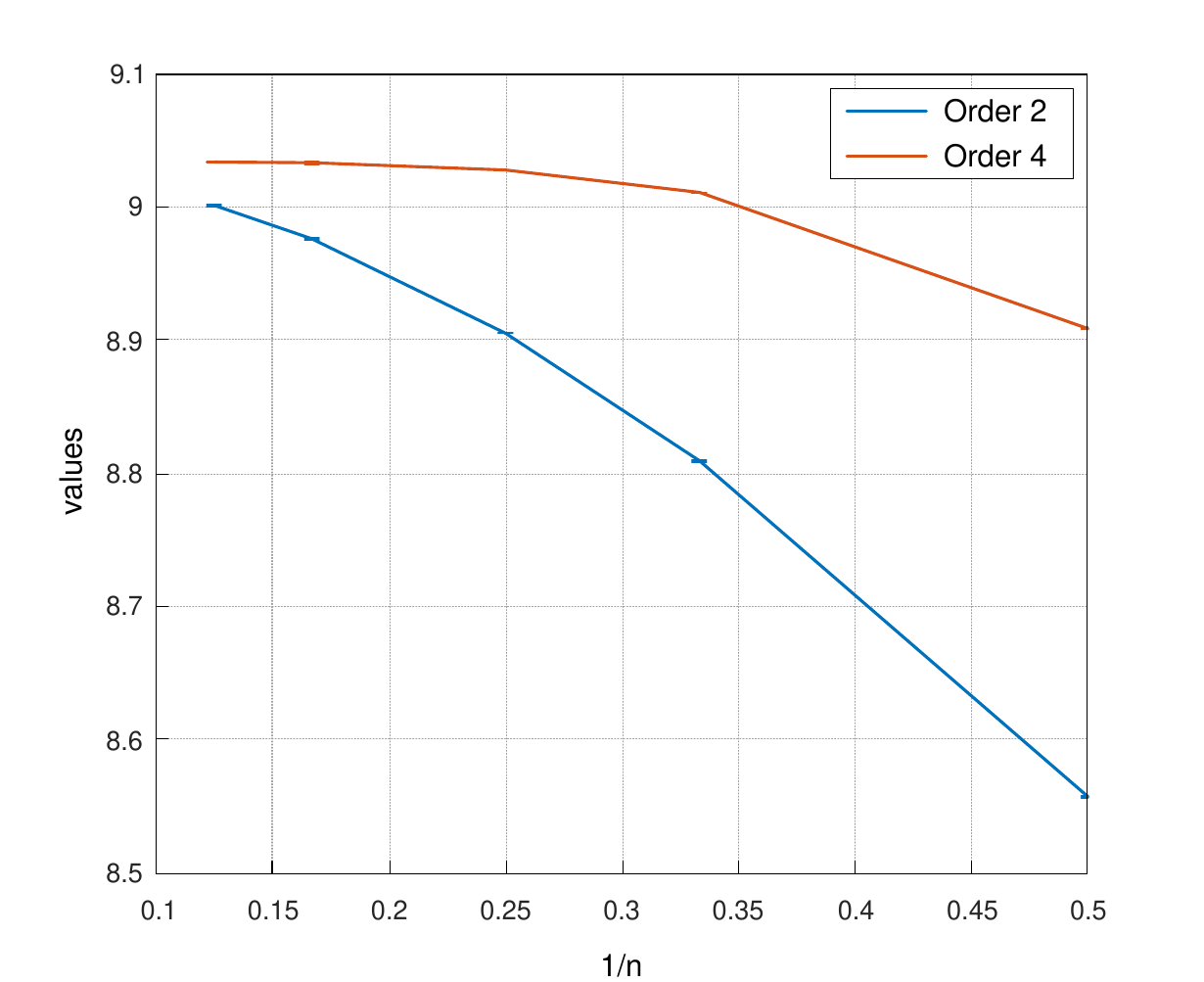}
    \caption{Values plot}
    \label{fig:values_plot_Aheston1}
  \end{subfigure}
  \hfill
  \begin{subfigure}[h]{0.49\textwidth}
    \centering
    \includegraphics[width=\textwidth]{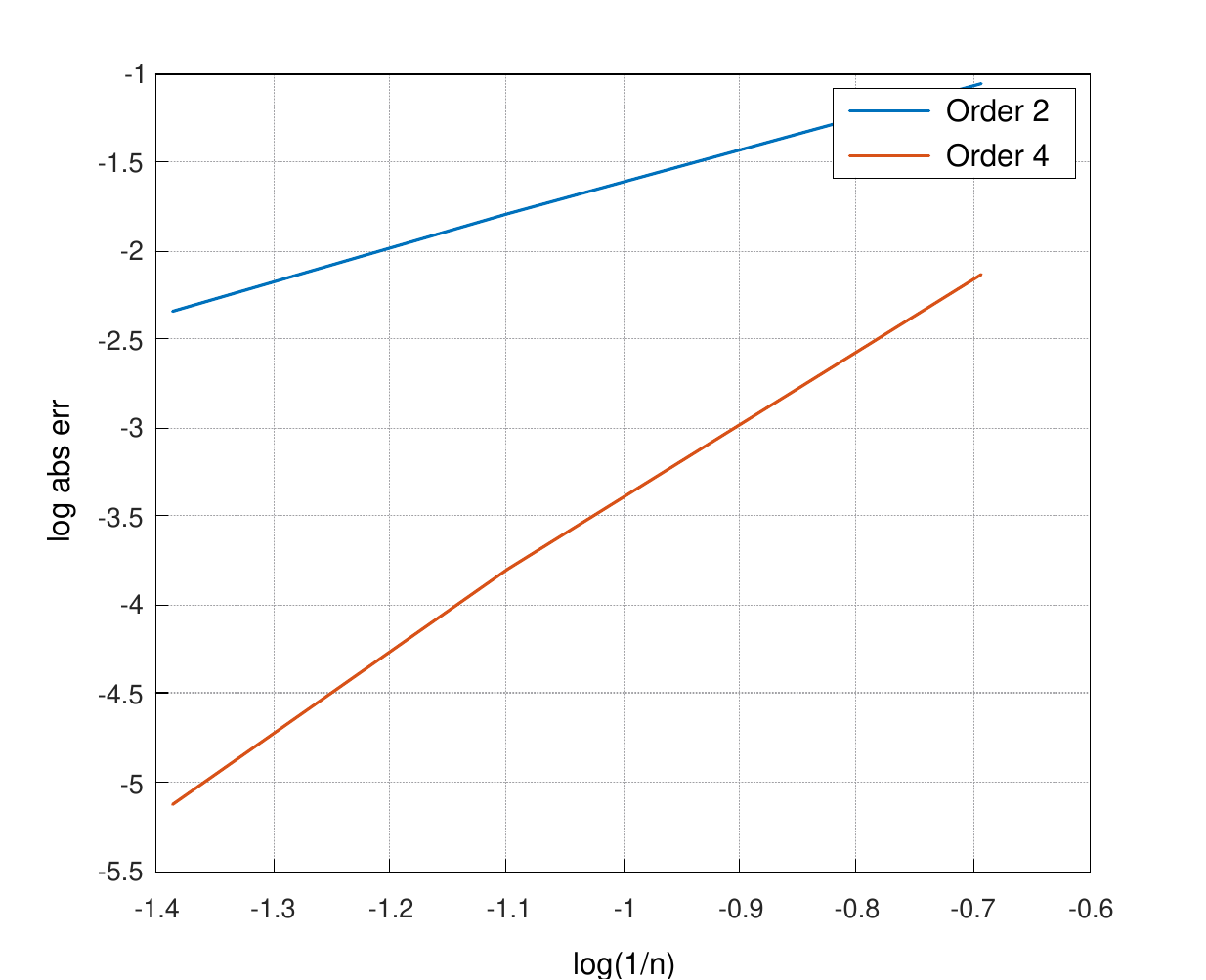}
    \caption{Log-log plot}
    \label{fig:log-log_plot_Aheston1}
  \end{subfigure}
  \caption{Test function: $f(x,y,i)=(K-i/T)^+$. Parameters: $e^{x}=100$, $r=0$, $y=0.2$, $a=0.2$, $b=2$, $\sigma=0.5$, $\rho=-0.7$, $T=1$, $K=100$. Statistical precision $\varepsilon=5$e-4.
  Graphic~({\sc a}) shows the Monte Carlo estimated values of $\cPh^{NV,1,n}f$, $\cPh^{NV,2,n}f$ as a function of the time step $1/n$. Graphic~({\sc b}) draws $\log(|\cPh^{NV,\nu,2n}f-\cPh^{NV,\nu,n}f|)$ in function of $\log(1/n)$: the regressed slopes are 1.85 and 4.30 for the second and fourth order respectively.}\label{Heston_orders_asian}
\end{figure}

\begin{figure}[h!]
  \centering
  \begin{subfigure}[h]{0.49\textwidth}
    \centering
    \includegraphics[width=\textwidth]{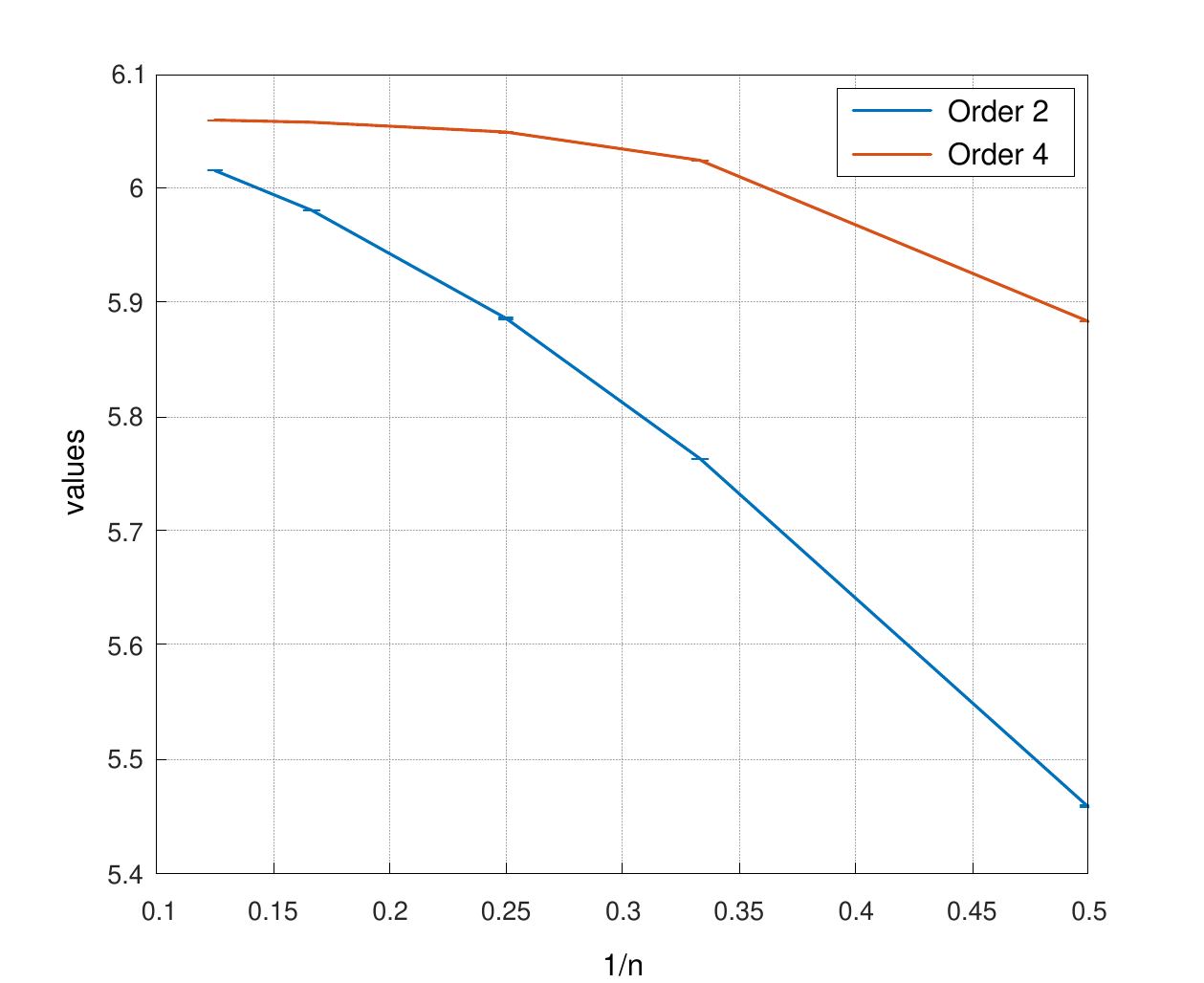}
    \caption{Values plot}
    \label{fig:values_plot_AhestonCF1}
  \end{subfigure}
  \hfill
  \begin{subfigure}[h]{0.49\textwidth}
    \centering
    \includegraphics[width=\textwidth]{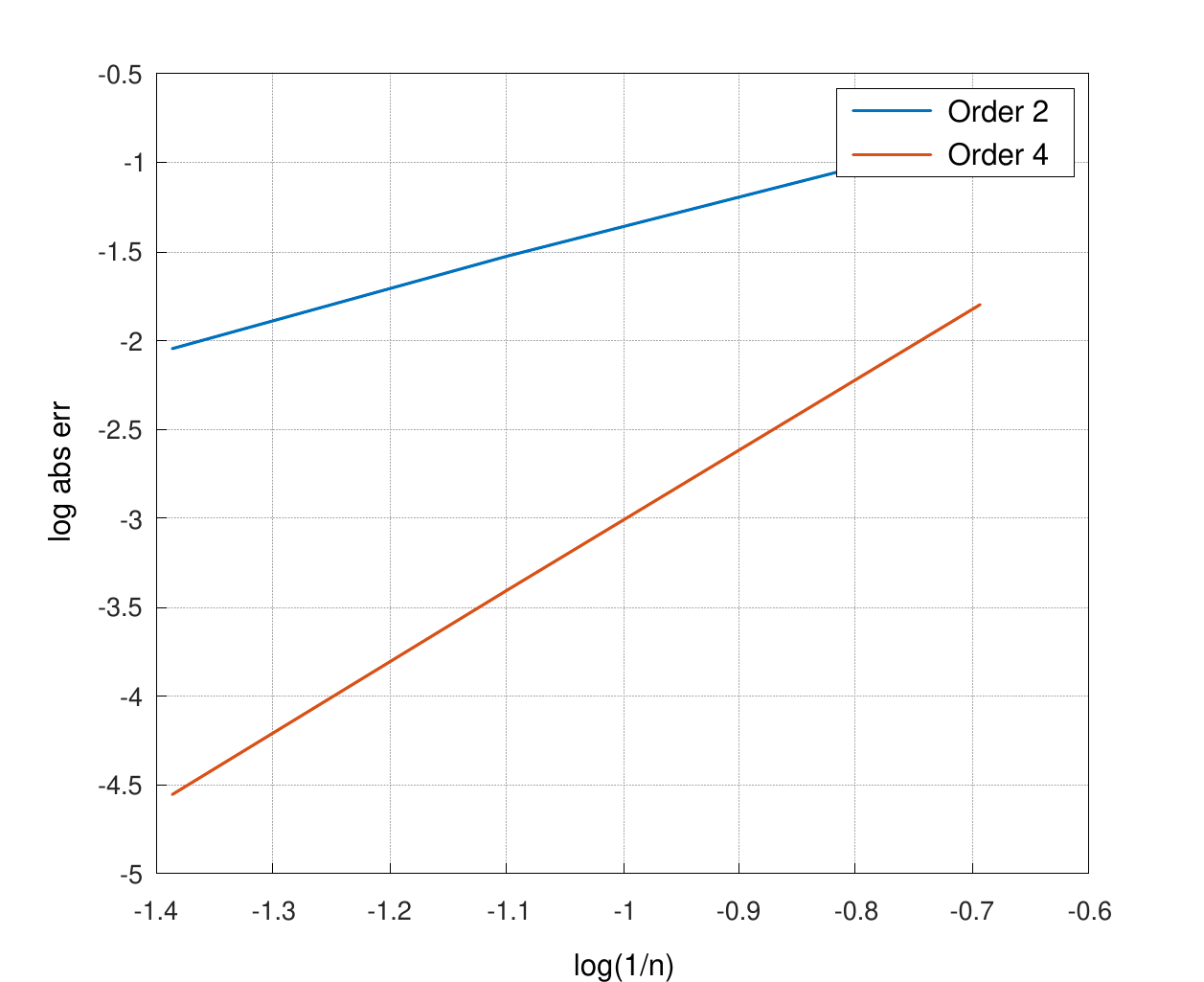}
    \caption{Log-log plot}
    \label{fig:log-log_plot_AhestonCF1}
  \end{subfigure}
  \caption{Test function: $f(x,y,i)=(K-i/T)^+$. Parameters: $e^x=100$, $r=0$, $y=0.1$, $a=0.1$, $b=1$, $\sigma=1.0$, $\rho=-0.9$, $T=1$, $K=100$. Statistical precision $\varepsilon=5$e-4.
  Graphic~({\sc a}) shows the Monte Carlo estimated values of $\cPh^{Ex,1,n}f$, $\cPh^{Ex,2,n}f$ as a function of the time step $1/n$. Graphic~({\sc b}) draws $\log(|\cPh^{Ex,\nu,2n}f-\cPh^{Ex,\nu,n}f|)$ in function of $\log(1/n)$: the regressed slopes are 1.72 and 3.98 for the second and fourth order respectively.}\label{HestonCF_orders_asian}
\end{figure}

Figure~\ref{Heston_orders_asian} shows the convergence of the approximations~$\cPh^{NV,1,n}$ and~$\cPh^{NV,2,n}$ to calculate the Asian option price $P_Tf=\E[(K-\cI_T/T)^+]$, with $f(x,y,i)=(K-i/T)^+$.  The left graphic draws the obtained value in function of the time step. This time, we do not have an exact value, and we draw in the log-log plot the logarithm of the difference between $\cPh^{NV,\nu,2n}$ and $\cPh^{NV,\nu,n}$. If  $\cPh^{NV,\nu,n}=P_Tf +\frac{c}{n^\eta} +o(n^{-\eta})$ for some $\eta>0$, then $\log(|\cPh^{NV,\nu,2n}-\cPh^{NV,\nu,n}|)=\log(|c|(1-2^{-\eta}))-\eta \log(n) +o(\log(n))$, and therefore the slope of the log-log plot can be seen as an estimation of the rate of convergence. Again, we find empirical rates that are close to 2 for $\nu=1$ and 4 for $\nu=2$, which is in line with the theoretical results. The same observation holds in Figure~\ref{HestonCF_orders_asian} for $\cPh^{Ex,\nu,n}$ in a case where $\sigma^2\ge 4a$.

\subsection{Estimators variance and schemes coupling}\label{Subsec_coupling}

In this paragraph,  we discuss  how to couple the refined path and the coarse one in order to minimize the variance of the correction term $$n\left(f(\hat{X}^{SCH,1}_T,\hat{Y}^{SCH,1}_T)-f(\hat{X}^{SCH,0}_T,\hat{Y}^{SCH,0}_T)\right),$$
where $SCH\in \{Ex, NV\}$ indicates the scheme used. We will note $$\mathbf{V}(n)=\text{Var}\left(n\left(f(\hat{X}^{SCH,1}_T,\hat{Y}^{SCH,1}_T)-f(\hat{X}^{SCH,0}_T,\hat{Y}^{SCH,0}_T)\right)\right).$$ While it is rather natural to take the same driving noise for the other time steps, the difficulty is to find a good coupling on $[\kappa h_1, (\kappa+1) h_1]$ between the noise used on the refined time grid and the one of the coarse grid. This issue does not exist for~$Y$ when it is simulated exactly, and for the Ninomiya-Victoir scheme we always take $G_{\kappa+1}=\frac 1 {\sqrt{n}} \sum_{k=1}^n \tilde{G}_k$. 
We therefore discuss the choice of $N_{\kappa+1}$ that is used for the simulation of~$X$. We consider the two following choices:
$$
N_{\kappa+1}=N^{\textup{st}}=\frac 1{\sqrt{n}}\sum_{k=1}^n\tilde{N}_k,\ \text{ or }  N_{\kappa+1}=N^{\textup{av}} = \frac{\sum_{k=1}^n \sqrt{\hY^{SCH,1}_{\kappa h_1 +(k-1)h_2}+\hY^{SCH,1}_{\kappa h_1 +kh_2}} \tilde{N}_k}{\sqrt{\sum_{k=1}^n \hY^{SCH,1}_{\kappa h_1 +(k-1)h_2}+\hY^{SCH,1}_{\kappa h_1 +kh_2}}}.
$$
Note that $N^{\textup{av}}\sim \mathcal{N}(0,1)$, since the normal variables $\tilde{N}_k$, $1\le k\le n$, are independent of the $Y$ component. This second choice is also rather natural since it weights each normal variable with the corresponding volatility on each fine time-step. A similar coupling has been proposed by Zheng~\cite{ZhengC} in a context of Multi-Level Monte-Carlo for the Heston model.

Besides this choice of coupling, we also consider another scheme for the Heston model. In fact, an alternative of Strang's scheme is to introduce a Bernoulli random variable of parameter~$1/2$ that selects which scheme is used first. We want to see if this additional random variable has an incidence on the variance of the correcting term.  This  scheme is given by
$$\hX^{SCH,x,y}_t = x +(r-\frac{\rho}{\sigma}a)t +\frac{\rho}{\sigma}(\hY^y_t-y) +(\frac{\rho}{\sigma}b-\frac{1}{2})\frac{y+\hY^y_t}{2}t +\sqrt{y + B (1-\rho^2)(\hY^{SCH,y}_t-y) t}N, 
$$
where $N\sim \mathcal{N}(0,1)$ and $B\sim \mathcal{B}(1/2)$ is an independent Bernoulli random variable. The random variable $\hY^{SCH,y}_t$ is either equal to $Y^y_t$ for $SCH=Ex$ or to $\hY^y_t$ for $SCH=NV$. This scheme has been used in the numerical experiments of~\cite{AL} and is indicated with "Bernoulli" in the following tables.

\begin{table}[h!]
  \centering 
  \begin{tabular}{ |c |c|c|c|c|c|c| }
    \cline{1-7}
        Scheme & Coupling & $n=2$     & $n=4$      & $n=8$       & $n=16$    & $n=32$    \\
        \hline
        \multicolumn{1}{|c|}{\multirow{2}{*}{$NV$}} &  \multirow{2}{*}{$N^{\textup{st}}$}    & 12.13 & 18.48 & 21.85 & 23.56 & 24.41 \\ 
        \multicolumn{1}{|c|}{}&     & (0.01) & (0.01) & (0.01) & (0.02) & (0.02) \\
      \hline
        \multicolumn{1}{|c|}{ \multirow{2}{*}{$NV$}} &  \multirow{2}{*}{$N^{\textup{av}}$}      & 8.31 & 9.08 & 8.91 & 8.70 & 8.57\\ 
        \multicolumn{1}{|c|}{}& & (0.01) & (0.01) & (0.01) & (0.01) & (0.01) \\ 
         \hline
        \multicolumn{1}{|c|}{\multirow{2}{*}{$NV$, Bernoulli}} &  \multirow{2}{*}{$N^{\textup{st}}$} & 33.27 & 41.96 & 46.14 & 48.27 & 49.37 \\ 
        \multicolumn{1}{|c|}{}&      & (0.02) & (0.03) & (0.03) & (0.04) & (0.04) \\
      \hline
        \multicolumn{1}{|c|}{\multirow{2}{*}{$NV$, Bernoulli}} &  \multirow{2}{*}{$N^{\textup{av}}$} & 25.11 & 28.55 & 30.74 & 32.13 & 32.85\\ 
        \multicolumn{1}{|c|}{}&   & (0.02) & (0.02) & (0.03) & (0.03) & (0.03) \\       
    \hline
    \multicolumn{1}{|c|}{\multirow{2}{*}{$Ex$}} &  \multirow{2}{*}{$N^{\textup{st}}$}     & 30.19 & 30.19 & 28.09 & 26.74 & 26.02 \\ 
    \multicolumn{1}{|c|}{}&  & (0.02) & (0.02) & (0.02) & (0.02) & (0.02) \\
    \hline
    \multicolumn{1}{|c|}{\multirow{2}{*}{$Ex$}} &  \multirow{2}{*}{$N^{\textup{av}}$}  & 26.35 & 20.80 & 15.17 & 11.88 &  10.18\\ 
    \multicolumn{1}{|c|}{}& & (0.01) & (0.01) & (0.01) & (0.01) & (0.01) \\ 
    \hline
    
  \end{tabular}
  \caption{Variance $\mathbf{V}(n)$ estimated  with $10^8$ samples, the 95\% confidence precision is indicated below in parentheses. Test function: $f(x,y)=(K-e^x)^+$. Parameters: $e^x=100$, $r=0$, $x=0.2$, $a=0.2$, $b=1.0$, $\sigma=0.5$, $\rho=-0.7$, $T=1$, $K=105$.}\label{Table_VAR_Heston}
\end{table}

\begin{table}[h!]
  \centering 
  \begin{tabular}{ |c |c|c|c|c|c|c| }
    \cline{1-7}
    Scheme & Coupling & $n=2$     & $n=4$      & $n=8$       & $n=16$    & $n=32$    \\
    \hline
    \multicolumn{1}{|c|}{\multirow{2}{*}{$Ex$}} &  \multirow{2}{*}{$N^{\textup{st}}$}      & 38.69 & 39.51 & 36.96 & 35.23 & 34.32 \\ 
    \multicolumn{1}{|c|}{}&      & (0.03) & (0.03) & (0.03) & (0.03) & (0.03) \\
  \hline
    \multicolumn{1}{|c|}{\multirow{2}{*}{$Ex$}} &  \multirow{2}{*}{$N^{\textup{av}}$}     & 32.49 & 26.01 & 19.16 & 15.20 & 13.17 \\ 
    \multicolumn{1}{|c|}{}& & (0.02) & (0.02) & (0.02) & (0.01) & (0.01) \\ 
    \hline
        \multicolumn{1}{|c|}{\multirow{2}{*}{$Ex$, Bernoulli}} &  \multirow{2}{*}{$N^{\textup{st}}$}  & 65.66 & 68.93 & 66.95 & 65.47 & 65.01 \\ 
        \multicolumn{1}{|c|}{}&      & (0.04) & (0.05) & (0.05) & (0.05) & (0.05) \\
        \hline
        \multicolumn{1}{|c|}{\multirow{2}{*}{$Ex$, Bernoulli}} &  \multirow{2}{*}{$N^{\textup{av}}$}    & 61.04 & 57.45 & 50.98 & 47.03 & 45.12 \\ 
        \multicolumn{1}{|c|}{}&   & (0.04) & (0.04) & (0.04) & (0.04) & (0.04) \\      
    \hline
    
  \end{tabular}
  \caption{Variance $\mathbf{V}(n)$ estimated  with $10^8$ samples,  the 95\% confidence precision is indicated below in parentheses. Test function: $f(x,y)=(K-e^x)^+$. Parameters: $e^x=100$, $r=0$, $x=0.1$, $a=0.1$, $b=1.0$, $\sigma=1.0$, $\rho=-0.9$, $T=1$, $K=105$.}\label{Table_VAR_HestonCF}
\end{table}

We have reported in Tables~\ref{Table_VAR_Heston} and \ref{Table_VAR_HestonCF} the variance of the correcting term for the different schemes, the two different choices for $N_{\kappa+1}$ and different values of~$n$.  Table~\ref{Table_VAR_Heston} reports a case with $\sigma^2\le 4a$ where the Ninomiya-Victoir scheme is well-defined, while Table~\ref{Table_VAR_HestonCF} reports a case with $\sigma^2>4a$. In both cases, we have taken the example of a European Put option. In both tables, we observe that the scheme using a Bernoulli random variable has a correcting term of  higher variance. Besides, it requires to simulate one more random variable. Thus, the schemes based on the Strang composition are better suited with the convergence acceleration using random grids.   

We now comment the coupling of the schemes. In all our experiments, the coupling using~$N^{\textup{av}}$ gives a lower variance than the one using~$N^{\textup{st}}$. Besides, we observe that the gain factor between the two choices is  increasing with~$n$. We have a gain factor of $\frac{24.41}{8.57}\approx 2.85$ in Table~\ref{Table_VAR_Heston} for the Ninomiya-Victoir scheme and $n=32$, and of $2.32$ in Table~\ref{Table_VAR_HestonCF} for the scheme $Ex$ with $n=16$. As a consequence, we recommend the use of $N^{\textup{av}}$ to couple the schemes on the coarse and fine grids.

\subsection{Towards higher order approximations of Rough Heston process}
In this last paragraph, we propose to investigate numerically the approximations with random grids in the case of the rough Heston model. We first recall that the rough Heston model proposed by El Euch and Rosenbaum~\cite{EER19} is given by $S_t=e^{X^{x,y}_t}$, where
\begin{align}
  X^{x,y}_t &= x + \int_0^t \left(r-\frac 12 Y^y_u \right) du + \int_0^t\sqrt{Y^y_u}  (\rho dW_u + \sqrt{1-\rho^2} dB_u),\\
  Y^y_t &= y + \int_0^t K(t-u) (a-bY^y_u) du +\int_0^t K(t-u)\sigma\sqrt{Y^y_u} dW_u, \label{rough_vol_SVE}
\end{align}
 where $K$ is the fractional kernel given by
\begin{equation}\label{fractional_kernel}
  K(t)= \frac{t^{H-1/2}}{\Gamma(H+1/2)}
\end{equation}
with Hurst parameter $H\in(0,1/2)$.
The convolution through the kernel $K$ in \eqref{rough_vol_SVE} introduces a dependence of the volatility $Y$ on the past, and the process $(X,Y)$ is not Markovian. Despite this, it is possible to find a process in larger dimension that is Markovian and approximates the rough process well. It is well known (see e.g. Alfonsi and Kebaier \cite[Proposition 2.1]{AK24}) that if we replace the rough kernel $K$ in \eqref{rough_vol_SVE}  by a discrete completely monotone kernel
\begin{equation}\label{discrete_completely_monotone_kernel}
  \tK(t) = \sum_{k=1}^d \gamma_k e^{-\rho_k t},\qquad \gamma_k,\rho_k\ge 0,\,k\in\{1,\ldots,d\},
\end{equation} 
then the solution of the Stochastic Volterra Equation
\begin{equation}\label{dcmk_vol_SVE}
  \tY_t = y + \int_0^t \tK(t-u) (a-b\tY^y_u) du +\int_0^t \tK(t-u)\sigma\sqrt{\tY^y_u} dW_u, 
\end{equation}
is given by $\tY_t = y + \sum_{k=1}^d \gamma_k\tY^k_t$, where $\tbfY=(\tY^1,\ldots,\tY^d)$ solves the SDE in $\R^d$:
\begin{equation}\label{dcmk_vol_SDE}
  \tY^k_t = -\rho_k\int_0^t \tY^k_u du + \int_0^t (a-b\tY_u) du + \int_0^t \sigma \sqrt{\tY_u}dW_u,\quad k\in\{1,\ldots,d\}, t\ge 0.
\end{equation}
We want to build a second order scheme for \eqref{dcmk_vol_SDE} along with
\begin{equation*}
  \tX_t = x + \int_0^t(r-\frac 12 \tY_u) du + \int_0^t\sqrt{\tY_u} (\rho dW_u + \sqrt{1-\rho^2} dB_u).
\end{equation*}
This multifactor model has been first developed by Abi Jaber and El Euch \cite{AEE19} and can be seen under a suitable choice of $\tK(t) = \sum_{k=1}^d \gamma_k e^{-\rho_k t}$  as an approximation of the rough Heston model.

We present here a second order approximation scheme for the couple $(\tX,\tY)$ that preserve the positivity of $\tY$ as proved by Alfonsi in \cite[Theorem 4.2 and Subsection 4.3]{AA_NPK}. The infinitesimal generator of the $d+1$ dimensional process $(\tX,\tbfY)$ is given by 
\begin{multline}\label{inf_gen_dcmk_vol}
  \mcL f(x,\bfy)  = (r-\frac{1}{2}y')\partial_{x} f(x,\bfy)+\sum_{k=1}^d (a-\rho_k y_k-b y')\partial_{y_k} f(x,\bfy)\\ 
  +\frac{1}{2} \partial_x^2f(x,\bfy)+\sum_{k=1}^d 2\rho\sigma \partial_{x}\partial_{y_k} f(x,\bfy) +\frac{1}{2} \sum_{k,l=1}^d \sigma^2 y' \partial_{y_k}\partial_{y_l} f(x,\bfy),
\end{multline}
where $\bfy = (y_1,\ldots,y_d)$ and $y'=y+\sum_{j=1}^d \gamma_j y_j$. We use the following splitting $\mcL=\mcL_1+\mcL_2$, where $\mcL_1 f = -\sum_{k=1}^d \rho_k y_k \partial_{y_k} f$ is the infinitesimal generator associated to
\begin{equation}\label{mcL1_linear_ODE}
  \begin{aligned}
    dX_t   &=0, \\
    dY^k_t &= -\rho_k Y^k_t dt,\qquad k\in\{1,\ldots,d\},
  \end{aligned}
\end{equation}
and $\mcL_2$ is associated to
\begin{equation}\label{mcL2_SDE}
  \begin{aligned}
    dX_t   &=(r-\frac{1}{2}Y_t)d_t+\sqrt{Y_t} (\rho dW_t + \sqrt{1-\rho^2} dB_t), \\
    dY^k_t &=  (a-bY_t) dt +  \sigma \sqrt{Y_t}dW_t, \text{ with }Y_t = y+\sum_{k=1}^d \gamma_k Y^k_t \quad k\in\{1,\ldots,d\}.
  \end{aligned}
\end{equation}
The linear ODE \eqref{mcL1_linear_ODE} has the exact solution
\begin{equation}
  \psi_1(t,x,{\bf y})=(x,{\bf y}_t), \text { with } {\bf y}_t=(y_1e^{-\rho_1 t}, \ldots, y_de^{-\rho_d t}).
\end{equation}
From \eqref{mcL2_SDE}, we obtain that $(X_t,Y_t)$ satisfies the following log-Heston SDEs
\begin{equation}\label{K0_logHeston_SDE}
  \begin{aligned}
    X_t   &= x + \int_0^t(r-\frac{1}{2}Y_u)d_t+\int_0^t\sqrt{Y_u} (\rho dW_u + \sqrt{1-\rho^2} dB_u), \\
    Y_t &= y' + \int_0^t K(0)(a-bY_u) du +\int_0^tK(0)\sigma \sqrt{Y_u} dW_u,
  \end{aligned}
\end{equation}
and $dY^k_t=\frac{1}{K(0)} dY_t$ (note that $K(0)=\sum_{j=1}^d \gamma_j$).
So, having a second order scheme $(\hat{X}^{x,y'}_t,\hat{Y}^{y'}_t)$ for $(X_t,Y_t)$, we can build a second order scheme for~\eqref{mcL2_SDE} by
\begin{equation}
  (\hat{X}^{x,y}_t,\hat{Y}^{1,y}_t,\dots,\hat{Y}^{d,y}_t) = (\hat{X}^{x,y'}_t,A_\bfy(\hat{Y}^{y'}_t)),
\end{equation}
where 
\begin{equation}
  A_\bfy(z) = \left(y_1 +\frac{z-y'}{K(0)}, \ldots, y_d+\frac{z-y'}{K(0)}  \right).
\end{equation}
In the end, we use again the Strang composition to get the second order scheme for~\eqref{inf_gen_dcmk_vol} starting from $(x,{\bf y})$ and time-step $t>0$:
\begin{equation}\label{scheme_multif_Heston} \psi_1 \left(t/2,\hat{X}^{x,y'_{t/2}}_t,A_{\bfy_{t/2}}(\hat{Y}^{y'_{t/2}}_t) \right),\end{equation}
where $y'_{t/2}=y+\sum_{j=1}^d \gamma_j y_j e^{-\rho_jt/2}$.

Now that we have defined the approximation scheme~\eqref{scheme_multif_Heston} for the multifactor Heston model, we want to use it to test numerically the convergence acceleration provided by the random grids. The construction of the estimators is identical to the one of $\cPh^{NV,1,n}$ and $\cPh^{NV,2,n}$ in Subsection~\ref{Subsec_implementation} and we do not reproduce it here. Also, by a slight abuse of notation, we still denote by $\cPh^{NV,1,n}$ and $\cPh^{NV,2,n}$ these estimators that are well-defined $\tK(0)\sigma^2<4a$. Unfortunately, there does not exist yet -- up to our knowledge -- efficient exact simulation method for the multifactor Cox-Ingersoll-Ross process. It it were the case, we could then define the corresponding estimators $\cPh^{Ex,1,n}$ and $\cPh^{Ex,2,n}$ exactly as in Subsection~\ref{Subsec_implementation}, for any $\sigma>0$. Here, we thus present only simulation in the case $\tK(0)\sigma^2<4a$. These simulations are intended to be a first attempt to get higher order approximations of the multifactor Heston model. We let the case $\tK(0)\sigma^2>4a$ as well as theoretical proofs of convergence in this model for future studies.

Multi exponential approximations of the rough kernel are available in literature, see e.g. Abi Jaber, El Euch \cite{AEE19} and Alfonsi, Kebaier \cite{AK24}. In our simulation we will use the algorithm BL2 suggested by Bayer and Breneis in \cite{BB23}, that optimizes the $\L^2([0,T])$-error between $K$ and $\tK$ while limiting high values of $\rho_k$. In particular, we will use the approximate BL2 Kernel with $d=3$ exponential factors, that has been proven to approximate a whole volatility surface of rough Heston call prices with approximately 1\% of maximal relative error~\cite[Table 4, third column]{BB23}.
When the Hurst parameter $H=0.1$ the nodes and weights are resumed following table
\begin{center}
  \begin{tabular}{ |r|r|r| } 
   \hline
   $\rho_1 = 0.08399474$ & $\rho_2 = 5.64850577$ & $\rho_3 = 118.00624702$ \\
   \hline
   $\gamma_1 = 0.80386099$ & $\gamma_2 = 1.60786461$ & $\gamma_3 = 8.80775525$  \\ 
   \hline
  \end{tabular}
\end{center}
We consider European put option prices and present in Figure~\ref{fig:values_plot_rheston1}  a plot of the values of $\cPh^{NV,1,n}f$ and $cPh^{NV,2,n}f$ as a function of the time step with the exact value obtained by Fourier techniques. In Figure~\ref{fig:log-log_plot_rheston1}, we draw a log-log plot to quantify the order of convergence.
First, we observe that we obtain a much larger bias than in our previous numerical experiments for the Heston process, Figure~\ref{fig:values_plot_hestonCF1}. This is mainly due to the map $\psi_1$ that has a relatively large nodes, namely $\rho_2$ and especially $\rho_3$. The contribution of these exponential factors in the dynamics of the scheme gets more important when the time step is sufficiently small. However, even if the bias is more important, the speed of convergence are still in line with the theoretical ones.  The regressed slopes for $\cPh^{NV,1,n}f$ and $cPh^{NV,2,n}f$ are respectively 1.89 and 3.98, showing that the scheme is indeed a second-order scheme and that the boosting technique with random grids works again in this case.

\begin{figure}[h!]
  \centering
  \begin{subfigure}[h]{0.49\textwidth}
    \centering
    \includegraphics[width=\textwidth]{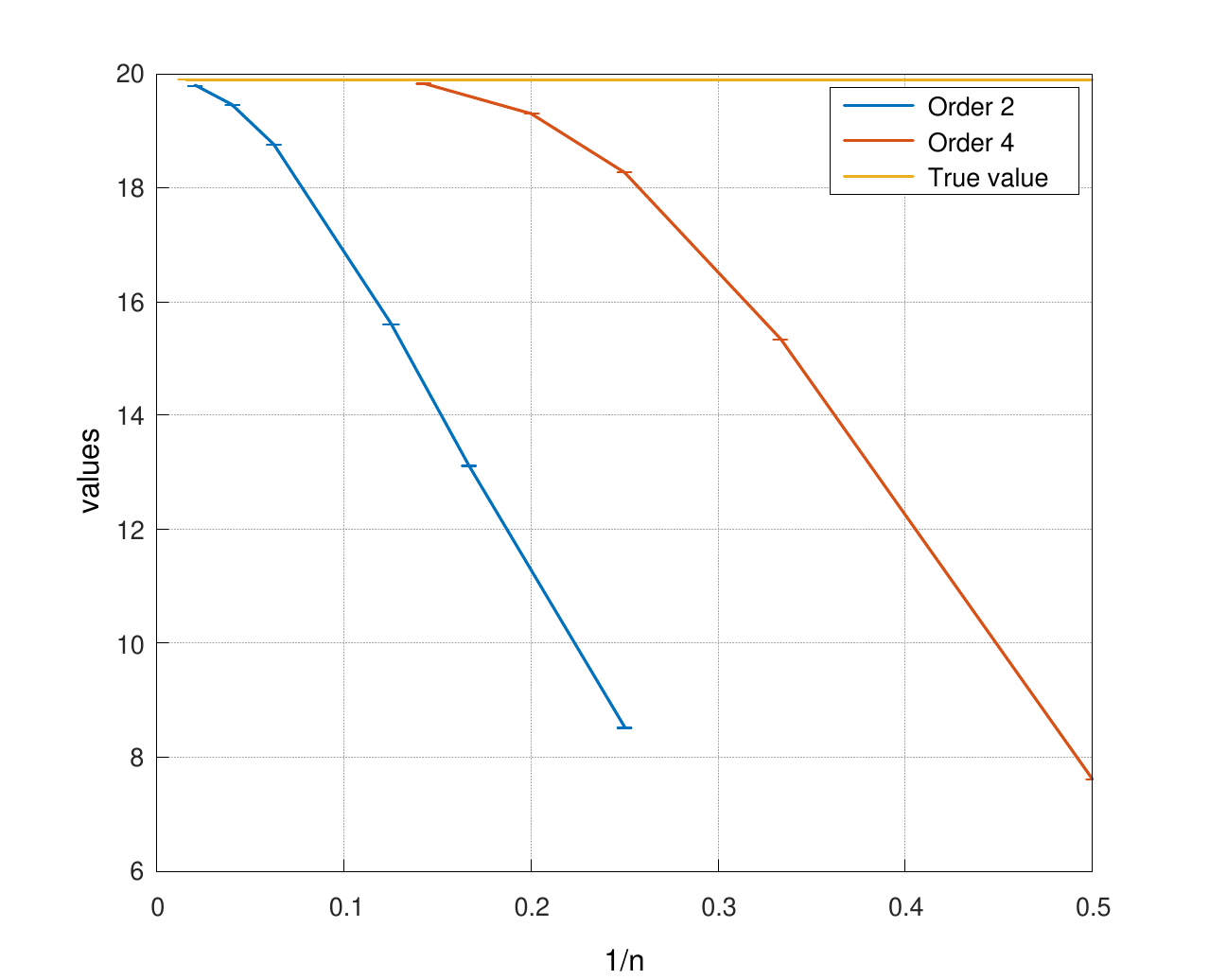}
    \caption{Values plot}
    \label{fig:values_plot_rheston1}
  \end{subfigure}
  \hfill
  \begin{subfigure}[h]{0.49\textwidth}
    \centering
    \includegraphics[width=\textwidth]{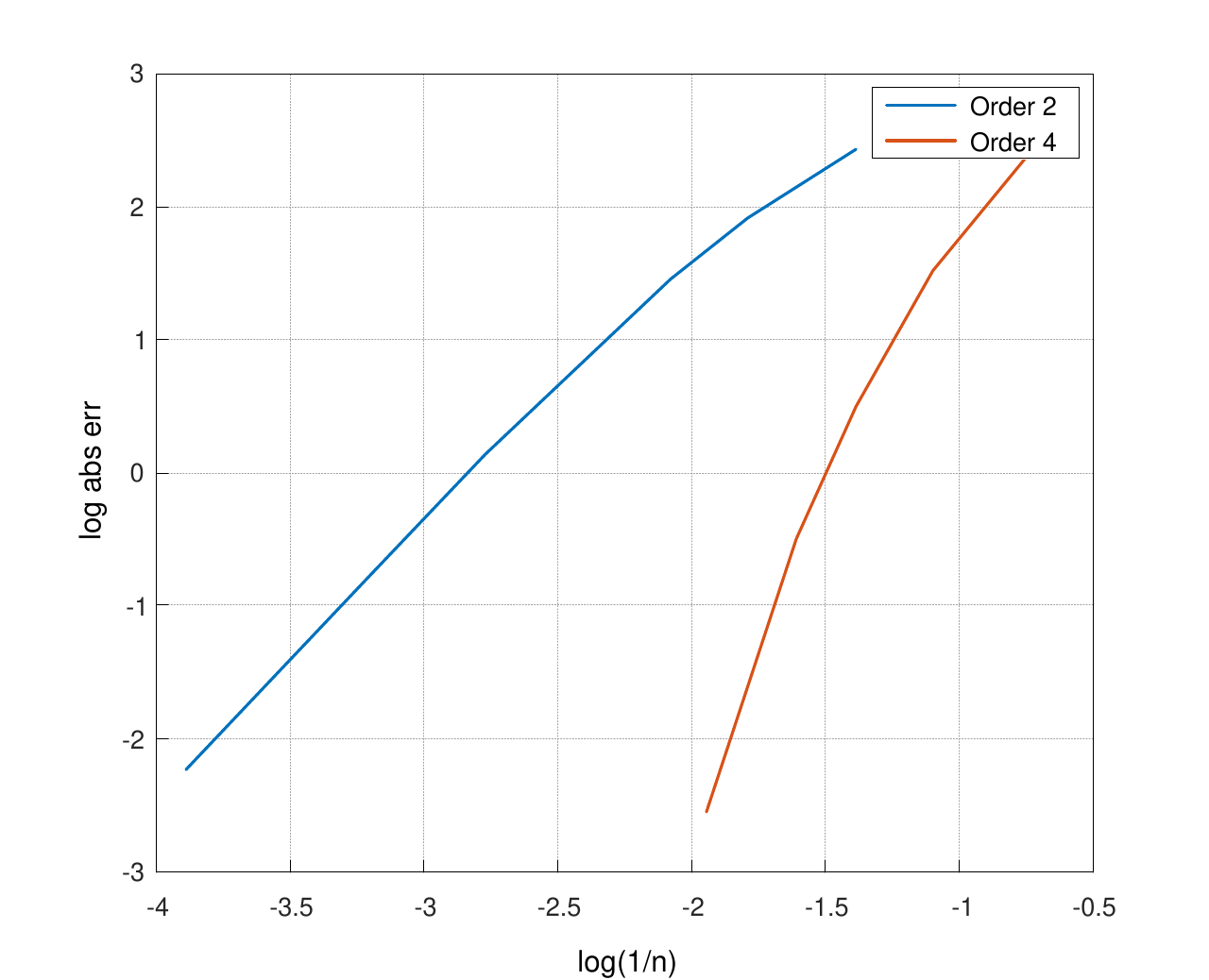}
    \caption{Log-log plot}
    \label{fig:log-log_plot_rheston1}
  \end{subfigure}
  \caption{Test function: $f(x,y)=(K-e^x)^+$. Parameters: $S_0=e^x=100$, $r=0$, $y=0.1$, $a=0.3$, $b=1$, $\sigma=0.1$, $\rho=-0.7$, $T=1$, $K=105$. Statistical precision $\varepsilon=2$e-3.
  Graphic~({\sc a}) shows the Monte Carlo estimated values of $\cPh^{NV,1,n}f$, $\cPh^{NV,2,n}f$ as a function of the time step $1/n$  and the exact option value. Graphic~({\sc b}) draws $\log(|\cPh^{NV,\nu,n}f-P_Tf|)$ in function of $\log(1/n)$: the regressed slopes are 1.89 and 3.98 for the second and fourth order respectively.}\label{rHeston_orders}
\end{figure}


\chapter{Some PDE results in Heston model with applications} 

\label{Chapter_PDE} 

This chapter is based on the paper \cite{Lpde25}.
\chapabstract{\\We present here some results for the PDE related to the logHeston model. We present different regularity results and prove a verification theorem that shows that the solution produced via the Feynman-Kac Theorem is the unique viscosity solution for a wide choice of initial data (even discontinuous) and source data. In addition, our techniques do not use Feller's condition at any time.
In the end, we prove a convergence theorem to approximate this solution by means of a hybrid (finite differences/tree scheme) approach.}

\section*{Introduction}
The stochastic volatility model proposed by Heston in \cite{Heston} is one of the most famous and used models in mathematical finance. It describes the evolution of the price of an asset $S$ and its instantaneous volatility $Y$, according to the following couple of stochastic differential equations
\begin{equation}\label{Heston_model_sde_intro}
  \begin{split}
 dS_t &= (r-\delta)S_t dt+  S_t\sqrt{Y_t} (\rho dW_t + \brho dB_t),\\
 dY_t &= (a-bY_t)dt + \sigma\sqrt{Y_t}dW_t,
  \end{split}
\end{equation}
where $r\in\R$, $\delta\ge0$, $a,b,\sigma>0$, $\rho\in(-1,1)$, $\brho=\sqrt{1-\rho^2}$, $(x,y)\in\R\times[0,\infty)$ and $(W,B)$ is a standard 2-dimensional Brownian motion.
In order to study and discretize the asset $S$, it is useful to consider the logHeston diffusion obtained by applying the transformation $(s,y)\mapsto(\log(s),y)$ to the asset and the volatility. To this purpose, we consider a slightly general model from which we can recover the logHeston by a precise choice of the parameters:
\begin{equation}\label{logHeston_model_sde_intro}
  \begin{split}
 dX_t&= (c+d Y_t)dt+\lambda\sqrt{Y_t}(\rho dW_t + \brho dB_t)\\
 dY_t&= (a-bY_t)dt+\sigma\sqrt{Y_t}dW_t,
  \end{split}
  \end{equation}
where $b,c,d\in\R$, $a,\lambda,\sigma>0$ $\rho\in(-1,1)$. Indeed, one can show that the price of financial derivatives written on $(X,Y)$, or equivalently on $(X,Y)$, satisfies a peculiar parabolic PDE that is degenerate, i.e. the matrix of the second-order derivative coefficients fails to be strictly positive definite when the boundary $\{y=0\}$ is attained. 
In this case, the classical existence and uniqueness results using the uniform ellipticity property fail, and an ad hoc method must be found to prove them.

The literature presents various existence and uniqueness results derived from analyzing the Heston and logHeston PDEs. Ekström and Tysk \cite{ET2010} examined a PDE arising from a generalized Heston model. While their model employs more general drift and volatility functions, these retain key characteristics of the original ones, such as positive drift of the volatility process when $Y_t=0$ and sufficient regularity of the squared volatility function. Within their model, they establish a verification theorem and a uniqueness result contingent upon certain mild assumptions on the payoff function $f$.
Costantini et al. groundbreaking study in \cite{CPD12} establish the existence and uniqueness of a viscosity solution for a PDE encompassing a wide array of jump-diffusion models, including Heston, applicable to both European and Asian options. Notably, when applied to the Heston model, their innovative approach necessitates a condition akin to the Feller condition (i.e. $\sigma^2\le 2a$), ensuring the volatility process remains strictly positive. Consequently, their result cannot cover the full range of parameters in the Heston case.
For numerical reasons, Briani et al. \cite{BCT} need regularity results for functional of the diffusion $(X^{t,x,y},Y^{t,y})$, representing the solution of \eqref{logHeston_model_sde_intro} starting from $(x,y)$ at time $t\in[0,T)$. This is important because the expectation of such functionals gives the price of European options.
In order to do that, they prove first a verification result (cf. \cite[Lemma 5.7]{BCT}) for the logHeston PDE \eqref{reference_PDE_generalset}: under appropriate regularity hypotheses on $f$ and $h$, the function
\begin{equation}\label{u_sol_gen_intro}
 u(t,x,y)=\E\bigg[e^{\varrho (T-t)}f(X^{t,x,y}_T,Y^{t,y}_T)-\int_t^Te^{\varrho (s-t)} h(s,X^{t,x,y}_s,Y^{t,y}_s)ds\bigg]
\end{equation}
is a solution of  
\begin{equation}\label{reference_PDE_generalset}
  \begin{cases}
    \partial_tu(t,x,y) +\mcL u(t,x,y) +\varrho u(t,x,y) = h(t,x,y),\quad   t\in[0,T), &(x,y)\in\mcO, \\
 u(T,x,y) = f(x,y), \quad  &(x,y)\in\mcO,
  \end{cases}
\end{equation}
where $\mcO=\R\times(0,\infty)$ and $\mcL$ is the infinitesimal generator of \eqref{logHeston_model_sde_intro}:
\begin{equation}\label{reference_infinit_gen}
\mcL = \frac{y}{2}(\lambda^2\partial^2_{x} + 2 \rho\lambda\sigma \partial_{x}\partial_{y} +  \sigma^2 \partial^2_{y}) +(c+dy) \partial_x + (a-by) \partial_y.
\end{equation}
Moreover, when the Feller condition $\sigma^2\le2a$ is satisfied, the uniqueness of the solution holds. \label{Page:Feller_condition_uniqueness} In fact, since the boundary $\R\times\{0\}$ is inaccessible to the process $(X,Y)$ under the Feller condition, the behaviour of $u$ if $\R\times\{y=0\}$ is irrelevant and one can achieve uniqueness of the solution. In second instance, they prove a stochastic representation for the derivatives of $u$. As a consequence of this result, and under specific conditions on final data $f$, they show that $u$ is regular enough to solve, by continuity, the problem even on $\overline{\mcO}=\R\times[0,\infty)$, so the PDE is satisfied even when volatility collapses in 0. This gives an additional boundary condition, giving an equation involving the function and its derivatives at the domain boundary. It is worth noting that this is called a Robin boundary condition.
Briani et al. did not establish uniqueness for the PDE over $\overline{\mcO}$, as their primary objective was different. 

In this paper we restart from the logHeston setting of \cite{BCT} and study minimal hypotheses over $f$ and $h$ under which we can prove a verification theorem that characterizes $u$ in \eqref{u_gen_sol_intro} as the \textit{unique} solution of \eqref{reference_PDE_generalset} even when the Feller condition is not met ($\sigma^2>2a$).
Let us stress that, due to the connection between $u$ in \eqref{u_sol_gen_intro} and option prices problems, weaker requests on $f$ and $h$ translate into the choice of more general payoffs and $h$ running costs in finance.
First, we consider classical solutions. 
To this purpose, in Section \ref{classical_section}, after reviewing a slight extension of the regularity result obtained in \cite{BCT}, we characterize $u$ as the unique classical solution of \eqref{reference_PDE_generalset} over $\overline{\mcO}=\R\times[0,\infty)$ under relaxed hypotheses on $f$ and $h$. However, this requires that $f$ and $h$ have some differentiability conditions: merely continuity properties are not enough. In order to contour this difficulty, solutions in weak or viscosity sense are typically taken into account. Here, in Section \ref{viscosity_section}, we tackle the problem from a viscosity solutions point of view: we prove an existence and uniqueness result without the restriction of the Feller condition. We stress that the initial data may have some types of discontinuities, allowing us to deal with valuable financial examples such as Digital options.

We point out that, under the Feller condition, uniqueness results for classical, viscosity or weak solutions over $\mcO$ or $\overline{\mcO}$ have already been studied in the literature (see, e.g. \cite{BCT,CPD12,CMA17}), possibly requiring strong hypotheses on $f$ and $h$.
However, when the Feller condition does not hold, to the best of our knowledge, the literature is very poor on results concerning the uniqueness of classical and viscosity solutions (see, e.g. \cite{ET2010} for classical solutions). Thus, the main original contributions of this paper delve into this direction.

Finally, we deal with the convergence of a hybrid numerical method introduced in \cite{BCZ}.
If $f$ is smooth enough, the convergence rate has already been provided in Briani et al. \cite{BCT}, independently of the validity of the Feller condition. As the authors need the regularity of the price function, they strongly use classical solution results. Thus, their approach must keep the regularity of $f$. Here, by exploiting the tools and techniques introduced to get the results concerning viscosity solutions, we can prove the convergence of the above-cited hybrid numerical scheme whenever $f$ is continuous, see Theorem \ref{convergence_theorem}. The result of this theorem is confirmed empirically by the numerical experiment carried out in \cite{BCZ}, which computes the price of a European put option in the Heston model. Other numerical experiments that use the hybrid algorithm for the Bates and for the Heston-Hull-White models have been carried out in \cite{BCTZ} and \cite{BCZ2} respectively.


\section{Existence and uniqueness of classical solutions}\label{classical_section}
This section contains a slight improvement of some results proven in \cite{BCT} regarding the log-Heston PDE in which we add a uniqueness result inspired by \cite{ET2010}.

We start by introducing some notations. We set $\R_+=[0,\infty)$, $\R^*_+=(0,\infty)$ and name $\mcC^{q}(\R\times\R_+)$ the set of all functions on $\R\times\R_+$ which are $q$-times continuously differentiable. We set $\mcC_{\pol}^{q}(\R\times\R_+)$ the set of functions $g \in \mcC^{q}(\R\times\R_+)$ such that there exist $C, L>0$ for which
$$ 
|\partial_{x}^{\alpha}\partial_{y}^{\beta} g(x,y)| \leq C(1+|x|^{L}+y^{L}), \quad (x,y) \in \R\times\R_+,\; \alpha+\beta \leq q .
$$
For $T>0$, we set $\mcC_{\pol, T}^{q}(\R\times\R_+)$ the set of functions $v=v(t, x, y)$ such that $v \in \mcC^{\lfloor q / 2\rfloor, q}([0, T) \times (\R\times\R_+))$ and there exist $C, L>0$ for which
$$ 
\sup _{t \in[0, T)}|\partial_{t}^{k}\partial_{x}^{\alpha}\partial_{y}^{\beta}  v(t, y)| \leq C(1+|x|^{L}+y^{L}), \quad (x,y) \in \R\times\R_+,\; 2 k+\alpha+\beta \leq q. 
$$
We set $\mcC(\R\times\R_+)=\mcC^{0}(\R\times\R_+)$, $\mcC_{\pol}(\R\times\R_+)=\mcC_{\pol }^{0}(\R\times\R_+)$ and $\mcC_{\pol,T}(\R\times\R_+)=\mcC_{\pol,T}^{0}(\R\times\R_+)$. We also need another functional space, that we call $\mcC_{\pol}^{p, q}(\R, \R_+), p \in[1, \infty], q \in \N, m \in \N^{*}: g=g(x, y) \in \mcC_{\pol}^{p, q}(\R,\R_+)$ if $g \in \mcC_{\pol}^{q}( \R\times\R_+)$ and there exist $C, c>0$ such that
$$ 
|\partial_{x}^{\alpha} \partial_{y}^{\beta} g(\cdot, y)|_{L^{p}(\R, dx)} \leq C(1+|y|^{c}), \quad\alpha+\beta \leq q,
$$
where $|h|_{L^p,dx}=(\int_\R h(x)^pdx)^{1/p}$ if $p>1$, and the standard sup norm if $p=\infty$.
Similarly, as above, we set $\mcC_{\pol,T}^{p, q}(\R, \R_+)$ the set of the function $v \in \mcC_{\pol,T}^{q}(\R \times \R_+)$ such that
$$ 
\sup _{t \in[0, T)}|\partial_{t}^{k} \partial_{x}^{l'} \partial_{y}^{l} v(t, \cdot, y)|_{L^{p}(\R, dx)} \leq C(1+|y|^{c}), \quad 2 k+|l'|+|l| \leq q. 
$$

We call the solution of the logHeston SDE \eqref{logHeston_model_sde_intro}
\begin{align}\label{referenceDiffusion}
 X_T^{t,x,y} &= x + \int_t^T\big(c + d Y^{t,y}_s\big) ds + \int_t^T\lambda\rho\sqrt{Y^{t,y}_s} dW_s +\int_t^T\lambda\bar{\rho}\sqrt{Y^{t,y}_s} dB_s, \nonumber\\
 Y^{t,y}_T &= y + \int_t^T(a-bY^{t,y}_s)ds + \int_t^T\sigma\sqrt{Y^{t,y}_s}dW_s.
\end{align}
We define the candidate solution
\begin{equation}\label{u_gen_sol}
 u(t,x,y)=\E\bigg[e^{\varrho (T-t)}f(X^{t,x,y}_T,Y^{t,y}_T)-\int_t^Te^{\varrho (s-t)} h(s,X^{t,x,y}_s,Y^{t,y}_s)ds\bigg],
\end{equation}
to the reference PDE
\begin{equation}\label{reference_PDE}
  \begin{cases}
    \partial_tu(t,x,y) +\mcL u(t,x,y) +\varrho u(t,x,y) = h(t,x,y),\quad   t\in[0,T), &(x,y)\in\R\times\R_+, \\
 u(T,x,y) = f(x,y), \quad  &(x,y)\in\R\times\R_+,
  \end{cases}
\end{equation}
where $\mcL$ is defined in \eqref{reference_infinit_gen}.
One can remark that when $c=r-\delta$ (interest rate minus dividend rate) and $d=-\frac 12$, then $(X,Y)$ is the standard logHeston model for the log-price and volatility. When instead $\rho=0$, $c=r-\delta-\frac \rho{\sigma}a$ and $d=\frac \rho{\sigma}b-\frac 12$ and $\lambda = \brho$, we recover a formulation that will be useful to discretize the solution $u$ in Section \ref{approximation_section}.

In order to present the main contribution in this section, we present a slight extension of a regularity result presented in \cite{BCT}, that can be summarized Lemma \ref{lemma-reg} and Proposition \ref{prop-reg-new}.

\begin{lemma}\label{lemma-reg}
 Let $u$ be defined in \eqref{u_gen_sol}, with $f$ and $h$ such that, as $j=0,1$, $\partial_x^{2j}g\in C^{1-j}_{\pol}(\R\times\R_+)$,  $\partial_x^{2j}h\in \mathcal{C}^{1-j}_{\pol,T}(\R\times\R_+)$ along with $
 h$ and $\partial_y h$ locally Hölder continuous in $[0,T)\times\R\times\R_+^*$.
 Then 
  $\partial^{2j}_xu\in \mathcal{C}^{1-j}_{\pol,T}(\R\times\R_+)$ for $j=0,1$, and one has for 
  \begin{align}
    \partial^m_x u(t,x,y) &=\E\left[e^{\varrho (T-t)} \partial^m_x g(X^{t,x,y}_T,Y^{t,x,y}_T)-\int_t^Te^{\varrho (s-t)} \partial^m_x h(s,X^{t,x,y}_s,Y^{t,x,y}_s)ds  \right],\,\, m=1,2,\label{u-x-xx}\\
    \partial_y u(t,x,y) &=\E\bigg[e^{(\varrho-b) (T-t)} \partial_yg(X^{1,t,x,y}_T,Y^{1,t,x,y}_T)\nonumber \\
  &\qquad\qquad+\int_t^Te^{(\varrho-b) (s-t)} \Big[\frac \lambda2 \partial^2_xu+d\partial_xu-\partial_yh\Big](s,X^{1,t,x,y}_s,Y^{1,t,x,y}_s)ds  \bigg],\label{u-y}
  \end{align}
 where
  $(X^{1,t,x,y}_s,Y^{1,t,x,y}_s)$ solves  \eqref{logHeston_model_sde_intro} with new parameters   $\rho_1=\rho$, $c_1=c+\rho\lambda\sigma$, $d_1=d$, $\lambda_1=\lambda$,  $b_1=b$, $a_1=a+\frac{\sigma^2}{2}$, $\sigma_1=\sigma$. Furthermore, $v=\partial_y u$ is the unique solution to the following PDE
  \begin{equation}
    \begin{cases}
 \big[(\partial_t+\mcL_1+\varrho-b)v +(\lambda^2/2 \partial_x^2+d\partial_x)u-\partial_yh\big](t,x,y)=0,\quad  t\in[0,T), \!\!\!\!&(x,y)\in\R\times\R_+^*,  \\
 v(T,x,y) = \partial_y f(x,y), \quad  \!\!\!&(x,y)\in\R\times\R_+^*,
    \end{cases}
  \end{equation}
 where $\mcL_1=\frac{y}{2}(\lambda^2\partial^2_{x} + 2 \rho\sigma \partial_{x}\partial_{y} +  \sigma^2 \partial^2_{y}) +(c +\rho\lambda\sigma+d y) \partial_x + (a+\sigma^2/2 -by) \partial_y$.
\end{lemma}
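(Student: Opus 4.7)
My plan is to prove the lemma in three parts, following the general strategy of \cite{BCT} but adapted to the slightly more general setting (in particular, to the presence of the source term $h$ and the discount $\varrho$). The arguments reduce to a differentiation under the expectation, justified by a tangent-process / change-of-measure computation for the $y$-derivative.

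\textbf{Part 1: $x$-derivatives.} Since the coefficients of the SDE \eqref{referenceDiffusion} are independent of $x$, the initial condition $x$ enters only additively: $X^{t,x,y}_s = x + \int_t^s(c+dY^{t,y}_r) dr + \int_t^s \lambda\sqrt{Y^{t,y}_r}(\rho dW_r + \brho dB_r)$, so $\partial_x X^{t,x,y}_s = 1$ and $Y^{t,y}$ is independent of $x$. Together with the polynomial bounds $|\partial_x^m f|, |\partial_x^m h|\le C(1+|x|^L+y^L)$ for $m\le 2$ and the classical uniform-in-time moment estimates for the CIR process and its integral, one may differentiate under the expectation by dominated convergence. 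Continuity in $(t,x,y)$ follows from flow continuity of the SDE (Grönwall), and the Hölder assumptions on $h, \partial_y h$ ensure enough integrability for the time integral. This yields \eqref{u-x-xx} as well as $\partial_x^{2j}u\in \mathcal{C}^{1-j}_{\pol,T}$.

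\textbf{Part 2: $y$-derivative via change of measure.} Fix $y>0$. Applying Itô calculus formally, the tangent process $V_s:=\partial_y Y^{t,y}_s$ solves the linear SDE $dV_s = -bV_s\, ds + \tfrac{\sigma}{2\sqrt{Y^{t,y}_s}} V_s\, dW_s$ with $V_t=1$, whose solution is the stochastic exponential
\begin{equation*}
V_s = e^{-b(s-t)}\mathcal{E}\Big(\int_t^\cdot \tfrac{\sigma}{2\sqrt{Y^{t,y}_r}}\, dW_r\Big)_s.
\end{equation*}
Setting $M_s := e^{b(s-t)} V_s$, one verifies that $M$ is a true (not merely local) $\P$-martingale with $\E[M_T]=1$; this is done by localizing with $\tau_n=\inf\{s\ge t : Y^{t,y}_s\le 1/n\}$ and using the explicit Laplace-transform formula for the CIR process to control the localized moments uniformly, \emph{without} invoking the Feller condition. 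Girsanov's theorem with density $M_T$ then makes $\tilde W_s = W_s - \int_t^s \tfrac{\sigma}{2\sqrt{Y^{t,y}_r}}\, dr$ a Brownian motion under $\tilde\P$ (while $B$ is unchanged), and under $\tilde\P$ the pair $(X^{t,x,y}, Y^{t,y})$ satisfies the SDE with the shifted parameters $a_1=a+\sigma^2/2$ (because of the extra drift in $Y$) and $c_1=c+\rho\lambda\sigma$ (because of the drift shift in $X$ coming from the correlation), all other parameters unchanged. A similar tangent-process computation gives $\partial_y X^{t,x,y}_s$, and a localization/dominated-convergence argument yields the differentiation identity
\begin{equation*}
\partial_y\E[\varphi(X^{t,x,y}_s,Y^{t,y}_s)] = \E\big[\partial_x\varphi\cdot \partial_y X^{t,x,y}_s + \partial_y\varphi\cdot V_s\big],
\end{equation*}
first for smooth bounded $\varphi$, then extended to $f, h$ by truncation and polynomial domination. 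Absorbing the $\partial_y X$ contribution using the PDE satisfied by $P_{T-s}(\text{polynomial})$ (equivalently, using that the drift of $X^1$ differs from that of $X$ by $\rho\lambda\sigma$) produces the additional term $\tfrac{\lambda^2}{2}\partial_x^2 u + d\,\partial_x u$ in the integrand of \eqref{u-y}; this is why the $\lambda$ in the statement should be read as $\lambda^2/2$, consistently with the PDE below.

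\textbf{Part 3: PDE for $v=\partial_y u$ and uniqueness.} The regularity secured in Parts 1--2 justifies differentiating \eqref{reference_PDE} in $y$; a direct computation gives $\partial_y\mathcal{L} u = \mathcal{L}_1 v + \tfrac{\lambda^2}{2}\partial_x^2 u + d\,\partial_x u - b v$, which rearranges to the stated PDE. Uniqueness within the class of continuous functions with polynomial growth in $(x,y)$ uniform on $[0,T]$ follows from applying Feynman--Kac backward to this PDE, treating $\tfrac{\lambda^2}{2}\partial_x^2 u + d\,\partial_x u - \partial_y h$ as a known source: any such solution coincides with the expectation in \eqref{u-y}, by the usual argument combining Itô's formula on $e^{(\varrho-b)(s-t)}v(s,X^1_s,Y^1_s)$ with the moment estimates for $(X^1,Y^1)$.

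\textbf{Main obstacle.} The delicate step is Part~2: verifying that $M$ is a true martingale when $\sigma^2>2a$, in which case $Y^{t,y}$ can reach zero and $\tfrac{\sigma}{2\sqrt{Y}}$ fails the standard Novikov condition. This is circumvented by the localization-plus-CIR-Laplace-transform computation sketched above, which exploits the specific structure of the CIR dynamics to obtain the uniform integrability of $(M_{s\wedge\tau_n})_n$ needed to pass to the limit, and is the one place where the explicit form of the diffusion really matters.
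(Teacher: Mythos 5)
The paper itself does not supply a proof of this lemma: immediately after stating Proposition~\ref{prop-reg-new} the authors write that ``it is possible to prove these results with the exact proofs presented in \cite{BCT}, with little changes due to the presence of the source term $h$, so we omit the proofs here.'' So there is no proof in the paper to compare against directly; what you have done is reconstruct the BCT argument, and your overall architecture (flow regularity for the $x$-derivatives, tangent process plus change of measure for the $y$-derivative, then $y$-differentiation of the PDE followed by Feynman--Kac for uniqueness) is the right one.

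However, Part~2 contains a concrete error. Under the Girsanov measure $\tilde\P$ with density $M_T=e^{b(T-t)}\partial_y Y^{t,y}_T=\mathcal{E}\big(\int_t^\cdot \frac{\sigma}{2\sqrt{Y_r}}\,dW_r\big)_T$, the Brownian motion shift is $dW_s=d\tilde W_s+\frac{\sigma}{2\sqrt{Y_s}}\,ds$, so while $Y$ acquires the drift $a+\frac{\sigma^2}{2}-bY_s$ (giving $a_1=a+\sigma^2/2$ correctly), the $X$-equation becomes
\begin{equation*}
dX_s=\Big(c+\tfrac{\rho\lambda\sigma}{2}+dY_s\Big)\,ds+\lambda\sqrt{Y_s}\,(\rho\,d\tilde W_s+\brho\,dB_s),
\end{equation*}
i.e.\ the drift constant is shifted to $c+\frac{\rho\lambda\sigma}{2}$, not $c+\rho\lambda\sigma$ as you claim. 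The change of measure alone therefore does \emph{not} produce the process $(X^{1},Y^{1})$ appearing in \eqref{u-y}. The remaining $\frac{\rho\lambda\sigma}{2}\,\partial_x$ contribution, together with the terms $\frac{\lambda^2}{2}\partial_x^2 u + d\,\partial_x u$, must come from the $\E[\partial_x f\cdot\partial_y X_T]$ piece, and resolving it is not a one-line ``absorption'': one must treat separately the drift part $d\int_t^T V_r\,dr$ and the stochastic-integral part $\lambda\int_t^T\frac{V_r}{2\sqrt{Y_r}}(\rho\,dW_r+\brho\,dB_r)$ of $\partial_y X_T$, pair the latter with the martingale $s\mapsto\E[\partial_x f(X_T,Y_T)\mid\mathscr F_s]$ via its quadratic covariation (which is where both the $\frac{\lambda^2}{2}\partial_x^2 u$ term and the missing $\frac{\rho\lambda\sigma}{2}\partial_x\partial_y u$ contribution actually arise), and then use the martingale property of $M$ to move everything under $\tilde\P$. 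Your parenthetical ``(equivalently, using that the drift of $X^1$ differs from that of $X$ by $\rho\lambda\sigma$)'' states the desired conclusion rather than the mechanism, and as written the step is circular. A cleaner route for this part, and one that avoids the delicate bookkeeping entirely, is to first establish the required $y$-differentiability of $u$ (by localization and convergence of difference quotients, using the $L^p$ estimates on $V_s$), then differentiate the PDE $\partial_t u+\mcL u+\varrho u=h$ with respect to $y$ to obtain the PDE for $v=\partial_y u$ displayed in the lemma, and apply Feynman--Kac to that PDE: this yields \eqref{u-y} with the correct parameters $c_1=c+\rho\lambda\sigma$, $a_1=a+\sigma^2/2$ directly from the form of $\mcL_1$, without any Girsanov factor to track.

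A minor note: you correctly observed that the $\tfrac{\lambda}{2}\partial_x^2 u$ in the statement of \eqref{u-y} should read $\tfrac{\lambda^2}{2}\partial_x^2 u$ to be consistent with the displayed PDE; that is indeed a typo in the paper.
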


Iterating this Lemma, it is possible to prove the following result.

\begin{prop} \label{prop-reg-new}
 Let $q\in\N$. For every $j=0,1,\ldots,q$, $\partial_x ^{2j}f\in \mcC^{q-j}_{\pol}(\R\times\R_+)$, $\partial_x ^{2j}h\in \mcC^{q-j}_{\pol, T}(\R\times\R_+)$, and for all $(m,n)\in\N^2$ such that $m+2n\le 2q$ and $m\le 2(q-1)$, $\partial^m_x\partial^n_y h$ locally Hölder continuous in $[0,T)\times\R\times\R_+^*$.
 Let $u$ as in \eqref{u_gen_sol}.
  
 Then $\partial_x ^{2j}u\in \mcC^{q-j}_{\pol, T}(\R\times\R_+)$ for every $j=0,1,\ldots,q$. 
 Moreover, the following stochastic representation holds: for $m+2n\leq 2q$,
  \begin{equation}\label{stoc_repr-new_chap4}
  \begin{split}
  &\partial^{m}_x\partial^{n}_yu(t,x,y)=\E\left[e^{(\varrho-nb) (T-t)} \partial^m_x\partial^n_yf(X^{n,t,x,y}_T,Y^{n,t,x,y}_T)\right]\\
  &\quad+
 \E\left[\int_t^Te^{(\varrho-nb) (s-t)}\left[n\Big(\frac \lambda 2 \partial^{m+2}_x\partial^{n-1}_yu+d\partial^{m+1}_x\partial^{n-1}_yu\Big)-\partial^{m}_x\partial^n_y h\right](s,X^{n,t,x,y}_s,Y^{n,t,x,y}_s)ds  \right],
  \end{split}
  \end{equation}
 where $\partial^{m}_x\partial^{n-1}_y u:=0$ when $n=0$ and $(X^{n,t,x,y},Y^{n,t,x,y})$, $n\geq 0$, denotes the solution starting from $(x,y)$ at time $t$ to the SDE \eqref{logHeston_model_sde_intro} with parameters
  \begin{equation}\label{parameters-new_chap4}
  \rho_n=\rho,\quad c_n=c+n\rho\lambda\sigma,\quad d_n=d,\quad \lambda_n=\lambda,\quad a_n=a+n\frac{\sigma^2}{2}, \quad b_n=b,\quad \sigma_n=\sigma.
  \end{equation}
 In particular, if $q\geq 2$ then $u\in \mcC^{1,2}([0,T]\times \R\times\R_+)$,  solves the PDE
  \begin{equation}\label{PDE-closed}
  \begin{cases}
  \partial_t u(t,x,y)+ \mcL u(t,x,y) +\varrho u(t,x,y)= h(t,x,y),\qquad& t\in [0,T), \,(x,y)\in \R\times\R_+,\\
 u(T,x,y)= f(x,y),\qquad& (x,y)\in \R\times\R_+.
  \end{cases} 
  \end{equation}
\end{prop}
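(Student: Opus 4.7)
The natural strategy is to prove Proposition \ref{prop-reg-new} by induction on $q$, with Lemma \ref{lemma-reg} providing both the base case and the key differentiation step that I iterate. The intuition is that each $x$-derivative commutes with the semigroup (since the noise coefficients do not depend on $x$), while each $y$-derivative shifts the parameters of the underlying SDE according to \eqref{parameters-new_chap4} and produces an extra source term that itself involves lower-order derivatives of $u$.

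\textbf{Base case.} For $q=0$ the claim is trivial and for $q=1$ it is exactly Lemma \ref{lemma-reg} applied with the original parameters. The formulas \eqref{u-x-xx} and \eqref{u-y} match \eqref{stoc_repr-new_chap4} with $(m,n)\in\{(0,0),(1,0),(2,0),(0,1)\}$ after noting that $(X^{0,t,x,y},Y^{0,t,y})=(X^{t,x,y},Y^{t,y})$.

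\textbf{Inductive step.} Assume the proposition holds up to order $q-1$. First I would handle the pure $x$-derivatives: differentiating under the expectation (justified by the polynomial growth assumption on $\partial_x^m f$ and $\partial_x^m h$, together with uniform moment bounds of the diffusion) one gets $\partial_x^m u$ as in \eqref{stoc_repr-new_chap4} with $n=0$, for every $m\leq 2q$ with the prescribed regularity. Next, for $n\geq 1$, I would proceed by a secondary induction on $n$. Fix $n\geq 1$ and set $v_{n-1}(t,x,y)=\partial_y^{n-1}u(t,x,y)$. By the inductive hypothesis $v_{n-1}$ is a classical $\mcC^{1,2}$ solution on $[0,T)\times\R\times\R_+^{*}$ of a PDE whose operator is $\partial_t+\mcL_{n-1}+(\varrho-(n-1)b)$ (i.e.\ \eqref{reference_infinit_gen} with parameters \eqref{parameters-new_chap4}) and whose source is
\[
H_{n-1}(t,x,y)=\partial_y^{n-1}h(t,x,y)-(n-1)\Bigl[\tfrac{\lambda^{2}}{2}\partial_x^{2}v_{n-2}+d\partial_x v_{n-2}\Bigr](t,x,y).
\]
Applying Lemma \ref{lemma-reg} to $v_{n-1}$ with the shifted parameters $(a_{n-1},c_{n-1},b,d,\lambda,\sigma,\rho)$ gives that $w_{n}:=\partial_y v_{n-1}=\partial_y^{n}u$ admits the representation \eqref{stoc_repr-new_chap4} at order $(0,n)$ with the $n$-shifted diffusion $(X^{n},Y^{n})$, and analogously for $\partial_x^{m}w_{n}=\partial_x^{m}\partial_y^{n}u$ when $m+2n\leq 2q$. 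The required space $\mcC^{q-n}_{\pol,T}$ then follows by standard dominated-convergence and polynomial-moment estimates for $(X^{n},Y^{n})$.

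\textbf{Main obstacle and closure.} The delicate part is verifying at each iteration that the source term $H_{n-1}$ fulfils the regularity and polynomial-growth hypotheses required by Lemma \ref{lemma-reg}: namely, that $\partial_x^{2j}H_{n-1}\in\mcC^{(q-n)-j}_{\pol,T}$ for $j=0,1$ and that $H_{n-1}$ together with $\partial_y H_{n-1}$ is locally Hölder on $[0,T)\times\R\times\R_{+}^{*}$. The growth part is inherited from the assumption $\partial_x^{2j}h\in\mcC^{q-j}_{\pol,T}$ and from the $\mcC^{q-(n-1)}_{\pol,T}$ regularity of $v_{n-2},\partial_x v_{n-2},\partial_x^{2}v_{n-2}$ granted by the outer induction; the Hölder property is inherited from the Hölder hypothesis on $\partial_x^{m}\partial_y^{n}h$ for $m+2n\leq 2q$, $m\leq 2(q-1)$, which is precisely why that technical assumption is imposed. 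Once all derivatives up to order $(m,n)$ with $m+2n\leq 2q$ are continuous on $[0,T]\times\R\times\R_{+}$ with polynomial growth (in particular for $q\geq 2$, $u\in\mcC^{1,2}$ up to $y=0$), the PDE \eqref{PDE-closed} is obtained by Itô's formula on $u(s,X^{t,x,y}_{s},Y^{t,y}_{s})e^{\varrho(s-t)}$ for $(x,y)\in\R\times\R_{+}^{*}$ (so that the diffusion stays in the interior on a short random interval), and then extended to $y=0$ by continuity of all the coefficients involved. The terminal condition $u(T,\cdot)=f$ follows from \eqref{u_gen_sol} by continuity of $(t,x,y)\mapsto\E[f(X^{t,x,y}_{T},Y^{t,y}_{T})]$ and the integrability of $h$.
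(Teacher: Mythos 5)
Your approach is exactly what the paper intends: the text just before the proposition says \emph{``Iterating this Lemma, it is possible to prove the following result''} and then refers to the proof in Briani–Caramellino–Terenzi with ``little changes due to the presence of the source term $h$'', which is precisely the induction on $q$ (with Lemma~\ref{lemma-reg} as the base case and as the iterated step) that you describe, including the identification of the new source $H_{n-1}$ and the verification that it satisfies the hypotheses of Lemma~\ref{lemma-reg} after each step. The only cosmetic mismatch is that you write $\tfrac{\lambda^2}{2}\partial_x^2$ in $H_{n-1}$, while \eqref{stoc_repr-new_chap4} in this chapter has $\tfrac{\lambda}{2}$ — but the corresponding representation in Proposition~\ref{prop-rep-logHeston-estim_intro} of the introduction (and the actual generator $\tfrac{\lambda^2 y}{2}\partial_x^2 + \dots$) confirms $\lambda^2/2$, so you have used the correct coefficient and the discrepancy is a typo in the chapter, not an error on your part.
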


\begin{remark}
 For discretization purposes, as done in Briani et al. \cite{BCT}, one can consider an $L^p$ property for $x\mapsto u(t,x,y)$, and similarly for the derivatives. In this case, one can reformulate Proposition \ref{prop-reg-new} as follows. Let $p\in [1, \infty]$, $q\in\N$. For every $j=0,1,\ldots,q$, $\partial_x ^{2j}f\in \mcC^{p,q-j}_{\pol}(\R,\R_+)$, $\partial_x ^{2j}h\in \mcC^{p,q-j}_{\pol, T}(\R,\R_+)$, and for all $(m,n)\in\N^2$ such that $m+2n\le 2q$ and $m\le 2(q-1)$, $\partial^m_x\partial^n_y h$ locally Hölder continuous in $[0,T)\times\R\times\R^*_+$.
 Then $\partial_x ^{2j}u\in \mcC^{p,q-j}_{\pol, T}(\R,\R_+)$ for every $j=0,1,\ldots,q$. Moreover, the stochastic representation \eqref{stoc_repr-new} holds and, if $q\geq 2$, $u$ solves PDE \eqref{PDE-closed}.
\end{remark}

It is possible to prove these results with the exact proofs presented in \cite{BCT}, with little changes due to the presence of the source term $h$, so we omit the proofs here.

One could be interested to see if we can ask for less regular $f$ and $h$ and still have existence of a classical solution and in which case the solution is unique. In order to do that we first fix other notations that will be required in what follows.

Let $T>0$ and a convex domain $\mcD\subset[0,T]\times\R^{m}$ and $P_1=(t_1,z_1),P_2=(t_2,z_2)\in\mcD$ we define the ``parabolic'' distance $d_\mcP:\mcD\times\mcD\rightarrow\R_+$ as
\begin{equation}\label{parabolic_distance}
 d_\mcP(P_1,P_2) = \big(|t_1-t_2|+|z_1-z_2|^2\big)^{1/2}.
\end{equation}
Let $v:\mcD\rightarrow\R$, using the notation $|v|^\mcD_0 = \sup_{P\in \mcD} |v(P)|$, we introduce the following notation of the $\alpha$-Hölder norm. For $\alpha\in(0,1)$:
\begin{equation}\label{alfa_holder_norm}
 \overline{|v|}^\mcD_\a = |v|^\mcD_0 + \bH^{\mcD}_\a(v),\quad \text{ where } \quad \bH^{\mcD}_\a(v) = \sup_{P\neq Q\mid P,Q\in \mcD}\frac{|v(P)-v(Q)|}{d_\mcP(P,Q)^\a} 
\end{equation}
We will say that $v$ is $\alpha$-Hölder for the parabolic distance if $\bH^{\mcD}_\a(u)<\infty$ that is equivalent to say that $v=v(t,z)$ is $\alpha/2$-Hölder in $t$ and $\alpha$-Hölder in $z$.
We define the $(2+\alpha)$-Hölder norm 
\begin{equation}
 \overline{|v|}^\mcD_{2+\a} =\overline{|v|}^\mcD_\a  +\overline{|\partial_t v|}^\mcD_\a  +\sum_{1\le |l|\le 2} \overline{|\partial^l_z v|}^\mcD_\a.
\end{equation}
To define the weighted $\alpha$-Hölder norm, we must first introduce the notion of weight.
We call for $\tau>0$ and $Q_i=(\tau_i,z)$ $i=1,2$
\begin{equation}\label{weights_for_norm}
  \partial\mcD_\tau=\{(t,z)\in \partial\mcD  \mid t\in[0,\tau]\},\quad d_{Q_i} = \inf_{P\in\partial\mcD_{\tau_i}}d_\mcP(Q_i,P) \text{ and } d_{Q_1,Q_2}=\min(d_{Q_1},d_{Q_2}),
\end{equation}
where $d_{Q_i}$, $i\in\{1,2\}$, measures the parabolic distance of $Q_i$ from the boundary $\partial\mcD_{\tau_i}$. 
Similarly to \eqref{alfa_holder_norm}, for $m\in\N$, $\alpha\in(0,1)$ we define
\begin{equation}\label{weighted_alfa_holder_norm_1}
 |v|^\mcD_{\a,m} = |v|^\mcD_{0,m} + H^{\mcD}_{\a,m}(v),
\end{equation}
where
\begin{equation}\label{weighted_alfa_holder_norm_2}
 |v|^\mcD_{0,m}= \sup_{P\in \mcD} d_{P}^m |v(P)|, \quad H^{\mcD}_{\a,m}(v) = \sup_{P\neq Q\mid P,Q\in \mcD}d_{P,Q}^{m+\a}\frac{|v(P)-v(Q)|}{d_\mcP(P,Q)^\a}.
\end{equation}
With the notation $|v|^\mcD_\a=|v|^\mcD_{\alpha,0}$ weighted $(2+\alpha)$-Hölder norm is as follows
\begin{equation}
 |v|^\mcD_{2+\a} =|v|^\mcD_\a  + |\partial_t v|^\mcD_{\a,1}  +\sum_{1\le |l|\le 2} |\partial^l_z v|^\mcD_{\a,|l|},
\end{equation}
where $l=(l_1,\ldots,l_m)$, $\partial^l_z=\partial^{l_1}_{z_1}\cdots \partial^{l_m}_{z_m}$ and $|l|=\sum_{i=1}^m l_i$.
The main difference between the standard and the weighted $\alpha$-Hölder norm is that the latter allows explosions for the derivatives of $v$ and the difference $|v(P)-v(Q)|$ when we evaluate them near the boundary.

Now we can state the following result,  which clarifies the behavior of the second order spatial derivatives of the solution $u$ near the spatial boundary $\R\times\{0\}$, in which we place hypotheses slightly stronger than the ones in Lemma \ref{lemma-reg}.

\begin{prop}\label{min_hp_pde_existence_sol}
 Let $u$ as in \eqref{u_gen_sol}, $f$ and $h$ such that, for $j=0,1$, $\partial_x^{2j}f\in \mcC^{1-j}_{\pol}(\R\times\R_+)$,  $\partial_x^{2j}h\in \mcC^{1-j}_{\pol,T}(\R\times\R_+)$. Furthermore, we take $|h|^K_{\alpha,2}$, $|\partial_y h|^K_{\alpha,2}<\infty$, for all $K$ convex compact set contained in $[0,T)\times\R\times\R_+$.
Then, for all $t_0\in(0,T)$ and $x_0\in\R$ one has
\begin{equation}\label{0lims_dyyu_dxyu}
 \lim_{(t,x,y)\rightarrow (t_0,x_0,0)} y \partial^2_y u(t,x,y)=0 \quad \text{ and } \quad \lim_{(t,x,y)\rightarrow (t_0,x_0,0)} y \partial_x\partial_y u(t,x,y)=0.
\end{equation}
\end{prop}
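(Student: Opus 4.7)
The proof combines three ingredients: (i) the interior regularity of $u$ as a classical solution of \eqref{reference_PDE} on $[0,T)\times\R\times\R^*_+$, (ii) the continuity up to $\{y=0\}$ of $u$, $\partial_x u$, $\partial_y u$, $\partial_x^2 u$ already provided by Lemma~\ref{lemma-reg} under the present hypotheses, and (iii) a degenerate first-order boundary equation that $(t,x)\mapsto u(t,x,0)$ necessarily satisfies. The strategy is first to combine (i) and (iii) to get that one specific linear combination of $y\partial_y^2 u$ and $y\partial_x\partial_y u$ tends to zero, and then to separate the two limits by exploiting the PDE satisfied by $v=\partial_yu$ along the auxiliary diffusion $(X^1,Y^1)$ of Lemma~\ref{lemma-reg}, which enjoys an automatic Feller-type property.

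\textbf{Main steps.} On compact subsets of $[0,T)\times\R\times\R_+^*$ the operator $\mcL+\varrho$ is uniformly parabolic, and the weighted Hölder bounds $|h|^K_{\alpha,2}<\infty$ let us invoke classical interior Schauder estimates to conclude $u\in\mcC^{1,2}_{\mathrm{loc}}([0,T)\times\R\times\R_+^*)$ and that \eqref{reference_PDE} holds pointwise there. Next, since $a>0$, the trajectory $(X^{t,x,0}_s,Y^{t,0}_s)$ enters $\R\times\R^*_+$ instantaneously; applying Itô's formula to $u(s,X^{t,x,0}_s,Y^{t,0}_s)$ on $[t+\varepsilon,T]$, then letting $\varepsilon\downarrow 0$ and using the continuity up to $\{y=0\}$ of $u,\partial_xu,\partial_yu$ together with dominated convergence, I would derive the boundary identity
\[
\partial_tu(t,x,0)+c\,\partial_xu(t,x,0)+a\,\partial_yu(t,x,0)+\varrho\, u(t,x,0)=h(t,x,0),\qquad(t,x)\in[0,T)\times\R.
\]
Subtracting this identity from the interior PDE, the terms not involving second-order $y$-derivatives admit continuous extensions to $y=0$ (or carry an explicit factor $y$ in front of $\partial_x^2u$), so that
\[
\tfrac{y\sigma^2}{2}\partial_y^2u(t,x,y)+y\rho\lambda\sigma\,\partial_x\partial_yu(t,x,y)\;\longrightarrow\; 0\quad\text{as }(t,x,y)\to(t_0,x_0,0).
\]

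\textbf{Separation and main obstacle.} The delicate point is then to split this combined limit into the two claimed individual ones. For this I would use Lemma~\ref{lemma-reg}: the function $v=\partial_yu$ solves a PDE driven by the generator $\mcL_1$ of $(X^1,Y^1)$, whose parameters $a_1=a+\sigma^2/2$ and $\sigma_1=\sigma$ satisfy $\sigma_1^2\le 2a_1$ automatically, regardless of the original Feller condition. Consequently $Y^{1,t,y}_s>0$ for every $s>t$ even when $y=0$, and the same argument (interior Schauder + Itô derivation of a boundary equation) can be applied to $v$ in place of $u$, yielding $y\,\partial_yv=y\partial_y^2u\to 0$ at $(t_0,x_0,0)$. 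Combined with the joint limit established above, this also forces $y\partial_x\partial_yu\to 0$, proving \eqref{0lims_dyyu_dxyu}. The main obstacle is the Itô derivation of the boundary equation and its analogue for $v$: one must justify passing to $\varepsilon\to 0$ without assuming the Feller condition on $(X,Y)$ itself, controlling the martingale terms and the second-order contributions in $\mcL u$ using only the weighted Hölder bounds $|h|^K_{\alpha,2},|\partial_yh|^K_{\alpha,2}<\infty$ and the continuity of $u,\partial_xu,\partial_yu,\partial_x^2u$ up to the boundary.
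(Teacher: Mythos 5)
Your approach is genuinely different from the paper's, which uses a blow-up/rescaling argument à la Ekström--Tysk: it sets $v=\partial_y u$, zooms in around $(t_0,x_0,0)$ via the maps $\chi_n(t,x,y)=(m_n(t-t_n),m_n(x-x_n),m_n y)$ with $m_n\to\infty$, writes the rescaled functions $w_n=v\circ\chi_n^{-1}$ as solutions of a uniformly parabolic PDE on a \emph{fixed} rectangle $\mcR=[-2\delta,2\delta]\times[-2,2]\times[1/2,4]$ bounded away from the boundary $\{\zeta=0\}$, applies interior Schauder estimates to get uniform $C^{2+\alpha}$ bounds, and concludes by Ascoli--Arzelà that $w_n$ converges to the constant $v(t_0,x_0,0)$, forcing $\partial_\eta w_n,\partial_\zeta w_n\to 0$ on a compact subrectangle; translating back through the chain-rule factor $1/m_n$ yields \emph{both} limits $y\partial_y^2u\to 0$ and $y\partial_x\partial_y u\to 0$ simultaneously. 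No boundary equation is invoked, no splitting of a combined limit is needed, and no probabilistic argument is used.

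Your proposal has two genuine gaps. First, the derivation of the boundary identity $\partial_tu(t,x,0)+c\partial_xu+a\partial_yu+\varrho u=h$ via Itô is circular in the present setting: applying Itô to $u(s,X_s^{t,x,0},Y_s^{t,0})$ on $[t+\varepsilon,T]$ and letting $\varepsilon\downarrow 0$ requires controlling the contribution of $Y_s(\partial_y^2u+\ldots)$ near the boundary, which is precisely what Proposition~\ref{min_hp_pde_existence_sol} (and, downstream, Remark~\ref{remark_PDE_on_boundary}) is meant to establish; moreover, continuity of $\partial_tu$ up to $\{y=0\}$, which the subtraction step also needs, is itself equivalent (through the interior PDE) to the behavior of $y\partial_y^2 u$ near $y=0$. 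Second, even granting the combined limit $\tfrac{y\sigma^2}{2}\partial_y^2 u+y\rho\lambda\sigma\partial_x\partial_y u\to 0$, the proposed separation does not go through: the "same argument applied to $v=\partial_yu$" would produce a \emph{third-order} combined limit $\tfrac{y\sigma^2}{2}\partial_y^3u+y\rho\lambda\sigma\partial_x\partial_y^2u\to0$, not the claimed $y\partial_y^2u\to0$, so it does not decouple the second-order terms. The automatic Feller property $\sigma_1^2\le 2a_1$ of the auxiliary diffusion $(X^1,Y^1)$ prevents $Y^1$ from touching $0$ but does not by itself deliver continuity of $\partial_y v=\partial_y^2u$ up to the boundary, which is what one would need to conclude $y\partial_y v\to 0$ directly. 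The scaling argument in the paper is precisely the device that gets both limits in one stroke and circumvents these difficulties.
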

\begin{proof}
 The proof takes inspiration from \cite{ET2010}. Under these hypotheses, as shown in Lemma \ref{lemma-reg}, we know that $v=\partial_y u$ solves 
  \begin{equation}
    \begin{cases}
 \big[(\partial_t+\mcL_1+\varrho-b)v +(\lambda^2/2 \partial_x^2+d\partial_x)u-\partial_yh\big](t,x,y)=0,\quad  t\in[0,T), \!\!\!\!&(x,y)\in\R\times\R_+^*,  \\
 v(T,x,y) = \partial_y f(x,y), \quad  \!\!\!&(x,y)\in\R\times\R_+^*,
    \end{cases}
  \end{equation}
 where $\mcL_1=\frac{y}{2}(\lambda^2\partial^2_{x} + 2 \rho\sigma \partial_{x}\partial_{y} +  \sigma^2 \partial^2_{y}) +(c +\rho\lambda\sigma+d y) \partial_x + (a+\sigma^2/2 -by) \partial_y$. 
 We consider, now, a sequence $(t_n,x_n,y_n)_{n \in{\N^*}}\subset [0,T)\times\R\times\R^*_+$ converging to $(t_0,x_0,0)$, where $t_0\in[0,T)$ and $x_0\in\R$. By the convergence $y_n\rightarrow0$, there exists $n_0$ such that for all $n\ge n_0$ $y_n\in(\frac{1}{m_n},\frac{2}{m_n})$, and $m_n\rightarrow\infty$ when $n\rightarrow\infty$. Then we define
  \begin{align*}
    \chi_n:(t,x,y)&\longmapsto(s,\eta,\zeta)=(m_n(t-t_n),m_n(x-x_n),m_ny)
  \end{align*}
 and the functions $w_n$ as
  \begin{equation*}
 w_n(s,\eta,\zeta)= v(\chi_n^{-1}(s,\eta,\zeta))=v\Big(\frac{s}{m_n}+t_n, \frac{\eta}{m_n}+x_n, \frac{\zeta}{m_n}\Big).
  \end{equation*}
 One can check that $w_n$ satisfies the following PDE
  \begin{multline}\label{w_pde}
    \partial_s w_{n}+\frac{\zeta}{2}(\lambda \partial_\eta^2 +2\rho\lambda\sigma\partial_\eta \partial_\zeta+\sigma^2\partial_\zeta^2) w_{n} +(c+\rho\lambda\sigma+d\frac{\zeta}{{m_n}})\partial_\eta w_{n}\\
 +(a+\frac{\sigma^2}{2}+b\frac{\zeta}{{m_n}})\partial_\zeta w_{n} +(\varrho-b)w_{n} + \frac{1}{{m_n}} g_{n} =0,
  \end{multline}
 where
  $$
 g_{n}(s,\eta,\zeta)=(\lambda^2/2 \partial_x^2+d\partial_x)u(\chi_n^{-1}(s,\eta,\zeta)) + \partial_y h(\chi_n^{-1}(s,\eta,\zeta)).
  $$
 We also define 

  $$
 u_n = u\circ \chi^{-1}_n,\quad h_n= h \circ\chi^{-1}_n
  $$

 Then, we consider the rectangle $R_n =[t_n-\frac{2\delta}{m_n},t_n+\frac{2\delta}{m_n}]\times[x_n-\frac{2}{m_n},x_n+\frac{2}{m_n}]\times[\frac{1}{2m_n},\frac{4}{m_n}]$, $(t_n,x_n,y_n)$, and we define $\mcR = \chi_n R_n=[-2\delta,2\delta]\times[-2,2]\times[1/2,4]$.
 By \eqref{w_pde} in $\mcR$ thanks to the Schauder interior estimates (cf. Theorem 5 in Sec.2 of Chap. 3 \cite{AF_book}), one has
  \begin{align*}
 |w_n|^{\mcR}_{2+\alpha} &\le |w_n|^{\mcR}_0 +\frac{1}{m_n}|g_n|^{\mcR}_{\a,2} \\
    &\le |w_n|^{\mcR}_0 +\frac{1}{m_n}\Big(\frac{\lambda^2}{2}|\partial^2_x u_n|^{\mcR}_{\a,2}+|d||\partial_x u_n|^{\mcR}_{\a,2}+|\partial_y h_n|^{\mcR}_{\a,2}\Big) \\
    &\le |v|^{R_n}_0 +\frac{1}{m_n}\Big(\frac{\lambda^2}{2}|\partial^2_x u|^{R_n}_{\a,2}+|d||\partial_x u|^{R_n}_{\a,2}+|\partial_y h|^{R_n}_{\a,2}\Big) \\
    &\le |v|^{R_n}_0 +\frac{C}{m_n}\Big(|u|^{R_n}_0+|h|^{R_n}_{\a,2}+|\partial_y h|^{R_n}_{\a,2}\Big)\\
    &\le |v|^{R_n}_0 +\frac{C}{m_n}\Big(|u|^{K}_0+|h|^{K}_{\a,2}+|\partial_y h|^{K}_{\a,2}\Big)<\hat{C}<\infty,
  \end{align*}
 where we pass from the third to fourth line using in succession: that exist $C_1>0$ such $|\partial_x u|^{R_n}_{\a,2}\le C_1 |\partial_x u|^{R_n}_{\a,1}$, that we can upper bound thanks to Schauder interior estimates on $u$ the two seminorms $|\partial_x u|^{R_n}_{\a,1}$, $|\partial^2_x u|^{R_n}_{\a,2}$ as follows
  $$
 |\partial_x u|^{R_n}_{\a,1} + |\partial^2_x u|^{R_n}_{\a,2} \le |u|^{R_n}_{2+\a}\le |u|^{R_n}_{0} + |h|^{R_n}_{\a,2}.
  $$
 The passage to the fifth line is because there exist $\hat{n}\in\N$ and $K$ compact set such that $R_n\subset K$ for all $n\ge\hat{n}$. The uniform bound by $\hat{C}$ follow by $|u|^{R_n}_0\rightarrow |v(t_0,x_0,0)|$.
 Now that we have the weighted holder norm estimate $|w_n|^{\mcR}_{2+\alpha}<\hat{C}$, we can consider a smaller rectangle $\mcR' = [\delta,\delta]\times[-1,1]\times[1,2]$ who has strictly positive distance from $\partial \mcR$ and get, in terms of standard holder norms, the uniform bound $\overline{|w_n|}^{\mcR '}_{2+\alpha}<\tilde{C}$. Now using the equi-boundedness and equi-continuity of a general subsequence $(w_{n_j})_j$ and of its derivatives $(\partial_\eta w_{n_j})_j$ and $(\partial_\zeta w_{n_j})_j$, by Ascoli-Arzela Theorem one can find subsequences $(w_{n_{j_k}})_k$, $(\partial_\eta w_{n_{j_k}})_k$ and $(\partial_\zeta w_{n_{j_k}})_k$ that converge uniformly on $\mcR'$ to continuous functions $w$, $\partial_\eta w$ and $\partial_\zeta w$ respectively. Being the uniform limit of the original sequence $w_n$ a constant equal to $v(t_0,x_0,0)$ then $\partial_\eta w_{n}$ and $\partial_\zeta w_{n}$ have to converge uniformly to $\partial_\eta w=0$ and $\partial_\zeta w=0$. So
  $$
  0\xleftarrow[\infty\leftarrow n]{|\cdot|^{\mcR'}_0} \zeta\partial_\eta w_{n}(s,\eta,\zeta) = \frac{\zeta}{m_n}\partial_y v\Big(\frac{s}{m_n}+t_n, \frac{\eta}{m_n}+x_n, \frac{\zeta}{m_n}\Big)
  $$
 and 
  $$
  0\xleftarrow[\infty\leftarrow n]{|\cdot|^{\mcR'}_0} \zeta\partial_\zeta w_{n}(s,\eta,\zeta) = \frac{\zeta}{m_n}\partial_x v\Big(\frac{s}{m_n}+t_n, \frac{\eta}{m_n}+x_n, \frac{\zeta}{m_n}\Big),
  $$
so, in particular, the limits hold if we restrict to the sequence $(t_n,x_n,y_n)_{n\in\N^*}$. Being $\partial_y v = \partial^2_y u$ and $\partial_x v = \partial_x \partial_y u$, then the proof is completed.
\end{proof}

\begin{remark} \label{remark_PDE_on_boundary}
 Under the hypotheses of Proposition \ref{min_hp_pde_existence_sol}, the equation $$
  \partial_t u(t,x,0) + (c \partial_x + a\partial_y +\varrho) u(t,x,0) =h(t,x,0)
  $$ 
 is satisfied for all $t\in[0,T)$, and so does not hold only as a limit. 
\end{remark}
\begin{proof}
 Let $t\in(0,T)$, since $\partial_y u$ is continuous up to the boundary, we expand $u$ in the direction $y$ around $(t,x,0)$ and $(t+\epsilon,x,0)$, and use the mean value theorem to get
  \begin{align*}
    \partial_t u(t,x,0)= \lim_{\epsilon \rightarrow 0^+} \frac{u(t+\epsilon,x,0) - u(t,x,0)}{\epsilon} &= \lim_{\epsilon\rightarrow 0^+}\frac{u(t+\epsilon,x,\epsilon^2) - u(t,x,\epsilon^2)+O(\epsilon^2)}{\epsilon} \\
    &= \lim_{\epsilon\rightarrow 0^+}  \partial_t u(t+\xi_\epsilon, x,\epsilon^2) +O(\epsilon),
  \end{align*}
 for some $\xi_\epsilon\in(0,\epsilon)$. Since for $y>0$ one has $ \partial_t u= -(\mcL +\varrho)u + h$, then by \eqref{0lims_dyyu_dxyu} and continuity of $\partial_x u$ and $\partial_y u$ up to the boundary $\{y=0\}$, one has 
  $$ 
  \partial_t u(t,x,0)=\lim_{\epsilon\rightarrow 0^+}-(\mcL+\varrho)u(t+\xi_\epsilon, x,\epsilon^2) + h(t+\xi_\epsilon, x,\epsilon^2) =  -(c \partial_x + a\partial_y +\varrho) u(t,x,0) +h(t,x,0).
  $$
Since the functions on both sides are continuous up to $t=0$, this is verified for $t=0$, too.
\end{proof}

So, we have just proven the following result.
\begin{prop}\label{hp_minimal_u_sol}
 Let $u$ defined as in \eqref{u_gen_sol}. Let $f$ and $h$ such that, as $j=0,1$, $\partial_x^{2j}f\in \mcC^{1-j}_{\pol}(\R\times\R_+)$,  $\partial_x^{2j}h\in \mcC^{1-j}_{\pol,T}(\R\times\R_+)$, $h$, $\partial_y h$ and $|h|^K_{\alpha,2}$, $|\partial_y h|^K_{\alpha,2}<\infty$ for all $K$ compact set contained in $[0,T)\times\R\times\R_+$.
 Then $\partial_x^{2j}u\in \mcC^{1-j}_{\pol,T}(\R\times\R_+)$ for $j=0,1$ and solves
  \begin{equation}
    \begin{cases}
    \partial_t u(t,x,y)+ \mcL u(t,x,y) +\varrho u(t,x,y)= h(t,x,y),\qquad& t\in [0,T), \,(x,y)\in \R\times\R_+,\\
 u(T,x,y)= f(x,y),\qquad& (x,y)\in \R\times\R_+.
    \end{cases} 
  \end{equation}
\end{prop}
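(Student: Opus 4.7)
The statement consolidates the two preceding results, so my plan is to verify that the hypotheses of Proposition~\ref{hp_minimal_u_sol} imply those of Lemma~\ref{lemma-reg} and of Proposition~\ref{min_hp_pde_existence_sol}, and then paste the conclusions together. The first observation is that the local bounds $|h|^K_{\alpha,2}, |\partial_y h|^K_{\alpha,2}<\infty$ on every convex compact $K\subset[0,T)\times\R\times\R_+$ force $h$ and $\partial_y h$ to be locally Hölder continuous on $[0,T)\times\R\times\R_+^*$ in the parabolic distance. Hence Lemma~\ref{lemma-reg} applies and delivers $\partial_x^{2j}u\in\mcC^{1-j}_{\pol,T}(\R\times\R_+)$ for $j=0,1$, together with the stochastic representations \eqref{u-x-xx}--\eqref{u-y}. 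This already takes care of the regularity statement and of the existence and continuity up to the boundary $\{y=0\}$ of $\partial_x u,\partial_x^2 u,\partial_y u$.

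Next I would verify the PDE on the open set $[0,T)\times\R\times\R_+^*$. There, the operator $\partial_t+\mcL+\varrho$ is locally uniformly parabolic and Schauder interior estimates together with the Feynman–Kac formula for \eqref{u_gen_sol} give that $u\in\mcC^{1,2}([0,T)\times\R\times\R_+^*)$ and solves $(\partial_t+\mcL+\varrho)u=h$ classically on this set; in particular $\partial_t u,\partial_x\partial_y u,\partial_y^2 u$ exist and are continuous on $[0,T)\times\R\times\R_+^*$, and the polynomial growth in $(x,y)$ uniformly in $t$ follows by standard moment estimates on $(X^{t,x,y},Y^{t,y})$.

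The remaining point is to extend the PDE up to the degeneracy set $\{y=0\}$. Here I would apply Proposition~\ref{min_hp_pde_existence_sol}: its hypotheses coincide with ours, so the limits \eqref{0lims_dyyu_dxyu} hold, and the second-order coefficients $\tfrac{y}{2}\lambda^2$, $y\rho\lambda\sigma$, $\tfrac{y}{2}\sigma^2$ multiplying $\partial_x^2 u$, $\partial_x\partial_y u$, $\partial_y^2 u$ kill the corresponding derivatives as $y\to 0^+$. The identity
\[
\partial_t u(t,x,0)+(c\partial_x+a\partial_y+\varrho)u(t,x,0)=h(t,x,0),\qquad t\in[0,T),
\]
is then exactly Remark~\ref{remark_PDE_on_boundary}, proved there by a Taylor expansion in $y$ of $u(t+\varepsilon,x,\varepsilon^2)-u(t,x,\varepsilon^2)$ combined with the continuity of $\partial_y u$ and of $\partial_t u$ (away from $y=0$) and with \eqref{0lims_dyyu_dxyu}. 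The terminal condition $u(T,x,y)=f(x,y)$ is immediate from the definition \eqref{u_gen_sol}.

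The only genuinely delicate step is the boundary extension: all the regularity in $x$ and the first-order regularity in $y$ come from the stochastic representations, but the PDE at $\{y=0\}$ requires a classical sense for $\partial_t u$ up to the boundary, which is not directly readable from \eqref{u_gen_sol}. The trick, carried out in Remark~\ref{remark_PDE_on_boundary}, is to replace the naive difference quotient in $t$ by one taken at height $y=\varepsilon^2$ and then pass to the limit using \eqref{0lims_dyyu_dxyu}; this is where the technical work really lies, and the rest of my argument is bookkeeping.
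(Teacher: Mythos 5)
Your proposal is correct and matches the paper's own route exactly: the paper introduces Proposition~\ref{hp_minimal_u_sol} with the line ``So, we have just proven the following result,'' i.e.\ it is a direct assembly of Lemma~\ref{lemma-reg} (regularity and stochastic representations), Proposition~\ref{min_hp_pde_existence_sol} (the vanishing limits in \eqref{0lims_dyyu_dxyu}), and Remark~\ref{remark_PDE_on_boundary} (extending the PDE to $\{y=0\}$). Your additional remark that finiteness of $|h|^K_{\alpha,2}$ and $|\partial_y h|^K_{\alpha,2}$ on convex compacts implies the local Hölder continuity needed by Lemma~\ref{lemma-reg} away from $\{y=0\}$ is the only bit of glue the paper leaves implicit, and you supply it correctly.
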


We can compare the result we have just proven with the one in Proposition \ref{prop-reg-new}.
In the latter one, Briani et al. use the stochastic representation \eqref{stoc_repr-new_chap4}, with $q=2$, to prove the function $u$ belongs to $\mcC^{1,2}([0,T]\times\R\times\R_+)$ and so by continuity of all the derivatives involved in the problem,  that initially (for example taking only the final data $f$ just continuous) is solved only over $[0,T)\times\R\times\R^*_+$, is solved even over $[0,T)\times\R\times\{0\}$. 
To get all this regularity for $u$ one has to request a lot of regularity on $f$ and $h$: $\partial_x ^{2j}f\in \mcC^{2-j}_{\pol}(\R\times\R_+)$, $\partial_x ^{2j}h\in \mcC^{2-j}_{\pol, T}(\R\times\R_+)$, and for all $(m,n)\in\N^2$ such that $m+2n\le 4$ and $m\le 6$, $\partial^m_x\partial^n_y h$ locally Hölder continuous in $[0,T)\times\R\times\R_+^*$. We are capable of lowering these requests on $f$ and $h$ because even if  $u$ is less regular, it can still solve the reference problem. With the hypotheses considered in \eqref{hp_minimal_u_sol}, we do not have the continuity of $\partial_x\partial_y u$ and $\partial_y^2$, but we prove only \eqref{0lims_dyyu_dxyu} and this is enough, as shown in Remark \ref{remark_PDE_on_boundary}, to prove the PDE is solved over the boundary $[0,T)\times\R\times\{0\}$.

We now state sufficient conditions to ensure the uniqueness of the solution.
\begin{prop}
 There is at most one classical solution $u\in \mcC^{1,2}([0,T)\times(\R\times\R^*_+)) \cap \mcC^{1,1,1}([0,T)\times\R\times\R_+)\cap \mcC([0,T]\times\R\times\R_+)$ to the PDE \eqref{reference_PDE} such that the solution has polynomial growth in $(x,y)$ uniformly in $t$. In particular, under the hypothesis of Proposition \ref{hp_minimal_u_sol}, $u$ defined as in \eqref{u_gen_sol} is the unique solution.
\end{prop}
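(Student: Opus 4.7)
My plan is to let $u_1, u_2$ be two classical solutions in the stated regularity class and set $v := u_1 - u_2$; by linearity, $v$ inherits the same regularity, satisfies $\partial_t v + \mcL v + \varrho v = 0$ on $[0,T)\times\R\times\R^*_+$ with $v(T,\cdot,\cdot)\equiv 0$, and has polynomial growth in $(x,y)$ uniformly in $t$. Reproducing the argument of Remark \ref{remark_PDE_on_boundary} with $h\equiv 0$ will show that at $y=0$, $v$ also satisfies $\partial_t v + c\partial_x v + a\partial_y v + \varrho v = 0$. I then aim to establish the Feynman--Kac representation
\begin{equation*}
  v(t_0,x_0,y_0) = \E\big[e^{\varrho(T-t_0)}\, v(T, X^{t_0,x_0,y_0}_T, Y^{t_0,y_0}_T)\big] = 0
\end{equation*}
for every $(t_0,x_0,y_0)\in[0,T)\times\R\times\R_+$, from which $u_1 \equiv u_2$ follows. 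The second assertion of the proposition will then be immediate, since Proposition \ref{hp_minimal_u_sol} places the candidate \eqref{u_gen_sol} in exactly this uniqueness class.

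\textbf{Applying Itô's formula.} Given $(X_s, Y_s)_{s\ge t_0}$ solving \eqref{referenceDiffusion} from $(x_0, y_0)$ at time $t_0$, I would apply Itô to $M_s := e^{\varrho(s-t_0)} v(s, X_s, Y_s)$. The delicate point is that $v$ is only $\mcC^{1,1,1}$ up to $\{y=0\}$, not $\mcC^{1,2}$. However, the second-order terms in Itô involve the quadratic variations $d\langle X\rangle_s = \lambda^2 Y_s\,ds$, $d\langle X,Y\rangle_s = \rho\lambda\sigma Y_s\,ds$ and $d\langle Y\rangle_s = \sigma^2 Y_s\,ds$, so they always appear weighted by $Y_s$. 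By Proposition \ref{min_hp_pde_existence_sol}, $y\,\partial_y^2 v$ and $y\,\partial_x\partial_y v$ tend to $0$ as $y\downarrow 0$; the analogous limit $y\,\partial_x^2 v \to 0$ follows by isolating $\tfrac{y\lambda^2}{2}\partial_x^2 v$ from the PDE on $\{y>0\}$ and using the continuity of $\partial_t v,\partial_x v,\partial_y v$ together with the boundary PDE. Hence the second-order integrand in Itô extends continuously by $0$ across $\{y=0\}$, and since $a>0$ ensures $\int_{t_0}^{T}\mathbf{1}_{\{Y_s=0\}}\,ds=0$ almost surely, Itô's formula will indeed apply to $v$ along $(X,Y)$. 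The bounded-variation part of $dM_s$ then reduces to $e^{\varrho(s-t_0)}[\partial_s v + \mcL v + \varrho v](s, X_s, Y_s)\,ds = 0$ by the PDE (using the reduced boundary form on $\{Y_s=0\}$), so $M$ is a local martingale.

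\textbf{Localization and passage to the limit.} To promote $M$ to a genuine martingale, I would localize via $\tau_R := \inf\{s\ge t_0 : |X_s|+Y_s \ge R\}$ and fix $T'\in(t_0,T)$. On $[t_0,\tau_R\wedge T']$ the continuous derivatives $\partial_x v, \partial_y v$ are uniformly bounded and $\sqrt{Y_s}\le\sqrt{R}$, so $M^{\tau_R\wedge T'}$ is a bounded martingale and $v(t_0,x_0,y_0) = \E[M_{\tau_R\wedge T'}]$. Standard polynomial moment estimates for \eqref{referenceDiffusion} then imply $\tau_R\to\infty$ a.s., and combined with the polynomial growth of $v$ a dominated-convergence argument will give
\begin{equation*}
  v(t_0,x_0,y_0) = \E\big[e^{\varrho(T'-t_0)} v(T', X_{T'}, Y_{T'})\big].
\end{equation*}
Letting $T'\uparrow T$ and exploiting the continuity of $v$ on $[0,T]\times\R\times\R_+$ with $v(T,\cdot,\cdot)=0$, a second dominated-convergence step (same polynomial majorant) will yield $v(t_0,x_0,y_0)=0$.

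\textbf{Expected main obstacle.} The hard part will be rigorously justifying Itô's formula when $v$ is merely $\mcC^{1,1,1}$---not $\mcC^{1,2}$---at the degenerate boundary $\{y=0\}$, which $(X,Y)$ reaches whenever the Feller condition fails. My resolution above hinges on the vanishing of $y$ times each second-order derivative of $v$ at $y=0$ (Proposition \ref{min_hp_pde_existence_sol}, complemented by a rearrangement of the PDE for $y\partial_x^2 v$) together with the Lebesgue-negligibility of $\{Y=0\}$. A safer alternative, if the direct extension proves too delicate, is to localize by $\theta_\epsilon := \inf\{s\ge t_0 : Y_s \le \epsilon\}$, apply classical Itô on $[t_0,\theta_\epsilon]$ (where $v$ is $\mcC^{1,2}$), and then iterate across excursions of $Y$ away from $0$, using the $\mcC^{1,1,1}$-continuity at the boundary to make the endpoint contributions telescope correctly as $\epsilon\downarrow 0$.
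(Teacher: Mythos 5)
Your approach is genuinely different from the paper's: you attempt a probabilistic (Feynman--Kac/Itô) uniqueness argument, representing the difference $v=u_1-u_2$ as a stopped martingale along $(X,Y)$, whereas the paper proves uniqueness by a purely analytic maximum-principle argument. The paper constructs a barrier $h(x,y)=1+x^{L+2}+y^{L+2}$ with $\mathcal{L}h< Mh$, forms the strict supersolution $w^\varepsilon=w+\varepsilon e^{Mt}h$ (with $w=u_1-u_2$ and time reversed), and derives a contradiction at the first time $w^\varepsilon$ becomes nonpositive — using $a>0$ to rule out a boundary touching point at $y_0=0$ (via sign conditions on the first-order derivatives) and the usual second-order test at interior points. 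This route never integrates second-order derivatives of $w$ against the diffusion and hence never has to control $\partial_y^2 w$ near $\{y=0\}$.

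That last point is exactly where your proof has a gap. You invoke Proposition~\ref{min_hp_pde_existence_sol} to get
$y\,\partial_y^2 v\to 0$ and $y\,\partial_x\partial_y v\to 0$ as $y\downarrow 0$,
but that proposition is a statement about the \emph{specific} Feynman--Kac candidate $u$ from \eqref{u_gen_sol} under additional hypotheses on $f$ and $h$; it is \emph{not} a property of an arbitrary element of $\mathcal{C}^{1,2}([0,T)\times(\R\times\R^*_+))\cap\mathcal{C}^{1,1,1}([0,T)\times\R\times\R_+)\cap\mathcal{C}([0,T]\times\R\times\R_+)$. For a generic $v$ in this class, the only information at the boundary is that the first derivatives $\partial_t v,\partial_x v,\partial_y v$ extend continuously; there is no a priori bound on $\partial_y^2 v$, $\partial_x\partial_y v$, $\partial_x^2 v$ as $y\downarrow 0$, and your proposed rearrangement of the PDE to extract $y\lambda^2\partial_x^2 v$ is circular because it presupposes control of the other two second-order terms. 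Consequently the Itô expansion of $e^{\varrho(s-t_0)}v(s,X_s,Y_s)$ is not justified across $\{y=0\}$ when the Feller condition fails and the process actually hits the boundary. Your alternative of localizing by $\theta_\varepsilon$ and telescoping across excursions runs into the same obstruction, since there are infinitely many excursions of $Y$ away from $0$ in any interval and the endpoint contributions do not telescope without additional second-order boundary control. To make a probabilistic proof work in this generality you would need either a strengthening of the uniqueness class (e.g. requiring $y\,D^2 v$ to vanish at the boundary), or a weak Itô formula for functions that are only $\mathcal{C}^{1,1,1}$ at a degenerate boundary; neither is provided here, and the paper's maximum-principle argument deliberately sidesteps the issue.
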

\begin{proof}
 Suppose that $u$ and $v$ are two solutions in the reference space $\mcC^{1,2}([0,T)\times(\R\times\R^*_+)) \cap \mcC^{1,1,1}([0,T)\times\R\times\R_+)\cap \mcC([0,T]\times\R\times\R_+)$, clearly the difference $w=u-v$ lies in the same space. For simplicity, we reverse the time by the change of variable $t\mapsto T-t$, so $w$ solves
  \begin{equation}
    \begin{cases}
 (\partial_t-\mcL -\varrho)w(t,x,y) = 0,\quad   t\in(0,T], &x\in\R, y\in\R_+, \\
 w(0,x,y) = 0, \quad  &x\in\R, y\in \R_+.
    \end{cases}
  \end{equation}
 From now on we consider $\varrho=0$, because $\exp(-\varrho t)w(t,x,y)$ solves the problem with the constant equal to 0, and the function $\mcM_{\varrho}:w(t,x,y)\mapsto \exp(-\varrho t)w(t,x,y)$ is a bijection from the reference space to itself.
 Let $L$ the first even integer such that $|w(t,x,y)|\le C(1+x^L+y^L)$. We call $h(x,y)=1+x^{L+2}+y^{L+2}$ and with a little algebra one can show that exists $M>0$ such that
  $$
 \mcL h(x,y)= \frac{(L+2)(L+1)}{2}y(\lambda^2 x^L+\sigma^2y^L)+(L+2)((c+d y)x^{L+1}+(a-by)y^{L+1})<Mh(x,y).
  $$
 Let $\varepsilon>0$, we define $w^\varepsilon:[0,T]\times\R\times\R_+\rightarrow \R$ by 
  $$w^\varepsilon(t,x,y) = w(t,x,y)+\varepsilon e^{Mt}h(x,y),$$ 
 then
  $$
 (\partial_t -\mcL) w^\varepsilon(t,x,y) = \varepsilon e^{Mt}(M-\mcL)h(x,y)>0
  $$
 for all the interior points. Let $\Gamma:=\{(t,x,y)\mid w^\varepsilon(t,x,y)< 0\} $, we remark that $\Gamma$ is bounded (because of the growth bound $|w(t,x,y)|\le C(1+x^L+y^L)$), and then $\bbar{\Gamma}$ is compact by continuity of $w^\varepsilon$. Assume that $\Gamma\neq \emptyset$ and define $t_0=\inf\{t\ge0\mid (t,x,y)\in\bbar{\Gamma} \text{, for some } (x,y)\in\R\times\R_+ \}$. We consider a point $(t_0,x_0,y_0)\in\bbar{\Gamma}$. By continuity of $w^\varepsilon$ and definition of $t_0$, $w$ must be equal 0 in $(t_0,x_0,y_0)$. In the meantime, being $w^\varepsilon(0,x,y)\ge 1$ and $\bbar{\Gamma}$ compact, one has $t_0>0$.
 We suppose first $y_0=0$. Then, by the fact that $t_0$ is an infimum, we have 
  $$
  \partial_t w^\varepsilon(t_0,x_0,0)\le 0, \qquad \partial_x w^\varepsilon(t_0,x_0,0)=0, \qquad\partial_y  w^\varepsilon(t_0,x_0,0)\ge 0 
  $$
 otherwise we can find a triple $(t_1,x_1,y_1)$ with $t_1<t_2$ such that $w^\varepsilon(t_1,x_1,y_1)<0$ contradicting the minimality of $t_0$. Being $a>0$, one has
  $$
  0\ge \partial_t w^\varepsilon(t_0,x_0,0) - a \partial_y w^\varepsilon(t_0,x_0,0) = \varepsilon e^{M t}M(1+x_0^{L+2})>0
  $$
 so $y_0=0$ is not possible. We consider now $y_0>0$, then $(t_0,x_0,y_0)$ is an interior point of the domain. By minimality of $t_0$ one has $\partial_t w^\varepsilon(t_0,x_0,y_0)\le 0$ and $(x_0,y_0)$ is a minimum point for the map $(x,y) \mapsto w^\varepsilon(t_0,x,y)$. Then, this map has a gradient equal to 0 and Hessian positive semi-definite, so $\mcL w^\varepsilon \ge 0$. One has
  $$
    0 \ge (\partial_t -\mcL) w^\varepsilon(t_0,x_0,y_0) = \varepsilon e^{Mt}(M-\mcL)h(x_0,y_0)>0.
  $$
 This contradiction implies that $\Gamma = \emptyset$. Since this holds for all $\varepsilon>0$, it follows that $w\ge0$. Applying the same argument to $-w$ shows that $w\le0$ and so $w=0$.
\end{proof}

\section{Existence and uniqueness of viscosity solutions}\label{viscosity_section}
Here, we want to explore the extended problem \eqref{reference_PDE} from the point of view of viscosity solutions in order to reduce the regularity on the function $f$. Now, we introduce some definitions (cf. \cite{CIL92}) that will be useful from now on.

Let $F:(0,T]\times\R^m\times\R\times\R^m\times\mcS(m)\rightarrow\R$ a continuous function where $\mcS(m)$ is the set of $m\times m$-dimensional, $\R$-valued symmetric matrices.
\begin{definition}[Degenerate ellipticity]
  $F$ is called degenerate elliptic if it is nonincreasing in its matrix argument
    $$
      F(t,x,u,p,X) \leq F(t,x,u,p,Y) \text{ for } Y\leq X,
    $$
    with the classical ordering $\leq$ over $\mcS(m)$ defined by the relation
    $$
      Y\leq X  \Leftrightarrow \langle Y\zeta,\zeta  \rangle \leq \langle X\zeta,\zeta  \rangle \text{ for all } \zeta\in\R^m.
    $$
\end{definition}

\begin{definition}[Proper]
  $F$ is called proper if it is degenerate elliptic and nondecreasing in $u$.
\end{definition}

\begin{remark}
  With the change of variable $s=T-t$ the original problem \eqref{reference_PDE} becomes
  \begin{equation}\label{reference_PDE_forward}
    \begin{cases}
      \partial_s u(s,x,y) +F(s,(x,y), u(s,x,y), D_{(x,y)}u(s,x,y),D^2_{(x,y)}u(s,x,y) )  = 0,  \\ 
      \phantom{\partial_s u(s,x,y) +F(s,(x,y), u(s,x,y), D_{(x,y)}u(s,x,y),D^2_{(x,y)})}\forall s\in(0,T],\, x\in\R,\, y\in\R_+, 
      \\
      u(0,x,y) = f(x,y), \quad \forall x\in\R, y\in \R_+,
    \end{cases}
  \end{equation}
  where
  \begin{equation}\label{F_logheston}
    F(s,(x,y),r,p,X)= -\frac{y}{2}(\lambda^2X_{1,1} +2\rho\lambda\sigma X_{1,2} +\sigma^2X_{2,2}) -\mu_X(y) p_1 -\mu_Y(y) p_2 - \varrho r+h(T-s,x,y)
  \end{equation}
  is degenerate elliptic (and proper if $\varrho\le0$).
\end{remark}

We consider a convex domain (possibly closed) $\mcO\subseteq \R^m$, $T>0$ and we name $\mcO_T = (0,T]\times\mcO$.
We study the following partial differential equation problem
\begin{equation}
  \begin{cases}
    u_t(t,x) + F(t,x,u(t,x),D_xu(t,x),D^2_xu(t,x))=0 \quad&\text{ if } t\in(0,T],\text{ } x\in\mcO,\\
    u(0,x)= f(x),   &\qquad\qquad\qquad x\in\mcO.
  \end{cases}
\end{equation}
We define 
\begin{align*}
  \LSC(\mcO_T) &= \{f:\mcO_T\rightarrow\R \mid   \text{$f$ is lower semi-continuous at every } (t,x)\in \mcO_T \}, \\
  \USC(\mcO_T) &= \{f:\mcO_T\rightarrow\R \mid   \text{$f$ is upper semi-continuous at every } (t,x)\in \mcO_T  \},
\end{align*}
and we give the following definition.
\begin{definition}\label{viscosity_property}
  Given a function $u$ and $(t,x)\in(0,T]\times\mcO$, we say that at $(t,x)$
  $$
  \partial_t u(t,x) +F(t,x,u(t,x),D_xu(t,x),D^2_x u(t,x))\ge 0\,(\text{resp.}\le 0)\text{ in viscosity sense }
  $$   if, for each smooth function $\phi$ such that $u-\phi$ has a local minimum (resp. a local maximum) at $(t,x)$ 
  $$
  \partial_t \phi(t,x) + F(t,x, u(t,x),D_x\phi(t,x),D^2_x\phi(t,x))\ge 0\,(\text{resp.}\le 0) \text{ in classical sense} .
  $$ 
\end{definition}
We introduce the notion of semijets to give an equivalent definition that will be useful in the following.
\begin{definition}\label{semijets}
  Let $u:\mcO_T\rightarrow \R$, then its upper parabolic second order semijet $\mcP_\mcO^{2,+} u$ is defined by
  \begin{align*}
    \mcP_\mcO^{2,+}u:\mcO_T&\rightarrow \mathscr{P}(\R\times\R^m\times\mcS(m))\\
    (t,x)&\mapsto \mcP_\mcO^{2,+}u(t,x)
  \end{align*}
  where $(c,p,X)$ lies in the set $\mcP_\mcO^{2,+}u(t,x)$ if 
  \begin{equation}\label{upper_semijet}
    \begin{split}
      u(s,z) \le  u(&t,x) +b(s-t)+ \langle p,z-x\rangle + \frac{1}{2}\langle X(z-x),x-z\rangle \\
      &+o(|s-t|+|z-x|^2) \text{ as } \mcO_T\ni(s,z)\rightarrow (t,x),
    \end{split}
  \end{equation}
  and we define the lower parabolic second order semijet $\mcP_\mcO^{2,-} u:= -\mcP_\mcO^{2,+}(-u)$.
  We also define the closure of these set-valued mappings as 
  \begin{multline}
    \bmcP_\mcO^{2,+}u(t,x)=\{(c,p,X)\in\R\times\R^m\times\mcS(m)\mid \exists\big((t_n,x_n),c_n,p_n,X_n\big)\in\mcO_T\times\R\times\R^m\times\mcS(m)\text{ s.t.} \\
    (c_n,p_n,X_n)\in \bmcP_\mcO^{2,+}u(t_n,x_n)\text{ and } \big((t_n,x_n),u(t_n,x_n),c_n,p_n,X_n\big)\rightarrow \big((t,x),u(t,x),c_n,p_n,X_n\big) \},
  \end{multline}
  and $\bmcP_\mcO^{2,-}u$, closure of $\mcP_\mcO^{2,-}u$, is defined in the same way.
\end{definition}

We now give the definition of viscosity super and sub-solutions. 
\begin{definition}[Viscosity super-solution (sub-solution)]\label{def_viscosity_sol}
  Let $F$ be a proper operator and $T>0$. A function $u$ that is $\LSC(\mcO_T)$  (resp. $\USC(\mcO_T)$)  is called a viscosity super-solution (resp. sub-solution) with initial value $f$ if, 
  \begin{itemize}
    \item for any $(t,x)\in\mcO_T$, $\partial_t\phi(t,x) + F(t,x,u(t,x),D_x u(t,x),D^2_x u(t,x))\geq 0 \text{ $($resp. $\le0)$}$ in viscosity sense,
    \item for any $x\in\mcO$, $u(0,x)\geq f(x) \text{ $($resp. $\le f(x)).$}$
  \end{itemize}
  A function $u$ that is both a super and a sub-solution is called a viscosity solution.
\end{definition}
 The following result gives an interesting equivalent definition.
\begin{prop}
  Let $(t,x)\in\mcO_T$. Then
  \begin{align}
    \mcP_\mcO^{2,+}u(t,x) =\{(\partial_t\phi(t,x),D_x\phi(t,x),D^2_x\phi(t,x))\mid \phi\text{ is } \mcC^{1,2} \text{ and } u-\phi  \text{ has local max. at } (t,x) \},\\
    \mcP_\mcO^{2,-}u(t,x) =\{(\partial_t\phi(t,x),D_x\phi(t,x),D^2_x\phi(t,x))\mid \phi\text{ is } \mcC^{1,2} \text{ and } u-\phi  \text{ has local min. at } (t,x) \}.
  \end{align}
\end{prop}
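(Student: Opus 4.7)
The plan is to prove the equality as a two-sided inclusion, focusing on $\mcP_\mcO^{2,+}u(t,x)$; the identity for $\mcP_\mcO^{2,-}u(t,x)$ then follows immediately by the defining duality $\mcP_\mcO^{2,-}u=-\mcP_\mcO^{2,+}(-u)$ together with the observation that $u-\phi$ has a local minimum at $(t,x)$ if and only if $(-u)-(-\phi)$ has a local maximum there. So from now on I work only with $\mcP_\mcO^{2,+}u(t,x)$, and denote by $J(t,x)$ the right-hand side set in the statement.

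For the inclusion $J(t,x)\subseteq \mcP_\mcO^{2,+}u(t,x)$, the argument is a routine Taylor expansion: take $\phi \in \mcC^{1,2}$ such that $u-\phi$ has a local maximum at $(t,x)$, so $u(s,z)-u(t,x)\le \phi(s,z)-\phi(t,x)$ for $(s,z)\in\mcO_T$ near $(t,x)$. Expanding $\phi$ to second order around $(t,x)$ yields
$$\phi(s,z)-\phi(t,x)=\partial_t\phi(t,x)(s-t)+\langle D_x\phi(t,x),z-x\rangle+\tfrac12\langle D^2_x\phi(t,x)(z-x),z-x\rangle+o(|s-t|+|z-x|^2),$$
and plugging this back into the inequality gives exactly condition \eqref{upper_semijet} with $(c,p,X)=(\partial_t\phi(t,x),D_x\phi(t,x),D^2_x\phi(t,x))$.

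For the converse $\mcP_\mcO^{2,+}u(t,x)\subseteq J(t,x)$, fix $(c,p,X)\in\mcP_\mcO^{2,+}u(t,x)$ and define the associated parabolic quadratic polynomial
$$Q(s,z)=u(t,x)+c(s-t)+\langle p,z-x\rangle+\tfrac12\langle X(z-x),z-x\rangle.$$
By \eqref{upper_semijet} the non-negative function
$$\sigma(r):=\sup\{[u(s,z)-Q(s,z)]_+ : (s,z)\in\mcO_T,\ |s-t|+|z-x|^2\le r\}$$
is non-decreasing and satisfies $\sigma(r)/r\to 0$ as $r\to 0^+$. The task is then to build a $\mcC^{1,2}$ correction $\Psi(s,z)$ with $\Psi(t,x)=0$, $\partial_t\Psi(t,x)=0$, $D_x\Psi(t,x)=0$, $D^2_x\Psi(t,x)=0$, such that $\Psi(s,z)\ge \sigma(|s-t|+|z-x|^2)$ near $(t,x)$; taking $\phi=Q+\Psi$ then gives $u-\phi\le 0$ in a neighborhood with equality at $(t,x)$ (so a local maximum), and the derivatives of $\phi$ at $(t,x)$ coincide with $(c,p,X)$.

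The main obstacle is the construction of $\Psi$: the function $\sigma$ is only monotone with $\sigma(r)=o(r)$, and to absorb it into a $\mcC^{1,2}$ profile one introduces a smoothed, strictly increasing majorant, e.g.
$$\bar\sigma(r)=\frac{2}{r}\int_{r}^{2r}\sigma(u)\,du+r^2,$$
which satisfies $\bar\sigma\ge\sigma$, $\bar\sigma(0)=0$, $\bar\sigma\in\mcC^1((0,\infty))$, and still $\bar\sigma(r)/r\to 0$ as $r\to 0^+$; setting $\Psi(s,z)=\bar\sigma(|s-t|+|z-x|^2)+|s-t|^2$ (or any equivalent construction using $|s-t|^2+|z-x|^4$ to restore $\mcC^{1,2}$-regularity of the $s$-variable) produces the required test function. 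This smoothing step is classical in viscosity-solutions theory (see e.g.\ Lemma 4.1 in the User's Guide of Crandall--Ishii--Lions~\cite{CIL92}); once it is available, combining $Q$ with $\Psi$ closes the proof.
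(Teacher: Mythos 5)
Your treatment of the easy inclusion and of the duality between $\mcP^{2,+}$ and $\mcP^{2,-}$ is fine and matches what the paper does. The problem is in the hard inclusion, where the paper simply defers to Fleming--Soner V.4, Prop.~4.1 rather than giving the construction, and the construction you supply is not correct. Your candidate $\Psi(s,z)=\bar\sigma(|s-t|+|z-x|^2)+|s-t|^2$ is not $\mcC^{1,2}$: along the slice $\{s=t,\,z\ne x\}$ the one-sided $s$-derivatives of $\bar\sigma(|s-t|+|z-x|^2)$ are $\pm\bar\sigma'(|z-x|^2)$, which differ whenever $\bar\sigma'(|z-x|^2)\ne 0$, so the map is not even differentiable in $s$ off the single point $(t,x)$. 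Your escape route, replacing the argument by $|s-t|^2+|z-x|^4$, does not work either: the semijet estimate only controls $\sigma(r)=o(r)$ for the parabolic gauge $r=|s-t|+|z-x|^2$, and a $\mcC^{1,2}$ function of $|s-t|^2+|z-x|^4$ vanishing to the required order at $(t,x)$ is $O(|s-t|^2+|z-x|^4)$, which need not dominate $\sigma(|s-t|+|z-x|^2)$ (take $\sigma(r)\sim r^{3/2}$). The correct device is to use monotonicity to split $\sigma(|s-t|+|z-x|^2)\le\sigma(2|s-t|)+\sigma(2|z-x|^2)$ and then build two separate majorants: a $\mcC^1$ time-profile $\Psi_1(|s-t|)$ with $\Psi_1(0)=\Psi_1'(0)=0$ and $\Psi_1(r)\ge\sigma(2r)$, and a $\mcC^2$ space-profile $\Psi_2(|z-x|^2)$ with $\Psi_2(0)=\Psi_2'(0)=0$ and $\Psi_2(\rho)\ge\sigma(2\rho)$. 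The chain rule then gives $\phi=Q+\Psi_1+\Psi_2\in\mcC^{1,2}$ with the required derivatives at $(t,x)$; this is the content of the Fleming--Soner proof.

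Two smaller issues: your single averaging $\bar\sigma(r)=\frac2r\int_r^{2r}\sigma$ is continuous but not automatically $\mcC^1$, because $\sigma$, being only monotone, can have jumps and the derivative of $r\mapsto\int_r^{2r}\sigma$ inherits them; a further integration (or convolution with a smooth kernel) is needed. Also, Lemma~4.1 in the Crandall--Ishii--Lions User's Guide is Ishii's lemma on sums and matrix inequalities, not the touching lemma you invoke; the correct reference for this step is Fleming--Soner V.4, Prop.~4.1 (as the paper cites), or Ishii's earlier papers.
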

\begin{proof}
  The inclusion $\supseteq$ follows easily by using the local maximum (respectively minimum) property and developing $\phi$ using Taylor Theorem around $(t,x)$ up to the first order in $t$, and to the second in $x$. The nontrivial inclusion $\subseteq$ requires constructing, for every $(c,p,X)\in\mcP_\mcO^{2,+}u(t,x)$, a regular function such that the difference $u-\phi$ has a local minimum at $(t,x)$. We refer to Fleming and Soner \cite{FS93book}, V.4 Proposition 4.1.
\end{proof}
As an immediate consequence, we have the following characterization of super and sub-viscosity solutions.
\begin{corollary}
  A function $w\in\USC(\mcO_T)$ is a viscosity sub-solution with initial value $f$, if and only if
  \begin{equation}\label{subsol_2nd_def}
    \begin{cases}
      c + F(t,x,w(t,x),p,X)\leq 0 \text{ for all } (t,x)\in\mcO_T \text{ and } (c,p,X)\in\mcP_\mcO^{2,+}w(t,x), \\
      w(0,x)\leq f(x) , \text{ for any } x\in\mcO.
    \end{cases}
  \end{equation}
  A function $v\in\LSC(\mcO_T)$  is a viscosity super-solution with initial value $f$, if and only if
  \begin{equation}\label{supersol_2nd_def}
    \begin{cases}
      c + F(t,x,v(t,x),p,X)\geq 0,\text{ for all } (t,x)\in\mcO_T \text{ and } (c,p,X)\in\mcP_\mcO^{2,-}v(t,x), \\
      v(0,x)\geq f(x) , \text{ for any } x\in\mcO.
    \end{cases}
  \end{equation}
\end{corollary}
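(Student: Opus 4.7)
The plan is to read off the corollary as an immediate translation of the preceding proposition, which identifies the parabolic semijets with derivative-triples of smooth test functions that touch $u$ from above or below. The argument consists of unpacking Definition \ref{def_viscosity_sol} (via Definition \ref{viscosity_property}) in the sub-solution case, then mirroring the reasoning for super-solutions.

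\textbf{Step 1 (sub-solution case, $\Rightarrow$).} Assume $w\in\USC(\mcO_T)$ is a viscosity sub-solution in the sense of Definition \ref{def_viscosity_sol}. Fix $(t,x)\in\mcO_T$ and $(c,p,X)\in\mcP_\mcO^{2,+}w(t,x)$. By the preceding proposition there exists $\phi\in\mcC^{1,2}$ such that $w-\phi$ attains a local maximum at $(t,x)$ and $(c,p,X)=(\partial_t\phi(t,x),D_x\phi(t,x),D^2_x\phi(t,x))$. Adding the constant $w(t,x)-\phi(t,x)$ to $\phi$ does not alter its derivatives nor the local maximum property, so we may assume $\phi(t,x)=w(t,x)$. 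Definition \ref{viscosity_property} then yields $c+F(t,x,w(t,x),p,X)\le 0$. The initial-value inequality $w(0,x)\le f(x)$ is already part of Definition \ref{def_viscosity_sol}.

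\textbf{Step 2 (sub-solution case, $\Leftarrow$).} Conversely, assume \eqref{subsol_2nd_def}. Let $\phi\in\mcC^{1,2}$ be such that $w-\phi$ has a local maximum at $(t,x)$. By the preceding proposition, $(\partial_t\phi(t,x),D_x\phi(t,x),D^2_x\phi(t,x))\in\mcP_\mcO^{2,+}w(t,x)$, hence \eqref{subsol_2nd_def} gives $\partial_t\phi(t,x)+F(t,x,w(t,x),D_x\phi(t,x),D^2_x\phi(t,x))\le 0$, which is exactly the inequality required in Definition \ref{viscosity_property}. Combined with $w(0,\cdot)\le f$, this is Definition \ref{def_viscosity_sol}.

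\textbf{Step 3 (super-solution case).} The argument is verbatim the same, with local maxima replaced by local minima, $\USC$ by $\LSC$, $\mcP_\mcO^{2,+}w$ by $\mcP_\mcO^{2,-}v$, and the inequalities reversed. Alternatively, one can observe that $v$ is a super-solution iff $-v$ is a sub-solution of the equation with $F$ replaced by $\widetilde{F}(t,x,r,p,X)=-F(t,x,-r,-p,-X)$, and invoke $\mcP_\mcO^{2,-}v(t,x)=-\mcP_\mcO^{2,+}(-v)(t,x)$ from Definition \ref{semijets}.

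There is no genuine obstacle: the corollary is a formal consequence of the preceding proposition together with the test-function form of Definition \ref{def_viscosity_sol}. The only point worth flagging in the write-up is the harmless normalization $\phi(t,x)=w(t,x)$ in Step 1, which reconciles the argument $w(t,x)$ appearing in \eqref{subsol_2nd_def} with the value $\phi(t,x)$ that one might naively expect from the test-function formulation.
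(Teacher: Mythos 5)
Your argument is correct and is exactly the unpacking the paper intends when it labels the statement an "immediate consequence" of the semijet characterization. One small remark: the normalization $\phi(t,x)=w(t,x)$ you flag in Step 1 is actually superfluous here, because Definition \ref{viscosity_property} (and hence Definition \ref{def_viscosity_sol}) already evaluates $F$ at $u(t,x)$ rather than at $\phi(t,x)$, so no reconciliation is needed.
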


Here, we give a key Lemma to prove the verification Theorem, which says that viscosity solutions are stable under local uniform convergence. 
\begin{lemma}[Stability]\label{sequence_lemma_general}
  Let $F,(F_n)_{n\in\N}$ be continuous and degenerate elliptic such that for all $\mcK^*_T\subseteq \mcO_T\times\R\times\R^m\times\mcS(m)$
  $$
    |F-F_n|_0^{\mcK^*_T}\xrightarrow[n\rightarrow\infty]{}0,
  $$
  and $(u_n)_{n_\in\N}\subset C(\mcO_T)$ such that
  \begin{enumerate}
    \item for all $n$,  $\partial_t u_n(t,x) + F_n(t, x,u_n(t, x),D_x u_n(t, x),D^2_x u_n(t, x))\geq 0 $ $($resp. $\le0)$ for all $(t,x)\in\mcO_T$ in viscosity sense, \label{property_one _seq_lem_gen}
    \item there exists $u$ such that for each $\mcK_T\subseteq \mcO_T$ compact set one has 
    $$|u_n-u|_0^{\mcK_T}\xrightarrow[n\rightarrow\infty]{} 0.$$
  \end{enumerate}
  Then $u\in C(\mcO_T)$ satisfies $\partial_t u(t,x) + F(t, x,u(t, x),D_x u(t, x),D^2_x u(t, x))\geq 0 $ $($resp. $\le0)$  for all $(t,x)\in\mcO_T$ in viscosity sense.
\end{lemma}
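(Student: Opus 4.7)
The plan is a standard perturbation/compactness argument in viscosity theory; I outline it for the subsolution case (the supersolution case follows by replacing $(u_n, u, F_n, F)$ with $(-u_n, -u, \check F_n, \check F)$, where $\check F(t,x,r,p,X) = -F(t,x,-r,-p,-X)$, which preserves degenerate ellipticity).

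\textbf{Step 1 (setup and strictification).} Pick any $(t_0, x_0)\in \mcO_T$ and any smooth $\phi$ such that $u-\phi$ has a local maximum at $(t_0, x_0)$. The goal is to prove
\begin{equation*}
  \partial_t \phi(t_0, x_0) + F\bigl(t_0, x_0, u(t_0, x_0), D_x\phi(t_0, x_0), D^2_x\phi(t_0, x_0)\bigr) \le 0.
\end{equation*}
Replace $\phi$ by $\tilde\phi(t,x) = \phi(t,x) + |t-t_0|^4 + |x-x_0|^4$. Then $\tilde\phi$ and $\phi$ coincide with all their first and second derivatives at $(t_0, x_0)$, while $u-\tilde\phi$ has a \emph{strict} local maximum at $(t_0, x_0)$; it suffices to prove the desired inequality with $\tilde\phi$.

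\textbf{Step 2 (extraction of approximate maximizers).} Fix $r>0$ small enough that $\overline K := \bar B_r(t_0, x_0) \cap \overline{\mcO_T}$ is compact, contained in $\mcO_T$ except possibly at boundary points of $\mcO_T$ that one retains, and that $(t_0, x_0)$ is the unique maximizer of $u-\tilde\phi$ on $\overline K$. For each $n$, let $(t_n, x_n)\in \overline K$ be a maximizer of $u_n - \tilde\phi$ on $\overline K$ (which exists by continuity and compactness). Using the locally uniform convergence $u_n \to u$ on $\overline K$ together with the strict-maximum property and a standard compactness/contradiction argument, one shows $(t_n, x_n) \to (t_0, x_0)$ and $u_n(t_n, x_n) \to u(t_0, x_0)$. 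Since $t_0 > 0$, for $n$ large we have $t_n > 0$, so $(t_n, x_n) \in \mcO_T$.

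\textbf{Step 3 (apply the viscosity inequality and pass to the limit).} At $(t_n, x_n)$, $u_n - \tilde\phi$ has a local maximum in $\mcO_T$, so by the subsolution property of $u_n$ with operator $F_n$,
\begin{equation*}
  \partial_t \tilde\phi(t_n, x_n) + F_n\bigl(t_n, x_n, u_n(t_n, x_n), D_x \tilde\phi(t_n, x_n), D^2_x \tilde\phi(t_n, x_n)\bigr) \le 0.
\end{equation*}
The arguments $(t_n, x_n, u_n(t_n, x_n), D_x \tilde\phi(t_n, x_n), D^2_x \tilde\phi(t_n, x_n))$ remain in a compact subset $\mcK^*_T$ of $\mcO_T\times\R\times\R^m\times\mcS(m)$, so by the uniform convergence $F_n \to F$ on $\mcK^*_T$ and the joint continuity of $F$, the quantity $F_n(\cdots) \to F(t_0, x_0, u(t_0, x_0), D_x \phi(t_0, x_0), D^2_x \phi(t_0, x_0))$; together with continuity of $\partial_t \tilde\phi$ and the identification of derivatives of $\tilde\phi$ at $(t_0, x_0)$ with those of $\phi$, this gives the desired inequality in the limit. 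The initial condition $u(0,\cdot)\le f$ is inherited directly from $u_n(0,\cdot)\le f$ and pointwise convergence.

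\textbf{Main obstacle.} The only nontrivial point is Step 2: ensuring the extracted maximizers genuinely converge to $(t_0, x_0)$ and lie in $\mcO_T$ rather than drifting to the boundary of $\overline K$. This is handled by (i) the quartic perturbation which guarantees strictness of the maximum while preserving all relevant derivatives at $(t_0, x_0)$, and (ii) the uniform convergence on the compact $\overline K$ which rules out escape via a standard limsup argument: if $(t_{n_k}, x_{n_k}) \to (t^*, x^*) \ne (t_0, x_0)$, then $u(t^*, x^*) - \tilde\phi(t^*, x^*) \ge u(t_0, x_0) - \tilde\phi(t_0, x_0)$, contradicting strictness.
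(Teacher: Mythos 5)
Your proposal is correct and follows essentially the same line of argument as the paper's proof: strictify the extremum by a higher-order penalization (the paper uses $\phi^*(s,y)=\phi(s,y)-(t-s)^2-|x-y|^4$, you use a quartic in both variables, and both choices leave the relevant derivatives at the base point unchanged), extract a sequence of approximate extremizers of $u_n-\tilde\phi$ that converges to the original point by the strictness and local uniform convergence of $u_n$, apply the viscosity inequality for $(u_n,F_n)$ at those points, and pass to the limit using the local uniform convergence of both $u_n$ and $F_n$ together with the continuity of $F$ and $D\phi$, $D^2\phi$. The only cosmetic differences are that you treat the subsolution case and derive the other by duality while the paper treats the supersolution case, and you strictify up front while the paper handles the strict case first and reduces the general case at the end; the final remark about the initial condition is not actually part of the statement being proved and can be dropped.
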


\begin{proof}
  We only prove that $u$ satisfies $\partial_t u(t,x) + F(t, x,u(t, x),D_x u(t, x),D^2_x u(t, x))\geq 0$ in viscosity sense, the reverse inequality can be proven in the same way. Let $\phi\in \mcC^{1,2}(\mcO_T)$ and $(t,x)\in{\mcO_T}$ such that is a global minimum for $u-\phi$. We consider a compact neighbourhood $\mcK_T$ of $(t,x)$. Suppose the minimum is strict at $(t,x)$, then thanks to the local uniform convergence of $(u_n)_{n\in\N^*}$  exists a sequence of points $(t_n,x_n)_{n\in\N^*}$ eventually in the interior of $\mcK_T$ that are minima for the sequence $(u_n-\phi)_{n\in\N^*}$ and such that $(t_n,x_n)\rightarrow (t,x)$.
  Being $(t_n,x_n)$ minimizer for $u_n-\phi$ with $\phi$ smooth, then by \textit{(1)}
  \begin{equation*}
    0\le \partial_t \phi(t_n,x_n) + F_n(t_n,x_n,u_n(t_n,x_n),D_x\phi(t_n,x_n),D^2_x\phi(t_n,x_n)).
  \end{equation*}
  By uniform convergence of $u_n$ through $u$ over $\mcK_T$, one has $u_n(t_n,x_n)\rightarrow u(t,x)$. Then thanks to continuity of $D\phi$ and $D^2\phi$ one has that $(t_n,x_n,u_n(t_n,x_n),D_x u_n(t_n,x_n),D^2_x u_n(t_n,x_n) )_{n\in\N}\subset \mcK_T \times\mcR$ compact set of $(0,T]\times\mcO\times\R\times\R^m\times\mcS(d)$.
  So, thanks to uniformly convergence of $F_n$ through $F$  over $\mcK_T\times\mcR$  we conclude that
  \begin{multline}
    0\le \lim_{n\rightarrow\infty} \partial_t \phi(t_n,x_n) + F_n(t_n, x_n, u_n(t_n,x_n),D_x\phi(t_n,x_n),D^2_x\phi(t_n,x_n)) \\
    = \partial_t \phi(t,x) + F(t,x, u(t,x),D_x\phi(t,x),D^2_x\phi(t,x)).
  \end{multline}
 If the minimum is not strict we consider the function $\phi^*(s,y) = \phi(s,y)-(t-s)^2 -|x-y|^4$. This function $\phi^*$ is such that $u-\phi^*$ has strict min in $(t,x)$ and has same derivatives up to first order in $t$ and up to the second one in $(t,x)$, so we can apply the previous technique and conclude remarking
 \begin{multline*} 
  \lim_{n\rightarrow\infty} \partial_t \phi^*(t_n,x_n) + F_n(t_n,x_n, u_n(t_n,x_n), D_x\phi^*(t_n,x_n), D^2_x\phi^*(t_n,x_n))\\
  = \lim_{n\rightarrow\infty} \partial_t \phi(t_n,x_n) + F_n(t_n,x_n, u_n(t_n,x_n), D_x\phi(t_n,x_n), D^2_x\phi(t_n,x_n)). 
\end{multline*}
\end{proof}

We want to use the notion of viscosity solution to extend the verification results obtained in the previous section. Given 
\begin{equation}\label{u_sol_chap3}
  u(t,x,y)=\E\bigg[e^{\varrho (T-t)}f(X^{t,x,y}_T,Y^{t,y}_T)-\int_t^Te^{\varrho (s-t)} h(s,X^{t,x,y}_s,Y^{t,y}_s)ds\bigg],
\end{equation}
we will verify that $u^F$ (candidate forward solution)
\begin{equation}\label{uF_sol_chap3}
  u^F(t,x,y)=u(T-t,x,y)
\end{equation}
is a viscosity solution of the forward problem \eqref{reference_PDE_forward} with more general initial data $f$ (even with discontinuities) and source terms $h$. We emphasize that whenever we have $u$ solution of \eqref{reference_PDE}, we know that $u^F$ is a solution of \eqref{reference_PDE_forward} and vice versa.

\subsection{Continuous initial data}
In this subsection, we deal with problem \eqref{reference_PDE_forward} where the initial data $f$ is chosen to be just continuous.

In the sequel, we will need some smoothing arguments, which can be resumed as follows.
\begin{lemma}\label{Mollifier_lemma}
  Let $f\in \mcC(\R\times\R_+)$ and $h\in \mcC([0,T]\times \R\times\R_+)$.
  Then there exist 
  \begin{itemize}
    \item $(f_n)_{n\in\N}\subset \mcC^ \infty(\R^2) $ such that $|f_n-f|_0^{\mcK}\rightarrow 0$ for every compact set $\mcK \subset \R\times\R_+$,
    \item $(h_n)_{n\in\N}\subset \mcC^ \infty(\R^3) $ such that $|h_n-h|_0^{\mcK_T}\rightarrow 0$ for every compact set $\mcK_T \subset [0,T]\times\R\times\R_+$.
  \end{itemize}
  Furthermore, if $f$ and $h$ are uniformly continuous and bounded then $|f_n-f|_0^{\R\times\R_+}\rightarrow 0$ and $|h_n-h|_0^{[0,T]\times\R\times\R_+}\rightarrow 0$.
\end{lemma}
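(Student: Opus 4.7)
The plan is to apply the standard mollification technique, with the only subtlety being the extension past the boundary $\{y=0\}$ and past the time endpoints $\{t=0\}$ and $\{t=T\}$, since the domains of $f$ and $h$ are not open in $\R^2$ and $\R^3$ respectively. I would proceed in four short steps.

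\textbf{Step 1 (Extension).} First I would extend $f$ to a continuous function $\tilde f:\R^2\to \R$ by reflection in the $y$-variable: set $\tilde f(x,y)=f(x,|y|)$. This is continuous on $\R^2$, agrees with $f$ on $\R\times\R_+$, and preserves both boundedness and the modulus of continuity (on compacts, and globally if applicable). Likewise I would extend $h$ to $\tilde h:\R^3\to\R$ by reflecting in both $y$ and $t$: for $t<0$ put $\tilde h(t,x,y)=h(-t,x,|y|)$, for $t>T$ put $\tilde h(t,x,y)=h(2T-t,x,|y|)$, and for $t\in[0,T]$ put $\tilde h(t,x,y)=h(t,x,|y|)$. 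Using $|2T-t-s|\le |t-s|$ etc., this extension preserves uniform continuity and sup-norm bounds.

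\textbf{Step 2 (Mollification).} Pick a standard nonnegative mollifier $\rho\in \mcC^\infty_c(\R^d)$ with $\int \rho =1$ and $\mathrm{supp}(\rho)\subset B(0,1)$, in dimension $d=2$ for $f$ and $d=3$ for $h$. Define $\rho_n(z)=n^d \rho(nz)$ and
\begin{equation*}
f_n := \tilde f \ast \rho_n, \qquad h_n := \tilde h \ast \rho_n.
\end{equation*}
Both are $\mcC^\infty$ on the whole of $\R^2$ and $\R^3$, respectively, so they trivially belong to the required function spaces when restricted to $\R\times\R_+$ and $[0,T]\times\R\times\R_+$.

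\textbf{Step 3 (Local uniform convergence).} Fix a compact set $\mcK\subset \R\times\R_+$. Then $\mcK':=\mcK+\overline{B(0,1)}$ is compact in $\R^2$, and $\tilde f$ is uniformly continuous on $\mcK'$. For $(x,y)\in \mcK$,
\begin{equation*}
|f_n(x,y)-\tilde f(x,y)| \le \int_{B(0,1/n)} |\tilde f((x,y)-z)-\tilde f(x,y)|\,\rho_n(z)\,dz \le \omega_{\mcK'}(1/n),
\end{equation*}
where $\omega_{\mcK'}$ is the modulus of continuity of $\tilde f$ on $\mcK'$. Since $\omega_{\mcK'}(1/n)\to 0$, we get $|f_n-f|_0^{\mcK}\to 0$. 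The argument is identical for $h_n$ on a compact $\mcK_T\subset [0,T]\times\R\times\R_+$, using the modulus of continuity of $\tilde h$ on $\mcK_T+\overline{B(0,1)}$.

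\textbf{Step 4 (Global uniform convergence).} If $f$ is bounded and uniformly continuous on $\R\times\R_+$, then by construction $\tilde f$ is bounded and uniformly continuous on $\R^2$ with modulus of continuity $\tilde \omega$ satisfying $\tilde\omega(\delta)\le \omega_f(\delta)\to 0$ as $\delta\to 0$, where $\omega_f$ is the modulus of continuity of $f$. The same inequality as in Step 3 then gives $|f_n-\tilde f|_0^{\R^2}\le \tilde\omega(1/n)\to 0$, hence $|f_n-f|_0^{\R\times\R_+}\to 0$. The same reasoning applies to $h_n$.

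There is really no hard step here; the only point to be careful about is that the reflection extensions used in Step 1 preserve (uniform) continuity and boundedness, which is immediate from $||y|-|y'||\le |y-y'|$ and the analogous inequality for the time reflection across $T$.
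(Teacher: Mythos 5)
Your proof is correct in spirit and follows the same two-step architecture as the paper (extend past the boundary, then mollify), but the two proofs differ in the choice of extension and in how the convergence is concluded. The paper extends by \emph{projection} (clamping): $\tf(x_1,x_2)=f(x_1,0\vee x_2)$ and $\tilde h(t,x_1,x_2)=h(0\vee t\wedge T,x_1,x_2)$, which is defined on all of $\R^2$ (resp.\ $\R^3$) in one stroke and trivially preserves sup bounds and uniform continuity; it then invokes a textbook mollification result (Brezis, Prop.\ 4.21) for the convergence. You instead extend by \emph{reflection} and give the explicit modulus-of-continuity estimate $|f_n-\tf|\le\omega_{\mcK'}(1/n)$. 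The explicit estimate is a nice touch --- it is more self-contained than the paper's citation --- and reflection in $y$ works as cleanly as clamping.

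There is one small gap worth patching: your reflection of $h$ across $t=0$ and $t=T$ only produces a function on $[-T,2T]\times\R^2$, not on all of $\R^3$ (for $t>2T$ you would need $h(2T-t,\cdot,\cdot)$ with $2T-t<0$, which is undefined, and likewise the $y$-reflection only handles $y<0$ but that part is fine). The statement asks for $(h_n)_{n\in\N}\subset\mcC^\infty(\R^3)$, so you need $\tilde h$ defined on a full neighborhood of $\R^3$ (or at least a fixed strip around $[0,T]\times\R\times\R_+$ that accommodates every mollifier in the sequence); as written, $\tilde h\ast\rho_n$ is only defined on $[-T+1/n,\,2T-1/n]\times\R^2$. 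The fix is trivial --- clamp after one reflection, or simply use the paper's clamping extension in $t$ --- but it should be stated. With that adjustment your argument is complete and arguably more transparent than the paper's.
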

\begin{proof}
  We need only to extend $f$ and $h$ in a continous way respectively around $\R\times\R_+$ and $[0,T]\times \R\times\R_+$ and than to take convolution with a mollifier $(\varphi_n)_{n\in\N}$. We finish applying Proposition 4.21 of \cite{Brezis2010book}.  
  We start by extending $f$ and $h$ in the following continuous way
  \begin{equation*}
    \tf(x_1,x_2)= f(x_1,0\vee x_2) \quad{ and } \quad \tilde{h}(t,x_1,x_2) = h(0\vee t\wedge T,x_1,x_2).
  \end{equation*}
  If $f$ and $h$ are uniformly continuous and bounded, then the same proof in \cite{Brezis2010book} guarantees uniform convergence without any restriction over compact sets (so in $\R\times\R_+$ and $[0,T]\times\R\times\R_+)$.
\end{proof}

We recall that for every compact set $\mcK_T$ in $[0,T]\times\R\times\R_+$ and $p\in\N$, one has
  \begin{equation}
    \sup_{(t,x,y)\in\mcK_T} \E\left[\big|X_T^{t,x,y}\big|^p+\big|Y^{t,y}_T\big|^p\right] < \infty,
  \end{equation}
where $\big((X_T^{t,x,y},Y^{t,y}_T)\big)_{t\in[0,T]}$ denotes the solution to \eqref{referenceDiffusion}.

We are now ready to prove that \eqref{reference_PDE} has a viscosity solution, with quite general requests on the function $f$ giving the Cauchy condition. We underline that we do not operate any restriction on the parameters: no Feller's condition is required.

\begin{prop}\label{existence_viscsol_fcont}
  Let $f\in C(\R\times \R_+)$ and $h\in C([0,T)\times\R\times\R_+)$ be such that for all compact set $\mcK_T\in[0,T]\times\R\times\R_+$ there exists $p>1$ such that
  \begin{equation}\label{integrability_condition}
    \sup_{(t,x,y)\in\mcK_T}\|f(X_T^{t,x,y},Y^{t,y}_T)\big\|_{L^p(\Omega)}, \sup_{(t,x,y)\in\mcK_T}\int_t^T \|h(s,X^{t,x,y}_s,Y^{t,y}_s)\|_{L^p(\Omega)}ds<\infty .
  \end{equation}
  Then, $$u(t,x,y)=\E\left[e^{\varrho (T-t)}f(X_T^{t,x,y},Y^{t,y}_T)-\int_t^T e^{\varrho (s-t)} h(s,X^{t,x,y}_s,Y^{t,y}_s)ds\right]$$ is $C([0,T]\times\R\times\R_+)$ and is a viscosity solution to the PDE \eqref{reference_PDE}.
\end{prop}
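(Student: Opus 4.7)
The idea is a standard smoothing/stability argument: approximate $(f,h)$ by a sequence of smooth data to which the classical results of Section~\ref{classical_section} apply, then pass to the limit using the stability Lemma~\ref{sequence_lemma_general} together with the integrability hypothesis \eqref{integrability_condition}. I would proceed in three steps.

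First, I would establish that $u \in C([0,T] \times \R \times \R_+)$. The flow $(t,x,y) \mapsto (X^{t,x,y}_s, Y^{t,y}_s)$ is continuous in probability for each $s \in [t,T]$ (the CIR diffusion is handled by Yamada--Watanabe pathwise uniqueness and continuity in the initial condition; the $X$-component is then straightforward). Combined with the continuity of $f$ and $h$, the random variables $f(X^{t,x,y}_T, Y^{t,y}_T)$ and $h(s, X^{t,x,y}_s, Y^{t,y}_s)$ depend continuously in probability on $(t,x,y)$. The uniform $L^p$ bound from \eqref{integrability_condition} provides uniform integrability on any compact $\mcK_T$, so Vitali's convergence theorem gives continuity of $u$ on $[0,T] \times \R \times \R_+$. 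The terminal case $t \to T$ follows since $(X^{t,x,y}_T, Y^{t,y}_T) \to (x,y)$ almost surely and the integral term vanishes.

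Second, I would construct smooth compactly supported approximations. Fix a cutoff $\chi_R \in C_c^\infty(\R^2)$ equal to $1$ on the ball of radius $R$, set $f^R = \chi_R f$ and $h^R(s,x,y) = \chi_R(x,y) h(s,x,y)$, extend them continuously as in Lemma~\ref{Mollifier_lemma}, and mollify to obtain $f_{R,n} \in C_c^\infty(\R^2)$ and $h_{R,n} \in C_c^\infty([0,T] \times \R^2)$ with $f_{R,n} \to f^R$ and $h_{R,n} \to h^R$ uniformly. These approximations satisfy the regularity and polynomial growth requirements of Proposition~\ref{prop-reg-new} for any $q$, so the corresponding Feynman--Kac function
\begin{equation*}
u_{R,n}(t,x,y) = \E\Bigl[e^{\varrho(T-t)} f_{R,n}(X^{t,x,y}_T, Y^{t,y}_T) - \int_t^T e^{\varrho(s-t)} h_{R,n}(s, X^{t,x,y}_s, Y^{t,y}_s)\, ds\Bigr]
\end{equation*}
is a classical solution on $[0,T) \times \R \times \R_+$ of the PDE \eqref{reference_PDE} with data $(f_{R,n}, h_{R,n})$. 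Being classical, it is in particular a viscosity solution in the sense of Definition~\ref{def_viscosity_sol}.

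Third, I would pass to the limit $n \to \infty$ then $R \to \infty$. Let $p > 1$ be associated with a compact $\mcK_T \subset [0,T] \times \R \times \R_+$ via \eqref{integrability_condition}, and let $q$ be its conjugate exponent. For $(t,x,y) \in \mcK_T$, split each of the expectations defining $u - u_{R,n}$ according to whether the diffusion lies in $B_R = \{x^2 + y^2 \le R^2\}$: the contribution on $B_R$ is dominated by $\|f - f_{R,n}\|_{L^\infty(B_R)} + T \sup_s \|h(s,\cdot) - h_{R,n}(s,\cdot)\|_{L^\infty(B_R)}$, which vanishes as $n \to \infty$; the contribution on the complement is controlled via Hölder by $\bigl(\E[|f|^p(X^{t,x,y}_T, Y^{t,y}_T)]\bigr)^{1/p} \P(X_T^2 + Y_T^2 > R^2)^{1/q}$ and analogously for $h$, which tends to $0$ as $R \to \infty$ uniformly in $(t,x,y) \in \mcK_T$ by \eqref{integrability_condition} and Markov's inequality together with uniform moment estimates for the diffusion. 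Hence $u_{R,n} \to u$ locally uniformly on $[0,T] \times \R \times \R_+$. Applying Lemma~\ref{sequence_lemma_general} (with $F_n = F$ fixed) separately to the subsolution and supersolution inequalities, $u$ inherits the viscosity property. The terminal condition $u(T,\cdot,\cdot) = f$ is already contained in Step 1.

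\emph{Main obstacle.} All the ingredients above (mollification, Proposition~\ref{prop-reg-new}, stability) are routine in isolation. The delicate point is the tail bound in Step 3: one must transfer the $L^p$ integrability of \eqref{integrability_condition} into uniform local convergence of $u_{R,n}$ on arbitrary compacts $\mcK_T$, so that the stability lemma can actually be invoked. This is where the assumption $p > 1$ (rather than $p = 1$) is crucial, since it furnishes, via Hölder and Markov, the required uniform control on $\P(X_T^2 + Y_T^2 > R^2)$.
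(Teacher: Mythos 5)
Your proposal follows the same overall route as the paper (spatial truncation, mollification, local uniform convergence, then the stability Lemma~\ref{sequence_lemma_general}), but there are two slips worth flagging. First, in Step~3 you invoke the stability lemma ``with $F_n = F$ fixed''; this is not correct. Each $u_{R,n}$ is a classical (hence viscosity) solution of the PDE whose source term is $h_{R,n}$, not $h$, so the operators $F_n$ must carry the mollified source. If you feed the original $F$ into the lemma, the hypothesis that each $u_n$ is a sub/supersolution for $F_n$ is simply false. The fix is cheap: the uniform convergence $h_{R,n} \to h$ on compacts of $[0,T)\times\R\times\R_+$, which you already establish, is precisely the hypothesis $|F - F_n|_0^{\mcK^*_T} \to 0$ in the lemma. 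You need to state it that way.

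Second, you truncate $h$ only in the spatial variables via $\chi_R(x,y)$. But the statement assumes only $h \in C([0,T)\times\R\times\R_+)$, open at $T$, so $h$ can blow up as $t \to T^-$; your $h^R$ is then neither bounded nor uniformly continuous on $[0,T]\times\R\times\R_+$, and the mollification step (which requires a continuous extension and uniform convergence after smoothing, as in Lemma~\ref{Mollifier_lemma}) is not directly available. The paper handles this with an additional time cutoff $\xi_R$ supported in $[0, T-\frac{1}{R+1}]$, so that $h^R = h\,\xi_R\,\zeta_R$ has compact support in $[0,T)$. Since the stability argument only requires uniform convergence of $F_n$ to $F$ on compacts of $[0,T)\times\R\times\R_+$, and since your Step~1 supplies the terminal continuity of $u$ separately, this is a fixable omission --- but as written the mollification step does not go through. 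Your Step~1 (continuity of $u$ via continuity in probability of the flow, the $L^p$ bound, and Vitali) is a valid alternative to the paper's approach, which instead deduces continuity from the uniform-on-compacts convergence of the approximating sequence.
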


\begin{proof}
  In order to simplify the proof, we write $u=v-w$ where
  $$
    v(t,x,y) = \E[e^{\varrho (T-t)}f(X_T^{t,x,y},Y^{t,y}_T)] \quad\text{ and }\quad w(t,x,y) = \E\left[\int_t^T e^{\varrho (s-t)} h(s,X^{t,x,y}_s,Y^{t,y}_s)ds\right],
  $$
  and we will show all the due convergences in the ``$w$'' part, being the $v$ part similar and simpler.
  Let $R>1/T$ and consider the smoothly truncated version $f^R$ and $h^R$ of $f$ and $h$, that is, $f^R(x,y)=f(x,y)\zeta_R(x,y)$ and $h^R(t,x,y)=h(t,x,y)\xi_R(t)\zeta_R(x,y)$, where  $\zeta_R$ and $\xi_R$ are smooth function such that 
  $$
  \ind{B(0,R)}\leq \zeta_R\leq \ind{B(0,R+1)}\quad \text{ and }\quad\ind{[0,T-\frac{1}{R}]}\leq \xi_R\leq \ind{[0,T-\frac{1}{R+1}]}.
  $$
  $f^R$ and $h^R$, being continuous and having compact support, are, in particular, uniformly continuous and bounded. Then by Lemma \ref{Mollifier_lemma} there exist two sequences $(f^R_n)_{n\in\N}$, $(h^R_n)_{n\in\N}$ that approximate in uniform norm $f^R$ over $\R\times\R_+$  and $h^R$ over $[0,T]\times\R\times\R_+$.
  We define in an intuitive way $v^R,v^R_n, w^R \text{ and } w^R_n$ as the functions obtained by replacing in $v$ and $w$ the functions $f$ and $h$ with $f^R$, $f^R_n$ and $h^R$, $h^R_n$. 
  Being the sequences $(f^R_n)_{n\in\N}\subset C^\infty_c(\R\times\R_+)\subset \mcC^{\infty}_{\pol}(\R\times\R_+)$ and $(h^R_n)_{n\in\N}\subset C^\infty_c([0,T]\times\R\times\R_+)\subset \mcC^{\infty}_{\pol}([0,T]\times\R\times\R_+)$, they satisfy the regularity conditions in Proposition \ref{prop-reg-new}, then for all $n\in\N$, $u^R_n(t,x,y)$ is a $\mcC^\infty$ classical solution to \eqref{reference_PDE} with final value $f^R_n$ and source term $h^R_n$ instead of $h$. Now, chosen a compact set $\mcK_T\subset[0,T]\times\R\times\R_+$, for all $R>1/T$ one has
  \begin{align*}
    |u^R-u^R_n|_0^{\mcK_T} \le& \, |v^R-v^R_n|_0^{\mcK_T} +|w^R-w^R_n|_0^{\mcK_T} \\
    =& \sup_{(t,x,y)\in \mcK_T} \big|\E\big[e^{\varrho (T-t)}\big(f^R(X_T^{t,x,y},Y^{t,y}_T)-f^R_n(X_T^{t,x,y},Y^{t,y}_T)\big)\big]\big| \\
    &+ \sup_{(t,x,y)\in \mcK_T} \bigg|\E\Big[\int_t^T e^{\varrho (s-t)} \big(h^R(s,X^{t,x,y}_s,Y^{t,y}_s)-h^R_n(s,X^{t,x,y}_s,Y^{t,y}_s)\big)ds\Big]\bigg| \\
    \le& \, C_1|f^R-f^R_n|_0^{\R\times\R_+}+ C_2|h^R-h^R_n|_0^{[0,T]\times\R\times\R_+}\longrightarrow 0.
  \end{align*}
  We show now that $|w^R-w|_0^{\mcK_T}\rightarrow0$ when $R\rightarrow\infty$, similarly one can do the same for $v$ and get the convergence in uniform norm for $u$. We write $z=(x,y)$ and $ Z^{t,z}_T = (X_T^{t,x,y},Y^{t,y}_T)$ and show that 
  \begin{align*}
    \bigg|\E\Big[\int_t^T e^{\varrho (s-t)} \big(h(s,X^{t,x,y}_s,Y^{t,y}_s)-&h^R(s,X^{t,x,y}_s,Y^{t,y}_s)\big)ds\Big]\bigg|\\
    &\le 2e^{(0\vee\varrho)T} \E\Big[\int_t^T |h(s,Z^{t,z}_s)|\ind{([0,T-\frac{1}{R}]\times B_R(0))^C}(s,Z^{t,z}_s)ds\Big],   \\
    &\le 2e^{(0\vee\varrho)T} \int_t^T\E\big[ |h(s,Z^{t,z}_s)| \ind{B^C_R(0)}(Z^{t,z}_s)\big] ds,   \\
    &\le 2e^{(0\vee\varrho)T} \int_t^T \|h(s,Z^{t,z}_s)\|_{L^P(\Omega)}ds\, \P(|Z^{t,z}_T|>R)^{\frac{p-1}{p}}, 
  \end{align*}
  where we used the Tonelli Theorem to exchange the order of the expected value and the integral and the Hölder inequality to get the last inequality. Now, using Markov inequality,
  \begin{equation*}
    \P(|Z^{t,z}_T|>R)\leq \frac{\E[|Z^{t,z}_T|]}{R}.
  \end{equation*}
  Using this last inequality and passing to the supremum over $\mcK_T$, we get
  {\small
   \begin{equation*}
    |w^R-w|_0^{\mcK_T}\le C \sup_{(t,x,y)\in\mcK_T}\int_t^T \|h(s,X^{t,x,y}_s,Y^{t,y}_s)\|_{L^p(\Omega)}ds \frac{\sup_{(t,x,y)\in\mcK_T}\E[|(X_T^{t,x,y},Y^{t,y}_T)|]^{\frac{p-1}{p}}}{R^\frac{(p-1)}{p}}\xrightarrow[R\rightarrow\infty]{} 0,
   \end{equation*}}
   that proves the limit. 
  Furthermore,  thanks to triangular inequality, one has 
  $$
  |u-u^R_n|_0^{\mcK_T} \le  |u-u^R|_0^{\mcK_T} + |u^R-u^R_n|_0^{\mcK_T},
  $$
  so, for all $n\in\N^*$, we can pick $R_n$ such that the second norm on the right-hand side is upper bounded by $1/(2n)$. Then, replacing $R$ with $R_n$ in the first norm of the right-hand part of the inequality, it exists $k(n)$ such that this norm is upper bounded by $1/(2n)$ too. So we define $\hat{u}_n = u^{R_n}_{k(n)}$ and
  \begin{equation}\label{convergence_prop_existence_f_cont}
    |u-\hat{u}_n|_0^{\mcK_T}\le \frac{1}{n}.
  \end{equation} 
  The functions $(u^F_n)_{n\in\N}=\big(\hat{u}_n(T-\cdot,\cdot,\cdot)\big)_{n\in\N}$ are classical solutions (so in particular viscosity solutions) to \eqref{reference_PDE_forward} where we have $F_n$ instead of $F$ by replacing in it $h$ with $h^{R_n}_{k(n)}$. $u$ (and so $u^F$) is $C([0,T]\times\R\times\R_+)$ thanks to \eqref{convergence_prop_existence_f_cont}. Furthermore is easy to prove the uniform convergence hypothesis $F_n\rightarrow F$ (because $h^{R_n}_{k(n)}\rightarrow h$ uniformly over the compact sets of $[0,T)\times\R\times\R_+$), so thanks to Lemma \ref{sequence_lemma_general}, $u^F$ satisfies $$\partial_t u^F(t,x) + F(t, x,u^F(t, x),D_x u^F(t, x),D^2_x u^F(t, x))= 0 \text{ for } (t,x,y)\in(0,T]\times\R\times\R_+$$ in viscosity sense. Furthermore, $u^F(0,x,y)=f(x,y)$ for every $(x,y)\in\R\times\R_+$, so $u^F$ is a viscosity solution of \eqref{reference_PDE_forward} with initial value $f$.
\end{proof}

\begin{remark}
  The hypothesis \eqref{integrability_condition} with $p>1$ is not restrictive. For example all the functions $f\in C_{\pol}(\R\times\R_+)$, $h\in C_{\pol,T}(\R\times\R_+)$ satisfy this hypothesis for all $p>1$. In fact
  $$
  \sup_{(t,x,y)\in\mcK_T}\|f(X_T^{t,x,y},Y^{t,y}_T)\|_{L^p(\Omega)} \le [C(1 + \sup_{(t,x,y)\in\mcK_T}\E[|(X_T^{t,x,y}|^{ap}+|Y^{t,y}_T)|^{ap}])]^{\frac{1}{p}}< \infty,
  $$
  and
  $$
  \sup_{(t,x,y)\in\mcK_T}\int_t^T \|h(s,X^{t,x,y}_s,Y^{t,y}_s)\|_{L^p(\Omega)}ds \le C(1 + \sup_{(t,x,y)\in\hat{\mcK}_T}\E[|(X_T^{t,x,y}|^{ap}+|Y^{t,y}_T)|^{ap}])< \infty
  $$
  with $\hat{\mcK}_T=\{(t,x,y)\in[0,T]\times\R\times\R_+\mid \exists t_0\in[0,T] \text{ such that } (t_0,x,y)\in\mcK_T \}$.
\end{remark}

\subsection{Comparison principle and uniqueness for continuous initial data}
In this subsection, we prove a comparison principle for our reference PDE \eqref{reference_PDE_forward}. In Subsection \ref{discont_initial_data_subsec}, we prove this result allows getting uniqueness of solutions even for initial data $f$ that present ``some discontinuities''.

We start by stating two results that will be crucial to prove a comparison argument needed to prove the uniqueness of the solution. We first recall a lemma proved in \cite{CIL92}, Proposition 3.7.

\begin{lemma}\label{lemma_doubling}
  Let $M\in\N^*$, $\mcA$ be a subset of $\R^M$, $\Phi\in \USC(\mcA)$, $\Psi\in \LSC(\mcA)$,
  \begin{equation}
    M_\alpha = \sup_{\mcA}(\Phi(x)-\alpha\Psi(x))
  \end{equation}
  for $\alpha>0$. Let $\lim_{\alpha\rightarrow\infty} M_\alpha \in\R$ and $x_\alpha\in\mcA$ be chosen so that 
  \begin{equation}
    \lim_{\alpha\rightarrow\infty} \big(M_\alpha-(\Phi(x_\alpha)-\alpha\Psi(x_\alpha))\big)=0.
  \end{equation}
  Then, the following hold
  \begin{equation}\label{implication_lemma_doubling}
    \begin{cases}
      (i)&\lim_{\alpha\rightarrow\infty} \alpha\Psi(\alpha)=0, \\
      (ii)& \Psi(\hx)=0 \text{ and } \lim_{\alpha\rightarrow\infty} =\Phi(\hx)=\sup_{\{x\in\mcA\mid\Psi(x)=0\}}\Phi(x)\\
       &\qquad\qquad\qquad\text{whenever } \hx\in\mcA \text{ is a limit point of } x_\alpha \text{ as } \alpha\rightarrow\infty.
    \end{cases}
  \end{equation}
\end{lemma}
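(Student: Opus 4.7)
The plan is to exploit the monotonicity of $\alpha \mapsto M_\alpha$ (which, under the implicit standing assumption that $\Psi \ge 0$, is nonincreasing) together with the convergence of $M_\alpha$ to a finite limit. The key trick is a ``halving'' comparison: since $x_\alpha$ is an approximate maximizer of $\Phi-\alpha\Psi$ and is a candidate maximizer for $\Phi-(\alpha/2)\Psi$, we have
\begin{equation*}
  M_{\alpha/2} \ge \Phi(x_\alpha) - \tfrac{\alpha}{2}\Psi(x_\alpha) = \bigl(\Phi(x_\alpha)-\alpha\Psi(x_\alpha)\bigr) + \tfrac{\alpha}{2}\Psi(x_\alpha) = M_\alpha + \tfrac{\alpha}{2}\Psi(x_\alpha) + o(1)
\end{equation*}
as $\alpha\to\infty$, by the definition of $x_\alpha$. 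Rearranging gives $\tfrac{\alpha}{2}\Psi(x_\alpha) \le M_{\alpha/2}-M_\alpha+o(1)$, and since both $M_{\alpha/2}$ and $M_\alpha$ share the same finite limit, the right-hand side tends to zero. This proves (i).

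For (ii), I would then pass along a subsequence $\alpha_k\to\infty$ such that $x_{\alpha_k}\to\hat{x}\in\mcA$. From (i), $\Psi(x_{\alpha_k})\to 0$, so lower semicontinuity of $\Psi$ yields $\Psi(\hat{x})\le 0$; combined with $\Psi\ge 0$ this gives $\Psi(\hat{x})=0$. Combining $\alpha\Psi(x_\alpha)\to 0$ with the defining property of $x_\alpha$ shows $\Phi(x_\alpha)\to \lim M_\alpha$; applying upper semicontinuity along the subsequence gives $\lim M_\alpha=\limsup\Phi(x_{\alpha_k})\le \Phi(\hat{x})$. Conversely, for any $x$ with $\Psi(x)=0$ we have $\Phi(x)=\Phi(x)-\alpha\Psi(x)\le M_\alpha$ for every $\alpha$, hence $\Phi(x)\le \lim M_\alpha\le \Phi(\hat{x})$. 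Taking the supremum over such $x$ yields $\Phi(\hat{x})=\sup_{\{\Psi=0\}}\Phi$ and also $\lim M_\alpha=\Phi(\hat{x})$.

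The only mild subtlety I foresee is justifying the ``$o(1)$'' term uniformly in the halving step: one must be careful that the definition of $x_\alpha$ as an approximate supremum is invoked at $\alpha$, not at $\alpha/2$, so the error term is controlled by $M_\alpha-(\Phi(x_\alpha)-\alpha\Psi(x_\alpha))\to 0$ (which is exactly the assumption). There is no obstacle beyond unwinding definitions carefully. A secondary point to handle cleanly is to note that $\Psi\ge 0$ is the natural standing hypothesis here (it is the case in CIL92), since otherwise $M_\alpha$ could be $+\infty$ and the halving argument would fail; in our intended application $\Psi$ will be a squared-distance-type penalization, so this is automatic.
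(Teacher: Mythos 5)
Your proof is correct and reconstructs exactly the argument of Crandall--Ishii--Lions, Proposition 3.7, which the paper cites without reproducing: the halving comparison $M_{\alpha/2}\ge\Phi(x_\alpha)-\tfrac{\alpha}{2}\Psi(x_\alpha)$ giving (i), and semicontinuity plus the trivial bound $\Phi(x)\le M_\alpha$ on $\{\Psi=0\}$ giving (ii). You also correctly note that $\Psi\ge 0$ is the implicit standing hypothesis (as in CIL92, and as is the case in the paper's application where $\Psi$ is the quadratic penalization $\tfrac12|\eta-\zeta|^2$).
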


\begin{prop}\label{semijet_estimates}
  Let $u_i\in\USC((0,T]\times\mcO_i)$ for $i=1,\ldots,k$ where $\mcO_i$ is s locally compact subset of $\R^{N_i}$. Let $\varphi$ be defined on an open neighborhood of $(0,T]\times\mcO_1\times\cdots\times\mcO_k$ and such that $\varphi:(t,x_1,\ldots,x_k)\mapsto \varphi(t,x_1,\ldots,x_k)$ is once continuously differentiable in $t$ and twice continuously differentiable in $(x_1,\ldots,x_k)\in\mcO_1\times\cdots\times\mcO_k$. Suppose that $\hat{t}\in(0,T]$, $\hx_i\in\mcO_i$ for $i=1,\ldots,k$ and
  \begin{align*}
    w(t,x_1,\ldots,x_k) &\equiv u_1(t,x_1) + \cdots + u_k(t,x_k) - \varphi(t,x_1,\ldots,x_k)\\
    &\le w(\hat{t},\hx_1,\ldots,\hx_k),
  \end{align*}
  for $0<t\le T$ and  $x_i\in\mcO_i$. Assume, moreover, that there is an $r>0$ such that for every $M>0$ there is a $C$ such that for $i=1,\ldots,k$
  \begin{multline}
    b_i\le C \text{ whenever } (b_i,q_i,X_i)\in\mcP^{2,+}_{\mcO_i}u_i(t,x_i),\\
    |x_i-\hx_i|+|t-\hat{t}|\le r \text{ and } |u_i(t,x_i)|+|q_i|+\|X_i\|\le M.
  \end{multline}
  Then for each $\varepsilon>0$ there are $X_i\in S(N_i)$ such that 
  \begin{equation}
    \begin{cases}
      (i)& (b_i,D_{x_i}\varphi(\hat{t},\hx_1,\ldots,\hx_k),X_i )\in\bmcP^{2,+}_{\mcO_i} u_i(\hat{t},\hx_i) \text{ for } i=1,\ldots,k, \\
      (ii)& -\left(\frac{1}{\varepsilon} +\|A\| \right) I \le \begin{pmatrix}
        X_{1} & \cdots  & 0 \\
        \vdots & \ddots & \vdots \\
        0& \cdots  & X_{k}  
    \end{pmatrix} \le A+\varepsilon A^2\\
      (iii)& b_1+\cdots+b_k = \partial_t\varphi(\hat{t},\hx_1,\ldots,\hx_k),
    \end{cases}
  \end{equation}
  where $A=(D^2_x\varphi)(\hat{t},\hx_1,\ldots,\hx_k)$.
\end{prop}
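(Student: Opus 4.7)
My plan is to follow the classical Crandall-Ishii proof of the parabolic theorem on sums, since the statement and hypotheses match that well-known result exactly (it is, essentially, Theorem 8.3 of the Crandall-Ishii-Lions User's Guide).

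First, by replacing $\varphi$ with $\varphi(t,x_1,\dots,x_k) + (t-\hat t)^2 + \sum_{i=1}^k |x_i-\hat x_i|^4$, one may assume the maximum of $w$ at $(\hat t,\hat x_1,\dots,\hat x_k)$ is strict, without altering the first two derivatives of $\varphi$ at that point. Next, I would regularize each $u_i$ by the sup-convolution
\[
u_i^\delta(t,x_i) = \sup_{(s,y)} \Big\{ u_i(s,y) - \tfrac{1}{2\delta}\big(|x_i-y|^2 + (t-s)^2\big) \Big\},
\]
which is upper semicontinuous and semiconvex in $(t,x_i)$, and therefore twice differentiable Lebesgue-almost everywhere by Alexandrov's theorem. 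Standard facts about sup-convolutions guarantee $u_i^\delta \downarrow u_i$ pointwise on a neighborhood of $(\hat t,\hat x_i)$ and preserve semijet information in the limit.

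The heart of the argument is then to apply Jensen's lemma to $u_1^\delta + \cdots + u_k^\delta - \varphi$ near $(\hat t,\hat x_1,\dots,\hat x_k)$: one perturbs the quadratic part of $\varphi$ by a small linear form to obtain a nearby strict interior maximum at which each $u_i^\delta$ is, simultaneously, twice differentiable in the classical sense. Writing out the first- and second-order optimality conditions at this perturbed maximizer gives derivatives $(b_i^\delta,q_i^\delta,X_i^\delta)$ satisfying $\sum_i b_i^\delta = \partial_t\varphi + o(1)$, $q_i^\delta = D_{x_i}\varphi + o(1)$, and the block-matrix inequality (ii) relative to $A=D_x^2\varphi$ (the $\varepsilon A^2$ term appearing when one realizes that the Hessian one controls is not of $\varphi$ itself but of its Moreau-Yosida type perturbation). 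Finally one lets $\delta\to 0$ and extracts a subsequential limit; by construction of the closure $\bmcP^{2,+}$ this produces elements $(b_i,D_{x_i}\varphi,X_i)\in \bmcP^{2,+}_{\mcO_i}u_i(\hat t,\hat x_i)$ satisfying all three assertions.

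The main obstacle, and the step which really distinguishes this parabolic version from the elliptic ``theorem on sums,'' is controlling the time-derivatives $b_i^\delta$ as $\delta\to 0$. Unlike the spatial Hessians $X_i^\delta$, which are automatically bounded below by the semiconvexity of $u_i^\delta$ and bounded above by the matrix inequality, the $b_i^\delta$ carry no such intrinsic compactness: they could in principle send a large positive contribution to one component and a compensating large negative contribution to another, preserving $\sum b_i^\delta \simeq \partial_t \varphi$ but preventing extraction of finite limits. The hypothesis in the proposition — that for every bound $M$ on $|u_i|$, $|q_i|$, $\|X_i\|$ there is an accompanying upper bound on $b_i$ — is precisely the one-sided compactness condition that forecloses this pathology: combined with $\sum b_i^\delta \simeq \partial_t\varphi$, a uniform upper bound on each $b_i^\delta$ yields automatically a uniform lower bound on each, so that Bolzano-Weierstrass supplies a convergent subsequence and (iii) is recovered in the limit.
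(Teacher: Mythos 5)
Your proposal follows exactly the route the paper takes: it simply refers to the classical Crandall--Ishii maximum principle for semicontinuous functions (Lemma~8 of~\cite{CI90}, equivalently Theorem~8.3 of the User's Guide), whose proof is the sup-convolution/Jensen/Alexandrov argument you sketch, with the one-sided bound on the $b_i$'s giving the needed compactness in the time derivative. Your account of why that one-sided bound suffices --- combined with $\sum b_i^\delta\approx\partial_t\varphi$ it becomes two-sided --- is correct and is the right way to understand the role of that hypothesis.

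There is, however, one point your sketch glosses over, and it is precisely the ``small modification'' the paper calls attention to. The classical statement is for $u_i\in\USC((0,T)\times\mcO_i)$, i.e.\ an open time interval, whereas here $u_i\in\USC((0,T]\times\mcO_i)$ and the maximizer $\hat t$ may sit at the closed right endpoint $\hat t=T$. This matters for the localization step that precedes the sup-convolution: in~\cite{CI90} one replaces $u_i$ by a function $v_i$ equal to $-\infty$ outside a small box around the maximizer, and if that box is symmetric in $t$ it would extend past $T$, where $u_i$ is not defined, and the relevant approximate maximizers $t_{i,\delta}$ could be pushed outside the domain. The fix is to cut off only on the left in time --- set $v_i=-\infty$ when $|x_i|>1$ or $t<s/2$ --- so that $t_{i,\delta}\in[s/2,T]$, with $T$ allowed, and the rest of~\cite{CI90} goes through verbatim. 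Without noting this, your proof as written establishes the result only for interior $\hat t\in(0,T)$, which is strictly weaker than what the paper needs (the comparison argument in Proposition~\ref{comparison_principle} must contend with maximizers at the terminal time).
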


\begin{proof}
  We refer to the proof in \cite{CI90} with a small modification. Here, the $u_i$-s are $\USC$ up to $T$, so we must consider possible maximum points over ${T}\times\mcO_1\times\cdots\times\mcO_k$. In the proof \cite[Lemma 8]{CI90} we redifine the $v_i$-s functions equal to $-\infty$ only when $|x_i|>1$ and $t<s/2$, so  $t_{i,\delta}$ belongs to $[s/2,T]$ and the rest of the proof still the same.
\end{proof}
\
We can now state a comparison principle for semicontinuous functions of the problem \eqref{reference_PDE_forward}.

\begin{prop}\label{comparison_principle}
  \textbf{(Comparison principle)}
  Let $w\in\USC([0,T]\times\R\times\R_+)$ and  $v\in\LSC([0,T]\times\R\times\R_+)$ be respectively a subsolution and a supersolution to \eqref{reference_PDE_forward} with polynomial growth uniformly in time, where $F$ is as in \eqref{F_logheston}. Then $w\le v$.
\end{prop}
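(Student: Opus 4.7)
My plan is the classical Crandall--Ishii--Lions doubling-of-variables argument, adapted to handle the polynomial growth of $w,v$ and the degeneracy of the Heston operator at $\{y=0\}$.

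First I would reduce to a strictly proper operator through the exponential rescaling $w\mapsto e^{-Kt}w$, $v\mapsto e^{-Kt}v$ with $K>\varrho^+$, and then construct a strict classical supersolution $\psi(t,x,y)=e^{Lt}(1+|x|^{2p}+y^{2p})$ where $p$ is larger than the growth exponent of $w,v$ and $L$ is sufficiently large. This construction works because $y\,\partial_y^2 y^{2p}=O(y^{2p-1})$ and Young's inequality controls the mixed term $y\,\partial_x\partial_y$, so a direct computation yields $\partial_t\psi + F(\cdot,\psi,D\psi,D^2\psi)\ge \psi$ in the classical sense. Consequently $w-\varepsilon\psi$ and $v+\varepsilon\psi$ are viscosity sub- and supersolutions of PDEs with a strictly positive $\varepsilon\psi$-slack on the right-hand side.

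Arguing by contradiction, suppose $\sup(w-v-2\varepsilon\psi)>0$ for some $\varepsilon>0$. The super-polynomial role of $\psi$ against the polynomial growth of $w,v$ forces this supremum to be attained at some $(\hat t,\hat z)\in(0,T]\times\R\times\R_+$; the case $\hat t=0$ is excluded by $w(0,\cdot)\le f\le v(0,\cdot)$. I would then double the spatial variables via
\[
\Phi_\alpha(t,z_1,z_2)=w(t,z_1)-v(t,z_2)-\varepsilon[\psi(t,z_1)+\psi(t,z_2)]-\tfrac{\alpha}{2}|z_1-z_2|^2,
\]
and apply Lemma~\ref{lemma_doubling} with $\Phi=w-v-\varepsilon(\psi(\cdot,z_1)+\psi(\cdot,z_2))$ and $\Psi=\tfrac12|z_1-z_2|^2$ to extract maximum points $(t_\alpha,z_{1,\alpha},z_{2,\alpha})$ satisfying $\alpha|z_{1,\alpha}-z_{2,\alpha}|^2\to 0$ and, along a subsequence, $(t_\alpha,z_{1,\alpha},z_{2,\alpha})\to(\hat t,\hat z,\hat z)$ with $\hat t>0$. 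Proposition~\ref{semijet_estimates} then yields matrices $X_1,X_2\in\mathcal{S}(2)$ and reals $b_1,b_2$ with $b_1+b_2=\varepsilon[\partial_t\psi(t_\alpha,z_{1,\alpha})+\partial_t\psi(t_\alpha,z_{2,\alpha})]$, together with the matrix bound $\left(\begin{smallmatrix}X_1 & 0\\ 0 & -X_2\end{smallmatrix}\right)\le 3\alpha\left(\begin{smallmatrix}I & -I\\ -I & I\end{smallmatrix}\right)$ up to $O(\varepsilon)$, plus the corresponding semijets for $w$ and $v$. Subtracting the subsolution inequality (at $(t_\alpha,z_{1,\alpha})$) from the supersolution inequality (at $(t_\alpha,z_{2,\alpha})$), the Lipschitz drift contribution is $O(\alpha|z_{1,\alpha}-z_{2,\alpha}|^2)\to 0$, while factoring the Heston diffusion as $y\Sigma=(\sqrt{y}C)(\sqrt{y}C)^\top$ (with $CC^\top=\Sigma$) and exploiting the matrix bound controls the diffusion contribution by $\tfrac{3\alpha}{2}\mathrm{tr}(\Sigma)(\sqrt{y_{1,\alpha}}-\sqrt{y_{2,\alpha}})^2+O(\varepsilon)$. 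The strictly positive $\varepsilon\psi$-slack survives in the limit, producing the required contradiction; letting $\varepsilon\to 0$ gives $w\le v$.

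The main obstacle is the boundary regime $\hat y=0$, where the $1/2$-Hölder nature of the square-root diffusion means that $\alpha|y_{1,\alpha}-y_{2,\alpha}|^2\to 0$ does not directly imply $\alpha(\sqrt{y_{1,\alpha}}-\sqrt{y_{2,\alpha}})^2\to 0$. This is handled by noting that as $y_{i,\alpha}\to 0$ the matrices $y_{i,\alpha}\,\Sigma\,X_i$ themselves go to zero (since $\|X_i\|=O(\alpha)$ and the sequences $y_{i,\alpha}\alpha$ stay bounded through the $\varepsilon\psi$-slack estimate), so the limiting inequality reduces to the first-order Fichera equation $b_i=c\,p_1+a\,p_2+\varrho\,r+h$ with inward-pointing drift $(c,a)$, and the inward drift combined with the surviving $\varepsilon\psi$-gain still closes the argument. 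Crucially, Proposition~\ref{semijet_estimates} is stated on the locally compact half-space $\R\times\R_+$, so the required semijets exist at $y=0$ without any additional boundary condition, and no Feller-type assumption on $(a,\sigma)$ is needed.
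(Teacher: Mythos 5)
Your proposal follows essentially the same road map as the paper's own proof: an exponential rescaling reduces to a proper operator, a higher-degree polynomial barrier yields a strict classical supersolution, the growth gap localizes the putative maximum of $w-v_\varepsilon$ on a compact rectangle, and spatial doubling combined with Lemma~\ref{lemma_doubling}, Proposition~\ref{semijet_estimates}, and the Ishii matrix inequality produces a contradiction. The minor variations (you symmetrize the $\varepsilon\psi$-penalty over both slots rather than perturbing only the supersolution, and you contract the diffusion against $\sqrt{y}\,C$ before invoking the matrix inequality, whereas the paper first replaces $Y_\alpha$ by $X_\alpha$ via degenerate ellipticity) do not change the essence of the argument.

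The genuine gap sits exactly where you flag it: the boundary regime $\hat y=0$. Your diagnosis is correct that $\alpha|y_{1,\alpha}-y_{2,\alpha}|^2\to 0$ does not control $\alpha(\sqrt{y_{1,\alpha}}-\sqrt{y_{2,\alpha}})^2$ when the maximizing sequences accumulate on $\{y=0\}$, but your proposed repair does not close it. First, the assertion that the sequences $\alpha\,y_{i,\alpha}$ ``stay bounded through the $\varepsilon\psi$-slack estimate'' is not established: neither the strict-supersolution construction nor the output of Lemma~\ref{lemma_doubling} (which delivers $\alpha|z_{1,\alpha}-z_{2,\alpha}|^2\to 0$ and convergence of the maximizers, nothing more) constrains the product $\alpha\,y_{i,\alpha}$. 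Second, even granting that $\alpha\,y_{i,\alpha}$ were bounded, this would only give $y_{i,\alpha}\,\Sigma\, X_i=O(1)$, not $o(1)$, so the claimed reduction to a first-order Fichera inequality with inward drift $(c,a)$ is not actually reached, and the subsequent appeal to the inward drift and the $\varepsilon\psi$-gain is left entirely informal. Closing the $\hat y=0$ case needs a genuinely new ingredient that your sketch does not supply — for instance a doubling penalty adapted to the square-root degeneracy (built on $|\sqrt{y_1}-\sqrt{y_2}|^2$, or a primitive of $1/\sqrt{y}$), or a barrier that quantitatively exploits $a>0$ near the degenerate boundary.
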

\begin{proof}
  It is simple to show (using the Definition \ref{def_viscosity_sol}) that if $w$ and $v$ are subsolution and supersolution to the general problem \eqref{reference_PDE_forward} with $\varrho\in\R$ then $e^{-\theta  t}w(t,x,y)$ and $e^{-\theta t}v(t,x,y)$ are subsolution and supersolution to \eqref{reference_PDE_forward} with $\varrho$ replaced by $\varrho-\theta$.
  So we need only to prove the result when $\varrho=0$.
  We start by remarking that $w$, $v$ and $h$ have polynomial growth uniformly in time, then
  \begin{equation}\label{pol_growth}
    \sup_{t\in[0,T]}|w(t,x,y)|,\sup_{t\in[0,T]}|v(t,x,y)|,\sup_{t\in[0,T]}|h(t,x,y)|\le C(1+x^L+y^L) \text{ for some } C>0,  L\in2\N^* 
  \end{equation} 
  
  We define the function $\phi_\varepsilon(t,x,y)=\varepsilon e^{Mt}(1+x^{L+2}+y^{L+2})$. It's easy to check that $\phi_\varepsilon$ is $C^\infty([0,T]\times\R\times\R_+)$ and, for every $\varepsilon$, $M>0$ can be chosen such that
  \begin{equation}\label{classical_strict_supersolution}
    \begin{cases}
      (\partial_t-\mcL)\phi_\varepsilon(t,x,y) \ge \varepsilon,\quad  \forall t\in(0,T],\, x\in\R,\, y\in\R_+, \\
      \phi_\varepsilon(0,x,y) \ge \varepsilon, \quad \forall x\in\R, y\in \R_+.
    \end{cases}
  \end{equation}
  Then, called $v_\varepsilon = v+\phi_\varepsilon$ and $\mcO=\R_+\times\R$, for all $(t,x,y)\in\mcO_T=(0,T]\times\mcO$ one has   
  \begin{multline*}
    \mcP^{2,-}_{\mcO_T}(v+\phi_\varepsilon)(t,x,y)= \big\{ (\alpha +\partial_t\phi_\varepsilon(t,x,y),(\beta,\gamma) +D_{(x,y)}\phi_\varepsilon(t,x,y), X + D^2_{(x,y)}\phi_\varepsilon(t,x,y))  \mid \\ (\alpha,(\beta,\gamma),X)\in\mcP^{2,-}_{\mcO_T}v(t,x,y) \big\},
  \end{multline*}
 and by the linearity of the reference PDE, one has that
 \begin{equation}\label{strict_supersolution}
  c + F(t,(x,y),v_\varepsilon(t,x,y),p,X)\geq \varepsilon,\text{ for all } (t,x,y)\in\mcO_T \text{ and } (c,p,X)\in\mcP_{\mcO_T}^{2,-}v_\varepsilon(t,x,y),
 \end{equation}
 and clearly $w-v_\varepsilon\le-\varepsilon$, that means $v_\varepsilon$ is a strict super-solution.
 Thanks to the growth hypothesis \eqref{pol_growth}
 \begin{equation*}
  w(t,x,y)-v_\varepsilon(t,x,y)\le 2C(1+x^L+y^L) -\varepsilon (1+x^{L+2}+y^{L+2}) \xrightarrow[|(x,y)|\rightarrow\infty]{} -\infty,
 \end{equation*}
 then there exists $R>0$ such that,  $w-v_\varepsilon\le0$ outside a rectangle $[0,T]\times\mcR$, where $\mcR=\times[-R,R]\times[0,R]$.
 We now suppose that there exist a point $(t_0,x_0,y_0)\in\mcR$ such that $(u-v_\varepsilon)(t_0,x_0,y_0)=\delta>0$. If such a point exists, $t_0$ must be $>0$ by the initial condition.
 In order to come up with a contradiction we use the well known technique in classical viscosity solutions framework of doubling the variables.
 We define, for all $\alpha>0$
 $$\varphi_\alpha:([0,T]\times\R^2\times\R^2)\ni (t,\eta,\zeta)\mapsto \frac{1}{2}|\eta-\zeta|^2\in\R_+.$$
 We penalize the function $w-v_\varepsilon$ subtracting the function $\alpha\varphi_\alpha$, while we double the spatial variables, 
 and study 
 $$M_\alpha=\sup_{(t,\eta,\zeta)\in[0,T]\times\mcR\times\mcR}w(t,\eta)-v_\varepsilon(t,\zeta) -\alpha\varphi_\alpha(t,\eta,\zeta). $$
 By the upper semi-continuity of the function $w(t,\eta)-v_\varepsilon(t,\zeta) -\alpha\varphi_\alpha(t,\eta,\zeta)$, and compactness of $[0,T]\times\mcR\times\mcR$, 
 $$\delta\le M_\a =  w(t_\a,\eta_\a)-v_\varepsilon(t_\a,\zeta_\a) -\alpha\varphi_\alpha(t_\a,\eta_\a,\zeta_\a)$$ for some $(t_\a,\eta_\a,\zeta_\a)\in[0,T]\times\mcR\times\mcR$. 
 We apply now Lemma \ref{lemma_doubling} with $\mcO= [0,T]\times\mcR\times\mcR$, $\Psi=\varphi$ and we chose the point $x_\alpha$ in the lemma to be the point $(t_\a,\eta_\a,\zeta_\a)$ that realizes the maximum $M_\alpha$. Then \eqref{implication_lemma_doubling} translates to
 \begin{equation}\label{implication_lemma_doubling_parab}
  \begin{cases}
    (i)&\lim_{\alpha\rightarrow\infty} \frac{\a}{2}|\eta_\a-\zeta_\a|^2=0, \\
    (ii)& \lim_{\alpha\rightarrow\infty} M_\a = u(\hat{t},\hat{\eta})-v_\varepsilon(\hat{t},\hat{\eta})=\sup_{\{(t,\eta)\in[0,T]\times\mcR\}}w(t,\eta)-v_\varepsilon(t,\eta)\\
     &\qquad\text{whenever } (\hat{t},\hat{\eta})\in[0,T]\times\mcR\ \text{ is a limit point of } (t_\a,\eta_\a) \text{ as } \alpha\rightarrow\infty.
  \end{cases}
 \end{equation}
 Now, because of the initial condition, \eqref{implication_lemma_doubling_parab} $(i)$ and $(ii)$, $(t_\a,\eta_\a,\zeta_\a)$ must lie inside $(0,T]\times\mcR\times\mcR$ for large $\a$.
 We want to show that there exists values in $\bmcP^{2,+}_{(0,T]\times\mcR}w(t_\a,\eta_\a)$ and $\bmcP^{2,-}_{(0,T]\times\mcR}v_\varepsilon(t_\a,\eta_\a)$ that are not compatible.
 We apply Proposition \ref{semijet_estimates} to the point $(t_\a,\eta_\a,\zeta_\a)$ with $u_1=w$ and $u_2=-v_\varepsilon$, $\varphi=\varphi_\a$, $\mcO_1=\mcO_2=\R\times\R_+$,  and $\varepsilon$ equal to $\alpha^{-1}$, one get
 $$
  (c_1,\alpha(\eta_\a-\zeta_\a),X_\alpha)\in\bmcP^{2,+}_\mcO w(t_\a,\eta_\a),\qquad (c_2,\alpha(\eta_\a-\zeta_\a),Y_\alpha)\in\bmcP^{2,-}_\mcO v_\varepsilon(t_\a,\zeta_\a)
 $$
 such that $c_1=c_2$ and
 \begin{equation*}
   -3\a\begin{pmatrix}
    I & 0 \\
    0 &  I
  \end{pmatrix} \le \begin{pmatrix}
    X_\a & 0 \\
    0 & -Y_\a
  \end{pmatrix} \le 3\a \begin{pmatrix}
    I  & -I \\
    -I &  I
  \end{pmatrix},
 \end{equation*}
 from which we derive
 \begin{equation}
  X_\a\le Y_\a \quad\text{ and }\quad |\langle X_\a z,z\rangle|\le 3 \alpha|z|^2.
 \end{equation}
 Using that $u$ is a sub-solution \eqref{subsol_2nd_def} and $v_\varepsilon$  is a strict super-solution \eqref{strict_supersolution}, one has
 \begin{align*}
  c_1 + F(t_\a,\eta_\a, w(t_\a,\eta_\a), \a(\eta_\a-\zeta_\a),X_\a)&\le 0, \\
  c_2 + F(t_\a,\zeta_\a, v_\varepsilon(t_\a,\zeta_\a), \a(\eta_\a-\zeta_\a),Y_\a)&\ge \varepsilon,
 \end{align*}
 using $X_\a\le Y_\alpha$, $v_\varepsilon(t_\a,\zeta_\a)\le w(t_\a,\eta_\a)$   and that $F$ is proper ($\varrho=0$) imply
 \begin{align*}
  \varepsilon &\le F(t_\a,\zeta_\a, v_\varepsilon(t_\a,\zeta_\a), \a(\eta_\a-\zeta_\a),Y_\a) -F(t_\a,\eta_\a,  w(t_\a,\eta_\a), \a(\eta_\a-\zeta_\a),X_\a) \\
  &\le F(t_\a,\zeta_\a, w(t_\a,\eta_\a), \a(\eta_\a-\zeta_\a),X_\a) - F(t_\a,\eta_\a,  w(t_\a,\eta_\a), \a(\eta_\a-\zeta_\a),X_\a).
 \end{align*}
 Naming $\eta_\a =(x^\eta_\a,y^\eta_\a)$, $\zeta_\a =(x^\zeta_\a,y^\zeta_\a)$, and using the definition of $F$ in \eqref{F_logheston} one has

 \begin{multline}
  \varepsilon\le \frac{(y^\zeta_\a-y^\eta_\a)}{2}(\lambda^2X^{1,1}_\a+2\rho\lambda\sigma X^{1,2}_\a+\sigma^2 X^{2,2}_\a)+d(y^\zeta_\a-y^\eta_\a)\alpha(x^\zeta_\a-x^\eta_\a) \\
  -b(y^\zeta_\a-y^\eta_\a)\alpha(y^\zeta_\a-y^\eta_\a) +h(T-t_\a,x^\zeta_\a,y^\zeta_\a)-h(T-t_\a,x^\eta_\a,y^\eta_\a).
 \end{multline}
 The first term on the right-hand side is upper bounded by $3/2(\lambda^2+\sigma^2)\alpha(y^\zeta_\a-y^\eta_\a)$, so the first three terms go to 0 thanks to \eqref{implication_lemma_doubling_parab} $(i)$ while $h(T-t_\a,x^\zeta_\a,y^\zeta_\a)-h(T-t_\a,x^\eta_\a,y^\eta_\a)$ goes to 0 when $\alpha\rightarrow\infty$ by continuity of $h$ (up to choose a subsequence of $\alpha$s for which $(t_\a,\eta_\a)\rightarrow(\hat{t},\hat{\eta})$). So $\varepsilon$, that is strictly positive, is upper bounded by a quantity that goes to 0 when $\alpha\rightarrow\infty$.
 This contradiction yields that there is no point $(t_0,x_0,y_0)$ in which $u-v_\varepsilon>0$ than $u\le v_\varepsilon$ for every $\varepsilon>0$ and so $u\le v$.
\end{proof}

We finish by stating and proving the two following results, which discuss the uniqueness of the reference PDE and the regularity of the solution.

\begin{corollary}
  The problem \eqref{reference_PDE} has a unique viscosity solution that is continuous over $[0,T]\times\R\times\R_+ $ and that has polynomial growth in $(x,y)$ uniformly in $t$. 
\end{corollary}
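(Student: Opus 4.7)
The plan is to combine the existence result of Proposition~\ref{existence_viscsol_fcont} with the comparison principle of Proposition~\ref{comparison_principle}. First, for existence, I would verify that when $f\in \mcC(\R\times\R_+)$ and $h\in \mcC([0,T]\times\R\times\R_+)$ have polynomial growth (which is enforced by the class in which we seek the solution, built from the candidate~$u$), the integrability condition~\eqref{integrability_condition} is satisfied for every $p>1$. Indeed, the log-Heston drift and diffusion coefficients have at most linear growth, so standard moment estimates (e.g.\ the same argument as in Lemma~\ref{lem_moments}) give, for any $q\ge 1$ and any compact $\mcK_T\subset[0,T]\times\R\times\R_+$, $\sup_{(t,x,y)\in\mcK_T}\E[|X_T^{t,x,y}|^q+|Y_T^{t,y}|^q]<\infty$ and the analogous bound uniformly in $s\in[t,T]$. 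Plugging a polynomial upper bound for $|f|$ and $|h|$ then yields~\eqref{integrability_condition}. Proposition~\ref{existence_viscsol_fcont} therefore shows that $u$ defined by~\eqref{u_gen_sol} is continuous on $[0,T]\times\R\times\R_+$ and is a viscosity solution of~\eqref{reference_PDE}; the same moment estimates, taken uniformly on $(x,y)$ in the whole domain, give polynomial growth in $(x,y)$ uniform in $t$.

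For uniqueness, let $u_1,u_2$ be two viscosity solutions of~\eqref{reference_PDE} that are continuous on $[0,T]\times\R\times\R_+$ and have polynomial growth in $(x,y)$ uniform in $t$. Setting $u_i^F(t,x,y):=u_i(T-t,x,y)$, the change of variable $s=T-t$ turns~\eqref{reference_PDE} into~\eqref{reference_PDE_forward} with $F$ as in~\eqref{F_logheston}, and $u_1^F,u_2^F$ are continuous viscosity solutions of this forward problem with the same initial datum $f$; their polynomial growth is inherited from that of $u_1$ and $u_2$. Because each $u_i^F$ is continuous, it lies simultaneously in $\USC([0,T]\times\R\times\R_+)$ and $\LSC([0,T]\times\R\times\R_+)$, and is thus simultaneously a viscosity sub- and super-solution in the sense of Definition~\ref{def_viscosity_sol}.

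I would then apply Proposition~\ref{comparison_principle} twice: treating $u_1^F$ as a sub-solution and $u_2^F$ as a super-solution with polynomial growth, the comparison principle yields $u_1^F\le u_2^F$ on $[0,T]\times\R\times\R_+$; swapping their roles gives $u_2^F\le u_1^F$, hence $u_1^F=u_2^F$ and therefore $u_1=u_2$ on $[0,T]\times\R\times\R_+$. This shows uniqueness in the stated class, and combined with the existence part concludes the proof.

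The only technical point that requires a bit of care is that Proposition~\ref{comparison_principle} is stated for the forward problem with $F$ given by~\eqref{F_logheston}; since viscosity sub/super-solutions of the backward problem~\eqref{reference_PDE} are by construction defined through the change of variable $t\mapsto T-t$, the transfer is immediate and does not constitute a real obstacle. The delicate work—the penalisation via $\phi_\varepsilon$, the doubling of variables, and the application of Proposition~\ref{semijet_estimates} that handles the degeneracy of the log-Heston operator on $\{y=0\}$—has already been carried out inside the comparison principle.
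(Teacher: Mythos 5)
Your proof is correct, and the uniqueness argument is in fact slightly cleaner than the paper's. Both approaches rest on Proposition~\ref{comparison_principle}, but the execution differs: the paper forms the difference $u=u_1-u_2$, asserts (``by linearity'') that it is a viscosity solution of the homogeneous problem, and then compares $u$ with the trivial solution $v\equiv 0$. You instead apply the comparison principle directly, viewing $u_1^F$ (being continuous, hence in $\USC$) as a sub-solution and $u_2^F$ (being in $\LSC$) as a super-solution of the same forward problem with the same initial datum $f$, which immediately gives $u_1^F\le u_2^F$, and by symmetry the reverse. Your route is the standard one and has the advantage of not needing the claim that the difference of two viscosity solutions of a linear equation is itself a viscosity solution of the homogeneous one — a statement that is \emph{not} automatic for viscosity solutions, since a test function touching $u_1-u_2$ from above at a point need not split into test functions touching $u_1$ from above and $u_2$ from below there. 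Your route sidesteps this entirely.

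One small remark on scope: the corollary as stated (and the paper's proof) is really a pure uniqueness statement in the stated class; existence under the integrability condition~\eqref{integrability_condition} was established just before in Proposition~\ref{existence_viscsol_fcont} and is then packaged with uniqueness in Proposition~\ref{uniqueness_and_regularity_f_cont}. Including the existence discussion here is not wrong, but it goes a bit beyond what the corollary requires, and you should be careful not to silently add hypotheses on $f$ and $h$ that the corollary's wording does not contain — the comparison/uniqueness half of the argument is fully self-contained in the class of continuous, polynomially growing solutions and needs no assumption on $f,h$ beyond what makes $F$ continuous.
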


\begin{proof}
  For simplicity, we consider the equivalent forward PDE \eqref{reference_PDE_forward} and two solutions $u_1$ and $u_2$ with polynomial growth. Then, by the linearity of the PDE, $u=u_1-u_2$ is a viscosity solution of
  \begin{equation}
    \begin{cases}
      (\partial_t-\mcL)u(t,x,y) = 0,\quad   t\in(0,T], &x\in\R, y\in\R_+, \\
      u(0,x,y) = 0, \quad  &x\in\R, y\in \R_+,
    \end{cases}
  \end{equation}
  with polynomial growth, so it exists $L\in2\N$ such that $|u(t,x,y)|\le C(1+x^L+y^L)$.
  We remark that $v(t,x,y)=0$ is a solution to the problem, then by Proposition \ref{comparison_principle} $u\le v$ and $u\ge v$, so $u_1=u_2$ is the unique solution.

\end{proof}

\begin{prop}\label{uniqueness_and_regularity_f_cont}
  Let $f\in \mcC_{\pol}(\R\times\R_+)$ and $h\in \mcC_{\pol,T}(\R\times\R_+)$.
  Then $u$ as in \eqref{u_gen_sol} is the unique viscosity solution of \eqref{reference_PDE} that belongs to $\mcC([0,T]\times\R\times\R_+)$ and that has polynomial growth in $(x,y)$ uniformly in $t$. 
  
  Furthermore, if $h$ is locally Hölder in the compact sets of $[0,T)\times\R\times\R^*_+$ then $u\in \mcC^{1,2}\big([0,T)\times(\R\times\R^*_+)\big)$.
\end{prop}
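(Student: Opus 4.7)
The first statement combines existence, polynomial growth, and uniqueness. For existence, the plan is to invoke Proposition~\ref{existence_viscsol_fcont}: its integrability hypothesis~\eqref{integrability_condition} is automatic under $f\in\mcC_{\pol}(\R\times\R_+)$ and $h\in\mcC_{\pol,T}(\R\times\R_+)$, since the coefficients of~\eqref{referenceDiffusion} have sublinear growth, so $\E[|X^{t,x,y}_s|^{pL}+|Y^{t,y}_s|^{pL}]$ is bounded, uniformly in $s\in[0,T]$ and in $(t,x,y)$ in any compact. Hence $u\in\mcC([0,T]\times\R\times\R_+)$ and is a viscosity solution of~\eqref{reference_PDE}. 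The polynomial growth of $u$ in $(x,y)$ uniformly in $t$ follows from Jensen's inequality and the same moment bounds applied to the explicit representation~\eqref{u_gen_sol}.

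For uniqueness, I would take two solutions $u_1,u_2\in\mcC([0,T]\times\R\times\R_+)$ with polynomial growth uniformly in~$t$. Pass to the time-reversed forward form~\eqref{reference_PDE_forward}, where each $u_i^F(t,x,y)=u_i(T-t,x,y)$ belongs to $\USC\cap\LSC$, is both a sub- and supersolution with the same initial datum and source. By Proposition~\ref{comparison_principle} (which requires only polynomial growth, not the Feller condition), $u_1^F\le u_2^F$, and swapping the roles yields equality, hence $u_1=u_2$.

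For the second statement (interior $\mcC^{1,2}$-regularity under local Hölder continuity of $h$), the plan is to exploit that the operator $\mcL$ is uniformly parabolic on every compact $K\subset[0,T)\times\R\times\R^*_+$, since there $y$ is bounded away from~$0$. I would reuse the smoothing construction from the proof of Proposition~\ref{existence_viscsol_fcont}: truncate and mollify $f$ and $h$ to obtain $(f_n,h_n)$ smooth and compactly supported, converging locally uniformly to $(f,h)$ (and with locally bounded Hölder seminorms of $h_n$ inherited from $h$ via standard properties of mollification). By Proposition~\ref{prop-reg-new}, the associated $u_n$ is a smooth classical solution of~\eqref{reference_PDE} with data $(f_n,h_n)$. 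By the uniqueness result already proved, $u_n\to u$ locally uniformly on $[0,T]\times\R\times\R_+$.

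The key step is to upgrade this convergence to $\mcC^{1,2}_{\mathrm{loc}}$ convergence on $[0,T)\times\R\times\R^*_+$. Fix a compact $K'\subset\subset K\subset\subset[0,T)\times\R\times\R^*_+$. On $K$, the PDE satisfied by $u_n$ is uniformly parabolic with $\mcC^\infty$ coefficients and source $h_n$ whose weighted parabolic Hölder seminorm $|h_n|^K_{\alpha,2}$ is uniformly bounded in~$n$. Interior Schauder estimates (as in the argument of Proposition~\ref{min_hp_pde_existence_sol}, cf.~\cite{AF_book}) then yield
\[
\overline{|u_n|}^{K'}_{2+\alpha}\le C\bigl(|u_n|^K_0+|h_n|^K_{\alpha,2}\bigr),
\]
with a constant $C$ independent of~$n$. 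Since the right-hand side is uniformly bounded (using the locally uniform convergence $u_n\to u$), the Arzelà--Ascoli theorem extracts a subsequence converging in $\mcC^{1,2}(K')$; the limit must be $u$ by uniqueness of the uniform limit, so $u\in\mcC^{1,2}(K')$ and the PDE is satisfied pointwise on~$K'$. Since $K'$ is arbitrary inside $[0,T)\times\R\times\R^*_+$, this gives $u\in\mcC^{1,2}([0,T)\times(\R\times\R^*_+))$. The main obstacle, as usual in such passages to the limit, is ensuring that the Schauder constant $C$ and the Hölder bound on $h_n$ are genuinely uniform in~$n$; this requires picking the truncation parameter $R_n\to\infty$ and mollification scale carefully so that $h_n\equiv h$ on a fixed enlargement of~$K$ for all sufficiently large~$n$, which makes the Hölder bound the one of~$h$ itself on that enlargement.
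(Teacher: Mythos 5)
For the first part (existence, polynomial growth, uniqueness) your approach coincides with the paper's: continuity and polynomial growth follow from the Feynman--Kac representation via the moment bounds, existence is Proposition~\ref{existence_viscsol_fcont}, and uniqueness is the comparison-principle corollary.

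For the interior $\mcC^{1,2}$-regularity your route is genuinely different. The paper does not pass to the limit along approximations: it picks a bounded rectangle $Q=[0,S)\times\mcR$ with $\mcR$ staying away from $\{y=0\}$, where the operator is uniformly parabolic with smooth coefficients, solves the Dirichlet problem with boundary data $u$ classically (Friedman), and then uses a local comparison principle to identify that classical solution $v$ with $u$ on $\overline{Q}$; regularity of $u$ is inherited pointwise and no uniform Schauder bound across a sequence is needed. Your argument instead mollifies $(f,h)$, produces smooth classical solutions $u_n$ via Proposition~\ref{prop-reg-new}, derives a uniform interior $\mcC^{2+\alpha}$ bound on a compact $K'\subset\subset K$, and applies Arzelà--Ascoli. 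This works, but it carries an extra burden: you must control $|h_n|^K_{\alpha,2}$ and $|u_n|^K_0$ uniformly in $n$. Two small inaccuracies in how you discharge that burden: first, after mollification $h_n$ is \emph{not} identically equal to $h$ on a neighbourhood of $K$, even when the truncation parameter is large enough that the truncated $h^{R_n}$ agrees with $h$ there; the correct statement is that convolution with a probability kernel does not increase Hölder seminorms, so $|h_n|^K_{\alpha,2}$ is dominated by the Hölder seminorm of $h$ on a slightly enlarged compact. Second, "by the uniqueness result already proved, $u_n\to u$" mixes up the argument; the convergence $u_n\to u$ locally uniformly comes from the probabilistic representation and the truncation/mollification estimates in the proof of Proposition~\ref{existence_viscsol_fcont}, not from uniqueness of viscosity solutions. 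Both points are readily repairable, so the proposal is correct in substance, though the paper's local Dirichlet-plus-comparison argument sidesteps the uniform-estimate bookkeeping entirely and is the more economical route.
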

\begin{proof}
  The first hypotheses over $f$ and $h$ guarantee $u$ to be continuous over $[0,T]\times\R\times\R_+ $ and to have polynomial growth, so to be the unique solution. Furthermore, if $f$ and $h$ are locally Hölder in the compact sets of $[0,T)\times\R\times\R^*_+$, one can consider the PDE locally in a compact inside $[0,T)\times\R\times\R^*_+$ to prove further regularity.
  Let $t<S\in[0,T)$, $(x,y)\in\R\times\R_+^*$ and $\mcR=(x-R,x+R) \times (y/2,2y)$, $R>0$,  $Q=[0,S)\times\mcR$ and consider the PDE problem
  \begin{equation}
  \begin{cases}\label{parabolic_problem_bounded_domain}
  \partial_tv+ \mcL v+\varrho v= h,\qquad&\mbox{ in }Q,\\
  v=u,\qquad&\mbox{ in }\partial_0Q,
  \end{cases}	
  \end{equation}
  $\partial_0Q$ denoting the parabolic boundary of $Q$. The coefficients satisfy in $Q$ all the classical assumptions (see A. Friedman  \cite{AF_book} Theorem 9 and Corollary 2 in Chapter 3, Sec. 4), so a unique (bounded) solution $v\in \mcC^{1,2}([0,S)\times\mcR)\cap \mcC([0,S]\times\bar{\mcR})$ actually exists (and have Hölder continuous derivatives $v_t$, $D_{(x,y)} v$ and $D^2_{(x,y)}v$ in any compact set contained inside $Q$). In Proposition \ref{comparison_principle} we proved a comparison principle on the unbounded spatial domain $\R\times\R_+$ it is easy (even easier) to prove with the same techniques a comparison principle for the problem above \eqref{parabolic_problem_bounded_domain}: this time the ``strictificate'' the super-solution $v$ can just add the term $\phi_\varepsilon= \varepsilon e^{Mt}$. So $u$ that is a viscosity solution and $v$ that is a classic one (hence a viscosity one, too) must be the same over $\overline{Q}$. Then we have the regularity claimed for $u$ over a generic point $(t,x,y)\in[0,T)\times(\R\times\R^*_+)$.
\end{proof}

\subsection{Discontinuous initial data}\label{discont_initial_data_subsec}
Here, we present a general result that allows us to reduce the regularity of the initial data and consider functions that are not continuous everywhere. 

Let $f$ be a locally bounded function defined over a locally compact domain $\mcO$. We define the upper-semicontinuous envelope $f^*$ and the lower-semicontinuous envelope $f_*$ on $\bar{\mcO}$ by
  $$
  f^*(x) = \limsup_{z\rightarrow x} f(z), \quad f_*(x) = \liminf_{z\rightarrow x} f(z). $$
We call $D_f$ the set of the discontinuity points of $f$, i.e.
\begin{equation}\label{discontinuity_set}
  D_f = \{x\in\bmcO\mid f^*(x)\neq f_*(x) \}.
\end{equation} 
Now, we state and prove a result that says the lower semicontinuous and upper semicontinuous functions belong to the Baire class 1  functions (i.e. they are the pointwise limit of continuous functions) and can be approximated in a monotone way.


\begin{prop}[Baire]\label{baire_approx}
  Let $X\subset\R^d$ closed. Let $f\in\LSC(X)$ then there exist a non-decreasing sequence $(f^-_n)_{n\in\N_*}\subset C(X)$ such that $f^-_n\rightarrow f$. If $f\in\USC(X)$ then there exist a non-increasing sequence $(f^+_n)_{n\in\N_*}\subset C(X)$ such that $f^+_n\rightarrow f$.
\end{prop}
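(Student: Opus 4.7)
The plan is to prove the lower semicontinuous case via a Moreau–Yosida type (Lipschitz) inf-convolution, handling unboundedness by a truncation/diagonal argument, and then deduce the upper semicontinuous case by applying the LSC case to $-f$.

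\textbf{Step 1 (bounded-below case).} Assume first that $f\in\LSC(X)$ with $f\geq -M$ for some $M\geq 0$. For $n\in\N^*$ set
\begin{equation*}
g_n(x)\ :=\ \inf_{y\in X}\{\, f(y)+n\,\|x-y\|\,\},\qquad x\in X.
\end{equation*}
Since $f\geq -M$, the infimum is $\geq -M$, hence finite. Choosing $y=x$ yields $g_n\leq f$, and the inequality $n\|x-y\|\leq (n+1)\|x-y\|$ gives $g_n\leq g_{n+1}$. For any $x,x'\in X$ and $y\in X$, $f(y)+n\|x-y\|\leq f(y)+n\|x'-y\|+n\|x-x'\|$; taking infimum in $y$ shows $|g_n(x)-g_n(x')|\leq n\|x-x'\|$, so $g_n$ is Lipschitz, in particular continuous. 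Pointwise convergence $g_n(x)\to f(x)$ uses LSC: given $\varepsilon>0$, pick $\delta>0$ with $f(y)>f(x)-\varepsilon$ for $\|y-x\|<\delta$; then for $y$ with $\|y-x\|<\delta$ we have $f(y)+n\|x-y\|\geq f(x)-\varepsilon$, while for $\|y-x\|\geq\delta$ we have $f(y)+n\|x-y\|\geq -M+n\delta\geq f(x)-\varepsilon$ as soon as $n\geq (M+f(x)+1)/\delta$. Hence $g_n(x)\geq f(x)-\varepsilon$ for $n$ large, and since $g_n\leq f$ we conclude $g_n(x)\to f(x)$.

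\textbf{Step 2 (general LSC case).} For $f\in\LSC(X)$ real-valued but possibly unbounded below, set $f_k(x):=\max(f(x),-k)$, which is LSC and bounded below by $-k$. Let $g_{k,n}$ be the inf-convolution of Step~1 applied to $f_k$. Two monotonicities hold: $g_{k,n}\leq g_{k,n+1}$ (increasing the Lipschitz penalty only raises the infimum), and $g_{k,n}\leq g_{k+1,n}$ (because $f_k\leq f_{k+1}$ pointwise). Define
\begin{equation*}
f^-_n(x)\ :=\ g_{n,n}(x).
\end{equation*}
Each $f^-_n$ is $n$-Lipschitz, hence continuous, and by the two monotonicities $f^-_n\leq g_{n+1,n}\leq g_{n+1,n+1}=f^-_{n+1}$. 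The bound $f^-_n=g_{n,n}\leq f_n\leq f$ gives $\limsup_n f^-_n\leq f$. Conversely, fix $x\in X$ and choose $k_0\geq |f(x)|$ so that $f_{k_0}(x)=f(x)$; for $n\geq k_0$, $f^-_n(x)=g_{n,n}(x)\geq g_{k_0,n}(x)$, and letting $n\to\infty$ with $k_0$ fixed yields $\liminf_n f^-_n(x)\geq f_{k_0}(x)=f(x)$ by Step~1. Hence $f^-_n(x)\to f(x)$ pointwise, proving the LSC half of the statement.

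\textbf{Step 3 (USC case).} If $f\in\USC(X)$ then $-f\in\LSC(X)$; applying Steps~1--2 to $-f$ produces a non-decreasing sequence of continuous functions $(g_n)\subset C(X)$ with $g_n\nearrow -f$. Setting $f^+_n:=-g_n$ gives the desired non-increasing sequence in $C(X)$ with $f^+_n\searrow f$.

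\textbf{Main obstacle.} The delicate point is that the infimum defining $g_n$ can be $-\infty$ when $f$ is not bounded below, which is exactly the reason Moreau--Yosida is usually stated for bounded data. The truncation $f_k=\max(f,-k)$ combined with the observation that $g_{k,n}$ is monotone in both arguments is what makes the simple diagonal choice $f^-_n=g_{n,n}$ succeed without needing a finite-max construction or a change of variable through a bounded homeomorphism.
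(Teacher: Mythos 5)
Your Step~1 (Moreau--Yosida inf-convolution for an LSC function bounded below) and Step~3 (passing to the USC case via $-f$) are correct and standard. The problem is in Step~2: the truncation $f_k=\max(f,-k)$ is \emph{non-increasing} in $k$, not non-decreasing. Since $-k>-(k+1)$, we have $\max(f(x),-k)\ge\max(f(x),-(k+1))$, so $f_k\ge f_{k+1}$ — the opposite of what you assert — and therefore $g_{k,n}$ is decreasing, not increasing, in its first index.

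This breaks the argument in two places. First, the chain $g_{n,n}\le f_n\le f$ is wrong because $f_n=\max(f,-n)\ge f$; in fact $g_{n,n}$ can strictly exceed $f$. If $f(x_0)=-100$ then $f_1\ge-1$ everywhere, so $g_{1,1}(x_0)=\inf_y\big(f_1(y)+\|x_0-y\|\big)\ge-1>-100=f(x_0)$, and a non-decreasing sequence converging to $f(x_0)$ cannot start above $f(x_0)$. Second, the diagonal $(g_{n,n})_n$ is not monotone: $g$ decreases in $k$ and increases in $n$, so the two effects compete. Concretely, if $f(x)=-n-\tfrac12$ then $g_{n,n}(x)=-n$ (the $y=x$ term gives $\max(f(x),-n)=-n$ and every term is $\ge -n$), while $g_{n+1,n+1}(x)\le f_{n+1}(x)=-n-\tfrac12<g_{n,n}(x)$. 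Likewise, the inequality $g_{n,n}\ge g_{k_0,n}$ used for the $\liminf$ direction is reversed.

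The obstruction is intrinsic: lower truncation moves in the wrong direction for a diagonalisation. What works is to first construct a \emph{continuous} minorant $h\le f$ on $X$ — possible because $X$ is closed in $\R^d$, so $f$ is bounded below on each compact $X\cap\overline{B}(0,r)$, and one can take a continuous, radially non-increasing function sitting below these infima — and then apply your Step~1 to $\hat f=f-h\ge 0$, setting $f^-_n=g_n+h$. This reduction to a non-negative $f$ is also the first step of the paper's proof; the paper then composes $\hat f$ with the increasing homeomorphism $t\mapsto t/(1+t)$ of $\R_+$ onto $[0,1)$ so as to invoke Bourbaki's version of the theorem for bounded semicontinuous functions, and transforms back via $t\mapsto t/(1-t)$. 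Once the continuous-minorant step is supplied, your Moreau--Yosida route gives a shorter, self-contained alternative.
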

\begin{proof}
  We prove only the first result; the second follows from the first, considering $-f$.
  We now sketch the proof.
  Let $f\le 0$, for $f$ not bounded below, we can find a continuous function $h\le f$, and we apply what follows to $\hf=f-h$, find approximations $\hf_n$ and define $f_n=\hf_n+h$.
  We consider the continuous function $g:x\rightarrow x/(1+x)$, prove the result for $\tf= g(f)$ taking values in $[0,1]$ (closed set) as in the proof in Proposition 11 in Section 2 of Chapter 9 of \cite{Bourbaki66}, consider the approximations $\tf_n$ given by proving the result for $\tf$ and then name $f_n=\tf_n/(1-\tf_n)$.
\end{proof}

We want to show that $(t,x,y)\mapsto \E[e^{\varrho(T-t)}f(X_T^{t,x,y},Y^{t,y}_T)]$ is a continuous mapping for $t<T$, even when the final data is discontinuous.
To this purpose, we use the fact that the distribution of the couple $(X^{t,x,y}_T,Y^{t,y}_T)$ for $t<T$ is absolutely continuous (with respect to Lebesgue measure). In fact, in \cite{PP2015}, it has been shown that $(\exp(X^{t,x,y}_T),Y^{t,y}_T)$ has a $\mcC^\infty(\R^*_+\times\R^*_+)$ density so using a change of variable argument, it easily follows that $(X^{t,x,y}_T,Y^{t,y}_T)$ has a $\mcC^\infty$ density over $\R\times\R^*_+$. 

\begin{prop}\label{continuity_prop}
  Let $f:\R\times\R_+\mapsto\R$ be a function such that the closure $\bDf$ of the set of its discontinuity points has zero Lebesgue measure. We assume that for all compact set $\mcK_T\in[0,T]\times\R\times\R_+$ there exists $p>1$ such that 
  $$
  \sup_{(t,x,y)\in\mcK_T}\|f(X_T^{t,x,y},Y^{t,y}_T)\|_{L^p(\Omega)}<\infty.
  $$ 
  Then $v(t,x,y)=\E[e^{\varrho(T-t)}f(X_T^{t,x,y},Y^{t,y}_T)]\in \mcC\big(([0,T]\times\R\times\R_+)\backslash (\{T\}\times \bDf) \big)$.
\end{prop}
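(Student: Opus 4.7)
The plan is to prove continuity of $v$ at any fixed point $(t_0,x_0,y_0)\in ([0,T]\times\R\times\R_+)\setminus (\{T\}\times\bDf)$ by picking an arbitrary sequence $(t_n,x_n,y_n)\to(t_0,x_0,y_0)$ and showing
$$\E[f(X_T^{t_n,x_n,y_n},Y_T^{t_n,y_n})]\longrightarrow \E[f(X_T^{t_0,x_0,y_0},Y_T^{t_0,y_0})].$$
The strategy is Vitali's convergence theorem: prove almost-sure convergence of the integrand and then invoke uniform integrability from the $L^p$ bound.

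First I would construct all the diffusions on a single probability space by driving $(X^{t,x,y},Y^{t,y})$, for every $(t,x,y)$, with the same underlying Brownian motion $(W,B)$ and the SDE \eqref{referenceDiffusion}. Standard continuity-in-initial-data estimates for square-root diffusions (applied to the CIR component $Y^{t,y}$ and then to $X^{t,x,y}$ conditionally on~$Y$) then yield
$$(X_T^{t_n,x_n,y_n},Y_T^{t_n,y_n})\xrightarrow[n\to\infty]{\text{a.s.}}
\begin{cases}(X_T^{t_0,x_0,y_0},Y_T^{t_0,y_0}) & \text{if }t_0<T,\\ (x_0,y_0) & \text{if }t_0=T.\end{cases}$$

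Next I would upgrade this to a.s.\ convergence of $f$ evaluated at those points. In the case $t_0=T$ and $(x_0,y_0)\notin\bDf$, $f$ is continuous at $(x_0,y_0)$, so the a.s.\ limit of $f(X_T^{t_n,x_n,y_n},Y_T^{t_n,y_n})$ equals $f(x_0,y_0)$. For $t_0<T$ the key ingredient is that the law of $(X_T^{t_0,x_0,y_0},Y_T^{t_0,y_0})$ is absolutely continuous with respect to Lebesgue measure on $\R\times\R_+$ (the smoothness of the joint density of $(\exp X^{t,x,y}_T, Y^{t,y}_T)$ on $\R_+^*\times\R_+^*$ proved in \cite{PP2015} transfers to $(X,Y)$ by the smooth change of variable, and the set $\{Y_T^{t_0,y_0}=0\}$ is Lebesgue-negligible and has probability zero under the CIR distribution when $a>0$). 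Since $\bDf$ has zero Lebesgue measure by hypothesis, $\P\big((X_T^{t_0,x_0,y_0},Y_T^{t_0,y_0})\in \bDf\big)=0$, so almost surely the limit point lies outside $\bDf$; consequently $f$ is continuous at this random limit point, and $f(X_T^{t_n,x_n,y_n},Y_T^{t_n,y_n})\to f(X_T^{t_0,x_0,y_0},Y_T^{t_0,y_0})$ a.s.

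Finally I would obtain the convergence of expectations by Vitali's theorem. Choose a compact neighborhood $\mcK_T\subset [0,T]\times\R\times\R_+$ of $(t_0,x_0,y_0)$ containing the sequence $(t_n,x_n,y_n)$; the assumption gives $p>1$ with $\sup_{(t,x,y)\in\mcK_T}\|f(X_T^{t,x,y},Y_T^{t,y})\|_{L^p(\Omega)}<\infty$, so the family $\{f(X_T^{t_n,x_n,y_n},Y_T^{t_n,y_n})\}_n$ is bounded in~$L^p$, hence uniformly integrable. Combined with the a.s.\ convergence and with the continuity of $t\mapsto e^{\varrho(T-t)}$, Vitali's theorem yields $v(t_n,x_n,y_n)\to v(t_0,x_0,y_0)$, proving the desired continuity.

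The main obstacle is the a.s.\ continuity of $f$ at the limit random point when $t_0<T$: it is what ultimately forces the exclusion of $\{T\}\times\bDf$ in the statement, and it rests on the nontrivial absolute continuity of the two-dimensional marginal of the log-Heston diffusion for strictly positive time (which must be invoked carefully so as not to be spoiled by the possible $\{Y_T=0\}$ event when the Feller condition fails).
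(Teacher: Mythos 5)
Your argument is correct in substance but follows a genuinely different route from the paper's. The paper works entirely at the level of laws: it uses convergence in distribution of $(X_T^{t_k,x_k,y_k},Y_T^{t_k,y_k})$ towards $(X_T^{t,x,y},Y_T^{t,y})$ together with a careful truncation scheme — a far region is controlled by the $L^p$ bound and Markov's inequality, a thin neighbourhood $C^\delta_R$ of $\bDf$ is controlled by absolute continuity of the limiting law, and on the remaining compact region $A^\delta_R$ the function $f\mathds{1}_{A^\delta_R}$ is bounded with a Lebesgue-null set of discontinuities, so a portmanteau-type argument applies. You instead couple all the diffusions on one probability space, obtain pathwise convergence of the evaluations, and pass to the limit by Vitali with the $L^p$ bound supplying uniform integrability. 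Both proofs rest on the same two essential inputs — the absolute continuity of the law of $(X,Y)$ at strictly positive time, which makes $\bDf$ invisible, and the $L^p$ estimate for the tails — but the paper only asks for convergence of the laws whereas you ask for convergence of the processes themselves.

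The one step that needs tightening is the claim that ``standard continuity-in-initial-data estimates'' directly yield \emph{almost-sure} convergence of $(X_T^{t_n,x_n,y_n},Y_T^{t_n,y_n})$. For the degenerate CIR component, the Yamada--Watanabe-type moment estimates give convergence in $L^p$ (hence in probability) of $Y^{t_n,y_n}_T$ as $(t_n,y_n)\to(t_0,y_0)$, not a.s.\ convergence along every sequence. The standard repair is to note that, since the goal is convergence of the numerical sequence $v(t_n,x_n,y_n)$, it suffices to check that every subsequence has a further subsequence along which it converges to $v(t_0,x_0,y_0)$; on such a subsequence convergence in probability upgrades to a.s.\ convergence and the rest of your argument goes through unchanged. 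Alternatively one could invoke a bicontinuous stochastic flow for CIR, but that is a stronger fact than the problem needs. With this subsequence reduction made explicit your proof is complete, and arguably shorter than the paper's, at the cost of invoking a pathwise result for a diffusion where the paper stays with purely distributional convergence.
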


\begin{proof}
  Since $(t,x,y)\mapsto e^{\varrho(T-t)}$ is continuous, we can consider the case $\varrho=0$. 
  We consider $(t,x,y)\in[0,T)\times\R\times\R_+$ and a sequence $(t_k,x_k,y_k)$ that converges towards it, we want to show that 
  \begin{equation*}
    |\E[f(X_T^{t_k,x_k,y_k},Y^{t_k,y_k}_T)] - \E[f(X_T^{t,x,y},Y^{t,y}_T)]| \xrightarrow[k\rightarrow\infty]{} 0.
  \end{equation*}
  Let $\varepsilon>0$. First, thanks to Hölder inequality
  \begin{multline*}
    |\E[f(X_T^{t_k,x_k,y_k},Y^{t_k,y_k}_T)\mathds{1}_{|(X_T^{t_k,x_k,y_k},Y^{t_k,y_k}_T)|>R}] |\le \\
     || f(X_T^{t_k,x_k,y_k},Y^{t_k,y_k}_T)||_{L^p(\Omega)} \P(|(X_T^{t_k,x_k,y_k},Y^{t_k,y_k}_T)|>R)^{\frac{p}{p-1}}.
  \end{multline*}
  One can choose $R$ such that the same inequality holds replacing $(X_T^{t_k,x_k,y_k},Y^{t_k,y_k}_T)$ with $(X_T^{t,x,y},Y^{t,y}_T)$. So, for $R$ big enough, one has
  \begin{multline*}
    |\E[f(X_T^{t_k,x_k,y_k},Y^{t_k,y_k}_T)] - \E[f(X_T^{t,x,y},Y^{t,y}_T)]| \le \\
    |\E[f(X_T^{t_k,x_k,y_k},Y^{t_k,y_k}_T)\mathds{1}_{|(X_T^{t_k,x_k,y_k},Y^{t_k,y_k}_T)|\le R}]- \E[f(X_T^{t,x,y},Y^{t,y}_T)\mathds{1}_{|(X_T^{t,x,y},Y^{t,y}_T)|\le R}]| +2\varepsilon.
  \end{multline*}
  Let $\delta>0$, we define the compact set $A^\delta_R=\{z\in\R\times\R_+ \mid d(z,\bDf)\ge \delta, |z|\le R \}$ and $C^\delta_R=\{z\in\R\times\R_+ \mid d(z,\bDf)< \delta, |z|\le R \}$. $C^\delta_R \searrow \bDf\cap \overline{B_R(0)}$ that is a null set, so thanks to the absolute continuity of $(X_T^{t,x,y},Y^{t,y}_T)$ one has 
  \begin{equation}\label{conv_prob_0}
    \P((X_T^{t,x,y},Y^{t,y}_T)\in C^\delta_R)\rightarrow 0 \text{ when } \delta\rightarrow0.
  \end{equation}
  Furthermore, the boundary $\partial C^\delta_R$ is a null set. In fact, it is contained in $\bDf \cup \partial B_R(0)\cup \{ z\in \R\times\R_+ \mid d(z,\bDf)= \delta \}$ that are three null sets, the first by hypothesis and the second two being sets whose points are exactly distant a strictly positive number from closed sets ($\{0\}$ and $\bDf$) (look here \cite{Erdos1945} for a simple proof). So, by convergence in distribution of $(X_T^{t_k,x_k,y_k},Y^{t_k,y_k}_T)$ towards $(X_T^{t,x,y},Y^{t,y}_T)$
  \begin{equation}\label{conv_distrib}
    \P((X_T^{t_k,x_k,y_k},Y^{t_k,y_k}_T)\in C^\delta_R)\xrightarrow[k\rightarrow \infty]{} \P((X_T^{t,x,y},Y^{t,y}_T)\in C^\delta_R).
  \end{equation}
  Thanks to the following inequality
  \begin{multline*}
    |\E[f(X_T^{t_k,x_k,y_k},Y^{t_k,y_k}_T)\mathds{1}_{(X_T^{t_k,x_k,y_k},Y^{t_k,y_k}_T)\in C^\delta_R}] | \le \\
     || f(X_T^{t_k,x_k,y_k},Y^{t_k,y_k}_T)||_{L^P(\Omega)} \P((X_T^{t_k,x_k,y_k},Y^{t_k,y_k}_T)\in C^\delta_R)^{\frac{p}{p-1}},
  \end{multline*}
  that still valid replacing $(t_k,x_k,y_k)$ with $(t,x,y)$, using the uniform boundedness in $L^P$ hypothesis, \eqref{conv_prob_0} and \eqref{conv_distrib}, if $\delta$ is small enough and $k\ge k_0$ one has
  \begin{equation*}
    |\E[f(X_T^{t_k,x_k,y_k},Y^{t_k,y_k}_T)\mathds{1}_{(X_T^{t_k,x_k,y_k},Y^{t_k,y_k}_T)\in C^\delta_R}] - \E[f(X_T^{t,x,y},Y^{t,y}_T)\mathds{1}_{(X_T^{t,x,y},Y^{t,y}_T)\in C^\delta_R}]|\le 2\varepsilon.
  \end{equation*}
  The set $\partial A^\delta_R$ is a null set, because it is contained in $\partial B_R(0)\cup \{ z\in \R\times\R_+ \mid d(z,\bDf)= \delta \}$, that are two null set, as explained above.
  So, the function $f\mathds{1}_{A^\delta_R}$ are continuous over the compact $A^\delta_R$, therefore, they are bounded and are discontinuous only over the null set $\partial A^\delta_R$. Thanks to the absolute continuity of $(X_T^{t,x,y},Y^{t,y}_T)$ and convergence in distribution of $(X_T^{t_k,x_k,y_k},Y^{t_k,y_k}_T)$ towards it, if $k\ge k_1$
  $$
  |\E[f(X_T^{t_k,x_k,y_k},Y^{t_k,y_k}_T)\mathds{1}_{(X_T^{t_k,x_k,y_k},Y^{t_k,y_k}_T)\in A^\delta_R}]- \E[f(X_T^{t,x,y},Y^{t,y}_T)\mathds{1}_{(X_T^{t,x,y},Y^{t,y}_T)\in A^\delta_R}]| \le \varepsilon
  $$
  Finally if $k\ge \max(k_0,k_1)$ one has 
  $$
  |\E[f(X_T^{t_k,x_k,y_k},Y^{t_k,y_k}_T)] - \E[f(X_T^{t,x,y},Y^{t,y}_T)]| \le 5\varepsilon.
  $$
  If we consider $(t,x,y)\in\{T\}\times A$, where $A=(\R\times\R_+)\backslash\bDf$ then for any sequence $(t_k, x_k, y_k)_{k\in\N}$ that convergences to $(t,x,y)$ and all the previous estimation still works because the limit law this time is a Dirac mass over $(t,x,y)$ that is not a discontinuity point of $f$.
\end{proof}

Thanks to the previous proposition, we can consider some type of discontinuities in the final data $f$ (or initial data if we consider the forward problem). We state and prove the following result.
\begin{theorem}\textbf{(Verification Theorem)}\label{verification_theorem}
  Let $f:\R\times\R_+\rightarrow\R$ be a polynomial growth function such that $\bDf$ has zero Lebesgue measure, and $h\in \mcC_{\pol,T}(\R\times\R_+)$.
  Then $u$ in \eqref{u_sol_chap3} is the unique viscosity solution to the problem \eqref{reference_PDE} that is $\mcC\big(([0,T]\times\R\times\R_+)\setminus (\{T\}\times \bDf) \big)$ and that has polynomial growth in $(x,y)$ uniformly in $t$. 
\end{theorem}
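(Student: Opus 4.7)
The plan is to prove existence and uniqueness of $u$ by the Baire-type sandwich strategy sketched in the introduction.

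For existence, polynomial growth of $u$ uniform in $t$ is immediate from polynomial growth of $f,h$ and the standard moment estimates for $(X^{t,x,y},Y^{t,y})$. Continuity on $([0,T]\times\R\times\R_+)\setminus(\{T\}\times\bDf)$ splits into the $f$-term, handled by Proposition \ref{continuity_prop} (since $\bDf$ has zero Lebesgue measure), and the $h$-term, continuous everywhere by dominated convergence using continuity and polynomial growth of $h$. To verify the viscosity PDE on $[0,T)\times\R\times\R_+$, I combine Baire's Proposition \ref{baire_approx} with a truncation-mollification step preserving polynomial growth uniformly in $n$, producing continuous approximations $f_n^+\searrow f^*$ and $f_n^-\nearrow f_*$. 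The corresponding Feynman-Kac functions $u_n^\pm$ (obtained by replacing $f$ by $f_n^\pm$ in \eqref{u_sol_chap3}) are continuous viscosity solutions of \eqref{reference_PDE} with terminal data $f_n^\pm$ by Proposition \ref{uniqueness_and_regularity_f_cont}. Since $f^*=f_*=f$ off $\bDf$ and the law of $(X_T^{t,x,y},Y_T^{t,y})$ is absolutely continuous for $t<T$, monotone convergence yields $u_n^\pm(t,x,y)\to u(t,x,y)$ pointwise on $[0,T)\times\R\times\R_+$. Monotonicity of the convergence together with continuity of the limit $u$ there allows Dini's theorem to upgrade this to local uniform convergence on compact subsets, and the stability Lemma \ref{sequence_lemma_general} then transfers the viscosity property from $u_n^\pm$ to $u$. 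The terminal condition $u(T,x,y)=f(x,y)$ holds by direct evaluation whenever $(x,y)\notin\bDf$.

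For uniqueness, let $v$ be another viscosity solution in the prescribed class. Its continuity off $\{T\}\times\bDf$ combined with $v(T,\cdot)=f$ on $(\R\times\R_+)\setminus\bDf$ yields the relaxed terminal ordering $v^*(T,\cdot)\le f^*$ and $v_*(T,\cdot)\ge f_*$. Reusing the approximants $f_n^\pm$ and the continuous solutions $u_n^\pm$ from the existence step, the comparison principle (Proposition \ref{comparison_principle}) produces
\[
u_n^-\le v_*\le v^*\le u_n^+\quad\text{on }[0,T]\times\R\times\R_+,
\]
since at the terminal time $u_n^+(T,\cdot)=f_n^+\ge f^*\ge v^*(T,\cdot)$ (and symmetrically from below), while in the interior $v^*$ is a sub-solution and $u_n^+$ a super-solution of the same PDE. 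Letting $n\to\infty$ collapses both bounds to $u$ on $[0,T)\times\R\times\R_+$, forcing $v_*=v^*=u$ in the interior; continuity of $v$ at terminal points $(T,x,y)\notin\bDf$ then yields $v=u$ throughout the domain of continuity.

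The main obstacle I foresee is engineering the approximations $f_n^\pm$: Proposition \ref{baire_approx} only delivers monotone continuous approximations after reducing to a bounded setting (via composition with $x\mapsto x/(1+x)$), so a careful combination with truncation and mollification is required to enforce simultaneously continuity, monotonicity in $n$, and a common polynomial envelope — the last property being indispensable both for applying the comparison principle to $u_n^\pm$ and for passing to the limit under the expectation defining $u$. A secondary but delicate point is a clean justification of the relaxed terminal inequalities $v^*(T,\cdot)\le f^*$ and $v_*(T,\cdot)\ge f_*$, which hinges on approaching $(T,x_0,y_0)$ through sequences $(s,x,y)$ with $s<T$ and $(x,y)\notin\bDf$, where $v$ is continuous and agrees with $f$.
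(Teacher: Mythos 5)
Your overall architecture — Baire approximation, monotone+Dini to get local uniform convergence, the stability Lemma \ref{sequence_lemma_general}, and the comparison principle — matches the paper's, and the existence step (including continuity off $\{T\}\times\bDf$ via Proposition \ref{continuity_prop} and the observation that $f^\ast = f$ a.e.) is sound. However, there is a genuine gap in the uniqueness step: the claimed terminal orderings $v^\ast(T,\cdot)\le f^\ast$ and $v_\ast(T,\cdot)\ge f_\ast$ do not follow from the hypotheses and are in fact false in general. For $(x_0,y_0)\in\bDf$, the upper envelope $v^\ast(T,x_0,y_0)=\limsup_{(t,x,y)\to(T,x_0,y_0)}v(t,x,y)$ is a $\limsup$ over the \emph{space-time} neighborhood, including sequences with $t<T$. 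Nothing in the hypotheses links $v(t,x,y)$ for $t<T$ to $f$ near $(x_0,y_0)$; the solution $v$ is only constrained to equal $f$ on the slice $\{T\}\times\bDf^c$ and to be continuous off $\{T\}\times\bDf$, so as $t\to T^-$ the values $v(t,x,y)$ near $\bDf$ are a priori controlled only by the polynomial growth bound, not by $f^\ast$. Your suggested remedy (``approaching $(T,x_0,y_0)$ through sequences $(s,x,y)$ with $s<T$ and $(x,y)\notin\bDf$, where $v$ is continuous and agrees with $f$'') does not work: at $(s,x,y)$ with $s<T$, $v$ is continuous but does \emph{not} agree with $f$.

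The paper sidesteps this by replacing $f$ not by $f^\ast$ but by a more generous function $f^+$ equal to $f$ off $\bDf$ and equal to a polynomial envelope $\chi$ on $\bDf$, where $\chi$ is chosen so that $|v|\le\chi$ uniformly in $t$ (possible by the growth hypothesis on the class). Since $\chi$ is continuous, $v^\ast(T,\cdot)\le\chi$ everywhere, and in particular $v^\ast(T,\cdot)\le f^+\le f^+_n$ on $\bDf$; off $\bDf$ one has $v^\ast(T,\cdot)=f=f^+\le f^+_n$. So the correct terminal squeeze reads $u^-_n(T,\cdot)\le v_\ast(T,\cdot)\le v^\ast(T,\cdot)\le u^+_n(T,\cdot)$ with $u^\pm_n$ defined from $f^\pm_n$ approximating the $\chi$-modified data $f^\pm$, not $f^\ast$ and $f_\ast$. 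Crucially, off $\bDf$ the two candidate terminal data $f^\pm$ and $f^\ast,f_\ast$ coincide, so the sequences $u^\pm_n$ still converge monotonically to $u$ on $[0,T)$ (the modification on $\bDf$ is on a Lebesgue null set, and the transition law of $(X_T^{t,x,y},Y_T^{t,y})$ is absolutely continuous for $t<T$), and the rest of your argument goes through unchanged. In short: replace $f^\ast,f_\ast$ by the $\chi$-modified $f^+,f^-$ in your uniqueness step, and the proof closes.
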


\begin{proof}
  Let $u$ be as in \eqref{u_sol_chap3} that is a function in the class considered, and let $v$ a solution in this same class. We use an approach that directly proves that $u$ is a solution and is the only one in the class considered.
  We show that exist a continuous sequence $(u^-_n)_{n\in\N}$ of sub-solution and a continuous sequence $(u^+_n)_{n\in\N}$ of super-solution such that
  \begin{align}
    &u^-_n(T,\cdot,\cdot)\le v_*(T,\cdot,\cdot)\le v^*(T,\cdot,\cdot)\le u^+_n(T,\cdot,\cdot), \label{squeeze_v} \\
    \text{for any compact } &\text{set } \mcK_T\subset[0,T)\times\R\times\R_+, \text{ one has }\lim_{n\rightarrow\infty} |u^\pm_n-u|_0^{\mcK_T}  \label{local_uniform_conv_VT},
  \end{align}
  where $v_*,v^*$ are respectively the lower and the upper semicontinuous envelope of $v$. 
  The local uniform convergence \eqref{local_uniform_conv_VT} tells us, thanks to Lemma \ref{sequence_lemma_general}, the limit $u$ is both sub and a super-solution, so it is a solution. 
  Then, if one has the inequalities \eqref{squeeze_v}, one can apply the comparison principle (reverting in time the solutions) comparing $u^-_n$ to $v_*$, and $u^+_n$ to $v^*$ getting the relations
  \begin{align*}
    u(t,x,y)&=\lim_{n\rightarrow\infty}u^+_n(t,x,y)\ge v^*(t,x,y)=v(t,x,y),\quad \text{ for all } t<T, \, (x,y)\in\R\times\R_+,\\
    u(t,x,y)&=\lim_{n\rightarrow\infty}u^-_n(t,x,y)\le v_*(t,x,y)=v(t,x,y),\quad \text{ for all } t<T, \, (x,y)\in\R\times\R_+,
  \end{align*}
  where we used the fact that $v_*(t,x,y)=v(t,x,y)=v^*(t,x,y)$ because $v$ is continuous for every point such that $t<T$. Then, $u=v$ everywhere because they have the same final data.
  We prove, so, the existence of continuous sequences of solutions that satisfy \eqref{squeeze_v}. We consider a modified version of final data $f^\pm$ defined as follows
  \begin{equation}
    \begin{cases}
      f^\pm(x,y)= \pm \chi(x,y) &\text{for }(x,y)\in N \\
      f^\pm(x,y)= f(x,y) &\text{otherwise},
      \end{cases}
  \end{equation}
  where $\chi(x,y)=C(1+|x|^L+y^L)$ $(C>0,L\in\N^*)$ is such that $|u(t,x,y)|,|v(t,x,y)|\le \chi(x,y)$, and we define the functions $u^\pm$
  $$
  u^\pm(t,x,y)=\E\left[e^{\varrho (T-t)}f^\pm(X_T^{t,x,y},Y^{t,y}_T)-\int_t^T e^{\varrho (s-t)} h(s,X^{t,x,y}_s,Y^{t,y}_s)ds\right].
  $$
  Being $f^-$ and $f^+$ respectively lower semicontinuous and upper semicontinuous, thanks to Proposition $\ref{baire_approx}$, there exists 
  $(f^-_n)_{n\in\N_*}\subset C(\R\times\R_+)$ non-decreasing sequence converging to $f^-$ and such that $f^-_n\ge-\chi$ (otherwise consider $\hat{f}^-_n=f^-_n\vee -\chi$) and $(f^+_n)_{n\in\N_*}\subset C(\R\times\R_+)$ non-increasing sequence converging to $f^+$ and such that $f^+_n\ge\chi$ (otherwise consider $\hat{f}^+_n=f^+_n\wedge \chi)$. We define in the same way as $u^\pm$
  $$
  u^\pm_n(t,x,y)=\E\left[e^{\varrho (T-t)}f^\pm_n(X_T^{t,x,y},Y^{t,y}_T)-\int_t^T e^{\varrho (s-t)} h(s,X^{t,x,y}_s,Y^{t,y}_s)ds\right].
  $$
  By Proposition \ref{existence_viscsol_fcont} $u^+_n$ are continuous solution with final data $f^+_n\ge f\ge v_*(T,\cdot,\cdot)$ and $u^-_n$ are continuous solution with final data $f^-_n\le f\le v^*(T,\cdot,\cdot)$ (proving \eqref{squeeze_v}). Furthermore, thanks to Lebesgue Theorem $u^\pm_n\rightarrow u^\pm$ when $n\rightarrow\infty$ and $u^\pm=u$ for $t<T$ because $f$ differs from $f^\pm$ only on a negligible set and $(X^{t,x,y}_T,Y^{t,y}_T)$ has density, and the convergence is monotone in $n$ for any point. Then, considering the convergence on any compact set $\mcK_T\subset[0,T)\times\R\times\R_+$ we have a monotone sequence  ($u^+_n$ or $u^-_n$) of continuous functions that converges everywhere over $\mcK_T$ to a continuous function $u$, so this convergence must be uniform for Dini's Theorem (proving \eqref{local_uniform_conv_VT}).
\end{proof}

We conclude with one remark and an example.

\begin{remark}\label{remark_regularity_f_discont}
  One should remark that if we add in Theorem \ref{verification_theorem} that $h$ is locally Hölder in the compact sets of $[0,T)\times\R\times\R^*_+$, then $u$ belongs also to $\mcC^{1,2}\big([0,T)\times(\R\times\R^*_+)\big)$.
  The proof of the regularity is the same as in Proposition \ref{uniqueness_and_regularity_f_cont}, so we do not repeat here.
\end{remark}

\begin{ex}\label{digital_option}
  Theorem \ref{verification_theorem} covers a wide range of possibilities. One of them is the case of digital options in the Heston Model. We fix the parameters $c = r-\delta,$ $d=-1/2$, $\lambda=1$ and $\varrho =r$
  \begin{equation*}
    \begin{cases}
      \partial_tu(t,x,y) +\mcL u(t,x,y) +r u(t,x,y) = 0,\quad   t\in[0,T), &x\in\R, y\in\R_+, \\
      u(T,x,y) = \ind{[c,d)}(\exp(x)), \quad  &x\in\R, y\in \R_+,
    \end{cases}
  \end{equation*}
  where $0\le c < d \le\infty$.
\end{ex}

%

\section{Application to finance: a hybrid approximation scheme for the viscosity solution}\label{approximation_section}
Consider the standard Heston model given by the following SDE
\begin{align}\label{Heston_model_sde}
  S_T^{t,s,y} &= s + \int_t^T (r-\delta)S_u du+\int_t^T\rho S_u\sqrt{Y^{t,y}_u} dW_u +\int_t^T\brho S_u\sqrt{Y^{t,y}_u} dB_u, \nonumber\\
  Y^{t,y}_T &= y + \int_t^T(a-bY^{t,y}_u)du + \int_t^T\sigma\sqrt{Y^{t,y}_u}dW_u.
\end{align}
In order to build our approximation, we  apply the transformation $(s,y)\mapsto (\log(s)-\frac{\rho}{\sigma}y,y)$ obtaining the following SDE
\begin{align}\label{logHestonDiffusion_nocorr}
  X_T^{t,x,y} &= x + \int_t^T\Big(r-\delta-\frac{\rho}{\sigma}a+\big(\frac{\rho}{\sigma}b-\frac{1}{2}\big)Y^{t,y}_s\Big) ds +\int_t^T\bar{\rho}\sqrt{Y^{t,y}_s} dB_s, \nonumber\\
  Y^{t,y}_T &= y + \int_t^T(a-bY^{t,y}_s)ds + \int_t^T\sigma\sqrt{Y^{t,y}_s}dW_s,
\end{align}
that given corresponds to a precise choice of the parameters in \eqref{referenceDiffusion}: $\rho=0$, $c=r-\delta-\frac \rho{\sigma}a$, $d=\frac \rho{\sigma}b-\frac 12$ and $\lambda = \brho$. 
The advantage of studying \eqref{logHestonDiffusion_nocorr} instead of the SDE obtained by $(s,y)\mapsto (\log(s),y)$ is that we can exploit that the noise driving the law of $X|Y$ is independent of the one driving $Y$.  
Hereafter, we fix $T>0$, $f\in\mcC_{\pol}(\R\times\R_+)$ and define
\begin{equation}\label{u_solution}
  u(t,x,y)=\E[f(X^{t,x,y}_T,Y^{t,y}_T)],\qquad(t,x,y)\in[0,T]\times\R\times\R_+.
\end{equation}
We know that
\begin{enumerate}
  \item $\P((X_s^{t,x,y},Y_s^{t,y})\in\R\times\R_+,\forall s\in[t,T])=1;$
  \item the function $u$ in \eqref{u_solution} solves the PDE
      \begin{equation}\label{logHeston_PDE_nocorr}
        \begin{cases}
          (\partial_t+\mcL)u(t,x,y) = 0,\quad   t\in[0,T), &x\in\R, y\in\R_+, \\
          u(T,x,y) = f(x,y), \quad  &x\in\R, y\in \R_+,
        \end{cases}
      \end{equation}
      where 
      \begin{equation}\label{Heston_infinit_gen_nocorr}
        \mcL = \frac{y}{2}(\brho^2 \partial^2_{x} +  \sigma^2 \partial^2_{y}) +\mu_X(y) \partial_x + \mu_Y(y) \partial_y,
      \end{equation}
      and  $\mu_X(y)= r-\delta-\rho a/\sigma +(\rho b/\sigma-1/2) y$,  $\mu_Y(y)=a-by$.
\end{enumerate}
In what follows, we prove the convergence for a large space of functions using the recent hybrid approach introduced in \cite{BCT} that we recall in what follows.

\subsection{The hybrid procedure}
Let $u$ be given in \eqref{u_solution}. We recall, briefly, the main ideas and describe the approximation of $u$. We use the Markov property to represent the solution $u(nh,x,y)$ at times $nh$, $h=T/N$, $n=0,\ldots,N$ for $(x,y)\in\R\times\R_+$
\begin{equation}\label{dynamic_programming_principle}
  \begin{cases}
    u(T,x,y)=f(x,y) \quad \text{ and } n=N-1,\ldots,0: \\
    u(nh,x,y)= \E[u((n+1)h,X^{nh,x,y}_{(n+1)h},Y^{nh,y}_{(n+1)h})].
  \end{cases}
\end{equation}

The goal is to build good approximations of the expectations in \eqref{dynamic_programming_principle}.
First, let $(\hY^h_n)_{n=0,\ldots,N}$ be a Markov chain which approximates $Y$, such that $(\hY^h_n)_{n=0,\ldots,N}$ is independent of the noise driving $X$. 
Then, at each step $n=0,1,\ldots,N-1$, for every $y\in\mcY^h_n\subset\R_+$ (the state space of $\hY^h_n$), one writes
$$
  \E[u((n+1)h,X^{nh,x,y}_{(n+1)h},Y^{nh,y}_{(n+1)h})] \approx \E[u((n+1)h,X^{nh,x,y}_{(n+1)h},\hY^h_{n+1})\mid \hY^h_n=y].
$$
As a second step, one approximates the component $X$ on $[nh,(n+1)h]$ by freezing the coefficients in \eqref{logHestonDiffusion_nocorr} at the observed position $\hY^h_n=y$, that is, for $t\in[nh,(n+1)h]$,
$$
X^{nh,x,y}_t \overset{\text{law}}{\approx} \hX^{nh,x,y}_t = x + \Big(r-\delta-\frac{\rho}{\sigma}a+\big(\frac{\rho}{\sigma}b-\frac{1}{2}\big)y\Big)(t-nh)+\brho\sqrt{y}(Z_t-Z_{nh}).
$$
Therefore, by the fact that the Markov chain and the noise driving $X$ are independent, one can write
\begin{align*}
  \E[u((n+1)h,X^{nh,x,y}_{(n+1)h},Y^{nh,y}_{(n+1)h})] &\approx \E[u((n+1)h,\hX^{nh,x,y}_{(n+1)h},\hY^{h}_{n+1})\mid \hY^h_n=y] \\
  &=\E[\phi(\hY^h_{n+1};x,y)\mid \hY^h_n=y]
\end{align*}
where
\begin{equation}
  \phi(\zeta;x,y)=\E[u((n+1)h,\hX^{nh,x,y}_{(n+1)h},\zeta)].
\end{equation}
From the Feynman-Kac formula, one gets $\phi(\zeta;x,y)=v(nh,x;y,\zeta)$, where $(t,x)\mapsto v(t,x;y,\zeta)$ is the solution at time $nh$ of the parabolic PDE Cauchy problem
\begin{equation}\label{PDE_frozencoeff}
  \begin{cases}
    \partial_t v +\mcL^{(y)}v=0, &\text{in } [nh,(n+1)h)\times\R,\\
    v((n+1)h,x;y,\zeta) = u((n+1)h,x,\zeta), &x\in\R,
  \end{cases}
\end{equation}
where $\mcL^{(y)}$ acts on function $g=g(x)$ as follows
\begin{equation}\label{mcLy_frozencoeff}
  \mcL^{(y)} g(x)= \Big(r-\delta-\frac{\rho}{\sigma}a+\big(\frac{\rho}{\sigma}b-\frac{1}{2}\big)y\Big)\partial_x g(x) + \frac{1}{2}\brho^2y\partial^2_xg(x).
\end{equation}
We remark that in \eqref{PDE_frozencoeff}-\eqref{mcLy_frozencoeff}, $y\in\R_+$ is just a parameter, so $\mcL^{(y)}$ has constant coefficients.
Consider now a numerical solution of the PDE \eqref{PDE_frozencoeff}. Let $\Dx$ denote a fixed initial spatial step, and set $\mcX$ as a grid on $\R$ given by $\mcX=\{x\in\R \mid x=X_0+i\Dx, i\in\Z \}$. For $y\in\R$, let $\PhDx(y)$ be a linear operator (acting on suitable functions on $\mcX$) which gives the approximating solution to the PDE \eqref{PDE_frozencoeff} at time $nh$. Then, as $x\in\mcX$, we get the numerical approximation
$$
\E[u((n+1)h,X^{nh,x,y}_{(n+1)h},Y^{nh,y}_{(n+1)h})] \approx \E[\PhDx(y)u((n+1)h,\cdot,\hY^{h}_{n+1})(x)\mid \hY^h_n=y].
$$
Therefore, by inserting in \eqref{dynamic_programming_principle}, one sees that the hybrid numerical procedure works as follows: the function $x\mapsto u(0,x,Y_0)$, $x\in\mcX$, is approximated by $u^h_0(x,Y_0)$ backward-defined as 
\begin{equation}\label{hybrid_scheme}
  \begin{cases}
    u^h_N(x,y) = f(x,y), \qquad (x,y)\in \mcX\times\mcY^h_N,\quad \text{and as } n=N-1,\ldots,0: \\
    u^h_n(x,y) = \E[\PhDx(y) u^h_{n+1}(\cdot,\hat{Y}^h_{n+1})(x) \mid \hat{Y}^h_{n}=y], \quad (x,y)\in\mcX\times\mcY^h_n.
    \end{cases}
\end{equation}

\subsection{Convergence in $\ell^\infty$}
We recall the finite difference scheme and the Markov chain $(\hY^h_n)_{n=0,\ldots,N}$, that under suitable hypothesis on $f$ assures the convergence of the Hybrid procedure to the solution $u$ \eqref{u_solution}.
Specifically, if $\mu_X(y)=\frac{h}{\Dx}r-\delta-\frac{\rho}{\sigma}a+\big(\frac{\rho}{\sigma}b-\frac{1}{2}\big)y\geq 0$, we approximate  $(\partial_t+\mcL^{(y)})v$ by using the scheme 
$$
\frac{v^{n+1}_i-v^n_i}{h}+ \mu_X(y)\frac{v^{n}_{i+1}-v^n_i}{\Dx} + \frac{1}{2}\brho^2y\frac{v^{n}_{i+1}-2v^n_i+v^n_{i-1}}{\Dx^2} ,
$$
while, if $\mu_X(y)\leq 0$, we use the approximation
$$
\frac{v^{n+1}_i-v^n_i}{h}+ \mu_X(y)\frac{v^{n}_{i}-v^n_{i-1}}{\Dx} + \frac{1}{2}\brho^2y\frac{v^{n}_{i+1}-2v^n_i+v^n_{i-1}}{\Dx^2}.
$$
The resulting scheme is 
\begin{equation}\label{equaz2}
A^h_{\Dx}(y)v^n=v^{n+1},
\end{equation}
where $A^h_{\Dx}(y)$ is the linear operator given by  
\begin{equation}\label{A2}
(A^h_{\Dx})_{ij}(y)=\begin{cases}
-\beta^h_{\Dx}(y)-|\alpha^h_{\Dx}(y)|\ind{\alpha^h_{\Dx}(y)<0},\qquad &\mbox{ if }i=j+1,\\
1+2\beta^h_{\Dx}(y)+|\alpha^h_{\Dx}(y)|,\qquad &\mbox{ if }i=j,\\
-\beta^h_{\Dx}(y)-|\alpha^h_{\Dx}(y)|\ind{\alpha^h_{\Dx}(y)>0},\qquad &\mbox{ if }i=j-1,\\
0, &\mbox{ if }|i-j|>1,
\end{cases}
\end{equation}  
with
$$
\alpha^h_{\Dx}(y)=\frac{h}{\Dx}r-\delta-\frac{\rho}{\sigma}a+\big(\frac{\rho}{\sigma}b-\frac{1}{2}\big)y, \qquad \beta^h_{\Dx}(y)=\frac{h}{2\Dx^2}y.
$$
We finally define $\PhDx(y)=\big(A^h_{\Dx}(y)\big)^{-1}$. 

Along with the finite different scheme, we need a Markov chain $(\hY^h_n)_{n=0,1,\ldots,N}$ approximating the CIR process $Y$ over the time grid $(nT/N)_{n=0,1,\ldots,N}$. The state space is the following, for $n=0,1,\ldots,N$ one has the lattice
\begin{equation}\label{vnk}
\mathcal{Y}_n^h=\{y^n_k\}_{k=0,1,\ldots,n}\quad\mbox{with}\quad
y^n_k=\Big(\sqrt {y}+\frac{\sigma} 2(2k-n)\sqrt{h}\Big)^2\ind{\{\sqrt {y}+\frac{\sigma} 2(2k-n)\sqrt{h}>0\}}.
\end{equation}
Note that $\mathcal{Y}_0^h=\{y\}$. For each fixed node $(n,k)\in\{0,1,\ldots,N-1\}\times\{0,1,\ldots,n\}$, the ``up'' jump $k_u(n,k)$ and the ``down'' jump $k_d(n,k)$ from $y^n_k\in\mathcal{Y}_n^h$ are defined as 
\begin{align}
\label{ku2}
&k_u(n,k) =
\min \{k^*\,:\, k+1\leq k^*\leq n+1\mbox{ and }y^n_k+\mu_Y(y^n_k)h \le y^{n+1}_{k^*}\},\\
\label{kd2}
&k_d(n,k)=
\max \{k^*\,:\, 0\leq k^*\leq k \mbox{ and }y^n_k+\mu_Y(y^n_k)h \ge y^{n+1}_{ k^*}\},
\end{align}
where $\mu_Y(y)=a-by$ and
with the understanding $k_u(n,k)=n+1$, resp. $k_d(n,k)=0$,  if the set in \eqref{ku2}, resp. \eqref{kd2}, is empty. 
Starting from the node $(n,k)$ the probability that the process jumps to $k_u(n,k)$ and $k_d(n,k)$ at time-step $n+1$ are set respectively as
\begin{equation*}
p_u(n,k)
=0\vee \frac{\mu_Y(y^n_k)h+ y^n_k-y^{n+1}_{k_d(n,k)} }{y^{n+1}_{k_u(n,k)}-y^{n+1}_{k_d(n,k)}}\wedge 1
\quad\mbox{and}\quad p_d(n,k)=1-p_u(n,k).
\end{equation*}
We call $(\hY^h_n)_{n=0,1,\ldots,N}$ the Markov chain governed by the above jump probabilities.

Let $\ell^\infty(\mcX)=\{g:\mcX\rightarrow\R\mid \sup_{x\in\mcX} g(x)<\infty\}$ with the norm $|g|_{\ell^\infty}=\sup_{x\in\mcX} g(x)$.
With the above Markov chain, Briani et al. in \cite{BCT} proved that $\PhDx(\cdot)$ satisfies the following Assumption $\mcK$ with $c=1$ and $\mcE=h+\Dx$.

\begin{definition}[\textbf{Assumption} $\mcK(\infty,c,\mcE)$]
  Let $c=c(y)\ge 0$, $y\in\R_+$, and $\mcE=\mcE(h,\Dx)\ge 0$ such that $\lim_{(h,\Dx)\rightarrow0}\mcE(h,\Dx)=0$. We say that the linear operator $\PhDx(y):\ell^\infty(\mcX)\rightarrow\ell^\infty(\mcX)$, $y\in\mcD$, satisfies this assumption if 
\begin{equation}
  \|\PhDx(y)\|_\infty:= \sup_{|f|_{\ell^\infty}=1}|\PhDx(y)f|_{\ell^\infty}\le 1+c(y)h, 
\end{equation}
and, with $u$ being defined in \eqref{u_solution}, for every $n=0,\ldots,N-1$, one has
\begin{equation}
  \E[\PhDx(\hY^h_n)u((n+1)h,\cdot,\hY^{h}_{n+1})(x)\mid Y^h_n] = u(nh,x,\hY^h_n)+\mcR^h_n(x,\hY^h_n),
\end{equation}
where the remainder $\mcR^h_n(x,\hY^h_n)$ satisfies the following property: there exists $\bar{h},C>0$ such that for every $h<\bar{h},\Dx<1$, and $n\le N=\lfloor T/h \rfloor$ one has
\begin{equation}
  \left\|e^{\sum_{l=1}^n c(\hY^h_l)h}|\mcR^h_n(\cdot,\hY^h_n)|_{\ell^\infty} \right\|_{L^1(\Omega)} \le C h\mcE(h,\Dx).
\end{equation}
\end{definition}

In \cite{BCT}, Briani et al. proved the following Theorem.
\begin{theorem}\label{BCT_regular_approx}
  Let $u$ defined in \eqref{u_solution}, $(u^h_n)_{n=0,\ldots,N}$ be given by \eqref{hybrid_scheme} with the choice
  $$
  \PhDx(y)=\big(A^h_{\Dx}(y)\big)^{-1},
  $$
  where $A^h_\Dx(y)$ is given in \eqref{A2}, and $(\hY^h_n)_{n=0,1,\ldots,N}$ defined as above.
  If $\partial_x ^{2j}f\in \mcC^{\infty,q-j}_{\pol}(\R,\R_+)$, for every $j=0,1,\ldots,4$, then, there exist $\bar{h},C>0$ such that for every $h<\bar{h}$ and $\Dx<1$ one has 
  \begin{equation}
    |u(0,\cdot,y) - u^h_0(\cdot,y)|_{\ell^\infty} \le C(h+\Dx).
  \end{equation}
\end{theorem}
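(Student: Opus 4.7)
The strategy is the classical consistency + stability approach for semigroup approximations, but implemented in the hybrid form suited to the splitting between the $Y$-component (Markov chain on a tree) and the $X$-component (finite differences). I would organise the argument around Assumption $\mcK(\infty,c,\mcE)$, which encapsulates both the $\ell^\infty$-stability of $\PhDx(y)$ and the one-step consistency of the composed scheme. The first block of the proof is to check this assumption for the operator $\PhDx(y)=(A^h_\Dx(y))^{-1}$ with the tree $(\hY^h_n)$; the second block is then the telescoping argument that turns the local error into the global $O(h+\Dx)$ estimate.

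For the stability bound $\|\PhDx(y)\|_\infty\le 1+c(y)h$, I would exploit that $A^h_\Dx(y)$ is a diagonally dominant $M$-matrix: under the upwind choice of $\alpha^h_\Dx$ all off-diagonal entries of $A^h_\Dx(y)$ are non-positive, the diagonal equals $1+2\beta^h_\Dx(y)+|\alpha^h_\Dx(y)|$, so by a standard maximum-principle argument $\|(A^h_\Dx(y))^{-1}\|_\infty\le 1$; thus $c(y)\equiv 0$ here (one could keep the linear term to absorb lower-order perturbations). For the one-step consistency, I would expand $u((n+1)h,\cdot,\hY^h_{n+1})$ using Proposition~\ref{prop-rep-logHeston-estim} / Proposition~\ref{prop-reg-new}, which under the hypothesis $\partial_x^{2j}f\in\mcC^{\infty,q-j}_{\pol}(\R,\R_+)$ for $j=0,\dots,4$ gives $\sup_{t\in[0,T]}|\partial_t^k\partial_x^\a\partial_y^\b u(t,\cdot,y)|_{L^\infty_x}\le C(1+y^c)$ for $2k+\a+\b\le 8$. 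Then, writing
\[
\E[\PhDx(\hY^h_n)u((n+1)h,\cdot,\hY^h_{n+1})(x)\mid \hY^h_n=y]-u(nh,x,y)
=\mathrm{I}(x,y)+\mathrm{II}(x,y),
\]
where $\mathrm{I}$ is the error coming from replacing $u((n+1)h,\cdot,\hY^h_{n+1})$ by $u((n+1)h,\cdot,y)+$ linear correction in $\hY^h_{n+1}-y$, and $\mathrm{II}$ is the error of the finite-difference step $\PhDx(y)$ against the exact parabolic semigroup of $\mcL^{(y)}$.  For $\mathrm{II}$, a standard Taylor expansion in $x$ (using the fourth-order $x$-regularity of $u$) combined with $\alpha^h_\Dx(y),\beta^h_\Dx(y)=O(h/\Dx)$ yields $|\mathrm{II}|_{\ell^\infty}\le Ch(1+y^c)(h+\Dx)$. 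For $\mathrm{I}$, I would use the tree construction \eqref{ku2}--\eqref{kd2}: by definition the jump probabilities $p_u,p_d$ are chosen so that $\E[\hY^h_{n+1}-\hY^h_n\mid \hY^h_n=y]=\mu_Y(y)h$, while a direct computation shows that $\E[(\hY^h_{n+1}-\hY^h_n)^2\mid \hY^h_n=y]=\sigma^2 yh+O(h^2)$, so matching the first two moments of the CIR increment; combined with the PDE \eqref{PDE_frozencoeff} satisfied by $u$ in the frozen-coefficient sense, this gives $|\mathrm{I}|_{\ell^\infty}\le Ch^2(1+y^c)$.  Summing both contributions produces the bound on $\mcR^h_n$ required by Assumption $\mcK(\infty,0,h+\Dx)$, with the $L^1(\Omega)$ norm controlled by the polynomial moments of the tree, which are uniformly bounded in $n$ and $h$.

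Once Assumption $\mcK$ is established, I would close the proof by iterating the one-step identity. Setting $e^h_n(x,y):=u(nh,x,y)-u^h_n(x,y)$, \eqref{hybrid_scheme} and the consistency relation yield $e^h_n(x,\hY^h_n)=\E[\PhDx(\hY^h_n)e^h_{n+1}(\cdot,\hY^h_{n+1})(x)\mid\hY^h_n]+\mcR^h_n(x,\hY^h_n)$ with $e^h_N\equiv 0$. Applying $\|\PhDx(y)\|_\infty\le 1+c(y)h$ and taking $\ell^\infty_x$-norms recursively gives
\[
\|e^h_0(\cdot,y)\|_{\ell^\infty}\le \sum_{n=0}^{N-1}\E\Bigl[e^{\sum_{l=1}^n c(\hY^h_l)h}|\mcR^h_n(\cdot,\hY^h_n)|_{\ell^\infty}\Bigr]\le N\,Ch(h+\Dx)= C T(h+\Dx),
\]
which is the stated rate.

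The main obstacle is the consistency analysis for $\mathrm{II}$: because the coefficients $\alpha^h_\Dx,\beta^h_\Dx$ depend on $y=\hY^h_n$, which is a random, possibly large, value, one needs the polynomial growth in $y$ of the derivatives of $u$ (hence the relatively strong hypothesis $j=0,\dots,4$ in Proposition~\ref{prop-reg-new}) to absorb these factors, and one needs the tree's moment bounds to control $\E[(1+(\hY^h_n)^c)]$ uniformly in $n,h$. The degeneracy of the diffusion at $y=0$ is not a real obstacle here because $\PhDx(0)$ is simply a pure advection step, for which the upwind scheme is $\ell^\infty$-stable and first-order consistent; nonetheless care is needed in the expansions to check that the $y^c$-type weights remain integrable against the tree's law when the Feller condition fails.
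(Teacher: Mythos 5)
The paper does not actually prove this theorem — it is recalled verbatim from \cite{BCT} and used as a black box in the proof of Theorem~\ref{convergence_theorem}. So there is no paper proof to compare against directly, only the surrounding framework (Assumption $\mcK(\infty,c,\mcE)$, the stability bound $\|\PhDx(y)\|_\infty\le 1$ credited to \cite[Lemma 4.7]{BCT}, and the statement that \cite{BCT} verified Assumption $\mcK$ with $c=1$, $\mcE=h+\Dx$). Your plan — verify Assumption $\mcK$, then telescope — is exactly the framework the paper sets up, so the architecture is right.

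That said, your verification of Assumption $\mcK$ contains an asserted-but-unproved step that is precisely where the real work lies. You claim, by ``direct computation,'' that $\E[(\hY^h_{n+1}-\hY^h_n)^2\mid\hY^h_n=y]=\sigma^2 yh+O(h^2)$ and conclude $|\mathrm{I}|_{\ell^\infty}\le Ch^2(1+y^c)$. This is not automatic for the tree defined in \eqref{vnk}--\eqref{kd2}: the jump indices $k_u,k_d$ in \eqref{ku2}--\eqref{kd2} can be multi-node (the sets may be empty and default to the endpoints), the probabilities are clipped to $[0,1]$, and at nodes near $y=0$ the lattice spacing degenerates. These effects mean the second moment of the increment is only matched up to an error that must be tracked carefully, and this is exactly the part that becomes delicate when the Feller condition fails. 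A rigorous verification needs a case analysis on whether the drifted point $y^n_k+\mu_Y(y^n_k)h$ falls inside the single-step window, plus uniform-in-$n$ moment bounds for $\hY^h$ to control the weights $1+(\hY^h_n)^c$ in the $L^1(\Omega)$ estimate of $\mcR^h_n$.

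A second, smaller issue: your one-step decomposition into $\mathrm{I}+\mathrm{II}$ conflates two distinct error sources. The hybrid scheme makes (a) a tree approximation of $Y$, (b) a frozen-coefficient approximation of the $X$-dynamics on $[nh,(n+1)h]$ (replacing the coupled SDE for $X$ by the SDE with $Y_s\equiv y$), and (c) a finite-difference discretization of the resulting constant-coefficient parabolic operator $\mcL^{(y)}$. Your $\mathrm{II}$ handles (c), and $\mathrm{I}$ gestures at (a), but (b) is not cleanly isolated. The frozen-coefficient error contributes at order $O(h^2)$ per step and needs the $y$-derivative bounds from Proposition~\ref{prop-reg-new} (which is why the hypothesis $\partial_x^{2j}f\in\mcC^{\infty,q-j}_{\pol}$ for $j\le 4$ is imposed); if it is not decomposed explicitly it is easy to lose a power of $h$ or a power of the polynomial weight. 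The paper's choice $c=1$ rather than your $c=0$ in Assumption $\mcK$ likely reflects the need for this extra slack; it is not a mistake on your part since $\|\PhDx(y)\|_\infty\le 1$ does hold, but it is worth checking that the remainder bound goes through with $c=0$ once the missing error term is included.
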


We are about to show that, under less regular $f$, we still have the convergence of the hybrid procedure. We present a lemma that will help us prove this result.
\begin{lemma}\label{stability_conv_Cpol}
  Let $f\in \mcC^{\infty,0}_{\pol}(\R,\R_+)$, $(\varphi_l)_{l\in\N^*}$ a sequence of mollifiers over $\R^2$ and
  \begin{equation}
    \tf(x,y) = f(x,0\vee y) \quad \text{and} \quad f_l=\tf\ast \varphi_l.
  \end{equation}
  Then, for all $q\in\N$, $f_l\in \mcC^{\infty,q}_{\pol}(\R,\R_+)$. In particular  $\exists C_0,C^*_0 >0$ such that for all $l\in\N$, $x\in\R$ and $y\in\R_+$
  \begin{equation}\label{const_stability_conv}
    |f_l(x,y)|\le C_0(1+|x|^L+y^L), \quad \sup_{x\in\R}|f_l(x,y)|\le C^*_0(1+y^L).
  \end{equation}
\end{lemma}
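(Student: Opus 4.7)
The plan is to exploit the fact that convolution with a $\mathcal{C}^\infty$ compactly supported mollifier automatically yields a $\mathcal{C}^\infty$ function on all of $\R^2$, so that the regularity part of the claim $f_l \in \mcC^{\infty,q}_{\pol}(\R,\R_+)$ will come essentially for free. The real work will lie entirely in producing the uniform-in-$l$ polynomial-growth estimates~\eqref{const_stability_conv} for $f_l$ itself, and separately deriving polynomial-growth bounds (possibly $l$-dependent) for each spatial derivative $\partial_x^\alpha \partial_y^\beta f_l$.

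First I would record two basic facts about the extension $\tilde{f}(x,y) = f(x, 0 \vee y)$: since $f \in \mcC^{\infty,0}_{\pol}(\R,\R_+)$, there exist constants $C,C^*,L$ such that $|\tilde{f}(x,y)| \le C(1 + |x|^L + |y|^L)$ on all of $\R^2$ (using $(0\vee y)^L \le |y|^L$) and $\sup_{x \in \R} |\tilde{f}(x,y)| \le C^*(1 + |y|^L)$, while $\tilde{f}$ remains continuous on $\R^2$. For a standard mollifier $\varphi_l$ with $\operatorname{supp}(\varphi_l) \subset B(0,1/l) \subset B(0,1)$ for $l \ge 1$, the convolution $f_l = \tilde{f} * \varphi_l$ is then $\mathcal{C}^\infty$ on $\R^2$, with all derivatives transferred onto the mollifier: $\partial_x^\alpha \partial_y^\beta f_l = \tilde{f} * (\partial_x^\alpha \partial_y^\beta \varphi_l)$.

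Next I would establish the two inequalities in~\eqref{const_stability_conv}. For the first, the triangle inequality inside the convolution integral gives
\[
|f_l(x,y)| \le C \int_{B(0,1)} \bigl(1 + |x-x'|^L + |y-y'|^L\bigr)\, \varphi_l(x',y')\, dx'\, dy',
\]
and combining $|a-b|^L \le 2^{L-1}(|a|^L + |b|^L)$ with $|x'|,|y'| \le 1$ and $\int \varphi_l = 1$ yields $|f_l(x,y)| \le C_0(1 + |x|^L + |y|^L)$ with $C_0$ independent of $l$. For the second, pushing the supremum in $x$ inside the integral produces
\[
\sup_{x \in \R} |f_l(x,y)| \le C^* \int_{B(0,1)} (1 + |y-y'|^L)\, \varphi_l(x',y')\, dx'\, dy',
\]
and the same compact-support argument yields the uniform constant $C^*_0$. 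Applying the identical scheme with $\partial_x^\alpha \partial_y^\beta \varphi_l$ in place of $\varphi_l$ delivers polynomial bounds on every derivative (with constants depending on $l$, which is all that is needed for the membership $f_l \in \mcC^{\infty,q}_{\pol}(\R,\R_+)$ at each fixed $l$).

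There is no serious obstacle; the only mildly subtle point is the case $y$ close to $0$, where the mollification variable $y'$ can push $y-y'$ into negative territory. This is precisely why we extend $f$ to $\tilde{f}(x,y) = f(x, 0 \vee y)$ first, so that $\sup_x |\tilde{f}(x,y-y')| \le C^*(1 + |y-y'|^L)$ holds on all of $\R^2$ without any sign restriction on $y-y'$.
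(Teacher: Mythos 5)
Your proposal is correct and follows essentially the same approach as the paper: extend $f$ to $\tilde f$ on $\R^2$, observe that $f_l=\tilde f*\varphi_l$ is $\mathcal C^\infty$ with $D^kf_l=\tilde f*D^k\varphi_l$, and then read off the polynomial bounds (uniform in $l$ for $k=0$ because the mollifier has total mass one and compact support, merely $l$-dependent for $k\ge1$). The paper phrases the $l$-uniformity via the observation that $\int(|\zeta|^L+|\eta|^L)\varphi_l\,d\zeta\,d\eta\to0$, whereas you bound the same integral by using $\operatorname{supp}\varphi_l\subset B(0,1)$; these are the same idea.
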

\begin{proof}
  Let $q\in\N$. Let $f\in \mcC^{\infty,0}_{\pol}(\R,\R_+)$, then $\tf\in \mcC^{\infty,0}_{\pol}(\R,\R)$ and in particular $\tilde{f}\in\mathbb{L}^\infty_{\text{loc}}(\R^2)$, hence $f_l\in C^\infty(\R^2)$ and, for all multi-index $k$, $D^k f_l = \tilde{f}*D^k\varphi_l$. It remains for us to prove the polynomial growth of $D^kf_l$ and $\sup_{x\in\R}|D^k f_l(x,y)|$ for all multi-index $k$ such that $|k|\le q$. Let $x\in\R,~y\in\R_+$, using that $\tf\in \mcC^{\infty,0}_{\pol}(\R,\R)$ and $(a+b)^L\le 2^{L-1}(a^L+b^L)$ for all $a,b\ge0$
  \begin{align*}
    |D^k f_l(x,y)|&= \left|\int_{\R^2}\tilde{f}(\zeta-x,\eta-y)D^k\varphi_l(\zeta,\eta)d\zeta d\eta \right| \le \int_{\R^2}|\tilde{f}(\zeta-x,\eta-y)| |D^k\varphi_l(\zeta,\eta)|d\zeta d\eta \\
    &\le \int_{\R^2}C(1+|\zeta-x|^L+|\eta-y|^L) |D^k\varphi_l(\zeta,\eta)|d\zeta d\eta \\
    &\le C +C_L(|x|^L+y^L) +C_L \int_{\R^2}(|\zeta|^L+|\eta|^L) |D^k\varphi_l(\zeta,\eta)|d\zeta d\eta \\
    &\le C_k(l)(1 +|x|^L+y^L) \\
    &\le \Big(\max_{k \,\text{s.t.}\, |k|\le q} C_k(l)\Big) (1 +|x|^L+y^L),
  \end{align*}
  where we used the fact that $D^k\varphi_l$ is $C^\infty_c(\R^2)$.
  Furthermore, if $k=0$ than $|D^k\varphi_l(\zeta,\eta)|=\varphi_l(\zeta,\eta)$ and the integral $\int_{\R^2}(|\zeta|^L+|\eta|^L) \varphi_l(\zeta,\eta) d\zeta d\eta \rightarrow 0$ when $l$ goes to $\infty$. So the constant $C_0(l)=C_0$.
  Now, let $y\in\R_+$,
  \begin{align*}
    \sup_{x\in\R} |D^k f_l(x,y)|  &= \sup_{x\in\R} \left|  \int_{\R^2}\tilde{f}(\zeta-x,\eta-y) D^k\varphi_l(\zeta,\eta) d\zeta d\eta\right| \\
    & \le\int_{\R^2}\sup_{z\in\R}|\tilde{f}(z,\eta-y)| |D^k\varphi_l(\zeta,\eta)|d\zeta d\eta, \\
    &\le (C+C_L y^L) +C_L \int_{\R^2}|\eta|^L |D^k\varphi_l(\zeta,\eta)|d\zeta d\eta \\
    &\le C^*_k(l)(1+y^L) \\
    &\le \Big(\max_{k \,\text{s.t.}\, |k|\le q} C^*_k(l)\Big)  (1+y^L),
  \end{align*}
  where once again, we used the fact that $D^k\varphi_l$ is $C^\infty_c(\R^2)$. As in the previous estimate, if $k=0$ than $|D^k\varphi_l(\zeta,\eta)|=\varphi_l(\zeta,\eta)$ and the integral $\int_{\R^2} |\eta|^L \varphi_l(\zeta,\eta) d\zeta d\eta \rightarrow 0$ when $l$ goes to $\infty$. So the constant $C^*_0(l)=C^*_0$.
\end{proof}

We state and prove the main contribution of this section.

\begin{theorem}\label{convergence_theorem}
  Let $f\in \mcC^{\infty,0}_{\pol}(\R,\R_+)$ and suppose that $f$ is uniformly continuous over the sets $\R\times[0,M]$ for all $M>0$. Let $u$ be the viscosity solution defined in \eqref{u_solution} and $u^h_0$ the discrete solution \eqref{hybrid_scheme} produced by the backward hybrid procedure starting from $f$.
  Then
  \begin{equation}
    u^h_0(\cdot,y) \xrightarrow[(h,\Dx)\rightarrow 0]{l^\infty} u(0,\cdot,y).
  \end{equation}
\end{theorem}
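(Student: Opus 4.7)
The approach is to approximate $f$ by the mollifications $f_l$ furnished by Lemma \ref{stability_conv_Cpol}, apply Theorem \ref{BCT_regular_approx} for each smooth $f_l$, and combine the resulting estimates by a triangle inequality. Let $u_l$ be the viscosity solution \eqref{u_solution} with initial data $f_l$ and $u^{h,l}_0$ the corresponding output of the hybrid scheme \eqref{hybrid_scheme}. By Lemma \ref{stability_conv_Cpol}, $f_l \in \mcC^{\infty,q}_{\pol}(\R,\R_+)$ for every $q\in\N$, so Theorem \ref{BCT_regular_approx} gives, for each fixed $l$, a constant $C_l$ with $|u_l(0,\cdot,y) - u^{h,l}_0(\cdot,y)|_{\ell^\infty} \le C_l(h+\Dx)$. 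I will show that
\begin{equation*}
 |u^h_0(\cdot,y) - u(0,\cdot,y)|_{\ell^\infty} \le |u_l(0,\cdot,y) - u(0,\cdot,y)|_{\ell^\infty(\mcX)} + C_l(h+\Dx) + |u^{h,l}_0(\cdot,y) - u^h_0(\cdot,y)|_{\ell^\infty},
\end{equation*}
where the first and third summands can be made arbitrarily small by choosing $l$ large, uniformly in $(h,\Dx)$.

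For the first summand, the representation \eqref{u_solution} and the linearity of expectation yield
\begin{equation*}
 |u(0,x,y) - u_l(0,x,y)| \le \E\big[|f-f_l|(X_T^{0,x,y},Y_T^{0,y})\ind{Y_T^{0,y}\le M}\big] + \E\big[|f-f_l|(X_T^{0,x,y},Y_T^{0,y})\ind{Y_T^{0,y}> M}\big].
\end{equation*}
The tail piece is independent of $x$ and, using the bounds $\sup_{x'\in\R}|f(x',y')|+\sup_{x'\in\R}|f_l(x',y')|\le C(1+y^{\prime L})$ from $f\in \mcC^{\infty,0}_{\pol}$ and \eqref{const_stability_conv} together with the finite moments of $Y_T^{0,y}$, is bounded by $C(1+\E[(Y_T^{0,y})^{L+1}])/M$, which tends to $0$ as $M\to\infty$, uniformly in $l$. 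The main piece is bounded by $\sup_{(x',y')\in\R\times[0,M]}|f-f_l|(x',y')$, which vanishes as $l\to\infty$ because the uniform continuity of $f$ on $\R\times[0,M]$ and the construction $\tilde f(x,y)=f(x,0\vee y)$, $f_l=\tilde f\ast \varphi_l$ imply that $f_l\to f$ uniformly on $\R\times[0,M]$. Thus, choosing $M$ then $l$, the first summand is made $<\varepsilon/3$.

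For the third summand, the linearity of the hybrid recursion and Assumption $\mcK(\infty,1,h+\Dx)$ (with $c\equiv 1$) applied to $f_l-f$ as terminal data give
\begin{equation*}
 u^{h,l}_0(x,y) - u^h_0(x,y) = \E\big[\PhDx(\hY_0^h)\cdots\PhDx(\hY_{N-1}^h)(f_l-f)(\cdot,\hY_N^h)(x)\,\big|\,\hY_0^h=y\big],
\end{equation*}
whence $|u^{h,l}_0(x,y)-u^h_0(x,y)|\le (1+h)^N\,\E\big[\sup_{x'\in\mcX}|f_l-f|(x',\hY_N^h)\,\big|\,\hY_0^h=y\big] \le e^T\,\E[\sup_{x'\in\R}|f_l-f|(x',\hY_N^h)\mid\hY_0^h=y]$. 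Splitting again at $\{\hY_N^h\le M\}$ and using the same uniform bounds $\sup_{x'}|f-f_l|(x',y')\le C(1+y^{\prime L})$ on the tail and the uniform convergence of $f_l$ to $f$ on $\R\times[0,M]$ on the main piece, the third summand is bounded by $e^T\sup_{(x',y')\in\R\times[0,M]}|f-f_l|(x',y') + Ce^T\E[(1+(\hY_N^h)^{L+1})\ind{\hY_N^h>M}\mid \hY_0^h=y]/M$, independently of $x$.

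The main obstacle is to establish a uniform-in-$h$ moment estimate of the form $\sup_{h\in(0,\bar h]}\E[(\hY_N^h)^{L+1}\mid \hY_0^h=y] < \infty$, since this is what allows the third summand to be bounded uniformly in $(h,\Dx)$. This is a standard property of the CIR-approximating tree chain $(\hY_n^h)_{n=0,\ldots,N}$: starting from the explicit form \eqref{vnk} of the state space and the bounded up/down transition probabilities, one proves by induction on $n$ that $\E[(\hY_n^h)^{p}\mid \hY_0^h=y]\le K_{p,T}(1+y^p)$ for every $p\in\N$, independently of $h\in(0,\bar h]$. Once this is granted, the argument is concluded as follows: fix $\varepsilon>0$, choose $M$ so that the two tail contributions are below $\varepsilon/6$ each for all $h$ and all $l$; then choose $l$ so that the two uniform-continuity contributions are below $\varepsilon/6$ each; finally, with $l$ fixed, apply Theorem \ref{BCT_regular_approx} to reduce $C_l(h+\Dx)$ below $\varepsilon/3$ by taking $(h,\Dx)$ sufficiently small.
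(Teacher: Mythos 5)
Your proof is correct and follows essentially the same route as the paper: mollify $f$ via Lemma \ref{stability_conv_Cpol}, split $|u^h_0-u(0,\cdot,y)|$ into the same three terms, control the middle one with Theorem \ref{BCT_regular_approx}, and handle the two approximation terms by separating the events $\{Y\le M\}$/$\{\hY^h_N\le M\}$ from their tails, using uniform continuity on $\R\times[0,M]$ for the bulk and moment bounds for the tails. The only (immaterial) variations are that the paper uses H\"older plus Markov for the tails where you apply Markov directly with one extra moment, and it invokes $\|\PhDx(y)\|_\infty\le 1$ (from~\cite[Lemma~4.7]{BCT}) rather than your $(1+h)^N\le e^T$ bound; the uniform-in-$h$ boundedness of the moments of $\hY^h_N$, which you rightly flag as needed, is asserted in the paper at the same level of detail.
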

\begin{proof}
  Let $(\varphi_l)_{l\in\N^*}$ a sequence of mollifiers. For all $l\in\N^*$, we define  $u_l(t,x,y) = \E[f_l(X^{t,x,y}_T,Y^{t,y}_T)]$ where we replaced $f$ with a mollified final data $f_l=f\ast\varphi_l$, and $u^{h}_{n,l}$ as the discrete solution produced by the backward hybrid procedure starting from $f_l$. Let $x\in\mcX$ and $y\in\R_+$, then
  \begin{align*}
    |u(0,\cdot,y) - u^h_0(\cdot,y)|_{\ell^\infty} \le
    &\underbrace{|u(0,\cdot,y) - \hu_l(0,\cdot,y)|_{\ell^\infty}}_{I}
    +\underbrace{|u_l(0,\cdot,y)-u^{h}_{0,l}(\cdot,y)|_{\ell^\infty}}_{II} \\
    +&\underbrace{|u^{h}_{0,l}(\cdot,y) - u^h_0(\cdot,y)|_{\ell^\infty}}_{III}.
  \end{align*}
Thanks to Theorem \ref{BCT_regular_approx}, term $II$ can be upper bounded by $C_lT(h+\Dx)$ that goes to zero when $(h,\Dx)$ goes to zero (and does not depend on $x$).
Regarding the term $I$, defined $(X^{\cdot,y}_T,Y^{y}_T)=(X^{0,\cdot,y}_T,Y^{0,y}_T)$ one has
\begin{align*}
  |u(0,\cdot,y) - u_l(0,\cdot,y)|_{\ell^\infty} &\le |\E[f(X^{\cdot,y}_T,Y^{y}_T)-f_l(X^{\cdot,y}_T,Y^{y}_T)]|_{\ell^\infty} \\
  &\le |\E[\big(f(X^{\cdot,y}_T,Y^{y}_T)-f_l(X^{\cdot,y}_T,Y^{y}_T)\big) \mathds{1}_{Y^{y}_T\le M}]|_{\ell^\infty} \\
  &\quad+| \E[\big(f(X^{\cdot,y}_T,Y^{y}_T)-f_l(X^{\cdot,y}_T,Y^{y}_T)\big) \mathds{1}_{Y^{y}_T>M}]|_{\ell^\infty} \\
  &\le \sup_{x\in\R,y\in[0,M]}|f(x,y)-f_l(x,y) | \\
  &\quad+ \E\big[\big(|f(\cdot,Y^{y}_T)|_{\ell^\infty}+|f_l(\cdot,Y^{y}_T)|_{\ell^\infty}\big) \mathds{1}_{Y^{y}_T>M}\big].
\end{align*}
Using \eqref{const_stability_conv}, $\exists C>0$ such that $|f(\cdot,Y^{y}_T)|_{\ell^\infty}+|f_l(\cdot,Y^{y}_T)|_{\ell^\infty}\le C(1+(Y^{y}_T)^L)$, then using Holder inequality and then Markov inequality one has
\begin{equation}\label{estim_I}
  I \le \sup_{x\in\R,y\in[0,M]}|f(x,y)-f_l(x,y) |+ C\|(1+(Y^{y}_T)^L)\|_{L^p(\Omega)} \left(\frac{E[Y^{y}_T]}{M}\right)^{\frac{p-1}{p}}.
\end{equation}
Regarding the term $III$, by linearity of the conditional expectation and of the linear operator $\Pi^h_\Dx(y)$ for $n=0,\ldots,N-1$
$$
u^{h}_{n,l}(\cdot,y) - u^h_n(\cdot,y) = \E\big[\Pi^h_\Dx(\hat{Y}^h_{n}) \big( u^{h}_{n,l}(\cdot,\hat{Y}^h_{n+1}) - u^h_{n+1}(\cdot,\hat{Y}^h_{n+1}) \big) \mid \hat{Y}^h_{n}=y \big].
$$
Then we can rewrite the difference  $u^{h}_{0,l}(\cdot,y) - u^h_0(\cdot,y)$ can be seen as the discrete solution constructed starting from $f_l-f$. In fact, using the tower properties, we get by induction
\begin{align*}
  u^{h}_{0,l}(\cdot,y) - u^h_0(\cdot,y) = &\E\bigg[\prod_{j=0}^{N-1}\Pi^h_\Dx(\hat{Y}^h_{j}) \big( f_l(\cdot,\hat{Y}^h_{N}) - f(\cdot,\hat{Y}^h_{N}) \big) \mid \hat{Y}^h_{0}=y \bigg] \\
  = &\E\bigg[\prod_{j=0}^{N-1}\Pi^h_\Dx(\hat{Y}^h_{j}) \big( f_l(\cdot,\hat{Y}^h_{N}) - f(\cdot,\hat{Y}^h_{N}) \big)\bigg].
\end{align*}
Furthermore, \cite[Lemma 4.7]{BCT} guarantees  $\|\Pi^h_\Dx(y)\|_{\infty} \le 1$ and so
$$
\Big\|\prod_{j=0}^{N-1}\Pi^h_\Dx(\hat{Y}^h_{j})\Big\|_{\infty} \le 1.
$$
Then
\begin{align*}
  |u^{h}_{0,l}(\cdot,y) - u^h_0(\cdot,y)|_{\ell^\infty} &\le \E\bigg[\Big\|\prod_{j=0}^{N-1}\Pi^h_\Dx(\hat{Y}^h_{j})\Big\|_{\infty}  | f_l(\cdot,\hat{Y}^h_{N}) - f(\cdot,\hat{Y}^h_{N})|_{\ell^\infty}\bigg] \\
  &\le \E[|f_l(\cdot,\hat{Y}^h_{N}) - f(\cdot,\hY^h_{N})|_{\ell^\infty}]. 
\end{align*}
Now proceeding as for term $I$, one can show
\begin{equation}\label{estim_III}
  III \le \sup_{x\in\R,y\in[0,M]}|f(x,y)-f_l(x,y) |+ C\|(1+(\hY^h_{N})^L)\|_{L^p(\Omega)} \left(\frac{E[\hY^h_{N}]}{M}\right)^{\frac{p-1}{p}}.
\end{equation}
Finally, for every $\varepsilon>0$, thanks to the boundedness of the moments of $Y^{0,y}_T$ and the uniform (in $N$) boundedness of all the moments of $\hY^h_{N}$,  we can choose $M>0$ big enough to guarantee that the second term in the right-hand sides of \eqref{estim_I} and \eqref{estim_III} are less than $\varepsilon/5$. Chosen $M$, thanks to the uniform continuity of $f$, we can take $l$ big enough to guarantee $\sup_{x\in\R,y\in[0,M]}|f-f_l|\le \varepsilon/5$  and $N_0\in\N^*$, $\delta>0$ such that for every $N\ge N_0$ and $\Dx<\delta$ one has $C_lT(h+\Dx)<\varepsilon/5$, and so 
$$
|u(0,\cdot,y) - u^h_0(\cdot,y)|_{\ell^\infty} \le \frac{2\varepsilon}{5}+\frac{\varepsilon}{5}+\frac{2\varepsilon}{5} = \varepsilon.
$$
\end{proof}


\appendix 


\chapter{Appendices of Chapter \ref{Chapter_CIR}}\label{Appendix_Chapter_CIR}

\section{Proofs of Section~\ref{Sec_pol_fct}}\label{App_proof_sec_pol}
\begin{proof}[Proof of Lemma~\ref{estimates_pol}.]
  \textit{(1)} Let $f\in \PLRp{L}$.  We have $X_0(t,x)^j=\sum_{i=0}^j {j\choose i}((a-\sigma^2/4)\psi_k(t))^{j-i}e^{-kti}x^i$ and thus
  \begin{align*}
    f(X_0(t,x)) & =\sum_{j=0}^L a_j \sum_{i=0}^j {j\choose i}((a-\sigma^2/4)\psi_k(t))^{j-i}e^{-kti} x^i. 
  \end{align*}
  Therefore, $f(X_0(t,\cdot)) \in \PLRp{L}$ and we have
  $$\|f(X_0(t,\cdot)) \|\le \sum_{j=0}^L |a_j| \sum_{i=0}^j {j\choose i}(|a-\sigma^2/4|\psi_k(t))^{j-i}e^{-kti}   =\sum_{j=0}^L |a_j| \tilde{X}_0(t)^j, $$
  with $ \tilde{X}_0(t)=e^{-kt}+  |a-\sigma^2/4|\psi_k(t)$. For $k\ge 0$, we have $0\leq\psi_k(t)\leq t$ and thus $\tilde{X}_0(t)\le (1+|a-\sigma^2/4|t)$. For $k<0$, we have $\tilde{X}_0(t)=e^{-kt}(1+  |a-\sigma^2/4|\psi_{-k}(t))\le e^{-kt}(1+  |a-\sigma^2/4|t )$. Since $(1+  |a-\sigma^2/4|t )^L\le 1+t\sum_{i= 1}^j {j\choose i}  |a-\sigma^2/4|^i  (1\vee T)^i\le 1+t(1+  |a-\sigma^2/4|(1\vee T) )^L$, we get $\tilde{X}_0(t)^j\le (1\vee e^{-k Lt})[ 1+t(1+  |a-\sigma^2/4|(1\vee T) )^L]$ for $j\in\{0,\dots,L\}$ and then
  $$\|f(X_0(t,\cdot))\|\leq (1\vee e^{-kLt})(1+(1+|a-\sigma^2/4|(1\vee T))^Lt) \|f \|,$$ which gives the claim with $C_{X_0}=1+|a-\sigma^2/4|(1\vee T)$.\\

  \textit{(2)} Since $Y$ is a symmetric random variable, we have
  \begin{align*}
    \E[f(X_1(\sqrt{t}Y,x ))] & = \sjzL a_j \E[X_1(\sqrt{t}Y,x)^j] = \sjzL a_j \sum_{i=0}^{2j} {2j\choose i}  \bigg(\frac{\sigma \sqrt{t}}{2}\bigg)^{2j-i}\E[Y^{2j-i}]x^{i/2} \\
                             & =\sjzL a_j \sum_{i=0}^j {2j\choose 2i}  \bigg(\frac{\sigma^2 t}{4}\bigg)^{j-i}\E[Y^{2(j-i)}]x^i                                               \\
                             & =\sjzL a_j x^j + t\sjzL a_j \sum_{i=0}^{j-1} {2j\choose 2i}  \bigg(\frac{\sigma^2 }{4}\bigg)^{j-i} t^{j-i-1}\E[Y^{2(j-i)}]x^i.
  \end{align*}
  This proves that $\E[f(X_1(\sqrt{t}Y,\cdot))]\in \PLRp{L}$. We note that $\E[Y^{2j}]\ge 1$ by Hölder inequality since $\E[Y^2]=1$, and thus $\E[Y^{2j}]\le \E[Y^{2L}]$ for $j\in \{0,\dots,L\}$.  We get
  \begin{align*}
    \| f(X_1(\sqrt{t}Y, \cdot ))\| & \le \|f\| +t \E[Y^{2L}]\sjzL |a_j| \sum_{i=0}^{j-1} {2j\choose 2i}  \bigg(\frac{\sigma^2 }{4}\bigg)^{j-i} (1\vee T)^{j-i} \\
                                   & \le \|f\|\left(1+t  \E[Y^{2L}]\left(1+\frac \sigma 2 \sqrt{1\vee T}\right)^{2L}\right),
  \end{align*}
  since $\sum_{i=0}^{j-1} {2j\choose 2i}  \left(\frac{\sigma^2 }{4}\right)^{j-i} (1\vee T)^{j-i}\le \left(1+\frac \sigma 2 \sqrt{1\vee T}\right)^{2j} \le \left(1+\frac \sigma 2 \sqrt{1\vee T}\right)^{2L} $. This gives the claim with $C_{X_1}=\left(1+\frac \sigma 2 \sqrt{1\vee T}\right)^{2}$.
\end{proof}

\begin{proof}[Proof of Lemma~\ref{Moments_Formula_CIR}.]
  We have $\tilde{u}_0(t,x)=1$, and in the case $m=1$, we have $\tilde{u}_1(t,x)=x +\int_0^t(a-k \tilde{u}_1(s,x))ds$ that has the solution:
  \begin{equation*}
    \tilde{u}_1(t,x) = x e^{-kt} + a \psi_k(t)
  \end{equation*}
  where $\psi_k(t)=\frac{1-e^{-kt}}{k}$ if $k\neq 0$ and $\psi_k(t)=t$ otherwise.
  This gives the claim for $m=1$ with $\tilde{u}_{0,1}=a \psi_k(t)$ and  $\tilde{u}_{1,1}=e^{-kt}$. We then prove the result by induction and consider $m\geq 2$. Using It\^o formula and taking the expected value, one has $\partial_t \tilde{u}_m(t,x) = (am+\sigma^2 m(m-1)/2) \tilde{u}_{m-1}(t,x)-km\tilde{u}_m(t,x)$. Hence, we have
  \begin{equation*}
    \tilde{u}_m(t,x) = (e^{-kt})^m\left( x^m + \int_0^t (am+\sigma^2 m(m-1)/2)(e^{ks})^m  \tilde{u}_{m-1}(s,x) ds\right),
  \end{equation*}
  and we get the following induction relations that give us the representation \eqref{Pol_Estimate_CIR}
  \begin{align*}
    \begin{cases}\tilde{u}_{j,m}(t) = (e^{-kt})^m \int_0^t (am+\sigma^2 m(m-1)/2)(e^{ks})^m  \tilde{u}_{j,m-1}(s) ds, \  0\le j\le m-1, \\
      \tilde{u}_{m,m}(t) = (e^{-kt})^m.
    \end{cases}
  \end{align*}

  Let $f\in \PLRp{L}$. We clearly get from the preceding result that $\E[f(X^\cdot_t)]\in \PLRp{L}$ and
  $$\|\E[f(X^\cdot_t)]\|\le \sum_{m=0}^L |a_m|\sum_{j=0}^m|\tilde{u}_{j,m}(t)| \le C_{\text{cir}}(L,T) \|f\|.  $$
\end{proof}

\section{Proofs of Section~\ref{Sec_main}}\label{App_proof_sec_5}

\begin{proof}[Proof of Lemma~\ref{lem_estimnorm}]
    Properties (1)--(3) are straightforward, and we prove only (4)--(6).
    \item[$(4)$] We use the fact that $1+x^L\leq 2(1+x^{L+1})$ for $x\geq 0$, hence
    $$ \max_{j\in\{0,\ldots, m\}} \sup_{x\geq 0}\frac{|f^{(j)}(x)|}{1+x^{L+1}}\leq 2 \max_{j\in\{0,\ldots, m\}} \sup_{x\geq 0}\frac{|f^{(j)}(x)|}{1+x^L}.$$
    \item[$(5)$] Let $f\in \CpolKL{m}{L}$. We will use the fact that for all $ x\geq 0$, $(1+x)(1+x^L)\leq 3(1+x^{L+1})$ so
    $$\sup_{x\geq 0}\frac{x|f^{(j)}(x)|}{1+x^{L+1}} \leq 3\sup_{x\geq 0}\frac{x}{1+x}\sup_{x\geq 0}\frac{|f^{(j)}(x)|}{1+x^L}= 3\sup_{x\geq 0}\frac{|f^{(j)}(x)|}{1+x^L}.$$  Now, we use the Leibniz rule on $\mathcal{M}_1 f$ and get $(xf(x))^{(j)}=jf^{(j-1)}(x)+xf^{(j)}(x)$, so
    $$\sup_{x\geq 0}\frac{|(xf(x))^{(j)}|}{1+x^{L+1}}\leq j\sup_{x\geq 0}\frac{|f^{(j-1)}(x)|}{1+x^{L+1}} +  \sup_{x\geq 0}\frac{x|f^{(j)}(x)|}{1+x^{L+1}}.$$
    Maximizing both sides on $j\in\{0,\ldots, m\}$ and using the previous inequality gives $\|\mathcal{M}_1 f\|_{m,L+1}\le  m\| f\|_{m-1,L+1} + 3\|f\|_{m,L}$. We get the bound by using properties $(2)$ and $(4)$.

    \item[$(6)$] We have $\|\cL f\|_{m,L+1}\le a \| f' \|_{m,L+1} + (2m+3)[|k|\| f' \|_{m,L}+\frac {\sigma^2} 2 \|f''\|_{m,L}]$ by using the property~(5). We get the estimate by using~(3),~(4) and~(2). The other estimate for $V_1^2/2$ is obtained by taking $a=\sigma^2/4$ and $k=0$, while the one for $V_0$ follows by using the same arguments.
\end{proof}

\begin{proof}[Proof of Lemma~\ref{regular_rep}]
  For $x>0$, we have
  \begin{align*}
    \psi'_g(x) & =\left(1+\frac{\beta}{2\sqrt{x}}\right) g'(x+\beta \sqrt{x}+\gamma)+ \left(1-\frac{\beta}{2\sqrt{x}}\right) g'(x-\beta \sqrt{x}+\gamma)             \\
               & =\underset{\psi_{g'}(x)}{\underbrace{g'(x+\beta \sqrt{x}+\gamma)+g'(x-\beta \sqrt{x}+\gamma)}}+\beta^2 \int_0^1g''(x+\beta(2u-1)\sqrt{x}+\gamma)du,
  \end{align*}
  since $\frac{d}{du}g'(x+\beta(2u-1)\sqrt{x}+\gamma)=2\beta\sqrt{x} g''(x+\beta(2u-1)\sqrt{x}+\gamma)$.
  Clearly, this derivative is continuous at~$0$ which shows that $\psi_g$ is $C^1$.

  We are now in position to prove~\eqref{formula_psign} by induction on~$n$. It is true for $n=0,1$.  We assume that it is true for~$n$. Then, we get by using the case $n=1$, differentiating~\eqref{formula_psign} and an integration by parts for the fourth term:
  \begin{align*}
    \psi_g^{(n+1)}(x)= & \, \psi_{g^{(n+1)}}(x)+\beta^2 \int_0^1g^{(n+2)}(x+\beta(2u-1)\sqrt{x}+\gamma)du                                                                   \\
                       & +     \sum_{j=1}^n\binom{n}{j}\beta^{2j} \left(\int_0^1g^{(n+j+1)}(x+\beta(2u-1)\sqrt{x}+\gamma) \frac{(u-u^2)^{j-1}}{(j-1)!}du \right.            \\
                       & \phantom{+   \sum_{k=1}^n\binom{n}{j}\beta^{2j}} \left.+\beta^2\int_0^1g^{(n+j+2)}(x+\beta(2u-1)\sqrt{x}+\gamma) \frac{(u-u^2)^{j}}{j!}du \right).
  \end{align*}
  We then reorganize the terms as follows
  \begin{align*}
    \psi_g^{(n+1)}(x)   = & \, \psi_{g^{(n+1)}}(x)+(n+1)\beta^2 \int_0^1g^{(n+2)}(x+\beta(2u-1)\sqrt{x}+\gamma)du                                               \\
                          & +\beta^{2n+2}\int_0^1g^{(2n+2)}(x+\beta(2u-1)\sqrt{x}+\gamma) \frac{(u-u^2)^{n}}{n!}du                                              \\
                          & + \sum_{j=2}^n\binom{n}{j}\beta^{2j} \left(\int_0^1g^{(n+j+1)}(x+\beta(2u-1)\sqrt{x}+\gamma) \frac{(u-u^2)^{j-1}}{(j-1)!}du \right) \\
                          & +\sum_{j=1}^{n-1}\binom{n}{j}\beta^{2j+2} \left(\int_0^1g^{(n+j+2)}(x+\beta(2u-1)\sqrt{x}+\gamma) \frac{(u-u^2)^{j}}{j!}du \right).
  \end{align*}
  The last sum is equal to $\sum_{j=2}^{n}\binom{n}{j-1}\beta^{2j} \left(\int_0^1g^{(n+j+1)}(x+\beta(2u-1)\sqrt{x}+\gamma) \frac{(u-u^2)^{j-1}}{(j-1)!}du \right)$ by changing $j$ to $j-1$, and we conclude by using that $\binom{n}{j}+\binom{n}{j-1}=\binom{n+1}{j}$.
\end{proof}

\begin{proof}[Proof of Corollary~\ref{cor_psign}]
  We use~\eqref{formula_psign} with $\gamma=\beta^2/4$. We first notice that
  \begin{align*}
    |\psi_{g^{(n)}}(x)| & \le \|g\|_{n,L}(2+(\sqrt{x}+\beta/2)^{2L}+(\sqrt{x}-\beta/2)^{2L})                      \\
                        & =\|g\|_{n,L}\left(2+2x^L +2 \sum_{i=1}^{L}\binom{2L}{2i}(\beta/2)^{2i}  x^{L-i}\right).
  \end{align*}
  Using that $x^{i}\le 1+x^L$ for $0\le i\le L-1$, we get
  $$ |\psi_{g^{(n)}}(x)|\le  2 \|g\|_{n,L}(1+x^L)  \sum_{i=0}^{L}\binom{2L}{2i}(\beta/2)^{2i}=\|g\|_{n,L}(1+x^L)\big((1+\beta/2)^{2L}+(1-\beta/2)^{2L}\big).$$
  For the other terms, we use that for $u\in[0,1]$, $x\ge 0$ and $j\in\{1,\dots,n\}$,
  \begin{align*}
    |g^{(n+j)}(x+\beta(2u-1)\sqrt{x}+\beta^2/4)| & \le \|g\|_{2n,L}(1+(x+\beta(2u-1)\sqrt{x}+\beta^2/4)^L) \\
                                                 & \le \|g\|_{2n,L}(1+( \sqrt{x}+\beta/2)^{2L}).
  \end{align*}
  We again expand $( \sqrt{x}+\beta/2)^L=x^L+ \sum_{i=1}^{2L}\binom{2L}{i}(\beta/2)^{i}x^{(L-i)/2}$ and use that $x^{(L-i)/2}\le 1+x^L$ to get
  $$|g^{(n+j)}(x+\beta(2u-1)\sqrt{x}+\beta^2/4)|\le \|g\|_{2n,L}(1+\beta/2)^{2L}(1+x^L).$$
  Besides, we have $u-u^2\le 1/4$ for $u\in[0,1]$ and thus $\int_0^1(u-u^2)^jdu\leq \frac{1}{4^{j}}$, which gives $\sup_{x\ge 0}\frac{|\psi_{g^{(n)}}(x)|}{1+x^L}\le \tilde{C}(\beta)$ with
  \begin{align*}
    \tilde{C}(\beta) & =\|g\|_{n,L}\big((1+\beta/2)^{2L}+(1-\beta/2)^{2L}\big)+\|g\|_{2n,L}(1+\beta/2)^{2L}\sum_{j=1}^n \binom{n}{j}\bigg(\frac{\beta^2}{4}\bigg)^j \\
                     & =\|g\|_{n,L}\big((1+\beta/2)^{2L}+(1-\beta/2)^{2L}\big)+\|g\|_{2n,L}(1+\beta/2)^{2L}(1+\beta^2/4)^{n}                                        \\
                     & \leq\|g\|_{2n,L}\big((1+\beta/2)^{2L}+(1-\beta/2)^{2L}+(1+\beta/2)^{2L}(1+\beta^2/4)^{m}\big)=C_{\beta,m,L}\|g\|_{2n,L},
  \end{align*}
  which gives the claim.
\end{proof}

\section{Assumption~\eqref{H1_bar} for symmetric random variables}

\begin{theorem}
  Let $\eta:\R\to \R_+$ be a $C^\infty$ even function. Then, $\eta^*_m\geq 0$ for all $m\in\N^*$ if and only if $\eta(\sqrt{\cdot})$ is the Laplace transform of a finite positive Borel measure $\mu$ on $[0,\infty)$, i.e. $\eta(\sqrt{x})=\int_0^{\infty}e^{-tx}\mu(dt)$ for all $x\in\R_+$.
\end{theorem}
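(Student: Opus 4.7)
My plan is to reduce the statement to the classical Bernstein--Widder theorem via the substitution $y = \sqrt{x}$. Set $g(x) := \eta(\sqrt{x})$ for $x \geq 0$. Since $\eta$ is even and $C^\infty$, its Taylor series at $0$ involves only even powers of $y$, so $g(x) = \sum_{k \geq 0} a_{2k} x^k$ near the origin and $g \in C^\infty([0,\infty))$. The first step is to establish the key identity
$$\eta^*_m(y) = \frac{(-1)^m\,2^{2m-1}}{(m-1)!}\, y^{2m}\, g^{(m)}(y^2), \qquad m \geq 1. $$
I would prove this by induction on $m$. The base case $m=1$ is direct: $g'(y^2) = \eta'(y)/(2y)$, so $-2y^2 g'(y^2) = -y\eta'(y) = \eta^*_1(y)$. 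For the inductive step, applying $\frac{d}{dx}$ to the identity at level $m$ (using $\frac{d}{dx}[\phi(\sqrt{x})] = \phi'(y)/(2y)$) yields
$$y^{2(m+1)}\, g^{(m+1)}(y^2) = \frac{(-1)^m (m-1)!}{2^{2m}}\bigl(y (\eta^*_m)'(y) - 2m\,\eta^*_m(y)\bigr),$$
and matching this with the identity at level $m+1$ reduces the induction to showing
$$\eta^*_{m+1}(y) = 4\,\eta^*_m(y) - \tfrac{2}{m}\, y\,(\eta^*_m)'(y).$$

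The second step is to verify that last recursion. Expanding $y (\eta^*_m)'(y)$ using $\eta^*_m(y) = (-1)^{m-1}\sum_{j=1}^m c_{j,m} y^j \eta^{(j)}(y)$, and collecting coefficients of $y^j \eta^{(j)}(y)$ for $j=1$, $2 \leq j \leq m$, and $j=m+1$ separately, I would obtain exactly the three cases of the recurrence \eqref{recursive_coeff_formula} defining $c_{j,m+1}$. This is the main computational step, but it is elementary coefficient-matching and no real obstacle; I expect it to be the most tedious part.

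Having the identity, the third step is to observe that $\eta$ even implies $\eta^{(j)}$ has parity $(-1)^j$, so each $y^j \eta^{(j)}(y)$ is even, and hence each $\eta^*_m$ is an even function. Therefore the hypothesis ``$\eta^*_m \geq 0$ on $\R$ for every $m \geq 1$'' is equivalent to ``$\eta^*_m(y) \geq 0$ for every $y \geq 0$ and every $m \geq 1$,'' which by the identity is equivalent to $(-1)^m g^{(m)}(x) \geq 0$ for every $x \geq 0$ and $m \geq 1$. Adding the trivially satisfied condition $g(x) = \eta(\sqrt{x}) \geq 0$, this is precisely the statement that $g$ is completely monotone on $[0,\infty)$.

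The final step invokes the Bernstein--Widder theorem: a function $g \in C^\infty([0,\infty))$ is completely monotone if and only if $g(x) = \int_0^\infty e^{-tx}\,\mu(dt)$ for some positive Borel measure $\mu$ on $[0,\infty)$, and $\mu$ is finite iff $g(0) < \infty$ (which holds since $g(0) = \eta(0) \in \R_+$). Substituting back $x = y^2$ in the forward direction, and differentiating under the integral $(-1)^m g^{(m)}(x) = \int_0^\infty t^m e^{-tx}\mu(dt) \geq 0$ in the reverse direction, yields the desired equivalence.
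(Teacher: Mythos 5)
Your proof is correct and, in its main thrust, follows the same route as the paper: establish by induction a closed-form relationship between $\eta^*_m$ and the $m$-th derivative of $g(x)=\eta(\sqrt{x})$, and then invoke Bernstein--Widder. The identity you prove, $\eta^*_m(y)=\frac{(-1)^m 2^{2m-1}}{(m-1)!}\,y^{2m}g^{(m)}(y^2)$, is the paper's $\partial_x^m[\eta(\sqrt x)]=(-1)^m\frac{(m-1)!}{2^{2m-1}}x^{-m}\eta^*_m(\sqrt x)$ solved the other way, and your inductive reduction to the recursion $\eta^*_{m+1}=4\eta^*_m-\frac{2}{m}y(\eta^*_m)'$ matches the defining recurrence for the $c_{j,m}$ (I checked the coefficient bookkeeping; it works). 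The one place you genuinely diverge from, and slightly improve on, the paper is the reverse implication: the paper re-derives nonnegativity of $\eta^*_m$ by decomposing $\eta$ into a mixture of Gaussians $\eta_t(x)=e^{-tx^2}$, using the scaling relation $(\eta_t)^*_m(x)=(\eta_g)^*_m(\sqrt{2t}\,x)$ together with the already-established Lemma~\ref{gaussian_eta_positivity}, and then integrating. You instead read nonnegativity straight off the identity, since differentiation under the Laplace integral gives $(-1)^m g^{(m)}(x)=\int_0^\infty t^m e^{-tx}\mu(dt)\ge 0$ directly. This is a shorter and more self-contained route that reuses the identity you already built, rather than detouring through the Gaussian case; the paper's choice is natural only because Lemma~\ref{gaussian_eta_positivity} happens to be available elsewhere. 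One minor point worth tightening in a final write-up: you should justify that $g\in C^\infty([0,\infty))$ (Whitney's theorem on smooth even functions), since Bernstein--Widder requires it and this is not automatic from $\eta\in C^\infty$ without the evenness.
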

\begin{proof}
  We start to prove that $\eta^*_m\geq 0$ for all $m\in\N^*$ implies $\eta(\sqrt{x})=\int_0^{\infty}e^{-tx}\mu(dt)$ for all $x\in\R$. To prove this, we use Bernstein's Theorem for completely monotone functions (see e.g. \cite[Theorem 12a p.~160]{Widder}) and show that  for all $m\in \N$ and $x\in\R_+^*$, $(-1)^m \partial_x^m [\eta(\sqrt{x})]\geq 0$. 
  To do so, we prove by induction on~$m$ the representation $$\partial^m_x[\eta(\sqrt{x})]=-\frac{(m-1)!}{2^{2m-1}}x^{-m}\sum_{j=1}^m c_{j,m}x^{\frac{j}{2}}\eta^{(j)}(\sqrt{x})=(-1)^m\frac{(m-1)!}{2^{2m-1}}x^{-m}\eta^*_m(\sqrt{x}).$$ For $m=1$, we have $\eta^*_1(\sqrt{x})=c_{1,1} \sqrt{x} \eta'(\sqrt{x})$ and the representation holds from $\partial_x[\eta(\sqrt{x})]=\frac{1}{2\sqrt{x}}\eta'(\sqrt{x})=-\frac{1}{2x}\eta^*_1(\sqrt{x})$ using that $c_{1,1}=-1$. Now, let $m\geq 2$ and suppose the representation is true for $m-1$, so
  \begin{equation*}
    \partial_x^{m}[\eta(\sqrt{x})]=\partial_x(\partial_x^{m-1}[\eta(\sqrt{x})])=\partial_x\Bigg(-\frac{(m-2)!}{2^{2m-3}}x^{-(m-1)}\sum_{j=1}^{m-1}c_{j,m-1}x^{\frac{j}{2}}\eta^{(j)}(\sqrt{x}) \Bigg).
  \end{equation*}
  Differentiating and using that $\partial_x \big(x^{\frac{j}{2}}\eta^{(j)}(\sqrt{x})\big)= \frac{1}{2x}\big(j x^\frac{j}{2}\eta^{(j)}(\sqrt{x})+x^{\frac{j+1}{2}}\eta^{(j+1)}(\sqrt{x})\big)$, we get
  \begin{align*}
    \partial_x^{m}[\eta(\sqrt{x})] & =
    \begin{multlined}[t] -\frac{(m-2)!}{2^{2m-3}}\Bigg(-\frac{m-1}{x^{m}} \sum_{j=1}^{m-1}c_{j,m-1}		x^{\frac{j}{2}}\eta^{(j)}(\sqrt{x}) \\
      +\frac{1}{2x^{m}}\sum_{j=1}^{m-1} c_{j,m-1}\bigg(j x^\frac{j}{2}	\eta^{(j)}(\sqrt{x})+x^{\frac{j+1}{2}}\eta^{(j+1)}(\sqrt{x})\bigg) \Bigg)\end{multlined}                                 \\
                                 & =\begin{multlined}[t] -\frac{(m-2)!}{2^{2m-3}}x^{-m}\Bigg( \Big(\frac{1}{2}- m-1\Big)  c_{1,m-1} x^{\frac{1}{2}}\eta^{(1)}(\sqrt{x})\\
      +\sum_{j=1}^{m-1}\bigg(\Big(\frac{j}{2}- m+1\Big)  c_{j,m-1} +\frac{1}{2} 	c_{j-1,m-1}\Big)	x^{\frac{j}{2}}\eta^{(j)}(\sqrt{x}) \\
      +\frac{1}{2}c_{m-1,m-1}x^\frac{m}{2}	\eta^{(m)}(\sqrt{x})\Bigg)\end{multlined} \\
                                 & =\begin{multlined}[t] -\frac{(m-1)!}{2^{2m-1}}x^{-m}\Bigg( \Big(\frac{2}{m-1}- 4\Big)  c_{1,m-1} x^{\frac{1}{2}}\eta^{(1)}(\sqrt{x})\\
      +\sum_{j=1}^{m-1}\bigg(\Big(\frac{2j}{m-1}- 4\Big)  c_{j,m-1} +\frac{2}{m-1} 	c_{j-1,m-1}\Big)	x^{\frac{j}{2}}\eta^{(j)}(\sqrt{x}) \\
      +\frac{2}{m-1}c_{m-1,m-1}x^\frac{m}{2}	\eta^{(m)}(\sqrt{x})\Bigg)\end{multlined}
  \end{align*}
  and we conclude using the recursion formula \eqref{recursive_coeff_formula} for $c_{j,m}$.

  We now assume that $\eta(\sqrt{x})=\int_0^\infty e^{-tx} \mu(dt)$ and show that $\eta^*_m\ge 0$ for all $m\ge 1$. We define $\eta_g(x)=e^{-\frac{x^2}{2}}$ and consider for all $t>0$ the function $\eta_t(x)=e^{-tx^2}$. We remark that for all $t>0$, $\eta_t(x)=\eta_g(h_t(x))$ with $h_t(x)=\sqrt{2t}x$ and so we can write by Lemma~\ref{gaussian_eta_positivity}
  $$(\eta_t)^*_m(x)=(-1)^{m-1}\sum_{j=1}^m c_{j,m}x^j\eta_t^{(j)}(x)=(-1)^{m-1}\sum_{j=1}^m c_{j,m}(\sqrt{2t}x)^j\eta_g^{(j)}(\sqrt{2t}x)=(\eta_g)^*_m(\sqrt{2t}x).
  $$
  Therefore, $(\eta_t)^*_m(x)\geq 0$ for all $t>0$ and $x\in\R$.
  We now consider an even function $\eta:\R \to \R_+$ such that $\eta(\sqrt{x})=\int_0^{\infty}e^{-tx}\mu(dt)$ for some Borel measure~$\mu$ on $[0,\infty)$. We then have for all $x\in\R$, $\eta(x)=\int_0^{\infty} e^{-tx^2}\mu(dt)=\int_0^{\infty}\eta_t(x) \mu(dt)$ and thus $\eta^{(j)}(x)=\int_0^{\infty}\eta^{(j)}_t(x) \mu(dt)$. This gives, for all $m\in\N^*$,
  \begin{align*}
    \eta^*_m(x) = (-1)^{m-1}\sum_{j=1}^m c_{j,m}x^j\eta^{(j)}(x) & =\int_0^\infty(-1)^{m-1}\sum_{j=1}^m c_{j,m}x^j\eta_t^{(j)}(x)\mu(dt) \\
                                                                 & =\int_0^\infty(\eta_t)^*_m(x)\mu(dt)\geq 0
  \end{align*}
  where the last integral is positive for all $x\in\R$ because is an integral of a positive function against a positive measure.
\end{proof}

\begin{corollary}\label{cor_density_etam}
  All the densities that satisfy the hypothesis of the representation Lemma \ref{regular_density} for all $m\in\N^*$ are such that $\eta(\sqrt{\cdot})$ is the Laplace transform of a finite positive Borel measure $\mu$ over $[0,\infty)$.
\end{corollary}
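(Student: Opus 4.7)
The plan is to observe that this corollary is an essentially immediate consequence of the preceding theorem, so the work consists in unpacking the hypotheses. The assumption that the density $\eta$ satisfies the hypotheses of Lemma~\ref{regular_density} for every $m\in\N^*$ forces $\eta\in\mathcal{C}^M(\R)$ for every $M\in\N$, hence $\eta\in\mathcal{C}^\infty(\R)$, and it also provides $\eta^*_m\ge 0$ for all $m\ge 1$. Since $\eta$ is the density of the symmetric random variable~$Y$, it is nonnegative and even, so the hypotheses of the preceding theorem are satisfied.

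Applying the theorem yields a positive Borel measure $\mu$ on $[0,\infty)$ such that $\eta(\sqrt{x})=\int_0^\infty e^{-tx}\mu(dt)$ for all $x\in\R_+$. The only thing to verify is that $\mu$ is \emph{finite}: this follows from evaluating at $x=0$, which gives $\mu([0,\infty))=\eta(0)<\infty$ (the value $\eta(0)$ being finite by continuity of $\eta$).

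There is essentially no obstacle here — the content of the corollary is entirely carried by the preceding theorem, and the role of the proof is simply to check that the regularity, symmetry, and positivity conditions assumed in Lemma~\ref{regular_density} (taken uniformly in $m$) coincide with those required by the theorem.
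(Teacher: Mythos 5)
Your proof is correct and matches the paper's (implicit) approach exactly: the paper leaves this corollary without a proof, treating it as an immediate consequence of the preceding theorem, and your unpacking of the hypotheses — $\mathcal{C}^\infty$ regularity from taking $M$ arbitrary, nonnegativity from being a density, evenness from symmetry of $Y$, and $\eta^*_m\ge 0$ for all $m$ — is the right reading. One small remark: the preceding theorem already asserts in its statement that the measure $\mu$ is finite, so your separate check that $\mu([0,\infty))=\eta(0)<\infty$ is not logically required for the corollary, though it is a correct explanation of why Bernstein's theorem produces a finite measure here.
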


\chapter{Some other results for CIR process}\label{other_results_CIR}
In this Appendix chapter, we list some results that have been proven concerning the CIR process, but that we have not attached to any article.

\section{{Improvements in $\CpolKL{k}{L}$ theory for CIR}}\label{improvement_CIR_result}
 Here, we present an extension of the representation presented in Lemma \ref{regular_density}
    \begin{lemma}\label{regular_density_mod}
 Let $M,L,\Xi\in \N$. Let $Y$ be a symmetric random variable with density $\eta\in \mathcal{C}^M(\R)$ such that for all $i\in\{0,\ldots,M\}$, there exists $\varepsilon>0$ such that $|\eta^{(i)}(v)|=o(|v|^{-2(L+\Xi+1 +\varepsilon)-i})$ for $|v|\rightarrow \infty$. Then, for all function $f\in\CpolKL{M}{L}$, $m\in\{1, \ldots,  M\}$, $\nu\in\{0,\ldots,\Xi\}$ and $t\in [0,T]$ one has the following representation
        \begin{equation}\label{repres_X1_functional_mod}
        \partial^m_x\E[Y^{2\nu}f(X_1(\sqrt{t}Y,x))] = \int_{-\infty}^\infty \int_0^1 (u-u^2)^{m-1} f^{(m)}(w(u,x,v)) \eta^{*,\nu}_m(v) dudv
        \end{equation}
 where  $w(u,x,v)=x+(2u-1)\sigma\sqrt{t}v\sqrt{x}+ \sigma^2tv^2/4$, $\eta^{*,\nu}_{m}(v)=(-1)^{m-1}  \left(\sum_{j=1}^m c_{j,m}  v^j\hetanu^{(j)}(v)\right)$, with $\hetanu(v)=v^{2\nu}\eta(v)$ and the coefficients $c_{j,m}$ are defined by induction, starting from $c_{1,1}=-1$, through the following formula
        \begin{equation}\label{recursive_coeff_formula_mod}
 c_{l,m}   = \bigg(\frac{2l}{m-1}-4\bigg)  c_{l,m-1}\mathds{1}_{l<m} + \frac{2}{m-1}  c_{l-1,m-1}\mathds{1}_{l>2},     \qquad l\in\{1, \ldots, m\}, \, m\in\{2, \ldots,M\}.  \\
        \end{equation}
 Let now $\nu=0$ and define $\eta^*_{m}(v)= \eta^{*,0}_m(v)$. If the density $\eta$ is such that  $\eta^*_{m}(v)\geq0$ for all $v\in\R$, and all $ m\in\{1, \ldots,  M\}$, then there exists $C\in \R_+$ such that
        \begin{equation}\label{sharp_estimate_X1_functional}
        \forall m \in \{1,\dots,M\}, \forall t \in[0,T] \   \|\E[f(X_1(\sqrt{t}Y,\cdot))]\|_{m,L} \leq (1+Ct) \|f\|_{m,L}.
        \end{equation}
 Furthermore, for all $\nu\in\{1,\ldots,\Xi\}$, there exists $C\in \R_+$ such that
        \begin{equation}\label{estimate_X1_functional_mod}
        \forall m \in \{1,\dots,M\}, \forall t \in[0,T] \   \|\E[Y^{2\nu} f(X_1(\sqrt{t}Y,\cdot))]\|_{m,L} \leq C \|f\|_{m,L}.
        \end{equation}
    \end{lemma}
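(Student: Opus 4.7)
The plan is to mirror the proof of Lemma~\ref{regular_density}, treating $\hetanu(v)=v^{2\nu}\eta(v)$ as an effective even weight in place of $\eta$. Since $\eta$ is an even density and $v\mapsto v^{2\nu}$ is even, $\hetanu$ is even, so all the symmetry-based identities in the original proof still apply. First I would verify, via Leibniz's rule, that $\hetanu^{(j)}(v)=\sum_{i=0}^{\min(j,2\nu)}\binom{j}{i}\frac{(2\nu)!}{(2\nu-i)!}v^{2\nu-i}\eta^{(j-i)}(v)$, whence the polynomial-times-Gaussian-tail decay $|\eta^{(i)}(v)|=o(|v|^{-2(L+\Xi+1+\varepsilon)-i})$ transfers to the bound $|v^{2L+j}\hetanu^{(j)}(v)|=o(|v|^{-2(\Xi-\nu+1+\varepsilon)})$, which is $o(|v|^{-2(1+\varepsilon)})$ since $\nu\le\Xi$. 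This is exactly what is required so that every boundary term in the repeated integration by parts vanishes, and the integral $\int |v^j \hetanu^{(j)}(v)|\,dv$ is finite.

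Next, I would prove the representation~\eqref{repres_X1_functional_mod} by induction on $m$, reproducing verbatim the induction of Lemma~\ref{regular_density} with $\hetanu$ in place of $\eta$. The base case $m=1$ uses the symmetrisation identity $\E[Y^{2\nu}g(X_1(\sqrt tY,x))]=\tfrac12\E[Y^{2\nu}(g(X_1(\sqrt tY,x))+g(X_1(-\sqrt tY,x)))]$ (valid by evenness of $\hetanu$), followed by the same algebraic manipulation converting $\partial_x\psi_f^+$ into $\partial_y\psi_f^-/(\sigma\sqrt t\sqrt x)$ and one integration by parts in $v$. The induction step is identical to the one in Lemma~\ref{regular_density}, producing the same recursion \eqref{recursive_coeff_formula} for the coefficients $c_{j,m}$ (so that I would read the $\mathds{1}_{l>2}$ in~\eqref{recursive_coeff_formula_mod} as a typo for $\mathds{1}_{l>1}$, consistent with $c_{m,m}=-2^{m-1}/(m-1)!$).

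For the estimate~\eqref{sharp_estimate_X1_functional} (case $\nu=0$), since by hypothesis $\eta^*_m=\eta^{*,0}_m\ge 0$, the exact reasoning of Lemma~\ref{regular_density} applies: bound $|f^{(m)}(w(u,x,v))|\le\|f\|_{m,L}(1+w(u,x,v)^L)$, then evaluate the two resulting double integrals $A=\int_0^1\!\int_\R(u-u^2)^{m-1}\eta^*_m(v)\,dv\,du$ and $B=\int_0^1\!\int_\R(u-u^2)^{m-1}w(u,x,v)^L\eta^*_m(v)\,dv\,du$ by plugging the test functions $f(x)=x^m/m!$ and $f(x)=\tfrac{L!}{(L+m)!}x^{L+m}$ into~\eqref{repres_X1_functional_mod} (with $\nu=0$) and using symmetry of $Y$. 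One obtains $A=1$ and $B\le x^L+Ct(1+x^L)$, giving the sharp $(1+Ct)$ factor.

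For~\eqref{estimate_X1_functional_mod} with $\nu\ge 1$, the same trick is used but sharpness in $t$ is lost. Applying the representation to $f(x)=\tfrac{L!}{(L+m)!}x^{L+m}$ yields
\begin{equation*}
\int\!\!\int(u-u^2)^{m-1}w(u,x,v)^L\eta^{*,\nu}_m(v)\,dv\,du
=\sum_{j=0}^L \binom{2(L+m)}{2j}\tfrac{L!(L+m-j)!}{(L-j)!(L+m)!}(\sigma^2 t/4)^{j}\E[Y^{2\nu+2j}]\,x^{L-j},
\end{equation*}
which is bounded by $C(1+x^L)$ uniformly in $t\in[0,T]$ because the coefficient of $x^L$ (from $j=0$) is $\E[Y^{2\nu}]$, which is finite and independent of $t$ but strictly positive, so no $(1+Ct)$ factorisation is possible. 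Combined with $|f^{(m)}|\le\|f\|_{m,L}(1+w^L)$ and the finiteness of $\int|\eta^{*,\nu}_m|<\infty$, this delivers~\eqref{estimate_X1_functional_mod}.

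The main obstacle is the bookkeeping of decay conditions: one has to check that the exponent $2(L+\Xi+1+\varepsilon)+i$ is exactly what is needed so that (i) every boundary term arising in the successive integrations by parts against $f^{(k)}(w(u,x,v))$ (which has polynomial growth of degree $2L$ in $v$) vanishes at $v=\pm\infty$, and (ii) the dominating integrals are finite for every $\nu\le\Xi$ and $m\le M$. Once this uniform tail control is established, the rest of the proof is a direct transcription of the original argument with $\eta$ replaced by $\hetanu$.
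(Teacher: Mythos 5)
Your treatment of the representation formula and of the $\nu=0$ estimate matches the paper's: both are obtained by running the proof of Lemma~\ref{regular_density} with $\eta$ replaced by $\hetanu$, and the sharp $(1+Ct)$ bound for $\nu=0$ uses positivity of $\eta^*_m$ together with the test-function substitution, exactly as in the original. Your observation that the $\mathds{1}_{l>2}$ in \eqref{recursive_coeff_formula_mod} should read $\mathds{1}_{l>1}$ is also correct.

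However, your argument for the $\nu\ge 1$ estimate \eqref{estimate_X1_functional_mod} has a genuine gap. You plug $f(x)=\tfrac{L!}{(L+m)!}x^{L+m}$ into \eqref{repres_X1_functional_mod} to compute $\int\!\!\int(u-u^2)^{m-1}w^L\,\eta^{*,\nu}_m(v)\,dv\,du$ and then read off a bound. That identity is true, but it only controls the integral against the \emph{signed} kernel $\eta^{*,\nu}_m$. To bound $|\partial^m_x\E[Y^{2\nu}f(X_1(\sqrt t Y,x))]|$ after the pointwise estimate $|f^{(m)}(w)|\le\|f\|_{m,L}(1+w^L)$, you need to control $\int\!\!\int(u-u^2)^{m-1}(1+w^L)\,|\eta^{*,\nu}_m(v)|\,dv\,du$, and for $\nu\ge 1$ no positivity of $\eta^{*,\nu}_m$ is assumed (indeed there is no reason to expect it), so the test-function identity says nothing about this absolute-value integral. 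In particular the statement that the result "is bounded by $C(1+x^L)$ because the coefficient of $x^L$ is $\E[Y^{2\nu}]$" does not yield an upper bound on what you actually need. The paper's proof avoids this entirely: it uses the elementary inequality $1+w(u,x,v)^L\le\big(1+2^L(1+\sigma^2 T v^2/4)^L\big)(1+x^L)$ together with $(u-u^2)^{m-1}\le 1$, and then invokes the tail-decay hypothesis on $\eta$ to ensure $\int\big(1+2^L(1+\sigma^2Tv^2/4)^L\big)|\eta^{*,\nu}_m(v)|\,dv<\infty$. Your closing clause about finiteness of $\int|\eta^{*,\nu}_m|$ points in the right direction, but you would need to replace the test-function computation with this direct polynomial-tail estimate to close the argument.
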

 Let us stress here one important fact. When $\nu\ge 1$, differently to the case $\nu=0$, we do not want to prove the sharper estimate with coefficient $1+Ct$, because we will use the estimate \eqref{estimate_X1_functional_mod} a fixed finite number of times to prove a (less demanding in derivatives) $H_1$ property.

    \begin{proof}
 We sketch the proof that is almost identical to the proof of Lemma \ref{regular_density}. The proof of \eqref{repres_X1_functional_mod} is identical to that used to prove \eqref{repres_X1_functional}  but this time we replace $\eta(v)$ with $\hetanu(v)=v^{2\nu}\eta(v)$. \eqref{sharp_estimate_X1_functional} is the same as \eqref{estimate_X1_functional}. Regarding \eqref{estimate_X1_functional_mod},we have
  \begin{align*}
 |\partial^m_x\E[Y^{2\nu}f(X_1(\sqrt{t}Y,x))]| & \leq \int_0^1 (u-u^2)^{m-1}  \int_{-\infty}^\infty |f^{(m)}(w(u,x,v))| |\eta^{*,\nu}_m(v)|dvdu                     \\
                                          & \leq \|f\|_{m,L}\int_0^1 (u-u^2)^{m-1}  \int_{-\infty}^\infty (1+w(u,x,v)^L) |\eta^{*,\nu}_m(v)|dvdu.          
  \end{align*}
 We use that $1+w(u,x,v)^L\le (1+2^L(1+\sigma^2Tv^2/4)^L)(1+x^L)$, and $(u-u^2)^{m-1}\le1$ to get

  \begin{equation*}
 |\partial^m_x\E[Y^{2\nu}f(X_1(\sqrt{t}Y,x))]|\le \\ 
    \|f\|_{m,L}\int_{-\infty}^\infty (1+2^L(1+\sigma^2Tv^2/4)^L) |\eta^{*,\nu}_m(v)|dv (1+x^L),
  \end{equation*}
 and the hypothesis on $\eta$ guarantees that the integral is finite and less of a constant $C$ depending on $\sigma,L$ and $ T$. Finally, one has
  $$
 |\partial^m_x\E[Y^{2\nu}f(X_1(\sqrt{t}Y,x))]|\le (1+x^L)C \|f\|_{m,L},
  $$
 and this proves the desired norm inequality.
    \end{proof}

 Thanks to estimate \eqref{estimate_X1_functional_mod} in Lemma \ref{regular_density_mod}, we can improve the estimate in the remainder in formula \eqref{expan_X1} in Lemma \ref{lem_expan_V}. We state the following result.

    \begin{lemma}\label{lem_expan_V_regular_density}
 Let $m,L\in \N, \ T>0,\ t \in [0,T]$ and $Y\sim \mathcal{N}(0,1)$. We have, for $f\in\CpolKL{m+2(\nu+1)}{L} $,
        \begin{multline}\label{expan_X1_regular_density}
 \E\big[f(X_1(\sqrt{t}Y,x))\big] = \sum_{i=0}^\nu \frac{t^i}{i!} \left(\frac{1}{2}V^2_1\right)^if(x)  \\
 + t^{\nu+1}\int_0^{1} \frac{(1-u)^{2\nu+1}}{(2\nu+1)!}\E\big[Y^{2\nu +2}V^{2\nu+2}_1 f(X_1(u \sqrt{t} Y,x))\big]du, 
        \end{multline}
 with 
        \begin{equation}\label{estim_expan_X1_regular_density}
 \left\|\int_0^{1} \frac{(1-u)^{2\nu+1}}{(2\nu+1)!}\E[Y^{2\nu +2}V^{2\nu+2}_1 f(X_1(u \sqrt{t} Y,\cdot))]du\right\|_{m,L+\nu+1}\le  C_1 \|f\|_{m+2(\nu+1),L}
        \end{equation}
 and $C_1\in \R^+$ depending on $(a,k,\sigma)$, $T$, $m$, $M$ and $\nu$.
    \end{lemma}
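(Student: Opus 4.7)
The plan is to reuse the Taylor expansion of the original Lemma \ref{lem_expan_V} verbatim, and sharpen only the remainder estimate by invoking Lemma \ref{regular_density_mod} in place of Corollary \ref{cor_psign}. The expansion \eqref{expan_X1_regular_density} itself is produced exactly as before: one applies Taylor's formula to $s \mapsto f(X_1(s, x))$ (using $\frac{d}{ds} f(X_1(s,x)) = V_1 f(X_1(s,x))$) up to order $2\nu+1$, evaluates at $s = \sqrt{t}\, Y$, takes the expectation, and uses the moments of the standard normal ($\E[Y^{2i+1}] = 0$ by symmetry and $\E[Y^{2i}] = \frac{(2i)!}{i!\,2^i}$) to collapse the terms of index $i \leq \nu$ into $\sum_{i=0}^\nu \frac{t^i}{i!}(V_1^2/2)^i f(x)$, leaving the remainder in the stated integral form. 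No new idea is needed here.

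The real improvement lies entirely in the sharper bound \eqref{estim_expan_X1_regular_density}. I would set $g := V_1^{2\nu+2} f = 2^{\nu+1}(V_1^2/2)^{\nu+1} f$ and, by iterating Lemma \ref{lem_estimnorm}(6) exactly $\nu+1$ times, obtain $g \in \CpolKL{m}{L+\nu+1}$ together with $\|g\|_{m, L+\nu+1} \leq C \|f\|_{m + 2(\nu+1), L}$ for a constant $C$ depending only on $(\sigma, m, \nu)$. I would then apply Lemma \ref{regular_density_mod} to the standard normal density $\eta(v) = \frac{1}{\sqrt{2\pi}}\, e^{-v^2/2}$, which trivially satisfies the polynomial decay hypothesis to arbitrary order thanks to its exponential tail. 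Taking the parameters of that lemma to be $M = m$, weight $L+\nu+1$, $\Xi = \nu+1$, and index $\nu+1$ (which triggers estimate \eqref{estimate_X1_functional_mod} since $\nu+1 \geq 1$), applied at time $s = u^2 t \in [0, T]$ for $u \in [0,1]$ and $t \in [0, T]$, I would obtain uniformly in $u$ and $t$,
\begin{equation*}
\|\E[Y^{2(\nu+1)} g(X_1(u\sqrt{t}\, Y, \cdot))]\|_{m, L+\nu+1} \leq C' \|g\|_{m, L+\nu+1} \leq C'' \|f\|_{m + 2(\nu+1), L}.
\end{equation*}

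Finally, I would integrate over $u \in [0, 1]$ against the weight $\frac{(1-u)^{2\nu+1}}{(2\nu+1)!}$ and use the triangle inequality (the norm of the integral is bounded by the integral of the norms), which yields \eqref{estim_expan_X1_regular_density} with $C_1 = C''/(2\nu+2)!$. There is no real obstacle once Lemma \ref{regular_density_mod} is in hand: the entire substantive point, already encapsulated there, is that the extra factor $Y^{2(\nu+1)}$ inside the expectation permits $m$ further integrations by parts against the weighted density $v^{2(\nu+1)} \eta(v)$, rather than against $\eta$ alone. It is this algebraic trick that reduces the derivative order required on $f$ from $2(m + \nu + 1)$ (as in the original symmetry-based argument of Corollary \ref{cor_psign}, which estimates $V_1^{2\nu+2} f$ in $\CpolKL{2m}{L+\nu+1}$) down to $m + 2(\nu+1)$, saving exactly $m$ derivatives; this is the essential ingredient for relaxing the regularity requirements of the test functions in the improved version of Theorem \ref{CIR_main_result_intro}.
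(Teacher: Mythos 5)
Your proof is correct and follows essentially the same route as the paper's: obtain the expansion via Taylor's formula on $s\mapsto f(X_1(s,x))$ exactly as in Lemma~\ref{lem_expan_V} (the only adjustment, which the paper notes and you imply, is a Fubini swap to move the $u$-integral outside the expectation), then replace the symmetry-based bound from Corollary~\ref{cor_psign} by the estimate~\eqref{estimate_X1_functional_mod} of Lemma~\ref{regular_density_mod} applied to $g=V_1^{2\nu+2}f$, which after iterating Lemma~\ref{lem_estimnorm}(6) yields the stated constant and the reduced derivative count $m+2(\nu+1)$. Your closing remark that this saves exactly $m$ derivatives relative to the $\CpolKL{2(m+\nu+1)}{L}$ requirement of the original argument correctly identifies the substantive improvement.
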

    \begin{proof}
 To get \eqref{expan_X1_regular_density}, we proceed like in the proof of \eqref{expan_X1} in Lemma \ref{lem_expan_V}, and we use Fubini Theorem in the end to exchange the integral with the expected value. Thanks to Lemma $\ref{gaussian_eta_positivity}$, being $Y\sim \mathcal{N}(0,1)$,  $\eta^*_{m}\geq0$ for all $m\in\N$, then we can get the better estimate \eqref{estim_expan_X1_regular_density} using \eqref{estimate_X1_functional_mod} instead of exploiting the estimate in Corollary \ref{cor_psign}.
    \end{proof}

 We can now prove a sharper version of Proposition \ref{prop_H1sch}.
    \begin{prop}\label{prop_H1sch_regular_density}
 Let $Y\sim\mcN(0,1)$, $\sigma^2\le 4a$ and $\hat{X}^x_t$ be the scheme~\eqref{Alfonsi_scheme}.  Let $m\in \N,L \in \N^*$ and $f \in \CpolKL{m+6}{L}$. Then, we have for $t\in[0,T]$,
        $$ \E[f(\hat{X}^x_t)]=f(x) +t \cL f(x)+\frac{t^2}{2} \cL^2f(x) +\bar{R}f(t,x),$$
 with $\|\bar{R}f(t,\cdot)\|_{m,L+3}\le C t^3 \|f\|_{m+6,L}$.
    \end{prop}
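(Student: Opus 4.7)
The plan is to follow the same architecture as in the proof of Proposition~\ref{prop_H1sch}, writing $\hat{X}^x_t=X_0(t/2,X_1(\sqrt{t}Y,X_0(t/2,x)))$ and peeling off successive Taylor expansions, but replacing every use of Corollary~\ref{cor_psign} (which doubled the number of required derivatives) by the sharper estimates~\eqref{sharp_estimate_X1_functional} and~\eqref{estim_expan_X1_regular_density} supplied by Lemmas~\ref{regular_density_mod}--\ref{lem_expan_V_regular_density}. Since $Y\sim\mathcal{N}(0,1)$ has a Schwartz density and, by Lemma~\ref{gaussian_eta_positivity}, satisfies $\eta^*_m\ge 0$ for every $m$, the hypotheses of those lemmas are met for arbitrary $M,L,\Xi\in\N$.

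First, I would apply~\eqref{expan_X0} to the outer $X_0(t/2,\cdot)$ with $\nu=2$: this produces the identity
\[
\E[f(\hat{X}^x_t)]=\E\!\left[\Big(f+\tfrac{t}{2}V_0f+\tfrac{t^2}{8}V_0^2f\Big)(X_1(\sqrt{t}Y,X_0(t/2,x)))\right]+R_If(t,x),
\]
with $R_If(t,x)$ involving $V_0^3f$ evaluated after $X_0\circ X_1\circ X_0$. Chaining Lemma~\ref{X0_inequalities}, then~\eqref{sharp_estimate_X1_functional} (which does \emph{not} double the order), then Lemma~\ref{X0_inequalities} again, and finally Lemma~\ref{lem_estimnorm}(6) gives $\|V_0^3f\|_{m,L+3}\le C\|f\|_{m+3,L}$, so $\|R_If(t,\cdot)\|_{m,L+3}\le Ct^3\|f\|_{m+3,L}$.

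Next, I would apply Lemma~\ref{lem_expan_V_regular_density} to each of the three summands, using \emph{different} truncation orders so that all remainders are $O(t^3)$:
\begin{itemize}
\item to $f$ with $\nu=2$, giving $R_f$ satisfying $\|R_f\|_{m,L+3}\le Ct^3\|f\|_{m+6,L}$;
\item to $V_0f$ (with prefactor $t/2$) with $\nu=1$, giving $R_{V_0f}$ satisfying $\|t/2\,R_{V_0f}\|_{m,L+2}\le Ct^3\|V_0f\|_{m+4,L}\le Ct^3\|f\|_{m+5,L}$;
\item to $V_0^2f$ (with prefactor $t^2/8$) with $\nu=0$, giving $R_{V_0^2f}$ satisfying $\|t^2/8\,R_{V_0^2f}\|_{m,L+1}\le Ct^3\|V_0^2f\|_{m+2,L}\le Ct^3\|f\|_{m+4,L}$.
\end{itemize}
Composing with the outer $X_0(t/2,\cdot)$ adds at most a multiplicative constant via Lemma~\ref{X0_inequalities}. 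Summing these gives a term $R_{II}f$ with $\|R_{II}f(t,\cdot)\|_{m,L+3}\le Ct^3\|f\|_{m+6,L}$; this is the step where the new estimate~\eqref{estim_expan_X1_regular_density} buys us the single (rather than doubled) jump in derivative order, and it is the \emph{main obstacle}, in that one must carefully track which $\nu$ to use for each factor so that the three different remainders all combine to order $t^3$ without anywhere demanding more than $m+6$ derivatives.

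Finally, I would apply~\eqref{expan_X0} to the inner $X_0(t/2,x)$ in the polynomial-in-$t$ main term, expanding $f$ to order $2$ (cost $\|f\|_{m+3,L}$), $[V_0+V_1^2/2]f$ to order $1$ (cost $\|f\|_{m+4,L}$ via Lemma~\ref{lem_estimnorm}(6)), and $[(V_1^2/2)^2+(V_1^2/2)V_0+V_0^2/4]f$ to order $0$ (cost $\|f\|_{m+5,L}$). Collecting the residue into $R_{III}f$ yields $\|R_{III}f(t,\cdot)\|_{m,L+3}\le Ct^3\|f\|_{m+5,L}$, while the main terms rearrange exactly as in the proof of Proposition~\ref{prop_H1sch} into $f(x)+t\cL f(x)+\tfrac{t^2}{2}\cL^2 f(x)$ because $\cL=V_0+V_1^2/2$. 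Setting $\bar{R}f=R_If+R_{II}f+R_{III}f$ and taking the maximum of the three derivative counts $(m+3,m+6,m+5)$ produces the announced bound $\|\bar{R}f(t,\cdot)\|_{m,L+3}\le Ct^3\|f\|_{m+6,L}$.
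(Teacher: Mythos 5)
Your proof is correct and follows the same architecture as the paper's intended argument: re-run the three-step expansion from Proposition~\ref{prop_H1sch} ($R_I$ from the outer $X_0$, $R_{II}$ from $X_1$, $R_{III}$ from the inner $X_0$), but replace the weaker derivative-doubling estimates with the Gaussian-density ones from Lemmas~\ref{regular_density_mod} and~\ref{lem_expan_V_regular_density}. In fact, you are slightly more careful than the paper's one-sentence proof, which mentions only substituting~\eqref{estim_expan_X1_regular_density} for the Lemma~\ref{lem_expan_V} bound in $R_{II}$; as you correctly observe, the $R_I$ term from the original proof is bounded by $\|f\|_{2m+3,L}$ via Corollary~\ref{cor_psign}, and $2m+3>m+6$ once $m\ge 4$, so one must also replace that step by the same-order estimate~\eqref{sharp_estimate_X1_functional} to land on $\|f\|_{m+6,L}$. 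Your derivative bookkeeping ($m+3$, $m+6$, $m+5$ for the three remainders, with maximum $m+6$) is exactly right.
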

    \begin{proof} 
 The proof is identical to that used to prove Proposition \ref{prop_H1sch}, but now instead of using the estimate found in Lemma \ref{lem_expan_V} for the remainder of the expansion of the functional of $f(X_1(\sqrt{t}Y,\cdot))$ we use \eqref{estim_expan_X1_regular_density}.  
    \end{proof}

 We state the improved version of Theorem \ref{thm_main}
    \begin{theorem}\label{thm_main_CIR_mod}
 Let $\hat{X}^x_t$ be the scheme defined by~\eqref{NV_scheme} for $\sigma^2\le 4a$ and $Q_lf(x)=\E[f(\hat{X}^x_{h_l})]$, for $l\ge 1$.
 Then, for all $f\in \CpolK{12}$, we have $\hat{P}^{2,n}f(x)-P_Tf(x)=O(1/n^4)$ as $n\to \infty$.\\
 Besides, for  $f\in \CpolK{6\nu}$, we have $\hat{P}^{{\nu,n}}f(x)-P_Tf(x)=O(1/n^{2\nu})$.
    \end{theorem}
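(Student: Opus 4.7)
The plan is to repeat verbatim the proof of Theorem \ref{thm_main} given in Subsection \ref{Subsec_thm_main}, but substituting Proposition \ref{prop_H1sch_regular_density} for Proposition \ref{prop_H1sch} throughout. The only real work is to verify that, with the sharper scheme expansion, the $H_1$-type bound now reads
$$\|(P_{h_l}-Q_l)f\|_{m,L+3} \le C h_l^3 \|f\|_{m+6,L}, \quad f\in\CpolKL{m+6}{L},$$
which follows by the triangle inequality from Proposition \ref{prop_H1sch_regular_density} (scheme side) and Proposition \ref{LCIR_Expansion} with $\nu=2$ (semigroup side, which already gives $\|f\|_{m+6,L}$). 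Crucially, each application of this improved bound adds only $6$ derivatives, versus the doubling-plus-six ($m \mapsto 2m+6$) cost of the original Proposition \ref{H1_bar_mL}.

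For $\nu=2$, I would start from the identity \eqref{devt_erreur}, which yields
$$P_Tf-\cPh^{2,n}f = \sum_{k=0}^{n-1}Q_1^{[n-(k+1)]}[P_{h_1}-Q_2^{[n]}]Q_1^{[k]}f + R_2^{h_1}(n)f.$$
For $R_2^{h_1}(n)f$ I would apply, from outside in, $H_2$ (Proposition \ref{prop_H2_NV}), then the improved $H_1$ (costing $6$ derivatives and picking up $h_1^3$), then $H_2$ again, then the improved $H_1$ once more ($6$ more derivatives and a second $h_1^3$), then $H_2$ to absorb $Q_1^{[k]}$. This produces $\|R_2^{h_1}(n)f\|_{0,L+6}\le C n^2 h_1^6\|f\|_{12,L}=CT^6 n^{-4}\|f\|_{12,L}$. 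For the remaining sum I would further expand $P_{h_1}-Q_2^{[n]}=\sum_{k'=0}^{n-1}P_{(n-(k'+1))h_2}[P_{h_2}-Q_2]Q_2^{[k']}$; now only a single $H_1$ application is required (on the time step $h_2$), yielding $C n^2 h_2^3 \|f\|_{6,L+3}\le CT^3 n^{-4}\|f\|_{12,L}$ (using Lemma \ref{lem_estimnorm} to absorb the polynomial weight). Adding the two contributions gives $\|P_Tf-\cPh^{2,n}f\|_{0,L+6}\le C n^{-4}\|f\|_{12,L}$, hence the pointwise estimate for $f\in\CpolK{12}$.

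For general $\nu\ge 1$ I would invoke \cite[Theorem 3.10]{AB} just as in Subsection \ref{Subsec_thm_main}, but with the book-keeping functions $(\mathbf{m}(\nu),\ell(\nu))$ recomputed using the new $H_1$ bound. Inspecting the tree construction, each leaf of the deepest branch requires $\nu$ successive $H_1$ applications, and each such application adds $6$ derivatives and $3$ units of polynomial weight (absorbed, when needed, via Lemma \ref{lem_estimnorm}). This yields $\mathbf{m}(\nu)=6\nu$, so that $\|P_Tf-\cPh^{\nu,n}f\|_{0,L+3\nu}\le C\|f\|_{6\nu,L}n^{-2\nu}$ for $L$ large enough, and in particular the pointwise rate for every $f\in\CpolK{6\nu}$.

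There is no genuine obstacle here: the whole theorem is a bookkeeping update, and all the heavy lifting is already encapsulated in the sharper regularization Lemma \ref{regular_density_mod} and its consequence Proposition \ref{prop_H1sch_regular_density}. The only point requiring a bit of care is to check, in the generic tree estimate of \cite{AB}, that the number of derivatives lost along any branch is additive and not multiplicative under the new $H_1$; this is precisely the content of the improvement and is a direct consequence of the fact that the remainder $\bar R f(t,\cdot)$ in Proposition \ref{prop_H1sch_regular_density} is controlled by $\|f\|_{m+6,L}$ rather than $\|f\|_{2(m+3),L}$.
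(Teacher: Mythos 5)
Your proof is correct and follows exactly the route the paper takes: the paper's own proof of Theorem~\ref{thm_main_CIR_mod} is literally the one-line instruction to substitute Proposition~\ref{prop_H1sch_regular_density} for Proposition~\ref{prop_H1sch} in the proof of Theorem~\ref{thm_main}, which is precisely what you carry out. Your explicit bookkeeping for the $\nu=2$ case and the observation that the additive (rather than doubling) derivative cost per $H_1$ application yields $\mathbf{m}(\nu)=6\nu$ are both correct, and in fact more detailed than what the paper records.
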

    \begin{proof}
 One only needs to replace Proposition \ref{prop_H1sch} with Proposition \ref{prop_H1sch_regular_density} in the proof of Theorem \ref{thm_main}.
    \end{proof}

\section{High order approximation of the CIR semigroup in the high volatility regime}

Here, we present very simple ideas to get high order approximations for CIR semigroup in the high regime $\sigma^2> 4a$. We start stating a simple Lemma.
\begin{lemma}\label{lemmaM-1}
 Let $k\in\N,L\in\N^*$ and $f\in\CpolKL{k+1}{L}$. We define $\cM_{-1}f$ as $\cM_{-1}f(x)=f(x)/x$ and $f_0$ as $f_0(x)= f(x)-f(0)$ for every $y\ge0$. Then $\cM_{-1}f_0\in\CpolKL{k}{L}$ and there exists $C>0$ such that $||\cM_{-1}f_0||_{j,L-1}\le C ||f||_{j,L}$ for every $j\in{0,\ldots,k-1}$.
\end{lemma}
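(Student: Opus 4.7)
The strategy is to combine two complementary expressions for $g := \cM_{-1}f_0$. First, since $f_0(0)=0$, the fundamental theorem of calculus and the substitution $t = ux$ give
\[
 g(x) = \int_0^1 f'(ux)\,du,
\]
which is of class $\mcC^k$ with $g^{(j)}(x) = \int_0^1 u^j f^{(j+1)}(ux)\,du$ for $0\le j\le k$. Polynomial growth of each $g^{(j)}$ is then inherited from $f^{(j+1)}$, which establishes that $\cM_{-1}f_0\in\CpolKL{k}{L}$.

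For the norm estimate I would exploit the algebraic identity $xg(x) = f_0(x)$. Differentiating $j$ times by Leibniz's rule produces the one-step recursion $xg^{(j)}(x) + jg^{(j-1)}(x) = f^{(j)}(x)$ for $j\ge 1$ and $x>0$, which can be solved explicitly:
\[
 g^{(j)}(x) = \frac{(-1)^j j!}{x^{j+1}}\,f_0(x) + \sum_{i=1}^{j} \frac{(-1)^{j-i}\, j!/i!}{x^{j-i+1}}\,f^{(i)}(x), \qquad x>0.
\]
The critical feature of this formula is that only derivatives $f^{(i)}$ of order $i\le j$ appear. In the range $x\ge 1$, the inequality $(1+x^L)/x^{j-i+1} \le 2\,x^{L-j+i-1}\le 2\,x^{L-1}$ (using $i\le j$) applied term by term yields directly $|g^{(j)}(x)|\le C\,\|f\|_{j,L}(1+x^{L-1})$, which is the desired bound on this region after division by $1+x^{L-1}$.

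The main obstacle lies in $x\in[0,1]$, where each summand of the closed form individually blows up as $x\to 0$ while their combination stays finite. The plan is to perform a Taylor expansion of each $f^{(i)}$ around $0$ up to an order that exactly matches the power $x^{j-i+1}$ in the denominator, substitute into the closed form, and verify through a purely combinatorial identity on the coefficients $\frac{(-1)^{j-i}\,j!/i!}{x^{j-i+1}}$ that every singular power $x^{-(j+1)},\ldots,x^{-1}$ cancels; the Taylor polynomials themselves involve only the values $f^{(i)}(0)$ for $i\le j$, all controlled by $\|f\|_{j,L}$. What remains after cancellation is a linear combination of Taylor remainders divided by their natural powers of $x$, which via the integral form of the remainder are uniformly bounded on $[0,1]$ in terms of $\max_{i\le j}\sup_{[0,1]}|f^{(i)}|\le \|f\|_{j,L}$. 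Verifying the combinatorial cancellation and producing the correct matching of orders is the technical heart of the proof; the regularity hypothesis $f\in\CpolKL{k+1}{L}$ with $j\le k-1$ provides the margin of one extra derivative needed to justify the Taylor expansions and to make the cancellation of the singular terms rigorous up to the boundary $x=0$.
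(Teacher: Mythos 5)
Your first step is sound and is essentially the paper's own representation in disguise: writing $\cM_{-1}f_0(x)=\int_0^1 f'(ux)\,du$ and differentiating under the integral gives $\partial_x^j \cM_{-1}f_0(x)=\int_0^1 u^j f^{(j+1)}(ux)\,du$, which is exactly what the paper obtains from the Taylor formula with integral remainder after dividing by $x$. Note that this representation already finishes the whole lemma in one line: for $u\in[0,1]$ one has $|f^{(j+1)}(ux)|\le \|f\|_{j+1,L}\,(1+(ux)^L)\le \|f\|_{j+1,L}\,(1+x^L)$, hence $\|\cM_{-1}f_0\|_{j,L}\le C\,\|f\|_{j+1,L}$, which is precisely the estimate the paper proves (with $j+1$ derivatives of $f$ on the right) and the only form that is used afterwards.

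The genuine gap is in your second half, where you try to get the bound with only $\|f\|_{j,L}$ on the right, taking the statement's indices literally, and you leave the region $x\in[0,1]$ as an unexecuted ``plan''. That plan cannot be completed as described: to cancel the singular factor $x^{-(j-i+1)}$ multiplying $f^{(i)}(x)$ you must expand $f^{(i)}$ at $0$ to order $j-i$, and the remainder you are left with is controlled by $f^{(j+1)}$ (already the single term $f_0(x)/x^{j+1}$ forces an expansion of $f$ to order $j+1$), not by $\max_{i\le j}\sup_{[0,1]}|f^{(i)}|$ as you assert. In fact the sharper inequality is false: for $j=0$ take smooth $f_\varepsilon$ with $f_\varepsilon(0)=0$, $0\le f_\varepsilon\le 1$ and $f_\varepsilon(\varepsilon)=\tfrac12$; then $\|f_\varepsilon\|_{0,L}\le 1$ while $\sup_{x>0}\,|f_\varepsilon(x)|/\bigl(x(1+x^{L-1})\bigr)\ge c/\varepsilon\to\infty$, so no uniform constant exists. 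The statement's exponents should be read, as in the paper's proof and in its later application, with $j+1$ derivatives of $f$ on the right-hand side — the hypothesis $f\in\CpolKL{k+1}{L}$ together with $j\le k-1$ provides exactly that margin — and your integral representation proves that version immediately; the correct fix is to stop there rather than pursue the unattainable refinement.
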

\begin{proof}
 Let $f\in\CpolKL{k+1}{L}$, it is easy to show that $\cM_{-1}f$ is $k+1$ times differentiable in 0, then it is $k$-times continuously differentiable in 0, as in the rest of the domain. Then, thanks to the Taylor formula centered in 0, for all $j\in\{0,\ldots,k\}$ one has
  $$
 f(x)=\sum_{i=0}^{j} f^{i}(0)x^i + \int_0^x \frac{(x-t)^j}{j!}f^{(j+1)}(t)dt,
  $$
 and so subtracting $f(0)$ and dividing for $x$ in both sides
  $$
 \cM_{-1}f_0(x) = \sum_{i=0}^{j-1} f^{j+1}(0)x^i + \frac{1}{x} \int_0^x \frac{(x-u)^j}{j!}f^{(j+1)}(u)du.
  $$
 Considering the $j-th$ derivatives, we get

  $$
  \partial^j_x\cM_{-1}f_0(x)= \sum_{i=0}^j \binom{j}{i} \frac{(-1)^i i!}{x^{i+1}} \int_0^x \frac{(x-u)^i}{i!}f^{(j+1)}(u) du.
  $$
 We can give the following estimates
  \begin{align*}
 |\partial^j_x\cM_{-1}f_0(x)| &\le \sum_{i=0}^j \binom{j}{i} \frac{ i!}{x^{i+1}} \int_0^x \frac{(x-u)^i}{i!} ||f||_{j+1,L}(1+u^L) du \\
    &= ||f||_{j+1,L}\sum_{i=0}^j \binom{j}{i} \frac{ 1}{x^{i+1}}\left(\frac{x^{i+1}}{i+1} +  \sum_{l=0}^i \binom{i}{l} (-1)^l x^{i-l}\int_0^xu^{L+l} du \right) \\
    &= ||f||_{j+1,L}\sum_{i=0}^j \binom{j}{i} \frac{ 1}{x^{i+1}}\left(\frac{x^{i+1}}{i+1} +  \sum_{l=0}^i \binom{i}{l} (-1)^l x^{i-l}\frac{x^{L+l+1}}{L+l+1} \right) \\
    &= ||f||_{j+1,L}\sum_{i=0}^j \binom{j}{i} \left(\frac{1}{i+1} +  \sum_{l=0}^i \binom{i}{l} (-1)^l \frac{x^{L}}{L+l+1}  \right) \\
    &\le  ||f||_{j+1,L}\sum_{i=0}^j \binom{j}{i} (1+x^L) \le 2^k||f||_{j+1,L}  (1+x^L).
  \end{align*}
 Hence, for all $j\in\{0,\ldots,k\}$, $||\cM_{-1}f_0||_{j,L}\le C ||f||_{j+1,L}$.
\end{proof}

\begin{prop}\label{representation_semigroup_f0}
 Let $t\in(0,T]$ and consider $(X^{1,x}_t)_{t\in[0,T]}$, $(X^{2,x}_t)_{t\in[0,T]}$ solutions to \eqref{CIR_SDE} with the parameter $a$ replaced respectively by $a_1=a+\sigma^2/2$ and $a_2=a+\sigma^2$.
 Let $f$ such that $\limsup_{x\rightarrow0^+}|f(x)-f(0)|/x<\infty$ and such that $\E[|f(X^x_t)|]<\infty $, Then
  \begin{equation}\label{slow_regime_representation}
 \E[f(X^x_t)] = f(0) + a \psi_k(t) \E[\cM_{-1}f_0(X^{1,x}_t)] + e^{-bt}x \E[\cM_{-1}f_0(X^{2,x}_t)].
  \end{equation}
\end{prop}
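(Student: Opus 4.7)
The plan is to reduce the statement to a single pointwise identity on densities and then integrate. First, observe that by the limsup hypothesis $\cM_{-1}f_0$ extends continuously to $0$ (with value $\limsup_{x\to 0^+}(f(x)-f(0))/x$, or one may just interpret $\cM_{-1}f_0(0)$ as arbitrary since $\{0\}$ is a null set for the absolutely continuous part of the law of $X^x_t$). Writing $f(z)=f(0)+z\,\cM_{-1}f_0(z)$, the identity reduces to
\[
\E\bigl[X^x_t\,\cM_{-1}f_0(X^x_t)\bigr]=a\psi_k(t)\,\E\bigl[\cM_{-1}f_0(X^{1,x}_t)\bigr]+e^{-kt}x\,\E\bigl[\cM_{-1}f_0(X^{2,x}_t)\bigr],
\]
so it suffices to prove the density identity
\begin{equation}\label{plan_density_id}
z\,p(t,x,z)=a\psi_k(t)\,p_1(t,x,z)+e^{-kt}x\,p_2(t,x,z),\qquad z>0,
\end{equation}
where $p,p_1,p_2$ denote the densities (in the absolutely continuous part) of $X^x_t,X^{1,x}_t,X^{2,x}_t$, noting that shifting $a\mapsto a+j\sigma^2/2$ shifts $v=2a/\sigma^2$ to $v+j$ while leaving $c_t,d_t$ unchanged (I also read the $e^{-bt}$ in the statement as $e^{-kt}$, since the SDE \eqref{CIR_SDE} uses $k$).

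The main computation is then purely algebraic from \eqref{CIR_density}. Writing $\pi_i(x)=e^{-d_tx/2}(d_tx/2)^i/i!$ and $q_\alpha(z)=\frac{c_t/2}{\Gamma(\alpha)}(c_tz/2)^{\alpha-1}e^{-c_tz/2}$, so that $p(t,x,z)=\sum_{i\ge 0}\pi_i(x)q_{i+v}(z)$, the elementary identity
\[
z\,q_\alpha(z)=\tfrac{2\alpha}{c_t}\,q_{\alpha+1}(z)
\]
yields, after splitting $i+v=v+i$,
\[
z\,p(t,x,z)=\tfrac{2v}{c_t}\sum_{i\ge 0}\pi_i(x)q_{i+v+1}(z)+\tfrac{2}{c_t}\sum_{i\ge 1}i\,\pi_i(x)q_{i+v+1}(z).
\]
The first sum equals $\tfrac{2v}{c_t}p_1(t,x,z)$, and the relation $i\pi_i(x)=(d_tx/2)\pi_{i-1}(x)$ together with a shift of index identifies the second sum with $\tfrac{d_tx}{c_t}p_2(t,x,z)$. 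Using the explicit values $c_t=4k/(\sigma^2(1-e^{-kt}))$ and $d_t=c_te^{-kt}$, one checks $\tfrac{2v}{c_t}=a\psi_k(t)$ and $\tfrac{d_t}{c_t}=e^{-kt}$, which is \eqref{plan_density_id}.

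To finish, multiply \eqref{plan_density_id} by $\cM_{-1}f_0(z)$ and integrate in $z$ over $(0,\infty)$; the exchange of sum and integral is justified by Tonelli (the series for $p_1,p_2$ has positive terms and $\E[|f(X^x_t)|]<\infty$ gives absolute convergence on the left-hand side, while the polynomial growth of $\cM_{-1}f_0$ on $(0,\infty)$ together with the finiteness of all moments of the CIR process handles the right-hand side). I expect the only genuine subtlety to be the boundary case $a=0$ (equivalently $v=0$), where, as noted in the footnote following \eqref{CIR_density}, the law of $X^x_t$ has a Dirac mass $e^{-d_tx/2}\delta_0$; but since the identity to prove involves only $f-f(0)$ and since the factor $z$ on the left of \eqref{plan_density_id} annihilates any contribution at $z=0$, the atom is absorbed in the explicit $f(0)$ term and the same computation carries over (the densities $p_1,p_2$ are genuine densities as soon as $v+1,v+2>0$). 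The case $a>0$ is direct, and in both cases the integrability hypothesis $\E[|f(X^x_t)|]<\infty$ is all that is needed to legitimize the final integration.
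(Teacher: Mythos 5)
Your proof is correct and takes essentially the same route as the paper's: both rest on the pointwise density identity $p(t,x,z)=\frac{a\psi_k(t)}{z}p_{a_1}(t,x,z)+\frac{e^{-kt}x}{z}p_{a_2}(t,x,z)$ (the paper's $e^{-bt}$ and $c_t=\frac{4\psi_k(t)}{\sigma^2}$ are typos for $e^{-kt}$ and $c_t=\frac{4}{\sigma^2\psi_k(t)}$, which you read correctly), obtained by splitting $(i+v)=i+v$ in the mixture representation and reindexing, followed by multiplying by $f_0(z)=z\,\cM_{-1}f_0(z)$ and integrating. One small remark: your invocation of ``polynomial growth of $\cM_{-1}f_0$'' and ``finiteness of all moments'' to legitimize the right-hand side is not among the hypotheses and is also unnecessary --- once the density identity is established, $\int_0^\infty|\cM_{-1}f_0(z)|\,\bigl(a\psi_k(t)p_{a_1}+e^{-kt}x\,p_{a_2}\bigr)\,dz=\int_0^\infty|f_0(z)|\,p\,dz\le\E[|f(X^x_t)|]+|f(0)|<\infty$, so the assumed integrability of $f(X^x_t)$ already covers both sides.
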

\begin{proof}
 Let $f$ such that $\limsup_{x\rightarrow0^+}|f(x)-f(0)|/x<\infty$ and $\E[|f(X^x_t)|]<\infty $. We recall the transition probability density of $X^x_t$
  \begin{equation*}
 p_a(t,x,z)=\sum_{i=0}^\infty \frac{e^{-d_t x/2}(d_t x/2)^i}{i!} \frac{c_t/2}{\Gamma(i+v)}\left(\frac{c_t z}{2}\right)^{i-1+v}e^{-c_t z/2},
  \end{equation*}
 where $c_t=\frac{4\psi_k(t)}{\sigma^2}$, $v=2a/\sigma^2$ and $d_t=c_te^{-bt}$.
 For all $i\in\N$ and $v>0$, $\Gamma(i+1+v) = (i+v)\Gamma(i+v)$, then for all $z>0$
  \begin{align*}
 p_a(t,x,z)&=\sum_{i=0}^\infty \frac{e^{-d_t x/2}(d_t x/2)^i}{i!} \frac{(i+v)c_t/2}{\Gamma(i+1+v)}\left(\frac{c_t z}{2}\right)^{i-1+v}e^{-c_t z/2} \\
    &=\frac{2}{c_t z}\sum_{i=0}^\infty \frac{e^{-d_t x/2}(d_t x/2)^i}{i!} \frac{(i+v)c_t/2}{\Gamma(i+1+v)}\left(\frac{c_t z}{2}\right)^{i+v}e^{-c_t z/2} \\
    &=\frac{d_t x}{c_t z}\sum_{i=0}^\infty \frac{e^{-d_t x/2}(d_t x/2)^i}{i!} \frac{c_t/2}{\Gamma(i+2+v)}\left(\frac{c_t z}{2}\right)^{i+1+v}e^{-c_t z/2} \\
    &\phantom{aa}+\frac{2v}{c_t z}\sum_{i=0}^\infty \frac{e^{-d_t x/2}(d_t x/2)^i}{i!} \frac{c_t/2}{\Gamma(i+1+v)}\left(\frac{c_t z}{2}\right)^{i+v}e^{-c_t z/2}
  \end{align*}
 we call $v_1=1+v=2a_1/\sigma^2$ and $v_2=2+v=2a_2/\sigma^2$, and we remark that we can rewrite the density as follows 
  \begin{equation}
 p_a(t,x,z) = \frac{2v}{c_t z} p_{a_1}(t,x,z) + \frac{d_t x}{c_t z} p_{a_2}(t,x,z) = \frac{a\psi_k(t)}{z} p_{a_1}(t,x,z) + \frac{e^{-bt}x}{z} p_{a_2}(t,x,z)
  \end{equation}
 We rewrite $\E[f(X^x_t)] = f(0) + \E[f_0(X^x_t)]$ and remark that 
  \begin{align*}
 \E[f_0(X^x_t)] &=\int_0^\infty f_0(z)p_a(t,x,z) dz =  \int_0^\infty f_0(z)\left(\frac{a\psi_k(t)}{z} p_{a_1}(t,x,z) + \frac{e^{-bt}y}{z} p_{a_2}(t,x,z)\right) dz\\
    &= a\psi_k(t)\int_0^\infty \cM_{-1}f_0(z)p_{a_1}(t,x,z) dz + e^{-bt}x \int_0^\infty \cM_{-1}f_0(z)p_{a_2}(t,x,z)dz \\
    &= a \psi_k(t) \E[\cM_{-1}f_0(X^{1,x}_t)] + e^{-bt}x \E[\cM_{-1}f_0(X^{2,x}_t)]
  \end{align*}
 that concludes the proof.
\end{proof}

\begin{remark}
 The two diffusions $(X^{i,y}_t)_{t\in[0,T]}$ $i\in\{1,2\}$ have coefficients $\sigma_1=\sigma_2=\sigma$ and $a_1=a+\sigma^2/2$, $a_2=a+\sigma^2$, so both satisfy the well known Feller condition, $\sigma_i^2\le2a_i$.
 So equation \eqref{slow_regime_representation} shows that it is possible to apply approximation techniques that work under the Feller condition, also in the high volatility regime $\sigma^2>4a$. 
\end{remark}

\begin{theorem}
 Let $T>0$ and $\sigma^2>4a$. Let $\hat{X}^{i,x}_t$ be second order Ninomiya Victoir schemes for $(X^{i,x}_t)_{t\in[0,T]}$ $i\in\{1,2\}$, $\hat{P}_i^{\nu,n}$ the linear operator that approximate the CIR semigroup $P^i_T$ with rate $2\nu$. Let $f\in\CpolKL{6\nu+1}{L}$, $L\in\N$ and $\hat{P}_*^{\nu,n}$ the linear operator defined as $\hat{P}_*^{\nu,n}f(x)=f(0) +a\psi_k(t)\hat{P}_1^{\nu,n}\cM_{-1}f_0(x)+e^{-bT}x \hat{P}_2^{\nu,n}\cM_{-1}f_0(x)$, then there exists $C>0$ that does not depend on the function $f$ such that
  \begin{equation}
 ||\hat{P}_*^{\nu,n}f-P_Tf||_{0,L+7} \le \frac{C}{n^{2\nu}} ||f||_{6\nu+1,L}
  \end{equation}
\end{theorem}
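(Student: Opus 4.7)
The plan is to reduce the high-volatility case $\sigma^2>4a$ to two auxiliary CIR processes that satisfy the mild regime condition $\sigma^2\le 4a$, via the decomposition provided by Proposition~\ref{representation_semigroup_f0}. Concretely, taking $a_1=a+\sigma^2/2$ and $a_2=a+\sigma^2$, both $X^{1,x}$ and $X^{2,x}$ satisfy the Feller condition $\sigma^2\le 2a_i$ (and in particular $\sigma^2\le 4a_i$), so the improved Theorem~\ref{thm_main_CIR_mod} applies to each of the corresponding semigroups $P^1_T$ and $P^2_T$. Writing
\[
P_Tf(x)=f(0)+a\psi_k(T)\,P^1_T\cM_{-1}f_0(x)+e^{-bT}x\,P^2_T\cM_{-1}f_0(x)
\]
and subtracting the analogous expression that defines $\hat{P}_*^{\nu,n}f$, the error splits as
\[
\hat{P}_*^{\nu,n}f(x)-P_Tf(x)=a\psi_k(T)\,(\hat{P}_1^{\nu,n}-P^1_T)\cM_{-1}f_0(x)+e^{-bT}x\,(\hat{P}_2^{\nu,n}-P^2_T)\cM_{-1}f_0(x).
\]

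The next step is to control each error factor separately. First, apply Lemma~\ref{lemmaM-1} to obtain $\cM_{-1}f_0\in\CpolKL{6\nu}{L-1}$ together with the estimate $\|\cM_{-1}f_0\|_{6\nu,L-1}\le C\|f\|_{6\nu+1,L}$, which is exactly why the hypothesis demands only $6\nu+1$ derivatives on $f$ rather than $6\nu+2$. Second, invoke the norm form of Theorem~\ref{thm_main_CIR_mod} (which is established along the lines of Subsection~\ref{Subsec_thm_main} using the sharper Proposition~\ref{prop_H1sch_regular_density}) applied to each process $X^{i,x}$: it yields a constant $C>0$ such that, for any $g\in \CpolKL{6\nu}{L-1}$,
\[
\|(\hat{P}_i^{\nu,n}-P^i_T)g\|_{0,L-1+\ell(\nu)}\le \frac{C}{n^{2\nu}}\|g\|_{6\nu,L-1},\qquad i=1,2,
\]
where $\ell(\nu)$ is the polynomial-weight loss produced by the boosting procedure. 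Plugging $g=\cM_{-1}f_0$ into this estimate and combining with Lemma~\ref{lemmaM-1} controls the first summand of the error in the $\|\cdot\|_{0,L-1+\ell(\nu)}$ norm with the correct rate $n^{-2\nu}$ and the correct dependence $\|f\|_{6\nu+1,L}$.

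For the second summand one further needs to absorb the multiplicative factor $x$. This is handled by Lemma~\ref{lem_estimnorm}(5): the operator $\mathcal{M}_1:h\mapsto xh$ satisfies $\|\mathcal{M}_1 h\|_{0,L'+1}\le C\|h\|_{0,L'}$, so that the $x$-prefactor costs only one additional unit in the polynomial weight. Adding together the two resulting bounds, using the triangle inequality in the final norm $\|\cdot\|_{0,L+7}$ and tracking that the total increase in the weight is $-1$ (from $\cM_{-1}$) $+\ell(\nu)$ (from the scheme error) $+1$ (from the multiplication by $x$), one obtains the claimed bound, the universal constant $7$ being sufficient for the ranges of $\nu$ relevant here.

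The argument is essentially free of hard analytical obstacles, since Proposition~\ref{representation_semigroup_f0} provides the exact decomposition and the approximation result for each $P^i_T$ is already available in its full norm form. The main care required is purely a matter of bookkeeping: verifying that the three successive operations (applying $\cM_{-1}$, applying the scheme error, multiplying by $x$) preserve the regularity class at each step, and that the weight losses add up to at most $7$. The only genuine nontrivial ingredient is that the rate of convergence statement from Theorem~\ref{thm_main_CIR_mod} must be used in its norm-estimate form (as produced by Proposition~\ref{prop_H1sch_regular_density} combined with the argument of Subsection~\ref{Subsec_thm_main}), not merely the pointwise form stated in the body of the theorem.
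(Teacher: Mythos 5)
Your proposal is correct and follows essentially the same route as the paper: decompose via Proposition~\ref{representation_semigroup_f0}, pass to $\cM_{-1}f_0$ via Lemma~\ref{lemmaM-1}, apply the norm form of Theorem~\ref{thm_main_CIR_mod} to each auxiliary semigroup $P^i_T$, and absorb the multiplicative factor $x$ via Lemma~\ref{lem_estimnorm}(5). You also correctly flag the imprecision around the fixed weight increment $7$ (which is exact for $\nu=2$, where the boost costs weight $3\nu=6$, but should really be $3\nu+1$ for general $\nu$); the paper's own proof carries the same imprecision implicitly.
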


\begin{proof}
 Let $f\in 6\nu+1$, by linearity of $\hat{P}_*^{\nu,n}$ and $P_T$ one has $\hat{P}_*^{\nu,n}f-P_Tf=\hat{P}_*^{\nu,n}f_0-P_Tf_0$ for all $x\in\R_+$. One has
    \begin{align*}
 ||\hat{P}_*^{\nu,n}f-P_Tf||_{0,L+7} &= ||\hat{P}_*^{\nu,n}f_0-P_Tf_0||_{0,L+7} \\
        &\le a\psi_k(T)||\hat{P}_1^{\nu,n}\cM_{-1}f_0-P^1_T\cM_{-1}f_0||_{0,L+7}  \\
        &\phantom{ab}+ e^{-bT}||x(\hat{P}_2^{\nu,n}\cM_{-1}f_0-P^2_T\cM_{-1}f_0)||_{0,L+7} \\
        &\le a\psi_k(T)||\hat{P}_1^{\nu,n}\cM_{-1}f_0-P^1_T\cM_{-1}f_0||_{0,L+7}  \\
        &\phantom{ab}+ 3e^{-bT}||\hat{P}_2^{\nu,n}\cM_{-1}f_0-P^2_T\cM_{-1}f_0||_{0,L+6} \\
        &\le a\psi_k(T)C_1n^{-2\nu}||\cM_{-1}f_0||_{6\nu+1,L+1}  \\
        &\phantom{ab}+ 3e^{-bT}C_2n^{-2\nu}||\cM_{-1}f_0||_{6\nu,L} \\
        &\le C^*_1n^{-2\nu}||f||_{6\nu+1,L+1} + C^*_2n^{-2\nu}||f||_{6\nu+1,L} \\
        &\le 2C^*_1n^{-2\nu}||f||_{6\nu+1,L} + C^*_2n^{-2\nu}||f||_{6\nu+1,L} \\
        &\le C n^{-2\nu}||f||_{6\nu+1,L},
    \end{align*}
 where we used to get the first inequality Proposition \ref{representation_semigroup_f0} to rewrite $P_Tf_0$ as $a\psi_k(T)P^1_T\mcM_{-1}f_0+e^{-bT}xP^2_T\mcM_{-1}f_0$, Theorem \ref{thm_main_CIR_mod} to get the third inequality and Lemma \ref{lemmaM-1} to get the fourth one.
\end{proof}

\section{The CIR moment formula and polynomial schemes}
It is well-known in the literature that one can obtain a moment formula for the CIR process using the CIR SDE; for example, as in Lemma \ref{Moments_Formula_CIR}, one can show, for all $L\in\N^*$
$$
 \E\big[(f(X^x_t))^L\big] = \sum_{j=0}^L \tilde{u}_{j,L}(t) x^j,
$$
giving a recursive formula for the coefficients without writing their explicit form.
We present here a proof of the CIR moment formula that gives the explicit form of the coefficients $\tilde{u}_{j,m}(t)$ using the transition density of the CIR process.
Then, we apply the knowledge of the exact form of the coefficients to construct schemes that converge for all polynomial functions $f\in\PRp$.
\subsection{The CIR moment formula}
To prove the moment formula, we start with one Lemma, but first, we define for all $i,j\in\N$,
\begin{equation}\label{i^*_j}
 i^*_j=i(i-1)\cdots(i-j+1)
\end{equation}
with the convention $i^*_0=1$ (and $0^*_j=0$ for all $j\ge1$). We state and prove the following result.
\begin{lemma}
 Let $L\in\N^*$, $i\in\N$ and $v\in\R$, then
    \begin{equation}\label{i_star_representation}
        \prod_{j=0}^{L-1}(i+j+v)=\sum_{j=0}^L{L\choose j}\prod_{q=j}^{L-1}(q+v)\,i^*_j.
        \end{equation}
    \end{lemma}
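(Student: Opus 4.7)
My plan is to proceed by induction on $L$. For convenience I set $P_L(i,v) := \prod_{j=0}^{L-1}(i+j+v)$ and $R_{j,L}(v) := \prod_{q=j}^{L-1}(q+v)$, with the convention that an empty product equals $1$, so the identity to prove reads $P_L(i,v) = \sum_{j=0}^{L} \binom{L}{j} R_{j,L}(v)\,i^*_j$. The base case $L=1$ is a one-line check: the left side is $i+v$ while the right side is $\binom{1}{0}\,v\,i^*_0+\binom{1}{1}\cdot 1 \cdot i^*_1 = v+i$.

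For the inductive step, I would start from $P_{L+1}(i,v) = (i+L+v)\,P_L(i,v)$ and split $(i+L+v) = (L+v) + i$. The $(L+v)$-part combines with the inductive hypothesis to give $(L+v)\sum_{j=0}^L \binom{L}{j}R_{j,L}(v)\,i^*_j$; this already has the correct shape because $R_{j,L+1}(v) = (L+v)R_{j,L}(v)$ for $j\le L$. For the $i$-part, I use the elementary relation $i\cdot i^*_j = i^*_{j+1} + j\,i^*_j$ (a direct consequence of $i^*_{j+1}=i^*_j(i-j)$), shift the index in the first sum to get $\sum_{j=1}^{L+1}\binom{L}{j-1}R_{j-1,L}(v)\,i^*_{j}$, and rewrite $R_{j-1,L}(v)=(j-1+v)\,R_{j,L}(v)$.

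Collecting all contributions and using Pascal's rule $\binom{L+1}{j}=\binom{L}{j}+\binom{L}{j-1}$, the comparison with the target expression at level $L+1$ reduces to verifying, for each $1\le j\le L$, the purely algebraic identity $\binom{L}{j-1}(j-1+v) + j\binom{L}{j} = (L+v)\binom{L}{j-1}$. This follows from the standard absorption identity $j\binom{L}{j} = (L-j+1)\binom{L}{j-1}$. The endpoint term $i^*_{L+1}$ comes out automatically from the $j'=L+1$ boundary of the shifted sum, since $\binom{L}{L}\,R_{L,L}(v) = 1 = \binom{L+1}{L+1}\,R_{L+1,L+1}(v)$.

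The main difficulty I anticipate is purely bookkeeping: carefully separating the endpoint $i^*_{L+1}$ contribution from the bulk sum and applying Pascal's identity to exactly the right combination of binomial coefficients. Once the inductive step is organized around the three ingredients---splitting $(i+L+v)=(L+v)+i$, the three-term recursion $i\cdot i^*_j = i^*_{j+1} + j\,i^*_j$, and the pair of identities $R_{j-1,L}(v)=(j-1+v)R_{j,L}(v)$ and $j\binom{L}{j}=(L-j+1)\binom{L}{j-1}$---the calculation becomes mechanical.
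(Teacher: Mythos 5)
Your proof is correct and follows essentially the same route as the paper's: induction on $L$, base case $L=1$, and an inductive step that decomposes $(i+L+v)$ into the same three contributions (the paper writes $(i-j)+j+(L+v)$ directly, while you write $(L+v)+i$ and then expand $i\cdot i^*_j=i^*_{j+1}+j\,i^*_j$, which is identical algebra). The final recombination via Pascal's rule and the absorption identity $j\binom{L}{j}=(L-j+1)\binom{L}{j-1}$ is exactly the paper's key step in slightly different dress.
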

    \begin{proof}
 For $L=1$ the right hand of \eqref{i_star_representation} is simply ${1\choose 1} 1 i^*_1+{1\choose 0} v i^*_0 =i+v$, so the equality is verified. We consider the representation true for $L\geq 1$, and we prove the representation for $L+1$.
    \begin{align*}
    \prod_{j=0}^{L}(i+j+v)= \prod_{j=0}^{L-1}(i+j+v)(i+L+v) &= \sum_{j=0}^L{L\choose j}\prod_{q=j}^{L-1}(q+v)\,i^*_j  (i-j+j+L+v)
    \end{align*}
 we develop the multiplication by the term $(i-j+j+L+v)$, dividing it in $i-j$, $j$ and we get
    \begin{align*}
    \prod_{j=0}^{L}(i+j+v)= &\sum_{j=0}^L{L\choose j}\prod_{q=j}^{L-1}(q+v)\,i^*_{j+1}  + \sum_{j=0}^L{L\choose j}j\prod_{q=j}^{L-1}(q+v)\,i^*_j \\
    &+ \sum_{j=0}^L{L\choose j}\prod_{q=j}^{L}(q+v)\,i^*_j \\
 = i^*_{L+1} +  &\sum_{j=0}^{L-1}{L\choose j}\prod_{q=j}^{L-1}(q+v)\,i^*_{j+1}  + \sum_{j=1}^L{L\choose j}j\prod_{q=j}^{L-1}(q+v)\,i^*_j \\
    &+ \sum_{j=1}^L{L\choose j}\prod_{q=j}^{L}(q+v)\,i^*_j + \prod_{q=0}^{L}(q+v) \\
 = i^*_{L+1} +  &\underbrace{\sum_{j=0}^{L-1}{L\choose j}\prod_{q=j}^{L-1}(q+v)\,i^*_{j+1}}_{=I}  + \underbrace{\sum_{j=0}^{L-1}{L\choose j+1}(j+1)\prod_{q=j+1}^{L-1}(q+v)\,i^*_{j+1}}_{=II} \\
    &+ \underbrace{\sum_{j=0}^{L-1}{L\choose j+1}\prod_{q=j+1}^{L}(q+v)\,i^*_{j+1}}_{=III} + \prod_{q=0}^{L}(q+v),
    \end{align*}
 where we get the second line taking out the term for $j=L$ from the first addend and the term for $j=0$ from the third one.
 Now we sum the term $I$ with the term $II$
    \begin{align*}
 I+II &= \sum_{j=0}^{L-1}\left( {L\choose j}(j+v) +  {L\choose j+1}(j+1) \right)\prod_{q=j+1}^{L-1}(q+v)\, i^*_{j+1}\\
    &= \sum_{j=0}^{L-1}\left( {L\choose j}(j+v) +  \frac{L!}{j!(L-j)!}(L-j) \right)\prod_{q=j+1}^{L-1}(q+v)\, i^*_{j+1}\\
    &= \sum_{j=0}^{L-1}{L\choose j} (L+v) \prod_{q=j+1}^{L-1}(q+v)i^*_{j+1}\\ 
    &= \sum_{j=0}^{L-1} {L\choose j}  \prod_{q=j+1}^{L}(q+v)\,i^*_{j+1},
    \end{align*}
 and consequently the term $III$, and we get
    \begin{align*}
 I+II+III &= \sum_{j=0}^{L-1} \left({L\choose j} +{L\choose j+1}\right) \prod_{q=j+1}^{L}(q+v)\,i^*_{j+1}\\
    &= \sum_{j=0}^{L-1} {L+1\choose j+1} \prod_{q=j+1}^{L}(q+v)\,i^*_{j+1} \\
    &= \sum_{j=1}^{L} {L+1\choose j} \prod_{q=j}^{L}(q+v)\,i^*_{j}.
    \end{align*}
 Finally,
    \begin{align*}
    \prod_{j=0}^{L}(i+j+v) &= i^*_{L+1} + \sum_{j=1}^{L} {L+1\choose j}\prod_{q=j}^{L}(q+v)\,i^*_{j} + \prod_{q=0}^{L}(q+v) \\
    &= \sum_{j=0}^{L+1} {L+1\choose j}\prod_{q=j}^{L}(q+v)\,i^*_{j}.
    \end{align*}
    \end{proof}
    
    \begin{corollary}\label{gamma_moments_representation}
 Let $i\in\N$, $v\in\R$ such that $i+v>0$, $Z$ a random variable distributes as a $\Gamma(i+v,c)$, and we define 
    \begin{equation}\label{delta_cir_coefficients}
        \delta^{cir}_{j,L}(v)={L\choose j}\prod_{q=j}^{L-1}(q+v).
    \end{equation}
 Then for all $L\in\N^*$, the $L-th$ moment of the random variable $Z$ is
    \begin{equation}
 \E[Z^L] = \bigg(\frac{1}{c}\bigg)^L\sum_{j=0}^{L} \delta^{cir}_{j,L}(v) \,i^*_{j}.
    \end{equation}
    \end{corollary}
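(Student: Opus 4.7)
The proof should be essentially a one-line consequence of the preceding Lemma combined with the classical raising-factorial formula for Gamma moments. The plan is to compute $\E[Z^L]$ directly from the Gamma density and then recognize the resulting product as the left-hand side of \eqref{i_star_representation}.

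First, I would recall that for $Z \sim \Gamma(i+v, c)$ with shape parameter $i+v>0$ and rate $c>0$, the $L$-th raw moment is given by the standard identity
\begin{equation*}
\E[Z^L] = \int_0^\infty z^L \frac{c^{i+v}}{\Gamma(i+v)} z^{i+v-1} e^{-cz}\, dz = \frac{\Gamma(i+v+L)}{c^L \, \Gamma(i+v)} = \frac{1}{c^L}\prod_{j=0}^{L-1}(i+v+j),
\end{equation*}
where the last equality follows from the functional equation $\Gamma(\alpha+1)=\alpha\Gamma(\alpha)$ applied $L$ times to the ratio $\Gamma(i+v+L)/\Gamma(i+v)$.

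Next, I would apply the preceding Lemma to the product $\prod_{j=0}^{L-1}(i+j+v)$. That lemma (equation \eqref{i_star_representation}) precisely expands this product in the basis of falling factorials $i^*_j$ with coefficients $\binom{L}{j}\prod_{q=j}^{L-1}(q+v)$, which are exactly the $\delta^{cir}_{j,L}(v)$ introduced in \eqref{delta_cir_coefficients}. Substituting this expansion into the moment formula yields
\begin{equation*}
\E[Z^L] = \frac{1}{c^L} \sum_{j=0}^{L} \binom{L}{j}\prod_{q=j}^{L-1}(q+v) \, i^*_j = \left(\frac{1}{c}\right)^L \sum_{j=0}^{L} \delta^{cir}_{j,L}(v)\, i^*_j,
\end{equation*}
which is the claimed identity. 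There is no real obstacle here: the only nontrivial ingredient, the combinatorial identity \eqref{i_star_representation}, has already been established by induction, and the Gamma-moment formula is standard. The corollary is essentially a repackaging of the lemma in probabilistic language, preparing the expansion of the CIR transition moments $\E[(X^x_t)^L]$ via the mixture-of-Gammas representation \eqref{CIR_density} in the next step.
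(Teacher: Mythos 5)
Your proposal is correct and matches the paper's proof essentially line for line: both start from the standard Gamma-moment formula $\E[Z^L] = c^{-L}\,\Gamma(i+L+v)/\Gamma(i+v)$, rewrite the Gamma-ratio as the product $\prod_{j=0}^{L-1}(i+j+v)$, and then substitute the falling-factorial expansion from the preceding lemma. The only difference is that you spell out the integral and the repeated use of $\Gamma(\alpha+1)=\alpha\Gamma(\alpha)$, which the paper leaves implicit.
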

    \begin{proof}
 The result follows from a direct application of the identity \eqref{i_star_representation} to the well-known formula of the moments of the gamma law
    $$
 \E[Z^L] = \bigg(\frac{1}{c}\bigg)^L\frac{\Gamma(i+L+v)}{\Gamma(i+v)}=\bigg(\frac{1}{c}\bigg)^L\prod_{j=0}^{L-1}(i+j+v) = \bigg(\frac{1}{c}\bigg)^L\sum_{j=0}^L{L\choose j}\prod_{q=j}^{L-1}(q+v)\,i^*_j.
    $$
    \end{proof}
    
    \begin{prop}
 Let $X^x_t$ the CIR process starting from $x\in\R_+$, $t\in [0,T]$, $a>0$ and $v=2a/\sigma^2$. Then, the moment of $(X^x_t)_{[0,T]}$ are given by the following formula
    \begin{equation}
 \E\big[(X^x_t)^L\big] = \sum_{j=0}^{L} \delta^{cir}_{j,L}(v) \bigg(\frac{\sigma^2\psi_k(t)}{2}\bigg)^{L-j} e^{-jkt}x^j .
    \end{equation}
    \end{prop}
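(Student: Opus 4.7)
The strategy is to exploit the explicit form of the CIR transition density recalled in~\eqref{CIR_density}, which exhibits $X^x_t$ as a Poisson mixture of Gamma distributions, and then to combine it with Corollary~\ref{gamma_moments_representation}. Concretely, writing $\lambda=d_tx/2$ and $c=c_t/2$, the density \eqref{CIR_density} says that $X^x_t$ conditional on a Poisson variable $I\sim\mcP(\lambda)$ has the law $\Gamma(I+v,c)$. So by the tower property,
\begin{equation*}
  \E[(X^x_t)^L]=\sum_{i=0}^\infty \frac{e^{-\lambda}\lambda^i}{i!}\,\E\bigl[Z_i^L\bigr],\qquad Z_i\sim\Gamma(i+v,c).
\end{equation*}

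The first step is to replace $\E[Z_i^L]$ by its expression given in Corollary~\ref{gamma_moments_representation}, which produces the factor $(1/c)^L\sum_{j=0}^L \delta^{cir}_{j,L}(v)\,i^*_j$. Swapping the two summations (all terms are nonnegative so Fubini applies), one reduces the problem to computing the $j$-th factorial moment of the Poisson law $\mcP(\lambda)$. The second step is precisely this classical identity: using $i^*_j=i!/(i-j)!$ for $i\ge j$ and $i^*_j=0$ otherwise, a reindexing $k=i-j$ gives
\begin{equation*}
  \sum_{i=0}^\infty \frac{e^{-\lambda}\lambda^i}{i!}\,i^*_j=\lambda^j\sum_{k=0}^\infty \frac{e^{-\lambda}\lambda^k}{k!}=\lambda^j.
\end{equation*}
Plugging this back yields $\E[(X^x_t)^L]=(1/c)^L\sum_{j=0}^L \delta^{cir}_{j,L}(v)\,\lambda^j$.

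The final step is to rewrite the parameters. Recall $c=c_t/2$, $\lambda=d_tx/2$, and the explicit values $c_t=4k/(\sigma^2(1-e^{-kt}))$, $d_t=c_te^{-kt}$. Hence $1/c=2/c_t=\sigma^2\psi_k(t)/2$ and $\lambda/c=d_tx/c_t=e^{-kt}x$, so
\begin{equation*}
  \frac{1}{c^L}\,\lambda^j=\Big(\frac{1}{c}\Big)^{L-j}\Big(\frac{\lambda}{c}\Big)^j=\Big(\frac{\sigma^2\psi_k(t)}{2}\Big)^{L-j}e^{-jkt}x^j,
\end{equation*}
which is exactly the announced formula.

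I do not expect any serious obstacle: the identity \eqref{i_star_representation} and Corollary~\ref{gamma_moments_representation} do the combinatorial work, so the only care required is the bookkeeping of the Poisson/Gamma scaling constants and the use of Fubini, which is trivially justified by positivity of all terms.
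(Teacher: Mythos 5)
Your proof is correct and follows essentially the same route as the paper: expand the CIR density as a Poisson mixture of Gammas, insert the moment formula from Corollary~\ref{gamma_moments_representation}, swap the two sums, evaluate the Poisson factorial moments $\sum_i e^{-\lambda}\lambda^i i^*_j/i!=\lambda^j$, and simplify the constants $c_t,d_t$. The only cosmetic difference is that you make the conditioning structure and the Fubini justification explicit, which the paper leaves implicit.
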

    
    \begin{proof}
 We recall the density of $X^x_t$
    \begin{equation*}
 p(t,x,z)=\sum_{i=0}^\infty \frac{e^{-d_t x/2}(d_t x/2)^i}{i!} \frac{c_t/2}{\Gamma(i+v)}\left(\frac{c_t z}{2}\right)^{i-1+v}e^{-c_t z/2}
    \end{equation*}
 where $c_t=\frac{4}{\sigma^2\psi_k(t)}$, $v=2a/\sigma^2$ and $d_t=c_te^{-kt}$. Then, the $L-th$ moment is given by
    \begin{align*}
 \E\big[(X^x_t)^L\big] =&\int_0^\infty z^L \sum_{i=0}^\infty \frac{e^{-d_t x/2}(d_t x/2)^i}{i!} \frac{c_t/2}{\Gamma(i+v)}\left(\frac{c_t z}{2}\right)^{i-1+v}e^{-c_t z/2}dz\\
 =&\sum_{i=0}^\infty \frac{e^{-d_t x/2}(d_t x/2)^i}{i!} \int_0^\infty z^L\frac{c_t/2}{\Gamma(i+v)}\left(\frac{c_t z}{2}\right)^{i-1+v}e^{-c_t z/2}dz.
    \end{align*}
 We use now the formula of the $L-th$ moment of a random variable $Z_i\sim\Gamma(i+v,2/c_t)$ in Corollary \ref{gamma_moments_representation} to get
    $$
    \int_0^\infty z^L\frac{c_t/2}{\Gamma(i+v)}\left(\frac{c_t z}{2}\right)^{i-1+v}e^{-c_t z/2}= \E[Z_i^L]= \bigg(\frac{2}{c_t}\bigg)^L\sum_{j=0}^{L} \delta^{cir}_{j,L}(v) \,i^*_{j},
    $$
 and so
    \begin{align*}
 \E\big[(X^x_t)^L\big] &=\sum_{i=0}^\infty \frac{e^{-d_t x/2}(d_t x/2)^i}{i!}  \bigg(\frac{2}{c_t}\bigg)^L\sum_{j=0}^{L} \delta^{cir}_{j,L}(v) \,i^*_{j}\\
        &=\bigg(\frac{2}{c_t}\bigg)^L \sum_{j=0}^{L} \delta^{cir}_{j,L}(v) \sum_{i=0}^\infty \frac{e^{-d_t x/2}(d_t x/2)^i}{i!}   \,i^*_{j}\\
        &=\bigg(\frac{2}{c_t}\bigg)^L \sum_{j=0}^{L} \delta^{cir}_{j,L}(v) \bigg(\frac{d_t x}{2}\bigg)^j\\
        &=\sum_{j=0}^{L} \delta^{cir}_{j,L}(v) \bigg(\frac{\sigma^2 \psi_k(t)}{2}\bigg)^{L-j} (e^{-kt}x)^j,
    \end{align*}
 where we used the fact that $d_t=e^{-kt}c_t$ and $c_t=\frac{4}{\sigma^2\psi_k(t)}$ to get the last equality.
    \end{proof}
\subsection{Polynomial schemes}
We show now how we can take advantage of knowing the explicit form of the coefficients in the moment formula to build weak approximation schemes when the test function $f\in\PRp$.

\begin{prop}
 Let $v>0$, $\nu\in\N^*$ and $(\tilde{Z}_i)_{i\in\N}$ a family of positive random variable such that
    \begin{equation}\label{tildeZ_Lmoments}
 \E[\tilde{Z}_i^L]=\sum_{j=0}^{L} \delta^{Z}_{j,L}(v) \,i^*_{j} \quad{where}\quad
        \delta^Z_{j,L}=\delta^{cir}_{j,L} \quad\text{ for all } j\in\{(L-\nu)\vee 0,\ldots,L\}. 
    \end{equation}
 We call $Z_i=2/c_t \tilde{Z_i}$ for all $i\in\N$ and $\hX^{x,Z}_t$ the random variable distributed has $Z_P$ where $P\sim\mathcal{P}(d_t x/2)$ (i.e. a Poisson random variable with parameter $d_t x/2$).
 Then for all $L\in\N^*$ and $f\in\PLRp{L}$ one has
    \begin{equation}
        \|\E[f(\hX^{\cdot,Z}_t)]-\E[f(X^{\cdot}_t)]\|\le Ct^{\nu+1}\|f\|.
    \end{equation}
    \begin{proof}
 Let $m\in\N$, $m>\nu$ one can easily use \eqref{tildeZ_Lmoments} to show that
        \begin{align*}
 \E\Big[\big(\hX^{\cdot,Z}_t\big)^m\Big]-\E[f(X^{\cdot}_t)^m] &= \sum_{j=0}^m (\delta^{Z}_{j,m}(v)-\delta^{cir}_{j,m}(v)) \Big(\frac{\sigma^2 \psi_k(t)}{2}\Big)^{m-j} e^{-jkt}x^j\\
            &=\Big(\frac{\sigma^2 \psi_k(t)}{2}\Big)^{\nu+1}\sum_{j=0}^{m-\nu-1} (\delta^{Z}_{j,m}(v)-\delta^{cir}_{j,m}(v)) \Big(\frac{\sigma^2 \psi_k(t)}{2}\Big)^{m-\nu-1-j} e^{-jkt}x^j
        \end{align*}
 whereas on the other hand if $m\le \nu$ then $\E[\hX^{\cdot,Z}_t]-\E[X^{\cdot}_t]$ vanishes. Then, let $L\in\N^*$ and $f\in\PLRp{L}$, using the previous step one has
        \begin{align*}
 \E[f(\hX^{x,Z}_t)]-\E[f(X^{x}_t&)] = \sum_{m=0}^L a_m\Big(\E\Big[\big(\hX^{x,Z}_t\big)^m\Big]-\E[f(X^{x}_t)^m]\Big)\\
 =&\Big(\frac{\sigma^2 \psi_k(t)}{2}\Big)^{\nu+1}\sum_{m=\nu+1}^L a_m \sum_{j=0}^{m-\nu-1} (\delta^{Z}_{j,m}(v)-\delta^{cir}_{j,m}(v)) \Big(\frac{\sigma^2 \psi_k(t)}{2}\Big)^{m-\nu-1-j} e^{-jkt}x^j
        \end{align*}
 then $\E[f(\hX^{\cdot,Z}_t)]-\E[f(X^{\cdot}_t)]\in\PLRp{L}$ and
    \begin{align*}
 ||\E[f(\hX^{\cdot,Z}_t)]-\E[f(&X^{\cdot}_t)]|| \\
 \le&\Big(\frac{\sigma^2 \psi_k(t)}{2}\Big)^{\nu+1}\sum_{m=\nu+1}^L |a_m| \sum_{j=0}^{m-\nu-1} |\delta^{Z}_{j,m}(v)-\delta^{cir}_{j,m}(v)| \Big(\frac{\sigma^2 \psi_k(t)}{2}\Big)^{m-\nu-1-j} e^{-jkt}\\
 \le& Ct^{\nu+1}\|f\|,
    \end{align*}
 where we used that $\psi_k(t)\le e^{(-b_+)T}t$ for all $0\le t\le T$ and $C=(\sigma^2e^{(-b_+)T}/2)^{\nu+1}C_L$ and $$C_L=\max_{\substack{t\le T\\m\in\{\nu+1,\ldots,L\}}} \sum_{j=0}^{m-\nu-1} |\delta^{Z}_{j,m}(v)-\delta^{cir}_{j,m}(v)| \Big(\frac{\sigma^2 \psi_k(t)}{2}\Big)^{m-\nu-1-j} e^{-jkt}.$$ 
    \end{proof}
\end{prop}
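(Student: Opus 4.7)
The plan is to exploit linearity: for $f(x)=\sum_{m=0}^L a_m x^m\in\PLRp{L}$, write
\begin{equation*}
\E[f(\hX^{x,Z}_t)]-\E[f(X^{x}_t)] = \sum_{m=0}^L a_m \Big(\E[(\hX^{x,Z}_t)^m]-\E[(X^{x}_t)^m]\Big),
\end{equation*}
so the whole problem reduces to controlling the difference of the $m$-th raw moments for each $m\le L$. The two expressions look structurally very similar, and the hypothesis on $\delta^Z_{j,L}$ is exactly calibrated to kill the top $\nu+1$ terms of this difference.

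First, I would compute $\E[(\hX^{x,Z}_t)^m]$ by conditioning on the Poisson index. Using $\hX^{x,Z}_t\overset{d}{=}Z_P=\tfrac{2}{c_t}\tilde Z_P$ with $P\sim\mathcal{P}(d_tx/2)$, and the hypothesis \eqref{tildeZ_Lmoments}, I get
\begin{equation*}
\E[(\hX^{x,Z}_t)^m]=\Big(\tfrac{2}{c_t}\Big)^m\E[\E[\tilde Z_P^m\mid P]]=\Big(\tfrac{2}{c_t}\Big)^m\sum_{j=0}^{m}\delta^Z_{j,m}(v)\,\E[P^*_j].
\end{equation*}
The key identity is that the factorial moment of a Poisson$(\lambda)$ variable is $\E[P^*_j]=\lambda^j$, so with $\lambda=d_tx/2$,
\begin{equation*}
\E[(\hX^{x,Z}_t)^m]=\Big(\tfrac{2}{c_t}\Big)^m\sum_{j=0}^{m}\delta^Z_{j,m}(v)\,\Big(\tfrac{d_tx}{2}\Big)^j.
\end{equation*}
On the other hand, the previous Proposition rewritten with $2/c_t=\sigma^2\psi_k(t)/2$ and $d_t=c_te^{-kt}$ gives exactly the same expression with $\delta^{cir}_{j,m}$ in place of $\delta^Z_{j,m}$.

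The difference then becomes
\begin{equation*}
\E[(\hX^{x,Z}_t)^m]-\E[(X^{x}_t)^m]=\Big(\tfrac{2}{c_t}\Big)^m\sum_{j=0}^{m}\big(\delta^Z_{j,m}(v)-\delta^{cir}_{j,m}(v)\big)\Big(\tfrac{d_tx}{2}\Big)^j.
\end{equation*}
By the hypothesis, the coefficients $\delta^Z_{j,m}$ and $\delta^{cir}_{j,m}$ agree for $j\ge m-\nu$, so the sum vanishes identically when $m\le\nu$ and otherwise runs only over $j\in\{0,\dots,m-\nu-1\}$. Factoring $(2/c_t)^{m-j}(d_t)^j=(\sigma^2\psi_k(t)/2)^{m-j}e^{-jkt}$ and using $m-j\ge\nu+1$, I can pull out $(\sigma^2\psi_k(t)/2)^{\nu+1}$, which is $O(t^{\nu+1})$ uniformly on $[0,T]$ since $\psi_k(t)\le e^{(-k)^+T}t$. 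Summing in $m$ against $|a_m|\le\|f\|$ and taking the polynomial norm concludes the argument.

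There is no real obstacle here; the proof is essentially a bookkeeping computation. The only thing to be careful about is the identity $2/c_t=\sigma^2\psi_k(t)/2$, which makes the moment formulas for $\hX^{x,Z}_t$ and $X^x_t$ coincide term-by-term up to the substitution $\delta^{cir}\leftrightarrow\delta^Z$, so that the hypothesis transparently annihilates the leading $\nu+1$ terms and all the remaining ones carry a factor $\psi_k(t)^{\nu+1}$.
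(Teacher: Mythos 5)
Your proposal is correct and follows essentially the same route as the paper: compare the moment expansions of $\hX^{x,Z}_t$ and $X^x_t$ coefficient by coefficient, use the hypothesis that $\delta^Z_{j,m}=\delta^{cir}_{j,m}$ for $j\ge m-\nu$ to annihilate the top $\nu+1$ terms, factor out $(\sigma^2\psi_k(t)/2)^{\nu+1}$, and sum over $m$ to get the polynomial-norm bound. The only difference is cosmetic: you spell out the derivation of $\E[(\hX^{x,Z}_t)^m]$ via Poisson factorial moments, whereas the paper leaves that step as ``one can easily show'', but the computation is the same.
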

We now provide an example of a first-order scheme that works without parameter restrictions.
\begin{ex}
 Let $\tilde{Z}_i\sim\mathcal{P}(i+v)$, then
    $$\E[\tilde{Z}_i^L]=\sum_{j=0}^{L} \delta^{Z}_{j,L}(v) \,i^*_{j} \quad{where}\quad
    \delta^Z_{j,L}=\delta^{cir}_{j,L} \quad\text{ for all } j\in\{(L-1)\vee 0,L\}.$$
 So $\hX^{x,Z}_t = Z_P$ where $Z_i= 2/c_t \tilde{Z_i}$ and $P\sim\mathcal{P}(d_tx/2)$ is a first order scheme.
\end{ex}
\begin{proof}
 Let $L\in\N^*$, the moments of a Poisson random variable $Z_\lambda$ of parameter $\lambda$ have the following formula
    \begin{equation*}
 \E[\tilde{Z}_\lambda^L]= \sum_{j=0}^L {L\brace j}\lambda^j 
    \end{equation*} 
 where ${L\brace j}$ are Stirling numbers of the second kind. It is well known that ${L\brace L}=1$, ${L\brace L-1}=\binom{L}{2}=L(L-1)/2$, and one could prove for $L\ge 2$, ${L\brace L-2}=L(L-1)(L-2)(3L-5)/24$. Using the definition of $Z_i$ and the formula of the Stirling coefficients.
    \begin{align*}
 \E[\tilde{Z}_i^L]= \sum_{j=0}^L (i+v)^j {L\brace j},
    \end{align*}
 we just want to rewrite the right-hand side in terms of basis $\{i^*_0,\ldots,i^*_L\}$, to explicit the coefficients of the terms $i^*_{L}, i^*_{L-1}$ and to check that are equal respectively to $\delta^{cir}_{L,L}=1$ and $\delta^{cir}_{L-1,L}=L(L-1)+Lv$. The key remark in order to do this calculation is that 
    $$
 i^j=i^*_j+\frac{j(j-1)}{2}i^{j-1}+\sum_{l=0}^{j-2}c_l i^{l},
    $$
 so iterating the formula, one gets
    \begin{equation}\label{i_to_i*}
 i^j=i^*_j+\frac{j(j-1)}{2}i^*_{j-1}+\sum_{l=0}^{j-2}c^*_l i^*_{l}.
    \end{equation} 
 Then one has
    \begin{align*}
 \E[\tilde{Z}_i^L]&= (i+v)^L + \frac{L(L-1)}{2} (i+v)^{L-1}+\sum_{j=0}^{L-2} (i+v)^j {L\brace j},\\
        &= i^L + Lv i^{L-1} +  \frac{L(L-1)}{2} i^{L-1} +\cdots  \\
        &= i^*_L + (L(L-1) +Lv)  i^*_{L-1} +\sum_{j=0}^{L-2} \delta^Z_{j,L}i^*_j 
    \end{align*}
 where we expanded $(i+v)^L$ and $(i+v)^{L-1}$ to get the second equality and used \eqref{i_to_i*} to pass from the base $\{i^0,\ldots,i^L\}$ to $\{i^*_0,\ldots,i^*_L\}$. To check $\delta^Z_{j,2}=\delta^{cir}_{j,2}$ for $j\in\{0,1,2\}$ is trivial.
\end{proof}

One could check that even   $\delta^Z_{L-2,L}=\delta^{cir}_{L-2,L}$ when $L=2$ that proves that when $x$ is smaller than a certain threshold $Ct$, the scheme is indeed of order two (because matches the first two moments of CIR distribution). Unluckily, one can verify using the same calculations used in the proof and the formula for ${L\brace L-2}$ that  $\delta^Z_{L-2,L}<\delta^{cir}_{L-2,L}$ for all $L\ge 3$.
Now, we show using numerical tests what we have proved for $f\in\PRp$: $\hX^{x,Z}_t$ is a scheme of order one.
\begin{figure}[h]
    \centering
    \begin{subfigure}[h]{0.49\textwidth}
      \centering
      \includegraphics[width=\textwidth]{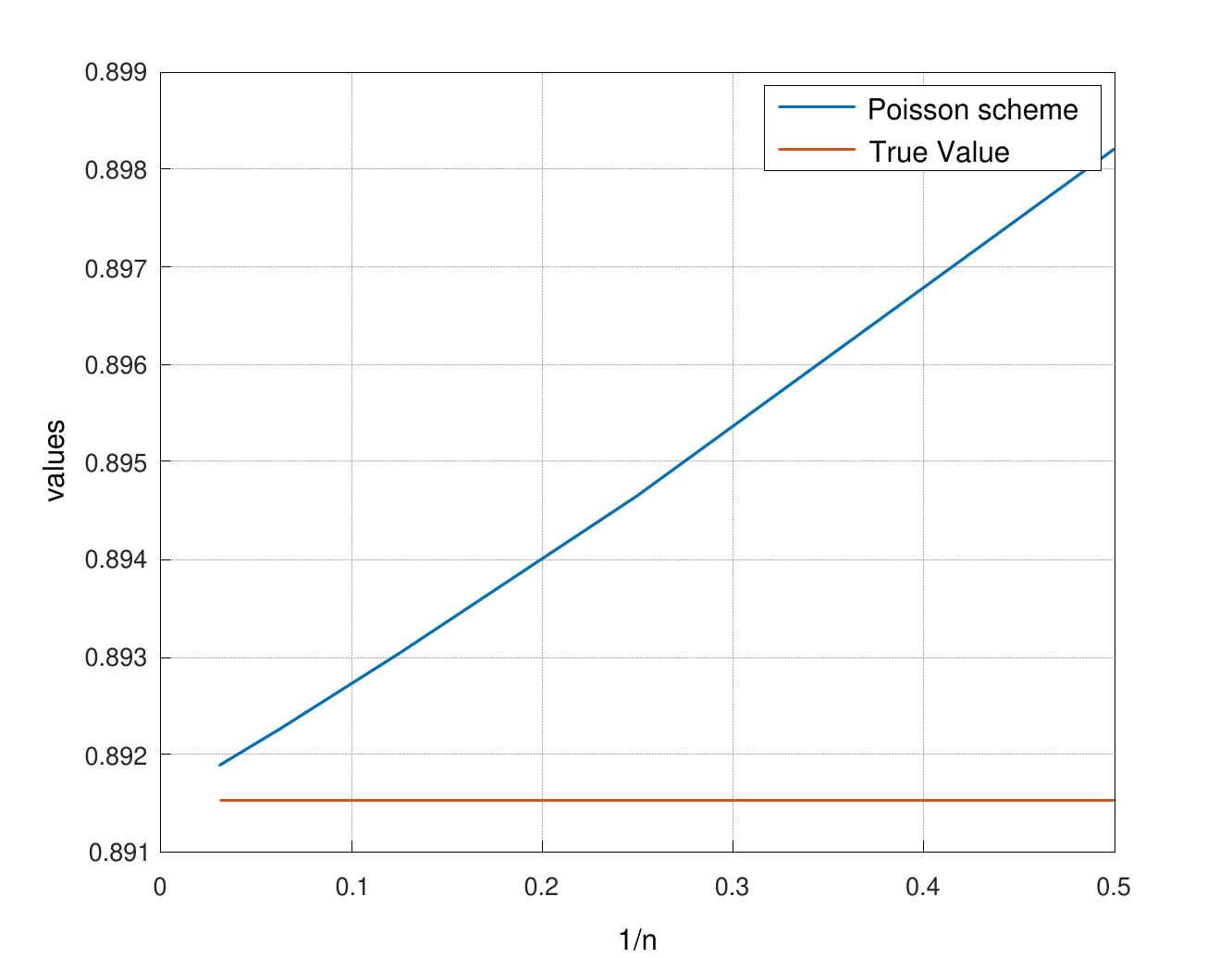}
      \caption{Values plot}
      \label{fig:values_C1S1}
    \end{subfigure}
    \hfill
    \begin{subfigure}[h]{0.49\textwidth}
      \centering
      \includegraphics[width=\textwidth]{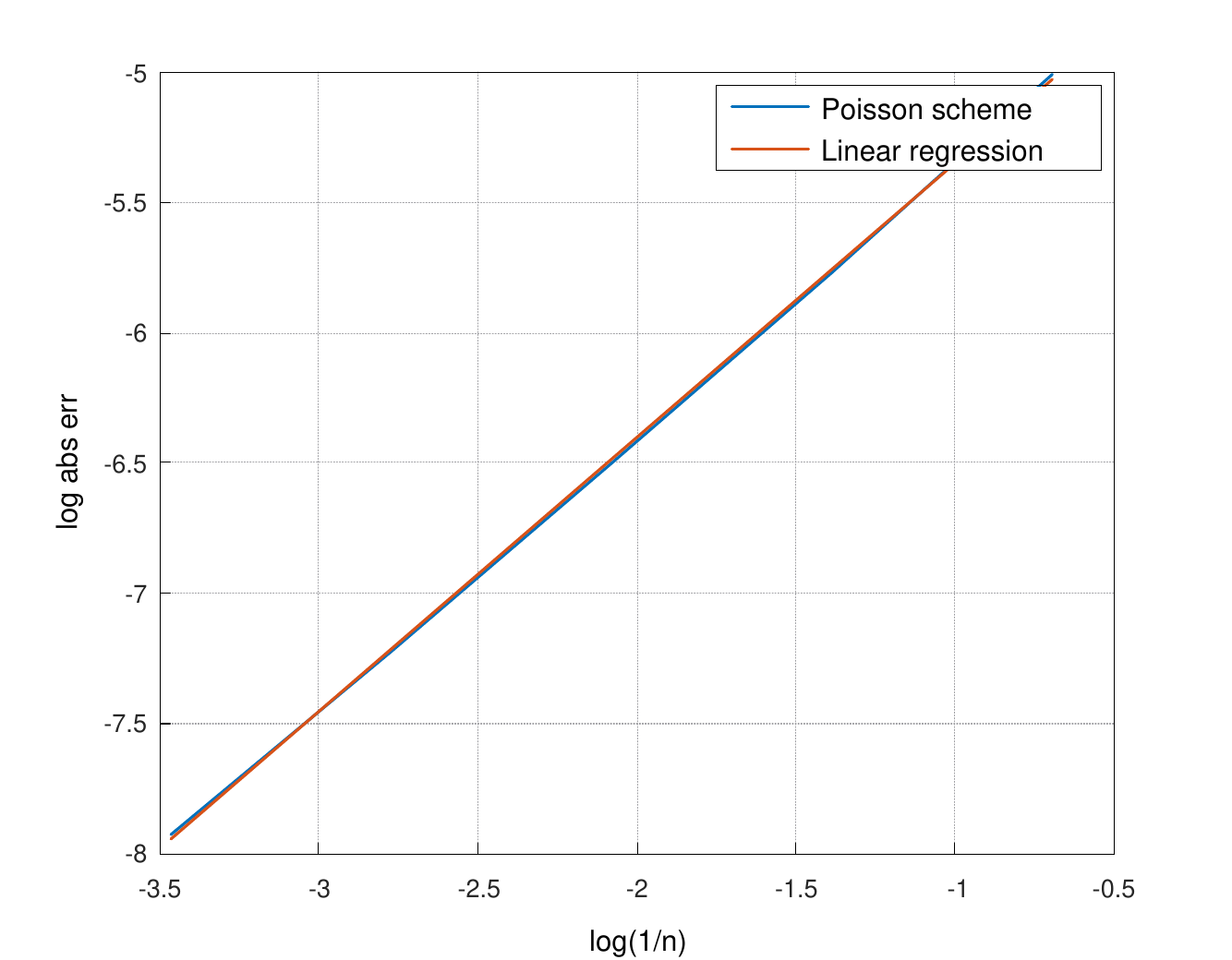}
      \caption{Log-log plot}
      \label{fig:log-log_C1S1}
    \end{subfigure}
    \caption{Parameters: $x=0.3$, $a=0.04$, $k=0.1$, $\sigma=2.0$, $f(z)=\exp(-z)$ and $T=1$ ($\frac{\sigma^2}{2a}= 50$). Graphic~({\sc a}) shows the values of $\hat{P}^{1,n}f$, as a function of the time step $1/n$ and the exact value. Graphic~({\sc b}) draws $\log(|\hat{P}^{1,n}f-P_Tf|)$ in function of $\log(1/n)$ and a regressed line. The slope of the regressed line is 1.05.}\label{CIR_Poiss1}
  \end{figure}

  \begin{figure}[h]
    \centering
    \begin{subfigure}[h]{0.49\textwidth}
      \centering
      \includegraphics[width=\textwidth]{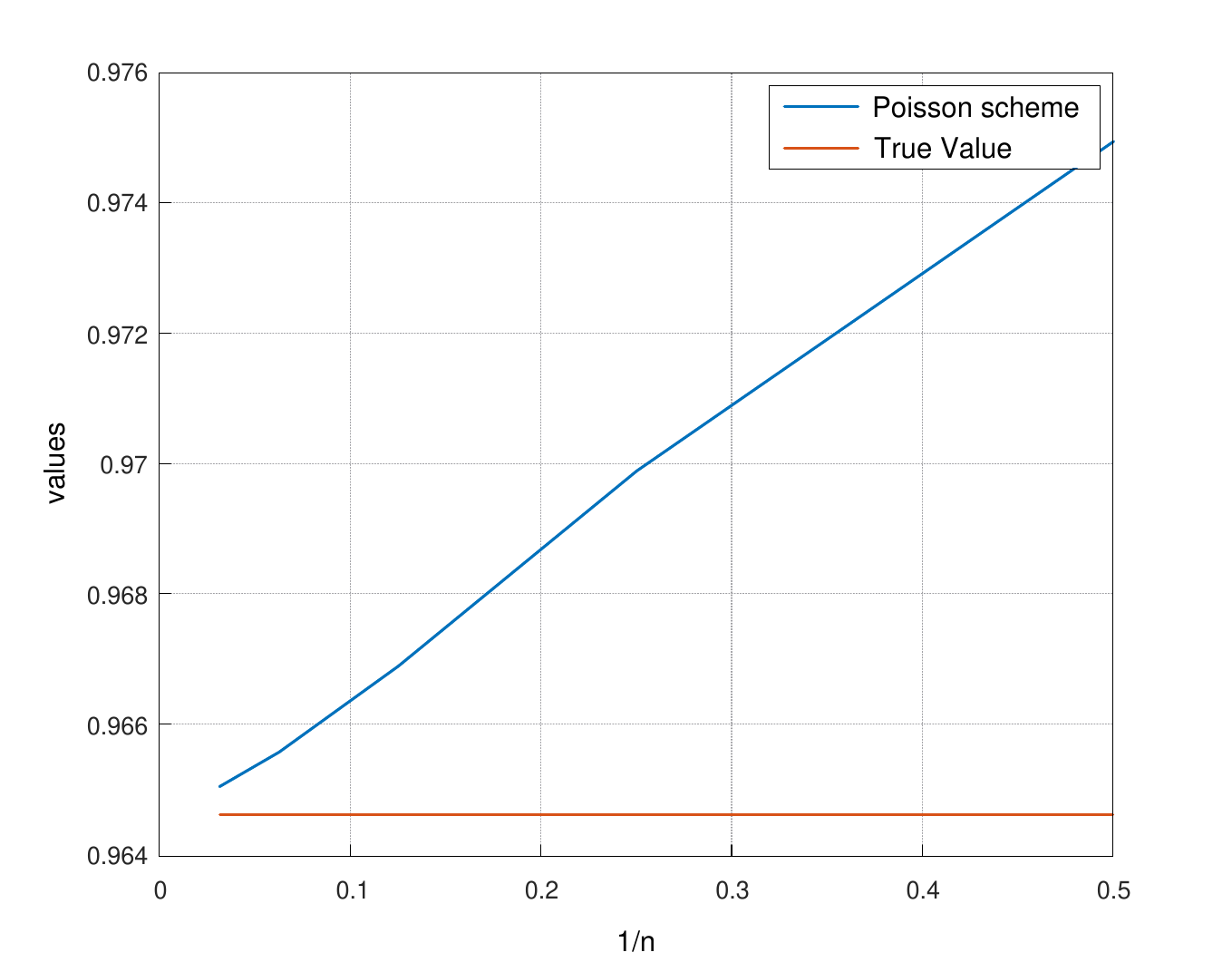}
      \caption{Values plot}
      \label{fig:values_C1S2}
    \end{subfigure}
    \hfill
    \begin{subfigure}[h]{0.49\textwidth}
      \centering
      \includegraphics[width=\textwidth]{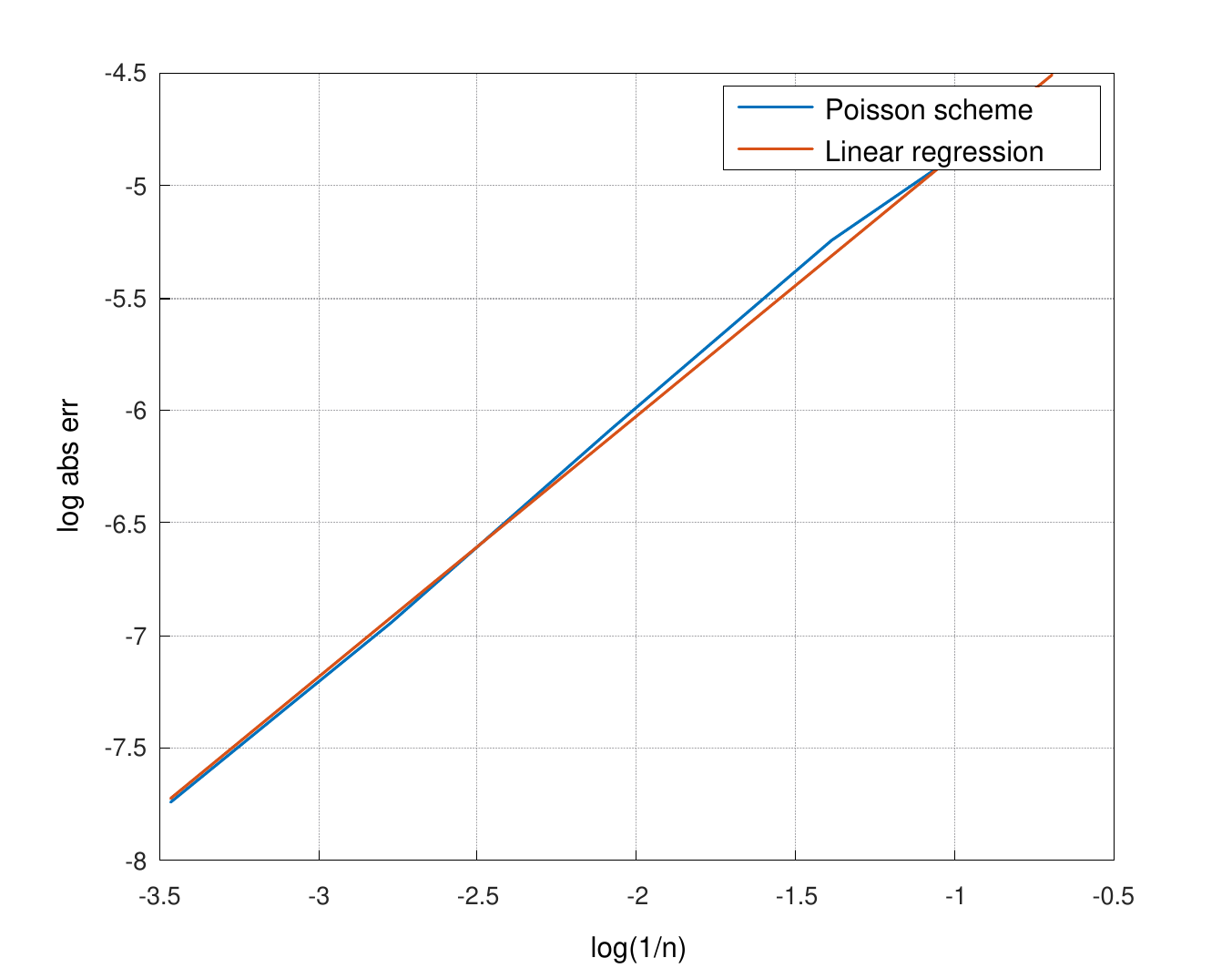}
      \caption{Log-log plot}
      \label{fig:log-log_C1S2}
    \end{subfigure}
    \caption{Parameters: $x=0.0$, $a=0.04$, $k=1.0$, $\sigma=2.0$, $f(z)=\exp(-4z)$ and $T=1$ ($\frac{\sigma^2}{2a}= 50$). Graphic~({\sc a}) shows the values of $\hat{P}^{1,n}f$, as a function of the time step $1/n$ and the exact value. Graphic~({\sc b}) draws $\log(|\hat{P}^{1,n}f-P_Tf|)$ in function of $\log(1/n)$ and a regressed line. The slope of the regressed line is 1.16.}\label{CIR_Poiss2}
  \end{figure}

\cleardoublepage
\phantomsection 
\addcontentsline{toc}{chapter}{Bibliography} 

\printbibliography


\end{document}